\title{Stability of Minkowski Space-time with a translation space-like Killing field}
\author{Cécile Huneau}
\newtheorem{thm}{Theorem}[section]
\newtheorem{prp}[thm]{Proposition}
\newtheorem{cor}[thm]{Corollary}
\newtheorem{lm}[thm]{Lemma}
\newtheorem{df}[thm]{Definition}
\newtheorem{rk}[thm]{Remark}
\newcommand{\m}[1]{\mathbb{#1}}
\newcommand{\ld}[1]{\left\|#1\right\|_{L^2}}
\newcommand{\q}[1]{\mathcal{#1}}
\newcommand{\wht}[1]{\widetilde{#1}}
\newcommand{\gra}[1]{\mathbf{#1}}
\newcommand{\ba}[1]{\underline{#1}}
\newcommand{\ep}{\varepsilon}
\newcommand{\ch}{\mathbbm{1}}
\newcommand{\Up}{\Upsilon}
\newcommand{\Upr}{\Upsilon\left(\frac{r}{t}\right)}
\newcommand{\Uprt}{\Upsilon\left(\frac{r}{\tau}\right)}
\numberwithin{equation}{section}
\renewcommand*{\eqref}[1]{%
	\hyperref[{#1}]{\textup{\tagform@{\ref*{#1}}}}%
}
\begin{document}
 
\maketitle

\begin{abstract}
	In this paper we prove the nonlinear stability of Minkowski space-time with a translation Killing field.  In the presence of such a symmetry, the $3+1$ vacuum Einstein equations reduce to the $2+1$ Einstein equations with a scalar field. We work in generalised wave coordinates. In this gauge Einstein's equations can be written as a system of quasilinear quadratic wave equations. The main difficulty in this paper is due to the decay in $\frac{1}{\sqrt{t}}$ of free solutions to the wave equation in $2$ dimensions, which is weaker than in $3$ dimensions. This weak decay seems to be an obstruction for proving a stability result in the usual wave coordinates. In this paper we construct a suitable generalized wave gauge in which our system has a "cubic weak null structure", which allows for the proof of global existence.
\end{abstract}

\section{Introduction}

In this paper, we address the stability of the Minkowski solution to the Einstein vacuum equations with 
a translation space-like Killing field. 
More precisely, we look for solutions of the $3+1$ vacuum Einstein equations, on manifolds of the form $\Sigma \times \m R_{x_3} \times \m R_t$, where $\Sigma$ is a $2$ dimensional manifold,
equipped with a metric of the form
$$\gra g= e^{-2\phi} g + e^{2\phi}(dx^3)^2, $$
where  $\phi$ is a scalar function, and $g$ a Lorentzian metric on $\Sigma \times \m R$, all quantities being independent of $x^3$. 
For these metrics, Einstein vacuum equations are equivalent to the $2+1$ dimensional system
\begin{equation}\label{sys}
\left\{ \begin{array}{l}
\Box_g \phi = 0\\
R_{\mu \nu} =2 \partial_\mu  \phi \partial_\nu \phi,\\
\end{array} \right.
\end{equation}
where $R_{\mu \nu}$ is the Ricci tensor associated to $g$. Choquet-Bruhat and Moncrief studied the case where $\Sigma$ is compact of genus $G \geq 2$. In \cite{choquet}, they proved the stability of a particular expanding solution.
In this paper we work in the case $\Sigma= \m R^2$. A particular solution is then given by Minkowski solution itself. It corresponds to $\phi=0$ and $g=m$, the Minkowski metric in dimension $2+1$.
A natural question one can ask in this setting is the nonlinear stability of this solution.

In the $3+1$ vacuum case, the stability of Minkowski space-time has been proven in the celebrated work of Christodoulou and Klainerman \cite{ck} in a maximal foliation. It has then been proven by Lindblad and Rodnianski using wave coordinates in \cite{lind}. Their proof extends also to Einstein equations coupled to a scalar field.
Let us note that the perturbations of Minkowski solution considered in our paper are not asymptotically flat in $3+1$ dimension, due to the presence of a translation Killing field. Consequently they are not included in \cite{ck} and \cite{lind}.

In \cite{qstab} we already proved quasistability of the solution $(\phi=0,g=m)$ : the perturbed solutions exist in exponential time : more precisely we show that the solutions exist up to time $t\sim e^{\frac{1}{\sqrt{\ep}}}$ where $\ep$ measures the size of the initial data. In both \cite{qstab} and this paper, we work in generalized wave coordinates. Consequently the method we use is more in the spirit of \cite{lind} than in the spirit of \cite{ck}.

\subsection{Einstein equations in wave coordinates}
Wave coordinates $(x^\alpha)$ are required to satisfy $\Box_g x^\alpha=0$. In these coordinates \eqref{sys} reduces to the following system of quasilinear wave equations

\begin{equation}
\left\{ \begin{array}{l}
\Box_g \phi = 0,\\
\Box_g g_{\mu \nu}= -4\partial_\mu \phi \partial_\nu \phi +P_{\mu \nu}(\partial g, \partial g),
\end{array}
\right.
\end{equation}
where $P_{\mu \nu}$ is a quadratic form.
To understand the difficulty, let us first recall known results in $3+1$ dimensions. In $3+1$ dimensions, a semi linear system of wave equations of the form
\[ \Box u^i = P^i (\partial u ^j, \partial u^k) \]
is critical in the sense that if there isn't enough structure, the solutions might blow up in finite time (see the counter examples by John \cite{john}). However, if the right-hand side satisfies the null
condition, introduced by Klainerman in \cite{klai}, the system admits global solutions. This condition requires that  $P^i$ is a null form, that is to say a linear combinations of the following forms
$$Q_0(u,v)=\partial_t u \partial_t v-\nabla u. \nabla v, \quad Q_{\alpha \beta}(u,v)=\partial_\alpha u \partial_\beta v-\partial_\alpha v \partial_\beta u.$$
In $3+1$ dimensions, Einstein equations written in wave coordinates do not satisfy the null condition. However, this is not a necessary condition to obtain global existence. An example is provided by the system
\begin{equation}
\label{model}
\left\{ 
\begin{array}{l}
\Box \phi_1 =0, \\
\Box \phi_2=(\partial_t \phi_1)^2.
\end{array}\right.
\end{equation}
The non-linearity does not have the null structure, but thanks to the decoupling there is nevertheless global existence.
In \cite{craslind}, Lindblad and Rodnianski showed that once the semi linear terms involving null forms are removed, Einstein's equations in wave coordinates can be written as a system with the same structure as \eqref{model}. They used the wave condition to obtain better decay for some coefficients of the metric, which allow them to control the quasilinear terms. However the decay they are able to show for the metric coefficients is $\frac{\ln(t)}{t}$, which is slower than the decay for the solution of the wave equation which is $\frac{1}{t}$. An example of a quasilinear scalar wave equation admitting global existence without the null condition, but with a slower decay is also studied by Lindblad in \cite{lindsym} in the radial case, and by Alinhac in \cite{alin2} and Lindblad in \cite{lindblad} in the general case. In \cite{craslind}, Lindblad and Rodnianski introduced the notion of weak-null structure, which gathers all these examples.

In $2+1$ dimensions, to show global existence, one has to be careful with  both quadratic and cubic terms. Quasilinear scalar wave equations in $2+1$ dimensions have been studied by Alinhac in \cite{alin}. He shows global existence for a quasilinear equation of the form
$$\Box u =g^{\alpha \beta}(\partial u)\partial_\alpha \partial_\beta u,$$
if the quadratic and cubic terms in the right-hand side satisfy the null condition (the notion of null form for the cubic terms is defined in \cite{alin}).
Global existence for a semi-linear wave equation with the quadratic and cubic terms satisfying the null condition has been shown by Godin in \cite{godin} using an algebraic trick to remove the quadratic terms, which does however not extend to systems. The global existence in the case of systems of semi-linear wave equations with the null structure has been shown by Hoshiga in  \cite{hoshiga}. It requires the use of $L^\infty-L^\infty$ estimates for the inhomogeneous wave equations, introduced in \cite{kubo}.

To show global existence for our system in wave coordinates, it will therefore be necessary to exhibit structure in quadratic and cubic terms. However, as for the vacuum Einstein equations in $3+1$ dimension in wave coordinates, our system does not satisfy the null structure. In particular it is important to understand what happens for 
a system of the form \eqref{model} in $2+1$ dimensions. For such a system, standard estimates only give an $L^\infty$ bound for $\phi_2$, without decay. Moreover, the growth of the energy of $\phi_2$ is like $\sqrt{t}$. 

One can easily imagine that with more intricate a coupling than for \eqref{model}, it will be very difficult to prove stability without decay for $\phi_2$. It seems that in the usual wave gauge one cannot prove more than existence of the perturbed solutions in time $\frac{1}{\ep^2}$. But it also seems that this problem is only a problem of coordinates. In \cite{qstab} we overcame part of the difficulty by looking at solutions $g=g_{\mathfrak{b}} + \wht g$ with
$$ g_{\mathfrak{b}} = -dt^2 + dr^2 + (r+\chi(q)b(\theta)q)^2 d\theta^2,$$
where $r,\theta$ are the polar coordinate, $q=r-t$ and $\chi$ is a cut-off function such that $\chi(q)=0$ for $q<0$ and $\chi(q)=1$ for $q>1$,
and $b(\theta)$ suitably chosen, depending on $\phi$. We have two complementary points of view on this method. The metric $g_{\mathfrak{b}}$ can be seen as an approximate solution whose role is to tackle the worst terms in \eqref{sys}. Also, since $t,x_1=r\cos(\theta),x_2=r\sin(\theta)$ are not wave coordinates for $g_{\mathfrak{b}}$, this forces us to work in a different gauge, more suited to the geometry of the problem : the procedure can also be seen as choosing the right coordinate system, in which Einstein equations have a better structure.
The condition we imposed on $b(\theta)$ in \cite{qstab} was
$$\left|b(\theta)-\int_0^\infty (\partial_q \phi)^2(r,t,\theta)rdr\right| \lesssim \left(\frac{\ep^2}{\sqrt{1+t}}\right),$$
with $b$ depending only on $\theta$.
However, due to the logarithmic growth in $t$ of the higher energy norms of $\phi$, which seems inherent to such problems, the coefficient $b(\theta)$ was controlled only by restricting to exponential times.

The main idea of this paper to overcome this difficulty is to construct more carefully an approximate solution and gauge choice, noticing that in the metric $g_{\mathfrak{b}}$, the Fourier coefficients
$$\int b(\theta)d\theta,\quad \int b(\theta)\cos(\theta)d\theta, \quad \int b(\theta)\sin(\theta)d\theta,$$
are imposed by the constraint equations, but the other Fourier coefficients of $b(\theta)$ are only a gauge choice in the region $\chi=1$.

\subsection{The initial data}

In this section, we will explain how to choose the initial data for $\phi$ and $g$. We will note $i,j$ the space-like indices and $\alpha,\beta$ the space-time indices. We will work in weighted Sobolev spaces.

\begin{df} Let  $m\in \m N$ and $\delta \in \mathbb{R}$. The weighted Sobolev space $H^m_\delta(\mathbb{R}^n)$ is the completion of $C^\infty_0$ for the norm 
 $$\|u\|_{H^m_\delta}=\sum_{|\beta|\leq m}\|(1+|x|^2)^{\frac{\delta +|\beta|}{2}}D^\beta u\|_{L^2}.$$
The weighted Hölder space $C^m_{\delta}$ is the complete space of $m$-times continuously differentiable functions with norm 
$$\|u\|_{C^m_{\delta}}=\sum_{|\beta|\leq m}\|(1+|x|^2)^{\frac{\delta +|\beta|}{2}}D^\beta u\|_{L^\infty}.$$
Let $0<\alpha<1$. The Hölder space $C^{m+\alpha}_\delta$ is the the complete space of $m$-times continuously differentiable functions with norm 
$$\|u\|_{C^{m+\alpha}_{\delta}}=\|u\|_{C^m_\delta} + \sup_{x \neq y, \; |x-y|\leq 1} \frac{|\partial^m u(x)-\partial^m u(y)|(1+|x|^2)^\frac{\delta}{2}}{|x-y|^\alpha}.$$
\end{df}
We  recall the Sobolev embedding with weights (see for example \cite{livrecb}, Appendix I).

\begin{prp}\label{holder} Let $s,m \in \m N$. We assume $s >1$. Let $\beta \leq \delta +1$ and $0<\alpha<min(1,s-1)$. Then, we have the continuous embedding
$$H^{s+m}_{\delta}(\m R^2)\subset C^{m+\alpha}_{\beta}(\m R^2).$$
\end{prp}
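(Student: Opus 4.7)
The plan is to reduce the weighted estimate to the standard (unweighted) Sobolev embedding $H^{s+m}(\mathbb R^2)\subset C^{m+\alpha}(\mathbb R^2)$, which holds under exactly the hypotheses $s>1$ and $0<\alpha<\min(1,s-1)$, by means of a dyadic decomposition and rescaling. Decompose $\mathbb R^2$ into the ball $A_0=\{|x|\leq 2\}$ and the dyadic annuli $A_k=\{2^{k-1}\leq |x|\leq 2^{k+1}\}$ for $k\geq 1$, on each of which the weight $(1+|x|^2)^{1/2}$ is comparable to $2^k$. Let $A^\ast=\{1/2\leq|y|\leq 2\}$ be a fixed reference annulus and, for $k\geq 1$, set $v_k(y)=u(2^k y)$ for $y\in A^\ast$.

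First I would carry out the book-keeping of the rescaling. A direct change of variables $x=2^k y$ in dimension $2$ gives, for any multi-index $\gamma$,
$$\|D^\gamma v_k\|_{L^2(A^\ast)}=2^{k(|\gamma|-1)}\|D^\gamma u\|_{L^2(A_k)},$$
and since $(1+|x|^2)^{(\delta+|\gamma|)/2}\sim 2^{k(\delta+|\gamma|)}$ on $A_k$, this rearranges to
$$\|D^\gamma v_k\|_{L^2(A^\ast)}\lesssim 2^{-k(1+\delta)}\bigl\|(1+|x|^2)^{(\delta+|\gamma|)/2} D^\gamma u\bigr\|_{L^2(A_k)}.$$
Summing over $|\gamma|\leq s+m$ yields $\|v_k\|_{H^{s+m}(A^\ast)}\lesssim 2^{-k(1+\delta)}\|u\|_{H^{s+m}_\delta(A_k)}$.

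Next I would apply the standard Sobolev embedding on the fixed domain $A^\ast$ to obtain $\|v_k\|_{C^{m+\alpha}(A^\ast)}\lesssim \|v_k\|_{H^{s+m}(A^\ast)}$, and translate the estimate back to $u$. For $|\gamma|\leq m$ and $x\in A_k$,
$$(1+|x|^2)^{(\beta+|\gamma|)/2}|D^\gamma u(x)|\sim 2^{k(\beta+|\gamma|)}\cdot 2^{-k|\gamma|}|D^\gamma v_k(y)|\lesssim 2^{k(\beta-\delta-1)}\|u\|_{H^{s+m}_\delta},$$
which is uniform in $k$ precisely under the hypothesis $\beta\leq\delta+1$. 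For the Hölder seminorm, an analogous computation for $x,x'\in A_k$ with $|x-x'|\leq 1$ gives
$$(1+|x|^2)^{\beta/2}\frac{|D^m u(x)-D^m u(x')|}{|x-x'|^\alpha}\lesssim 2^{k(\beta-m-\alpha-1-\delta)}\|u\|_{H^{s+m}_\delta},$$
again uniform in $k$ since $\beta\leq\delta+1$ and $m,\alpha\geq 0$. The case $A_0$ is the ordinary Sobolev embedding on a bounded set.

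The only genuinely non-routine point, and the one I expect to be the main subtlety, is handling Hölder pairs $(x,x')$ with $|x-x'|\leq 1$ that straddle two annuli $A_k$ and $A_{k+1}$; the cleanest fix is to enlarge the annuli to overlapping shells $\widetilde A_k=\{2^{k-1}\leq|x|\leq 2^{k+2}\}$ so that any such pair lies in a single $\widetilde A_k$, on which the previous argument (with slightly larger reference annulus) applies verbatim. Once this is accounted for, taking the supremum over $k$ yields the desired bound $\|u\|_{C^{m+\alpha}_\beta}\lesssim\|u\|_{H^{s+m}_\delta}$.
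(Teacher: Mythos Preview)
The paper does not actually supply a proof of this proposition; it merely cites Appendix~I of \cite{livrecb} and states the result as a recalled fact. Your dyadic-decomposition-and-rescaling argument is the standard way such weighted embeddings are established (and is essentially what one finds in the cited reference), and the book-keeping you record is correct: the rescaling $v_k(y)=u(2^ky)$ produces the factor $2^{-k(1+\delta)}$ on the $H^{s+m}$ side, and tracing it through the unweighted embedding on the fixed annulus gives exactly the condition $\beta\leq\delta+1$ for uniformity in $k$. Your handling of the cross-annulus H\"older pairs via overlapping shells is also the standard fix. There is nothing to compare here beyond noting that you have written out what the paper leaves to the literature.
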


Let $0<\delta <1$ and $N \geq 1$.
The initial data $(\phi_0, \phi_1)$ for $(\phi,\partial_t \phi)|_{t=0}$ are freely given in $H^{N+1}_{\delta}\times H^{N}_{\delta+1}$.
For technical reasons, we will work here with compactly supported initial data for $\phi$ : $(\phi_0,\phi_1) \in H^{N+1}(\m R^2)\times H^N(\m R^2)$ supported in $B(0,R)$. 
The initial data for $(g_{\mu\nu},\partial_t g_{\mu\nu})$ cannot be chosen arbitrarily, they must satisfy the constraint equations.

We recall the constraint equations.  First we write the metric $g$ in the form
$$g = -N^2(dt)^2 +\bar{g}_{ij}(dx^i +\beta^i dt)(dx^j + \beta^j dt),$$
where the scalar function $N$ is called the lapse, the vector field $\beta$ is called the shift and $\bar{g}$ is a Riemannian metric on $\m R^2$. 

We consider the initial space-like surface $\m R ^2 = \{t=0\}$.
We will use the notation
$$\partial_0=\partial_t - \mathcal{L}_{\beta},$$
where $\mathcal{L}_{\beta}$ is the Lie derivative associated to the vector field $\beta$. With this notation, we have the following 
expression for the second fundamental form of $\m R^2$
$$K_{ij}=-\frac{1}{2N}\partial_0 g_{ij}.$$
We will use the notation 
$$\tau=g^{ij}K_{ij}$$
for the mean curvature. We also introduce the Einstein tensor
$$G_{\alpha \beta}=R_{\alpha \beta} - \frac{1}{2}Rg_{\alpha \beta},$$
where $R$ is the scalar curvature $R = g^{\alpha \beta}R_{\alpha \beta}$. 
The constraint equations are given by
\begin{align}
 \label{contrmom} G_{0j} &\equiv N(\partial_j \tau - D^i K_{ij})=2\partial_0\phi \partial_j \phi, \; j=1,2,\\
\label{contrham} G_{00} & \equiv \frac{N^2}{2}(\bar{R}-|K|^2+ \tau^2)= 2(\partial_0 \phi)^2 - {g}_{00} {g}^{\alpha \beta}\partial_\alpha\phi \partial_\beta \phi,
\end{align}
where $D$ and $ \bar{R}$ are respectively the covariant derivative and the scalar curvature associated to $\bar{g}$. 
We have studied the constraint equations in \cite{moi} and \cite{expcont}. The following result is a direct consequence of \cite{expcont} which was proven in Appendix 1 of \cite{qstab}. It gives us the initial data we need.

\begin{thm}\label{thinitial}
Let $0<\delta<1$. Let $(\phi_0, \phi_1)\in H^{N+1}_{\delta}(\m R^ 2)\times H^{N}_{\delta+1}(\m R^ 2)$ 
We assume
$$\|\phi_0\|_{H^{N+1}_\delta}+\|\phi_1\|_{H^N_{\delta+1}}
\lesssim \ep.$$
If $\ep>0$ is small enough, there exists $a_0,a_1,a_2 \in \m R \times \m R \times \m S^1$, $J\in W^{N,2}(\m S^ 1)$ and 
$$ (g_{\alpha \beta})_0,(g_{\alpha \beta})_1 \in H^{N+1}_{\delta}\times H^{N}_{\delta+1}$$ such that the initial data for $g$ given by
$$g=g_{\mathfrak{a}}+g_0, \;\partial_t g = \partial_t g_{\mathfrak{a}} +g_1,$$
where $g_{\mathfrak{a}}$ is defined by 
$$g_{\mathfrak{a}} = -dt^2 + dr^2 + (r+\chi(q)a(\theta)q)^2 d\theta^2+J(\theta)\chi(q)dqd\theta,$$
 with
$$a(\theta)=a_0+a_1\cos(\theta)+a_2\sin(\theta),$$
are such that
\begin{itemize}
\item $g_{ij}, K_{ij}=-\frac{1}{2}\partial_0 g_{ij}$ satisfy the constraint equations \eqref{contrmom} and \eqref{contrham}.
\item the following generalized wave coordinates condition is satisfied at $t=0$
$$g^{\lambda\beta}\Gamma^\alpha_{\lambda \beta}=g_{\mathfrak{a}}^{\lambda\beta}(\Gamma_a)^\alpha_{\lambda \beta},$$
\end{itemize}
where $\Gamma_a$ denotes the Christoffel symbols of $g_{\mathfrak{a}}$, expressed in the coordinates $t,x_1=r\cos(\theta), x_2=r\sin(\theta)$.
Moreover, we have the estimates
$$\|J\|_{W^{N,2}(\m S^1)}+\|g_0\|_{H^{N+1}_{\delta}} +\|g_1\|_{H^N_{\delta+1}} \lesssim \ep^2,$$
\begin{align*}
a_0&=\frac{1}{4\pi}\int \left(\dot{\phi}^2+|\nabla \phi|^2\right)dx +O(\ep^4),\\
a_1&=\frac{1}{\pi}\int \dot{\phi}\partial_1 \phi dx +O(\ep^4),\\
a_2&=\frac{1}{\pi}\int \dot{\phi}\partial_2 \phi dx +O(\ep^4).
\end{align*}
\end{thm}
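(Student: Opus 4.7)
The plan is to invoke and adapt the constraint-solving framework developed by the author in \cite{moi}, \cite{expcont} and refined in Appendix~1 of \cite{qstab}. Write the unknowns as $(a_0, a_1, a_2, J, g_0, g_1)$ and regard $(g_0, g_1) \in H^{N+1}_\delta \times H^N_{\delta+1}$ as the ``decaying'' perturbation, while $(a_0,a_1,a_2) \in \m R^2 \times \m S^1$ and $J(\theta) \in W^{N,2}(\m S^1)$ play the role of finite-dimensional and angular parameters that must be tuned to ensure solvability in the weighted space. The background $g_{\mathfrak{a}}$ reduces to $m$ when all these parameters vanish, so the problem is genuinely a perturbation problem around Minkowski with a compactly supported scalar source.

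First I would substitute the ansatz $g = g_{\mathfrak{a}} + g_0$, $\partial_t g = \partial_t g_{\mathfrak{a}} + g_1$ into the Hamiltonian and momentum constraints \eqref{contrham}--\eqref{contrmom} together with the generalized wave gauge condition, and linearize around $g = m$, $\phi = 0$. The resulting linearized constraints form an elliptic system on $\m R^2$ whose Fredholm theory in weighted Sobolev spaces $H^m_\delta$ with $0<\delta<1$ is standard (see \cite{livrecb}): the relevant operator has cokernel spanned by the three angular modes $1, \cos\theta, \sin\theta$ at infinity. These three obstructions are the $2+1$-dimensional analogues of the ADM mass and momentum; they prevent existence of a perturbation in $H^{N+1}_\delta$ solving the constraints with a nontrivial scalar source, unless one first corrects the flat background by a finite-dimensional piece, which is precisely the role of $a_0, a_1, a_2$.

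Next I would project the full nonlinear constraints onto this three-dimensional cokernel and show that the projection determines $a_0, a_1, a_2$ via an implicit scalar equation. At leading order, the Hamiltonian constraint projects onto the constant mode and yields $4\pi a_0 = \int(\dot\phi^2 + |\nabla \phi|^2)\,dx + O(\ep^4)$, while the two components of the momentum constraint project onto $\cos\theta, \sin\theta$ and yield the stated formulas for $a_1, a_2$. The residual obstruction coming from the generalized wave-gauge condition, which involves all remaining angular Fourier modes with $|k|\geq 2$, is absorbed into $J(\theta)$: one reads the Fourier coefficients of $J$ directly off the angular decomposition of the wave-gauge residual, yielding $J \in W^{N,2}(\m S^1)$ with $\|J\|_{W^{N,2}} \lesssim \ep^2$.

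Finally, with the obstructions removed, the remaining system for $(g_0,g_1)$ is an elliptic system in weighted Sobolev spaces with trivial cokernel, and a contraction mapping / implicit function argument in $H^{N+1}_\delta \times H^N_{\delta+1}$ produces $(g_0,g_1)$ with $\|g_0\|_{H^{N+1}_\delta}+\|g_1\|_{H^N_{\delta+1}} \lesssim \ep^2$. The main obstacle I expect is the bookkeeping required to match the cokernel projection of the nonlinear constraints to the explicit integrals claimed for $a_0, a_1, a_2$: this involves integration by parts on $\m R^2$ using the asymptotics of $g_{\mathfrak{a}}$ and tracking which contributions from the cross-term $J(\theta)\chi(q)\,dq\,d\theta$ enter at leading versus quartic order. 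Since the functional-analytic machinery, including the Fredholm theory, the cokernel identification, and the perturbative solution scheme, is already developed in \cite{expcont} and adapted to this asymptotic family in Appendix~1 of \cite{qstab}, the proof here reduces to invoking those results and verifying the explicit Fourier identities.
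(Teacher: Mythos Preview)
Your proposal is correct and matches the paper's treatment: the paper does not give a proof of this theorem at all, stating only that it ``is a direct consequence of \cite{expcont} which was proven in Appendix~1 of \cite{qstab},'' and your outline accurately sketches the machinery contained in those references (Fredholm theory in weighted spaces, three-dimensional cokernel corresponding to $1,\cos\theta,\sin\theta$, absorption of higher modes into $J$, and a contraction argument for the decaying remainder). If anything, you have written more than the paper itself provides here.
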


Let us note that in the resolution of the constraint equations, the only free data in the metric is in the choice of $\tau$ and corresponds to what hypersurface in the space-time will be "$t=0$".

Before stating our main result, we will recall some notations and basic tools in the study of wave equations.

\subsection{Some basic tools}

We will use the notation $a\lesssim b$ when there exists a numerical constant $C$ such that $a\leq Cb$.
\paragraph{Coordinates and frames}
\begin{itemize}
\item We note $x^\alpha$ the standard space-time coordinates, with $t=x^0$.
We note $(r,\theta)$ the polar space-like coordinates, and $s=t+r$, $q=r-t$ the null coordinates. The associated one-forms are
$$ds=dt+dr, \quad dq=dr-dt,$$
and the associated vector fields are
$$\partial_s = \frac{1}{2}(\partial_t+\partial_r), \quad \partial_q =\frac{1}{2}(\partial_r -\partial_t).$$
\item We note $\partial$ the space-time derivatives, $\nabla$ the space-like derivatives, and by 
$\bar{\partial}$ the derivatives tangent to the future directed light-cone in Minkowski, that is to say $\partial_t +\partial_r$ and
$\frac{\partial_\theta}{r}$.
\item We introduce the null frame $L=\partial_t + \partial_r$, $\ba L = \partial_t -\partial_r$, $U =\frac{\partial_\theta}{r}$. In this frame, the Minkowski metric takes the form
$$m_{L\ba L}= -2, \; m_{UU}= 1, \; m_{LL}=m_{\ba L \ba L}=m_{L U}= m_{\ba L U}=0.$$
The collection $\q T =\{U,L\}$ denotes the vector fields of the frame tangent to the outgoing light-cone, and the collection 
$\q V = \{U,L,\ba L\}$ denotes the full null frame.
\item When it is omitted, the volume form is $dx$, the Lebesgue measure for the background coordinates, and the domain of integration is $\m R^2$. The $L^p$ spaces are also always considered with respect to the Lebesgue measure for the background coordinates.
\end{itemize}

\paragraph{The flat wave equation}
Let $\phi$ be a solution of
\begin{equation}\label{eqphi}
 \left\{ \begin{array}{l} \Box \phi = 0,\\
	 (\phi, \partial_t \phi)|_{t=0} =(\phi_0, \phi_1).\\
        \end{array}
\right.
\end{equation}
The following proposition establishes decay for the solutions of $2+1$ dimensional flat wave equation.
\begin{prp}[Proposition 2.1 in \cite{kubota}]\label{flat1}
Let $\mu >\frac{1}{2}$. 
We have the estimate
\[|\phi(x,t)|\lesssim M_\mu(\phi_0,\phi_1)\frac{(1+|t-r|)^{[1-\mu]_{+}}}{\sqrt{1+t+r}\sqrt{1+|t-r|}}\]
where
\[M_\mu(\phi_0, \phi_1)=\sup_{y \in \m R^2}
(1+|y|)^{\mu}|\phi_0(y)|+(1+|y|)^{\mu +1}(|\phi_1(y)|+|\nabla \phi_0(y)|)\]
and where we used the notation $A^{[\alpha]_{+}}=A^{\max(\alpha,0)}$ if $\alpha \neq 0$ and 
 $A^{[0]_{+}}=\ln(A)$.
\end{prp}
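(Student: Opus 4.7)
The plan is to use the explicit Poisson--Kirchhoff representation for $2+1$ dimensional solutions of $\Box\phi=0$:
\[
\phi(x,t) = \frac{1}{2\pi t}\int_{B(x,t)}\frac{\phi_0(y) + (y-x)\cdot\nabla\phi_0(y)}{\sqrt{t^2-|y-x|^2}}dy + \frac{1}{2\pi}\int_{B(x,t)}\frac{\phi_1(y)\,dy}{\sqrt{t^2-|y-x|^2}}.
\]
This follows from the standard formula $\phi = \partial_t[S(t)\phi_0] + S(t)\phi_1$, with $S(t)\psi(x) = \frac{1}{2\pi}\int_{B(x,t)}\psi(y)(t^2-|y-x|^2)^{-1/2}dy$, by rescaling $y=x+tz$ to the unit disc and then differentiating in $t$. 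Inserting the pointwise bounds $|\phi_0(y)|\lesssim M_\mu(1+|y|)^{-\mu}$ and $|\phi_1(y)|+|\nabla\phi_0(y)|\lesssim M_\mu(1+|y|)^{-\mu-1}$ coming from the definition of $M_\mu$, and using $|y-x|\le t$ to absorb the factor $(y-x)$ in the middle term, everything reduces to estimating the model integrals
\[
I_\nu(x,t) \;=\; \int_{B(x,t)}\frac{dy}{(1+|y|)^\nu\sqrt{t^2-|y-x|^2}},
\]
with $|\phi(x,t)|\lesssim M_\mu\bigl(\tfrac{1}{t}I_\mu + I_{\mu+1}\bigr)$.

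Next I would parametrise the ball by $y=x+\rho\omega$, $\rho\in[0,t]$, $\omega\in\m S^1$, and set $r=|x|$. The law of cosines $|y|^2 = r^2 + 2r\rho\cos\theta + \rho^2$ shows that $|y|$ ranges over $[|r-\rho|, r+\rho]$, and switching the angular variable to $s=|y|$ turns the angular integral into
\[
\int_{\m S^1}\frac{d\omega}{(1+|y|)^\nu} \;=\; \int_{|r-\rho|}^{r+\rho}\frac{4s\,ds}{(1+s)^\nu\sqrt{\bigl((r+\rho)^2-s^2\bigr)\bigl(s^2-(r-\rho)^2\bigr)}}.
\]
The further substitution $s^2 = (r-\rho)^2 + \bigl((r+\rho)^2-(r-\rho)^2\bigr)u$ collapses this to $\int_0^1 \frac{du}{2\sqrt{u(1-u)}(1+s(u))^\nu}$, which is bounded directly in terms of $r+\rho$ and $|r-\rho|$. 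One then integrates over $\rho$ against the weight $\rho(t^2-\rho^2)^{-1/2}$, splitting the $\rho$-interval into the three regions where $\rho$ is small, where $\rho\sim|t-r|$, and where $\rho$ is close to $t$. The characteristic factor $(1+t+r)^{-1/2}(1+|t-r|)^{-1/2}$ arises from the $\rho\to t$ singularity combined with the bulk value $s\sim r+\rho$ of the angular integral, while the loss $(1+|t-r|)^{[1-\mu]_+}$ originates exclusively from the radial integral $\int d\rho/(1+\cdot)^\mu$, whose behaviour transitions between convergent, logarithmic and polynomially divergent precisely at $\mu=1$.

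The hard part will be keeping the exponents sharp in the intermediate region $\rho\sim|t-r|\ll t$, where $(t^2-\rho^2)^{-1/2}$ and the weights from the angular integral degenerate simultaneously and where the contribution from $\phi_0$ (with its slower decay rate $-\mu$ and auxiliary prefactor $1/t$) must be shown to satisfy the same bound as the contribution from $\phi_1$. A secondary nuisance is the threshold $\mu=1$, where $A^{[0]_+}=\ln A$ must be recovered continuously from the $\mu\neq 1$ estimates; this is handled either by estimating the radial integral with weight exactly $(1+s)^{-1}$ by hand, or by interpolating between the cases $\mu = 1\pm\delta$ and letting $\delta\to 0$ only at the end.
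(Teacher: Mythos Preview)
The paper does not give its own proof of this proposition: it is simply quoted as ``Proposition~2.1 in \cite{kubota}'' and used as a black box throughout. So there is nothing in the paper to compare your argument against.

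That said, your outline is the standard route to this estimate and is essentially what one finds in the original reference. The Poisson--Kirchhoff representation is correct, the reduction to the scalar integrals $I_\nu$ is clean, and your change of variables for the angular integral (law of cosines followed by the substitution to $u\in[0,1]$) is exactly the manoeuvre that isolates the factors $(r+\rho)$ and $|r-\rho|$. Your identification of the origin of the $(1+t+r)^{-1/2}(1+|t-r|)^{-1/2}$ factor and of the $[1-\mu]_+$ loss is accurate, and the threshold $\mu=1$ is indeed handled by a direct logarithmic estimate rather than by interpolation. The one place to be careful is the $\phi_0$ contribution: the prefactor $1/t$ does not simply compensate for the slower decay $(1+|y|)^{-\mu}$ uniformly; near $r\approx t$ and for small $\rho$ you need to split the angular integral more carefully (or, equivalently, observe that $\frac{1}{t}I_\mu$ picks up an extra $(1+|t-r|)$ that is absorbed by the stated bound only after combining with the $\rho$-integral). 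This is not a gap so much as the step where the bookkeeping is most delicate.
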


\paragraph{Minkowski vector fields} We will rely in a crucial way on the
 Klainerman vector field method. We introduce the following family of vector fields
$$\q Z =\left\{\partial_\alpha, \Omega_{\alpha \beta}=-x_\alpha \partial_\beta + x_\beta \partial_\alpha, S=t\partial_t + r\partial_r \right\},$$
where $x_\alpha = m_{\alpha \beta}x^\beta$. These vector fields satisfy the commutation property 
$$[\Box, Z]=C(Z)\Box,$$
where 
$$C(Z)=0,\; Z \neq S, \quad C(S)=2.$$
Moreover some easy calculations give

\begin{align*}
& \partial_t + \partial_r = \frac{S+\cos(\theta)\Omega_{0,1}+\sin(\theta)\Omega_{0,2}}{t+r},\\
&\frac{1}{r}\partial_\theta = \frac{\Omega_{1,2}}{r}=\frac{\cos(\theta) \Omega_{0,2} -\sin(\theta)\Omega_{0,1}}{t},\\
& \partial_t-\partial_r  = \frac{S-\cos(\theta)\Omega_{0,1}-\sin(\theta)\Omega_{0,2}}{t-r}.
\end{align*}
With these calculations, and the commutations properties in the region
$-\frac{t}{2}\leq r \leq 2t$
$$[Z,\partial]\sim \partial, \; [Z,\bar{\partial}] \sim \bar{\partial},$$
we obtain
\begin{equation}
\label{important}
|\partial^k \bar{\partial^l}u|\leq \frac{1}{(1+|q|)^k (1+s)^l}|Z^{k+l} u|,
\end{equation}
where here and in the rest of the paper,
$Z^I u$ denotes any product of $I$ or less of the vector fields of $\q Z$.
Estimates \eqref{important} and Proposition \ref{flat1} yield

\begin{cor}\label{cordec}Let $\phi $ be a solution of \eqref{eqphi}. We have the estimate
\[|\partial^k \bar{\partial}^l \phi(x,t)|
\lesssim M^{k+l}_\mu(\phi_0, \phi_1) 
\frac{(1+|t-r|)^{[1-\mu]_{+}}}{(1+t+r)^{l+\frac{1}{2}}(1+|t-r|)^{k+\frac{1}{2}}}\]
where 
\[M^j_\mu(\phi_0, \phi_1)  =\sup_{y \in \m R^2}
(1+|y|)^{\mu+j}|\nabla^s \phi_0(y)|+(1+|y|)^{\mu +1+j}(|\nabla^s \phi_1(y)|+|\nabla^{1+j} \phi_0(y)|).\]
\end{cor}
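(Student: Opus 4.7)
The plan is to commute $\Box$ with the Klainerman vector fields, apply the sharp pointwise decay of Proposition \ref{flat1} to $Z^{k+l}\phi$, and finally transfer the gain in $Z$-derivatives into one for $\partial^k \bar\partial^l$ via the relation \eqref{important}. The commutation $[\Box,Z]=C(Z)\Box$ with $C(Z)\in\{0,2\}$ together with $\Box\phi=0$ imply by an immediate induction on $|I|$ that $\Box(Z^I\phi)=0$, so $Z^{k+l}\phi$ is itself a free solution of the $2+1$ dimensional wave equation.

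To feed it into Proposition \ref{flat1}, one expresses its Cauchy data at $t=0$ in terms of $(\phi_0,\phi_1)$: the translations $\partial_i$ and the rotation $\Omega_{12}$ produce at most $|I|$ spatial derivatives together with a polynomial weight in $|x|$ of degree at most $|I|$; the boosts $\Omega_{0i}$ and the scaling $S$ carry a factor $t$ that either vanishes on $\{t=0\}$ or is absorbed by a neighbouring $\partial_t$; and every remaining time derivative is exchanged for spatial derivatives via $\partial_t^2\phi=\Delta\phi$. A short bookkeeping then yields
\[M_\mu\!\left(Z^{k+l}\phi|_{t=0},\,\partial_t Z^{k+l}\phi|_{t=0}\right)\lesssim M^{k+l}_\mu(\phi_0,\phi_1),\]
so that Proposition \ref{flat1} applied to $Z^{k+l}\phi$ gives
\[|Z^{k+l}\phi(x,t)|\lesssim M^{k+l}_\mu(\phi_0,\phi_1)\,\frac{(1+|t-r|)^{[1-\mu]_{+}}}{\sqrt{1+t+r}\sqrt{1+|t-r|}}.\]

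In the region $-t/2\leq r\leq 2t$, dividing through by $(1+|q|)^k(1+s)^l$ via \eqref{important} immediately produces the decay claimed in the corollary. In the complementary regions $r\leq t/2$ and $r\geq 2t$, one has $1+|t-r|\sim 1+t+r$, so $\partial$ and $\bar\partial$ are of comparable size and each of them is bounded by $|Z^{k+l}\phi|/(1+t+r)^{k+l}$ by writing $\partial_\alpha$ as a linear combination of $S$ and the rotations divided by a quantity of size $1+t+r$; the claim then follows. The only genuinely delicate step is the initial-data bookkeeping, where one must track carefully the powers of $|x|$ generated by $\Omega_{\alpha\beta}$ and $S$ so that the weighted $M_\mu$ of the commuted data picks up exactly the extra factor $(1+|y|)^{k+l}$ present in the definition of $M^{k+l}_\mu(\phi_0,\phi_1)$; everything else is a mechanical composition of \eqref{important} with Proposition \ref{flat1}.
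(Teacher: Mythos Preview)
Your proposal is correct and follows exactly the approach the paper indicates: the paper simply states that ``Estimates \eqref{important} and Proposition \ref{flat1} yield'' the corollary, and your argument fills in precisely those details (commuting $\Box$ with $Z$, applying Proposition \ref{flat1} to $Z^{k+l}\phi$, then invoking \eqref{important}). The extra care you take with the initial-data bookkeeping and the regions $r\le t/2$, $r\ge 2t$ is appropriate and does not depart from the intended route.
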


\paragraph{Weighted energy estimate}
We consider a weight function $w (q)$, where $q=r-t$, such that $w'(q)>0$ and 
$$\frac{w(q)}{(1+|q|)^{1+\mu}}\lesssim w'(q)\lesssim \frac{w(q)}{1+|q|},$$
for some $0<\mu<\frac{1}{2}$.

\begin{prp}\label{energy}
We assume that $\Box \phi= f$. Then we have
\begin{align*}
&\frac{1}{2}\partial_t \int_{\m R^2} w(q)\left((\partial_t \phi)^2+|\nabla \phi|^2 \right)dx
+\frac{1}{2}\int_{\m R^2} w'(q)\left((\partial_s \phi)^2 + \left(\frac{\partial_\theta \phi}{r}\right)^2\right)dx \\
&\lesssim \int_{\m R^2} w(q)|f\partial_t \phi|dx. 
\end{align*}
\end{prp}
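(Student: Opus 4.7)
The plan is a standard multiplier argument: multiply $\Box\phi = f$ by $w(q)\,\partial_t\phi$ and integrate over $\m R^2$, with the density/decay of $\phi$ needed to justify the spatial integration by parts taken as standard. First I would split $\Box = -\partial_t^2 + \Delta$ and integrate by parts in the spatial integral. Because $q=r-t$ one has $\nabla w(q) = w'(q)\,\hat r$, so this produces an error term $-\int w'(q)\,\partial_t\phi\,\partial_r\phi\,dx$ alongside the familiar $-\tfrac{1}{2}\int w(q)\,\partial_t|\nabla\phi|^2\,dx$. The time-derivative piece contributes $-\tfrac{1}{2}\int w(q)\,\partial_t((\partial_t\phi)^2)\,dx$. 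Collecting terms and pulling the time derivative outside via $\partial_t w(q) = -w'(q)$ yields the identity
\[
\frac{1}{2}\partial_t\int w(q)E\,dx + \frac{1}{2}\int w'(q)\bigl(E + 2\partial_t\phi\,\partial_r\phi\bigr)\,dx = -\int w(q)\,\partial_t\phi\,f\,dx,
\]
where $E=(\partial_t\phi)^2 + |\nabla\phi|^2$.

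The key algebraic step, which is really the ghost-weight identity of Alinhac, is that the bulk integrand factors as a sum of squares of derivatives tangent to the outgoing light cone:
\[
E + 2\,\partial_t\phi\,\partial_r\phi = (\partial_t\phi + \partial_r\phi)^2 + \left(\frac{\partial_\theta\phi}{r}\right)^2 = 4(\partial_s\phi)^2 + \left(\frac{\partial_\theta\phi}{r}\right)^2,
\]
using $|\nabla\phi|^2 = (\partial_r\phi)^2 + r^{-2}(\partial_\theta\phi)^2$ in polar coordinates and the convention $\partial_s=\tfrac{1}{2}(\partial_t+\partial_r)$. Since $w'(q)>0$, this bulk term is non-negative and dominates $w'(q)\bigl((\partial_s\phi)^2 + (r^{-1}\partial_\theta\phi)^2\bigr)$, so it can be retained on the left-hand side. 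Bounding the right-hand side in absolute value by $\int w(q)|f\,\partial_t\phi|\,dx$ concludes the proof.

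There is no genuine obstacle. The sign condition $w'>0$ is precisely what delivers coercivity of the bulk term after completing the square along $L = \partial_t+\partial_r$, which is the whole point of inserting the weight $w(q)$; no Poincaré inequality, Hardy inequality, or commutator computation is required. The quantitative bounds $w(q)/(1+|q|)^{1+\mu}\lesssim w'(q)\lesssim w(q)/(1+|q|)$ in the hypothesis do not enter this particular estimate — they are only needed downstream, when one applies the proposition together with Grönwall to bound the energy or trades the tangential gain for pointwise decay on $\bar\partial\phi$.
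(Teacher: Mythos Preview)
Your proof is correct and coincides with the paper's argument. The paper refers for this statement to its Proposition~\ref{prpweightem}, which is phrased via the energy--momentum tensor $Q_{\alpha\beta}=\partial_\alpha u\partial_\beta u-\tfrac12 m_{\alpha\beta}m^{\mu\nu}\partial_\mu u\partial_\nu u$: one multiplies $D^\alpha(Q_{\alpha\beta}T^\beta)=f\,\partial_t u$ by $w(q)$, integrates, and computes $Q_{T\alpha}D^\alpha w=w'(q)\,Q_{TL}$, arriving at exactly the same sum-of-squares identity you obtained by the direct multiplier $w(q)\partial_t\phi$. Your observation that the hypotheses $w(q)/(1+|q|)^{1+\mu}\lesssim w'(q)\lesssim w(q)/(1+|q|)$ are not used here is also correct.
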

For the proof of Proposition \ref{energy}, we refer to the proof of Proposition \ref{prpweighte}.

\paragraph{Weighted Klainerman-Sobolev inequality} The following proposition allows us to obtain $L^\infty$ estimates from the energy estimates. It is proved in Appendix 5 of \cite{qstab}. The proof is inspired from the corresponding $3+1$ dimensional proposition (Proposition 14.1 in \cite{lind}).
\begin{prp}\label{prpweight}
We denote by $v$ any of our weight functions.
 We have the inequality
$$|f(t,x)v^\frac{1}{2}(|x|-t)|\lesssim \frac{1}{\sqrt{1+t+|x|}\sqrt{1+||x|-t|}}\sum_{|I|\leq 2}\|v^\frac{1}{2}(.-t) Z^I f\|_{L^2}.$$
\end{prp}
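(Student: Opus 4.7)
The plan is to localize $f$ to a ``box'' around $(t,x_0)$ adapted to the null geometry, rescale so the box has unit size, and apply the classical 2D Sobolev embedding $\|F\|_{L^\infty(B_1)}\lesssim \sum_{|I|\leq 2}\|\partial_y^I F\|_{L^2(B_1)}$. I would split into the interior regime $|x_0|\leq t/2$ and the wave zone $|x_0|\geq t/2$, since the natural scale and the vector fields playing the role of rescaled derivatives differ in each.

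In the interior, $\sqrt{(1+t+|x_0|)(1+||x_0|-t|)}\sim 1+t$, so the goal reduces to $|f(x_0)|\lesssim (1+t)^{-1}\sum_{|I|\leq 2}\|\q Z^I f\|_{L^2}$. Working on $B(x_0,c(1+t))$ with $c$ a small absolute constant (so $|x|\leq (1/2+2c)t$ on the ball for $t\geq 1$) and using the identities
$$\Omega_{0,i}=t\partial_i+x^i\partial_t,\qquad S=t\partial_t+x^i\partial_i,$$
a short quadratic manipulation exploiting $2|x|^2/t^2<1$ yields the pointwise bound $(1+t)|\partial f|\lesssim \sum_{Z\in\q Z}|Zf|$ on $B$. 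Rescaling the ball to unit size and applying the 2D Sobolev embedding then gives the desired estimate with constants independent of $t$.

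In the wave zone, set $q_0=|x_0|-t$ and $\theta_0=\arg x_0$, and change variables to $(y_1,y_2)=(q/(1+|q_0|),\theta)$ on the box $B=\{|q-q_0|\leq (1+|q_0|)/4\}\times\{|\theta-\theta_0|\leq 1/4\}$. The rescaled derivatives $\partial_{y_1}=(1+|q_0|)\partial_q$ and $\partial_{y_2}=\partial_\theta=\Omega_{1,2}$ are both pointwise controlled by $\q Z$: for $\partial_{y_2}$ this is immediate, and for $\partial_{y_1}$ it follows from the identity
$$(t-r)(\partial_t-\partial_r)=S-\cos\theta\,\Omega_{0,1}-\sin\theta\,\Omega_{0,2}$$
combined with $1+|q|\sim 1+|q_0|$ on $B$. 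Since $r\sim r_0\sim 1+t+|x_0|$ throughout $B$, the Jacobian satisfies $dy_1\,dy_2\sim ((1+|q_0|)(1+t+|x_0|))^{-1}\,dx$, so applying the 2D Sobolev estimate in $y$-variables and converting back produces exactly the claimed weight $((1+|q_0|)(1+t+|x_0|))^{-1/2}$ on the right-hand side.

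The main technical obstacle is propagating the bound $\partial_y\lesssim \q Z$ to second order, since $\partial_y^2 F$ picks up commutators of Klainerman vector fields; one needs $[Z_i,Z_j]\in\mathrm{span}(\q Z)$, the standard Lie-algebra identity inherited from Minkowski, together with care that the resulting coefficients remain bounded on $B$. Finally, the weight $v^{1/2}(\cdot-t)$ is pulled in and out of the $L^2$ norm at the cost of a universal multiplicative loss, using that the hypothesis $w'(q)\lesssim w(q)/(1+|q|)$ integrates to give $v(q)/v(q_0)\sim 1$ whenever $|q-q_0|\lesssim 1+|q_0|$, which is precisely the $q$-scale of the box in both regimes.
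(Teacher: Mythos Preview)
Your argument is correct and is precisely the standard localize--rescale--Sobolev approach; the paper does not prove this proposition itself but defers to Appendix~5 of \cite{qstab} (the $2{+}1$--dimensional adaptation of Lindblad--Rodnianski's Proposition~14.1 in \cite{lind}), which proceeds in the same way. One minor tightening: deducing $v(q)/v(q_0)\sim 1$ on the box requires a two--sided bound $|v'|\lesssim v/(1+|q|)$ rather than only the one--sided hypothesis $w'\lesssim w/(1+|q|)$, but this is immediate for the explicit power weights $(1+|q|)^\alpha$ actually used in the paper.
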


\paragraph{Weighted Hardy inequality}
If $u$ is solution of $\Box u=f$, the energy estimate allows us to estimate the $L^2$ norm of $\partial u$. To estimate the $L^2$ norm of $u$, we will use a weighted Hardy inequality.
\begin{prp}\label{hardy}Let $\alpha<1$ and $\beta>1$. We have, with $q=r-t$
$$\left\|\frac{v(q)^\frac{1}{2}}{(1+|q|)}f\right\|_{L^2} \lesssim\|v(q)^\frac{1}{2}\partial_r f\|_{L^2}, $$
where
\begin{align*}
v(q)&=(1+|q|)^\alpha, \; for\; q<0,\\
v(q)&=(1+|q|)^\beta, \; for\; q>0.\\
\end{align*}
\end{prp}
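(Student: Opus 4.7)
The proof is a classical weighted radial Hardy inequality, obtained via a single integration by parts. By density I would reduce to the case of smooth, compactly supported $f$, and work in polar coordinates $x=(r\cos\theta,r\sin\theta)$, where $dx=r\,dr\,d\theta$.

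The key ingredient is to choose the radial weight
\[
\phi(r)=\frac{v(q)}{1+|q|}, \qquad q=r-t,
\]
which is continuous on $[0,\infty)$ and differentiable off the hypersurface $r=t$. In each of the regions $\{r<t\}$ and $\{r>t\}$ one checks two pointwise properties: first, $\phi'(r)=c_{\pm}\,v(q)/(1+|q|)^2$ with $c_-=1-\alpha>0$ and $c_+=\beta-1>0$, so $\phi'(r)\ge c\,v(q)/(1+|q|)^2$ almost everywhere with $c=\min(c_-,c_+)>0$; second, the pointwise identity $\phi(r)^2/v(q)=v(q)/(1+|q|)^2$.

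Next I would compute the divergence of the radial vector field $X=\phi(r)f^2\partial_r$ in $\mathbb{R}^2$, using the polar formula $\nabla\cdot(g(r)\partial_r)=g'(r)+g(r)/r$, and integrate over $\mathbb{R}^2$. The boundary contribution at infinity vanishes since $f$ has compact support, while the contribution at $r=0$ is controlled by a standard cutoff argument (near the origin $\phi(r)/r$ has only a $1/r$ singularity, which is integrable against the polar Jacobian $r\,dr\,d\theta$). This yields the identity
\[
\int_{\mathbb{R}^2}\bigl(\phi'(r)+\phi(r)/r\bigr)f^2\,dx=-2\int_{\mathbb{R}^2}\phi(r)\,f\,\partial_r f\,dx.
\]

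To conclude, I would drop the nonnegative term $\phi/r$ on the left, use the first property above to bound the left side from below by $c\,\|v^{1/2}f/(1+|q|)\|_{L^2}^2$, and apply Cauchy--Schwarz to the right side using the second property, producing
\[
c\,\Bigl\|\tfrac{v^{1/2}}{1+|q|}f\Bigr\|_{L^2}^{2}\le 2\Bigl\|\tfrac{v^{1/2}}{1+|q|}f\Bigr\|_{L^2}\,\|v^{1/2}\partial_r f\|_{L^2},
\]
from which the claim follows after dividing. The only real point of care is the choice of $\phi$: it must match the two different exponents $\alpha$ and $\beta$ continuously across the cone $r=t$ while having derivative of the correct size on each side, and the strict inequalities $\alpha<1<\beta$ are precisely what makes both constants $c_\pm$ positive.
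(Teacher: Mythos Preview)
Your argument is correct. The paper itself does not prove this proposition; it simply states that the proof is given in Appendix~4 of \cite{qstab} and is modeled on Lemma~13.1 of \cite{lind}. What you have written is precisely the standard integration-by-parts argument that underlies those references: choosing the multiplier $\phi(r)=v(q)/(1+|q|)$ so that it is continuous across $r=t$, computing $\phi'(r)$ on each side to exploit the strict inequalities $\alpha<1<\beta$, and closing with Cauchy--Schwarz using the algebraic identity $\phi^2/v=v/(1+|q|)^2$. The treatment of the origin via the polar Jacobian is also handled correctly. There is nothing to add.
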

This is proven in Appendix 4 of \cite{qstab}. The proof is inspired from the $3+1$ dimensional analogue (Lemma 13.1 in \cite{lind}).

\paragraph{$L^\infty-L^\infty$ estimate}  With the condition $w'(q)>0$ for the energy inequality, we are not allowed to take weights of the form $(1+|q|)^\alpha$, with $\alpha>0$ in the region $q<0$. Therefore, Klainerman-Sobolev inequality can not give us more than the estimate
$$|\partial u|\lesssim \frac{1}{\sqrt{1+|q|}\sqrt{1+s}},$$
in the region $q<0$, for a solution of $\Box u= f$. However, we know that for suitable initial data, the solution of the wave equation $\Box u=0$ satisfies
$$|u|\lesssim \frac{1}{\sqrt{1+|q|}\sqrt{1+s}}, \; 
|\partial u|\lesssim \frac{1}{(1+|q|)^\frac{3}{2}\sqrt{1+s}}.$$
To recover some of this decay we will use the following proposition
 
\begin{prp}\label{inhom}
Let u be a solution of
\begin{equation*}
 \left\{ \begin{array}{l} 
         \Box u = F, \\
	 (u, \partial_t u)|_{t=0}=(0,0).
        \end{array}
\right.
\end{equation*}
For $\mu>\frac{3}{2} , \nu >1$ we have the following $L^\infty-L^\infty$ estimate
$$|u(t,x)|( 1+t+|x|)^\frac{1}{2} \leq C(\mu, \nu) M_{\mu, \nu}(F) (1+|t-|x|||)^{-\frac{1}{2} + [2-\mu]_{+}},$$
where 
$$M_{\mu, \nu}(F) = \sup (1+|y|+s)^\mu (1+|s-|y||)^\nu F(y,s),$$
and where we used the convention $A^{[\alpha]_+}=A^{\max(\alpha,0)}$ if $\alpha \neq 0$ and $A^{[0]_{+}}=\ln(A)$. 
\end{prp}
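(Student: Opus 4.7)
The plan is to start from the explicit retarded-potential representation for $\Box u = F$ with zero data in $2+1$ dimensions, namely
$$u(t,x)=\frac{1}{2\pi}\int_0^t\!\int_{|y-x|\leq t-s}\frac{F(s,y)}{\sqrt{(t-s)^2-|y-x|^2}}\,dy\,ds,$$
and to reduce the pointwise bound on $u$ to a one-dimensional integral over null parameters after a change of variables that desingularizes the square root. Since the hypothesis on $F$ is expressed through the radial quantities $s+|y|$ and $\big||y|-s\big|$, I would first dominate $|F|$ by $M_{\mu,\nu}(F)\,(1+s+|y|)^{-\mu}(1+||y|-s|)^{-\nu}$ and work with this upper bound.

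Next, by rotational invariance I can assume $x=(|x|,0)$ and use shifted polar coordinates $(\rho,\omega)$ around $x$, so $y=x+\rho(\cos\omega,\sin\omega)$, and set $\lambda=t-s$. The substitution $\rho=\lambda\sin\alpha$, $\alpha\in[0,\pi/2]$, absorbs the singular denominator: $d\rho/\sqrt{\lambda^2-\rho^2}=d\alpha$. Then I would switch to null variables $\sigma=s+|y|$ and $q'=|y|-s$ on the truncated backward light-cone from $(t,x)$: the Jacobian of $(\sigma,q')\mapsto(s,|y|)$ is $\tfrac12$, and a standard computation shows that the remaining angular element $d\alpha$, combined with $d\omega$, contributes a factor $\sim(1+t+|x|)^{-1/2}$ coming from the square root $\sqrt{\sigma(t+|x|-\sigma)}$ that arises in the geometric intersection of the backward cone of $(t,x)$ with the surface $\{s+|y|=\sigma\}$. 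This is where the decisive $\tfrac12$-power of $s$ in 2D appears.

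Having reduced the problem to
$$|u(t,x)|\lesssim\frac{M_{\mu,\nu}(F)}{\sqrt{1+t+|x|}}\iint \frac{d\sigma\,dq'}{(1+\sigma)^{\mu}(1+|q'|)^{\nu}}\cdot J(\sigma,q';t,|x|),$$
where $J$ is bounded and the integration is over the allowed range of null parameters, I would split the domain according to the sign of $t-|x|$ and the size of $q'$ relative to $t-|x|$. In the interior region $|x|<t$, the range of $q'$ is bounded in terms of $|t-|x||$, and integrating $(1+|q'|)^{-\nu}$ with $\nu>1$ gives a finite constant, while integrating $(1+\sigma)^{-\mu}$ up to $\sim t+|x|$ against the weights coming from the geometry yields $(1+|t-|x||)^{-1/2+[2-\mu]_+}$, with the logarithm appearing exactly in the critical case $\mu=2$. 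In the exterior region $|x|>t$, the constraint $q'\leq|x|-t$ together with $\nu>1$ produces the same power of $1+|t-|x||$ and the same $(1+t+|x|)^{-1/2}$ factor.

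The main obstacle is the bookkeeping in the region where the backward cone from $(t,x)$ is tangent to a forward cone $\{|y|-s=\text{const}\}$ carrying the slow $(1+|q'|)^{-\nu}$ decay, because there both the desingularizing Jacobian $1/\sqrt{\sigma(t+|x|-\sigma)}$ and the weight $(1+\sigma)^{-\mu}$ must be balanced simultaneously. Tracking constants through this region is what forces the hypothesis $\mu>3/2$ (so that one can split $(1+\sigma)^{-\mu}=(1+\sigma)^{-1/2}(1+\sigma)^{-(\mu-1/2)}$ and absorb the geometric singularity) and produces the borderline logarithmic case at $\mu=2$. Once this region is handled, the remaining pieces are dominated by elementary one-variable integrals and assemble into the claimed bound.
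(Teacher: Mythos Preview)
The paper does not actually prove this proposition: immediately after the statement it writes ``This is proven in Appendix~3 of \cite{qstab}. This inequality has been introduced by Kubo and Kubota in \cite{kubo}.'' So there is nothing in the present paper to compare against beyond a citation.

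Your sketch follows the standard Kubo--Kubota route one finds in those references: represent $u$ by the explicit $2{+}1$ retarded potential, desingularize the kernel via $\rho=\lambda\sin\alpha$, pass to null coordinates, extract the geometric factor $(1+t+|x|)^{-1/2}$, and then split the remaining one-dimensional integrals according to the sign of $t-|x|$. The ideas you list are the right ones and the role you identify for the thresholds $\mu>3/2$ and $\nu>1$ is correct. What is missing from your write-up is only the honest bookkeeping: the precise form of the factor $J(\sigma,q';t,|x|)$ (it is not merely bounded, it carries a $1/\sqrt{\sigma}$-type singularity that must be integrated against $(1+\sigma)^{-\mu}$), and the explicit range of $(\sigma,q')$ in each of the interior/exterior regions. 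If you want a self-contained proof rather than a sketch, those computations need to be written out; otherwise, citing \cite{kubo} or the appendix of \cite{qstab} as the paper does is the efficient option.
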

This is proven in Appendix 3 of \cite{qstab}. This inequality has been introduced by Kubo and Kubota in \cite{kubo}.

\paragraph{An integration lemma} The following lemma will be used many times in the proof of Theorem \ref{main}, to obtain estimates for $u$ when we only have estimates for $\partial u$.
\begin{lm}
\label{lmintegration}
Let $\alpha,\beta,\gamma \in \m R$ with $\beta<-1$. We assume that the function $u: \m R^{2+1} \rightarrow \m R$ satisfies
$$|\partial u|\lesssim (1+s)^\gamma(1+|q|)^\alpha, \; for\; q<0, \quad
|\partial u|\lesssim (1+s)^\gamma (1+|q|)^\beta \; for \; q>0,$$
and for $t=0$
$$|u|\lesssim (1+r)^{\gamma+\beta}.$$
Then we have the following estimates
$$| u|\lesssim (1+s)^\gamma\max(1,(1+|q|)^{\alpha+1}), \; for\; q<0, \quad
|u|\lesssim (1+s)^\gamma (1+|q|)^{\beta+1} \; for \; q>0.$$
\end{lm}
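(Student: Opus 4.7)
The plan is to bound $u$ at a point $(t,x)$ by the fundamental theorem of calculus applied along a line $\{s=\mathrm{const},\ \theta=\mathrm{const}\}$, using the coordinates $(s,q,\theta)$ in which $t=\frac{s-q}{2}$ and $r=\frac{s+q}{2}$. Along such a line the parameter $q$ ranges freely while $s$ is fixed, and the reference endpoint $q=s$ lies on the initial slice $t=0$, where we control $u$ by hypothesis. The only derivative needed is $\partial_q u$, which is controlled by $|\partial u|$, and the factor $(1+s)^\gamma$ can be pulled out of the integral.

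First I will treat the exterior region $q>0$. Fix $(s,\theta)$ and, for $0<q\le s$, write
\begin{equation*}
u(s,q,\theta)=u(s,s,\theta)-\int_{q}^{s}\partial_{q'}u(s,q',\theta)\,dq'.
\end{equation*}
At $q'=s$ we have $t=0$ and $r=s$, so the initial bound gives $|u(s,s,\theta)|\lesssim (1+s)^{\gamma+\beta}$. On the segment of integration we stay in $q'>0$, hence $|\partial_{q'}u|\le|\partial u|\lesssim (1+s)^\gamma(1+q')^\beta$. Since $\beta<-1$, the $q'$-integral is bounded by $(1+q)^{\beta+1}$, yielding
\begin{equation*}
|u(s,q,\theta)|\lesssim (1+s)^{\gamma+\beta}+(1+s)^{\gamma}(1+q)^{\beta+1}.
\end{equation*}
To conclude one observes that $\beta+1<0$ and $q\le s$ imply $(1+q)^{\beta+1}\ge (1+s)^{\beta+1}$, so $(1+s)^{\gamma+\beta}\le(1+s)^\gamma(1+q)^{\beta+1}$, which gives the desired estimate for $q>0$.

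For the interior region $q<0$, I use the $q>0$ estimate just obtained, evaluated at $q=0$: $|u(s,0,\theta)|\lesssim(1+s)^\gamma$. Fixing $(s,\theta)$ again and integrating along the constant-$s$ line,
\begin{equation*}
u(s,q,\theta)=u(s,0,\theta)+\int_{0}^{q}\partial_{q'}u(s,q',\theta)\,dq',
\end{equation*}
the integrand is bounded by $(1+s)^\gamma(1+|q'|)^\alpha$, and
\begin{equation*}
\int_{q}^{0}(1+|q'|)^\alpha\,dq'\lesssim \max\bigl(1,(1+|q|)^{\alpha+1}\bigr),
\end{equation*}
which gives the claimed bound. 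The only subtlety is verifying that the path stays inside the region where the assumed derivative bound is valid; this holds by construction since $\mathrm{sgn}(q')$ is constant on each of the two integration segments.

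The main potential obstacle is really just sign-bookkeeping: making sure that the reference point $q=s$ on $t=0$ is reached without crossing the light cone, that the hypothesis $\beta<-1$ is precisely what makes the outer integral finite, and that the $\max(1,(1+|q|)^{\alpha+1})$ form accurately absorbs the three sub-cases $\alpha>-1$, $\alpha=-1$ (a logarithmic factor to be swept into the statement) and $\alpha<-1$. No further analytical input is needed beyond the fundamental theorem of calculus and the elementary integration of $(1+|q|)^\alpha$ and $(1+q)^\beta$.
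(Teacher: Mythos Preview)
Your proof is correct and follows essentially the same approach as the paper: integrate $\partial_q u$ along lines of constant $s$ and $\theta$, starting from $q=s$ (the point on $\{t=0\}$) for the exterior region, then from $q=0$ for the interior. You are somewhat more explicit than the paper in verifying that the initial-data contribution $(1+s)^{\gamma+\beta}$ is absorbed by $(1+s)^\gamma(1+q)^{\beta+1}$, and in flagging the borderline case $\alpha=-1$; the paper simply states the conclusions of each integration without comment.
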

\begin{proof}
We assume first $q>0$. We integrate the estimate
$$|\partial_q u|\lesssim (1+s)^\gamma (1+|q|)^\beta,$$
from $t=0$. We obtain, since $\beta<-1$, for $q>0$
$$|u|\lesssim (1+s)^\gamma (1+|q|)^{\beta+1}.$$
Consequently, we have, for $q=0$, $|u|\lesssim (1+s)^\gamma$.
We now assume $q<0$. We integrate
$$ |\partial_q u|\lesssim (1+s)^\gamma(1+|q|)^\alpha,$$
from $q=0$. We obtain
$$| u|\lesssim (1+s)^\gamma\max(1,(1+|q|)^{\alpha+1}).$$
This concludes the proof of Lemma \ref{lmintegration}.
\end{proof}

\paragraph{Generalized wave coordinates}
In a coordinate system $x^\alpha$, the Ricci tensor is given by
\begin{equation}\label{calcricci}
R_{\mu \nu}=-\frac{1}{2}g^{\alpha \rho}\partial_\alpha \partial_\rho g_{\mu \nu} +\frac{1}{2} H^\rho \partial_\rho g_{\mu \nu}
+\frac{1}{2}\left( g_{\mu \rho}\partial_\nu H^\rho + g_{\nu \rho}\partial_\mu H^\rho \right) + \frac{1}{2}P_{\mu \nu}(g)(\partial g, \partial g),
\end{equation}
where $P_{\mu \nu}(g)(\partial g, \partial g)$ is a quadratic form in $\partial g$ and 
\begin{equation}\label{wavecond}H^\alpha =-\Box_g x^\alpha= -\partial_\lambda g^{\lambda \alpha}-\frac{1}{2}g^{\lambda \mu} \partial^\alpha g_{\lambda \mu}.
\end{equation}
The wave coordinate condition (respectively the generalized wave coordinate condition) consists in imposing $H^\alpha= 0$ (respectively $H^\alpha= F^\alpha$ a fixed function, which may depend on $g$ but not on its derivatives).
\begin{prp}
	If the coupled system of equations
	\begin{equation}
	\label{wcoor}\left\{\begin{array}{l}
	-\frac{1}{2}g^{\alpha \rho}\partial_\alpha \partial_\rho g_{\mu \nu} +\frac{1}{2} F^\rho \partial_\rho g_{\mu \nu}
	+\frac{1}{2}\left( g_{\mu \rho}\partial_\nu F^\rho + g_{\nu \rho}\partial_\mu F^\rho \right) +\frac{1}{2} P_{\mu \nu}(g)(\partial g, \partial g)=2\partial_\mu \phi \partial_\nu \phi\\
	g^{\alpha \rho}\partial_\alpha\partial_\rho \phi - F^\rho\partial_\rho \phi = 0
	\end{array}\right.
	\end{equation}
	with $F$ a function which may depend on $\phi, g$,
	is satisfied on a time interval $[0,T]$ with $T>0$, if the initial induced Riemannian metric and second fundamental form $(\bar{g}, K)$ satisfy the constraint equations, and if the initial compatibility condition
	\begin{equation}\label{ft0}
	F^\alpha|_{t=0} = H^\alpha|_{t=0},
	\end{equation}
	is satisfied, then the equations \eqref{sys} are satisfied on $[0,T]$, together with the wave coordinate condition
	$$F^\alpha = H^\alpha.$$
\end{prp}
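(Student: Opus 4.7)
The plan is to introduce the defect $V^\alpha := H^\alpha - F^\alpha$ and to show that $V \equiv 0$ on $[0,T]$. Once this is established, \eqref{wcoor} reduces to the full Einstein--scalar system \eqref{sys} together with the generalized wave gauge condition $H^\alpha = F^\alpha$.

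The first step is algebraic. The first line of \eqref{wcoor} is exactly the Ricci identity \eqref{calcricci} with $H^\rho$ replaced by $F^\rho = H^\rho - V^\rho$; subtracting yields
\begin{equation*}
R_{\mu\nu} - 2\partial_\mu\phi\,\partial_\nu\phi \;=\; \tfrac12 V^\rho\partial_\rho g_{\mu\nu} + \tfrac12\bigl(g_{\mu\rho}\partial_\nu V^\rho + g_{\nu\rho}\partial_\mu V^\rho\bigr).
\end{equation*}
Similarly, using $\Box_g\phi = g^{\alpha\rho}\partial_\alpha\partial_\rho\phi - H^\rho\partial_\rho\phi$, the second line of \eqref{wcoor} becomes $\Box_g\phi = -V^\rho\partial_\rho\phi$. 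Thus the error in Einstein's equations is entirely controlled by $V$.

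The second and main step is to derive a closed wave system for $V$. Trace-reverse the identity above, forming $\tilde E_{\mu\nu} := G_{\mu\nu} - T_{\mu\nu}$ with $T_{\mu\nu} := 2\partial_\mu\phi\,\partial_\nu\phi - g_{\mu\nu}g^{\alpha\beta}\partial_\alpha\phi\,\partial_\beta\phi$, and apply the twice-contracted Bianchi identity $\nabla^\mu G_{\mu\nu}=0$. A short computation using symmetry of the Hessian gives $\nabla^\mu T_{\mu\nu} = 2(\Box_g\phi)\partial_\nu\phi = -2(V^\rho\partial_\rho\phi)\partial_\nu\phi$, which is of order $V$. On the other hand, expanding $\nabla^\mu \tilde E_{\mu\nu}$ from the explicit $V$-expression, the $\partial_\rho\partial_\nu V^\rho$ contribution coming from the $g_{\mu\rho}\partial_\nu V^\rho$ piece cancels exactly against the $\partial_\nu\partial_\rho V^\rho$ coming from the trace-reversal, leaving $\tfrac12 g_{\nu\rho}\,g^{\mu\alpha}\partial_\mu\partial_\alpha V^\rho$ as the only second-order contribution. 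The result is a linear homogeneous wave system
\begin{equation*}
\tfrac12 g_{\nu\rho}\,\Box_g V^\rho + \mathcal A_\nu\bigl(g,\partial g,\phi,\partial\phi\bigr)\cdot\bigl(V,\partial V\bigr) = 0.
\end{equation*}
Checking this principal-part cancellation and organising the lower-order coefficients $\mathcal A_\nu$ into a bona fide linear symmetric hyperbolic system is the computational heart of the argument, and the step I would treat most carefully.

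The third step verifies vanishing Cauchy data. Condition \eqref{ft0} gives $V^\alpha|_{t=0}=0$, and hence the spatial derivatives $\partial_i V^\alpha$ vanish on $\{t=0\}$ as well. Inserting this into the identity of Step~1 at $t=0$ expresses $\tilde E_{0\nu}|_{t=0}$ as a linear combination of $\partial_t V^\alpha|_{t=0}$ with coefficient matrix a nondegenerate combination of the $g_{\alpha\beta}$, invertible for a Lorentzian metric close to Minkowski. But the constraint equations \eqref{contrmom}--\eqref{contrham} are precisely $\tilde E_{0\nu}|_{t=0}=0$, so $\partial_t V^\alpha|_{t=0}=0$. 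Since $V$ solves a linear homogeneous hyperbolic system on $[0,T]$ (where $g$ stays Lorentzian by hypothesis) with vanishing Cauchy data, the standard energy uniqueness theorem for such systems gives $V\equiv 0$ on $[0,T]$, completing the proof.
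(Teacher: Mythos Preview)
Your proof is correct and follows the standard argument. The paper itself does not give a proof of this proposition; it simply refers the reader to Wald's textbook and to Appendix~2 of the author's earlier paper \cite{qstab}. The approach outlined in those references is exactly the one you describe: define the gauge defect $V^\alpha = H^\alpha - F^\alpha$, use the contracted Bianchi identity together with the structure of the stress-energy tensor to derive a linear homogeneous wave system for $V$, verify that the constraint equations and the compatibility condition \eqref{ft0} force both $V$ and $\partial_t V$ to vanish initially, and conclude by energy uniqueness.
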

For a proof of this result, we refer to \cite{wald}, or Appendix 2 of \cite{qstab}.

\subsection{Main Result}
We introduce another cut-off function $\Up:\m R_+ \rightarrow \m R_+$ such that $\Up(\rho)=0$ for $\rho\leq \frac{1}{2}$ and $\rho \geq 2$ and $\Up=1$ for 
$\frac{3}{4}\leq \rho \leq \frac{3}{2}$. 
Theorem \ref{main} is our main result, in which we prove stability of Minkowski space-time with a translational symmetry.
We give here a first version, a more precise one will be given in Section \ref{second}.
\begin{thm}\label{main}Let $0<\ep<1$.
Let $\frac{1}{2}<\delta<1$ and $N \geq 25$.
Let $(\phi_0,\phi_1) \in H^{N+2}(\m R^2)\times H^{N+1}(\m R^2)$ compactly supported in $B(0,R)$.  
We assume
$$\|\phi_0\|_{ H^{N+2}}+\|\phi_1\|_{H^{N+1}}
\leq \ep.$$
Let $\ep\ll\rho\ll \sigma \ll \delta$, such that $\delta-2\sigma>\frac{1}{2}$.
If $\ep$ is small enough, there exists a global solution $(g,\phi)$ of \eqref{sys}.
Moreover, if we call $C$ the causal future of $B(0,R)$, and $\bar{C}$ its complement, there exists
a coordinate system $(t,x_1,x_2)$ in $C$ and a coordinate system $(t',x_1',x_2')$ in $\bar{C}$ such that
we have in $C$ :
$$(\phi,\partial_t \phi)|_{t=0}= (\phi_0,\phi_1),$$
$$|g-m|\lesssim \frac{\ep}{(1+s)^{\frac{1}{2}-\rho}}, \quad |\phi| \lesssim \frac{\ep}{(1+s)^{\frac{1}{2}}(1+|q|)^{\frac{1}{2}-4\rho}},$$
$$\|\ch_C \partial Z^{N} \phi\|_{L^2} +\|\ch_C \partial^2 Z^N \phi\|_{L^2} + \|\ch_C(1+|q|)^{-\frac{1}{2}-\sigma }\partial Z^N (g-m)\|_{L^2} \lesssim \ep (1+t)^{2\rho},$$
and we have in $\bar{ C}$
$$\|\ch_{\bar{ C}} (1+|q|)^{1+\delta -2\sigma} \partial Z^N (g-g_{\mathfrak{a}})\|_{L^2}\lesssim \ep(1+t)^{2\rho}.$$
\end{thm}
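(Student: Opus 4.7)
The plan is to run a large continuity/bootstrap argument on a time interval $[0,T]$, making assumptions of the form stated in the conclusion (with slightly worse constants), and improving them to close. Given the initial data constructed by Theorem \ref{thinitial}, set $\wht g = g - g_{\mathfrak{a}}$; by finite speed of propagation and the compact support of $(\phi_0,\phi_1)$, the scalar field $\phi$ vanishes identically in $\bar C$, so inside $\bar C$ the system reduces to the vacuum $2+1$ Einstein equations and we only need to control $\wht g$ with weights favouring $q>0$. Inside $C$ we must control both $\phi$ and $g-m$, and it is here that the weak null/cubic structure must be exploited.

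The first key step is gauge choice. Following the philosophy recalled in the introduction, I would take a generalized wave gauge $H^\alpha=F^\alpha(g,\phi,g_{\mathfrak{a}})$ tailored so that the worst obstruction terms disappear: the three Fourier modes $\int b\,d\theta,\int b\cos\theta\,d\theta,\int b\sin\theta\,d\theta$ are fixed by the constraints and absorbed into the background $g_{\mathfrak{a}}$ via $a_0,a_1,a_2$, while the remaining Fourier coefficients of the angular profile are prescribed dynamically by the gauge in the region $\chi=1$ so as to kill the non-integrable part of $g^{LL}$ and $\partial_q g_{LL}$ sourced by $(\partial_q\phi)^2$. Writing Ricci with the help of \eqref{calcricci} and \eqref{wavecond}, the reduced system \eqref{wcoor} for $(\phi,\wht g)$ then takes the form of a quasilinear system of wave equations whose right-hand side splits into (i) a semi-linear part with full $2+1$ null structure (handled as in Hoshiga), (ii) a cubic ``weak null'' part, and (iii) a purely quasilinear part whose coefficients are controlled by the gauge.

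With the gauge fixed, the analytic engine is standard in spirit but delicate in execution. I would commute the system with $Z^I$ for $|I|\leq N$, using $[\Box,Z]=C(Z)\Box$ together with the fact that $F^\alpha$ depends on lower-order quantities, and apply the weighted energy inequality of Proposition \ref{energy} with two different weights: inside $C$ a weight $w(q)=(1+q)_+^{1+2\sigma}$ (weak on the left, strong on the right) to accommodate the bootstrap norms of $\partial Z^N\wht g$, and in $\bar C$ the stronger weight $(1+|q|)^{2+2\delta-4\sigma}$ that the vanishing of $\phi$ allows. The $L^2$-to-$L^\infty$ passage uses Proposition \ref{prpweight}, recovering the bootstrap pointwise decay $|g-m|\lesssim\ep(1+s)^{-\frac12+\rho}$ and $|\phi|\lesssim\ep(1+s)^{-\frac12}(1+|q|)^{-\frac12+4\rho}$ up to the loss $(1+t)^{2\rho}$ in energy. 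The $L^\infty$ bound for the undifferentiated $g-m$ and $\phi$, which is not directly produced by energy+Klainerman-Sobolev in $2+1$, is recovered from the inhomogeneous $L^\infty$-$L^\infty$ estimate of Proposition \ref{inhom}, applied to each component after peeling off the good structure, together with the integration Lemma \ref{lmintegration} and the weighted Hardy inequality of Proposition \ref{hardy}.

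The main obstacle is to prove that when all the top-order norms $\|\partial Z^N\cdot\|_{L^2}$ are only bootstrapped with the polynomial weight $(1+t)^{2\rho}$, the right-hand sides of the commuted equations still integrate in time to give back the same $(1+t)^{2\rho}$ with a strictly smaller constant. Concretely, the quadratic terms $\partial\phi\,\partial\phi$ and $P_{\mu\nu}(\partial g,\partial g)$ in \eqref{wcoor} must be decomposed in the null frame $\{L,\ba L,U\}$; the null forms integrate directly thanks to the extra $w'(q)$ factor in the energy inequality, while the bad ``$\partial_q\times\partial_q$'' pieces must be arranged so that one factor is essentially the undifferentiated metric coefficient $g_{LL}$ that the gauge choice forces to decay faster than generic, thereby converting the ostensibly quadratic-then-cubic obstruction into an integrable source. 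Verifying this structural identity for the $Z^I$-commuted equations, and matching the weights and hierarchy of indices $\ep\ll\rho\ll\sigma\ll\delta$ so that both the $C$ and $\bar C$ estimates close simultaneously through the boundary $q=0$, is where the real work lies; once that is done, the continuity argument closes the bootstrap and yields global existence with the stated decay.
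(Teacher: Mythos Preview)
Your outline captures the broad architecture (bootstrap, gauge choice, null-frame decomposition, weighted energies, Klainerman--Sobolev and $L^\infty$--$L^\infty$), but it misses the central mechanism of the paper and, as written, would not close.

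The key gap is the treatment of the angular profile. You say the higher Fourier modes are ``prescribed dynamically by the gauge'' so as to kill the bad source $(\partial_q\phi)^2$, and then run a standard bootstrap. But in the paper the gauge function $G^L$ contains a term $h(\theta,2t)$ coming from the background $g_{\mathfrak{b}}$, and the whole scheme only works if $h$ is close to $\int_0^\infty 2(\partial_q\phi)^2 r\,dr$. This is a condition relating the background to the \emph{solution}; it cannot be imposed once and for all at $t=0$, and a naive choice would not persist under the bootstrap because the higher energy norms of $\phi$ grow like $(1+t)^\rho$. The paper therefore makes the approximate equality $h\approx\int(\partial_q\phi)^2 r\,dr$ one of the bootstrap assumptions (equations \eqref{esth}--\eqref{estdelta}), and closes it not by improving constants in a fixed gauge, but by \emph{constructing a new background} $b^{(2)}$ from the current $\phi$ (Propositions \ref{prph}--\ref{prpb}), building a new solution $(g^{(2)},\phi^{(2)})$ in the gauge adapted to $g_{\mathfrak{b}^{(2)}}$, and checking via an explicit change of coordinates that the new solution satisfies all bootstrap assumptions with improved constants (Proposition \ref{prpfin}). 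Separately, the first three Fourier modes of $h$ (the ones fixed by the constraints) are recovered by integrating the constraint equations over constant-$t$ slices (Section \ref{angle}, Proposition \ref{prpangle}). Your proposal has no analogue of this two-layer bootstrap and would not be able to improve the assumption linking $h$ to $\phi$.

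A second, more technical omission is the weight hierarchy. You mention two weights, but the paper needs three ($w,w_1,w_2$), together with a second decomposition $g=g_{\mathfrak{b}}+4\Upsilon k\,dq^2+\wht g_1$, precisely because the wave operator does not commute with the null decomposition: the ``bad'' component $g_{\ba L\ba L}$ has energy growing like $(1+t)^\rho$, and without the extra $(1+|q|)^{2\sigma}$ gap between successive weights this growth propagates to \emph{all} components and the argument fails (see Section 1.6). Finally, a minor index confusion: the obstruction $(\partial_q\phi)^2$ sources $g_{\ba L\ba L}$, not $g_{LL}$ or $\partial_q g_{LL}$; the wave-coordinate condition already gives good decay for $\partial_q g_{LL}$.
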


\paragraph{Comments on Theorem \ref{main}}
\begin{itemize}
	\item We consider perturbations of $3+1$ dimensional Minkowski space-time with a translational space-like Killing field. These perturbations are not asymptotically flat in $3+1$ dimensions, therefore the result of Theorem \ref{main} does not follow from the stability of Minkowski space-time by Christodoulou and Klainerman  \cite{ck}.
	\item As our gauge, we choose  the generalized wave coordinates. Therefore, the method we use has a lot in common with the method of Lindblad and Rodnianski in \cite{lind} where they proved the stability of Minkowski space-time in harmonic gauge. It is an interesting problem to investigate the stability of Minkowski with a translation symmetry using a strategy in the spirit of \cite{ck} or \cite{nicolo}. 
	\item Theorem \ref{main} can be easily generalized to the non polarized case, where the $3+1$ dimensional metric is of the form
	$$\gra g= e^{-2\phi} g + e^{2\phi}(dx^3+A_\alpha dx^\alpha)^2. $$
	In this case, the vacuum Einstein equations reduce to Einstein equations coupled to a wave-map system in $2+1$ dimension. Since the additional equations have the null structure, this does not make any change to the proof given here.
	\item It is conjectured that maximal Cauchy developments of asymptotically flat solutions to the 3+1 vacuum Einstein equations with a spacelike translational Killing field are geodesically complete (see \cite{guda}). This result has been proved in the non polarized case with an additional symmetry assumption in \cite{guda}.
	
	\item We assume more regularity for $\phi$ than for $g$. This is possible in wave coordinates because the equation $\Box_g \phi = 0$ involves only $g$ and not its derivatives. The proof is based on the construction of an approximate solution in the exterior region, and it is for the control of this approximate solution, which involves one derivative of $\phi$ that the additional regularity for $\phi$ is needed. 
\item Our proof restricts to $\phi$ compactly supported. The reason why is the following. Our approximate solution forces us to work in adapted generalized wave coordinates in the exterior. The approximate solution involves one derivatives of $\phi$, so if $\phi$ was not supported only in the interior, the equation $\Box_g \phi=0$ would involve coupling terms between $\phi$ and the approximate solution, at the level of two derivatives of $\phi$. Let us note that in \cite{qstab}, the compact support assumption for $\phi$ is not needed.
\item The space-time constructed in Theorem \ref{main} is the development of the initial data of Theorem \ref{thinitial}. At space-like infinity, the metric converges to $g_{\mathfrak{a}}$ given by the constraint equations. The metric $g_{\mathfrak{a}}$ is Ricci flat in the exterior and has a deficit angle. This behaviour is similar to the behaviour of Einstein-Rosen waves (see \cite{beck} and \cite{asht}) which are radial solutions of \eqref{sys}.
\item Generalized wave coordinates have also been used by Hintz and Vasy in the proof of the nonlinear stability of Kerr-de Sitter black holes (see \cite{hintz}). There seems to be a lot of similarities between the two constructions. In their paper they choose the generalized wave coordinates inductively in order to remove the non physical exponentially growing solutions which appear as solutions to the Einstein equations in wave coordinates. In our paper, solutions to the Einstein equations in wave coordinates may have a growth in $\sqrt{t}$ : we also choose the generalized wave coordinates to remove this pathological behaviour.
\end{itemize}

\subsection{Sketch of the proof}\label{sketch}

To begin with, let us look at the structure of Einstein equations in wave coordinates.
The structure of Einstein equations can be seen when we write them in the null frame
$L$, $\ba L$, $U$.
We decompose the metric into
$$g=m+\wht{g}+\frac{1}{4}g_{\ba L \ba L}dq^2,$$
where $m$ is the Minkowski metric.
Then, if we neglect all the nonlinearities involving a good derivative, we obtain the following model system for \eqref{sys} in wave coordinates
\begin{equation*}
\left\{\begin{array}{l}
\Box \phi + g_{LL}\partial^2_q \phi = 0,\\
\Box  \wht g + g_{LL}\partial^2_q \wht g =0,\\
\Box  g_{\ba L \ba L} + g_{LL}\partial^2_q g_{\ba L \ba L} =-16(\partial_q \phi)^2.
\end{array}
\right.
\end{equation*}
The quadratic terms involving $g_{LL}$ are handled by making use of the wave coordinate condition, as in \cite{lind} :
the condition $H^\alpha=0$ where $H^\alpha$ is defined by \eqref{wavecond} implies $\partial_q g_{LL}\sim \bar{\partial } \wht g$ (more precisely it is implied by $\ba L_{\alpha}H^\alpha=0$). 
Therefore, the quadratic terms involving $g_{LL}$ behave like terms having the null structure.
Consequently, we are left with the model system
\begin{equation*}
\left\{\begin{array}{l}
\Box \phi  = 0,\\
\Box  g_{\ba L \ba L} =-16(\partial_q \phi)^2.
\end{array}
\right.
\end{equation*}
Thanks to the decoupling it is of course possible to solve such a system. However, in $2+1$ dimensions, for initial data of size $\ep$ the energy estimate yields
$$\|\partial g_{\ba L \ba L}\|_{L^2}\lesssim \ep \sqrt{1+t},$$
and the metric coefficient $g_{\ba L \ba L}$ has no decay, not even with respect to $q=r-t$. This is not enough to solve the full coupled system.
However, this seems to be only a coordinate problem.
To see it, let's assume for a moment we had found a coordinate system (not the wave gauge) in which all the metric coefficients have at least the decay of a solution to the free wave equation.
Then we can compute, on the light cone
$$R_{\ba L \ba L }= -2\partial_q^2 g_{UU}+O\left(\frac{\ep}{(1+r)^\frac{3}{2}}\right).$$
Since we also have
$$R_{\ba L \ba L}=8(\partial_q \phi)^2=O\left(\frac{\ep}{1+r}\right),$$
we see that the only term which could balance this behaviour is $\partial_q^2 g_{UU}$.
Consequently, we would like to write
$$\partial_q g_{UU}= -\frac{4}{r}\int (\partial_q \phi)^2rdq +\partial_q \wht g_{UU}.
$$
We can impose to have such a decomposition in $C$, the causal future of $B(0,R)$, by choosing to work in generalized wave coordinates such that
\begin{equation}
\label{wcond}H^{ L}=- L_\alpha \Box_g x^\alpha =  \frac{2}{r}\int_{-\infty}^q (\partial_q \phi)^2rdq.
\end{equation}
Indeed, we will see in Section \ref{secwave} that $L_\alpha \Box_g x^\alpha \sim \frac{1}{2}\partial_q g_{UU}+\bar{\partial} g$. With this gauge, a model equation for $g_{\ba L \ba L}$ will be
$$
\Box  g_{\ba L \ba L} =-16(\partial_q \phi)^2+2g_{\ba L  L}\partial_{\ba L} H^L
\sim 4\wht g_{L \ba L}(\partial_q \phi)^2.
$$
In the right-hand side, instead of having a quadratic nonlinearity without null structure, we now have a cubic nonlinearity without null structure. This leads to a logarithmic loss in the estimate for $g_{\ba L \ba L}$ but as in \cite{lind}, this loss occurs only on "bad" coefficients. Thanks to the structure, "bad" components of the metric interact only with good derivatives. Consequently, this is not an obstruction for proving global existence. 

Let us analyse the consequence of \ref{wcond} in the exterior region. Since the initial data for $\phi$ are compactly supported, outside the causal future of this compact region, we obtain
$$  \partial_q g_{UU}= \frac{1}{r}a(\theta,s) +\partial_q \wht g_{UU},$$
for some function $a$ which can be computed from $\phi$,
and consequently
$$ g_{UU} \sim \frac{q}{r}a(\theta,s).$$
We can compute that the metric
$$-dt^2 +dr^2 +(r+qa(\theta,s))^2d\theta^2,$$
is not Ricci flat when $a$ depends on $s$. This is not compatible with the fact that outside the causal future of $B(0,R)$ we have $R_{\mu \nu}= 2\partial_\mu \phi \partial_\nu \phi=0$.

To overcome this difficulty we will follow the following scheme.
\begin{itemize}
	\item The initial data for $\phi$ are given, compactly supported in $B(0,R)$. The initial data for $g$ are given by Theorem \ref{thinitial}. At the level of the initial data, we can write $g=g_{\mathfrak{a}}+\wht g$, with
	\begin{align*}
	g_{\mathfrak{a}} &= -(dt')^2 + (dr')^2 + (r'+\chi(q')a(\theta')q')^2 (d\theta')^2+J(\theta')\chi(q')dq'd\theta',\\
	a(\theta')&=a_0+a_1\cos(\theta')+a_2\sin(\theta'),\\
	a_0&=\frac{1}{4\pi}\int_{\m R^2} \left(\dot{\phi}^2+|\nabla \phi|^2\right)dx +O(\ep^4),\\
	a_1&=\frac{1}{\pi}\int_{\m R^2} \dot{\phi}\partial_1 \phi dx +O(\ep^4),\\
	a_2&=\frac{1}{\pi}\int_{\m R^2} \dot{\phi}\partial_2 \phi dx+O(\ep^4),
	\end{align*}
	and $(\wht g ,\partial_t \wht g)|_{t=0} \in H^{N+1}_\delta \times H^N_{\delta+1}$ for all $0<\delta<1$.
	\item We can solve \eqref{sys} in generalised wave coordinates $\Box_g x^{(\alpha)}=\Box_{g_{\mathfrak{a}}} x^{(\alpha)}$ up to a time $T$.
	We obtain a solution of the form $g=g_{\mathfrak{a}}+\wht g$.
	Moreover the solution is global outside the causal future of $B(0,R)$ (see Appendix \ref{ext}).
	\item We consider a function $b(\theta,s)$, satisfying a set of hypothesis $H$, and make a change of variable in the region $q>R+1$
	$$s'\sim (1+b(\theta,s))s,$$
	$$q'\sim (1+b(\theta,s))^{-1}q,$$
	$$\theta'\sim \theta + f(\theta,s),$$
	where $f(\theta,s)$ is such that
	$$1+\partial_\theta f(\theta,s)= (1+b(\theta,s))^{-1}.$$
	We use the symbol $\sim$ because we prefer to give a simplified formula at this stage, to enlighten the main contributions. The precise formula is given in next section.
	With this change of variable, we obtain a solution of the form $g=g_{\mathfrak{b}}+ \wht g$,
	where in the region $q>R+1$, the metric $g_{\mathfrak{b}}$ corresponds to $g_{\mathfrak{a}}$, expressed in the new coordinates $q,s,\theta$. By construction $g_{\mathfrak{b}}$ is Ricci flat for $q>R+1$
and it is given by
	$$ g_{\mathfrak{b}} \sim -dt^2+ dr^2 + (r+\chi(q)(a(\theta)-b(\theta,s)-\partial^2_\theta b(\theta,s))q)^2 d\theta^2,$$
	where we have neglected the terms involving $\partial_s b$.
	By looking at the Riemannian metric induced by $g$ on the new hypersurface $t=0$ and at the second fundamental form,  we obtain a solution to the constraint equations with an asymptotic behaviour compatible with $g_{\mathfrak{b}}$ (see Appendix \ref{reguini}).
	\item We now perform a bootstrap argument. We assume that we have a solution $(g,\phi)$ on a time interval $[0,T']$ in  generalized coordinates such that
	in the exterior $\Box_g x^\alpha \sim \Box_{g_{\mathfrak{b}}} x^\alpha$ (plus corrective terms) and in the interior we have \eqref{wcond} (plus corrective terms). We assume that we can write $g=g_{\mathfrak{b}} + \wht g$, with $\wht g$ satisfying estimates similar to the estimates for a free wave (except for $\wht g_{\ba L \ba L}$ which has a logarithmic growth in the energy). We note
	$$h(\theta,s)=a(\theta)+b(\theta,s)+\partial^2_\theta b(\theta,s).$$
	We would like to have $h(\theta,s)=\int_0^\infty (\partial_q \phi(t=\frac{s}{2},r,\theta))^2rdr$\footnote{It is more convenient for the estimates to use an integration along lines of constant $t$ and $\theta$ than lines of constant $s$ and $\theta$. On the light cone, we have $t=\frac{s}{2}$, so it is why we evaluate the integral at $t=\frac{s}{2}$}. However, we can not ask directly for this equality to hold because it would introduce non local terms in the equations. Instead we will use bootstrap assumptions to construct $h$ inductively :
	in the bootstrap assumptions we assume that
	\begin{equation}
	\label{skbootb}\left|\Pi  \left(\int_0^\infty (\partial_q \phi(t=s/2,r,\theta))^2rdr - h(\theta,s)\right)\right|\lesssim \frac{\ep^2}{\sqrt{1+s}},
	\end{equation}
	where $\Pi: H^k(\m S^1)\rightarrow H^k(\m S^1)$ is the projection operator such that 
	$$\int \Pi(u) d\theta = \int \Pi(u) \cos(\theta)d\theta=\int \Pi(u)\sin(\theta) d\theta=0.$$
	\item 
	By integrating the constraint equations on a time slab $t$ constant, we obtain the remaining estimate for $h$
	$$\left|\int_0^\infty (\partial_q \phi(t=s/2,r,\theta))^2rdr - h(\theta,s)\right|\lesssim \frac{\ep^2}{\sqrt{1+s}}.$$
	\item We obtain estimates for $\wht g$ and $\phi$ thanks to $L^\infty-L^\infty$ estimates and energy estimates. This step follows a quite standard vector field method, similar to the one in \cite{lind}. Let us just note that, as in \cite{qstab}, we have to introduce a set of weight functions to be able to use the structure of our equations even at the level of our last energy estimate. We describe briefly this issue in section \ref{noncomu}. We also use the set of weight functions to treat the interaction with the metric $g_{\mathfrak{b}}$. 
	\item To improve estimate \eqref{skbootb}, we set 
	$$h^{(2)}(\theta,s)\sim 2\int_0^\infty (\partial_q \phi(t=\frac{s}{2},r,\theta))^2rdr,$$
	and choose $b^{(2)}$ to be the solution to the elliptic equation
	$$b^{(2)}+\partial^2_\theta b^{(2)} =\Pi h^{(2)}.$$
	We then check that $b^{(2)}$ satisfies the estimates $H$ (for this purpose, some geometric corrective terms have to be added to the formula given here for $h^{(2)}$), return to the third step to construct initial data with the asymptotic behaviour given by $g_{b^{(2)}}$, and solve the evolution problem in coordinates adapted to $g_{b^{(2)}}$, we note $(g^{(2)},\phi^{(2)})$ the solution. We remark that we can go from one solution to the other by a change of coordinates, which can be controlled thanks to the estimates we have on the metric. Consequently $(g^{(2)},\phi^{(2)})$ satisfy the same improved estimates as $(g,\phi)$ and moreover, we have improved \eqref{skbootb}.
	\item By performing an inverse change of variable in $\bar{C}$ we see that $g$ converges to $g_{\mathfrak{a}}$ in the exterior, which is the behaviour given in Theorem \ref{main}.
\end{itemize}

\subsection{Non commutation of the null decomposition with the wave operator}\label{noncomu}
We have seen in the previous section that the coefficient $g_{\ba L \ba L}$ is expected to have a logarithmic growth in the energy
$$\|w^\frac{1}{2}\partial g_{\ba L\ba L}\|_{L^2}\lesssim \ep^2(1+t)^\rho.$$
 We do not want this behaviour to propagate to the other coefficients of the metric. To this end, we will rely on a decomposition of the type
$$g= g_{\mathfrak{b}} +\Up\left(\frac{r}{t}\right)\frac{g_{\ba L \ba L}}{4}dq^2+\wht g_1.$$
However, since the wave operator does not commute with the null decomposition, we have to control the solution $\wht g_1$ of an equation of the form (see the proof of Corollary \ref{corstr})
$$\Box \wht g_1 = \Up \left(\frac{r}{t}\right) \frac{\bar{\partial} g_{\ba L \ba L}}{r}.$$
 When applying the weighted energy estimate for $\wht g_1$, we obtain
$$\frac{d}{dt}\|w(q)^\frac{1}{2} \partial \wht g_1\|^2_{L^2}
\leq \left\|w(q)^\frac{1}{2}\Up\left(\frac{r}{t}\right) \frac{\bar{\partial} g_{\ba L \ba L}}{r}\right\|_{L^2}\ld{w(q)^\frac{1}{2} \partial \wht g_1}.$$
We estimate
\begin{equation}
\label{estdeh}
\left\|w(q)^\frac{1}{2}\Up\left(\frac{r}{t}\right) \frac{\bar{\partial} g_{\ba L \ba L}}{r}\right\|_{L^2}\lesssim \frac{1}{1+t}\|w(q)^\frac{1}{2}\partial g_{\ba L \ba L}\|_{L^2}\lesssim \frac{\ep^2}{(1+t)^{1-\rho}},
\end{equation}
This yields
$$\frac{d}{dt}\|w(q)^\frac{1}{2} \partial \wht g_1\|_{L^2} \leq 
\frac{\ep^2}{(1+t)^{1-\rho}}.$$
So
$$\|w(q)^\frac{1}{2} \partial \wht g_1\|_{L^2} \leq \ep^2(1+t)^\rho,$$
which is precisely the behaviour we are trying to avoid with this decomposition ! However
we have not been able to exploit all the decay in $t$ in \eqref{estdeh} : we could not exploit the good derivative $\bar \partial$ acting on $g_{\ba L \ba L}$. In order to do so, we will use different weight functions for $\wht g_1$ and for $g_{\ba L \ba L}$. If we set 
$$ w(q)=(1+|q|)^{2\sigma} w_1(q),$$
 and we assume that we have
$$\| w(q)^\frac{1}{2}\partial g_{\ba L \ba L} \|_{L^2}\lesssim \ep^2(1+t)^\rho,$$
then we can estimate
$$\left\|w_1(q)^\frac{1}{2}\Up\left(\frac{r}{t}\right) \frac{\bar{\partial} g_{\ba L \ba L}}{r}\right\|_{L^2}\lesssim \frac{1}{1+t} \left\|w(q)^\frac{1}{2}\Up\left(\frac{r}{t}\right) \frac{\bar{\partial} g_{\ba L \ba L}}{(1+|q|)^{\sigma}}\right\|_{L^2}.$$
We write
$$|\bar{\partial} h|\lesssim \frac{1}{1+s}|Z h|
\lesssim \frac{1}{(1+s)^{\sigma}(1+|q|)^{1-\sigma}}|Z h|
,$$
so we obtain
$$\left\|w_1(q)^\frac{1}{2}\Up\left(\frac{r}{t}\right) \frac{\bar{\partial} g_{\ba L \ba L}}{r}\right\|_{L^2}\lesssim \frac{1}{(1+t)^{1+\sigma}} \left\| w(q)^\frac{1}{2} \frac{Zg_{\ba L\ba L} }{1+|q|}\right\|_{L^2}
\lesssim \frac{1}{(1+t)^{1+\sigma}} \|w(q)^\frac{1}{2} \partial Z g_{\ba L \ba L}\|_{L^2},$$
where we used the weighted Hardy inequality.
Consequently, the energy inequality for $\wht g_1$ yields
$$\frac{d}{dt}\|w_1(q)^\frac{1}{2} \partial \wht g_1\|_{L^2} \lesssim \frac{\ep^2}{(1+t)^{1+\sigma-\rho}},$$
and therefore, if $\sigma>\rho$
$$\|w_1(q)^\frac{1}{2} \partial \wht g_1\|_{L^2} \lesssim \ep^2.$$
Recall that the weighted energy inequality forbids weights of the form $(1+|q|)^\alpha$ with $\alpha>0$ in the region $q<0$. Therefore we are forced to make the following choice in the region $q<0$
$$ w(q)= O(1), \quad w_1(q)= \frac{1}{(1+|q|)^{2\sigma}}.$$
Thus, for $\wht g_1$, the $t^\rho$ loss has been replaced by a loss in $(1+|q|)^{\sigma}$.

\section{The background metric}
In this section, we explain the construction of the metric $g_{\mathfrak{b}}$. This metric should be
\begin{itemize}
	\item isometric to the Minkowski metric in the region $q<R$,
	\item isometric to $g_{\mathfrak{a}}$ in the region $q>R+1$,
	\item not flat in the transition region, but the Ricci tensor must not contain terms which can not be handled.
\end{itemize}
Moreover we will need coordinates in which, in the region $q>R+1$, we have
$(g_{\mathfrak{b}})_{UU} \sim \frac{q}{r}h(\theta,s)$. For this, we will write
$$g_{\mathfrak{b}} = \chi(q)g_{\mathfrak{a}}+(1-\chi(q))m,$$
where $g_{\mathfrak{a}}$ is expressed in appropriate coordinates, described in the following section, and $ \chi$ is an appropriate cut-off function.

\subsection{A change of variable}\label{change}
In this section we describe the coordinate change we will use. Corrective terms have to be added compared to what was explained in the introduction because
\begin{itemize}
	\item the metric coefficients expressed in the new coordinates should have enough decay,
	\item the Ricci tensor in the transition region should also have enough decay.
\end{itemize}
Let $a_0,a_1,a_2 \in \m R$ given by Theorem \ref{thinitial}. They satisfy 
$$|a_0|+|a_1|+|a_2|\leq \ep^2,$$
we will note $a(\theta)=a_0+a_1\cos(\theta)+a_2\sin(\theta)$.
We have at this stage to already state the estimates satisfied by $b$. It will allow us to see which terms can be treated as remainders and simplify the exposition. 
Let $b(\theta,s)$ satisfying the following set of hypothesis
\begin{equation}
\label{intb}\int_{\m S^1} \frac{b(\theta,s)}{1+b(\theta,s)}d\theta=0,
\end{equation}
\begin{align}
\label{estb1}&\|  Z^{N-11} b\|_{H^2(\m S^1)}\leq \frac{\ep^2}{(1+s)^{2}},\\
\label{estb2}&\| Z^{N-1} b\|_{H^2(\m S^1)}\leq \ep^2,\\
\label{estb3}&\| \partial_s  Z^{N-1} b\|_{H^1(\m S^1)}\leq \frac{\ep^2}{(1+s)^{2-\frac{1}{2}\sigma}},\\
\label{estb4}&\| Z^{N} b\|_{H^2(\m S^1)} \leq \ep^2(1+s)^\rho,\\
\label{estb5}&\int_0^S(1+s)\|\partial_s Z^N b\|^2_{H^2(\m S^1)}ds \leq \ep^4(1+S)^{2\rho} ,\\
\label{estb5bis}&\int_0^S(1+s)^{1-4\rho}\|\partial_s Z^N b\|^2_{H^2(\m S^1)}ds \leq \ep^4 ,\\
\label{estb5ter}&\int_0^S(1+s)^{3-\sigma}\|\partial^3_s Z^N b\|^2_{L^2(\m S^1)}ds \leq \ep^4(1+S)^{2\rho} ,
\end{align}
and we can write 
\begin{equation}
\label{decb}\partial_s b = f_1+f_2
\end{equation}
with 
\begin{align}
\label{estf1b}&\|Z^Nf_1\|_{L^2(\m S^1)} \leq \ep^2(1+s)^{\frac{1}{2}\sigma-2},\\
\label{estf2b}&\|Z^Nf_2\|_{L^2(\m S^1)} \leq\ep^2(1+s)^{-\frac{3}{2}+\rho},
\end{align}
\begin{align}
\label{estf1}&\int_0^S (1+s)^4\|\partial_s Z^Nf_1\|_{L^2(\m S^1)}^2ds \leq \ep^4(1+S)^{2\rho},\\
\label{estf11}&\int_0^S (1+s)^3\|\partial_s Z^Nf_1\|_{H^1(\m S^1)}^2ds \leq \ep^4(1+S)^{2\rho},\\
\label{estf1bis1}&\int_0^S (1+s)^2\|Z^Nf_1\|_{H^1(\m S^1)}^2ds \leq \ep^4(1+S)^{2\rho},\\
\label{estf2}& \int_0^S (1+s)^{3-2\rho}\|Z^N f_2\|^2_{H^1(\m S^1)}ds \leq \ep^4(1+S)^{2\rho},\\
\label{estf2bis}& \int_0^S (1+s)^{3}\|\partial_s  Z^N f_2\|^2_{H^1(\m S^1)}ds \leq \ep^4(1+S)^{2\rho}.\\
\end{align}
We note $\q H$ the set of hypothesis \eqref{intb} to \eqref{estf1bis1}.

We begin by constructing a Ricci flat metric in the following way : we start with the Ricci flat metric
$${g_{\mathfrak{a}}}=ds'dq' +(r'+a(\theta')q')^2(d\theta')^2+J(\theta')d\theta'dq'.$$
We perform the change of coordinates
$$s'=(1+b(\theta,s))s-(\partial_\theta b(\theta,s))^2(1+b(\theta,s))^{-1}q,$$
$$q'=(1+b(\theta,s))^{-1}q,$$
$$\theta'= \theta - \frac{q}{r}\frac{\partial_\theta b(\theta,s)}{(1+b(\theta,s))^2}+ f(\theta,s),$$
where $f(\theta,s)$ is such that
$$1+\partial_\theta f(\theta,s)= (1+b(\theta,s))^{-1},$$
and we note
$$r= \frac{1}{2}(s+q), \; t =\frac{1}{2}(s-q).$$
In the following proposition, we estimate the coefficients of ${g_{\mathfrak{a}}}$ in the null frame $L=\partial_t +\partial_r$, $\ba L =\partial_t -\partial_r$, $U=\frac{\partial_\theta}{r}$.
\begin{prp}\label{prpgb}
We can write ${g_{\mathfrak{a}}}= \sigma^0+\sigma^1$ where
\begin{align*}
&\sigma^0(L,\ba L)=-2,\\
&\sigma^0(U, L)= s(1+b)\partial_s f,\\
&\sigma^0(U,U)=1+\frac{q}{r}h(\theta,s),
\end{align*}
where
\begin{equation}
\label{eqh}h(\theta,s)=2a(\theta')(1+b)^{-2}+(1+b)^{-2}-1-2\frac{\partial^2_\theta b}{(1+b)}+(1+b)^{-2}(\partial_\theta b)^2,
\end{equation}
and we have the estimates (we denote by $\partial_\theta^\alpha$ any product of $\alpha$ or less vector fields $\partial_\theta$) :
\begin{equation}
\label{calcg0}|Z^I {g_{\mathfrak{a}}}| \lesssim s|\partial_s Z^I b|+
q|\partial_s \partial_\theta Z^I b|+\frac{q}{s}| \partial^2_\theta Z^I b|,
\end{equation}
\begin{equation}
\label{calcsigma1}|Z^I \sigma^1_{\q T \q T}| \lesssim |\partial_s Z^I b|
+\frac{q}{s}|\partial_s \partial_\theta Z^I b|+\frac{q^2}{s^2}|\partial_\theta Z^I b|.
\end{equation}
Moreover we have
\begin{equation}
\label{calcqg}|\partial_q Z^I{g_{\mathfrak{a}}}|\lesssim \frac{1}{(1+|q|)}|Z^I g_0|,
\end{equation}
\begin{equation}
\label{calcsg}|\partial_s Z^I {g_{\mathfrak{a}}}|\lesssim s|\partial^2_s Z^I b|+|\partial_s Z^I b|
+q|\partial^2_s \partial_\theta Z^I b|+
\frac{q}{s}| \partial_s \partial^2_\theta Z^Ib|
+\frac{q}{s^2}|  \partial^2_\theta Z^Ib|.
\end{equation}
\end{prp}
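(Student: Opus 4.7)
The plan is to pull back $g_{\mathfrak{a}}$ through the explicit change of variables and then read off the components in the null frame $L=2\partial_s$, $\ba L=-2\partial_q$, $U=r^{-1}\partial_\theta$. First I would differentiate the three defining relations to obtain
$$dq' = (1+b)^{-1}dq - q(1+b)^{-2}(\partial_s b\,ds+\partial_\theta b\,d\theta),$$
$$ds' = (1+b)ds + s\bigl(\partial_s b\,ds+\partial_\theta b\,d\theta\bigr)-(1+b)^{-1}(\partial_\theta b)^2 dq + (\text{corrections}),$$
$$d\theta' = (1+b)^{-1}d\theta+\partial_s f\,ds-\frac{\partial_\theta b}{r(1+b)^2}dq+(\text{corrections}),$$
where the identity $1+\partial_\theta f=(1+b)^{-1}$ has been used to simplify the principal $d\theta$-coefficient of $d\theta'$, and the "corrections" are either quadratic in the small quantities $(b,f)$ or carry an extra factor of $\partial_s b$ or $\partial_\theta b$.

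Next I would substitute into $g_{\mathfrak{a}} = ds'\,dq' + (r'+a(\theta')q')^2 (d\theta')^2 + J(\theta')d\theta'\,dq'$, using $r'=\tfrac12(s'+q')$ and Taylor-expanding $a(\theta')$ about $\theta$. Three dominant contributions emerge: the product of the leading $(1+b)ds$ and $(1+b)^{-1}dq$ factors gives $ds\,dq$, producing $\sigma^0(L,\ba L)=-2$; the $\partial_s f\,ds$ term in $d\theta'$ paired with the $(d\theta')^2$-coefficient produces the $ds\,d\theta$ cross term that yields $\sigma^0(L,U)=s(1+b)\partial_s f$ after dividing by $r$; and the $(d\theta')^2$-coefficient is $(r+a(\theta')q'+\tfrac12(s'-s)+\tfrac12(q'-q)+\ldots)^2(1+b)^{-2}$, which after expansion to first order in $q/r$ gives exactly $r^2(1+\tfrac{q}{r}h(\theta,s))$ with $h$ as in \eqref{eqh}. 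Everything else is collected into $\sigma^1$, and the $J(\theta')d\theta'\,dq'$ contribution enters only at the order of $\sigma^1$ because $\|J\|\lesssim \ep^2$.

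The estimates \eqref{calcg0}--\eqref{calcsg} then follow by inspecting the explicit expansion and applying Leibniz through $Z^I$: the term $s|\partial_s Z^I b|$ in \eqref{calcg0} comes from the $sb$-pieces hidden in $s'$; the term $q|\partial_s\partial_\theta Z^I b|$ arises from products like $q\,\partial_s b$ paired against $\partial_\theta b$; and the $\frac{q}{s}|\partial_\theta^2 Z^I b|$ term is produced by $r^{-1}\partial_\theta$ hitting $\partial_\theta b$ (with $r\sim s$ in the transition region). The sharper bound \eqref{calcsigma1} on the $\q T\q T$-components holds because the largest piece ($s\,\partial_s b$) has been absorbed into $\sigma^0$, so only remainders carrying an extra $(1+|q|)/s$ factor or a $\partial_\theta b$-weight remain. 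The $\partial_q$-bound \eqref{calcqg} uses that $q$ enters linearly in the coordinate change, so each $\partial_q$ either loses a factor of $q$ (giving the $1/(1+|q|)$ improvement) or hits $r^{-1}$. Formula \eqref{calcsg} is obtained by differentiating once more in $s$.

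The main obstacle is verifying the cancellations that make $\sigma^0(L,\ba L)$ exactly $-2$ and ensure that no spurious $O(qb/r)$ term contaminates $\sigma^0(U,U)$ beyond what is recorded in $h$. These cancellations are precisely the reason for the seemingly ad hoc correction $-(\partial_\theta b)^2(1+b)^{-1}q$ appearing in the definition of $s'$ and the $-\tfrac{q}{r}\partial_\theta b/(1+b)^2$ term in $\theta'$: together with the identity $1+\partial_\theta f=(1+b)^{-1}$ they cancel the off-diagonal contributions generated by $ds'\,dq'$ and by the $(d\theta')^2$-expansion. Once this cancellation is verified, the remaining work is a careful bookkeeping of weights in $s$ and $q$.
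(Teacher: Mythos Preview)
Your approach is essentially the same as the paper's: compute $ds'$, $dq'$, $d\theta'$, substitute into $g_{\mathfrak{a}}=ds'dq'+(r'+a(\theta')q')^2(d\theta')^2+J(\theta')d\theta'dq'$, expand, and read off the $(s,q,\theta)$-components. The paper does this computation fully and explicitly rather than sketching it, but the structure is identical, including the identification of the cancellations that make $(g_{\mathfrak{a}})_{sq}=1+O(s\partial_s b)+O(q\partial_s\partial_\theta b)$ and $(g_{\mathfrak{a}})_{qq}=O(\tfrac{q}{s}(\partial_\theta b)^2)$.

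One point you gloss over that the paper makes explicit: the passage from the pointwise expansion to the $Z^I$-estimates \eqref{calcg0}--\eqref{calcsg} is not simply ``Leibniz'', because $b$ depends on $(\theta,s)$ while the Minkowski vector fields $Z$ mix $s$, $q$, and $\theta$. The paper records the commutator formulas
\[
[S,\partial_s]=\partial_s,\qquad [\Omega_{0,i},\partial_s]=(\text{trig})\,\partial_s - \frac{q}{2r^2}(\text{trig})\,\partial_\theta,
\]
which show that applying $Z$ to a function of $(\theta,s)$ produces $s\partial_s$ and $\partial_\theta$ derivatives with controlled coefficients; this is what justifies writing $Z^I(\partial_s b)$ schematically as $\partial_s Z^I b$ in the bounds. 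Your explanation of the individual terms in \eqref{calcg0} is also slightly off (for instance the $q|\partial_s\partial_\theta Z^I b|$ comes directly from the $-q\partial_s\bigl((\partial_\theta b)^2(1+b)^{-1}\bigr)ds$ correction in $ds'$, not from a pairing of separate factors), but these are bookkeeping details, not gaps in the argument.
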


\begin{proof}
We have
\begin{align*}
ds'=&(1+b(\theta,s))ds+\partial_\theta b(\theta,s)sd\theta
+\partial_s b(\theta,s) s ds
-(\partial_\theta b(\theta,s))^2(1+b(\theta,s))^{- 1}dq\\
&-\partial_\theta\left((\partial_\theta b(\theta,s))^2(1+b(\theta,s))^{- 1}\right)qd\theta-
\partial_s\left((\partial_\theta b(\theta,s))^2(1+b(\theta,s))^{- 1}\right)qds\\
dq'&=(1+b(\theta,s))^{-1}dq -q(1+b(\theta,s))^{-2}\partial_\theta b(\theta,s)d\theta -q(1+b(\theta,s))^{-1}\partial_s b(\theta,s)ds,\\
d\theta'&=\left((1+b(\theta,s))^{-1}-\frac{q}{r}\partial_\theta \left(\frac{\partial_\theta b(\theta,s)}{(1+b(\theta,s))^2}\right)\right)d\theta 
+\partial_s f(\theta,s)ds\\
&+\frac{\partial_\theta b(\theta,s)}{(1+b(\theta,s))^2}\left(\frac{q}{2r^2}ds-\frac{s}{2r^2}dq\right) -\frac{q}{r}\partial_s \left(\frac{\partial_\theta b(\theta,s)}{(1+b(\theta,s))^2}\right)ds
\end{align*}
and also
\begin{align*}r'= \frac{1}{2}(s'+q')=&\frac{1}{2}\left((1+b(\theta,s))s-\frac{1}{(1+b(\theta,s))}(\partial_\theta b(\theta,s))^2q+(1+b(\theta,s))^{-1}q\right)\\
=&(1+b)r-\frac{1}{2(1+b(\theta,s))}(\partial_\theta b)^2q
+\frac{1}{2}((1+b)^{-1}-(1+b))q
.
\end{align*}
We note that $\frac{b}{s}$ has at least the same decay in $s$ and is more regular than $s\partial_s b$, so if we are able to estimate the second, we are able to estimate the first. We can also neglect quadratic terms with similar or better behaviour than a term which is already present. Consequently we write
\begin{align*}
ds'&=(1+b+O\left(s\partial_s b\right)+O\left(q\partial_s\partial_\theta b\right))ds-(1+b)^{-1}(\partial_\theta b)^2dq
+\left(s\partial_\theta b+O(\ep^2q\partial^2_\theta b)\right)d\theta,\\
dq'&=O(q\partial_s b)ds+(1+b)^{-1}dq -q(1+b)^{-2}\partial_\theta b d\theta,
\end{align*}
and consequently
\begin{align*}
ds'dq'=&\left(1+O\left(s\partial_s b\right)+O\left(q\partial_s \partial_\theta b\right)\right)dsdq
-(\partial_\theta b)^2(1+b)^{-2}dq^2+O\left(q\partial_s\partial_\theta b\right)ds^2\\
&+\left(-qs (1+b)^{-2}(\partial_\theta b)^2+O(\ep^2q^2\partial^2_\theta b)\right)d\theta^2\\
&+(-q\partial_\theta b(1+b)^{-1}+O(\ep qs\partial_s \partial_\theta b))dsd\theta+\left(s(1+b)^{-1}(\partial_\theta b) + O(\ep^2q\partial^2_\theta b)\right)dqd\theta.
\end{align*}
We also estimate
\begin{align*}&(1+b)^2r^2(d\theta')^2\\
=&\left(r^2-2qr(1+b)\partial_\theta\left(\frac{\partial_\theta b}{(1+b)^2}\right)+O(\ep q\partial^2_\theta b)\right)d\theta^2
+(q\partial_\theta b(1+b)^{-1}+(1+b)r^2\partial_s f)dsd\theta\\
&-s\partial_\theta b(1+b)^{-1}dqd\theta
+O(r^2(\partial_s f)^2)ds^2+\left(((1+b)^{-2}(\partial_\theta b)^2+O\left(\frac{\ep q}{s} \partial_\theta b\right)\right)dq^2
\end{align*}
and
$$(r'+a(\theta')q')^2-(1+b)^2r^2
=rq(2a(\theta+f)-(\partial_\theta b)^2+(1-(1+b)^2))+O(q^2\ep^2\partial_\theta b).$$
Consequently, we can estimate the coefficients of the metric ${g_{\mathfrak{a}}}$ in coordinates $s,q,\theta$
\begin{align*}
(g_{\mathfrak{a}})_{sq}&= 1+O(s\partial_{s}b)+O\left(q\partial_s \partial_\theta b\right),\\
(g_{\mathfrak{a}})_{ss}&=O(q\partial_{s}\partial_\theta b),\\
(g_{\mathfrak{a}})_{qq}&=O\left(\frac{q}{s}(\partial_\theta b)^2\right),\\
(g_{\mathfrak{a}})_{s\theta}&=r^2(1+b)\partial_s f +O\left(sq\partial_s \partial_\theta b\right),\\
(g_{\mathfrak{a}})_{q\theta}&=J(\theta')(1+b)^{-2}+O(\ep^2q\partial_\theta^2b),\\
(g_{\mathfrak{a}})_{\theta \theta}&= r^2+qr\left(2a(\theta')(1+b)^{-2}+(1+b)^{-2}-1-2(1+b)\partial_\theta \left(\frac{\partial_\theta b}{(1+b)^2}\right)-(1+b)^{-2}(\partial_\theta b)^2-2(1+b) ^{-2}(\partial_\theta b)^2\right)
\\&+O(\ep q^2\partial^2_\theta b)\\
&=r^2+qr\left(2a(\theta')(1+b)^{-2}+(1+b)^{-2}-1-2\frac{\partial^2_\theta b}{(1+b)}+(1+b)^{-2}(\partial_\theta b)^2\right)+O(\ep q^2\partial^2_\theta b)
\end{align*}
To obtain the estimates for $Z^I g_{\mathfrak{a}}$, we note that we have the following expression for the commutator of $Z$ with $\partial_s$
$$[S,\partial_s]=\partial_s,\quad [\Omega_{0,1},\partial_s]=\cos(\theta)\partial_s -\frac{q}{2r^2}\sin(\theta)\partial_\theta,
\quad [\Omega_{0,2},\partial_s]=\sin(\theta)\partial_s +\frac{q}{2r^2}\cos(\theta)\partial_\theta.$$
We see that if we isolate the contribution $r^2(1+b)\partial_s f$ in $(g_{\mathfrak{a}})_{s\theta}$ and $rqh$ in $(g_{\mathfrak{a}})_{\theta \theta}$ we obtain the desired estimates for $\sigma^1$.
\end{proof}

We call $g_{\mathfrak{b}}$ the metric whose coefficients in the coordinates $s,q,\theta$ are given by the coefficients above, where the terms involving $b$ or $f$ are multiplied by a cut-off function $\chi(q)$, more precisely
$$g_{\mathfrak{b}} = \chi(q)g_{\mathfrak{a}}+(1-\chi(q))m,$$
where $m$ is the Minkowski metric
$$m=dsdq+r^2d\theta^2.$$

From now on in the paper, $\chi$ will be a cut-off function such that
$$\chi(q)=1, \text{ for }q\geq R+1, \quad \chi(q)=0, \text{ for } q\leq R+\frac{1}{2}.$$
In particular, when $\chi=0$, $g_{\mathfrak{b}}$ is isometric to Minkowski metric and when $\chi=1$, $g_{\mathfrak{b}}$ is isometric to $g_{\mathfrak{a}}$.

\begin{cor}\label{corestb}
	We have for $I\leq N-2$
	\begin{align}
	\label{estsigma0}|Z^I g_{\mathfrak{b}}|&\lesssim \frac{\ep^2}{(1+s)^\frac{3}{4}}+ \frac{\ep^2(1+|q|)}{(1+s)},\\
	\label{estsigma1}|Z^I \sigma^1_{\q T \q T}|&\lesssim \frac{\ep^2q}{(1+s)^\frac{7}{4}}  +\frac{\ep^2q}{(1+s)^2}.
	\end{align}
	For $I\leq N-11$ we have
	\begin{equation}
	\label{estg0}|Z^I g_{\mathfrak{b}}|\lesssim \frac{\ep^2(1+|q|)}{(1+s)}.
	\end{equation}
\end{cor}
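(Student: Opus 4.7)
The proof is a bookkeeping computation: substitute the hypotheses from the set $\q H$ into the pointwise bounds \eqref{calcg0} and \eqref{calcsigma1} of Proposition \ref{prpgb}, passing from $H^k(\m S^1)$ norms to $L^\infty$ on each slice by Sobolev embedding on the circle. I first reduce to estimates on $g_{\mathfrak{a}}$ itself. Since $g_{\mathfrak{b}}-m = \chi(q)(g_{\mathfrak{a}}-m)$ and the null-frame components of $m$ are constant (so $Z^I m = 0$ for $|I|\geq 1$), the Leibniz rule gives $Z^I g_{\mathfrak{b}} = \sum_{J+K=I}\binom{I}{J}(Z^J\chi)(Z^K(g_{\mathfrak{a}}-m))$, and every $Z^J\chi$ is pointwise bounded on $\m R^{2+1}$ (its non-trivial derivatives are supported in the compact shell $\{R+\tfrac12\leq q\leq R+1\}$). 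It therefore suffices to prove the analogous bounds for $|Z^K g_{\mathfrak{a}}|$ and $|Z^K\sigma^1_{\q T\q T}|$ at each order $K\leq I$.

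For \eqref{estsigma0} with $I\leq N-2$, I plug \eqref{calcg0} into the hypotheses and treat its three summands in turn. The term $s|\partial_s Z^K b|$ is bounded by $s\,\|\partial_s Z^K b\|_{H^1(\m S^1)}$ via Sobolev on $\m S^1$; applying \eqref{estb3} (valid for $K\leq N-1$) yields $\lesssim \ep^2(1+s)^{-1+\sigma/2}\lesssim \ep^2(1+s)^{-3/4}$, using $\sigma\ll \delta <1$. The term $q|\partial_s\partial_\theta Z^K b|$, after identifying $\partial_\theta$ with $\Omega_{1,2}\in\q Z$ so that $\partial_\theta Z^K$ is a $Z^{K+1}$ operator of order $\leq N-1$, is controlled by \eqref{estb3} and Sobolev by $q\ep^2(1+s)^{-2+\sigma/2}\lesssim \ep^2(1+|q|)(1+s)^{-1}$. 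Finally the term $(q/s)|\partial_\theta^2 Z^K b|$, rewritten as $(q/s)|\partial_\theta Z^{K+1}b|$, is bounded using the embedding $H^2(\m S^1)\hookrightarrow W^{1,\infty}(\m S^1)$ and \eqref{estb2} by $(q/s)\ep^2\lesssim \ep^2(1+|q|)(1+s)^{-1}$. Summing these three pieces delivers \eqref{estsigma0}.

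Estimate \eqref{estsigma1} follows the same pattern from \eqref{calcsigma1}, using that $\sigma^1_{\q T\q T}$ is supported where $\chi\neq 0$, i.e.\ $q\geq R+\tfrac12$, so the factor $q\gtrsim 1$ is available to absorb the $(1+s)^{\sigma/2}$ loss into the desired $q(1+s)^{-7/4}$; the second and third summands of \eqref{calcsigma1} carry an extra $1/s$ and fit in $q(1+s)^{-2}$ after using $q\lesssim 1+s$ on $\mathrm{supp}\,\chi$. For the sharper bound \eqref{estg0} at order $I\leq N-11$ I appeal instead to the stronger low-order decay \eqref{estb1}: the identity
\[ s\partial_s = \tfrac12\bigl(S+\cos\theta\,\Omega_{0,1}+\sin\theta\,\Omega_{0,2}\bigr), \]
recalled among the preliminaries, converts $s|\partial_s Z^K b|$ into $|Z^{K+1}b|_{L^\infty}\lesssim \ep^2(1+s)^{-2}$, and the remaining terms in \eqref{calcg0} acquire the same rate of decay, each fitting inside $\ep^2(1+|q|)(1+s)^{-1}$.

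The only real difficulty is the order bookkeeping: every application of $\partial_\theta$ or of the Sobolev embedding on $\m S^1$ consumes one additional $\q Z$-vector field, so in each of the three summands one must verify that $K+2$ stays within the range allowed by the chosen hypothesis ($N-1$ for \eqref{estb2}--\eqref{estb3}, $N-11$ for \eqref{estb1}). The margins $I\leq N-2$ and $I\leq N-11$ in the statement are designed precisely to leave the needed room.
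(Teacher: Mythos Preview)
Your proof is correct and follows essentially the same route as the paper: plug the hypotheses from $\q H$ into the pointwise bounds \eqref{calcg0}--\eqref{calcsigma1} using Sobolev embedding on $\m S^1$, invoking \eqref{estb2}--\eqref{estb3} for $I\leq N-2$ and \eqref{estb1} for $I\leq N-11$. Your explicit Leibniz reduction from $g_{\mathfrak{b}}$ to $g_{\mathfrak{a}}$ and your use of the identity $s\partial_s=\tfrac12(S+\cos\theta\,\Omega_{0,1}+\sin\theta\,\Omega_{0,2})$ are minor presentational additions that the paper leaves implicit.
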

\begin{proof}
	We have thanks to \eqref{estb3} and the Sobolev embedding $H^1(\m S^1)\subset L^\infty$
	$$|\partial_s \partial_\theta Z^I b|\lesssim \|\partial_s  Z^{I+1}b\|_{H^1(\m S^1)} \lesssim\frac{\ep^2}{(1+s)^{2-\frac{\sigma}{2}}}\lesssim \frac{\ep^2}{(1+s)^{2-\frac{1}{4}}}, \text{ for } I \leq N-2$$
		and
		$$|\partial^2_\theta Z^I b|\lesssim \|Z^{I+1}b\|_{H^2}\lesssim \ep^2, \text{ for }I\leq N-2.$$
		Consequently, 
		$$|Z^{N-2} g_{\mathfrak{b}}|\lesssim \frac{\ep^2}{(1+s)^{\frac{3}{4}}}+ \frac{\ep^2(1+|q|)}{(1+s)},$$
		$$|Z^{N-2} \sigma^1_{\q T \q T}|\lesssim  \frac{\ep^2}{(1+s)^{\frac{7}{4}}}+ \frac{\ep^2(1+|q|)^2}{(1+s)^2}.$$
	Thanks to \eqref{estb1} we have, for $I\leq N-11$
	$$|\partial_s \partial_\theta Z^I b|\lesssim \|\partial_s  Z^{I}b\|_{H^2(\m S^1)} \lesssim \frac{\ep^2}{(1+s)^{2}}$$
	and consequently
	$$|Z^{N-11}g_{\mathfrak{b}}|\lesssim \frac{\ep^2(1+|q|)}{(1+s)},$$
	$$|Z^{N-11}\sigma_{TT}|\lesssim \frac{\ep^2(1+|q|)^2}{(1+s)^2},$$
	which concludes the proof of Corollary \ref{corestb}.
\end{proof}

\begin{cor}\label{estih}
	We have the estimates 
	$$\|Z^{N-1} h\|_{L^2(\m S^1)} \lesssim \ep^2,$$
	$$\|Z^N h \|_{L^2(\m S^1)} \lesssim \ep^2(1+s)^\rho,$$
	$$\int_0^S(1+s)\|\partial_s Z^N h\|^2_{L^2(\m S^1)}ds \leq \ep^4(1+S)^{2\rho} .$$
\end{cor}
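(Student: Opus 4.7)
The plan is to prove Corollary~\ref{estih} by a direct Leibniz expansion of the formula \eqref{eqh} for $h$, combined with the hypotheses $\eqref{estb1}$--$\eqref{estb5}$ on $b$ and Sobolev embedding on $\mathbb S^1$. First I would write $h = h_1 + h_2 + h_3 + h_4$ corresponding to the four terms in \eqref{eqh}, and note that each $h_i$ is a smooth function of $b$, $\partial_\theta b$, $\partial_\theta^2 b$ (and of $a(\theta')$, with $\theta' = \theta + f(\theta,s)$ modulo a $q$-correction absorbed into the error terms of Proposition~\ref{prpgb}); crucially $a$ is a trigonometric polynomial of size $\lesssim \ep^2$ by Theorem~\ref{thinitial}. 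Since $h$ depends only on $(\theta,s)$, the action of any $Z \in \q Z$ on $h$ is, in the region where we are working, a combination of $\partial_\theta$ and $s \partial_s$ with bounded coefficients (in the formula for $S$ and $\Omega_{\alpha\beta}$, the $q\partial_q$ and $t/r$ pieces vanish or are bounded for $q > R$). Hence $Z^I h$ is controlled by products of $(s \partial_s)^a \partial_\theta^c$-derivatives of $b$ with $a+c \le I$, which is precisely the framing of the hypotheses.

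The first two estimates follow by a standard divide-in-two Leibniz argument: after applying $Z^{N-1}$ (resp.~$Z^N$) and using Leibniz, in each product at most one factor carries more than $(N-1)/2 + 2$ (resp.~$N/2 + 2$) derivatives. Since $N \ge 25$, we can place that high-order factor in $L^2(\m S^1)$, using \eqref{estb2} (bound $\ep^2$) for the $Z^{N-1}$ estimate and \eqref{estb4} (bound $\ep^2(1+s)^\rho$) for the $Z^N$ estimate, while the other factors go in $L^\infty$ via $H^1(\m S^1) \hookrightarrow L^\infty(\m S^1)$ using \eqref{estb1}. The derivatives of $f$ are handled by the relation $\partial_\theta f = (1+b)^{-1} - 1$, which together with \eqref{intb} expresses $Z^k f$ in terms of $Z^{\le k} b$ and gives the same bounds; and each derivative of $a(\theta')$ brings down a factor $\lesssim \ep^2$ from $a'$, which only improves things.

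For the third, time-integrated, estimate, note that $\partial_s h$ is an analogous polynomial expression in which each term contains either $\partial_s b$, $\partial_s \partial_\theta b$, $\partial_s \partial_\theta^2 b$, or $\partial_s f$, and the last is controlled in $H^1(\m S^1)$ by $\partial_s b$ via $\partial_\theta \partial_s f = -(1+b)^{-2} \partial_s b$. Applying $Z^N$ and Leibniz, the top-order contribution is of the form (bounded coefficient) $\times \partial_s Z^N b$ with at most two additional $\partial_\theta$, and the product of the remaining low-order factors is $\lesssim 1$ in $L^\infty$ by \eqref{estb1}--\eqref{estb2} and Sobolev. Squaring, multiplying by $(1+s)$, and integrating in $s$, this top-order contribution gives exactly $\int_0^S (1+s) \|\partial_s Z^N b\|_{H^2(\m S^1)}^2 ds \le \ep^4 (1+S)^{2\rho}$ by \eqref{estb5}. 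All cross terms pair a $Z^{\le N-1}$-derivative of $\partial_s b$ (or $\partial_s f$) with lower-order factors and are strictly better thanks to \eqref{estb3}.

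The main technical obstacle, though modest, is the bookkeeping: checking that the combined $(s\partial_s)^a\partial_\theta^c$-framework on $\m S^1$ genuinely captures the $Z$-action on functions of $(\theta,s)$, that derivatives of $f$ never exceed the order at which $b$ is controlled (using \eqref{intb} to fix the mean of $f$), and that the $\frac{q}{r}\partial_\theta b/(1+b)^2$ piece inside $\theta'$ only contributes to the $O(\ep q^2 \partial_\theta^2 b)$ error in $(g_{\mathfrak a})_{\theta\theta}$ rather than to $h(\theta,s)$ itself; so when estimating $h$ we may safely replace $\theta'$ by $\theta + f(\theta,s)$.
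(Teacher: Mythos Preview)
Your proposal is correct and is exactly the argument the paper has in mind: the paper's own proof is the single sentence ``It is a direct consequence of the definition of $h$ \eqref{eqh} and the assumptions for $b$, \eqref{estb2} and \eqref{estb4},'' and what you have written is a careful unpacking of that sentence (you also correctly identify \eqref{estb5} as the ingredient for the time-integrated estimate, which the paper's terse citation omits). Your remarks on the bookkeeping---reducing the $Z$-action to $(s\partial_s)^a\partial_\theta^c$ on functions of $(\theta,s)$, controlling $f$ through $\partial_\theta f = (1+b)^{-1}-1$, and absorbing the $\tfrac{q}{r}$-piece of $\theta'$ into the error so that $a(\theta')$ may be treated as $a(\theta+f)$---are all correct and match how the surrounding computations in the paper are set up.
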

\begin{proof}
	It is a direct consequence of the definition of $h$ \eqref{eqh} and the assumptions for $b$, \eqref{estb2} and \eqref{estb4}.
\end{proof}
\subsection{Calculation of the Ricci tensor}
We now turn to the estimate for the Ricci coefficients of $g_{\mathfrak{b}}$.
\begin{prp}
\label{riccigb}
We can write $R=R^0+R^1$, where
$$R^0_{U\ba L}= -\partial^2_q (\chi(q))\sigma^0_{UL}, \quad R^0_{\ba L \ba L}=4\frac{\partial_q^2(q\chi(q))}{r}h(\theta,s),$$
and
 $$|Z^I R^1|\lesssim \ch_{R\leq q\leq R+1}\left(|s\partial^2_s Z^I b|+
|\partial_s^2 \partial_\theta Z^I b|+|\partial_s \partial_\theta Z^I b|
+\frac{1}{s^2}|\partial^2_\theta Z^I b|\right),$$
 $$|Z^I R^1_{LL}|\lesssim \ch_{R\leq q\leq R+1}\left(
|\partial_s^2 \partial_\theta Z^I b|+|\partial_s \partial_\theta Z^I b|
+\frac{1}{s^2}|\partial^2_\theta Z^I b|\right).$$
\end{prp}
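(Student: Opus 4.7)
My plan starts with the observation that both $m$ and $g_{\mathfrak{a}}$ are Ricci flat: Minkowski obviously, and $g_{\mathfrak{a}}$ because, by the construction in Subsection \ref{change}, it is the pull-back along a diffeomorphism of the flat model $ds'dq'+(r'+a(\theta')q')^2(d\theta')^2+J(\theta')d\theta'dq'$, which is locally isometric to Minkowski with a deficit angle. Since $g_{\mathfrak{b}}=\chi(q)g_{\mathfrak{a}}+(1-\chi(q))m$ agrees with $m$ on $\{q\leq R+\tfrac12\}$ and with $g_{\mathfrak{a}}$ on $\{q\geq R+1\}$, its Ricci tensor is automatically supported in the strip $\{R\leq q\leq R+1\}$, which accounts for the indicator function $\ch_{R\leq q\leq R+1}$ in the stated bounds on $R^1$.

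Inside this strip I would plug $g_{\mathfrak{b}}=m+\chi(q)(g_{\mathfrak{a}}-m)$ into the coordinate formula \eqref{calcricci} and track which derivatives land on the cut-off. The leading $R^0$ terms are those with two derivatives falling on the cut-off, either as a genuine $\chi''(q)$ factor or as a $\chi'(q)$ combined with a $\partial_q$ of a coefficient that does not itself carry any decay (namely the factor $q$ in the expression $\tfrac{q}{r}h$). Because $\ba L=-2\partial_q$, these leading terms concentrate in the $U\ba L$ and $\ba L\ba L$ Ricci components, acting on the dominant background pieces $\sigma^0_{UL}=s(1+b)\partial_s f$ and the $\tfrac{q}{r}h(\theta,s)$ part of $\sigma^0_{UU}$. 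A null-frame calculation, using $m_{L\ba L}=-2$ and $m_{UU}=1$, shows that in $R^0_{\ba L\ba L}$ the contribution $q\chi''(q)h/r$ coming from $-\tfrac12 g^{\alpha\rho}\partial_\alpha\partial_\rho(g_{\mathfrak{b}})_{UU}$ combines with the cross term $2\chi'(q)h/r$ arising when only one $\partial_q$ hits $\chi$ and the other hits the factor $q$, to produce the perfect derivative $\partial_q^2(q\chi(q))\,h/r$ with the correct numerical factor $4$. Similarly, because $\sigma^0_{UL}$ does not depend on $q$, only the $\chi''(q)$ piece survives, yielding $R^0_{U\ba L}=-\chi''(q)\sigma^0_{UL}$.

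All remaining contributions are placed in $R^1$. They fall into four classes: (i) $\chi'(q)$ multiplied by $\partial_s$ or $\partial_\theta$ derivatives of $\sigma^0$; (ii) $\chi''(q)$ multiplied by a component of $\sigma^1$; (iii) $\chi(q)$ multiplied by $\mathrm{Ric}(g_{\mathfrak{a}})$, which vanishes identically; (iv) the quadratic $P_{\mu\nu}(g)(\partial g,\partial g)$ piece, which produces at most one $\chi'(q)$. Using the pointwise estimates \eqref{calcg0}, \eqref{calcsigma1}, \eqref{calcqg}, \eqref{calcsg} of Proposition \ref{prpgb}, together with the relation $1+\partial_\theta f=(1+b)^{-1}$ (which controls $\partial_s f$ in terms of $\partial_s\partial_\theta b$), each of these four classes is bounded pointwise by the sum $|s\partial_s^2 Z^I b|+|\partial_s^2\partial_\theta Z^I b|+|\partial_s\partial_\theta Z^I b|+\tfrac{1}{s^2}|\partial_\theta^2 Z^I b|$. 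The sharper bound on $R^1_{LL}$ comes from the algebraic fact that $\sigma^0_{LL}=0$ and $\sigma^1_{LL}=0$ at the relevant orders, so no $\partial_s$-derivatives falling on a coefficient linear in $s$ can enter, which eliminates precisely the $|s\partial_s^2 Z^I b|$ term. The commuted estimates for $Z^I R^1$ follow because the Klainerman vector fields preserve the strip $\{R\leq q\leq R+1\}$ and commute with $\chi'(q),\chi''(q)$ modulo terms of the same algebraic form but with one more $Z$ falling on $b$ or $f$.

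The main obstacle I anticipate is the bookkeeping of Step 2: separating the $\chi''(q)$ and $\chi'(q)$ pieces that assemble into the perfect derivative $\partial_q^2(q\chi(q))$ in $R^0_{\ba L\ba L}$, and verifying that the additional $\chi''$ contributions coming from the $\partial H^\rho$-term of \eqref{calcricci} (through the second derivatives of the inverse metric) either cancel or combine into the same perfect-derivative packaging, with the exact coefficient $4$ in the $\ba L\ba L$ component and $-1$ in the $U\ba L$ component. This requires a careful null-frame expansion rather than any genuinely new idea.
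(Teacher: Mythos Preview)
Your overall strategy is correct and matches the paper's: localize to the strip via Ricci-flatness of $m$ and $g_{\mathfrak a}$, discard quadratic terms as $O(\ep^4/r^2)$, and isolate the linear pieces carrying $\chi'$ or $\chi''$. The paper, however, does not route through \eqref{calcricci}; it computes each component $R_{qq},R_{sq},R_{ss},R_{s\theta},R_{q\theta},R_{\theta\theta}$ directly from the Christoffel expression $R_{\mu\nu}=\partial_\alpha\Gamma^\alpha_{\mu\nu}-\partial_\mu\Gamma^\alpha_{\nu\alpha}+\Gamma\Gamma-\Gamma\Gamma$ in the $(s,q,\theta)$ coordinate system, using the explicit metric coefficients from Proposition~\ref{prpgb}. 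The $R^0$ pieces are then read off from the $-\tfrac12 g^{\theta\theta}\partial_q^2 g_{\theta\theta}$ term in $R_{qq}$ and the $\partial_q^2 g_{s\theta}$ term in $R_{q\theta}$, and the sharper bound on $R_{ss}$ follows because its only linear $\chi'$-terms are $\partial_q\partial_s g_{ss}-\partial_s\partial_q g_{ss}=0$.

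Your route via \eqref{calcricci} would also work in principle, but note that since $m^{qq}=0$ in $(s,q,\theta)$ coordinates the $\chi''$ contributions to $R^0$ do not come from the wave-operator term $-\tfrac12 g^{\alpha\rho}\partial_\alpha\partial_\rho g_{\mu\nu}$ but from $g_{q\rho}\partial_q H^\rho$, so you must compute $H^\alpha=\Box_{g_{\mathfrak b}}x^\alpha$ first; this is no shorter than the paper's direct computation. Two details in your sketch need tightening: class (iii) holds only modulo quadratic terms (the terms without $\chi'$ assemble to the \emph{linearized} Ricci of $g_{\mathfrak a}$ at $m$, which vanishes only up to $O((g_{\mathfrak a}-m)^2)$, harmless but worth stating); and your reason for the sharper $R^1_{LL}$ bound is not quite right, since $\sigma^1_{LL}=O(q\partial_s\partial_\theta b)\neq 0$ and, in any case, $R_{ss}$ involves derivatives of \emph{all} metric components via the Christoffel symbols, not just $g_{ss}$. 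The absence of the $s\partial_s^2 b$ term there comes from the mixed-partial cancellation just mentioned (equivalently, the structural absence of $\partial_s^2 g_{s\cdot}$ in $R_{ss}$, cf.\ the argument in Proposition~\ref{prpcross}), not from the smallness of $(g_{\mathfrak b})_{LL}$ alone.
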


\begin{proof}
	When $\chi=0$, $g_{\mathfrak{b}}$ is isometric to Minkowski metric, and when $\chi=1$, $g_{\mathfrak{b}}$ is isometric to $g_{\mathfrak{a}}$ which is Ricci flat, so the Ricci coefficients are
 non zero only in the region where $\chi$ is non constant, that is to say near the null cone. Since $g_{\mathfrak{a}}-m=O\left(\frac{\ep^2}{r}\right)$ in the region where $\chi$ is non constant, all the quadratic terms are a $O\left(\frac{\ep^4}{r^2}\ch_{R\leq q \leq R+1}\right)$ and are easier to estimate than the non quadratic terms. The non quadratic terms have to involve a term containing $\chi'(q)$.
We calculate in $s,q,\theta$ coordinates. We note that for Minkowski metric the non zero Christoffel symbols are
$$\Gamma^\theta_{\theta s}=\Gamma^\theta_{\theta q}=\frac{1}{2r}, \quad \Gamma^s_{\theta \theta}=\Gamma^q_{\theta \theta}=-\frac{r}{2}.$$
We can consequently neglect the terms involving one of these symbols : they give contributions which are $O\left(\frac{\chi'(q)}{r}(g_{\mathfrak{a}}-m)\right)$. We calculate

\begin{align*}
R_{qq}=&\partial_q \Gamma^q_{qq}+\partial_s \Gamma^s_{qq}+\partial_\theta \Gamma^\theta_{qq}
-\partial_q \Gamma^q_{qq}-\partial_q \Gamma^s_{qs}-\partial_q \Gamma^\theta_{q\theta}+O\left(\frac{\chi'(q)}{r}(g_{\mathfrak{a}}-m)\right)+0\left(\frac{\ep^4\chi'(q)}{r^2}\right)\\
=&\partial_s\partial_q g_{qq}+g^{\theta \theta}\partial_\theta \partial_q g_{q\theta}
-\frac{1}{2}\partial_q\partial_s g_{sq}-\frac{1}{2}g^{\theta\theta}\partial^2_q g_{\theta \theta}
+O\left(\frac{\chi'(q)}{r}(g_{\mathfrak{a}}-m)\right)+0\left(\frac{\ep^4\chi'(q)}{r^2}\right)\\
=&\frac{1}{r}\partial^2_q(q\chi(q))h(\theta,s)+O\left(\frac{\chi'(q)}{r}(g_{\mathfrak{a}}-m)\right)+0\left(\frac{\ep^4\chi'(q)}{r^2}\right),
\end{align*}
\begin{align*}
R_{sq}=&\partial_q \Gamma^q_{sq}+\partial_s \Gamma^s_{sq}+\partial_\theta \Gamma^\theta_{sq}
-\partial_s \Gamma^q_{qq}-\partial_s \Gamma^s_{qs}-\partial_s \Gamma^\theta_{q\theta}
+O\left(\frac{\chi'(q)}{r}(g_{\mathfrak{a}}-m)\right)+0\left(\frac{\ep^4\chi'(q)}{r^2}\right)\\
=&\partial_q\partial_q g_{ss}+\frac{1}{2}g^{\theta \theta}\partial_\theta \partial_q g_{s\theta}
-2\partial_s\partial_q g_{sq}-g^{\theta\theta}\partial_s\partial_q g_{\theta \theta}
+O\left(\frac{\chi'(q)}{r}(g_{\mathfrak{a}}-m)\right)+0\left(\frac{\ep^4\chi'(q)}{r^2}\right)\\
=&0(s\chi'(q)\partial^2_s b)+0(q\chi'(q)\partial^2_s\partial_\theta b)+O\left(\frac{\chi'(q)}{r}(g_{\mathfrak{a}}-m)\right)+0\left(\frac{\ep^4\chi'(q)}{r^2}\right),
\end{align*}
\begin{align*}
R_{ss}=&\partial_q \Gamma^q_{ss}+\partial_s \Gamma^s_{ss}+\partial_\theta \Gamma^\theta_{ss}
-\partial_s \Gamma^q_{sq}-\partial_s \Gamma^s_{ss}-\partial_s \Gamma^\theta_{s\theta}+
O\left(\frac{\chi'(q)}{r}(g_{\mathfrak{a}}-m)\right)+0\left(\frac{\ep^4\chi'(q)}{r^2}\right)\\
=&\partial_q\partial_s g_{ss}
-\partial_s\partial_q g_{ss}+O\left(\frac{\chi'(q)}{r}(g_{\mathfrak{a}}-m)\right)+0\left(\frac{\ep^4\chi'(q)}{r^2}\right)\\
=&O\left(\frac{\chi'(q)}{r}(g_{\mathfrak{a}}-m)\right)+0\left(\frac{\ep^4\chi'(q)}{r^2}\right),
\end{align*}
\begin{align*}
R_{s\theta}=&\partial_q \Gamma^q_{s\theta}+\partial_s \Gamma^s_{s\theta}+\partial_\theta \Gamma^\theta_{s\theta}
-\partial_s \Gamma^q_{\theta q}-\partial_s \Gamma^s_{\theta s}-\partial_s \Gamma^\theta_{\theta\theta}+O\left(\chi'(q)(g_{\mathfrak{a}}-m)\right)+0\left(\frac{\ep^4\chi'(q)}{r}\right)\\
=&\partial_q\partial_\theta g_{ss}-\partial_s\partial_q g_{s\theta}+
O\left(\chi'(q)(g_{\mathfrak{a}}-m)\right)+0\left(\frac{\ep^4\chi'(q)}{r}\right)\\
=&0\left(s^2\chi'(q)\partial^2_s b\right)+0\left(s\chi'(q)\partial^2_s\partial_\theta b\right)+O\left(\chi'(q)(g_{\mathfrak{a}}-m)\right)+0\left(\frac{\ep^4\chi'(q)}{r}\right),
\end{align*}
\begin{align*}
R_{q\theta}=&\partial_q \Gamma^q_{q\theta}+\partial_s \Gamma^s_{q\theta}+\partial_\theta \Gamma^\theta_{q\theta}
-\partial_q \Gamma^q_{\theta q}-\partial_q \Gamma^s_{\theta s}-\partial_q \Gamma^\theta_{\theta\theta}+O\left(\chi'(q)(g_{\mathfrak{a}}-m)\right)+0\left(\frac{\ep^4\chi'(q)}{r}\right)\\
=&g^{\theta \theta}\partial_\theta\partial_q g_{\theta \theta}-\partial_q(\partial_\theta g_{qs}+\partial_s g_{\theta q}-\partial_q g_{s\theta})-\frac{1}{2}g^{\theta \theta}\partial_q \partial_\theta g_{\theta \theta} 
+O\left(\chi'(q)(g_{\mathfrak{a}}-m)\right)+0\left(\frac{\ep^4\chi'(q)}{r}\right)\\
=&\partial^2_q g_{s\theta} +O\left(s^2\chi'(q)\partial^2_s b\right)+O\left(\chi'(q)(g_{\mathfrak{a}}-m)\right)+0\left(\frac{\ep^4\chi'(q)}{r}\right),
\end{align*}
\begin{align*}
R_{\theta\theta}=&\partial_q \Gamma^q_{\theta\theta}+\partial_s \Gamma^s_{\theta\theta}+\partial_\theta \Gamma^\theta_{\theta\theta}
-\partial_\theta \Gamma^q_{\theta q}-\partial_\theta \Gamma^s_{\theta s}-\partial_\theta \Gamma^\theta_{\theta\theta}+O\left(r\chi'(q)(g_{\mathfrak{a}}-m)\right)+0\left(\ep^4\chi'(q)\right)\\
=&\partial_q(2\partial_\theta g_{s\theta}-\partial_s g_{\theta \theta})-\partial_s\partial_q g_{\theta \theta}-\partial_\theta \partial_q g_{\theta s}+ \partial_\theta \partial_q g_{\theta s}
+O\left(r\chi'(q)(g_{\mathfrak{a}}-m)\right)+0\left(\ep^4\chi'(q)\right)\\
=&O\left(s\chi'(q)\partial_s\partial^2_\theta b\right)+O\left(r\chi'(q)(g_{\mathfrak{a}}-m)\right)+0\left(\ep^4\chi'(q)\right).
\end{align*}
We note that all this terms give contributions which are $O\left(\frac{\ep^2\chi'(q)}{r^2}\right)$
except the contribution of $\frac{1}{r}\partial^2_q(q\chi(q))h(\theta,s)$ in $R_{qq}$ and the contribution of
$\partial^2_q g_{s\theta}$ (more precisely the term $\frac{1}{2}\partial_q^2 \chi(q)\sigma^0_{s\theta}$) in $R_{q\theta}$.
\end{proof}

\subsection{The generalized wave coordinates}\label{secgen}

We will look for solutions of the form $g=g_{\mathfrak{b}} + \wht g$.
We will work in generalized wave coordinates, chosen as follow. First we need them to be compatible with our choice of background metric. We will note
$$F^\alpha_b = \Box_{g_{\mathfrak{b}}}x^\alpha.$$
Next we need to get rid of the artificial bad term $\sigma^0_{UL} \partial^2_q \chi(q)$ in $R_{U\ba L}$. If we look at \eqref{calcricci}, we see that this contribution can only come from $\frac{1}{2}g_{UU}\partial_{\ba L}F^U_b$, and consequently from a term of the form $-\sigma^0_{UL}\chi'(q)$ in $U_\alpha F^\alpha$. Consequently, we will modify the wave coordinate condition to remove this term.
We also want to take a coordinate choice in which the quadratic nonlinearities in our system satisfy the null condition (see Section \ref{sketch}). For these two reasons, we introduce the vector-valued function $G$ such that 
\begin{align}
\label{gu}U_\alpha G^\alpha&=\sigma^0_{UL}\chi'(q),\\
\label{gl}L_\alpha G^\alpha& =  \frac{1}{r}\Up\left(\frac{r}{t}\right)\int_{\infty}^r \left(2(\partial_q \phi)^2r -h(\theta,s=2t)\chi'(q)\right)dr,\\
\label{gbal}\ba L_\alpha G^\alpha &= 0
\end{align}
Finally, we will work in generalized wave coordinates such that
\begin{equation}
\label{gwl}H^\alpha = g^{\lambda \beta}\Gamma^\alpha_{\lambda \beta}=F^\alpha_b+G^\alpha +\wht G^\alpha,
\end{equation}
where $\wht G^\alpha$ is defined in the following manner : 
\begin{df}\label{defgtild} $\wht G^\alpha$ is the sum of all the terms in $g^{\lambda \beta}\Gamma^\alpha_{\lambda \beta}$, calculated for $g=g_{\mathfrak{b}} + \wht g$, which are of the form $\wht g \partial^l_s\partial^k_\theta b$, where
$l+k-2\geq 1$ or $l\geq 2$.
\end{df} Proposition \ref{prpcross} is the reason why we add this small modulation to the gauge condition.

In generalized wave coordinates, the expression \eqref{calcricci}  allows us to write the system \eqref{sys} into the form

\begin{equation}\label{gw}
 \left\{ \begin{array}{l}
          \Box_g \phi = 0\\
\Box_g g_{\mu \nu}= -4\partial_\mu \phi \partial_\nu \phi +P_{\mu \nu}(\partial g, \partial g)
+ g_{\mu \rho}\partial_\nu H^\rho + g_{\nu \rho}\partial_\mu H^\rho ,
         \end{array}
\right.
\end{equation}
where
\begin{equation}\label{quadr}
\begin{split}
 P_{\mu \nu}(g)(\partial g, \partial g)
=&\frac{1}{2}g^{\alpha \rho}g^{\beta \sigma}\left(\partial_\mu g_{\rho \sigma}\partial_\alpha g_{\beta \nu}+\partial_\nu g_{\rho \sigma}\partial_\alpha g_{\beta \mu}
-\partial_\beta g_{\mu \rho}\partial_\alpha g_{\nu \sigma}-\frac{1}{2}\partial_\mu g_{\alpha \beta} \partial_\nu g_{\rho \sigma}\right)\\
&+\frac{1}{2}g^{\alpha \beta}g^{\lambda \rho}\partial_\alpha g_{\nu \rho}\partial_\beta g_{\mu \rho}.
\end{split}
\end{equation}

\begin{rk}
 In generalized wave coordinates, the wave operator can be expressed as
$$\Box_g = g^{\alpha \rho}\partial_\alpha \partial_\rho - H_b^\rho\partial_\rho.$$
\end{rk}
The expression \eqref{calcricci} yields also
\begin{equation}
\label{calcrb}
(R_b)_{\mu\nu}=-\frac{1}{2}\Box_{g_{\mathfrak{b}}} (g_{\mathfrak{b}})_{\mu \nu}+\frac{1}{2}P_{\mu \nu}(g_{\mathfrak{b}})(\partial g_{\mathfrak{b}},\partial g_{\mathfrak{b}})+\frac{1}{2}\left((g_{\mathfrak{b}})_{\mu \rho}\partial_\nu F_b^\rho + (g_{\mathfrak{b}})_{\mu \rho}\partial_\mu F^\rho_b\right).
\end{equation}
Therefore, subtracting twice the equation \eqref{calcrb} to the second equation of \eqref{gw} we obtain
\begin{equation}\label{s2}
 \left\{ \begin{array}{l}
          \Box_g\phi = 0,\\
\Box_g\wht g_{\mu \nu} =-4 \partial_\mu \phi\partial_\nu \phi
+2(R_b)_{\mu \nu} + g_{\mu \rho}\partial_\nu G_b^\rho + g_{\mu \rho}\partial_\mu G^\rho_b
+ P_{\mu \nu}(g)(\partial \wht g, \partial \wht g) + \wht P_{\mu \nu} (\wht g, g_{\mathfrak{b}}),
         \end{array}
\right.
\end{equation}
where $P_{\mu \nu}(g)(\partial \wht g, \partial \wht g) $ is defined by \eqref{quadr} and
\begin{equation}
\label{ptilde}
\begin{split}
\wht P_{\mu \nu} (\wht g, g_{\mathfrak{b}})=&
\left(g_{\mathfrak{b}}^{\alpha\beta}-g^{\alpha \beta}\right)\partial_\alpha \partial_\beta (g_{\mathfrak{b}})_{\mu \nu}
+(G^\rho +\wht G^\rho)\partial_\rho (g_{\mathfrak{b}})_{\mu \nu}\\
&+P_{\mu \nu}(g)(\partial g,\partial g)-P_{\mu \nu}(g)(\partial \wht g,\partial \wht g)-P_{\mu \nu}(g_{\mathfrak{b}})(\partial g_{\mathfrak{b}},\partial g_{\mathfrak{b}})\\
&+\wht g_{\mu \rho}\partial_\nu (F^\rho) + \wht g_{\nu \rho}\partial_\mu (F^\rho)
+ g_{\mu \rho}\partial_\nu \wht G^\rho + g_{\nu \rho}\partial_\mu \wht G^\rho.
\end{split}
\end{equation}
Let us note that $\wht P_{\mu \nu} (\wht g, g_{\mathfrak{b}})$ contains only crossed terms between $g_{\mathfrak{b}}$ and $\wht g$. 

\begin{prp}
	\label{prpcross} $\wht P_{\mu \nu} (\wht g, g_{\mathfrak{b}})$ does not contain any term involving $\partial_s^3\partial_\theta b$ nor $\partial_s^3 b$ nor $\partial^4_\theta b$.
\end{prp}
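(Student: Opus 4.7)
By Proposition \ref{prpgb}, the components $(g_{\mathfrak{b}})_{\mu\nu}$ depend on $b$ only through $\partial_s b$, $\partial_s\partial_\theta b$, and $\partial_\theta^2 b$ (the latter through $h$). Consequently, the three forbidden expressions $\partial_s^3 b$, $\partial_s^3\partial_\theta b$, $\partial_\theta^4 b$ can appear in $\wht P_{\mu\nu}$ only from terms involving two derivatives of a component of $g_{\mathfrak{b}}$, or equivalently one derivative of $F_b^\rho=\Box_{g_{\mathfrak{b}}}x^\rho$ (which already contains $\partial g_{\mathfrak{b}}$). Inspecting the five groups in \eqref{ptilde}, the only candidate sources are: \textbf{(i)} the leading term $(g_{\mathfrak{b}}^{\alpha\beta}-g^{\alpha\beta})\partial_\alpha\partial_\beta(g_{\mathfrak{b}})_{\mu\nu}$, which at leading order in $\wht g$ reads $g_{\mathfrak{b}}^{-1}\wht g\,g_{\mathfrak{b}}^{-1}\cdot\partial^2 g_{\mathfrak{b}}$; \textbf{(ii)} the $F_b$-piece of $\wht g_{\mu\rho}\partial_\nu F^\rho+\wht g_{\nu\rho}\partial_\mu F^\rho$, namely $\wht g\cdot\partial F_b$; and \textbf{(iii)} the final term $g_{\mu\rho}\partial_\nu\wht G^\rho+g_{\nu\rho}\partial_\mu\wht G^\rho$, whose Leibniz expansion of $\wht G^\rho\sim\wht g\cdot\partial_s^l\partial_\theta^k b$ yields $\wht g\cdot\partial_s^{l+1}\partial_\theta^k b$ and $\wht g\cdot\partial_s^l\partial_\theta^{k+1} b$. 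All other pieces of \eqref{ptilde}, namely $(G^\rho+\wht G^\rho)\partial_\rho(g_{\mathfrak{b}})_{\mu\nu}$ and the $P_{\mu\nu}$-combination, involve at most first derivatives of $g_{\mathfrak{b}}$ and so can produce only second-order derivatives of $b$ in positions such as $\partial_s^2 b$, $\partial_s^2\partial_\theta b$, $\partial_s\partial_\theta^2 b$, $\partial_\theta^3 b$, none of which matches the forbidden shapes.

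The cancellation among (i), (ii), (iii) is engineered by Definition \ref{defgtild}. Expanding
\[\Gamma^\rho_{\lambda\beta}=\tfrac{1}{2}g^{\rho\sigma}\bigl(\partial_\lambda g_{\sigma\beta}+\partial_\beta g_{\sigma\lambda}-\partial_\sigma g_{\lambda\beta}\bigr)\]
to first order in $\wht g$ around $g_{\mathfrak{b}}$, the order-$\wht g$ contributions to $H^\rho=g^{\lambda\beta}\Gamma^\rho_{\lambda\beta}$ of the specific form $\wht g\cdot\partial_s^l\partial_\theta^k b$ with $l+k\geq 3$ or $l\geq 2$ are by construction precisely the corresponding terms of $\wht G^\rho$. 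When $\partial_\nu$ is applied to such a term, the resulting forbidden-shape $\wht g\cdot\partial_s^{l+1}\partial_\theta^k b$ (or $\wht g\cdot\partial_s^l\partial_\theta^{k+1} b$) piece appears with the same structural coefficient as the analogous $\wht g\cdot\partial^2 g_{\mathfrak{b}}$ contributions of (i) and (ii), but with opposite sign. This sign flip is the content of the Ricci identity \eqref{calcricci}: the term $-\tfrac{1}{2}g^{\alpha\rho}\partial_\alpha\partial_\rho g_{\mu\nu}$ (whose $\wht g$-crossed slice becomes (i) once $\Box_{g_{\mathfrak{b}}}(g_{\mathfrak{b}})_{\mu\nu}$ is removed via $2R_b$ using \eqref{calcrb}) and the gauge term $\tfrac{1}{2}(g_{\mu\rho}\partial_\nu H^\rho+g_{\nu\rho}\partial_\mu H^\rho)$ (of which (iii) is the $\wht G$-slice) enter \eqref{calcricci} with opposite signs, so the corresponding $\partial^3 b$ and $\partial^4 b$ pieces cancel.

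The main obstacle is the bookkeeping: one must verify separately, for each of $\partial_s^3 b$, $\partial_s^3\partial_\theta b$, $\partial_\theta^4 b$ and for each index combination $(\mu,\nu)$, that the structural coefficient from (i)+(ii) exactly matches, with opposite sign, the one from (iii). A useful organizing principle is the coordinate invariance of $R_{\mu\nu}$: because $R_{\mu\nu}$ is a tensor independent of the gauge choice, any $\partial^3 b$ contribution which is not absorbed into the geometric source $2R_b$ of \eqref{s2} is gauge-dependent and must cancel in the remainder $\wht P_{\mu\nu}$. Performing the index-by-index check using the Christoffel expansion described above, and verifying in particular that no term of the forbidden form survives the summation over $(\lambda,\beta,\rho,\sigma)$, completes the proof.
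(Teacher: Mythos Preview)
Your overall strategy matches the paper's: you correctly isolate the three sources (i)--(iii), you observe that only two-derivative pieces of $g_{\mathfrak b}$ can produce the forbidden shapes, and you recognise that the role of $\wht G^\rho$ (Definition~\ref{defgtild}) is to bring the $\wht g\cdot\partial^2 g_{\mathfrak b}$ part of $\wht P_{\mu\nu}$ back to the combination appearing in the Ricci tensor $\partial_\alpha\Gamma^\alpha_{\mu\nu}-\partial_\mu\Gamma^\alpha_{\nu\alpha}$. Up to here your argument and the paper's coincide.

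Where your write-up falls short is the last paragraph. The ``coordinate invariance of $R_{\mu\nu}$'' is not a sharp enough principle to force the cancellation you need: tensoriality of the Ricci tensor does not by itself rule out terms of the form $g^{s\rho}\partial_s^2 g_{s\rho}$ or $g^{\theta\theta}\partial_\theta^2 g_{\theta\theta}$ in $\partial_\alpha\Gamma^\alpha_{\mu\nu}-\partial_\mu\Gamma^\alpha_{\nu\alpha}$; one has to check directly that the specific index pattern cancels. Your final sentence (``performing the index-by-index check\dots completes the proof'') acknowledges this, but that check \emph{is} the content of the proposition. The paper carries it out explicitly: for instance, when $\mu=s$ the $\partial_s^2 (g_{\mathfrak b})_{s-}$ contributions to $\partial_\alpha\Gamma^\alpha_{s\nu}-\partial_s\Gamma^\alpha_{\nu\alpha}$ are those in
\[
\tfrac12 g^{s\rho}\partial_s(\partial_s g_{\nu\rho}+\partial_\nu g_{s\rho}-\partial_\rho g_{s\nu})-\tfrac12 g^{\alpha\rho}\partial_s\partial_\nu g_{\alpha\rho},
\]
and one sees by inspection that for $\nu\in\{q,\theta\}$ the two $\partial_s^2 g_{s-}$ pieces cancel, while for $\nu=s$ the remaining terms also compensate; the $\partial_\theta^2 g_{\theta\theta}$ case is analogous. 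Including this short explicit check would make your proof complete and essentially identical to the paper's.
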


\begin{proof}
	By looking at the decomposition of $g_{\mathfrak{b}}$ we observe that the terms in $\wht P_{\mu \nu} (\wht g, g_{\mathfrak{b}})$ which involve $\partial_s^3\partial_\theta b$ or $\partial_s^3 b$ or $\partial^4_\theta b$, in fact involve $\partial_s^2 (g_{\mathfrak{b}})_{s-}$ or $\partial_\theta^2 (g_{\mathfrak{b}})_{\theta \theta}$, where ${-}$ stands for any index.
	The terms involving two derivatives of $g_{\mathfrak{b}}$ in $\wht P_{\mu \nu} (\wht g, g_{\mathfrak{b}})$ are the same than in
	$$-\Box_g (g_{\mathfrak{b}})_{\mu \nu}+g_{\mu \rho}\partial_{\nu}(F_b^\rho + \wht G^\rho)+g_{\nu \rho}\partial_{\mu}(F_b^\rho + \wht G^\rho).$$
	Our choice of  $\wht G^\rho$ is done precisely in order for the terms involving $\partial_s^3\partial_\theta b$ or $\partial_s^3 b$ or $\partial^4_\theta b$ in the above expression to be the same than in the Ricci tensor of $g_{\mathfrak{g}}+\wht g$, so the same than in the expression
\begin{equation} \label{expr}\partial_\alpha \Gamma^\alpha_{\mu \nu}-\partial_\mu \Gamma^\alpha_{\nu \alpha}.
\end{equation}
	We look for the terms involving $\partial_s^2 (g_{\mathfrak{b}})_{s-}$. When $\mu$ and $\nu$ are equal to $q$ or $\theta$ these terms are not present. If $\mu=s$, the terms involving $\partial_s^2 (g_{\mathfrak{b}})_{s-}$ in \eqref{expr} are the same than in
	$$ \frac{1}{2}g^{s\rho}\partial_s (\partial_s g_{\nu \rho}+\partial_\nu g_{s \rho}-\partial_\rho g_{s\nu})
	-\frac{1}{2}g^{\alpha \rho}\partial_s \partial_\nu g_{\alpha \rho}.$$
If $\nu$ is equal to $q$ or $\theta$, we directly see that the terms involving $\partial_s^2 (g_{\mathfrak{b}})_{s-}$
compensate. If $\nu=s$ these terms are the same than in
	$$ \frac{1}{2}g^{s\rho}\partial_s (\partial_s g_{s \rho}+\partial_s g_{s \rho})-\frac{1}{2}g^{ss}\partial_s \partial_s g_{ss}
	-\frac{1}{2}g^{\alpha \rho}\partial_s \partial_s g_{\alpha \rho},$$
	so again they compensate. The case $\partial_\theta^2 (g_{\mathfrak{b}})_{\theta \theta}$ is similar, so this concludes the proof of Proposition \ref{prpcross}.
\end{proof}

\subsection{Second version of Theorem \ref{main}}\label{second}
We give here a more precise version of Theorem \ref{main}.
\begin{thm}\label{main2}Let $0<\ep<1$.
	Let $\frac{1}{2}<\delta<1$ and $N \geq 25$.
	Let $(\phi_0,\phi_1) \in H^{N+2}(\m R^2)\times H^{N+1}(\m R^2)$ compactly supported in $B(0,R)$.  
	We assume
	$$\|\phi_0\|_{ H^{N+2}}+\|\phi_1\|_{H^{N+1}}
	\leq \ep.$$
	Let $\ep\ll\rho\ll \sigma \ll \delta$ such that $\delta-2\sigma>\frac{1}{2}$.
	If $\ep$ is small enough, there exists a global solution $(g,\phi)$ of \eqref{sys}. More precisely there exists $b(\theta,s)$ satisfying the set of hypothesis $\q H$, and a set of generalized wave coordinates $(t,x^1,x^2)$ defined by \eqref{gwl} such that we can write in these coordinates $g=g_{\mathfrak{b}} + \wht g$, where $g_{\mathfrak{b}}$ is defined in Section \ref{change}.
	We have the estimates for $\wht g,\phi$ 
	$$|\wht g|\lesssim \frac{\ep}{(1+s)^{\frac{1}{2}-\rho}}, \quad |\phi| \lesssim \frac{\ep}{(1+s)^{\frac{1}{2}}(1+|q|)^{\frac{1}{2}-4\rho}},$$
	$$\| \partial Z^{N} \phi\|_{L^2} +\| \partial^2 Z^N \phi\|_{L^2} + \|w_2 \partial Z^N \wht g \|_{L^2} \lesssim \ep (1+t)^{2\rho},$$
	where
$$\left\{\begin{array}{l}
w_2(q)=(1+|q|)^{2+2\delta-4\sigma},\text{ for } q>0\\
w_2(q)=(1+|q|)^{-1-2\sigma}, \text{ for } q<0.
\end{array}\right.$$
Moreover, for $h(\theta,s)$ defined by \eqref{eqh}, we have
$$\int_0^\infty 2(\partial_q \phi(t,r,\theta))^2rdr - h(\theta,s=2t) = O\left(\frac{\ep^2}{\sqrt{1+t}}\right).$$
\end{thm}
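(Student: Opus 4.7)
The strategy is to run an iteration scheme that simultaneously constructs the profile $b(\theta,s)$ (hence the background $g_{\mathfrak{b}}$) and the perturbation $(\wht g,\phi)$. Starting from $b^{(0)}(\theta,s)$ built only from the low Fourier modes $a_0,a_1\cos\theta+a_2\sin\theta$ supplied by Theorem \ref{thinitial}, I would, given $b^{(n)}\in \mathcal{H}$, form $g_{\mathfrak{b}^{(n)}}$ as in Section \ref{change}, construct initial data for $g=g_{\mathfrak{b}^{(n)}}+\wht g$ that solve the constraints and match the generalized wave gauge \eqref{gwl} at $t=0$ (using Appendix on regularized initial data), and then solve the evolution system \eqref{s2} locally in time. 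The coordinate system itself is selected by the gauge condition $H^\alpha=F_b^\alpha+G^\alpha+\wht G^\alpha$ where $G^\alpha$ is given by \eqref{gu}--\eqref{gbal}; the crucial choice $L_\alpha G^\alpha=\frac{1}{r}\Up(r/t)\int_\infty^r(2(\partial_q\phi)^2 r-h\chi')dr$ is exactly what removes the obstruction $(\partial_q\phi)^2$ in $R_{\ba L\ba L}$ so that the equation for $\wht g_{\ba L\ba L}$ becomes a cubic weak-null equation rather than a quadratic one.

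The heart of the argument is a continuity / bootstrap argument propagating the estimates stated in Theorem \ref{main2} together with the auxiliary assumption
\[
\Bigl|\Pi\Bigl(\textstyle\int_0^\infty 2(\partial_q\phi(t=s/2,r,\theta))^2 r\,dr-h^{(n)}(\theta,s)\Bigr)\Bigr|\lesssim \frac{\ep^2}{\sqrt{1+s}}.
\]
Commuting \eqref{s2} with products $Z^I$ of Klainerman vector fields and applying the weighted energy estimate (Proposition \ref{energy}) with the hierarchy of weights from Section \ref{noncomu}, I would improve the high-order energy bound $\|w_2^{1/2}\partial Z^N \wht g\|_{L^2}\lesssim \ep(1+t)^{2\rho}$; the logarithmic loss is confined to $\wht g_{\ba L\ba L}$ thanks to the null decomposition and the gauge condition, which via $\ba L_\alpha H^\alpha=0$ converts the quadratic $g_{LL}\partial^2$ terms into null forms as in \cite{lind}. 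The interaction terms $\wht P_{\mu\nu}(\wht g,g_{\mathfrak{b}})$ are controlled by Proposition \ref{prpcross} together with the estimates of Corollary \ref{corestb}, and the background Ricci $(R_b)_{\mu\nu}$ is handled by Proposition \ref{riccigb}, whose support $\{R\le q\le R+1\}$ provides decay via $\chi'(q)$. Pointwise estimates on $\wht g$, $\phi$, $\partial\wht g$, $\partial\phi$ are then recovered from the weighted Klainerman--Sobolev inequality (Proposition \ref{prpweight}), weighted Hardy (Proposition \ref{hardy}), and the inhomogeneous $L^\infty$--$L^\infty$ bound (Proposition \ref{inhom}) applied to each scalar component.

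To close the loop on $b$, I would integrate the Einstein equation for $R_{\ba L\ba L}$ in the exterior along rays of constant $(s,\theta)$. Since in the new gauge $R_{\ba L\ba L}=8(\partial_q\phi)^2+(\text{cubic})+4 r^{-1}\partial_q^2(q\chi(q))h$, integrating from $q=R+1$ to $q=+\infty$ and using the decay of $\wht g$ yields an estimate for the full $\int 2(\partial_q\phi)^2 r\,dr - h$ (not only its $\Pi$-projection), of size $\ep^2/\sqrt{1+t}$, recovering the last assertion of the theorem and the bootstrap assumption on $\Pi$. I would then define
\[
h^{(n+1)}(\theta,s):=2\int_0^\infty (\partial_q\phi(t=s/2,r,\theta))^2\, r\,dr+(\text{geometric correctors}),
\]
solve the $\theta$-elliptic equation $b^{(n+1)}+\partial_\theta^2 b^{(n+1)}=\Pi h^{(n+1)}$ (the non-$\Pi$ modes are fixed by $a_0,a_1,a_2$ from the constraints at $t=0$), and verify by elliptic regularity on $\mathbb{S}^1$ combined with Corollary \ref{estih} and the improved $\phi$-estimates that $b^{(n+1)}\in\mathcal{H}$ with the constants strictly improved.

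The main obstacle is the simultaneous bookkeeping of three competing difficulties: the slow $1/\sqrt{t}$ decay of $2{+}1$ free waves forces cubic terms to be tracked, the $\sqrt{t}$ growth of the $g_{\ba L\ba L}$ energy must be prevented from leaking into other components by the two-weight device of Section \ref{noncomu} (with $\sigma>\rho$), and the change of coordinates tying $g$ to $g_{\mathfrak{b}}$ must be compatible with both the evolution gauge in the interior and the asymptotic behavior of $g_{\mathfrak{a}}$ in the exterior. Once the bootstrap is closed for fixed $b^{(n)}$, the iteration converges in a lower-regularity norm by a standard contraction using that the map $b\mapsto \Pi\int 2(\partial_q\phi[b])^2 r\,dr$ is quadratic in $\phi$ of size $\ep^2$. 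The limit $(g,\phi,b)$ is the global solution claimed in Theorem \ref{main2}, and undoing the coordinate change of Section \ref{change} in $\bar C$ gives convergence of $g$ to $g_{\mathfrak{a}}$ outside the causal future of $B(0,R)$ as stated in Theorem \ref{main}.
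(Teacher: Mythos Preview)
Your overall architecture matches the paper's: bootstrap on $[0,T]$ with weighted energies, the two-weight device of Section \ref{noncomu} to quarantine the $\ba L\ba L$ component, Klainerman--Sobolev and the $L^\infty$--$L^\infty$ estimate for pointwise bounds, and an update $b\mapsto b^{(2)}$ via the elliptic problem $b+\partial_\theta^2 b=\Pi h'$. Two points, however, depart from the paper in ways that matter.

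\medskip
\textbf{The low Fourier modes of $h-\int 2(\partial_q\phi)^2 r\,dr$.} Your plan is to ``integrate the Einstein equation for $R_{\ba L\ba L}$ along rays of constant $(s,\theta)$ from $q=R+1$ to $+\infty$''. This does not recover the full difference. In the exterior $q>R+1$ one has $\phi\equiv 0$ (Proposition \ref{cone}) and $g_{\mathfrak{b}}=g_{\mathfrak{a}}$ is Ricci flat, so that integration yields nothing about $\int_0^\infty(\partial_q\phi)^2 r\,dr$; and the expression you write, $R_{\ba L\ba L}=8(\partial_q\phi)^2+(\text{cubic})+4r^{-1}\partial_q^2(q\chi)h$, conflates the Einstein equation $R_{\ba L\ba L}=8(\partial_q\phi)^2$ with the background Ricci $R^0_{\ba L\ba L}$ of Proposition \ref{riccigb}. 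The paper obtains the $1$, $\cos\theta$, $\sin\theta$ modes (what it calls $\Delta_h$) by an entirely different mechanism: it writes $R_{00}+R_{11}+R_{22}$ and $R_{0i}$ as spatial divergences plus $(R_b)_{\mu\nu}$ plus quadratic errors and integrates over all of $\m R^2$ on a $t$-slice (Section \ref{angle}). The divergence terms vanish, the background contribution localizes on $R\le q\le R+1$ and gives $\int h\,d\theta$, $\int h\cos\theta\,d\theta$, $\int h\sin\theta\,d\theta$, and the quadratic errors are $O(\ep^2/\sqrt{1+t})$. Without this step your bootstrap on \eqref{estdelta} does not close.

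\medskip
\textbf{The ``iteration'' is a coordinate change, not a contraction.} You describe convergence of $b^{(n)}$ by a contraction in a lower norm. The paper does something more economical and more robust: $(g^{(2)},\phi^{(2)})$ is obtained from $(g,\phi)$ by an explicit change of variables $\Psi$ (Section \ref{secprpfin}), so it is the \emph{same} geometric solution. One then checks, using the estimates on $\Psi$, that $\phi^{(2)}$ inherits the improved bounds from $\phi$ and that the new bootstrap assumption \eqref{esth} for $h^{(2)}$ holds with constant $C\ep^3$ rather than $2C_0\ep^2$. There is no limit to take. Relatedly, the updated profile is not simply $h^{(n+1)}=2\int(\partial_q\phi)^2 r\,dr+(\text{correctors})$: to ensure $\partial_s h'\in O((1+t)^{-2})$ at the level of $Z^N$ one must use $\check h=2\int(1+\beta)g^{0\alpha}\sqrt{|\det g|}\partial_\alpha\phi\,\partial_r\phi\,dr$ with $\beta$ solving the transport equation of Corollary \ref{corbeta}. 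This is the substance of Proposition \ref{prph} and is not a minor corrector.
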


\section{Bootstrap assumptions and proof of Theorem \ref{main}}
\subsection{Bootstrap assumptions}
Let $\rho,\sigma,\mu$ such that $\ep \ll \rho \ll \sigma  \ll \delta,$ and 
 $$\delta-2\sigma>\frac{1}{2}, \quad \sigma \leq \frac{1}{4}, \quad \mu \leq \frac{1}{4}$$
The initial data $(\phi_0,\phi_1)$ for $\phi$ are given in $H^{N+2}\times H^{N+1}(\m R^2)$, compactly supported in $B(0,R)$.
Let $b$ which satisfy the set of estimates $\q H$. We construct a metric $g_{\mathfrak{b}}$ as described in Section \ref{change}.
There exists initial data for $g$ (see Appendix \ref{reguini}), such that we can write at $t=0$
$$g= g_{\mathfrak{b}} + \wht h_0, \quad \partial_t g = \partial_t g_{\mathfrak{b}} + \wht h_1$$
with $(\wht h_0,\wht h_1) \in H^{N+1}_\delta \times H^N_{\delta+1}$ and
\begin{itemize}
\item the constraint equations are satisfied at $t=0$,
\item the generalized wave coordinate condition is satisfied at $t=0$.
\end{itemize}
We consider a time $T$ such that there exists a solution $g=g_{\mathfrak{b}}+\wht g, \phi$ on $[0,T]$ of \eqref{s2}. We assume that on $[0,T]$ the following estimates hold

\paragraph{$L^\infty$-based bootstrap assumptions}
For $I\leq N-9$ we assume
\begin{align}
\label{bootphi1}|Z^I \phi| &\leq \frac{2C_0\ep}{\sqrt{1+s}(1+|q|)^{\frac{1}{2}-4\rho}},\\  
\label{bootg1}|Z^I \wht g| &\leq \frac{2C_0\ep}{(1+s)^{\frac{1}{2}-\rho}},
\end{align}
where here and in the following, $C_0$ is a constant depending on $\rho,\sigma,\mu, \delta, N$ such that the inequalities
are satisfied at $t=0$ with $2C_0$ replaced by $C_0$.
For $I\leq N-7$ we assume
\begin{align}
\label{bootphi2}|Z^I \phi| &\leq \frac{2C_0\ep}{(1+s)^{\frac{1}{2}-2\rho}},\\ 
\label{bootg2}|Z^I \wht g| &\leq \frac{2C_0\ep}{(1+s)^{\frac{1}{2}-3\rho}}.
\end{align}

\paragraph{$L^2$-based bootstrap assumptions}

We introduce three weight functions
$$\left\{\begin{array}{l}
w(q)=(1+|q|)^{2+2\delta},\text{ for } q>0\\
w(q)=1+(1+|q|)^{-2\mu}, \text{ for } q<0,
\end{array}\right.$$
$$\left\{\begin{array}{l}
w_1(q)=(1+|q|)^{2+2\delta-2\sigma},\text{ for } q>0\\
w_1(q)=(1+|q|)^{-2\sigma}, \text{ for } q<0,
\end{array}\right.$$
$$\left\{\begin{array}{l}
w_2(q)=(1+|q|)^{2+2\delta-4\sigma},\text{ for } q>0\\
w_2(q)=(1+|q|)^{-1-2\sigma}, \text{ for } q<0.
\end{array}\right.$$
We introduce the following decompositions of the metric
\begin{align}
\label{dec1}g&=g_{\mathfrak{b}}+\wht g,\\
\label{dec2}g&=g_{\mathfrak{b}}+4\Up\left(\frac{r}{t}\right) k dq^2 +\wht g_1,
\end{align}
where $ k$ satisfies
\begin{equation}
\label{eqg2}
\Box_g k = Q_{\ba L \ba L }=\partial_q g_{UU}\partial_q \wht g_{\ba L  L}
+\wht g_{L \ba L }\partial_q G^L.
\end{equation}
We introduce the second decomposition to exploit the weak null structure for cubic terms.
Our $L^2$-based bootstrap assumptions are the following.

Estimates for $\phi$ :
\begin{align}
\label{phiN}&\|w^\frac{1}{2}\partial Z^N \phi\|_{L^2}\leq 2C_0 \ep (1+t)^\rho,\\
\label{dphiN}&\|w^\frac{1}{2}\partial^2 Z^N \phi\|_{L^2}\leq 2C_0 \ep (1+t)^\rho,\\
\label{SphiN}&\|w^\frac{1}{2}\partial (S Z^N\phi-s\partial_q\phi Z^N g_{LL}) \|_{L^2}\leq 2C_0 \ep (1+t)^\rho,\\
\label{ZphiN}&\|w^\frac{1}{2}\partial Z^{N+1} \phi\|_{L^2}\leq 2C_0 \ep (1+t)^{\frac{1}{2}+\rho},\\
\label{phiN1}&\|w^\frac{1}{2}\partial Z^{N-1} \phi\|_{L^2}\leq 2C_0 \ep,
\end{align}
Integrated estimates for $\bar{\partial}\phi$ :
\begin{align}
\label{iphiN}& \int_0^t \int w'(q)(\bar{\partial}Z^N \phi)^2 dxd\tau  \leq 2C_0\ep^2(1+t)^{2\rho} \\
\label{idphiN}& \int_0^t \int w'(q)(\bar{\partial} \partial Z^N \phi)^2 dxd\tau  \leq 2C_0\ep^2(1+t)^{2\rho} \\
\label{iSphiN}& \int_0^t \int w'(q)(\bar{\partial}( S Z^N\phi-s\partial_q\phi Z^N g_{LL}))^2 dxd\tau  \leq 2C_0\ep^2(1+t)^{2\rho} \\
\label{iZphiN}& \int_0^t \int w'(q)(1+\tau)^{-1}(\bar{\partial} Z^{N+1} \phi)^2 dxd\tau  \leq 2C_0\ep^2(1+t)^{2\rho} 
\end{align}
Estimates for $\wht g$ :
\begin{align}
\label{gN}&\|w_2^\frac{1}{2}\partial Z^N \wht g_{1}\|_{L^2}\leq 2C_0 \ep(1+t)^\rho ,\\
\label{g2N}&\|w_1^\frac{1}{2}\partial Z^N \wht g\|_{L^2}\leq 2C_0 \ep (1+t)^{2\rho},\\
\label{g2N2}&\|w^\frac{1}{2}\partial Z^{N-3} \wht g\|_{L^2}\leq 2C_0 \ep (1+t)^{\rho},\\
\label{gN2}&\|w_1^\frac{1}{2}\partial Z^{N-4} \wht g_1\|_{L^2}\leq 2C_0 \ep,\\
\label{gmieux} &\|\partial \wht Z^{N-10}g_1\|_{L^2} \leq 2C_0 \ep.
\end{align}
Integrated estimates for $\bar{\partial}\wht g$
\begin{align}
\label{igN}& \int_0^t \int  w_2'(q)(\bar{\partial}Z^N \wht g_{1})^2 dxd\tau  \leq 2C_0\ep^2(1+t)^{2\rho},\\
\label{ig2N}& \int_0^t \int  w_1'(q)(\bar{\partial}Z^N \wht g)^2 dxd\tau  \leq 2C_0\ep^2(1+t)^{4\rho}, \\
\label{ig2Nbis}& \int_0^t  \int (1+\tau)^{-2\rho}w_1'(q)(\bar{\partial}Z^N \wht g)^2 dxd\tau  \leq 2C_0\ep^2(1+t)^{2\rho}. 
\end{align}

\paragraph{Bootstrap assumptions for $h$}
\begin{equation}
\label{esth}\left\|\Pi  Z^{N-1}\left(\int_0^\infty 2\partial_r \phi(t,r,\theta)\partial_t \phi(t,r,\theta)rdr + h(\theta,2t)\right)\right\|_{L^2(\m S^1)} \leq  2C_0\frac{\ep^2}{(1+t)^{\frac{1}{2}-2\rho}} .
\end{equation}
\begin{equation}
\label{esthbis}\left\|\Pi  Z^{N-5}\left(\int_0^\infty 2\partial_r \phi(t,r,\theta)\partial_t \phi(t,r,\theta)rdr+ h(\theta,2t)\right)\right\|_{L^2(\m S^1)} \leq  2C_0\frac{\ep^2}{(1+t)^{\frac{1}{2}}} .
\end{equation}
We note
\begin{align*}\Delta_h(t) =&\left|\int h(\theta,2t)d\theta -\int_{\m R^2}\left((\partial_t \phi)^2
+|\nabla \phi|^2\right)(t,x)dx\right|+
\left|\int h(\theta,2t)\cos(\theta)d\theta +\int_{\m R^2}2\left(\partial_t \phi \partial_1 \phi\right)(t,x) dx \right|\\
&+
\left|\int h(\theta,2t)\sin(\theta)d\theta +\int_{\m R^2}2\left(\partial_t \phi \partial_2 \phi\right)(t,x) dx\right| . 
\end{align*}
we assume
\begin{equation}
\label{estdelta}
|\Delta_h|\leq 2C_0 \frac{\ep}{\sqrt{1+t}}.
\end{equation}

\subsection{Proof of Theorem \ref{main}}

We have the following improvements for the bootstrap assumptions. The constant $C$ will denote a constant depending only on $\rho,\sigma, \mu,\delta,N$.

\begin{prp}\label{prpangle}
We have
$$|\Delta_h |\leq \frac{C\ep^2}{\sqrt{1+t}}.$$
\end{prp}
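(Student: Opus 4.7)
The plan is to match the three unprojected Fourier modes of $h(\theta,2t)$ (which together make up $\Delta_h$) with the corresponding scalar field integrals at time $t$, since the $\Pi$-orthogonal component is already controlled by \eqref{esth}. This is done by combining (i) a direct computation of these modes of $h$ from \eqref{eqh}, (ii) the initial data formulas of Theorem \ref{thinitial} expressing $a_0, a_1, a_2$ in terms of $\phi_0, \phi_1$, and (iii) an approximate conservation of the appropriate scalar field integrals under $\Box_g\phi=0$.

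First I would compute $\int h(\theta,2t)\,d\theta$, $\int h\cos\theta\,d\theta$ and $\int h\sin\theta\,d\theta$ directly from \eqref{eqh}. Integration by parts combines $-2\partial_\theta^2 b/(1+b)$ with $(1+b)^{-2}(\partial_\theta b)^2$ into a single quadratic-in-$b$ piece; the constraint \eqref{intb} forces $\int b\,d\theta$ to itself be $O(\|b\|_{H^1(\m S^1)}^2)$; and the change of variables $\theta \mapsto \theta' = \theta + f$ is a $2\pi$-periodic diffeomorphism of $\m S^1$. Combined with $\|b(\cdot,2t)\|_{H^2(\m S^1)} \lesssim \ep^2/(1+t)^2$ from \eqref{estb1}, this expresses each low Fourier mode of $h$ as a specific linear combination of $a_0, a_1, a_2$ up to remainders of order $\ep^4/(1+t)^4$. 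Theorem \ref{thinitial} then identifies these linear combinations, up to $O(\ep^4)$, with $E(0)$, $P_1(0)$, $P_2(0)$, where $E(t) = \int(\dot\phi^2+|\nabla\phi|^2)(t,x)\,dx$ and $P_i(t) = \int \dot\phi\,\partial_i\phi(t,x)\,dx$.

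Second, I would establish $|E(t)-E(0)|,\, |P_i(t)-P_i(0)| \lesssim \ep^3/\sqrt{1+t}$. Using $\Box_g\phi = 0$ in the form $\Box_m\phi = H^\rho\partial_\rho\phi - \wht g^{\alpha\beta}\partial_\alpha\partial_\beta\phi$, with $\wht g^{\alpha\beta} = g^{\alpha\beta}-m^{\alpha\beta}$, direct computation yields
$$\frac{dE}{dt} = 2\int \partial_t\phi\bigl[\wht g^{\alpha\beta}\partial_\alpha\partial_\beta\phi - H^\rho\partial_\rho\phi\bigr]dx,$$
and an analogous formula for $P_i$. The key structural input is that the slowly decaying bad component $\wht g_{\ba L\ba L}$ raises via $m$ to $\wht g^{LL}$, which is contracted only against $\partial_L^2\phi = 4\partial_s^2\phi$ (a pure good-derivative squared), and that $H^\rho$ has been engineered in \eqref{gwl}, \eqref{gu}--\eqref{gbal} to suppress the worst quadratic obstructions, leaving $H^\rho\partial_\rho\phi\cdot \partial_t\phi$ as a cubic quasi-null expression. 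With the pointwise bootstrap bounds \eqref{bootphi1}--\eqref{bootg2} and integration by parts (to transfer bad derivatives off $\phi$ onto $\wht g$, whose $L^2$ norms are controlled by \eqref{gN}--\eqref{ig2Nbis}), one obtains $|dE/dt|,\, |dP_i/dt| \lesssim \ep^3/(1+t)^{3/2}$; the tail integral $\int_t^\infty ds/(1+s)^{3/2} \lesssim 1/\sqrt{1+t}$ then gives the claimed rate.

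Combining these ingredients yields $|\Delta_h(t)| \lesssim \ep^3/\sqrt{1+t} + \ep^4/(1+t)^4 \lesssim C\ep^2/\sqrt{1+t}$ for $\ep$ sufficiently small. The main obstacle is the second step: verifying that the non-local contribution to $H^L$ in \eqref{gl}, which involves the very quantity $(1/r)\int (\partial_q\phi)^2\,r\,dr$ that also enters $h$ on outgoing null cones, does not obstruct scalar-field conservation. The precise cancellation here is the geometric content of the wave gauge construction in Section \ref{secgen} and must be extracted using the null decomposition of the Ricci tensor in Proposition \ref{riccigb}.
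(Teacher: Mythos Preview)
Your approach is genuinely different from the paper's and contains a real gap.

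The paper's proof (Section \ref{angle}) does not go through the constants $a_0,a_1,a_2$ or the initial data at all. It integrates the Einstein equation $R_{\mu\nu}=2\partial_\mu\phi\partial_\nu\phi$ over the time-$t$ slice directly: after writing $R_{\mu\nu}=(R_b)_{\mu\nu}+\text{(linear in }\wht g)+O((\partial g)^2)$, the linear part in the relevant traces $R_{00}+R_{11}+R_{22}$ and $R_{0i}$ is a total spatial divergence (the $\partial_0$-terms cancel or are $\partial_0$ of a spatial divergence), while $(R_b)_{\mu\nu}$ contributes exactly the low Fourier modes of $h(\cdot,2t)$ via the explicit Ricci computation in Proposition~\ref{riccigb}. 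The scalar field side gives $E(t),P_i(t)$ directly, and the only error is $\int O((\partial g)^2)\,dx$, which is $O(\ep^2/\sqrt{1+t})$ by the pointwise bounds. No conservation from $t=0$ is invoked.

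Your argument, by contrast, routes through $E(0),P_i(0)$ and a conservation law, and that is where it fails. Even granting $|dE/dt|\lesssim\ep^3(1+t)^{-3/2}$, what you need is $|E(t)-E(0)|=\int_0^t|dE/d\tau|\,d\tau$, which is $O(\ep^3)$ and \emph{does not decay} in $t$; your ``tail integral $\int_t^\infty$'' is the wrong integral. The same holds for $P_i$. So your scheme yields only $|\Delta_h|\lesssim\ep^3+\ep^4$, which does not imply $|\Delta_h|\le C\ep^2/\sqrt{1+t}$ once $t\gtrsim\ep^{-2}$. The $t^{-1/2}$ decay is essential downstream (it feeds into Proposition~\ref{prpG} and hence the $L^\infty$ bounds on $\wht g_{UU}$), so this is a genuine obstruction, not a cosmetic loss.

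A secondary point: the claim $\|b(\cdot,2t)\|_{H^2(\m S^1)}\lesssim\ep^2/(1+t)^2$ from \eqref{estb1} is a misreading; as its use in Corollary~\ref{corestb} and Proposition~\ref{prph3} shows, that hypothesis governs $\partial_s b$, while $b$ itself is only $O(\ep^2)$ uniformly in $s$ by \eqref{estb2}. Your algebraic step still survives (the linear-in-$b$ contribution to the $1,\cos\theta,\sin\theta$ modes of $h$ cancels exactly, using \eqref{intb} for the zero mode and the eigenvalue identity $\int(b+\partial_\theta^2 b)\cos\theta\,d\theta=0$ for the others), but the remainder is $O(\ep^4)$, not $O(\ep^4/(1+t)^4)$.
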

The proof of Proposition \ref{prpangle} is the object of Section \ref{angle}.

\begin{prp}\label{prplinf}
We have
\begin{align*}
|Z^{N-9} \phi| &\leq \frac{C_0\ep+C\ep^2}{\sqrt{1+s}(1+|q|)^{\frac{1}{2}-4\rho}},\\  
|Z^{N-9} \wht g| &\leq \frac{C_0\ep+C\ep^2}{(1+s)^{\frac{1}{2}-\rho}},\\
|Z^{N-7} \phi| &\leq \frac{C_0\ep+C\ep^2}{(1+s)^{\frac{1}{2}-2\rho}},\\ 
|Z^{N-7} \wht g| &\leq \frac{C_0\ep+C\ep^2}{(1+s)^{\frac{1}{2}-2\rho}}.
\end{align*}
\end{prp}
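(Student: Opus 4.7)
The strategy combines the weighted Klainerman--Sobolev inequality (Proposition \ref{prpweight}) together with Lemma \ref{lmintegration} for the coarser $Z^{N-7}$ bounds, and the sharpened $L^\infty$--$L^\infty$ estimate (Proposition \ref{inhom}) for the finer $Z^{N-9}\phi$ bound that requires the additional factor $(1+|q|)^{-1/2+4\rho}$.

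First I would apply Proposition \ref{prpweight} to $\partial Z^{N-7}\phi$ with the weight $w$, and to $\partial Z^{N-7}\wht g_1$ with the weight $w_2$. The two extra commutators inherent to Klainerman--Sobolev bring us to $Z^{N-5}$, whose $L^2$ norms are controlled by the top-level $L^2$ bootstraps \eqref{phiN}, \eqref{dphiN}, \eqref{gN}. Dividing by the weight and applying Lemma \ref{lmintegration} converts a pointwise bound on $\partial u$ into one on $u$, producing the claimed estimates on $Z^{N-7}\phi$ and $Z^{N-7}\wht g$ with $C\ep^2(1+s)^{-1/2+2\rho}$, respectively $C\ep^2(1+s)^{-1/2+3\rho}$; the linear contribution from the initial data gives $C_0\ep$ via Corollary \ref{cordec}.

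Second, for the sharper bound on $Z^{N-9}\phi$ I would use Proposition \ref{inhom}. Writing $\Box_g\phi=0$ in the form
\[
\Box Z^I\phi = (m-g)^{\alpha\beta}\partial_\alpha\partial_\beta Z^I\phi + H^\rho\partial_\rho Z^I\phi + [\Box_g,Z^I]\phi =: F_I,
\]
the right-hand side is schematically of type $Z^J\wht g\cdot\partial^2 Z^K\phi$, $\partial Z^J\wht g\cdot\partial Z^K\phi$ plus contributions from $g_{\mathfrak b}$ and the gauge $H$. Using the bootstraps \eqref{bootphi1}, \eqref{bootg1} together with the pointwise derivative bounds from the first step, and the structural choice of $G^\alpha$ in \eqref{gu}--\eqref{gbal} which neutralizes the dangerous $g_{LL}\partial_q^2\phi$ term (via $\partial_q g_{LL}\sim\bar\partial\wht g$ and the definition of $L_\alpha G^\alpha$), I would verify
\[M_{\mu,\nu}(F_I)\leq C\ep^2\]
with $\mu=2-4\rho>3/2$ and $\nu>1$. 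Proposition \ref{inhom} then yields the inhomogeneous contribution to $Z^{N-9}\phi$, while the homogeneous part is handled by Corollary \ref{cordec} and gives the $C_0\ep$ piece. For $Z^{N-9}\wht g$ I would use the decomposition \eqref{dec2}: the background $g_{\mathfrak b}$ obeys Corollary \ref{corestb}; the piece $\wht g_1$ is treated as in the first step with the weight $w_2$ (which, by construction in Section \ref{noncomu}, is designed to avoid the logarithmic loss and therefore recovers the decay $(1+s)^{-1/2+\rho}$ rather than $(1+s)^{-1/2+2\rho}$); and the $k$ piece is estimated directly from \eqref{eqg2} through Proposition \ref{inhom}, using the pointwise decay of $\partial_q\phi$ and the cubic structure of $Q_{\ba L\ba L}$.

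The main obstacle is the bookkeeping of cross terms between $\wht g$ and $g_{\mathfrak b}$ contained in $\wht P_{\mu\nu}(\wht g,g_{\mathfrak b})$ from \eqref{ptilde}. Without care, terms of the form $\partial_s^3\partial_\theta b$, $\partial_s^3 b$ or $\partial_\theta^4 b$ would fail to decay sufficiently in $s$ to be fed into Proposition \ref{inhom}; Proposition \ref{prpcross}, which is precisely why $\wht G^\alpha$ was inserted into the generalized wave gauge \eqref{gwl}, guarantees these worst contributions are absent, so the remaining cross terms can be absorbed into $C\ep^2$ via the estimates \eqref{estb1}--\eqref{estb5ter} on $b$ together with Corollaries \ref{corestb} and \ref{estih}.
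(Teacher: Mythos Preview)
Your approach to the $Z^{N-7}$ estimates (and to $\wht g_1$ at the $Z^{N-9}$ level) has a genuine gap. The Klainerman--Sobolev inequality combined with Lemma~\ref{lmintegration} cannot yield the stated bounds, for two reasons. First, applying Proposition~\ref{prpweight} to $\partial Z^{N-7}\phi$ with weight $w$ and integrating produces, in the interior $q<R$,
\[
|Z^{N-7}\phi|\lesssim \frac{\ep(1+|q|)^{1/2}}{(1+s)^{1/2}},
\]
exactly as recorded in Corollary~\ref{estksg}. Near the spatial origin one has $|q|\sim t\sim s$, so the right-hand side is merely $O(\ep)$ and carries no decay in $s$; the target bound $(1+s)^{-1/2+2\rho}$ is not recovered. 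The same failure occurs for $\wht g_1$ with the weight $w_2$ (which in the interior is $(1+|q|)^{-1-2\sigma}$ and makes the $|q|$-growth worse, not better). Second, the $L^2$ bootstrap assumptions \eqref{phiN}, \eqref{gN} are of size $\ep$: they bound the full solution, not a nonlinear remainder. Klainerman--Sobolev therefore cannot produce the $C\ep^2$ contribution you claim, and there is no mechanism in your scheme to separate the $C_0\ep$ free-evolution piece from the quadratic correction.

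The paper (Section~\ref{seclinf}) uses the $L^\infty$--$L^\infty$ estimate of Proposition~\ref{inhom}, via Lemma~\ref{linf2}, for \emph{all four} bounds, including those at level $N-7$. The scheme is: first obtain pointwise estimates on the sources $\Box Z^I\phi$ and $\Box Z^I\wht g$ (Propositions~\ref{linfphi}, \ref{linfwhtg}, \ref{linfphiN5}, \ref{linfwhtgN5}), which are quadratic in $\ep$; then Proposition~\ref{inhom} or Lemma~\ref{linf2} converts these into a $C\ep^2$ inhomogeneous contribution with the correct uniform-in-$q$ decay in $s$, while Proposition~\ref{flat1} supplies the $C_0\ep$ piece from the data. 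The source bounds themselves are obtained from the $L^\infty$ bootstrap assumptions \eqref{bootphi2}, \eqref{bootg2} and the wave-coordinate estimates on $\wht g_{\q T\q T}$ (Corollaries~\ref{iestll}, \ref{iestul}, \ref{iestuu}); the Klainerman--Sobolev bounds of Proposition~\ref{estks} enter only as inputs at this intermediate stage. Note also that for $Z^{N-9}\wht g$ the paper applies Lemma~\ref{linf2} directly to $\wht g$ after Proposition~\ref{linfwhtg}, and does not pass through the decomposition~\eqref{dec2}. Your plan for $Z^{N-9}\phi$ via Proposition~\ref{inhom} is on the right track, but the rest of the argument must follow the same route.
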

The proof of Proposition \ref{prplinf} is the object of Section \ref{seclinf}.
\begin{prp}\label{prphigh} We have the estimates
\begin{align*}
&\|w^\frac{1}{2}\partial Z^N \phi\|_{L^2}\leq C_0 \ep +C\ep^\frac{3}{2}(1+t)^\rho,\\
&\|w^\frac{1}{2}\partial^2 Z^N \phi\|_{L^2}\leq C_0 \ep +C\ep^\frac{3}{2}(1+t)^\rho,\\
&\|w^\frac{1}{2}\partial (S Z^N\phi-s\partial_q\phi Z^N g_{LL}) \|_{L^2}\leq C_0 \ep +C\ep^\frac{3}{2}(1+t)^\rho,\\
&\|w^\frac{1}{2}\partial Z^{N+1} \phi\|_{L^2}\leq C_0 \ep +C\ep^\frac{3}{2}(1+t)^\rho,\\
&\|w_2^\frac{1}{2}\partial Z^N \wht g_{1}\|_{L^2}\leq C_0\ep +C \ep^\frac{5}{4}(1+t)^\rho ,\\
&\|w_1^\frac{1}{2}\partial Z^N \wht g\|_{L^2}\leq  C_0 \ep +C\ep^\frac{3}{2}(1+t)^{2\rho},\\
\end{align*}
and the integrated estimates 
\begin{align*}
& \int_0^t \int w'(q)(\bar{\partial}Z^N \phi)^2 dxd\tau  \leq C_0 \ep^2 + C\ep^3(1+t)^{2\rho}, \\
& \int_0^t \int w'(q)(\bar{\partial} \partial Z^N \phi)^2 dxd\tau  \leq C_0 \ep^2 + C\ep^3(1+t)^{2\rho}, \\
& \int_0^t \int w'(q)(\bar{\partial}( S Z^N\phi-s\partial_q\phi Z^N g_{LL}))^2 dxd\tau  \leq C_0 \ep^2 + C\ep^3(1+t)^{2\rho}, \\
& \int_0^t \int w'(q)(1+\tau)^{-1}(\bar{\partial} Z^{N+1} \phi)^2 dxd\tau  \leq C_0 \ep^2 + C\ep^3(1+t)^{2\rho},\\ 
& \int_0^t \int  w_2'(q)(\bar{\partial}Z^N \wht g_{1})^2 dxd\tau  \leq C_0 \ep^2+C\ep^\frac{5}{2}(1+t)^{2\rho},\\
& \int_0^t \int  w_1'(q)(\bar{\partial}Z^N \wht g)^2 dxd\tau  \leq C_0\ep^2+ 2C\ep^3(1+t)^{4\rho}, \\
& \int_0^t \int  (1+\tau)^{-2\rho}w_1'(q)(\bar{\partial}Z^N \wht g)^2 dxd\tau  \leq  C_0\ep^2+ 2C\ep^3(1+t)^{2\rho}. 
\end{align*}
\end{prp}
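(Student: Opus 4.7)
The plan is to apply the weighted energy estimate of Proposition \ref{energy} to each of $Z^I\phi$, $Z^I\wht g$, $Z^I\wht g_1$, and the modified quantity $SZ^N\phi-s\partial_q\phi Z^N g_{LL}$, with weights $w$, $w_1$, $w_2$ chosen as in the statement. Writing $\Box_g=\Box_m+H^{\alpha\beta}\partial_\alpha\partial_\beta$ with $H^{\alpha\beta}=g^{\alpha\beta}-m^{\alpha\beta}$, for each quantity $u$ we commute $Z^I$ through \eqref{s2} and apply the energy inequality
\begin{equation*}
\tfrac12\partial_t\!\int w(q)|\partial u|^2+\tfrac12\!\int w'(q)|\bar\partial u|^2\lesssim \int w(q)|\partial_t u\cdot\Box_g u|+\text{(lower order terms from }\partial H\text{)}.
\end{equation*}
The gradient-squared term on the left is exactly what produces the integrated $\bar\partial$-estimates \eqref{iphiN}--\eqref{ig2Nbis}, so the two families of improvements are obtained simultaneously.

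The source terms $\Box_g Z^I\phi$ and $\Box_g Z^I\wht g$ after commutation split into: (i) genuine null quadratic forms in $\partial\wht g,\partial\phi$ coming from $P_{\mu\nu}$ and from harmless pieces of $[Z^I,\Box_g]$, (ii) the dangerous semilinear terms $Z^I(g_{LL}\partial_q^2\phi)$ and $Z^I(g_{LL}\partial_q^2\wht g)$, (iii) quasilinear pieces involving $Z^J H\cdot\partial^2 Z^{I-J}u$, (iv) crossed terms with $g_{\mathfrak{b}}$ from $\wht P_{\mu\nu}$ and from $R_b$, and (v) cubic terms carrying the weak-null structure. Terms of type (i) and (ii) are handled as in \cite{lind}: for (ii) the wave-coordinate condition \eqref{gwl} together with \eqref{gl} gives $\partial_q g_{LL}\sim\bar\partial\wht g+\frac1r\int(\partial_q\phi)^2r\,dr$, converting the bad coefficient into a good derivative (on which the weight $w'$ acts) plus a source controlled by $\phi$ itself via the weighted Hardy inequality, Proposition \ref{hardy}. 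The modified energy $SZ^N\phi-s\partial_q\phi\,Z^N g_{LL}$ is introduced precisely to cancel the leading $s\partial_q^2\phi\cdot Z^N g_{LL}$ contribution produced when $S$ commutes with the quasilinear part, so that after this subtraction only terms with a good derivative or an extra power of $(1+s)^{-1}$ remain.

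The crucial non-trivial step is the treatment of $\wht g_1$: using the decomposition $g=g_{\mathfrak{b}}+4\Upsilon(r/t)k\,dq^2+\wht g_1$ and the equation \eqref{eqg2} for $k$, the equation for $\wht g_1$ takes the schematic form $\Box_g\wht g_1=\Upsilon(r/t)\,\bar\partial g_{\ba L\ba L}/r+\text{(already-good terms)}$, as explained in Section \ref{noncomu}. The point is that we estimate $\wht g_1$ with weight $w_2=w_1/(1+|q|)^{2\sigma}$: the gain of $(1+|q|)^{-\sigma}$ compensates, via the bound $|\bar\partial u|\lesssim (1+s)^{-\sigma}(1+|q|)^{\sigma-1}|Zu|$ and the weighted Hardy inequality, the $t^\rho$ loss accumulated by $\partial g_{\ba L\ba L}$, so that the right-hand side in the energy inequality for $\wht g_1$ is integrable in time and yields the improved $C_0\ep+C\ep^{5/4}(1+t)^\rho$. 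For $Z^N\wht g$ with weight $w_1$ one accepts the cubic-weak-null loss and only gets $C_0\ep+C\ep^{3/2}(1+t)^{2\rho}$. The crossed source $\wht P_{\mu\nu}$ is controlled using Proposition \ref{prpcross} (which removes the worst third-order-in-$b$ terms) combined with the hypotheses $\mathcal H$ on $b$ and Corollaries \ref{corestb}--\ref{estih}; the Ricci source $2(R_b)_{\mu\nu}$ is a compactly supported-in-$q$ quantity by Proposition \ref{riccigb} and its $L^2$ in time is controlled by \eqref{estb5}--\eqref{estf2bis}.

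The main obstacle is to arrange the hierarchy of weights and growth rates so that, on the right-hand side of each energy identity, every contribution is either (a) integrable in time after using a good derivative plus $w'$ combined with the already-available integrated estimates \eqref{iphiN}--\eqref{ig2Nbis}, or (b) of size $\ep^{3/2}(1+t)^{-1+\rho}$ so that Grönwall produces the claimed $(1+t)^\rho$ or $(1+t)^{2\rho}$ factor with a prefactor $C\ep^{3/2}$ (resp.\ $C\ep^{5/4}$ for $\wht g_1$). In particular, one must track carefully the cubic interaction $\partial_q g_{UU}\partial_q\wht g_{\ba LL}$ and the $G^\rho\partial_\rho(g_{\mathfrak b})$ terms to see that the budget closes; this is precisely what the almost-sharp estimate \eqref{esth} on $h$ is designed to give. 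Once all source terms are bounded in $L^2_t L^2_x$ with the right rates, integrating the differential inequalities from $t=0$, where the initial-data bounds from Appendix \ref{reguini} contribute the $C_0\ep$ term, yields all the improvements simultaneously.
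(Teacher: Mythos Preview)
Your sketch is correct in outline and follows the same strategy as the paper: weighted energy estimates in the curved metric (Proposition~\ref{prpweighte}) applied to each quantity with the hierarchy of weights $w,w_1,w_2$, the wave-coordinate control of $g_{LL}$, the modified quantity $SZ^N\phi-s\partial_q\phi\,Z^Ng_{LL}$ to absorb the top-order quasilinear commutator, and the $w_1\to w_2$ shift for $\wht g_1$ to beat the non-commutation term coming from $k$. The paper organises the source terms via Proposition~\ref{prpstr} into $R^1+{}^NM+{}^NM^E+{}^NQ$ and then splits each energy integrand into an ``$A^N$'' part (directly in $L^2$), a ``$B^N$'' part (controlled through the integrated $\bar\partial$-bounds), and a ``$C^N$'' part requiring integration by parts in time or in $\theta$; your (a)/(b) dichotomy corresponds to $A^N$ and $B^N$ but you do not single out the $C^N$ mechanism, which is where the real work lies.

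Concretely, three classes of terms do \emph{not} fall under either (a) or (b) and need an explicit space-time integration by parts: the contribution $\tfrac{1}{r^2}\Upsilon(r/t)(1-\chi(q))\partial_\theta Z^Nh$ in $\bar\partial Z^NG$ (one has no estimate on $\partial_\theta Z^Nh$, only on $Z^Nh$ and $\partial_sZ^Nh$, so one integrates by parts in $\theta$ and then in $t$); the Ricci-type source $\chi'(q)\,s\,\partial_s^2Z^Nb$, which is handled by writing $\partial_sb=f_1+f_2$ and integrating the $f_2$-piece by parts in $t$ while estimating the $f_1$-piece directly via~\eqref{estf1}; and, in the $\wht g_1$ estimate, the terms $\partial_q\wht g_{\ba L\ba L}\,\partial_{\mathcal T}Z^N\wht g_{LL}$ from ${}^NQ_{\mathcal T\ba L}$, which again require integration by parts in $t$. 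For $Z^{N+1}\phi$ there is an additional device you do not mention: one multiplies the energy identity by $(1+t)^{-1}$ before integrating, and the dangerous top term $s\,\bar\partial Z^N\wht g_{LL}\,\partial_q^2\phi$ is treated by a double integration by parts (in $t$ and in $\theta$). These are the steps that make the ``budget close''; without them several of your source terms are only $O(\ep^2(1+t)^{-1+\rho})$ in $L^2$, which after Grönwall would give $(1+t)^{C\ep}$ rather than the claimed polynomial improvement.
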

The proof of Proposition \ref{prphigh} is the object of Section \ref{sechigh}.

\begin{prp}\label{prplow}
	We have the estimates
	\begin{align*}
	&\|w^\frac{1}{2}\partial Z^{N-1} \phi\|_{L^2}\leq C_0 \ep +C\ep^\frac{3}{2}(1+t)^\rho,\\
&\|w^\frac{1}{2}\partial Z^{N-3} \wht g\|_{L^2}\leq  C_0 \ep +C\ep^\frac{3}{2}(1+t)^\rho,\\
&\|w_1^\frac{1}{2}\partial Z^{N-4} \wht g_1\|_{L^2}\leq C_0 \ep+C\ep^\frac{5}{4},\\
 &\|\partial  Z^{N-10}\wht g_1\|_{L^2} \leq C_0 \ep+C\ep^\frac{5}{4}.
	\end{align*}
\end{prp}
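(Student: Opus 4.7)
The plan is to apply the weighted energy inequality (Proposition \ref{energy}) with the appropriate weight to each quantity, using the wave equations satisfied by $Z^I\phi$, $Z^I\wht g$, and $Z^I\wht g_1$, and to bound the source terms by making extensive use of the $L^\infty$ bootstrap assumptions \eqref{bootphi1}--\eqref{bootg2} together with the sharper $L^\infty$ estimates furnished by Proposition \ref{prplinf}. The decisive advantage in the low-order estimates is that, with fewer vector fields commuted, we always have enough derivatives to spare so the factor carrying the most $Z$'s can be placed in $L^2$ (controlled by the higher-order $L^2$ bootstrap assumptions \eqref{phiN}--\eqref{gN}) while the other factors are placed in $L^\infty$ with essentially sharp decay. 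Thus products of the schematic form $\wht g \cdot \partial^2 Z^{I}\phi$, $\partial Z^I \wht g \cdot \partial \phi$ and analogous cubic terms are all integrated in time without creating a $(1+t)^{2\rho}$ loss in the top-order energy.

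For $\|w^{1/2}\partial Z^{N-1}\phi\|_{L^2}$, commuting $\Box_g$ with $Z^{N-1}$ produces source terms that we estimate in $L^\infty\times L^2$ using Proposition \ref{prplinf} and the already-bounded $\|w^{1/2}\partial Z^N\phi\|_{L^2}\le 2C_0\epsilon (1+t)^\rho$; the spacetime integral of $w(q)|f\,\partial_t Z^{N-1}\phi|$ is then controlled by $C\epsilon^{3/2}(1+t)^\rho$. The estimate on $\|w^{1/2}\partial Z^{N-3}\wht g\|_{L^2}$ follows the same pattern applied to equation \eqref{s2}: the quadratic source $\partial\phi\,\partial\phi$ is immediate, the Ricci contribution $(R_b)_{\mu\nu}$ is supported in $R\le q\le R+1$ and controlled by Proposition \ref{riccigb} together with Corollary \ref{estih}, the semi-linear $P_{\mu\nu}(g)(\partial \wht g,\partial \wht g)$ terms are decomposed in the null frame so that components lacking decay (such as $g_{LL}$) only interact with good derivatives via the generalized wave coordinate condition \eqref{gwl}, and the crossed term $\wht P_{\mu\nu}(\wht g,g_{\mathfrak{b}})$ is tamed by Proposition \ref{prpcross} and the set of hypothesis $\q H$ on $b$.

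The estimates on $\wht g_1$ at orders $N-4$ and $N-10$ are more delicate because we must avoid the $(1+t)^\rho$ loss exhibited at order $N$. The key is the mechanism recalled in Section \ref{noncomu}: the inhomogeneous term driving the equation for $\wht g_1$ involves $\Up(r/t)\,\bar\partial g_{\ba L\ba L}/r$ and similar commutator terms. Working with $w_1$ (strictly weaker than $w$) and writing $\bar\partial$ as $(1+s)^{-\sigma}(1+|q|)^{\sigma-1}Z$, then applying the weighted Hardy inequality (Proposition \ref{hardy}), one gains a factor $(1+t)^{-1-\sigma+\rho}$, integrable in time since $\sigma>\rho$. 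This produces an $\epsilon^{5/4}$ bound without any $(1+t)^\rho$ growth, matching the statement; the unweighted estimate $\|\partial Z^{N-10}\wht g_1\|_{L^2}\le C_0\epsilon+C\epsilon^{5/4}$ follows the same scheme with the trivial weight $w_1\equiv 1$, ten extra derivatives giving enough room to bound every source pointwise.

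The main obstacle is to make the scheme of Section \ref{noncomu} rigorous at the derivative level $N-4$ without re-introducing the $(1+t)^\rho$ loss. Concretely, after commuting $Z^{N-4}$ with $\Box_g$ applied to $\wht g_1$ one must track: (i) the high-order derivatives of $g_{\mathfrak{b}}$, controlled by Corollary \ref{corestb} and the hypothesis $\q H$ on $b$; (ii) the crossed terms in $\wht P_{\mu\nu}(\wht g,g_{\mathfrak{b}})$, for which Proposition \ref{prpcross} removes precisely the couplings that would otherwise be uncontrollable; (iii) the commutator $[\Box_g,Z^{N-4}]$ acting on the corrective $\Up(r/t)k\,dq^2$, which through equation \eqref{eqg2} is a cubic expression in $\partial Z^{\le N-4}\phi$ and $\partial Z^{\le N-4}\wht g$. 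Each must be paired with a higher-order $L^2$ norm and a lower-order $L^\infty$ bound, with the crucial gain always coming from trading one power of $\bar\partial$ against $(1+s)^{-\sigma}(1+|q|)^{\sigma-1}$ and applying Proposition \ref{hardy}.
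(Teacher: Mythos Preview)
Your outline follows the same overall strategy as the paper (Section \ref{seclow}), and the heuristic you invoke from Section \ref{noncomu} is indeed the right mechanism for the $\wht g_1$ estimates. Two points are handled imprecisely, though, and one of them is a genuine gap.

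First, you do not distinguish which energy estimate is used at which order. In the paper, the estimates for $Z^{N-1}\phi$, $Z^{N-4}\wht g_1$ and $Z^{N-10}\wht g_1$ are carried out with the \emph{flat} energy inequality (Proposition \ref{prpweightem}) precisely to avoid the $\frac{\ep}{1+t}\int w(\partial u)^2$ term of the curved version (Proposition \ref{prpweighte}), which would reintroduce the $(1+t)^\rho$ growth. The $Z^{N-3}\wht g$ estimate, by contrast, is done with the curved inequality and that is why it carries the $(1+t)^\rho$. Using the flat estimate forces you to control $(\Box-\Box_g)Z^I\wht g_1$ as an additional source; this is done in Proposition \ref{prpn4g} and is not difficult, but it must be said.

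Second, and more importantly, your description of the $\wht g_1$ source is not quite right. The non-commutation contribution is not $\Up(r/t)\,\bar\partial g_{\ba L\ba L}/r$, and it does not arise from $[\Box_g,Z^{N-4}]$; by Corollary \ref{corstr} it is $O\bigl(\Up(r/t)\,r^{-2}\partial_\theta Z^{N-3}k\bigr)$, coming from the failure of the scalar wave operator to commute with the null-frame projection (equation \eqref{noncom}). Here $k$ is \emph{not} ``a cubic expression in $\partial Z^{\le N-4}\phi$ and $\partial Z^{\le N-4}\wht g$'' as you write: it is the solution of the auxiliary wave equation \eqref{eqg2}, whose right-hand side $Q_{\ba L\ba L}$ is cubic. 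Consequently you cannot bound $\partial_\theta Z^{N-3}k$ pointwise by products of lower-order quantities; you need a separate weighted energy estimate for $k$. The paper obtains $\|w^{1/2}\partial Z^{N-3}k\|_{L^2}\le C\ep^{3/2}(1+t)^\rho$ alongside the $\wht g$ estimate (second line of Proposition \ref{prpn3g}), and it is this bound, combined with the weight shift $w_1\lesssim (1+|q|)^{-2\sigma}w$ and Hardy, that yields the integrable-in-time factor $(1+t)^{-1-\sigma+\rho}$ in Proposition \ref{prpn4g}. For the $N-10$ case the paper instead uses the $L^\infty$ bound on $Z^{N-9}k$ from Corollary \ref{linfk}; this is why the cutoff is at $N-10$ (so that $I+1\le N-9$). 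Your sentence ``ten extra derivatives giving enough room to bound every source pointwise'' is correct in spirit, but the source that must be bounded pointwise is $k$, not the cubic $Q_{\ba L\ba L}$.
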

The proof of Proposition \ref{prplow} is the object of Section \ref{seclow}.
To improve the estimate for $h$ we set
$$\check{h}(\theta,s)
=2\int_0^\infty (1+\beta)g^{0\alpha}\sqrt{|\det{g}|}\partial_\alpha \phi \partial_r \phi dr,$$
where the integrand is taken at time $t=\frac{s}{2}$ and $\beta$ is defined by $\beta(r,T,\theta)=0$ and
$$\partial_s \beta +\frac{1}{4}g_{LL}\partial_r \beta= -\frac{1}{2}\frac{\wht g_{L \ba L}}{r}-\frac{1}{2}F_2,$$
where $F_2$ is defined in Corollary \ref{corbeta}.
The additional terms we add (compared to the heuristic choice $-2\int \partial_t \phi \partial_r \phi rdr$ ) are needed for two purposes
\begin{itemize}
	\item $\partial_s\check{ h}$ must be $O\left(\frac{1}{(1+t)^2}\right)$,
	\item $\partial_s \check{h}$ must be at the same level of regularity than $\partial_\theta \partial \phi$ and $\partial_\theta g$.
\end{itemize}

We extend the function $\check{h}$ to all times by
$$ h'(\theta,s)= \psi(s)\check{h}(\theta,s)+(1-\psi(s))\check{h}(\theta,2T),$$
where $\psi$ is a cut-off function such that $\psi=1$ for $s\leq 2T-1$ and $\psi=0$ for $s>2T$.
\begin{prp}\label{prph}
$h'$ satisfy the following estimates
\begin{align}
\label{estih2}&\| Z^{N-1} h'\|_{L^2(\m S^1)}\leq C\ep^2,\\
\label{esth4}&\| Z^{N} h'\|_{L^2(\m S^1)} \leq \ep^2(1+t)^\rho,\\
\label{estih1}&\|\partial_sZ^{N-11} h'\|_{L^2(\m S^1)}\leq \frac{C\ep^2}{(1+t)^2},\\
\label{estih3}&\|\partial_sZ^{N-1} h'\|_{H^{-1}(\m S^1)}\leq \frac{C\ep^2}{(1+t)^{2-\frac{1}{2}\sigma}},\\
\label{esth5}&\int_0^t(1+\tau)\| Z^{N} h'\|^2_{L^2(\m S^1)}d\tau \leq \ep^2(1+t)^{2\rho},
\end{align}
and we can write $\partial_s h' = h'_1+h'_2$ with, 
\begin{align}
\label{estf1h1}&\int_0^t (1+\tau)^4\|\partial_s Z^N h'_1\|_{H^{-2}(\m S^1)}^2d\tau \leq C\ep^4(1+t)^{2\rho},\\
\label{estf1hb}&\| Z^N h'_1\|_{H^{-2}(\m S^1)} \leq C\ep^2(1+t)^{\frac{1}{2}\sigma-2},\\
\label{estf2hb}&\| Z^N h'_2\|_{H^{-2}(\m S^1)} \leq C\ep^2(1+t)^{\frac{3}{2}-\rho},\\
\label{estfh2}& \int_0^t (1+\tau)^{3-2\rho}\| Z^N h'_2\|_{H^{-1}(\m S^1)}^2d\tau \leq C\ep^4(1+t)^{2\rho},\\
\label{estf2hbis}& \int_0^t (1+\tau)^3\|\partial_s  Z^N h'_2\|_{H^{-1}(\m S^1)}^2ds \leq C\ep^4(1+t)^{2\rho},\\
\label{estf1hbis}&\int_0^t (1+\tau)^3\|\partial_s Z^N h'_1\|_{H^{-1}(\m S^1)}^2d\tau \leq C\ep^4(1+t)^{2\rho},\\
\label{estf1bbis}&\int_0^t (1+\tau)^2\| Z^N h'_1\|_{H^{-1}(\m S^1)}^2d\tau \leq C\ep^4(1+t)^{2\rho},
\end{align}
and also
\begin{equation}
\label{esth'}\left\| Z^{N-5}\left(\int_0^\infty \partial_r \phi(t,r,\theta)\partial_t \phi(t,r,\theta)rdr + h'(\theta,2t)\right)\right\|_{L^2(\m S^1)} \leq  C\frac{\ep^3}{\sqrt{1+t}} ,
\end{equation}
\begin{equation}
\label{esth2'}\left\| Z^{N-1}\left(\int_0^\infty \partial_r \phi(t,r,\theta)\partial_t \phi(t,r,\theta)rdr + h'(\theta,2t)\right)\right\|_{L^2(\m S^1)} \leq  C\frac{\ep^3(1+t)^\rho}{\sqrt{1+t}}.
\end{equation}
\end{prp}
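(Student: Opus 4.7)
The plan is to treat $\check h(\theta,s)$ as the primary object, since the cut-off $\psi$ has $\psi=1$ for $s\leq 2T-1$ and is smooth with compactly supported derivative, so its effect on every asserted estimate is either trivial or absorbed into the constants: the bulk of the work is to establish the analogous bounds for $\check h$ on $\{s\leq 2T\}$. All estimates reduce, after commuting $Z^I$ through the integral (using $[S,\partial_r]=-\partial_r$, and the fact that commutators of $\Omega$-vector fields with $\partial_r,\partial_\theta$ produce only lower order terms), to bounds on $\int_0^\infty A \cdot Z^{I_1}\partial\phi \cdot Z^{I_2}\partial_r\phi \, r\, dr$, where $A$ is a polynomial in the metric and $\beta$ factors with at most one derivative. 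Since $\phi$ is supported initially in $B(0,R)$, finite propagation speed confines $\phi$ to $\{q\leq R\}$ and all the $r\, dr$ integrals are really integrals over a region where $r\sim 1+t\sim 1+s$ in the interior.

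For the static estimates \eqref{estih2}, \eqref{esth4}, I would apply Cauchy--Schwarz in $r$: place one $\partial\phi$ factor in $L^\infty_r$ via Klainerman--Sobolev (using the pointwise bootstraps \eqref{bootphi1}--\eqref{bootphi2}) and the other in $L^2_r$ via the energy bounds \eqref{phiN}, \eqref{ZphiN}, \eqref{phiN1}; the geometric weight $r\,dr$ is absorbed by the fact that $r\lesssim 1+s$ on the support of $\phi$ combined with the square-root of the weight $w(q)^{1/2}$ (and the weighted Hardy inequality of Proposition \ref{hardy} when $Z^I$ falls on one factor as an unweighted $\phi$ rather than $\partial\phi$). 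The integrated estimate \eqref{esth5} is then deduced from \eqref{iphiN}, using that $\partial_s$ applied to $\check h$ eventually reveals a good derivative $\bar\partial\phi$ after the manipulation described next.

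The heart of the proposition is the decomposition $\partial_s h'=h_1'+h_2'$. Differentiating under the integral, the only dangerous contribution comes from $\partial_s$ falling on $g^{0\alpha}\partial_\alpha\phi$ and producing $g^{00}\partial_t^2\phi$, which is not controllable in $L^2_r$ on its own. One eliminates it by the wave equation $\Box_g\phi=0$ written as
\[
g^{00}\partial_t^2\phi \;=\; -2g^{0i}\partial_t\partial_i\phi-g^{ij}\partial_i\partial_j\phi+H_b^\rho\partial_\rho\phi,
\]
and integrating by parts in $r$. The boundary term at $r=0$ vanishes by smoothness, the one at $r=\infty$ by compact support. The resulting expression splits into three groups: (i) quadratic terms in $\partial_r\phi$ and $\bar\partial\phi$ decaying like $(1+s)^{-2}$ and integrable in $s$ by \eqref{iphiN}--\eqref{idphiN}, which form $h_1'$ and yield \eqref{estf1hb}, \eqref{estf1h1}, \eqref{estf1hbis}, \eqref{estf1bbis}; (ii) two precise cross terms $\frac{1}{2}\wht g_{L\ba L}r^{-1}\partial_r\phi\cdot\partial_r\phi$ and $\frac{1}{2}F_2\,\partial_r\phi\cdot\partial_r\phi$ which, and this is the whole point of the construction of $\beta$, are exactly cancelled by the $\partial_s\beta$ contribution, thanks to the transport equation $\partial_s\beta+\frac{1}{4}g_{LL}\partial_r\beta=-\frac{1}{2}\wht g_{L\ba L}/r-\frac{1}{2}F_2$ (the $\frac{1}{4}g_{LL}\partial_r\beta$ piece is absorbed after a further integration by parts into the same type of terms as in (i)); (iii) residual contributions where $\partial_s$ lands on $\sqrt{|\det g|}$, on the second $\partial\phi$, or on $\beta$ itself, which form $h_2'$ and inherit the $(1+s)^{-3/2+\rho}$ pointwise bound and the integrated bound \eqref{estfh2} from \eqref{ig2N} and the bootstraps on $\wht g$. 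Throughout, using $H^{-1}$ and $H^{-2}$ norms in $\theta$ allows us to absorb one or two $\partial_\theta$ by integration by parts in $\theta$, matching the regularity level of $\partial\phi$, $\partial\wht g$ rather than $\partial_\theta\partial\phi$, $\partial_\theta\partial\wht g$.

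Finally, the comparison estimates \eqref{esth'} and \eqref{esth2'} are obtained by writing
\[
(1+\beta)g^{0\alpha}\sqrt{|\det g|}\partial_\alpha\phi\,\partial_r\phi \;=\; -\partial_t\phi\,\partial_r\phi+E,
\]
where $E$ is a cubic remainder in $(\beta,\wht g,\partial\phi)$, and bounding $\int_0^\infty E\, r\,dr$ by Cauchy--Schwarz with the $L^\infty$ bootstraps \eqref{bootphi1}--\eqref{bootg1} on one factor and the $L^2$ bootstraps \eqref{phiN}, \eqref{ZphiN} on the other, producing the claimed $\ep^3/\sqrt{1+t}$ (resp. $\ep^3(1+t)^\rho/\sqrt{1+t}$) factor. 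The main obstacle will be the careful bookkeeping for the cancellation in step (ii) at the highest regularity $Z^N$ without loss of derivatives: one must verify that no term of the form $\partial_s^3 Z^{N-2}\phi$ or $\partial_s^2 Z^N\wht g$ appears uncontrolled after commuting $Z^N$ with the integration by parts, which is the structural reason behind the specific choice \eqref{gu}--\eqref{gbal} of the gauge modifier $G$ and behind the subtraction of the auxiliary $F_2$ in the definition of $\beta$.
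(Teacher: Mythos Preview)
Your outline follows the paper's strategy: Cauchy--Schwarz for the static bounds, the wave equation plus integration by parts in $r$ for $\partial_s \check h$, the transport equation for $\beta$ to kill the dangerous $\partial_r g_{LL}(\partial_q\phi)^2$ contribution, and negative Sobolev norms in $\theta$ to trade a $\partial_\theta$. However, two substantive ingredients are missing.

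First, you never invoke the modified scaling energy $SZ^N\phi - s\,\partial_q\phi\, Z^N g_{LL}$ from the bootstrap \eqref{SphiN}. At top order the term $\int O(\cdot)\,\partial_s Z^N\phi\,dr$ cannot be bounded pointwise in $t$ by $(1+t)^{-2}$ using only \eqref{phiN} or \eqref{iphiN}: writing $\partial_s Z^N\phi=\tfrac{1}{s}SZ^N\phi+\tfrac{q}{s}\partial_q Z^N\phi$ and controlling $SZ^N\phi$ via $\|w^{1/2}\partial Z^{N+1}\phi\|_{L^2}\lesssim\ep(1+t)^{1/2+\rho}$ only yields $(1+t)^{-3/2+\rho}$. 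The paper instead writes
\[
\partial_s Z^N\phi=\tfrac{1}{s}\bigl(SZ^N\phi - s\,\partial_q\phi\, Z^N g_{LL}\bigr)+\partial_q\phi\, Z^N g_{LL}+\tfrac{q}{s}\partial_q Z^N\phi
\]
and uses \eqref{SphiN} with the weighted Hardy inequality on the first piece to obtain $(1+t)^{-2+C\rho}$. This is precisely what produces \eqref{estf1hb} and \eqref{estih3}; without it those pointwise bounds fail.

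Second, your description of the split $h_1'/h_2'$ is essentially inverted relative to the paper's. The terms where $\partial_s$ lands on $\sqrt{|\det g|}$ or on the second $\partial\phi$ are \emph{good} terms (they carry a $\bar\partial g$ or $\bar\partial\phi$) and belong to $h_1'$. What actually forces the slower-decaying piece $h_2'$ is the $Z^N g_{\q T\q T}$ contribution that \emph{survives} the $\beta$-cancellation: after combining $A_1+A_4+A_7$ one is left not with zero but with $F_1$ from Corollary~\ref{corbeta}, which contains $\partial_U g_{\q T\q T}$. These terms are controlled only through the integrated wave-coordinate estimates of Propositions~\ref{l2ll}--\ref{l2uu}, and moreover involve $\sigma^0_{UL}\sim s\,\partial_s b$, which must itself be split via the assumed decomposition $\partial_s b=f_1+f_2$ from the hypotheses $\q H$. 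Your step (ii) asserts exact cancellation, but the remainder is exactly the content of $h_2'$.
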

The proof of Proposition \ref{prph} is the object of Section \ref{secprph}.
We obtain $b$ thanks to the following proposition

\begin{prp}\label{prpb}
There exists $b_0(s),b_1(s),b_2(s)$ such that there exists a solution $b^{(2)}$ of
\begin{align*}&2a(\theta+f)(1+b^{(2)})^{-2}+(1+b^{(2)})^{-2}-1-2\frac{\partial^2_\theta b^{(2)}}{(1+b^{(2)})}+(1+b^{(2)})^{-2}(\partial_\theta b^{(2)})^2\\
&=\Pi h'(\theta,s)+b_0+b_1\cos(\theta)+b_2\sin(\theta),
\end{align*}
and $b^{(2)}$ satisfy
$$\int_{\m S^1} \frac{b^{(2)}}{(1+b^{(2)})}d\theta=0,$$
$$\|b^{(2)}\|_{H^{l+2}(\m S^1)}\lesssim \|h'\|_{H^{l}(\m S^1)},$$
$$\|\partial^k_s b^{(2)}\|_{H^{l+2}(\m S^1)}\lesssim \|\partial^k_s h'\|_{H^{l}(\m S^1)},$$
and 
$$|b_0-a_0|+|b_1-a_1|+|b_2-a_2|\lesssim \ep^4,$$
$$|\partial^k_s b_0|+|\partial^k_s b_1|+|\partial^k_s b_2|\lesssim \ep^2\|\partial^k_s h'\|_{L^2(\m S^1)}.$$
\end{prp}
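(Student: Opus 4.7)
The plan is to solve the target equation as a quasilinear elliptic problem on $\m S^1$ (parametrised by $s$) via a fixed-point argument tailored to the three-dimensional kernel of the linearisation. Denoting the left-hand side by
$$F(b) := 2a(\theta+f)(1+b)^{-2} + (1+b)^{-2} - 1 - 2\frac{\partial_\theta^2 b}{1+b} + (1+b)^{-2}(\partial_\theta b)^2,$$
a direct expansion around $b=0$ yields $F(b) = 2a(\theta) + Lb + R(b)$, where $L := -2(1 + \partial_\theta^2)$ is the linearisation and $R(b)$ collects all higher-order corrections — including the Taylor expansion of $a(\theta+f)$, which is at least quadratic in $b$ because the constraint $1+\partial_\theta f = (1+b)^{-1}$ forces $f = O(b)$. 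The operator $L$ has three-dimensional kernel $\q K := \mathrm{span}\{1, \cos\theta, \sin\theta\}$ on $\m S^1$ and is an isomorphism $\Pi H^{l+2}(\m S^1) \to \Pi H^l(\m S^1)$ for every $l$, the space $\Pi H^l$ being precisely the $L^2$-orthogonal complement of $\q K$. Projecting the target equation onto $\Pi$ and onto $\q K$ then yields
$$L(\Pi b^{(2)}) = \Pi h' - \Pi\bigl[2a(\theta+f) - 2a(\theta) + R(b^{(2)})\bigr],$$
$$b_0 + b_1\cos\theta + b_2\sin\theta = (I-\Pi)\bigl[2a(\theta+f) + R(b^{(2)})\bigr],$$
the second relation simply reading off $b_0,b_1,b_2$ once $b^{(2)}$ has been constructed.

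Next I would set up a contraction mapping for $\tilde b := \Pi b^{(2)}$. Writing $b^{(2)} = \tilde b + c_0$ with $c_0 \in \m R$ amounts to declaring the $\cos, \sin$ components of $(I-\Pi)b^{(2)}$ to vanish, which is harmless since any such component could be absorbed into a redefinition of $b_1, b_2$. The normalisation $\int_{\m S^1} b^{(2)}/(1+b^{(2)})\,d\theta = 0$ then reduces to $2\pi c_0 = \int_{\m S^1} \tilde b^2\,d\theta + O(\|\tilde b\|_\infty^3)$ and defines $c_0(\tilde b)$ as a smooth, quadratically small functional of $\tilde b$ via the implicit function theorem near $\tilde b = 0$. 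Inverting $L$ on $\Pi H^l$ gives the fixed-point formulation
$$\tilde b = L_\Pi^{-1}\Bigl[\Pi h' - \Pi\bigl(2a(\theta+f) - 2a(\theta)\bigr) - \Pi R(\tilde b + c_0(\tilde b))\Bigr] =: T(\tilde b).$$
Using the standard elliptic bound $\|L_\Pi^{-1} g\|_{H^{l+2}(\m S^1)} \lesssim \|g\|_{H^l(\m S^1)}$ together with $|a| \lesssim \ep^2$ and the quadratic character of $R$, one obtains
$$\|T(\tilde b)\|_{H^{l+2}} \lesssim \|\Pi h'\|_{H^l} + \ep^2 \|\tilde b\|_{H^{l+2}} + \|\tilde b\|_{H^{l+2}}^2,$$
so for $\ep$ small $T$ is a contraction on a ball of radius $\sim \|\Pi h'\|_{H^l}$, producing the sought $\tilde b$ and hence $b^{(2)}$; the bound $\|b^{(2)}\|_{H^{l+2}} \lesssim \|h'\|_{H^l}$ follows at once.

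For the $s$-derivative estimates I would differentiate the fixed-point identity in $s$ — neither $L$ nor $\Pi$ depend on $s$ — invert $L_\Pi$ once more, and absorb the terms involving $\partial_s b^{(2)}$ through their $\ep^2$ prefactor; induction on $k$ yields $\|\partial_s^k b^{(2)}\|_{H^{l+2}} \lesssim \|\partial_s^k h'\|_{H^l}$. Finally, the estimates on $b_0, b_1, b_2$ follow directly from the $(I-\Pi)$-projection: the $\{1, \cos, \sin\}$-Fourier coefficients of $2a(\theta+f)$ agree with those of $2a(\theta)$ up to a Taylor remainder of size $O(|a||f|) = O(\ep^4)$, and $(I-\Pi)R(b^{(2)}) = O(\ep^4)$, giving the claimed $\ep^4$ bound on $|b_i - a_i|$ (up to the overall normalisation implicit in the statement). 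The hard part will be juggling all the higher $s$-derivative and Sobolev norms simultaneously without losing powers of $\ep$: in particular, verifying that $\partial_s f$, $\partial_s c_0(\tilde b)$, and the nested nonlinear remainders compose with $L_\Pi^{-1}$ as expected, and tracking the interplay between the quadratic nature of $R$ and any potential loss of derivatives it might introduce.
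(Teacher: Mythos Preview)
Your approach is correct and follows essentially the same fixed-point strategy as the paper, with one methodological difference worth noting. The paper first performs the substitution $\beta = b/(1+b)$, which has the effect of turning the nonlinear normalisation $\int b/(1+b)\,d\theta = 0$ into the linear condition $\int \beta\,d\theta = 0$; one then runs the contraction directly for $\beta$ in the space $\{\int \beta = 0\}$, with $b_0,b_1,b_2$ chosen at each step to enforce the three solvability conditions for $L = -2(1+\partial_\theta^2)$. You instead keep $b$, restrict a priori to $b = \tilde b + c_0$ with $\tilde b \in \Pi H^{l+2}$, and recover $c_0 = c_0(\tilde b)$ from the normalisation via the implicit function theorem. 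Both routes are valid; the paper's substitution is slightly slicker in that it avoids the auxiliary implicit-function step and makes the contraction domain a linear subspace, while your version has the minor advantage of making the role of the three kernel modes and the two-parameter non-uniqueness of $b^{(2)}$ completely explicit. Your observation that $\Pi(2a(\theta)) = 0$ (so that $a$ enters only through the $O(\ep^2\|\tilde b\|)$ correction $a(\theta+f)-a(\theta)$) is exactly what makes the bound $\|b^{(2)}\|_{H^{l+2}} \lesssim \|h'\|_{H^l}$ hold without an additive $|a|$ term.
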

The proof of Proposition \ref{prpb} is the object of Section \ref{secprpb}.
\begin{prp}
\label{prpfin}
We set
$$h^{(2)}= \Pi h'(\theta,s)+b_0+b_1\cos(\theta)+b_2\sin(\theta),$$
There exists a solution $(g^{(2)}=g_{{\mathfrak{b}}^{(2)}}+\wht g^{(2)},\phi^{(2)})$ of \eqref{s2}
on $[0,T]\times \m R^2$, in generalized wave coordinates
$$(H^{(2)})^\alpha = (g^{(2)})^{\lambda \beta}(\Gamma^{(2)})^\alpha_{\lambda \beta}=(F^{(2)})^\alpha+(G^{(2)})^\alpha +(\wht G^{(2)})^\alpha,$$
with
$$(F^{(2)})^\alpha = \Box_{g_{\mathfrak{b}^{(2)}}}x^\alpha,$$
\begin{align*}
U_\alpha (G^{(2)})^\alpha&=-s(1+b^{(2)})\partial_s f^{(2)}\chi'(q),\\
L_\alpha (G^{(2)})^\alpha& =  \frac{1}{r}\Up\left(\frac{r}{t}\right)\int_{\infty}^r \left(2(\partial_q \phi^{(2)})^2r -h^{(2)}(\theta,2t)\partial_q^2(q\chi(q))\right)dr,\\
\ba L_\alpha (G^{(2)})^\alpha &= 0,
\end{align*}
with $f^{(2)}$ such that $1+\partial_\theta f^{(2)}=(1+b^{(2)})^{-1},$
and $(\wht G^{(2)})^\alpha$ contains the terms in $ (g^{(2)})^{\lambda \beta}(\Gamma^{(2)})^\alpha_{\lambda \beta}$ of the form $\wht g^{(2)} \partial^l_s\partial^k_\theta b^{(2)}$, where
$l+k-2\geq 1$ or $l\geq 2$.
Moreover $(g^{(2)},\phi^{(2)})$ satisfy the same estimates as $(g,\phi)$, 
$b^{(2)}$ satisfy the estimates $\q H$ and
\begin{equation*}
\left\|\Pi  Z^{N-5}\left(\int_0^\infty (\partial_q \phi^{(2)}(t,r,\theta))^2rdr + h^{(2)}(\theta,2t)\right)\right\|_{L^2(\m S^1)} \leq  C\frac{\ep^3}{\sqrt{1+t}} .
\end{equation*}
\begin{equation*}
\left\|\Pi  Z^{N-1}\left(\int_0^\infty (\partial_q \phi^{(2)}(t,r,\theta))^2rdr + h^{(2)}(\theta,2t)\right)\right\|_{L^2(\m S^1)} \leq  C\frac{\ep^3(1+t)^\rho}{\sqrt{1+t}} .
\end{equation*}

\end{prp}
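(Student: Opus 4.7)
\medskip
\noindent\emph{Proof plan.} The strategy decouples into three parts: verify that $b^{(2)}$ belongs to the class $\q H$; solve the Cauchy problem in the new gauge; and transfer the bootstrap estimates from $(g,\phi)$ to $(g^{(2)},\phi^{(2)})$ via a small diffeomorphism.

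The first step checks that $b^{(2)}\in\q H$. Proposition \ref{prpb} controls $b^{(2)}$ in $H^{k+2}(\m S^1)$ by $h^{(2)}=\Pi h'+b_0+b_1\cos\theta+b_2\sin\theta$ in $H^k(\m S^1)$, with a gain of two angular derivatives and with the $b_i$ close to the $a_i$. Feeding in the estimates \eqref{estih2}--\eqref{estf1bbis} for $h'$ from Proposition \ref{prph}, together with the constant-mode bounds for $b_0,b_1,b_2$, I check each of \eqref{intb}--\eqref{estf1bis1} in turn: the normalisation \eqref{intb} is built into the elliptic equation for $b^{(2)}$, while the splitting $\partial_s h'=h'_1+h'_2$ produces $\partial_s b^{(2)}=f_1^{(2)}+f_2^{(2)}$ by inversion of the elliptic operator, and the elliptic gain turns the $H^{-1}, H^{-2}$ bounds for $h'_j$ into the $L^2, H^1$ bounds required for $f_j^{(2)}$.

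The second step constructs and solves the Cauchy problem. Using Appendix \ref{reguini} (based on Theorem \ref{thinitial}) I obtain initial data $(g^{(2)},\partial_t g^{(2)})|_{t=0}$ with $g^{(2)}-g_{\mathfrak{b}^{(2)}}\in H^{N+1}_\delta$, $\partial_t(g^{(2)}-g_{\mathfrak{b}^{(2)}})\in H^N_{\delta+1}$, satisfying both the constraint equations and the compatibility condition \eqref{ft0} for the gauge $H^\alpha=(F^{(2)})^\alpha+(G^{(2)})^\alpha+(\wht G^{(2)})^\alpha$; the initial data for $\phi^{(2)}$ is the same pair $(\phi_0,\phi_1)$ as for $\phi$. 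Local well-posedness for \eqref{s2} in this gauge yields a short-time solution. To extend it to $[0,T]$, I define $\Phi:[0,T]\times\m R^2\to[0,T]\times\m R^2$ as the coordinate change of Section \ref{change} with parameter $b^{(2)}-b$ in the exterior region $q\geq R+\tfrac{1}{2}$, smoothly interpolated to the identity in the interior. Because $b$ and $b^{(2)}$ both lie in $\q H$, one has $\Phi-\mathrm{Id}=O(\ep^2)$ in every Sobolev norm appearing in the bootstrap. Setting $g^{(2)}=\Phi^{*}g$ and $\phi^{(2)}=\phi\circ\Phi$, covariance of \eqref{sys} produces a solution on $[0,T]$; uniqueness in the chosen gauge and the fact that the pulled-back Cauchy data match those built above, modulo the ambiguity in solving the constraints, identify $\Phi^{*}g$ with the short-time solution and extend it to $[0,T]$. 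All bootstrap estimates \eqref{bootphi1}--\eqref{estdelta} then transfer to $(g^{(2)},\phi^{(2)})$ up to factors $1+O(\ep^2)$.

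The improved bound on $\int_0^\infty(\partial_q\phi^{(2)})^2 r\,dr+h^{(2)}(\theta,2t)$ follows directly: on the orthogonal complement of $\{1,\cos\theta,\sin\theta\}$ the estimates \eqref{esth'}--\eqref{esth2'} apply after pulling $\phi$ back by $\Phi$, whereas on this three-dimensional subspace the bound is supplied by Proposition \ref{prpangle} through the choice of $b_0,b_1,b_2$. The main obstacle is the algebraic verification, at the level of Christoffel symbols, that $\Phi^{*}g$ combined with the reference change $g_{\mathfrak{b}}\mapsto g_{\mathfrak{b}^{(2)}}$ satisfies the generalised wave gauge condition with exactly the stated $(F^{(2)},G^{(2)},\wht G^{(2)})$, and in particular that the correction $\wht G^{(2)}$ of Definition \ref{defgtild} is reproduced term by term by the pullback; once this identity is established, Sobolev bookkeeping driven by the smallness of $\Phi-\mathrm{Id}$ completes the proof.
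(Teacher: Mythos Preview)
Your plan has a genuine gap in Step~2. You define $\Phi$ as the explicit coordinate change of Section~\ref{change} with parameter $b^{(2)}$ in the exterior, interpolated to the \emph{identity} in the interior, and then want to identify $\Phi^{*}g$ with the solution in the new gauge. But $\Phi^{*}g$ cannot satisfy the new gauge condition in the interior. Indeed, the gauge source $L_\alpha G^\alpha$ of \eqref{gl} contains, even at points with $q<R+\tfrac12$, the contribution $\tfrac{1}{r}\Up(\tfrac{r}{t})\,h(\theta,2t)$, because the integral $\int_\infty^r h\,\chi'(q)\,dr$ picks up $-h$ when evaluated inside the cone. Since $h\neq h^{(2)}$, the two gauge conditions differ in the interior by a term of size $\ep^2/(1+s)$; with $\Phi=\mathrm{Id}$ there, $\Phi^{*}g=g$ is simply in the wrong gauge. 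The ``algebraic verification'' you flag as the main obstacle is therefore not a bookkeeping issue but an actual obstruction, and the identification by uniqueness fails.

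The paper sidesteps this by \emph{not} transferring the bootstrap through an explicit $\Phi$. Once $b^{(2)}\in\q H$ (your Step~1 is fine), the solution $(g^{(2)},\phi^{(2)})$ is set up directly in the new gauge via Appendix~\ref{reguini}, and Propositions~\ref{prpangle}--\ref{prplow} apply verbatim to it, giving all the improved bootstrap bounds except \eqref{esth}. Only for the last estimate is a comparison of $\phi^{(2)}$ with $\phi$ needed. There the transition map $\Psi$ exists abstractly (two generalized wave coordinate systems on the same spacetime), and its deviation from the identity is estimated by integrating $\nabla(x^{(2)}-x)$ along the $q$-direction, controlled by $(\wht g^{(2)})^{LL}$, $(\wht g^{(2)})^{\ba L\ba L}$, $(\wht g^{(2)})^{UU}$ from the already established bootstrap. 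This gives \emph{nonzero} interior bounds such as $|q^{(2)}-q|\lesssim \ep(1+|q|)^{3/2}(1+s)^{-1/2}$, which then feed into
\[
\Bigl\|Z^I\!\int\bigl((\partial_q\phi^{(2)})^2-(\partial_q\phi)^2\bigr)r\,dr\Bigr\|_{L^2(\m S^1)}\lesssim\frac{\ep^3}{(1+t)^{1/2-\rho}},
\]
and combined with Proposition~\ref{comph} yield the two displayed inequalities.
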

The proof of Proposition \ref{prpb} is the object of Section \ref{secprpfin}. Combining Propositions \ref{prpangle} to \ref{prpfin} we now give the proof of Theorem \ref{main}
\begin{proof}[Proof of Theorem \ref{main}]
	We choose $\ep$ small enough such that 
	$$C\ep^\frac{1}{4} \leq \frac{C_0}{2}, \quad C\ep \leq \frac{1}{2}$$
	Then Propositions \ref{prplinf}, \ref{prphigh} and \ref{prplow} imply that the bootstrap assumptions for $(\phi,\wht g)$ are true with the constant $2C_0$ replaced with $\frac{3C_0}{2}$. Moreover, thanks to Proposition \ref{prpangle}, the bootstrap assumption \ref{estdelta} is true with $2C_0$ replaced by $C_0$. Moreover, Propositions \ref{prph}, \ref{prpb} and \ref{prpfin} yield the existence of $b^{(2)}$ satisfying the hypothesis $\q H $ and $(g^{(2)}=g_{b^{(2)}}+\wht g^{(2)},\phi^{(2)})$ solution of \eqref{sys} such that the bootstrap assumptions are satisfied by $(\wht g^{(2)},\phi^{(2)})$ with $2C_0$ replaced by $\frac{3C_0}{2}$, and the bootstrap assumptions \eqref{esth},\eqref{esthbis} and \eqref{estdelta} are satisfied by
	$$h^{(2)}=2a(\theta+f)(1+b^{(2)})^{-2}+(1+b^{(2)})^{-2}-1-2\frac{\partial^2_\theta b^{(2)}}{(1+b^{(2)})}+(1+b^{(2)})^{-2}(\partial_\theta b^{(2)})^2,$$
	with $2C_0$ replaced by $C_0$. This concludes the proof of Theorem \ref{main}.
\end{proof}

\subsection{First consequences of the bootsrap assumptions}
Thanks to the weighted Klainerman-Sobolev inequality, the bootstrap assumptions immediately imply the following propositions

\begin{prp}\label{estks}
We have the estimates, for $q<R$
\begin{align}
 \label{ks1}|\partial Z^{N-3} \phi(t,x)|&\lesssim \frac{\ep}{\sqrt{1+|q|}\sqrt{1+s}},\\
  \label{ks3}|\partial Z^{N-5} \wht g(t,x)|&\lesssim \frac{\ep(1+s)^\rho}{\sqrt{1+|q|}\sqrt{1+s}},\\
\label{ks2}|\partial Z^{N-6} \wht{g_1}(t,x)|&\lesssim \frac{\ep}{(1+|q|)^{\frac{1}{2}-\sigma}\sqrt{1+s}},\\
\label{ks2bis}|\partial Z^{N-12} \wht{g_1}(t,x)|&\lesssim \frac{\ep}{\sqrt{1+|q|}\sqrt{1+s}},
\end{align}
and for $q>R$ 
\begin{align}
\label{ks6} |\partial Z^{N-5} \wht g|&\lesssim \frac{\ep}{(1+s)^{\frac{1}{2}-\rho}(1+|q|)^{\frac{3}{2}+\delta}},\\
\label{ks5}|\partial Z^{N-6} \wht{g_1}(t,x)|&\lesssim \frac{\ep}{(1+|q|)^{\frac{3}{2}+\delta-\sigma}\sqrt{1+s}}.
\end{align}
\end{prp}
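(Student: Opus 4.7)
The plan is to carry out a direct case-by-case application of the weighted Klainerman-Sobolev inequality (Proposition \ref{prpweight}) to the $L^2$-based bootstrap assumptions. The organizing principle is: to bound $|\partial Z^{N-k}|$ pointwise one must spend two Minkowski vector fields on Klainerman-Sobolev, so the matching bootstrap hypothesis is the $L^2$ control of $\partial Z^{N-k+2}$. The weight is then chosen so that, after dividing by $v^{1/2}(q)$, the desired power of $(1+|q|)$ appears on whichever side of the light cone is relevant. The commutator identity $[Z,\partial]\sim \partial$ lets one rewrite $\sum_{|I|\leq 2}\|v^{1/2}Z^I(\partial Z^{J}u)\|_{L^2}$ as $\|v^{1/2}\partial Z^{J+2}u\|_{L^2}$ up to absorbed lower-order terms, so the combinatorics is clean.

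The first step is to record the behavior of the weights on each side of the cone: in the interior $q<0$ one has $w^{1/2}\gtrsim 1$, $w_1^{1/2}(q)\sim(1+|q|)^{-\sigma}$, and $w_2^{1/2}(q)\sim(1+|q|)^{-1/2-\sigma}$, while in the exterior $q>0$ one has $w^{1/2}(q)=(1+|q|)^{1+\delta}$, $w_1^{1/2}(q)=(1+|q|)^{1+\delta-\sigma}$, and $w_2^{1/2}(q)=(1+|q|)^{1+\delta-2\sigma}$. With this dictionary in hand, each pointwise estimate is obtained by applying Proposition \ref{prpweight} and then dividing through by the appropriate factor of $v^{1/2}(q)$.

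Concretely, \eqref{ks1} follows from \eqref{phiN1} with weight $w^{1/2}$ applied to $\partial Z^{N-3}\phi$ (the two extra $Z$'s land on $\|w^{1/2}\partial Z^{N-1}\phi\|_{L^2}$ and $w^{1/2}\gtrsim 1$ for $q<R$). Estimate \eqref{ks3} follows from \eqref{g2N2} with weight $w^{1/2}$ applied to $\partial Z^{N-5}\wht g$, again using $w^{1/2}\gtrsim 1$ in $q<R$. Estimate \eqref{ks2} follows from \eqref{gN2} with weight $w_1^{1/2}$ applied to $\partial Z^{N-6}\wht g_1$: here $(N-6)+2=N-4$ exactly matches the bootstrap index, and dividing by $w_1^{1/2}(q)\sim(1+|q|)^{-\sigma}$ in $q<R$ produces the factor $(1+|q|)^{-1/2+\sigma}$. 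Estimate \eqref{ks2bis} follows from \eqref{gmieux} with trivial weight $v=1$ applied to $\partial Z^{N-12}\wht g_1$, since $(N-12)+2=N-10$. Estimate \eqref{ks6} is the same application as \eqref{ks3} but read on the exterior side where $w^{1/2}(q)=(1+|q|)^{1+\delta}$, which after division yields the factor $(1+|q|)^{-3/2-\delta}$. Finally \eqref{ks5} is the same application as \eqref{ks2} read on the exterior side, where $w_1^{1/2}(q)=(1+|q|)^{1+\delta-\sigma}$ produces the factor $(1+|q|)^{-3/2-\delta+\sigma}$.

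There is no real obstacle: the statement is essentially the $L^\infty$ translation of the bootstrap assumptions through Klainerman-Sobolev, and all that has to be checked is that the two indices, the regularity budget $N-k$ and the weight exponent, line up. The only mildly delicate point is the use of the improved bootstrap \eqref{gmieux} to obtain the weightless estimate \eqref{ks2bis}, which costs the six extra vector fields between $N-12$ and $N-6$ but gains the $(1+|q|)^{-1/2}$ decay that is otherwise lost to the $w_1^{1/2}\sim (1+|q|)^{-\sigma}$ factor in the interior.
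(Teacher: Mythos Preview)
Your proof is correct and follows exactly the approach the paper intends: the paper states that Proposition \ref{estks} is an immediate consequence of the weighted Klainerman--Sobolev inequality applied to the $L^2$ bootstrap assumptions, and you have carried out precisely that bookkeeping, matching each pointwise estimate to the appropriate bootstrap hypothesis and weight. The index arithmetic $(N-k)+2$ and the identification of the weight behavior on each side of the cone are all accurate.
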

Thanks to Lemma \ref{lmintegration} we deduce the following corollary

\begin{cor}\label{estksg}
We have the estimates, for $q<R$
\begin{align}
\label{iks1}| Z^{N-3} \phi(t,x)|&\lesssim \frac{\ep\sqrt{1+|q|}}{\sqrt{1+s}},\\
\label{iks3}| Z^{N-5} \wht g(t,x)|&\lesssim \frac{\ep(1+s)^\rho\sqrt{1+|q|}}{\sqrt{1+s}},\\
\label{iks2}| Z^{N-6} \wht{g_1}(t,x)|&\lesssim \frac{\ep(1+|q|)^{\frac{1}{2}+\sigma}}{\sqrt{1+s}},\\
\label{iks2bis}| Z^{N-12} \wht{g_1}(t,x)|&\lesssim \frac{\ep\sqrt{1+|q|}}{\sqrt{1+s}}
\end{align}
and for $q>R$ 
\begin{align}\label{iks6} 
| Z^{N-5} \wht g|&\lesssim \frac{\ep}{(1+s)^{\frac{1}{2}-\rho}(1+|q|)^{\frac{1}{2}+\delta}},\\
\label{iks5}|Z^{N-6} \wht{g_1}(t,x)|&\lesssim \frac{\ep}{(1+|q|)^{\frac{1}{2}+\delta-\sigma}\sqrt{1+s}}.
\end{align}
\end{cor}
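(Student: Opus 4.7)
The plan is to apply Lemma \ref{lmintegration} componentwise, reading off the parameters $\alpha,\beta,\gamma$ directly from each bound in Proposition \ref{estks}. Since the lemma transforms a bound $|\partial u|\lesssim(1+s)^\gamma(1+|q|)^\alpha$ for $q<0$ and $|\partial u|\lesssim(1+s)^\gamma(1+|q|)^\beta$ for $q>0$ into a bound $|u|\lesssim(1+s)^\gamma\max(1,(1+|q|)^{\alpha+1})$ for $q<0$ and $|u|\lesssim(1+s)^\gamma(1+|q|)^{\beta+1}$ for $q>0$, all six estimates of Corollary \ref{estksg} follow by a single unified computation once the hypothesis $\beta<-1$ and the initial-data condition $|u|_{t=0}\lesssim(1+r)^{\gamma+\beta}$ are checked.

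For the interior estimates \eqref{iks1}, \eqref{iks3}, \eqref{iks2}, \eqref{iks2bis}, we set respectively $(\gamma,\alpha)=(-\tfrac12,-\tfrac12)$, $(-\tfrac12+\rho,-\tfrac12)$, $(-\tfrac12,-\tfrac12+\sigma)$, $(-\tfrac12,-\tfrac12)$, and note that $\max(1,(1+|q|)^{\alpha+1})$ coincides (up to a constant) with the stated right-hand side in each case. For the exterior estimates \eqref{iks6} and \eqref{iks5}, we take $\beta=-\tfrac32-\delta$ and $\beta=-\tfrac32-\delta+\sigma$ respectively; since $\delta>\tfrac12$ and $\sigma\leq\tfrac14$, the condition $\beta<-1$ is comfortably satisfied, and $\beta+1$ matches the exponents in \eqref{iks6}--\eqref{iks5}. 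For $\phi$ the exterior bound is not needed because $\phi$ is supported in the causal future of $B(0,R)$, so $\phi$ and all its $Z^I$-derivatives vanish for $q>R$; the lemma may still be applied with any $\beta<-1$ and a trivial right-hand side.

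The main item to check is the initial-data assumption $|u|_{t=0}\lesssim(1+r)^{\gamma+\beta}$. For $\phi$ this is immediate from the compact support of $(\phi_0,\phi_1)$. For $\wht g$ and $\wht g_1$ we invoke the construction of initial data in Appendix \ref{reguini}, which provides $(\wht h_0,\wht h_1)\in H^{N+1}_\delta\times H^N_{\delta+1}$; the weighted Sobolev embedding of Proposition \ref{holder} then yields pointwise decay $|Z^I\wht g|_{t=0}\lesssim\ep^2(1+r)^{-\delta}$, which dominates $(1+r)^{\gamma+\beta}$ in every case considered (one only needs $\gamma+\beta\leq -\delta$, which holds since $\gamma\leq\rho-\tfrac12$ and $\beta\leq-\tfrac32-\delta+\sigma$, giving $\gamma+\beta\leq-2-\delta+\rho+\sigma<-\delta$). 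No genuine obstacle arises; the entire proof is bookkeeping with parameters, and the only mild subtlety is that the factor $(1+s)^\rho$ inherited from the bad coefficient of $\wht g$ must be carried through $\gamma$ in the $\wht g$-estimates while the decompositions $g=g_{\mathfrak b}+\wht g$ and $g=g_{\mathfrak b}+4\Up(r/t)k\,dq^2+\wht g_1$ ensure that $\wht g_1$ retains the unweakened $\gamma=-\tfrac12$.
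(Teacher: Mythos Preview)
Your approach is exactly the paper's: the paper simply says ``Thanks to Lemma \ref{lmintegration}'' and leaves the verification implicit. Your added checks on $\alpha,\beta,\gamma$ and on $\beta<-1$ are all correct.

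There is, however, a genuine slip in your verification of the initial-data hypothesis. The lemma requires $|u|_{t=0}\lesssim (1+r)^{\gamma+\beta}$; starting from $|Z^I\wht g|_{t=0}\lesssim \ep^2(1+r)^{-\delta}$ you would need $-\delta\leq\gamma+\beta$, not the reverse inequality $\gamma+\beta\leq-\delta$ that you establish. In fact for \eqref{iks6} one has $\gamma+\beta=-2-\delta+\rho$, and $-\delta\leq -2-\delta+\rho$ would force $\rho\geq 2$, which is false. Two small corrections fix this. First, the weighted Sobolev embedding of Proposition \ref{holder} actually gives $H^{N+1}_\delta\subset C^{N-1}_{\delta+1}$, hence $|Z^I\wht g|_{t=0}\lesssim\ep^2(1+r)^{-\delta-1}$ rather than $(1+r)^{-\delta}$. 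Second, the proof of Lemma \ref{lmintegration} only uses the weaker hypothesis $|u|_{t=0}\lesssim(1+r)^{\gamma+\beta+1}$: indeed at $t=0$ one has $q=s=r$, so $(1+s)^\gamma(1+|q|)^{\beta+1}=(1+r)^{\gamma+\beta+1}$, and the initial contribution is absorbed since $(1+s)^{\beta+1}\leq(1+q)^{\beta+1}$ for $q\leq s$ and $\beta+1<0$. With both fixes the needed inequality is $-\delta-1\leq\gamma+\beta+1$, i.e.\ $\gamma+\beta\geq -\delta-2$; in the worst case \eqref{iks6} this reads $-2-\delta+\rho\geq-\delta-2$, i.e.\ $\rho\geq 0$, which is satisfied. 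The remaining cases are identical.
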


To obtain $L^2$ estimates for $Z^I \phi$ and $Z^I \wht g$ we may use the weighted Hardy inequality

\begin{prp}
We have
\begin{align}
\label{hphin}
&\|(1+|q|)^{-1} w^\frac{1}{2}Z^N \phi\|_{L^2}+\|(1+|q|)^{-1} w^\frac{1}{2}(SZ^N \phi-s\partial_q \phi Z^N g_{LL})\|_{L^2}\lesssim \ep (1+t)^{\rho},\\
\label{hphinn}
&\|(1+|q|)^{-1} w^\frac{1}{2}Z^{N+1} \phi\|_{L^2}\lesssim \ep (1+t)^{\frac{1}{2}+\rho},\\
\label{hg1n}&\|(1+|q|)^{-1}w_2^\frac{1}{2}Z^{N} \wht g_1\|_{L^2}\lesssim \ep(1+t)^\rho,\\
\label{hg2n}&\|(1+|q|)^{-1} w_1^\frac{1}{2}Z^{N} \wht g\|_{L^2}\lesssim \ep (1+t)^{2\rho},\\
\label{hphin1}&\|(1+|q|)^{-1} w^\frac{1}{2}Z^{N-1} \phi\|_{L^2} \lesssim \ep,\\
\label{hgn1}&\|(1+|q|)^{-1} w^\frac{1}{2}Z^{N-3} \wht g\|_{L^2} \lesssim \ep(1+t)^\rho,\\
\label{hg1n1}&\|(1+|q|)^{-1}w_1^\frac{1}{2}Z^{N-4} \wht g_1\|_{L^2}\lesssim \ep.
\end{align}
\end{prp}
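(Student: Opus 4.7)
The plan is to derive each of the eight listed bounds by a single direct application of the weighted Hardy inequality (Proposition \ref{hardy}) to the corresponding $L^2$ bootstrap bound from the previous subsection. The template is uniform: a bootstrap estimate of the form $\|v^{1/2}\partial Z^I(\cdot)\|_{L^2}\lesssim \ep(1+t)^{\tau}$ combined with Hardy yields $\|v^{1/2}(1+|q|)^{-1}Z^I(\cdot)\|_{L^2}\lesssim \ep(1+t)^{\tau}$, using the trivial inequality $|\partial_r f|\leq |\partial f|$ to pass from a full space-time derivative down to the radial one.

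The first step is to verify that each of the three weights satisfies the exponent hypothesis of Proposition \ref{hardy}: $\alpha<1$ for $q<0$ and $\beta>1$ for $q>0$. On the exterior side, the exponents are $2+2\delta$, $2+2\delta-2\sigma$ and $2+2\delta-4\sigma$ for $w,w_1,w_2$; the most constrained one is the exponent of $w_2$, which is strictly larger than $1$ precisely because of the standing assumption $\delta-2\sigma>\tfrac{1}{2}$. On the interior side, $w(q)=1+(1+|q|)^{-2\mu}$ is bounded above and below by positive constants (so it behaves as $(1+|q|)^0$ for the purposes of Hardy), while $w_1$ and $w_2$ have exponents $-2\sigma$ and $-1-2\sigma$; all three are strictly less than $1$. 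Thus the hypotheses of Proposition \ref{hardy} are met in every case.

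The second step is to match each inequality in the conclusion with the appropriate bootstrap bound. Specifically, \eqref{hphin} follows from \eqref{phiN} with $f=Z^N\phi$ and from \eqref{SphiN} with $f=SZ^N\phi-s\partial_q\phi\, Z^N g_{LL}$; \eqref{hphinn} follows from \eqref{ZphiN}; \eqref{hg1n} from \eqref{gN} with weight $w_2$; \eqref{hg2n} from \eqref{g2N} with weight $w_1$; and the three low-regularity estimates \eqref{hphin1}, \eqref{hgn1}, \eqref{hg1n1} come from \eqref{phiN1}, \eqref{g2N2}, \eqref{gN2} in the same manner, each time with the weight chosen to match the bootstrap assumption.

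Since the proposition is a purely quantitative consequence of Proposition \ref{hardy} combined with assumptions already in force, I do not anticipate any serious obstacle. The only point that requires care is the compatibility of the weight $w_2$ on the outgoing side — it is the weight nearest the boundary of admissibility — and this is precisely why the strict inequality $\delta-2\sigma>\tfrac{1}{2}$ was built into the hierarchy $\ep\ll\rho\ll\sigma\ll\delta$ at the very start of the section.
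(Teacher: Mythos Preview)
Your proposal is correct and follows essentially the same approach as the paper: verify the weight exponents fall in the admissible range for Proposition \ref{hardy}, then apply Hardy to each bootstrap bound. Your write-up is in fact more careful than the paper's brief proof, which only checks the exterior exponent of the smallest weight and notes that the interior weights are at most $O(1)$; you correctly spell out the pairing of each conclusion with its source bootstrap assumption and identify that the critical constraint is on $w_2$ in the exterior, handled by $\delta-2\sigma>\tfrac12$.
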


\begin{proof}
	The only thing we have to check is whether we can apply Proposition \ref{hardy} with our weight functions. In the exterior, the smaller weight is $w_2(q)=(1+|q|)^\beta$ with $\beta=2+2\delta-2\sigma >1$. In the interior, the biggest one is a $O(1)$. Consequently we are in the range of the weighted Hardy inequality.
\end{proof}

\begin{lm}\label{lmh} We have
\begin{equation}
\label{esthq}\left\|\Pi  Z^{N-1}\left(\int_0^\infty 2(\partial_q \phi(t,r,\theta))^2rdr - h(\theta,2t)\right)\right\|_{L^2(\m S^1)} \lesssim \frac{\ep^2}{(1+t)^{\frac{1}{2}-2\rho}} ,
\end{equation}
\begin{equation}
\label{esthqbis}\left\|\Pi  Z^{N-5}\left(\int_0^\infty 2(\partial_q \phi(t,r,\theta))^2rdr-h(\theta,2t)\right)\right\|_{L^2(\m S^1)} \lesssim \frac{\ep^2}{(1+t)^{\frac{1}{2}}} .
\end{equation}
\end{lm}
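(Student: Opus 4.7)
I would start from the algebraic identity $2(\partial_q\phi)^2 = \tfrac{1}{2}(\partial_r\phi - \partial_t\phi)^2$, which combined with $L\phi = (\partial_r + \partial_t)\phi$ yields
$$\int_0^\infty 2(\partial_q\phi)^2\, r\, dr - h \;=\; \tfrac{1}{2}\int_0^\infty (L\phi)^2\, r\, dr \;-\; \Big(2\int_0^\infty \partial_r\phi\,\partial_t\phi\, r\, dr + h\Big).$$
The second term on the right is precisely the quantity controlled by the bootstrap assumptions \eqref{esth} (for $I=N-1$) and \eqref{esthbis} (for $I=N-5$), which already give the desired decay. After applying $\Pi Z^I$ and the triangle inequality, it will therefore suffice to bound the first term, which involves only the good derivative $L\phi$ and should gain substantial extra decay.

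Next I would differentiate under the integral, using that $[Z,L]$ is itself of the form $L$ plus a bounded operator, so that $Z^I(L\phi)$ decomposes into a sum of $LZ^{I_1}\phi$ terms with $I_1 \leq I+1$ (this is the same mechanism as \eqref{important}). This reduces the problem to estimating
$$\int L Z^{I_1}\phi \cdot L Z^{I_2}\phi \cdot r\, dr, \qquad I_1 + I_2 \leq I \leq N - 1,$$
up to bounded $\theta$-dependent coefficients arising from the boosts. In each such term I would distribute derivatives so that $I_1 \leq \lfloor (N-1)/2 \rfloor \leq N-10$, which thanks to \eqref{bootphi1} and \eqref{important} provides the pointwise bound
$$|L Z^{I_1}\phi| \;\lesssim\; \frac{|Z^{I_1+1}\phi|}{1+s} \;\lesssim\; \frac{\ep}{(1+s)^{3/2}(1+|q|)^{1/2-4\rho}}.$$
Splitting $\m R_r$ into $\{r\leq t/2\}$, $\{t/2 \leq r \leq 2t\}$, and $\{r \geq 2t\}$ and integrating directly then yields
$$\sup_\theta \Big(\int_0^\infty |L Z^{I_1}\phi|^2\, r\, dr\Big)^{1/2} \;\lesssim\; \frac{\ep}{(1+t)^{1-4\rho}}.$$

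For the remaining factor I would use the polar identity $\|\,\|g\sqrt r\|_{L^2(dr)}\|_{L^2(d\theta)} = \|g\|_{L^2(\m R^2)}$ together with the energy bootstrap \eqref{phiN} to get $\|L Z^{I_2}\phi\|_{L^2(\m R^2)} \lesssim \ep(1+t)^\rho$ for $I_2 \leq N$. Cauchy--Schwarz in $r$ and Hölder in $\theta$ then produce
$$\Big\|\int L Z^{I_1}\phi \cdot L Z^{I_2}\phi \cdot r\, dr\Big\|_{L^2(\m S^1)} \;\lesssim\; \frac{\ep^2}{(1+t)^{1-5\rho}},$$
which is considerably stronger than $\ep^2/(1+t)^{1/2-2\rho}$ or $\ep^2/(1+t)^{1/2}$ since $\rho$ is very small. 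The one place where care is needed is the bookkeeping of commutator corrections when pulling $Z^I$ inside the $r$-integral; in particular the $t/r$-type weights produced by the boosts must always be shown to multiply a factor already carrying the extra $L$-decay, which is essentially the same observation behind \eqref{important}.
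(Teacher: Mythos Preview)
Your proposal is correct and follows essentially the same route as the paper. Both arguments reduce to the bootstrap assumptions \eqref{esth}, \eqref{esthbis} plus an estimate on $\int_0^\infty 2(\partial_s\phi)^2\,r\,dr = \tfrac12\int_0^\infty (L\phi)^2\,r\,dr$; the paper simply records this as $(\partial_q\phi)^2+\partial_r\phi\,\partial_t\phi=O(\partial\phi\,\bar\partial\phi)$ rather than writing out the exact identity, and closes with the Hardy-type bounds \eqref{hphin}, \eqref{hphin1} in place of your direct use of \eqref{phiN}, but the mechanism (one low-index factor in $L^\infty$ via \eqref{bootphi1}, the remaining factor in $L^2$) is identical.
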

\begin{proof}
	We can write 
	$$(\partial_q \phi)^2+\partial_r \phi \partial_t \phi = O\left(\partial \phi \bar{\partial} \phi\right).$$
	Consequently, thanks to \eqref{bootphi1} and \eqref{important} we have
	$$|(\partial_q \phi)^2+\partial_r \phi \partial_t \phi|\lesssim \frac{\ep}{(1+s)^\frac{3}{2}(1+|q|)^{\frac{3}{2}-4\rho}}|Z\phi|,$$
	and therefore
	\begin{align*}
	&\left\|\int \left((\partial_q \phi)^2+\partial_r \phi \partial_t \phi\right)rdr\right\|_{L^2(\m S^1)}\\
	&\lesssim \int \frac{\ep}{(1+s)^\frac{1}{2}(1+|q|)^{\frac{3}{2}-4\rho}}\|Z\phi\|_{L^2(\m S^1)}dr\\
	&\lesssim \frac{\ep}{(1+t)}\ld{\frac{Z\phi}{1+|q|}}.
	\end{align*}
	Estimates \eqref{hphin} and \eqref{hphin1} conclude the proof of Lemma \ref{lmh}.
\end{proof}

\section{The wave coordinates condition}\label{secwave}

Similarly to \cite{lind} we use the wave coordinate condition to obtain better decay on some coefficients of the metric. More precisely, since we are in $2+1$ dimensions, the wave coordinate condition gives us three relations, which yield the fact that $\partial_q g_{LL}$, $\partial_q g_{LU}$ and $\partial_q g_{UU}$ have a better decay than expected. In the first part of this section, we calculate the algebraic relations given by the wave coordinate condition, and in the remaining parts, we give the estimates for these good coefficients of metric.

\subsection{Good components of the metric}
The wave coordinates condition yields better decay properties in $s$ for some components of the metric. Since far from a conical neighbourhood of the light cone, we have $|q|\sim s$, this condition will only be relevant near the light cone. It is given by
$$H^\alpha = g^{\lambda \beta}\Gamma^\alpha_{\lambda \beta}=F^\alpha+G^\alpha +\wht G^\alpha,$$
where the terms are defined in Section \ref{secgen}.
\begin{prp}\label{estLL}
We have the following estimate, in the region $\frac{t}{2}\leq r \leq 2t$,
$$
|\partial_q Z^I \wht g_{LL}|\lesssim 
|\bar{\partial}Z^I \wht g_{\ba L L}|+|\bar{\partial}Z^I \wht g_{\q T \q T}|
+\frac{1}{1+s}
\left(|Z^I \wht g_{L\ba L}|+|Z^I \wht g_{\q T \q T}|\right).$$
Moreover, in the region $q\leq R+1$ we can write
$$\partial_q \wht g_{LL} = \frac{1}{2r}\wht g_{L\ba L}+ \wht g_{\q T \q T}\partial_q \wht g_1 + \wht g_1 \partial_{\q T} \wht g_1
+\partial_U \wht g_{\q T \q T} +\frac{1}{r} \wht g_{\q T \q T}.$$
\end{prp}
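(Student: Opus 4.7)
The proof rests on extracting $\partial_q g_{LL}$ algebraically from the generalized wave coordinate condition $H^\alpha=F_b^\alpha+G^\alpha+\wht G^\alpha$. Starting from $H^\alpha=-\partial_\lambda g^{\lambda\alpha}-\tfrac12 g^{\lambda\mu}\partial^\alpha g_{\lambda\mu}$ and using $\partial g^{\mu\nu}=-g^{\mu\alpha}g^{\nu\beta}\partial g_{\alpha\beta}$, I would first rewrite the condition in the form
\[
g_{\gamma\alpha}H^\alpha = g^{\lambda\mu}\bigl(\partial_\lambda g_{\gamma\mu}-\tfrac12\partial_\gamma g_{\lambda\mu}\bigr).
\]
Contract this with $L^\gamma$ and plug in the null-frame decomposition of $g^{-1}$, which to leading order reads $-\tfrac12(L\otimes\ba L+\ba L\otimes L)+U\otimes U$. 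The only coefficient which produces a bad $\partial_{\ba L}$-derivative is $-\tfrac12\ba L^\lambda L^\mu$ applied to $g_{\mu L}$, which yields $-\tfrac12\partial_{\ba L}g_{LL}=\partial_q g_{LL}$. Every other term is either a $\bar\partial$ (i.e.\ $\partial_L$ or $\partial_U$) of some component of $g$, or comes from differentiating the frame vectors in the coordinate basis (yielding $\tfrac{1}{r}$-type coefficients in the region $\tfrac{t}{2}\leq r\leq 2t$), or is a nonlinear term of the form $\wht g\cdot\partial g$.

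The key structural point to verify is that after the cancellations (in particular $\tfrac12\partial_L g_{\ba L L}-\tfrac12\partial_L g_{L\ba L}=0$ by symmetry), the good-derivative terms surviving on the right involve only $\partial_L g_{L\ba L}$, $\partial_L g_{UU}$ and $\partial_U g_{UL}$—that is, only components of the metric in $\wht g_{L\ba L}$ and $\wht g_{\q T\q T}$, never $\wht g_{LL}$ itself. The same holds for the frame-correction terms $\tfrac{1}{r}g$. Solving for $\partial_q g_{LL}$, writing $g=g_{\mathfrak b}+\wht g$, and subtracting the corresponding identity for $g_{\mathfrak b}$ (whose $LL$-component is controlled via Proposition~\ref{prpgb}) produces an analogous identity for $\partial_q\wht g_{LL}$, with the inhomogeneity $L_\alpha H^\alpha=L_\alpha(F_b^\alpha+G^\alpha+\wht G^\alpha)$. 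Each of these three gauge contributions must then be checked to fit into the stated bound: $L_\alpha F_b^\alpha$ is a derivative of $g_{\mathfrak b}$ controlled by Proposition~\ref{prpgb}; $L_\alpha G^\alpha$ is given explicitly by \eqref{gl}; and $L_\alpha\wht G^\alpha$ consists, by Definition~\ref{defgtild}, of crossed terms $\wht g\cdot\partial_s^l\partial_\theta^k b$ already of $\wht g\cdot\partial g$ schematic type.

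To produce the $Z^I$ version, I would apply $Z^I$ to the resulting identity and commute, using $[Z,\bar\partial]\sim\bar\partial$, $[Z,\partial]\sim\partial$ in the region $\tfrac{t}{2}\leq r\leq 2t$, and the fact that $Z$ only rotates the null frame modulo $O(1)$ coefficients. Every commutator moves at most one $Z$ past the frame decomposition and preserves the "type" of each term, so the right-hand side becomes $|\bar\partial Z^I\wht g_{\ba L L}|+|\bar\partial Z^I\wht g_{\q T\q T}|+\tfrac{1}{1+s}(|Z^I\wht g_{L\ba L}|+|Z^I\wht g_{\q T\q T}|)$, as required. The main obstacle throughout is purely algebraic book-keeping: making sure that the "bad" component $\wht g_{LL}$ never reappears on the right-hand side through either the $g^{\lambda\mu}\partial_\gamma g_{\lambda\mu}$ trace or through the nonlinear corrections $g^{\lambda\mu}-m^{\lambda\mu}$, which requires repeatedly noting that the would-be $g_{LL}$ coefficients are multiplied by another $\wht g$ factor that produces a good derivative or a $\tfrac{1}{r}$ gain.

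For the refined explicit formula in the region $q\leq R+1$, I would exploit the fact that here $\chi(q)=0$, so $g_{\mathfrak b}=m$ and hence $F_b^\alpha=0$; the initial data of $\phi$ is supported in $B(0,R)$, so $\partial_q\phi=0$ on the relevant portion of $\{q\leq R+1\}$ and $L_\alpha G^\alpha$ simplifies; finally $\wht G^\alpha$ contributes nothing of the listed forms in this region, since $g_{\mathfrak b}$ has no $b$-dependence there. What remains of the general identity reduces exactly to
\[
\partial_q\wht g_{LL}=\tfrac{1}{2r}\wht g_{L\ba L}+\wht g_{\q T\q T}\partial_q\wht g_1+\wht g_1\partial_{\q T}\wht g_1+\partial_U\wht g_{\q T\q T}+\tfrac{1}{r}\wht g_{\q T\q T},
\]
where the two explicit $\tfrac{1}{r}$ terms arise from the precise derivatives of $L^\alpha$, $\ba L^\alpha$, $U^\alpha$ in the Cartesian frame (the linear-in-$\wht g$ contributions survive because Minkowski derivatives of the frame are computable in closed form), and the quadratic terms $\wht g_{\q T\q T}\partial_q\wht g_1+\wht g_1\partial_{\q T}\wht g_1$ come from the leftover $\wht g\cdot\partial g$ schematic pieces after writing $g^{\lambda\mu}-m^{\lambda\mu}$ explicitly in this region.
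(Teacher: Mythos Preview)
Your overall strategy matches the paper's: extract $\partial_q \wht g_{LL}$ from the generalized wave--coordinate identity by the appropriate null--frame contraction, then commute with $Z^I$. The paper uses the divergence form $H^\alpha=-\frac{1}{\sqrt{|\det g|}}\partial_\mu(\sqrt{|\det g|}\,g^{\mu\alpha})$ and contracts with $\ba L_\alpha$, while you use the Christoffel form $g_{\gamma\alpha}H^\alpha=g^{\lambda\mu}(\partial_\lambda g_{\gamma\mu}-\tfrac12\partial_\gamma g_{\lambda\mu})$ and contract with $L^\gamma$; these are equivalent packagings of the same computation.

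However, your treatment of the gauge contribution from $G^\alpha$ has a real gap. You identify the relevant term as $L_\alpha G^\alpha$ from \eqref{gl} (the nonzero integral involving $(\partial_q\phi)^2$) and then, for the refined formula in $\{q\le R+1\}$, try to make it disappear by asserting that $\chi(q)=0$ and $\partial_q\phi=0$ there. Both assertions are false: $\chi$ vanishes only for $q\le R+\tfrac12$, and---more seriously---by Proposition~\ref{cone} the field $\phi$ is supported \emph{precisely} in $\{q\le R+\tfrac12\}\subset\{q\le R+1\}$, so $\partial_q\phi$ is nonzero throughout that region and $L_\alpha G^\alpha$ does not simplify. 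Since $G^L$ involves $\phi$ and not $\wht g$, it cannot be absorbed into the right-hand side of the stated estimate either.

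The actual mechanism (used in the paper's proof) is that the gauge component picked out by the contraction isolating $\partial_q g_{LL}$ is the one set to zero by \eqref{gbal}, namely $\ba L_\alpha G^\alpha=0$. This is why Proposition~\ref{estLL} carries no $G$-term, whereas the companion Proposition~\ref{estLU} for $\partial_q\wht g_{UU}$ does carry $G^L$. Your proposal conflates these two cases: you have correctly isolated $\partial_q g_{LL}$ on the right, but on the left you have matched it with the wrong null component of $G$, and then invoked incorrect support arguments to repair the mismatch. Once you use $\ba L_\alpha G^\alpha=0$ instead, the $G$-contribution drops out by construction and no support argument is needed.
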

Let us note that the second part of the Proposition will only be used in Section \ref{secprph}.
\begin{proof}
The wave coordinate condition implies
\begin{align*}
-\ba L_{\alpha}H^\alpha &=\ba L_{\alpha}\left(\frac{1}{\sqrt{|\det(g)|}}\partial_\mu (g^{\mu \alpha}\sqrt{|\det(g)|})\right)\\
=&\frac{ g^{\mu \alpha}}{\sqrt{|\det(g)|}}\ba L_{\alpha}\partial_\mu \sqrt{|\det(g)|}
+\partial_\mu (\ba L_\alpha g^{\mu \alpha})-g^{\mu \alpha}\partial_\mu( \ba L_\alpha)\\
=&\frac{g^{\ba L \mu}}{\sqrt{|\det(g)|}}\partial_{\mu}\sqrt{|\det (g)|}
+\partial_\mu (g^{\ba L \mu})
-\frac{1}{r}g^{UU}\\
=&\frac{g^{\ba L \ba L}}{\sqrt{|\det(g)|}}\partial_{\ba L} \sqrt{|\det (g)|}
+\frac{g^{\ba L \q T}}{\sqrt{|\det(g)|}}\partial_{\q T} \sqrt{|\det (g)|}
+\partial_{\ba L} g^{\ba L \ba L}+\partial_U g^{\ba L U}+\partial_L g^{\ba L L}\\
&+\frac{1}{r}g^{\ba L R}-\frac{1}{r}g^{UU},
\end{align*}
where we have denoted by $R$ the vector field $\partial_r$, and used the following calculations
\begin{align*}
g^{\mu \alpha}\partial_\mu(\ba L_\alpha)=&-g^{\mu \alpha}\partial_\mu( R_\alpha)\\
=&-g^{11}\partial_1 \cos(\theta)-g^{12}(\partial_2 \cos(\theta)-\partial_1 \sin(\theta))-g^{22}\partial_2 \sin(\theta)\\
=&-\frac{g^{UU}}{r},
\end{align*}
\begin{align*}
\partial_\mu g^{\ba L \mu}=&\partial_0 g^{\ba L0} +\partial_1 g^{\ba L 1}
+\partial_2 g^{\ba L 2}\\
=&\partial_0 g^{\ba L0}+\partial_R g^{\ba L R}+\partial_U g^{\ba L U}
+g^{\ba L R}(\partial_1 \cos(\theta)+\partial_2 \sin(\theta))
+g^{\ba L U}(-\partial_1\sin(\theta)+\partial_2 \cos(\theta))\\
=&\partial_{\ba L} g^{\ba L \ba L}+\partial_U g^{\ba L U}+\partial_L g^{\ba L L}+\frac{g^{\ba L R}}{r}.
\end{align*}
Consequently
\begin{equation}\label{condonde1}
\begin{split}
\partial_{\ba L} g^{\ba L \ba L}
=& -\ba L_{\alpha}\left(F^\alpha+G^\alpha + \wht G^\alpha\right)-\frac{g^{\ba L \ba L}}{\sqrt{|\det(g)|}}\partial_{\ba L} \sqrt{|\det (g)|}
-\frac{g^{\ba L \q T}}{\sqrt{|\det(g)|}}\partial_{\q T} \sqrt{|\det (g)|}\\
&-\partial_U g^{\ba L U}-\partial_L g^{\ba L L}
-\frac{1}{r}g^{\ba L R}-\frac{1}{r}g^{UU}.
\end{split}
\end{equation}
Also we have in the basis $L,\ba L, U$
\begin{equation}
\label{determinant}\det(g)|_{L,\ba L,U}=g_{LL}(g_{\ba L \ba L}g_{UU}-(g_{U \ba L})^2) -g_{L\ba L}(g_{L \ba L}g_{UU}-g_{LU}g_{\ba L U} ) + 
g_{LU}(g_{\ba L L}g_{U \ba L}-g_{\ba L \ba L}g_{LU}).
\end{equation}
and we can express
\begin{align*}
g^{\ba L \ba L}=\frac{1}{\det(g)}(g_{ L  L}g_{UU}-(g_{U  L})^2)&=-\frac{1}{4}g_{LL}+ O( g_{\q T \q T})O(g),\\
g^{\ba L U}=\frac{1}{\det(g)}(g_{\ba L  L}g_{LU}-g_{U \ba L}g_{LL})&=\frac{1}{2}g_{LU}+ O(g_{\q T \q T})O(g ),\\
g^{\ba L L}=\frac{1}{\det(g)}(g_{\ba L  L}g_{UU}-g_{U \ba L}g_{UL})&=\frac{1}{g_{L\ba L}}+ O( g_{\q T \q T}),\\
\end{align*}
where we have used the notation $O(g)=O(g-m)$ where $m$ is the Minkowski metric.
To go from the determinant in the basis $L,\ba L, U$ to the determinant in the basis $t,x_1,x_2$ we just have to divide by $4$.
Therefore
$$| \sqrt{|\det(g)|}-\sqrt{|\det(g_{\mathfrak{b}})|}+\frac{1}{2}\wht g_{L\ba L}|\lesssim 
|\wht g_{\q T \q T}|.$$
 We note that in \eqref{condonde1} the terms involving $\partial_{L} g_{L \ba L}$ compensate.
Since in \eqref{condonde1}, by definition of $F^\alpha$  the terms involving only $g_{\mathfrak{b}}$ compensate, we have
$$\partial_q \wht g_{LL}-\frac{1}{2r}\wht g_{L \ba L}=
 \bar{\partial} \wht g_{\q T \q T}+
\frac{1}{1+s} \wht g_{\q T \q T}
+s.t..$$
where $s.t$ denotes similar terms (here these terms are quadratic terms with a better or similar decay), and we have used the fact that in the region $\frac{t}{2}\leq r \leq 2t$, we have $r\sim s$. This prove the second part of the proposition.
Since $[Z,\partial_q]\sim \partial_q$ and $[Z,\bar{\partial}]\sim \bar{\partial}$ we have
$$\left|\partial_q Z^I \wht g_{LL}\right|\lesssim |Z^{I-1}\wht g_{LL}|+|\bar{\partial}Z^I \wht g_{L \ba L}|+|\bar{\partial}Z^I \wht g_{\q T \q T}|+
\frac{1}{1+s}(|Z^I \wht g_{L \ba L}|+|Z^I \wht g_{\q T \q T}|).$$
This concludes the proof of Proposition \ref{estLL}.
\end{proof}

The other two contractions of the wave condition yield better decay on a conical neighbourhood of the light cone for $\wht g_{U L}$ and $\wht g_{UU}$.
\begin{prp}\label{estLU}
We have the following property
\begin{align*}
| Z^I (\partial_q\wht g_{UL}+G^U)|&\lesssim |\overline{\partial}Z^I \wht g_{\q T \q V}| + \frac{1}{1+s} |Z^I \wht g_{\q T \q V}|,\\
\left|Z^I \left(\partial_q \wht g_{UU}+2G^L\right)\right|&\lesssim  |\overline{\partial}Z^I \wht g| + \frac{1}{1+s}|Z^I \wht g|.
\end{align*}
\end{prp}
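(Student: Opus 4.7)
The plan is to mirror the proof of Proposition \ref{estLL}, applying the same divergence identity
$$
-A_\alpha H^\alpha = \frac{1}{\sqrt{|\det g|}}\partial_\mu\bigl(g^{\mu A}\sqrt{|\det g|}\bigr) - g^{\mu\alpha}\partial_\mu A_\alpha
$$
to the contractions $A_\alpha=U_\alpha$ and $A_\alpha=L_\alpha$ in turn, and then invoking the generalized wave coordinate condition $H^\alpha = F^\alpha + G^\alpha + \wht G^\alpha$. As in Proposition \ref{estLL}, everything hinges on which frame component of $g^{-1}$ the bad derivative $\partial_{\ba L}$ produces, and on what is left over after subtracting the background contribution coming from $F^\alpha=\Box_{g_\mathfrak{b}}x^\alpha$.

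For the $U_\alpha$-contraction, the algebraic expansion $g^{\ba L U}=\tfrac{1}{2}g_{LU}+O(g_{\q T\q T}\cdot g)$ makes the principal term $\partial_{\ba L}g^{\ba L U}$ equal to $-\partial_q g_{LU}$ up to quadratically small corrections. The remaining contributions $\partial_L g^{LU}$, $\partial_U g^{UU}$, and the corrections from $\sqrt{|\det g|}$ and $g^{\mu\alpha}\partial_\mu U_\alpha$ are either good derivatives $\bar{\partial}$ acting on a $\q T\q V$-component of $\wht g$, or lower-order terms of the form $(1+s)^{-1}|\wht g_{\q T\q V}|$. Subtracting $U_\alpha F^\alpha$ (which by the same formula applied to $g_\mathfrak{b}$ reproduces $\partial_q(g_\mathfrak{b})_{LU}$ to leading order) and using $U_\alpha G^\alpha = G^U$, one obtains the stated bound for $\partial_q \wht g_{UL} + G^U$; the leftover $U_\alpha \wht G^\alpha$ is by Definition \ref{defgtild} of the form $\wht g\cdot \partial^{l}_s\partial^{k}_\theta b$ with $l+k\geq 3$ or $l\geq 2$, and is absorbed via the estimates $\q H$ for $b$.

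For the $L_\alpha$-contraction the computation is more delicate. In the coordinates $(s,q,\theta)$ one has $L_\alpha=\delta_\alpha^q$, so $\partial_\mu L_\alpha=0$ and only $\frac{1}{\sqrt{|\det g|}}\partial_\mu\bigl(g^{\mu q}\sqrt{|\det g|}\bigr)$ remains. The expansion of $g^{\mu q}$ gives $g^{sq}=2+\wht g_{L\ba L}+O(\wht g^2)$, $g^{qq}=-\wht g_{LL}+O(\wht g^2)$, $g^{\theta q}=-\wht g_{LU}/r+O(\wht g^2)$, so $\partial_q g^{qq}=-\partial_q\wht g_{LL}$ is already controlled by Proposition \ref{estLL}; the expansion of $\sqrt{|\det g|}$ contributes the combination $\partial_q(\wht g_{UU}-\wht g_{L\ba L})$; all other terms are tangential $\partial_s,\partial_\theta$ derivatives of $\wht g$ or of the lower order $(1+s)^{-1}|\wht g|$. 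The function $G^L$ in \eqref{gl} has been defined precisely so that, once $L_\alpha F^\alpha$ is subtracted and the background $(q/r)h$-contribution from $(g_\mathfrak{b})_{UU}$ is accounted for, $\partial_q \wht g_{UU}+2G^L$ is pointwise bounded by $|\bar{\partial}\wht g|+(1+s)^{-1}|\wht g|$. The right-hand side here involves all of $\wht g$ (not merely the $\q T\q V$-components) because the $\sqrt{|\det g|}$-expansion couples $\wht g_{UU}$ to $\wht g_{L\ba L}$.

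The commutation with $Z^I$ is handled as in Proposition \ref{estLL}: the identities $[Z,\partial_q]\sim \partial_q$, $[Z,\bar{\partial}]\sim \bar{\partial}$ and $[Z,1/r]\sim 1/r$ allow the pointwise identity to be propagated through $I$ vector fields without losing structure. The main obstacle is the $L_\alpha$-contraction: unlike the $\ba L$- and $U$-contractions, $\partial_q \wht g_{UU}$ is not visible at leading order in the wave coordinate condition alone, and one must carefully track the $\sqrt{|\det g|}$-expansion together with the specific integral form of $G^L$ (tuned to the Einstein source $R_{\ba L\ba L}=8(\partial_q\phi)^2$) in order to identify the cancellation, while simultaneously handling the background $(q/r)h$ piece in $(g_\mathfrak{b})_{UU}$.
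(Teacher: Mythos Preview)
Your treatment of the $U_\alpha$-contraction is correct and coincides with the paper's argument.

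The $L_\alpha$-contraction, however, contains a genuine gap. You work in $(s,q,\theta)$ coordinates and assert that ``the expansion of $\sqrt{|\det g|}$ contributes the combination $\partial_q(\wht g_{UU}-\wht g_{L\ba L})$''. This is not so. In these coordinates $\sqrt{|\det g|}=\tfrac{r}{2}\bigl(1+\tfrac12(\wht g_{UU}-\wht g_{L\ba L})\bigr)+O(\wht g^2)$, and the only way the determinant could produce a $\partial_q$ derivative is through the term $g^{qq}\partial_q\log\sqrt{|\det g|}$. But $g^{qq}=-\wht g_{LL}+O(\wht g^2)$ is itself first order, so this contribution is \emph{quadratic}. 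At linear order the determinant only feeds in through $g^{sq}\partial_s\log\sqrt{|\det g|}$, which is a good $\partial_s$-derivative. A direct expansion of $\frac{1}{\sqrt{|g|}}\partial_\mu(\sqrt{|g|}g^{\mu q})$ therefore yields, to first order,
\[
\tfrac{1}{r}+\partial_s\wht g_{UU}-\partial_q\wht g_{LL}-\partial_U\wht g_{UL}+O\Bigl(\tfrac{\wht g}{r}\Bigr),
\]
with \emph{no} $\partial_q\wht g_{UU}$ term. So your scheme does not produce the quantity the proposition is about.

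The paper avoids this by staying in Cartesian coordinates (where the gauge condition is stated and where $\partial_\mu L_\alpha\neq 0$), and by isolating the single combination
\[
\sqrt{|\det g|}\,g^{L\ba L}=-\tfrac12\sqrt{g_{UU}}+O(\wht g_{\q T\q T})O(g),
\]
whose $\partial_{\ba L}$-derivative is precisely $\tfrac12\partial_q g_{UU}$ to leading order. This algebraic identity---that the product $\sqrt{|\det g|}\,g^{L\ba L}$ depends on $g_{UU}$ alone modulo $\q T\q T$-corrections---is the point your expansion misses; it is what transmutes a determinant contribution into a bad derivative of $g_{UU}$, and it does not survive the passage to $(s,q,\theta)$ coordinates with $\partial_\mu L_\alpha=0$, because the $g^{\mu\alpha}\partial_\mu L_\alpha$ term you discard carries exactly the missing piece.
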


\begin{proof}
To obtain the first estimate, we contract the wave coordinate condition with the vector field $U$.
\begin{align*}
-U_{\alpha}H^\alpha &= \frac{1}{\sqrt{|\det(g)|}}U_{\alpha}\partial_\mu (g^{\mu \alpha})\sqrt{\det(g)}\\
=&\frac{ g^{\mu \alpha}}{\sqrt{|\det(g)|}}U_{\alpha}\partial_\mu \sqrt{|\det(g)|}
+\partial_\mu (U_\alpha g^{\mu \alpha})+g^{\mu \alpha}\partial_\mu( U_\alpha)\\
=&\frac{g^{U \mu}}{\sqrt{|\det(g)|}}\partial_{\mu}\sqrt{|\det (g)|}
+\partial_\mu (g^{U \mu})
+\frac{1}{r}g^{UR}\\
=&\frac{g^{U \ba L}}{\sqrt{|\det(g)|}}\partial_{\ba L} \sqrt{|\det (g)|}
+\frac{g^{U \q T}}{\sqrt{|\det(g)|}}\partial_{\q T} \sqrt{|\det (g)|}
+\partial_{\ba L} g^{U \ba L}+\partial_U g^{U U}+\partial_L g^{U L}+\frac{1}{r}g^{UR}. 
\end{align*}
Therefore 
\begin{equation*}
\partial_{\ba L} g^{U \ba L}=-U_{\alpha}H_b^\alpha-\frac{g^{U \ba L}}{\sqrt{|\det(g)|}}\partial_{\ba L} \sqrt{|\det (g)|}
-\frac{g^{U \q T}}{\sqrt{|\det(g)|}}\partial_{\q T} \sqrt{|\det (g)|}
-\partial_U g^{U U}-\partial_L g^{U L}-\frac{1}{r}g^{UR},
\end{equation*}
and arguing as in Proposition \ref{estLL} we infer
$$|\partial_q\wht g_{UL}+G^U|\lesssim |\bar{\partial} \wht g_{\q T \q V}|+\frac{1}{1+s}|\wht g_{\q T \q V}|+s.t.$$
Commuting with the vector fields $Z$ as before, we obtain the desired estimate.
To obtain the second one, we contract the wave coordinate condition with $L$
\begin{equation}\label{wcl}\begin{split}
-L_{\alpha}H^\alpha = &\frac{1}{\sqrt{|\det{g}|}}L_{\alpha}\partial_\mu (g^{\mu \alpha})\sqrt{|\det(g)|}.\\
=&\frac{1}{\sqrt{|\det{g}|}}\partial_{\ba L}\left(\sqrt{|\det(g)|}g^{L\ba L}\right) +\frac{1}{\sqrt{|\det{g}|}}\partial_{\q T}\left(\sqrt{|\det(g)|}g^{L\q T}\right)-g^{\mu \alpha}\partial_\mu (L_\alpha). \end{split}
\end{equation}
We note that 
\begin{align*}
\sqrt{|\det(g)|}g^{L\ba L}=&\frac{1}{2\sqrt{|\det(g)|_{L,\ba L,U}|}}(g_{L\ba L}g_{UU}-g_{U\ba L}g_{UL})\\
=&\frac{g_{L\ba L}g_{UU}}{\sqrt{g_{L\ba L}^2g_{UU}+O(\wht g_{\q T \q T})O(g)}}
+O(\wht g_{\q T \q T})O(g)\\
=&-\frac{1}{2}\sqrt{g_{UU}}+ +O(\wht g_{\q T \q T})O(g).
\end{align*}
Therefore \eqref{wcl} yields
$$|\partial_q \wht g_{UU}+2G^{L}|\lesssim |\bar{\partial}\wht g|
+\frac{1}{1+s}|\wht g|.$$
We commute with the vector fields $Z$ to conclude.
\end{proof}

\subsection{Estimate for the good metric component $g_{LL}$}
Thanks to the bootstrap assumptions, we obtain the following corollary.
\begin{cor}\label{estll}
	We have the estimates for $q>R+1$ 
	\begin{equation}
	\label{wextll}|\partial Z^{N-8} \wht g_{LL}|\lesssim \frac{\ep}{(1+s)^\frac{3}{2}(1+|q|)^{\frac{1}{2}+\delta -\sigma}}
	\end{equation}
	and for $q\leq R+1$ 
	\begin{align}
	\label{wintll1}|\partial Z^{N-7} \wht g_{LL}|&\lesssim \frac{\ep(1+|q|)^{\frac{1}{2}+\sigma}}{(1+s)^\frac{3}{2}},\\
	\label{wintll1bis}|\partial Z^{N-7} \wht g_{LL}|&\lesssim \frac{\ep(1+|q|)^{\frac{1}{2}}}{(1+s)^{\frac{3}{2}-\rho}},\\
	\label{wintll2}|\partial Z^{N-8} \wht g_{LL}|&\lesssim \frac{\ep}{(1+s)^{\frac{3}{2}-3\rho}}, \\
	\label{wintll3}|\partial Z^{N-10} \wht g_{LL}|&\lesssim \frac{\ep}{(1+s)^{\frac{3}{2}-\rho}} .
	\end{align}
\end{cor}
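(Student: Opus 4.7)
The key tool is Proposition \ref{estLL}, which in the region $t/2 \leq r \leq 2t$ bounds the bad derivative $\partial_q Z^I \wht g_{LL}$ by a good derivative of $\wht g_{\ba L L}$ and $\wht g_{\q T \q T}$, plus $(1+s)^{-1}$ times these components. To pass from $\partial_q$ to the full space-time derivative $\partial$, I note that $\partial = \partial_q + \bar{\partial}$ and use $|\bar{\partial}u|\lesssim (1+s)^{-1}|Zu|$ on the tangential piece, so that control of $\partial Z^I \wht g_{LL}$ reduces entirely to pointwise estimates of the form $(1+s)^{-1}|Z^{I+1}\wht g|$ on the relevant null-frame components.

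A crucial structural point is that the one-form $dq=dr-dt$ satisfies $dq(L)=0$, so the correction $4\Up(r/t) k\, dq^2$ appearing in the decomposition \eqref{dec2} contributes only to the $\ba L\ba L$ component. Hence $\wht g$ and $\wht g_1$ agree on $LL$, $L\ba L$, $LU$, and $UU$, which are precisely the components appearing on the right-hand side of Proposition \ref{estLL} and in $\wht g_{LL}$ itself. This allows me to replace $\wht g$ by $\wht g_1$ on those components, gaining access to the sharper exterior decay available for $\wht g_1$.

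With this reduction, each of the six estimates follows by plugging in the appropriate pointwise bound. The exterior estimate \eqref{wextll} follows from \eqref{iks5}, which gives $|Z^{N-7}\wht g_1|\lesssim \ep (1+s)^{-1/2}(1+|q|)^{-1/2-\delta+\sigma}$, and dividing by $(1+s)$ yields the claimed rate. For the interior estimates I use in turn \eqref{iks2} for \eqref{wintll1}, \eqref{iks3} for \eqref{wintll1bis}, and the $L^\infty$ bootstrap assumptions \eqref{bootg2} and \eqref{bootg1} for \eqref{wintll2} and \eqref{wintll3} respectively. Outside the cone region $t/2 \leq r \leq 2t$, where Proposition \ref{estLL} does not directly apply, one has $|q|\sim s$ and the target bounds follow immediately from Proposition \ref{estks} via $|\partial u|\lesssim (1+|q|)^{-1}|Zu|$.

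The only subtlety is regularity bookkeeping: the conversion $|\bar{\partial}u|\lesssim (1+s)^{-1}|Zu|$ costs one derivative, so controlling $\partial Z^I$ of a quantity requires a pointwise bound at level $Z^{I+1}$. A direct tally shows that in each of the six cases the required higher level is covered by Proposition \ref{estks}, Corollary \ref{estksg}, or the $L^\infty$ bootstrap, and this is where one uses $N\geq 25$ comfortably. There is no genuine analytic obstacle here --- the corollary is essentially a bookkeeping consequence of Proposition \ref{estLL} together with the already available pointwise estimates on $\wht g$ and $\wht g_1$.
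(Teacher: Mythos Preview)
Your proof is correct and follows the same approach as the paper: both reduce the bound on $\partial_q Z^I \wht g_{LL}$ via Proposition \ref{estLL} to $(1+s)^{-1}|Z^{I+1}\wht g_1|$ (using that $\wht g$ and $\wht g_1$ differ only in their $\ba L\ba L$ component), and then invoke \eqref{iks5}, \eqref{iks2}, \eqref{iks3}, \eqref{bootg2}, \eqref{bootg1} respectively for the five estimates. Your write-up is in fact somewhat more complete than the paper's --- you explicitly justify passing from $\partial_q$ to the full gradient $\partial$ via $\bar\partial$, and you address the region outside $t/2\leq r\leq 2t$ where $|q|\sim s$ --- but neither of these requires a new idea.
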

\begin{proof}
	First we note that $\wht g$ and $\wht g_1$ differ only by their $\ba L \ba L$ component.
	Thanks to Proposition \ref{estLL} we have
$$|\partial_q Z^I \wht g_{LL}|\lesssim \frac{1}{1+s}|Z^{I+1}\wht g_1|.$$
Then \eqref{iks5} yields \eqref{wextll}, \eqref{iks2} yields \eqref{wintll1}, \eqref{iks3} yields \eqref{wintll1bis}, \eqref{bootg2} yields \eqref{wintll2} and \eqref{bootg1} yields \eqref{wintll3}.
\end{proof}
Thanks to Lemma \ref{lmintegration}, since $\delta-\sigma >\frac{1}{2}$ we obtain the following corollary
\begin{cor}\label{iestll}
	We have the estimates for $q>R+1$ 
	\begin{equation}
	\label{iwextll}|Z^{N-7} \wht g_{LL}|\lesssim \frac{\ep}{(1+s)^\frac{3}{2}(1+|q|)^{-\frac{1}{2}+\delta -\sigma}}, 
	\end{equation}
	and for $q\leq R+1$ 
	\begin{align}
	\label{iwintll1}| Z^{N-7} \wht g_{LL}|&\lesssim \frac{\ep(1+|q|)^{\frac{3}{2}+\sigma}}{(1+s)^\frac{3}{2}}, \\
	\label{iwintll1bis}| Z^{N-7} \wht g_{LL}|&\lesssim \frac{\ep(1+|q|)^{\frac{3}{2}}}{(1+s)^{\frac{3}{2}-\rho}},\\
	\label{iwintll2}|Z^{N-8} \wht g_{LL}|&\lesssim \frac{\ep(1+|q|)}{(1+s)^{\frac{3}{2}-3\rho}},\\
	\label{iwintll3}| Z^{N-10} \wht g_{LL}|&\lesssim \frac{\ep(1+|q|)}{(1+s)^{\frac{3}{2}-\rho}}.
	\end{align}
\end{cor}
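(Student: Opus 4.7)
The statement follows by applying Lemma \ref{lmintegration} to each of the four pointwise estimates on $\partial Z^I\wht g_{LL}$ furnished by Corollary \ref{estll}. In each case one reads off the parameters $(\alpha,\beta,\gamma)$ as the exponents of $(1+|q|)$ for $q<0$, of $(1+|q|)$ for $q>0$, and of $(1+s)$ respectively, then integrates and verifies that the resulting exponents $\alpha+1$ (interior) and $\beta+1$ (exterior) match the stated bound.

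For the exterior estimate \eqref{wextll} one has $\gamma=-\tfrac32$ and $\beta=-\tfrac12-\delta+\sigma$. The hypothesis $\beta<-1$ of Lemma \ref{lmintegration} reduces to $\delta-\sigma>\tfrac12$, which follows from the standing assumption $\delta-2\sigma>\tfrac12$ together with $\sigma>0$. Integration in $q$ from $q=+\infty$ at fixed $s$ then produces the decay $(1+|q|)^{-1/2+\delta-\sigma}$ claimed in \eqref{iwextll}. The only ingredient still to check is the initial pointwise decay of $Z^I\wht g_{LL}$ on $\{t=0\}$ required by the lemma: since on the initial slice one may write $\wht g=\wht h_0$ with $\wht h_0\in H^{N+1}_{\delta}$ (from the initial-data construction in Appendix \ref{reguini}), the weighted Sobolev embedding of Proposition \ref{holder} gives $|Z^I\wht g_{LL}(0,x)|\lesssim (1+|x|)^{-\delta-1+\varepsilon}$ for any $\varepsilon>0$ and $I$ up to the required order, which dominates the boundary term arising at $t=0$ in the proof of Lemma \ref{lmintegration} and lets the integration close in the region $q>R+1$.

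For the three interior bounds \eqref{wintll1}--\eqref{wintll3}, only the $q<0$ half of Lemma \ref{lmintegration} is invoked, so no sign condition on $\alpha$ and no decay at infinity are needed; one integrates in $q$ from $q=0$, starting from the value at $q=0^+$ which is controlled by the exterior estimate just established. Reading off $(\alpha,\gamma)\in\{(\tfrac12+\sigma,-\tfrac32),(\tfrac12,-\tfrac32+\rho),(0,-\tfrac32+3\rho),(0,-\tfrac32+\rho)\}$ one obtains the exponents $(1+|q|)^{\alpha+1}\in\{(1+|q|)^{3/2+\sigma},(1+|q|)^{3/2},(1+|q|),(1+|q|)\}$ claimed in \eqref{iwintll1}, \eqref{iwintll1bis}, \eqref{iwintll2}, \eqref{iwintll3} respectively. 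The only real care required is the tabulation of the parameters $(\alpha,\beta,\gamma)$ in each of the four cases and the verification that the standing smallness relations among $\rho,\sigma,\delta$ keep the integrations convergent; beyond this bookkeeping the argument is mechanical.
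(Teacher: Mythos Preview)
Your proof is correct and follows the same approach as the paper, which simply invokes Lemma~\ref{lmintegration} together with the condition $\delta-\sigma>\tfrac12$; you have supplied considerably more detail than the paper's one-line justification, in particular the explicit tabulation of $(\alpha,\beta,\gamma)$ in each case and the check of the initial-data decay via the weighted Sobolev embedding.
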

We now give $L^2$ estimates for the coefficient $g_{LL}$.

\begin{prp}\label{l2ll}
	We have 
	\begin{align}
	\label{wl2dlln}&
	\int_0^t\left\| w'_2(q)^\frac{1}{2} \partial Z^N \wht g_{LL}\right\|^2_{L^2}ds \lesssim \ep^2(1+t)^{2\rho},\\
	\label{hwl2dlln}&
	\int_0^t\left\| \frac{w_2'(q)^\frac{1}{2}}{(1+|q|)}  Z^N \wht g_{LL}\right\|^2_{L^2}ds \lesssim \ep^2(1+t)^{2\rho},\\
	\label{wl2dll}&\left\| \frac{w_2(q)^\frac{1}{2}}{(1+|q|)} \partial Z^{N-1} \wht g_{LL}\right\|_{L^2}\lesssim \frac{\ep}{(1+t)^{1-\rho}}, \quad \left\| \frac{w_1(q)^\frac{1}{2}}{(1+|q|)} \partial Z^{N-1} \wht g_{LL}\right\|_{L^2}\lesssim \frac{\ep}{(1+t)^{1-2\rho}}\\
\label{hwl2dll}&\left\| \frac{w_2(q)^\frac{1}{2}}{(1+|q|)^2}  Z^{N-1} \wht g_{LL}\right\|^2_{L^2}\lesssim \frac{\ep}{(1+t)^{1-\rho}}, \quad \left\| \frac{w_1(q)^\frac{1}{2}}{(1+|q|)^2}  Z^{N-1} \wht g_{LL}\right\|^2_{L^2}\lesssim \frac{\ep}{(1+t)^{1-2\rho}},\\
	\label{wl2dll2}&\left\| \frac{w_1(q)^\frac{1}{2}}{(1+|q|)} \partial Z^{N-5} \wht g_{LL}\right\|_{L^2}\lesssim \frac{\ep}{(1+t)^{1-\rho}},\\
	\label{hwl2dll2}&\left\| \frac{w_1(q)^\frac{1}{2}}{(1+|q|)^2}  Z^{N-5} \wht g_{LL}\right\|^2_{L^2}\lesssim \frac{\ep}{(1+t)^{1-\rho}}.
	\end{align}
Moreover, the same statement holds true with $w_2$ replaced by $w_1$ and $\rho$ replaced by $2\rho$.
\end{prp}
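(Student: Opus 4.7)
The starting observation is that $\wht g$ and $\wht g_1$ differ only by the term $4\Up(r/t)\,k\,dq^2$, which in the null frame $(L,\ba L,U)$ contributes only to the $\ba L\ba L$ component (since $dq(L)=0$). Hence $\wht g_{LL}=\wht g_{1,LL}$, and more generally all the components $\wht g_{\q T\q V}$ appearing in Proposition \ref{estLL} agree with the corresponding components of $\wht g_1$. This is what allows us to combine the sharp weighted energy for $\wht g_1$ (bootstraps \eqref{gN}, \eqref{igN}, Hardy estimate \eqref{hg1n}) with the structural gain of one factor of $(1+s)$ or one good derivative coming from the wave coordinate condition.

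For the integrated estimate \eqref{wl2dlln}, I split $\partial = \bar\partial + \partial_q$ (schematically). The $\bar\partial$ piece is controlled directly by \eqref{igN} applied to $\wht g_{1,LL}$. For the $\partial_q$ piece I apply Proposition \ref{estLL}: it bounds $|\partial_q Z^N\wht g_{LL}|$ by $|\bar\partial Z^N\wht g_{1,\q T\q V}|$ plus $(1+s)^{-1}|Z^N\wht g_{1,\q T\q V}|$. The $\bar\partial$ terms are again absorbed into \eqref{igN}. For the undifferentiated term, I integrate in time: using $(1+s)^{-1}(1+|q|)^{-1/2}\leq (1+s)^{-1/2}(1+|q|)^{-1}$ and $w_2'^{1/2}\lesssim w_2^{1/2}(1+|q|)^{-1/2}$, together with Hardy \eqref{hg1n},
\begin{align*}
\int_0^t\left\|\tfrac{w_2'^{1/2}}{1+\tau}\,Z^N\wht g_1\right\|_{L^2}^2 d\tau
\lesssim \int_0^t\frac{1}{1+\tau}\left\|\tfrac{w_2^{1/2}}{1+|q|}\,Z^N\wht g_1\right\|_{L^2}^2 d\tau
\lesssim \ep^2(1+t)^{2\rho}.
\end{align*}
The Hardy-type estimate \eqref{hwl2dlln} then follows directly from \eqref{wl2dlln} by applying Proposition \ref{hardy} with $v=w_2'$ (the condition $\delta>2\sigma$ guarantees that $w_2'$ lies in the admissible range of weights).

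The lower-order, non-integrated estimates \eqref{wl2dll}, \eqref{hwl2dll}, \eqref{wl2dll2}, \eqref{hwl2dll2} are obtained by the same decomposition but with the slightly stronger weight $w_2$ or $w_1$ without a time integral. For a factor $\bar\partial Z^{N-1}$, I use $|\bar\partial h|\lesssim (1+s)^{-1}|Zh|$ to produce a factor $(1+t)^{-1}$ in front of $\|w_2^{1/2}(1+|q|)^{-1}Z^N\wht g_1\|_{L^2}$, estimated by \eqref{hg1n}. For $\partial_q Z^{N-1}\wht g_{LL}$, Proposition \ref{estLL} again gives $\bar\partial$ of $\wht g_1$ at order $N-1$ (bounded via $(1+s)^{-1}\|w_2^{1/2}(1+|q|)^{-1}Z^N\wht g_1\|$) plus $(1+s)^{-1}Z^{N-1}\wht g_1$ (bounded by \eqref{hg1n} at order $N-1$). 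This yields \eqref{wl2dll}, and then \eqref{hwl2dll} follows by one further Hardy application. The $w_1$ versions \eqref{wl2dll2}, \eqref{hwl2dll2}, and the ``moreover'' clause, are identical except that one substitutes \eqref{g2N}, \eqref{ig2N}, \eqref{hg2n} for their $w_2$/$\wht g_1$ analogues, at the cost of an extra factor $(1+t)^\rho$.

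The only real subtlety — and what I would call the main obstacle — is the interplay between the different weight functions $w_2'$, $w_2$, $w_1$: one must verify that each Hardy and time-integration step lands in the admissible range (the conditions $\alpha<1$, $\beta>1$ translate, with the given $w_2$, into the inequality $\delta-2\sigma>\tfrac12$ assumed throughout), and that the factor $(1+|q|)^{-1/2}$ lost in passing from $w_2'^{1/2}$ to $w_2^{1/2}$ is always compensated either by a time factor $(1+\tau)^{-1/2}$ (using $(1+|q|)\leq(1+s)$) or by the good derivative $\bar\partial$. Once the bookkeeping is set up once, the six inequalities are essentially variations on the same two-step argument (wave coordinate relation + Hardy), and the factor $(1+t)^\rho$ versus $(1+t)^{2\rho}$ comes directly from the corresponding split between the bootstraps for $\wht g_1$ and $\wht g$.
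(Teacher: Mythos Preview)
Your proposal is correct and follows essentially the same route as the paper: use Proposition \ref{estLL} to trade $\partial_q Z^I\wht g_{LL}$ for $\bar\partial Z^I\wht g_1$ and $(1+s)^{-1}Z^I\wht g_1$, then feed these into the bootstraps \eqref{igN}, \eqref{hg1n} (respectively \eqref{hg1n1} for the $N-5$ estimates), and derive the undifferentiated estimates from the differentiated ones via the weighted Hardy inequality after checking the weight exponents lie in the admissible range. The only minor imprecision is the reference for \eqref{wl2dll2}--\eqref{hwl2dll2}: there one uses \eqref{hg1n1} (the $w_1$-weighted Hardy bound for $Z^{N-4}\wht g_1$) rather than \eqref{hg2n}, but the mechanism is exactly the one you describe.
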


\begin{proof}
	Proposition \eqref{estLL} implies
	$$|\partial_q Z^N \wht g_{LL}|\lesssim 
	\frac{1}{1+s}|Z^I \wht g_1|
	+|\bar{\partial} Z^N \wht g_{1}|.$$
	Thanks to \eqref{igN} we have
	$$\int_0^t\left\| w_2'(q)^\frac{1}{2} \bar{\partial} Z^N \wht g_{1}\right\|^2_{L^2}ds \lesssim \ep^2(1+t)^{2\rho},$$
	and thanks to \eqref{hg1n} we have
	\begin{align*}
	&\int_0^t\left\| \frac{w_2'(q)^\frac{1}{2}}{1+s}  Z^N \wht g_1\right\|^2_{L^2}ds \\
	&\lesssim \int_0^t\frac{1}{(1+s)}\left\| \frac{w_2(q)^\frac{1}{2}}{1+|q|}  Z^N \wht g_1\right\|^2_{L^2}ds\\ 
	&\lesssim \int_0^t \frac{\ep^2}{(1+s)^{1-2\rho}} \lesssim \ep^2(1+t)^{2\rho},
	\end{align*}
	where we have used \eqref{hg1n}. This concludes the proof of estimate \eqref{wl2dlln}. To prove \eqref{hwl2dlln} we notice that
	thanks to the weighted Hardy inequality
	$$\left\| \frac{w_2'(q)^\frac{1}{2}}{(1+|q|)}  Z^N \wht g_{LL}\right\|_{L^2}\lesssim  \left\| w'_2(q)^\frac{1}{2} \partial Z^N \wht g_{LL}\right\|^2_{L^2}.$$
	Indeed in the exterior $w_2'(q)=(1+|q|)^{\beta}$ with $\beta =1+2\delta-4\sigma>1$, and in the interior, $w_2'(q)=(1+|q|)^\alpha$ with $\alpha=-1-2\sigma-1<1$.
	To prove \eqref{wl2dll} we write
	$$|\partial_q Z^I \wht g_{LL}|\lesssim \frac{1}{1+s}|Z^{I+1}\wht g_1|,$$
	and consequently
	$$\left\| \frac{w_2(q)^\frac{1}{2}}{(1+|q|)} \partial Z^{N-1} \wht g_{LL}\right\|_{L^2}\lesssim
	\frac{1}{1+t}\left\| \frac{w_2(q)^\frac{1}{2}}{(1+|q|)}  Z^{N} \wht g_1\right\|_{L^2} \lesssim \frac{\ep^2}{(1+t)^{1-\rho}},$$
	where we have used \eqref{hg1n}. To prove \eqref{hwl2dll} we notice that 	thanks to the weighted Hardy inequality
	$$\left\| \frac{w_2(q)^\frac{1}{2}}{(1+|q|)^2}  Z^{N-1} \wht g_{LL}\right\|_{L^2}\lesssim  \left\| \frac{w_2(q)^\frac{1}{2}}{(1+|q|)} \partial Z^{N-1} \wht g_{LL}\right\|^2_{L^2}.$$
	Indeed in the exterior $\frac{w_2}{(1+|q|)^2}=(1+|q|)^{\beta}$ with $\beta =2\delta-4\sigma>1$, and in the interior, $\frac{w_2}{(1+|q|)^2}=(1+|q|)^\alpha$ with $\alpha=-1-2\sigma-2<1$.
	Estimates \eqref{wl2dll2} and \eqref{hwl2dll2} are proved in the same way thanks to \eqref{hg1n1}.
\end{proof}
\paragraph{The outgoing null cone}

\begin{prp}
	\label{cone} For $\ep$ small enough, the causal future of $B(0,R)$ is included in the Minkowski cone $q<R+\frac{1}{2}$.
\end{prp}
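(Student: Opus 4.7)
The strategy is a continuity argument on $q(t)=r(t)-t$ along causal curves for $g$ starting in $B(0,R)\times\{0\}$. Since the cut-off $\chi$ vanishes on $\{q\le R+1/2\}$, the auxiliary metric reduces to $g_{\mathfrak b}=m$ in this region, hence $g=m+\wht g$; moreover $G^U\equiv 0$ there since its definition involves $\chi'(q)$, so the wave coordinate identity for $\wht g_{UL}$ of Proposition \ref{estLU} simplifies accordingly. The plan is to bootstrap on $\{q(\tau)<R+1\}$ and prove $q(t)\le R+C\ep$, which strictly improves the bootstrap for $\ep$ small enough.

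Given a future-directed causal curve $\gamma(t)=(t,r(t),\theta(t))$ with $r(0)\le R$, I decompose $\dot\gamma=aL+b\ba L+cU$ with $a=(1+\dot r)/2$, $b=(1-\dot r)/2$, $c=r\dot\theta$. The causal inequality $g(\dot\gamma,\dot\gamma)\le 0$ is a quadratic in $c$; requiring a real root gives a nonnegative discriminant which, after expansion, reads
\begin{equation*}
(1-\dot r^{2})(1+O(|\wht g|))\;\ge\;\wht g_{LL}\,a^{2}-(\wht g_{LU}a+\wht g_{\ba L U}b)^{2}+\wht g_{\ba L\ba L}\,b^{2}+O(|\wht g|^{2}).
\end{equation*}
Since on outgoing causal directions $b=O(1-\dot r)$ is small, solving this inequality for $\dot r$ and iterating once yields
\begin{equation*}
|\dot r-1|\;\le\;C\bigl(|\wht g_{LL}|+|\wht g_{LU}|^{2}+|\wht g|\,|\wht g_{LL}|\bigr),
\end{equation*}
so only the good null components control the growth of $q$.

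The decisive input is now Corollary \ref{iestll} together with the analogous $L^\infty$ consequence of Proposition \ref{estLU} (which, in the region $q\le R+1/2$ where $G^U\equiv 0$, gives the same improvement for $\wht g_{LU}$ as for $\wht g_{LL}$): along $\gamma$ one has
\begin{equation*}
|\wht g_{LL}|+|\wht g_{LU}|\;\lesssim\;\frac{\ep}{(1+t)^{3/2-\rho}},
\end{equation*}
which is integrable in $t$ with integral bounded by $C\ep$, while $|\wht g|\,|\wht g_{LL}|$ is integrable thanks to the bare bootstrap \eqref{bootg1}. Integrating from $0$ to $t$ one obtains $q(t)\le R+C\ep<R+1/2$ for $\ep$ small, strictly improving the bootstrap and yielding the claim by a standard continuity argument. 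The main obstacle is precisely that the bare bound $|\wht g|\lesssim\ep(1+t)^{-1/2+\rho}$ from \eqref{bootg1} is \emph{not} $L^1_t$-integrable; the argument succeeds only because the wave coordinate identities of Section \ref{secwave} single out the components $\wht g_{LL}$ and $\wht g_{LU}$ whose decay is faster by one power of $(1+t)$.
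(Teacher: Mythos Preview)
Your argument is correct and reaches the same conclusion, but it takes a genuinely different route from the paper. The paper works with the outgoing eikonal function: it solves $g^{\alpha\beta}\partial_\alpha u\,\partial_\beta u=0$ with $u|_{t=0}=r$, observes that $\partial_s u=O(g_{LL})=O\bigl(\ep(1+|q|)/(1+s)^{3/2-\rho}\bigr)$, integrates to get $u=q(1+O(\ep))$, and concludes since the causal future of $B(0,R)$ lies inside $\{u\le R\}$. You instead bound $\dot q=\dot r-1$ directly along causal curves via the discriminant of the causal quadratic in the angular component, then integrate in $t$. Both arguments rest on the same decisive input---the integrable decay of $\wht g_{LL}$ coming from the wave coordinate condition (Corollary \ref{iestll})---so neither is deeper; the eikonal approach is shorter and more geometric, while yours is more elementary and avoids invoking existence and regularity of the optical function.

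One small inaccuracy worth flagging: in the region $q\le R+\tfrac12$ the paper's integrated estimate for $\wht g_{UL}$ is \eqref{iwintlu3}, which controls $\wht g_{UL}-(1-\chi(q))\sigma^0_{UL}$ rather than $\wht g_{UL}$ itself; the residual $\sigma^0_{UL}=s(1+b)\partial_s f$ is only $O(\ep^2/(1+s))$, not $O(\ep/(1+s)^{3/2-\rho})$. This does not affect your proof because you only use $|\wht g_{LU}|^2$, and $\bigl(\ep^2/(1+s)\bigr)^2$ is still integrable in $t$ with integral $O(\ep^4)$. Your displayed discriminant inequality also silently absorbs the cross term $2ab\,\wht g_{L\ba L}$ into the factor $(1+O(|\wht g|))$ on the left, which is legitimate since $4ab=1-\dot r^{2}$; it may be worth saying so explicitly.
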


\begin{proof}
With our estimates for $g$, we obtain that the outgoing solution of the eikonal equation
$$g^{\alpha \beta}\partial_\alpha u \partial_\beta u=0,$$
with initial data $u=r$ is such that
$$\partial_s u =O(g_{LL})=O\left(\frac{\ep q}{(1+s)^{\frac{3}{2}-\rho}}\right),$$
so $$u=q\left(1+O(\ep)\right).$$
The causal future of $B(0,R)$ is the inside of the cone bounded by the hypersurface $u=R$. So in the causal future of $B(0,R)$ we have
$$q\leq R(1+\ep)\leq R+\frac{1}{2}.$$
\end{proof}
As a consequence, $\phi$ is supported in the region $q<R+\frac{1}{2}$.

\subsection{Estimate for the good metric coefficient $g_{UL}$}
We recall the definition of $G^U$ 
$$G^U=\sigma^0_{UL}\chi'(q).$$
The following estimate for $G^U$ is a direct consequence of \eqref{estg0}.
\begin{prp}
	We have
	\begin{equation}
	\label{estgu}|Z^{N-11}G^U|\lesssim \frac{\ep^2\ch_{R+\frac{1}{2}\leq q \leq R+1 }}{1+s}.
	\end{equation}
\end{prp}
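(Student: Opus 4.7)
The plan is essentially a one-line reduction to the estimate \eqref{estg0} already established in Corollary \ref{corestb}. Since $G^U=\sigma^0_{UL}\chi'(q)$ by definition, and the cut-off $\chi'(q)$ is supported in $R+\tfrac12\le q\le R+1$ (because $\chi\equiv 0$ for $q\le R+\tfrac12$ and $\chi\equiv 1$ for $q\ge R+1$), any bound on $G^U$ automatically carries the characteristic function $\ch_{R+\frac12\leq q\leq R+1}$. So I only need to produce a pointwise estimate of order $\varepsilon^2/(1+s)$ for $|Z^{N-11}(\sigma^0_{UL}\chi'(q))|$ on that thin $q$-strip, where $1+|q|\lesssim 1$.

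Next I would apply the Leibniz rule. The factor $\chi'(q)$ depends only on $q$, and for every $Z\in\q Z$ one has $|Zq|\lesssim 1$: translations give $|\partial_\alpha q|\le 1$, rotations $\Omega_{1,2}$ preserve $r$ and $t$ separately so annihilate $q$, and $Sq=q$ is bounded on the support. Iterating, $|Z^I\chi'(q)|\lesssim 1$ uniformly, still supported in $R+\tfrac12\le q\le R+1$. Therefore the problem is reduced to controlling $|Z^I\sigma^0_{UL}|$ for $I\le N-11$ on the same strip.

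For this control I would read $\sigma^0_{UL}$ off from the metric $g_{\mathfrak{b}}$. By construction $g_{\mathfrak{b}}=\chi(q)(\sigma^0+\sigma^1)+(1-\chi(q))m$, and in the null frame $m(U,L)=0$, so $(g_{\mathfrak{b}})(U,L)=\chi(q)(\sigma^0_{UL}+\sigma^1_{UL})$. Hypothesis \eqref{estg0} then gives $|Z^I(g_{\mathfrak{b}})_{UL}|\lesssim \varepsilon^2(1+|q|)/(1+s)\lesssim \varepsilon^2/(1+s)$ on the support, while \eqref{calcsigma1} combined with the $\q H$ hypotheses on $b$ bounds $|Z^I\sigma^1_{UL}|$ by a strictly better quantity (one extra factor of $q/s\sim 1/s$). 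Extracting $\sigma^0_{UL}$ therefore inherits the same bound $|Z^{N-11}\sigma^0_{UL}|\lesssim \varepsilon^2/(1+s)$.

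Putting the two estimates together via Leibniz gives $|Z^{N-11}G^U|\lesssim \varepsilon^2\,\ch_{R+\frac12\le q\le R+1}/(1+s)$, as claimed. There is no genuine obstacle here; the only thing to be careful about is simply that the decay coming from $g_{\mathfrak{b}}$ already factors through its $\sigma^0$ part because $\sigma^1$ is smaller, so one does not need to revisit the delicate relation $\partial_\theta\partial_s f=-\partial_s(b/(1+b))$ or bound $\partial_s f$ by hand.
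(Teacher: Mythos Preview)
Your overall strategy matches the paper's: the estimate is indeed meant to follow immediately from the bounds behind Corollary~\ref{corestb}. However, your extraction step has a small gap. From $(g_{\mathfrak{b}})_{UL}=\chi(q)(\sigma^0_{UL}+\sigma^1_{UL})$ and the bounds on $(g_{\mathfrak{b}})_{UL}$ and $\sigma^1_{UL}$, you only obtain control of $\chi(q)\sigma^0_{UL}$, not of $\sigma^0_{UL}$ itself. Since $\chi(q)$ vanishes at $q=R+\tfrac12$, which lies in the support of $\chi'$, you cannot simply divide out the factor $\chi(q)$ to recover $\sigma^0_{UL}$.

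The fix is immediate: use $g_{\mathfrak{a}}$ in place of $g_{\mathfrak{b}}$. The estimate \eqref{calcg0} in Proposition~\ref{prpgb} bounds $Z^I g_{\mathfrak{a}}$ (not $g_{\mathfrak{b}}$), and it is precisely this bound, combined with the hypotheses $\q H$, that is used in the proof of Corollary~\ref{corestb} to establish \eqref{estg0}. Since $g_{\mathfrak{a}}=\sigma^0+\sigma^1$ with no cut-off, you get $\sigma^0_{UL}=(g_{\mathfrak{a}})_{UL}-\sigma^1_{UL}$ directly, and your subtraction argument goes through. Alternatively, and even more directly, $\sigma^0_{UL}=s(1+b)\partial_s f$ is controlled by the term $s|\partial_s Z^I b|$ in \eqref{calcg0}, which is exactly what is bounded by $\ep^2/(1+s)$ via \eqref{estb1} in the proof of Corollary~\ref{corestb}. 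Either route closes the gap and yields the claimed bound.
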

We now go to the estimate of $g_{UL}$.
\begin{cor}\label{estul}
	We have the estimates for $q>R+1$ 
	\begin{equation}
	\label{wextlu}
	|\partial Z^I \wht g_{UL}|\lesssim \frac{\ep}{(1+s)^{\frac{3}{2}-\rho}(1+|q|)^{\frac{1}{2}+\delta}}
	\end{equation}
	and for $q\leq R+1$
	\begin{align}
	\label{wintlu1}|\partial_q Z^{N-7}  g_{UL}|&\lesssim \frac{\ep(1+|q|)^{\frac{1}{2}}}{(1+s)^{\frac{3}{2}-\rho}},\\
	\label{wintlu2}|\partial_q Z^{N-8} g_{UL}|&\lesssim \frac{\ep}{(1+s)^{\frac{3}{2}-3\rho}}, \\
	\label{wintlu3}|\partial_q Z^{N-10} g_{UL}|&\lesssim \frac{\ep}{(1+s)^{\frac{3}{2}-\rho}}.
	\end{align}
\end{cor}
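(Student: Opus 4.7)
The strategy mirrors Corollary~\ref{estll}, this time using the $U$-contraction of the wave coordinate condition recorded in Proposition~\ref{estLU}. The key structural input is that $(g_{\mathfrak{b}})_{UL}=\chi(q)\sigma^0_{UL}$ with $\sigma^0_{UL}=s(1+b)\partial_s f$ depends only on $s$ and $\theta$; since $\partial_q=\tfrac{1}{2}(\partial_r-\partial_t)$ annihilates every function of $s$, one has the exact algebraic identity
\[
\partial_q (g_{\mathfrak{b}})_{UL}=\chi'(q)\,\sigma^0_{UL}=G^U.
\]
Writing $g_{UL}=(g_{\mathfrak{b}})_{UL}+\wht g_{UL}$, the $Z^I G^U$ contribution produced by differentiating the background cancels the $Z^I G^U$ appearing in Proposition~\ref{estLU}'s bound for $Z^I\partial_q\wht g_{UL}$. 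Combined with the commutation identity $|\bar\partial Z^J u|\lesssim (1+s)^{-1}|Z^{J+1}u|$ from \eqref{important}, this yields
\[
|Z^I\partial_q g_{UL}|\lesssim \frac{|Z^{I+1}\wht g|}{1+s},
\]
and standard commutators between $Z$ and $\partial_q$ transfer this to $\partial_q Z^I g_{UL}$ by induction on $I$.

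In the exterior region $q>R+1$ we have $\chi'(q)=0$ so $G^U\equiv 0$, and the same inequality holds with $\wht g_{UL}$ on the left hand side. Applying the pointwise bootstrap \eqref{iks6} on $Z^{I+1}\wht g$ produces $\ep/[(1+s)^{3/2-\rho}(1+|q|)^{1/2+\delta}]$ for $I\leq N-6$; the $\bar\partial Z^I\wht g_{UL}$ component of $\partial Z^I\wht g_{UL}$ is controlled identically by the same commutator identity, which gives \eqref{wextlu}.

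For the interior $q\leq R+1$ we insert the three relevant pointwise bootstrap assumptions: \eqref{iks3} applied with $I=N-7$ yields $\ep\sqrt{1+|q|}/(1+s)^{3/2-\rho}$, matching \eqref{wintlu1}; \eqref{bootg2} applied with $I=N-8$ yields $\ep/(1+s)^{3/2-3\rho}$, matching \eqref{wintlu2}; and \eqref{bootg1} applied with $I=N-10$ yields $\ep/(1+s)^{3/2-\rho}$, matching \eqref{wintlu3}. The main conceptual obstacle is simply recognising the exact cancellation between $\partial_q (g_{\mathfrak{b}})_{UL}$ and the $G^U$ modulation of the wave gauge introduced in Section~\ref{secgen}; once this is made explicit, the estimates reduce to a direct substitution of the appropriate pointwise bootstrap bounds in each region.
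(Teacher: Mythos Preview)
Your approach is the same as the paper's, and the use of Proposition~\ref{estLU} together with the identification $G^U=\chi'(q)\sigma^0_{UL}$ is exactly the mechanism at work. However, there is one inaccuracy that leaves a small gap: it is not true that $(g_{\mathfrak{b}})_{UL}=\chi(q)\sigma^0_{UL}$. By construction $g_{\mathfrak{b}}=\chi(q)g_{\mathfrak{a}}+(1-\chi(q))m$ and, by Proposition~\ref{prpgb}, $(g_{\mathfrak{a}})_{UL}=\sigma^0_{UL}+\sigma^1_{UL}$, so
\[
(g_{\mathfrak{b}})_{UL}=\chi(q)\bigl(\sigma^0_{UL}+\sigma^1_{UL}\bigr).
\]
Only the $\sigma^0$ piece is annihilated by $\partial_q$; the $\sigma^1_{UL}$ piece genuinely depends on $q$ and survives. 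Thus the cancellation you describe is not exact, and one is left with
\[
|\partial_q Z^I g_{UL}|\lesssim \frac{1}{1+s}|Z^{I+1}\wht g|+\ch_{q>R}\,|\partial_q Z^I(\chi(q)\sigma^1_{UL})|.
\]
The paper handles this explicitly by writing $\wht g_{UL}+\chi(q)\sigma^0_{UL}=g_{UL}-\chi(q)\sigma^1_{UL}$ and then invoking the estimate \eqref{estsigma1} (Corollary~\ref{corestb}) to bound the remainder by $\ep^2/(1+s)^{7/4}$ in the transition strip $R<q\leq R+1$, which is harmless compared to the main terms. Once you add this one line, your argument matches the paper's and the three interior bounds follow exactly as you indicate from \eqref{iks3}, \eqref{bootg2}, and \eqref{bootg1}.
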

\begin{proof}
	In the exterior, $G^U=0$ so thanks to Proposition \ref{estLU} we have
	$$|\partial Z^I \wht g_{UL}|\lesssim \frac{1}{1+s}|Z^{I+1}\wht g|,$$
	and \eqref{wextlu} is a consequence of \eqref{iks6}.
	In the interior Proposition \ref{estLU} yields
	$$|Z^I (\partial_q (\wht g_{UL}+\sigma^0_{UL}\chi(q)))|\lesssim \frac{1}{1+s}|Z^{I+1}\wht g|.$$
	We can write $\wht g_{LU}+\chi(q)\sigma^0_{UL}=g_{UL}-\chi(q)\sigma^1_{UL}$ so thanks to \eqref{estsigma1} for $I\leq N-2$
	$$|\partial_q Z^I g_{UL}|\lesssim \frac{1}{1+s}|Z^{I+1}\wht g|+ \ch_{q>R}|Z^I \sigma^1_{UL}| \lesssim \frac{1}{1+s}|Z^{I+1}\wht g|+\frac{\ep^2}{(1+s)^\frac{7}{4}}.$$
	Consequently \eqref{iks3} yields \eqref{wintlu1}, \eqref{bootg2} yields \eqref{wintlu2} and \eqref{bootg1} yields \eqref{wintlu3}.
\end{proof}

By integrating, we obtain the following corollary
\begin{cor}\label{iestul}
	We have the estimates for $q>R+1$ 
	\begin{equation}
\label{iwextlu}
	|Z^{N-7} \wht g_{\q T \q T}|\lesssim \frac{\ep}{(1+s)^{\frac{3}{2}-\rho}(1+|q|)^{-\frac{1}{2}+\delta}}
	\end{equation}
	and for $q\leq R+1$
	\begin{align}
	\label{iwintlu1}| Z^{N-7}  (\wht g_{UL}-(1-\chi(q))\sigma^0_{UL})|&\lesssim \frac{\ep(1+|q|)^{\frac{3}{2}}}{(1+s)^{\frac{3}{2}-\rho}},\\
	\label{iwintlu2}| Z^{N-8} (\wht g_{UL}-(1-\chi(q))\sigma^0_{UL})|&\lesssim \frac{\ep(1+|q|)}{(1+s)^{\frac{3}{2}-3\rho}}, \\
	\label{iwintlu3}| Z^{N-10} (\wht g_{UL}-(1-\chi(q))\sigma^0_{UL})|&\lesssim \frac{\ep(1+|q|)}{(1+s)^{\frac{3}{2}-\rho}}, \\
	\label{iwintlu4}| Z^{N-11}\wht g_{UL}|&\lesssim \frac{\ep(1+|q|)^{\frac{1}{2}+\rho}}{1+s}.
	\end{align}
\end{cor}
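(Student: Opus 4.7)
The plan is to mirror the proof of Corollary \ref{iestll}: apply Lemma \ref{lmintegration} to the pointwise bounds in Corollary \ref{estul}, letting the correction term $(1-\chi(q))\sigma^0_{UL}$ absorb the gap between $\partial_q\wht g_{UL}$ (what one wishes to integrate) and $\partial_q\wht g_{UL}+G^U$ (what Proposition \ref{estLU} actually controls).

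For the exterior estimate on $|Z^{N-7}\wht g_{\q T \q T}|$ in the region $q > R+1$, I would first observe that Proposition \ref{cone} together with the definitions \eqref{gu}--\eqref{gbal} forces $G^U=G^L=0$ there, so Proposition \ref{estLU} supplies the bounds $|\partial Z^{N-8}\wht g_{UL}|,|\partial Z^{N-8}\wht g_{UU}|\lesssim \frac{\ep}{(1+s)^{3/2-\rho}(1+|q|)^{1/2+\delta}}$ through the bootstrap estimates, while Corollary \ref{estll} handles the $\wht g_{LL}$ component. Since $\delta > 1/2$, the exponent $\beta = -1/2-\delta$ is strictly less than $-1$, so Lemma \ref{lmintegration} applies with vanishing boundary data at spatial infinity and yields the desired gain of one power of $(1+|q|)$.

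For the interior estimates on $\wht g_{UL} - (1-\chi(q))\sigma^0_{UL}$, the key observation is that $\sigma^0_{UL} = s(1+b)\partial_s f$ depends only on $s$ and $\theta$, hence $\partial_q\sigma^0_{UL} = 0$ and therefore
$$\partial_q\bigl[\wht g_{UL} - (1-\chi(q))\sigma^0_{UL}\bigr] = \partial_q\wht g_{UL}+\chi'(q)\sigma^0_{UL} = \partial_q\wht g_{UL}+G^U,$$
which is precisely the combination controlled by Proposition \ref{estLU} and hence estimated in \eqref{wintlu1}--\eqref{wintlu3}. Integrating in $q$ starting from $q=R+1$, where $\chi=1$ makes the correction vanish and the exterior estimate just obtained provides the boundary value, Lemma \ref{lmintegration} produces the three bounds \eqref{iwintlu1}--\eqref{iwintlu3} at indices $N-7, N-8, N-10$. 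The last estimate $|Z^{N-11}\wht g_{UL}|\lesssim \frac{\ep(1+|q|)^{1/2+\rho}}{1+s}$ follows by reabsorbing $\sigma^0_{UL}$ into $\wht g_{UL}$ and bounding $|Z^{N-11}\sigma^0_{UL}|$ using the hypotheses \eqref{estb2}--\eqref{estb3} on $b$, at the cost of one extra derivative in the index budget.

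The main obstacle is the bookkeeping of commutators $[Z^I,\partial_q]$ and $[Z^I,\chi(q)]$ when commuting vector fields past the $q$-integration, and verifying that the action of boost-type fields $S$, $\Omega_{0i}$ on the $s$-dependence of $\sigma^0_{UL}$ produces only terms consistent with the stated right-hand sides. This check requires exploiting the full set of estimates $\q H$ on $b$ rather than its pointwise decay alone.
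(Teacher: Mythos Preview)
Your proposal is correct and follows essentially the same route as the paper: integrate the exterior derivative bounds using $\delta>\tfrac12$, then in the interior observe that $\partial_q\sigma^0_{UL}=0$ so that $\partial_q\bigl(\wht g_{UL}-(1-\chi(q))\sigma^0_{UL}\bigr)=\partial_q\wht g_{UL}+G^U$ is exactly the quantity controlled by Proposition~\ref{estLU}, and integrate from $q=R+1$ using the exterior bound as boundary data; \eqref{iwintlu4} then follows by adding back $(1-\chi(q))\sigma^0_{UL}$ via \eqref{estg0}. The commutator bookkeeping you flag as an obstacle is routine here and does not require the full strength of the hypotheses~$\q H$.
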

\begin{proof}
	By integrating \eqref{wextlu} in the exterior, since $\delta>\frac{1}{2}$, we obtain \eqref{iwextlu}.
	For $I\leq N-7$, we have 
	$$|\partial_q(Z^I \wht g_{UL} +\chi(q)\sigma^0_{UL})|\lesssim \frac{\ep(1+|q|)^{\frac{1}{2}}}{(1+s)^{\frac{3}{2}-\rho}},$$
	and consequently
	\begin{equation}
	\label{aintegrer}|\partial_q(Z^I \wht g_{UL} +(\chi(q)-1)\sigma^0_{UL})|\lesssim \frac{\ep(1+|q|)^{\frac{1}{2}}}{(1+s)^{\frac{3}{2}-\rho}}.
	\end{equation}
	For $q=R+1$ we can estimate, thanks to \eqref{iwextlu} and the fact that $\chi(q)-1=0$,
	$$|Z^I (\wht g_{UL} +(\chi(q)-1)\sigma^0_{UL})|\lesssim \frac{\ep}{(1+s)^\frac{3}{2}}.$$
	Consequently, integrating \eqref{aintegrer} from $q=R+1$ yields \eqref{iwintlu1}. We obtain \eqref{iwintlu2} and \eqref{iwintlu3} in the same way. Estimate \eqref{iwintlu4} is a direct consequence of \eqref{iwintlu3} and \eqref{estg0}.
\end{proof}

We now give the $L^2$ estimates for the good coefficients of the metric
\begin{prp}\label{l2lu}
	We have 
	\begin{align}
	\label{wl2dlun}&\int_0^t\left\| w_2'(q)^\frac{1}{2} \partial_q Z^N (\wht g_{UL}+\chi(q)\sigma^0_{UL})\right\|^2_{L^2}ds \lesssim \ep^2(1+t)^{2\rho},\\
		\label{hwl2dlun}&\int_0^t\left\| \frac{w_2'(q)^\frac{1}{2}}{1+|q|} Z^N (\wht g_{UL}+\chi(q)\sigma^0_{UL})\right\|^2_{L^2}ds \lesssim \ep^2(1+t)^{2\rho},\\
	\label{wl2dul}&\left\| \frac{w_2(q)^\frac{1}{2}}{(1+|q|)} \partial_q Z^{N-1} (\wht g_{UL}+\chi(q)\sigma^0_{UL})\right\|_{L^2}\lesssim \frac{\ep}{(1+t)^{1-\rho}},\\
	\label{wl2dulbis}&\left\| \frac{w_2(q)^\frac{1}{2}}{(1+|q|)^{\frac{1}{4}+\sigma}} \partial_q Z^{N-1} \wht g_{UL}\right\|_{L^2}\lesssim \frac{\ep}{(1+t)^\frac{1}{4}},\\
	\label{hwl2dul}&\left\| \frac{w_2(q)^\frac{1}{2}}{(1+|q|)^2}  Z^{N-1} (\wht g_{UL}+(1-\chi(q))\sigma^0_{UL})\right\|^2_{L^2}\lesssim \frac{\ep}{(1+t)^{1-\rho}},\\
	\label{hwl2dulbis}&\left\| \frac{w_1(q)^\frac{1}{2}}{(1+|q|)^{\frac{5}{4}+\sigma}}  Z^{N-1} \wht g_{UL}\right\|_{L^2}\lesssim \frac{\ep}{(1+t)^{\frac{1}{4}}}.
	\end{align}
\end{prp}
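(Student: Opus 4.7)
The plan is to mirror the proof of Proposition \ref{l2ll}, using Proposition \ref{estLU} in place of Proposition \ref{estLL}. The key algebraic observation is that $\sigma^0_{UL} = s(1+b)\partial_s f$ given by Proposition \ref{prpgb} depends only on $(s,\theta)$, so $\partial_q \sigma^0_{UL} = 0$ and $\partial_q(\chi(q)\sigma^0_{UL}) = \chi'(q)\sigma^0_{UL}$. Together with $U_\alpha G^\alpha = \sigma^0_{UL}\chi'(q)$ from \eqref{gu}, this means that the $G^U$ term in Proposition \ref{estLU} is exactly cancelled when we consider $\partial_q(\wht g_{UL}+\chi(q)\sigma^0_{UL})$, yielding, after commutation,
$$|\partial_q Z^N(\wht g_{UL}+\chi(q)\sigma^0_{UL})| \lesssim |\bar\partial Z^N \wht g_{\q T \q V}| + \frac{1}{1+s}|Z^N \wht g_{\q T \q V}|.$$
Since $\wht g$ and $\wht g_1$ agree on the $\q T \q V$ components (they differ only in the $\ba L\ba L$ entry), the right-hand side can equally be expressed in terms of $\wht g_1$.

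For the integrated estimates \eqref{wl2dlun} and \eqref{hwl2dlun}, I follow the template of the proof of \eqref{wl2dlln}--\eqref{hwl2dlln}: multiply the pointwise bound above by $w_2'(q)^{1/2}$, square, and integrate in space and time; the $\bar\partial$-contribution is controlled by \eqref{igN}, and the lower-order $(1+s)^{-1}$-contribution by \eqref{hg1n} together with the pointwise comparison $w_2'\lesssim w_2/(1+|q|)^2$. Estimate \eqref{hwl2dlun} then follows from \eqref{wl2dlun} by Proposition \ref{hardy}, once one checks that $w_2'$ lies in the admissible weight class. Exactly analogously, the pointwise-in-$t$ bounds \eqref{wl2dul} and \eqref{hwl2dul} are obtained by using $|\bar\partial u|\lesssim (1+s)^{-1}|Zu|$, combining with \eqref{hg2n}, and applying Hardy.

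The delicate step is \eqref{wl2dulbis}--\eqref{hwl2dulbis}, where no correction is subtracted. I split
$$\partial_q Z^{N-1}\wht g_{UL} = \partial_q Z^{N-1}(\wht g_{UL}+\chi(q)\sigma^0_{UL}) - Z^{N-1}(\chi'(q)\sigma^0_{UL}).$$
The second term is supported in the compact shell $\{R+\tfrac12\le q\le R+1\}$ and, using $\partial_\theta \partial_s f = -\partial_s b/(1+b)^2$ together with \eqref{estb3}, its weighted $L^2(\m R^2)$-norm is of order $\ep^2(1+t)^{-1/2+\sigma/2}$, well below the target. For the first term, the basic pointwise bound above reduces matters to controlling $\|\tfrac{w_2^{1/2}}{(1+|q|)^{1/4+\sigma}} Z^N \wht g\|_{L^2}$. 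Applying Proposition \ref{hardy} with the admissible weight $v = w_2(1+|q|)^{3/2-2\sigma}$ transfers this to $\|w_2^{1/2}(1+|q|)^{3/4-\sigma}\partial Z^N \wht g\|_{L^2}$; the pointwise comparison $w_2(1+|q|)^{3/2-2\sigma}\lesssim (1+s)^{3/2-4\sigma} w_1$ and bootstrap \eqref{g2N} give a bound of order $\ep(1+t)^{3/4-2\sigma+2\rho}$, which combined with the $(1+s)^{-1}$ factor yields the claimed $(1+t)^{-1/4}$ decay, provided $\rho\ll\sigma$. Estimate \eqref{hwl2dulbis} finally follows from \eqref{wl2dulbis} by another application of Proposition \ref{hardy}, checking that $w_1/(1+|q|)^{5/2+2\sigma}$ is in the admissible class.

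The main obstacle is precisely this last step: the non-standard exponents $\tfrac14+\sigma$ and $\tfrac54+\sigma$ on the right-hand sides of \eqref{wl2dulbis}--\eqref{hwl2dulbis} are tuned so that the Hardy-transferred top-order energy of $\wht g$ can be compared to the available $w_1$-bootstrap with only an $(1+s)^{3/2-4\sigma}$ loss, which is then absorbed by the $(1+s)^{-1}$ gain inherent in Proposition \ref{estLU}. Any attempt to bypass the decomposition $\wht g_{UL} = (\wht g_{UL}+\chi(q)\sigma^0_{UL}) - \chi(q)\sigma^0_{UL}$ or to use a cruder weight comparison costs a factor that is not recovered elsewhere.
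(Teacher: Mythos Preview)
Your overall strategy matches the paper's: use Proposition~\ref{estLU} to rewrite $\partial_q Z^I(\wht g_{UL}+\chi\sigma^0_{UL})$ in terms of tangential derivatives of $\wht g_1$, then feed in the bootstrap assumptions and Hardy. The first four estimates go through as you describe, with one correction: for \eqref{wl2dul} you cite \eqref{hg2n}, which carries a $(1+t)^{2\rho}$ loss and would only yield $\ep(1+t)^{-1+2\rho}$. The paper instead uses \eqref{hg1n} (legitimate because only $\q T\q V$-components appear, and these agree for $\wht g$ and $\wht g_1$), obtaining the stated $\ep(1+t)^{-1+\rho}$.

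The more substantive point concerns \eqref{wl2dulbis}. Your route---apply Hardy with $v=w_2(1+|q|)^{3/2-2\sigma}$, then invoke the pointwise comparison $w_2(1+|q|)^{3/2-2\sigma}\lesssim (1+s)^{3/2-4\sigma}w_1$, then ``combine with the $(1+s)^{-1}$ factor''---has an order-of-operations problem: once Hardy has converted $Z^N\wht g$ to $\partial Z^N\wht g$, the $(1+s)^{-1}$ is no longer available to cancel the growing $(1+s)^{3/4-2\sigma}$ factor inside the spatial integral (and $(1+s)$ is unbounded in $r$). You can repair this by keeping $(1+s)^{-1}$ inside when applying Hardy to $f=(1+s)^{-1}Z^N\wht g$, but the paper avoids the detour entirely. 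Its argument is a one-line pointwise trick: since $1+|q|\le 1+s$,
\[
\frac{1}{(1+|q|)^{\frac14+\sigma}(1+s)}=\frac{(1+|q|)^{\frac34-\sigma}}{(1+|q|)(1+s)}\le\frac{1}{(1+|q|)(1+s)^{\frac14+\sigma}}\le\frac{1}{(1+t)^{\frac14+\sigma}}\cdot\frac{1}{1+|q|},
\]
after which \eqref{hg2n} immediately gives $\ep(1+t)^{-\frac14-\sigma+2\rho}\lesssim\ep(1+t)^{-\frac14}$. No Hardy is needed at this step; the Hardy inequality only enters to deduce \eqref{hwl2dulbis} from \eqref{wl2dulbis}.
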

\begin{proof}
	Thanks to Proposition \ref{estLU} we have
	$$|\partial_q Z^N(\wht g_{UL}+\chi(q)\sigma^0_{UL})|\lesssim |\bar{\partial} Z^N \wht g_1|+\frac{1}{1+s}|Z^N \wht g_1|.$$
	Thanks to \eqref{igN} we have
	$$\int_0^t\left\| w_2'(q)^\frac{1}{2} \bar{\partial} Z^N \wht g_1 \right\|^2_{L^2}ds \lesssim \ep^2(1+t)^{2\rho},$$
	and thanks to \eqref{hg1n} we have
	\begin{align*}
	&\int_0^t\left\| \frac{w_2'(q)^\frac{1}{2}}{1+s}  Z^N \wht g_1\right\|^2_{L^2}ds \\
	&\lesssim \int_0^t\frac{1}{(1+s)}\left\| \frac{w_2(q)^\frac{1}{2}}{1+|q|}  Z^N \wht g_1\right\|^2_{L^2}ds\\ 
	&\lesssim \int_0^t \frac{\ep^2}{(1+s)^{1-2\rho}} \lesssim \ep^2(1+t)^{2\rho},
	\end{align*}
	which concludes the proof of \eqref{wl2dlun}. Then \eqref{hwl2dlun} is a consequence of \eqref{wl2dlun} and Hardy inequality.
	To prove \eqref{wl2dul} we write
	$$|\partial_q Z^{N-1}(\wht g_{UL}+\chi(q)\sigma^0_{UL})|\lesssim \frac{1}{1+s}|Z^N \wht g_1|,$$
	so \eqref{wl2dul} is a direct consequence of \eqref{hg1n}. 
	We prove \eqref{wl2dulbis}. We have
	$$\ld{\chi'(q)Z^{N-1}\sigma^0_{UL}}^2 \lesssim \int \chi'(q)^2\|s\partial_s Z^{N-1}b\|^2_{L^2(\m S^1)}rdr
	\lesssim \frac{\ep^4}{(1+s)^\frac{1}{2}},$$
	thanks to \eqref{estb3},
	and thanks to \eqref{hg2n}
	$$\ld{\frac{w_1(q)^\frac{1}{2}}{(1+|q|)^{\frac{1}{4}+\sigma}(1+s)}Z^N \wht g}\lesssim \frac{1}{(1+t)^{\frac{1}{4}+\sigma}}
	\ld{\frac{w_1(q)^\frac{1}{2}}{1+|q|}Z^N \wht g}\lesssim \frac{\ep^2}{(1+t)^\frac{1}{4}}.$$ 
	This concludes the proof of \eqref{wl2dulbis}.
	Estimates \eqref{hwl2dul} and \eqref{hwl2dulbis} are a direct consequence of the weighted Hardy inequality and \eqref{wl2dul} and \eqref{wl2dulbis}.
\end{proof}

\subsection{Estimates for the good metric coefficient $g_{UU}$}

\subsubsection{Estimates for $G^L$}

\begin{prp}\label{prpG}
	We have the estimates
	\begin{align}
	\label{estG5}&\|rZ^{N-5} G^L\|_{L^2(\m S^1)}\lesssim \frac{\ep^2}{\sqrt{1+t}}+\frac{\ep^2}{(1+|q|)^{1-4\rho}}+\Delta_h,\\
	\label{estG1}&\|rZ^{N-1} G^L\|_{L^2(\m S^1)} \lesssim \frac{\ep^2}{(1+t)^{\frac{1}{2}-\rho}}+\frac{\ep^2}{(1+|q|)^{1-4\rho}}+\Delta_h,\\
	\label{estGN}&\|rZ^{N}G^L\|_{L^2(\m S^1)}\lesssim \ep^2(1+t)^{\rho}+\Delta_h,\\
	\label{estGN1}&\int_0^t(1+\tau) \|r\partial_s Z^N G^L\|^2_{L^2(\m S^1)} d\tau \lesssim (\ep^2+\Delta_h)(1+t)^{2\rho}.
	\end{align}
\end{prp}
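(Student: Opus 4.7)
The first step is to rewrite $rL_\alpha G^\alpha$ in a form directly comparable with the bootstrap assumptions on $\phi$ and $h$. Using $\int_r^\infty \chi'(q')\,dr' = 1-\chi(q)$ together with Proposition \ref{cone} (so that $\phi$ is supported in $\{q\leq R+\tfrac12\}$), I split
\begin{equation*}
rL_\alpha G^\alpha
= -\Upr E(t,\theta) - \Upr h(\theta,2t)\chi(q) + \Upr \int_0^{r} 2(\partial_q\phi)^2 r'\,dr',
\end{equation*}
where $E(t,\theta) := \int_0^{\infty} 2(\partial_q\phi)^2 r\,dr - h(\theta, 2t)$. On the support of $\Upsilon(r/t)$ one has $s\sim t\sim r$; the middle term lives in the thin strip $R+\tfrac12\leq q \leq R+1$ and is absorbed by Corollary \ref{estih} into the $\ep^2/(1+|q|)^{1-4\rho}$ remainder.

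Next I apply $Z^I$ and commute past the $r'$-integral. Each Minkowski vector field picks up at most either a boundary contribution at $r'=r$ controlled by pointwise bounds on $\partial_q\phi$, or a harmless copy of the integral itself (for example, $S\int_\infty^r f\,dr' = \int_\infty^r Sf\,dr' + \int_\infty^r f\,dr'$). The $\Pi$-projection of $Z^I E$ is then directly bounded by Lemma \ref{lmh} (the forms \eqref{esthq}, \eqref{esthqbis}), producing the summand $\ep^2/\sqrt{1+t}$ in \eqref{estG5} and $\ep^2/(1+t)^{1/2-\rho}$ in \eqref{estG1}, \eqref{estGN}. For the three complementary Fourier modes $1, \cos\theta, \sin\theta$ I expand
$$2(\partial_q\phi)^2 = \tfrac12\bigl((\partial_t\phi)^2+|\nabla\phi|^2\bigr) - \partial_t\phi\,\partial_r\phi - \tfrac12(U\phi)^2,$$
integrate over $\m S^1$ against $1,\cos\theta,\sin\theta$, and identify each mode with one of the Cartesian integrals appearing in $\Delta_h$, the only errors being good-derivative $(U\phi)^2$-type terms controlled pointwise by \eqref{ks1}.

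The tail $\Upr\int_0^r 2\,Z^{I_1}\partial_q\phi\cdot Z^{I_2}\partial_q\phi \,r'\,dr'$ is treated differently according to $I$. For $I\leq N-5$ both factors enjoy Klainerman--Sobolev pointwise bounds from Proposition \ref{estks} and, since $r'\sim s$ on the support, the integral is dominated by
$$\int_{-t}^{q}\frac{\ep^2}{(1+|q'|)^{3-8\rho}}\,dq'\lesssim \frac{\ep^2}{(1+|q|)^{1-4\rho}}.$$
For $I\in\{N-1,N\}$ at least one factor must be kept in $L^2$; Cauchy--Schwarz in $r'$, combined with the bootstrap estimates \eqref{phiN}, \eqref{phiN1} and the weighted Hardy inequality (Proposition \ref{hardy}), yields the stated bounds, and the $(1+t)^\rho$ growth in \eqref{estGN} matches that of $\|Z^N h\|_{L^2(\m S^1)}$ from Corollary \ref{estih}. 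The integrated statement \eqref{estGN1} follows by the same scheme applied to $\partial_s Z^N(rG^L)$: the $\partial_s Z^N h$ piece is controlled in spacetime $L^2$ by \eqref{esth5} (a consequence of \eqref{estb5} and the formula \eqref{eqh} for $h$), while the $\partial_s$-derivative hitting the tail produces a factor of $\partial\partial_q Z^N\phi$ whose spacetime $L^2$-norm is controlled by \eqref{iphiN}--\eqref{idphiN} via Cauchy--Schwarz in $r'$.

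The main obstacle is the second step: one must commute $Z^I$ cleanly past a nonlinear integrand in $\phi$ and then identify each non-$\Pi$ Fourier mode with a $\Delta_h$-controlled Cartesian integral, while verifying that every residual good-derivative error decays at the threshold rate $1/\sqrt{1+t}$ imposed by the right-hand side of \eqref{estG5}.
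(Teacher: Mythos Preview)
Your plan follows the same initial decomposition as the paper: write $rG^L=-\Upr E+(\text{thin strip})+\Upr\int_0^r 2(\partial_q\phi)^2 r'\,dr'$, bound $\Pi E$ via Lemma~\ref{lmh}, link $(1-\Pi)E$ to $\Delta_h$, and treat the tail by Cauchy--Schwarz. The identification of the three low Fourier modes with $\Delta_h$ is correct once one observes, for instance, that the $0$--mode of $E$ collapses to $-\frac{1}{2\pi}\int\bigl[2(\partial_s\phi)^2+(U\phi)^2\bigr]\,dx$ modulo $O(\Delta_h)$. Where the paper departs from your plan is in the passage to general $Z^I$.

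The paper does \emph{not} commute $Z^I$ past the $r'$-integral. Instead it argues inductively: it first bounds $\|rG^L\|_{H^I(\m S^1)}$ (pure $\partial_\theta$ derivatives), then computes $\partial_r G^L$ and $\partial_t G^L$ explicitly, extracts $\partial_q G^L$ and $\partial_s G^L$, and passes from $Z^I$ to $Z^{I+1}$. The $\partial_q$ piece yields the boundary term $2(\partial_q\phi)^2-r^{-1}h\,\partial_q^2(q\chi)$; the $\partial_s$ piece is controlled through the wave-equation identity
\[
\partial_s\bigl(r(\partial_q\phi)^2\bigr)=\tfrac12\,\partial_q\phi\Bigl((\Box-\Box_g)\phi\cdot r-\partial_s\phi-\tfrac1r\partial_\theta^2\phi\Bigr),
\]
which invokes $\Box_g\phi=0$ and reduces $\partial_s$ of the integrand to good-derivative and semilinear expressions. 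This identity drives both the recursion and the proof of \eqref{estGN1}, where the paper appeals to \eqref{iZphiN} for $\bar\partial Z^{N+1}\phi$ rather than to \eqref{idphiN}.

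Your direct commutation is recoverable but needs more care than you indicate in two places. First, $[\Omega_{0,i},\int_\infty^r\cdot\,dr']$ is not merely a boundary term or a copy of the integral: the coefficients $r,t$ in $\Omega_{0,i}$ differ from the integration variable $r'$, producing weighted error integrals with factors like $(r'-r)$. Second---and this is the real structural advantage of the paper's route---by working inductively the paper arranges that $\Delta_h$ enters only at the base step (the $H^I(\m S^1)$ estimate, where $(1-\Pi)\partial_\theta^j E$ is still a combination of the three low modes of $E$). In your scheme, once $S$ or $\Omega_{0,i}$ is applied you pick up $t\partial_t E$, and the low Fourier modes of $\partial_t E$ are $\partial_t$ of the $\Delta_h$-controlled quantities, which are not directly part of the bootstrap. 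One can close this by hand (those modes of $\partial_t E$ are again good-derivative integrals), but the paper's wave-equation identity avoids the issue entirely and makes the dependence on the $\phi$-bootstrap explicit.
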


\begin{proof}
We have
$$G^L = \frac{1}{r}\Up\left(\frac{r}{t}\right)\int_{\infty}^r\left(2(\partial_q \phi)^2r-h(\theta,2t)\partial_q^2(q\chi(q))\right)dr.$$
Thanks to Proposition \ref{cone} we have $\phi=0$ for $q\geq R+\frac{1}{2}$ so
$G^L = 0$ for $q>R+1$, and we have
$$ G^L = \frac{1}{r}\Up\left(\frac{r}{t}\right)\left(\int_{\infty}^02(\partial_q \phi)^2rdr+h(\theta,2t)
+\int_{0}^r\left(2(\partial_q \phi)^2r-h(\theta,2t)\partial_q^2(q\chi(q))\right)dr\right).$$
We have the estimate, thanks to \eqref{esthbis} (more precisely thanks to Lemma \ref{lmh})
$$\|rG^L\|_{H^{N-5}(\m S^1)}\\
\lesssim \frac{\ep^2}{\sqrt{1+t}}+\Delta_h +\Upr\int_0^r\left( \|(\partial_q \phi)^2\|_{H^{N-5}(\m S^1)}r +\|h(\theta,2t)\|_{H^{N-5}(\m S^1)}\partial_q^2(q\chi(q)) \right)dr.$$
We estimate
\begin{align*}\int_0^r \|(\partial_q \phi)^2\|_{H^{I}(\m S^1)}r dr
&\lesssim \int_0^r \|\partial_\theta^{\frac{I}{2}}\partial_q \phi\|_{L^\infty(\m S^1)}\|\partial_q \phi\|_{H^I(\m S^1)}rdr \\
&\lesssim \int_0^r \frac{\ep r}{\sqrt{1+s}(1+|q|)^{\frac{3}{2}-4\rho}}\|\partial_q Z^{I}\phi\|_{L^2(\m S^1)}\\
&\lesssim \ep  \ld{\partial_q Z^{I} \phi}\left(\int_0^r \frac{1}{(1+|q|)^{3-8\rho}}dr\right)^{\frac{1}{2}}\\
&\lesssim \frac{\ep}{(1+|q|)^{1-4\rho}}\ld{\partial_q Z^{I} \phi}
\end{align*}
where we have used \eqref{bootphi1} to estimate 
$$|\partial^{\frac{I}{2}}_\theta \partial_q \phi|\lesssim \frac{1}{1+|q|}|Z^{\frac{I}{2}+1}\phi|\lesssim \frac{\ep^2}{(1+|q|)^{\frac{3}{2}-4\rho}(1+s)^\frac{1}{2}},$$
for $\frac{I}{2}\leq \frac{N}{2}\leq N-9$.
We obtain
$$\|rG^L\|_{H^{N-5}(\m S^1)}
\lesssim \frac{\ep^2}{\sqrt{1+t}}+\Delta_h+\Upr \frac{\ep}{(1+|q|)^{1-4\rho}}\ld{\partial_q Z^{I} \phi} +\Upr\|Z^{N-5} b(\theta,t)\|_{H^2(\m S^1)}\ch_{q>R},$$
and so thanks to \eqref{phiN1} and \eqref{estb2}
\begin{equation}
\label{tgl1}\|rG^L\|_{H^{N-5}(\m S^1)}\lesssim \frac{\ep^2}{\sqrt{1+t}}+\frac{\ep^2}{(1+|q|)^{1-4\rho}}+\Delta_h
\end{equation}
Similarly, thanks to \eqref{esth} we have
\begin{equation}
\label{tgl1bis}\|rG^L\|_{H^{N-1}(\m S^1)}\lesssim \frac{\ep^2}{(1+t)^{\frac{1}{2}-\rho}}+\frac{\ep^2}{(1+|q|)^{1-4\rho}}+\Delta_h
\end{equation}
and thanks to \eqref{phiN} and \eqref{estb4}
\begin{equation}
\label{tgl}\|rG^L\|_{H^{N}(\m S^1)}\lesssim \ep^2(1+t)^\rho+\Delta_h.
\end{equation}
We now look at the derivatives with respect to $r$ and $t$.
$$\partial_r G^L = O\left(\frac{1}{1+s}G^L\right)
+\Up\left(\frac{r}{t}\right)2(\partial_q \phi)^2-\Upr\frac{h(\theta,2t)\partial_q^2(q\chi(q))}{r},$$
\begin{align*}
\partial_t G^L =& O\left(\frac{1}{1+s}G^L\right)
+\frac{1}{r}\Upr\int_{\infty}^r \partial_t\left(2(\partial_q \phi)^2r-h(\theta,2t)\partial_q^2(q\chi(q))\right)dr\\
=&O\left(\frac{1}{1+s}G^L\right)
+\frac{1}{r}\Upr\int_{\infty}^r (2\partial_s -\partial_r)\left(2(\partial_q \phi)^2r-h(\theta,2t)\partial_q^2(q\chi(q))\right)dr\\
=&O\left(\frac{1}{1+s}G^L\right)-\Up\left(\frac{r}{t}\right)2(\partial_q \phi)^2+\Upr\frac{h(\theta,2t)\partial_q^2(q\chi(q))}{r}\\
&+\frac{1}{r}\Upr\int_{\infty}^r 2\partial_s\left(2(\partial_q \phi)^2r-h(\theta,2t)\partial_q^2(q\chi(q))\right)dr.
\end{align*}
Consequently we have
\begin{equation}\label{dqG}\begin{split}\partial_q G^L=&\Up\left(\frac{r}{t}\right)2(\partial_q \phi)^2-\Upr\frac{h(\theta,2t)\partial_q^2(q\chi(q))}{r}\\
&+  O\left(\frac{1}{1+s}G^L\right)
-\frac{1}{r}\Upr\int_{\infty}^r \partial_s\left(2(\partial_q \phi)^2r-h(\theta,2t)\partial_q^2(q\chi(q))\right)dr,
\end{split}
\end{equation}
$$\partial_s G^L =O\left(\frac{1}{1+s}G^L\right)
+\frac{1}{r}\Upr\int_{\infty}^r \partial_s\left(2(\partial_q \phi)^2r-h(\theta,2t)\partial_q^2(q\chi(q))\right)dr.$$
We calculate
\begin{align*}
\partial_s (r(\partial_q \phi)^2)=&2r\partial_q \phi\partial_s\partial_q \phi+\frac{1}{2}(\partial_q \phi)^2\\
=&\frac{1}{2}\partial_q\phi \Box \phi -\partial_q \phi \left( \partial_s \phi +\frac{1}{r}\partial^2_\theta \phi\right)\\
=&\frac{1}{2}\partial_q \phi \left( (\Box \phi-\Box_g \phi)r -\partial_s \phi -\frac{1}{r}\partial^2_\theta \phi\right).\\
\end{align*}
We have
$$Z^I(\partial_q \phi \partial_s \phi)= \sum_{I_1+I_2\leq I} \partial_q Z^{I_1 }\phi\bar{\partial}Z^{I_2}\phi,$$
If $I_1\leq \frac{N}{2}\leq N-10$ we estimate thanks to \eqref{bootphi1} and \eqref{important}
$$|\partial_q Z^{I_1}\phi|\lesssim \frac{\ep}{(1+|q|)^{\frac{3}{2}-4\rho}(1+s)^\frac{1}{2}},$$
and if $I_2\leq \frac{N}{2}\leq N-10$ we estimate
$$|\bar{\partial} Z^{I_2}\phi|\lesssim\frac{1}{1+s}|Z^{I_2+1}\phi|\lesssim \frac{\ep}{(1+|q|)^{\frac{1}{2}-4\rho}(1+s)^\frac{3}{2}},$$
and so
$$|Z^I(\partial_q \phi \partial_s \phi)|\lesssim \frac{\ep}{(1+|q|)^{\frac{3}{2}-4\rho}(1+s)^\frac{1}{2}}|\bar{\partial} Z^I \phi|+
\frac{\ep}{(1+|q|)^{\frac{1}{2}-4\rho}(1+s)^\frac{3}{2}}|\partial Z^I \phi|.$$
We estimate $Z^I \left(\partial_q \phi \frac{1}{r}\partial^2_\theta \phi\right)$ in the same way.
For the estimate of $Z^I(\Box \phi-\Box_g \phi)$ we refer to Proposition \ref{strphi}. We obtain 
\begin{align*}|Z^I \partial_s (r(\partial_q \phi)^2)|
\lesssim &\frac{\ep}{(1+|q|)^{\frac{3}{2}-4\rho}(1+s)^\frac{1}{2}}|\bar{\partial} Z^{I+1} \phi|
+\frac{\ep}{(1+s)^\frac{3}{2}(1+|q|)^{\frac{1}{2}-4\rho}}|\partial Z^I \phi|\\
&+\frac{\ep\sqrt{1+s}}{(1+|q|^{\frac{3}{2}-4\rho}}\Bigg(\frac{\ep}{\sqrt{1+s}(1+|q|)^{\frac{5}{2}-4\rho}}| Z^I \wht g_{LL}|
+\frac{\ep}{(1+s)^\frac{3}{2}(1+|q|)^{\frac{3}{2}-4\rho}}| Z^I \wht g|\\
&+\frac{\ep(1+|q|)^\frac{1}{2}}{(1+s)^{\frac{3}{2}}}|\partial Z^{I+1} \phi|
+\frac{\ep}{(1+s)^\frac{3}{2}(1+|q|)^{\frac{1}{2}-4\rho}}| Z^I G^L|
+\frac{\ep}{(1+s)\sqrt{1+|q|}}|\bar{\partial}Z^I \phi|\Bigg).
\end{align*}
We estimate, for $I\leq N-1$
\begin{align*}\int \left\|\frac{\ep}{(1+|q|)^{\frac{3}{2}-4\rho}(1+s)^\frac{1}{2}}\bar{\partial} Z^{I+1} \phi\right\|_{L^2(\m S^1)}dr
&\lesssim \int \frac{\ep}{(1+|q|)^{\frac{3}{2}-4\rho}(1+s)^\frac{3}{2}}\|Z^{I+2}\phi\|_{L^2(\m S^1)}dr\\
&\lesssim \ld{\frac{w^{\frac{1}{2}}}{1+|q|}Z^{I+2} \phi}\left(\int \frac{\ep^2}{(1+s)^{4}(1+|q|)^{1-8\rho}}dr\right)^\frac{1}{2}\\
&\lesssim \frac{\ep}{(1+s)^{2-5\rho}}\ld{\frac{w^\frac{1}{2}}{1+|q|}Z^{I+2} \phi},
\end{align*}
and for $I\leq N$ ,
$$\int \left\|\frac{\ep}{(1+|q|)^{\frac{3}{2}-4\rho}(1+s)^\frac{1}{2}}\bar{\partial} Z^{I+1} \phi\right\|_{L^2(\m S^1)}dr
\lesssim  \frac{\ep}{(1+s)}\ld{w'(q)^\frac{1}{2}\bar{\partial}Z^{I+1} \phi}$$
$$
\int \left\|\frac{\ep}{(1+s)^\frac{3}{2}(1+|q|)^{\frac{1}{2}-4\rho}}\partial Z^I \phi\right\|_{L^2(\m S^1)}dr
\lesssim \frac{\ep}{(1+s)^{2-5\rho}}\ld{\partial Z^{I} \phi},
$$
\begin{align*}&\int \left\|\frac{\ep\sqrt{1+s}}{(1+|q|)^{\frac{3}{2}-\rho}}\frac{\ep}{\sqrt{1+s}(1+|q|)^{\frac{5}{2}-4\rho}} Z^I \wht g_{LL}\right\|_{L^2(\m S^1)}dr\\
&\lesssim \ld{\frac{w_2^\frac{1}{2}}{(1+|q|)^\frac{3}{2}}Z^I \wht g_{LL}}\left( \int\frac{\ep^4}{(1+s)(1+|q|)^{4-10\rho-2\sigma}}\right)^\frac{1}{2}dr\\
&\lesssim \frac{\ep^2}{(1+s)^\frac{1}{2}}\ld{\frac{w_2^\frac{1}{2}}{(1+|q|)^\frac{3}{2}}Z^I \wht g_{LL}},
\end{align*}
\begin{align*}
\int \left\|\frac{\ep\sqrt{1+s}}{(1+|q|)^{\frac{3}{2}-4\rho}}\frac{\ep(1+|q|)^\frac{1}{2}}{(1+s)^{\frac{3}{2}}}\partial Z^{I+1} \phi\right\|_{L^2(\m S^1)}dr&\lesssim
 \ld{\partial Z^{I+1} \phi}\left(\int \frac{\ep^4}{(1+s)^{3}(1+|q|)^{2-8\rho}}dr\right)^\frac{1}{2}\\
&\lesssim \frac{\ep^2}{(1+s)^\frac{3}{2}}\ld{\partial Z^{I+1} \phi},
\end{align*}
$$
\int \left\|\frac{\ep\sqrt{1+s}}{(1+|q|)^{\frac{3}{2}-4\rho}}\frac{\ep}{(1+s)^\frac{3}{2}(1+|q|)^{\frac{1}{2}-4\rho}} Z^I G^L\right\|_{L^2}dr
\lesssim \frac{\ep^2}{(1+s)^2}\sup_{r \in \m R} \| rZ^I G^L\|_{L^2(\m S^1)}.
$$
Consequently we can estimate for $I\leq N-1$
\begin{align*}
&\left\|Z^I \int_{\infty}^r \partial_s\left((\partial_q \phi)^2r-h(\theta,2t)\partial_q^2(q\chi(q))\right)dr\right\|\\
&\lesssim \frac{\ep}{(1+s)^{2-5\rho}}\ld{\frac{w^\frac{1}{2}}{1+|q|}Z^{I+2} \phi}+\frac{\ep}{(1+s)^{2-5\rho}}\ld{\partial Z^{I} \phi}
+\frac{\ep^2}{(1+s)^\frac{1}{2}}\ld{\frac{w_2^\frac{1}{2}}{(1+|q|)^\frac{3}{2}}Z^I \wht g_{LL}}\\
&+\frac{\ep^2}{(1+s)^\frac{3}{2}}\ld{\partial Z^{I+1} \phi}+\frac{\ep}{(1+s)^2}\sup_{r\in \m R} \| rZ^I G^L\|_{L^2(\m S^1)}+\|\partial_s Z^Ih\|_{L^2(\m S^1)},\\
\end{align*}
and therefore, for $I\leq N-5$, thanks to \eqref{hphin1}, \eqref{phiN1}, \eqref{hwl2dll2} and \eqref{estb3} we have
\begin{equation}\label{sgl25}
\|r(s\partial_s Z^{N-5} G^L)\|_{L^2(\m S^1)} \lesssim \|rZ^{N-5} G^L\|_{L^2(\m S^1)} +\frac{\ep^2}{(1+s)^{\frac{1}{2}}}.
\end{equation}
For $I\leq N-2$, thanks to \eqref{hphin}, \eqref{phiN}, \eqref{hwl2dll} and \eqref{estb3} we have
\begin{equation}\label{sgl2}
\|r(s\partial_s Z^{N-2} G^L)\|_{L^2(\m S^1)} \lesssim \|rZ^{N-2} G^L\|_{L^2(\m S^1)} +\frac{\ep^2}{(1+s)^{\frac{1}{2}-\rho}},
\end{equation}
and for $I\leq N-1$, thanks to \eqref{hphinn}, we have
\begin{equation}\label{sgl1}
\|r(s\partial_s Z^{N-1} G^L)\|_{L^2(\m S^1)} \lesssim \|rZ^{N-1} G^L\|_{L^2(\m S^1)} +\frac{\ep^2}{(1+s)^{\frac{1}{4}}}.
\end{equation}
Moreover, for $I\leq N$ we have
\begin{equation}\label{sgl}\begin{split}
\|r(\partial_s Z^N G^L)\|_{L^2(\m S^1)} \lesssim &\frac{1}{1+s}\|rZ^N G^L\|_{L^2(\m S^1)} 
+\frac{\ep}{1+s}\ld{w'(q)\bar{\partial}Z^{N+1} \phi}\\
&+\frac{\ep^2}{(1+s)^\frac{1}{2}}\ld{\frac{w_2^\frac{1}{2}}{(1+|q|)^\frac{3}{2}}Z^N \wht g_{LL}}
+\frac{\ep^2}{(1+s)^\frac{3}{2}}\ld{\partial Z^{N+1} \phi}+\|\partial_s Z^Nh\|_{L^2(\m S^1)} .
\end{split}
\end{equation}
We now estimate
\begin{equation}
\label{qgl}\begin{split} &rq\left\|Z^I\left(\Up\left(\frac{r}{t}\right)2(\partial_q \phi)^2-\Upr\frac{h(\theta,2t)\partial_q^2(q\chi(q))}{r}\right)\right\|_{L^2(\m S^1)}\\
&\lesssim \frac{\ep\sqrt{1+s}}{(1+|q|)^{\frac{1}{2}-4\rho}}\|\partial_q Z^I \phi\|_{L^2(\m S^1)}
+\ch_{R<q<R+1}\|Z^I h\|_{L^2(\m S^1)}.
\end{split}
\end{equation}
Consequently we have proved, for $I\leq N-6$, thanks to \eqref{tgl1}, \eqref{sgl25} and \eqref{qgl}
\begin{align*}
\|rZ^{I+1}G^L\|_{L^2(\m S^1)} &\lesssim 
\frac{\ep^2}{(1+t)^{\frac{1}{2}}}+\frac{\ep^2}{(1+|q|)^{1-4\rho}}+\Delta_h+
\|rZ^I G^L\|_{L^2(\m S^1)}\\
&+\frac{\ep\sqrt{1+s}}{(1+|q|)^{\frac{1}{2}-4\rho}}\|\partial_q Z^I \phi\|_{L^2(\m S^1)}
+\ch_{R<q<R+1}\|Z^I h\|_{L^2(\m S^1)}.
\end{align*}
We can estimate $\ch_{R<q<R+1}\|Z^I h\|_{L^2(\m S^1)}$ thanks to Corollary \ref{estih}. Thanks to the term $\ch_{R<q<R+1}$ it has as much decay in $q$ as we want.
By recurrence we have
\begin{equation}
\label{gln5}
\|rZ^{N-5}G^L\|_{L^2(\m S^1)}\lesssim \frac{\ep^2}{\sqrt{1+t}}+\frac{\ep^2}{(1+|q|)^{1-4\rho}}+\Delta_h+\frac{\ep\sqrt{1+s}}{(1+|q|)^{\frac{1}{2}-4\rho}}\|\partial_q Z^I \phi\|_{L^2(\m S^1)}.
\end{equation}
In the same way, thanks to \eqref{tgl1bis}, \eqref{sgl2} and \eqref{qgl} we obtain 
\begin{equation}
\label{gln1}
\|rZ^{N-1}G^L\|_{L^2(\m S^1)}\lesssim \frac{\ep^2}{(1+t)^{\frac{1}{2}-2\rho}}+\frac{\ep^2}{(1+|q|)^{1-4\rho}}+\Delta_h+\frac{\ep\sqrt{1+s}}{(1+|q|)^{\frac{1}{2}-4\rho}}\|\partial_q Z^I \phi\|_{L^2(\m S^1)}.
\end{equation}
Thanks to \eqref{tgl}, \eqref{qgl} and \eqref{sgl1} we obtain
\begin{equation}
\label{gln2}\|rZ^{N}G^L\|_{L^2(\m S^1)}\lesssim\ep^2(1+t)^{\rho} +\Delta_h.
\end{equation}
Moreover, thanks to \eqref{sgl} we obtain
\begin{align*}
\int_0^t(1+\tau)\|r\partial_s Z^N G^L\|^2_{L^2(\m S^1)}d\tau\lesssim & \int_0^t\left(\frac{\ep^2+\Delta_h}{(1+\tau)^{1-2\rho}}+\frac{\ep}{1+\tau}\ld{w'(q)\bar{\partial}Z^{N+1} \phi}^2\right)d\tau\\
&+\int_0^t \left(\ld{\frac{w_2^\frac{1}{2}}{(1+|q|)^\frac{3}{2}}Z^N \wht g_{LL}}^2+(1+\tau)\|\partial_s Z^Nh\|_{L^2(\m S^1)}^2 \right)d\tau
\end{align*}
and consequently
\begin{equation*}
\int_0^t(1+\tau)\|r\partial_s Z^N G^L\|_{L^2(\m S^1)}d\tau \lesssim (\ep^2+\Delta_h)(1+t)^\rho.
\end{equation*}
\end{proof}

\subsubsection{Estimates for $g_{UU}$}

\begin{cor}\label{estuu}
We have the estimates for $q>R+1$ 
\begin{equation}
\label{wextuu}
|\partial Z^{N-7} \wht g_{UU}|\lesssim \frac{\ep}{(1+s)^{\frac{3}{2}-\rho}(1+|q|)^{\frac{1}{2}+\delta}}
\end{equation}
and for $q\leq R+1$
\begin{align}
\label{wintuu1}|\partial_q Z^{N-7}  \wht g_{UU}|&\lesssim \frac{\ep(1+|q|)^{\frac{1}{2}}}{(1+s)^{\frac{3}{2}-\rho}}+\frac{\ep}{(1+s)(1+|q|)^{1-4\rho}},\\
\label{wintuu2}|\partial_q Z^{N-8} \wht g_{UU}|&\lesssim \frac{\ep}{(1+s)^{\frac{3}{2}-3\rho}}+\frac{\ep}{(1+s)(1+|q|)^{1-4\rho}} ,\\
\label{wintuu3}|\partial_q Z^{N-10} \wht g_{UU}|&\lesssim \frac{\ep}{(1+s)^{\frac{3}{2}-\rho}}+\frac{\ep}{(1+s)(1+|q|)^{1-4\rho}}.
\end{align}
\end{cor}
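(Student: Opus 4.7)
The strategy is exactly parallel to Corollaries \ref{estll} and \ref{estul}: combine the wave coordinate identity for $\partial_q \wht g_{UU}$ (second part of Proposition \ref{estLU}) with pointwise control of $G^L$ and $\wht g$ coming from the bootstrap assumptions. Explicitly, Proposition \ref{estLU} gives
$$|Z^I(\partial_q \wht g_{UU} + 2G^L)| \lesssim |\bar\partial Z^I \wht g| + \frac{1}{1+s}|Z^I \wht g|,$$
so it suffices to bound the three objects $Z^I G^L$, $\bar\partial Z^I \wht g$, and $(1+s)^{-1} Z^I \wht g$ separately in each region.

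For $q > R+1$, Proposition \ref{cone} ensures $\phi$ is supported in $q \le R+\tfrac{1}{2}$, and the cut-off $\chi$ in the integrand of $G^L$ together with the support of $\Up(r/t)$ then forces $G^L \equiv 0$ in this region. Hence $|\partial Z^{N-7} \wht g_{UU}| \lesssim \frac{1}{1+s} |Z^{N-6}\wht g|$, which is exactly the bound \eqref{iks6} divided by $1+s$; this gives \eqref{wextuu}.

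For $q \le R+1$, I convert the $L^2(\m S^1)$ bounds of Proposition \ref{prpG} to pointwise bounds by the one-dimensional Sobolev embedding $H^2(\m S^1)\hookrightarrow L^\infty(\m S^1)$, at a cost of two angular derivatives. Using \eqref{estG5}, \eqref{estG1} and the bootstrap bound \eqref{estdelta} for $\Delta_h$, I obtain
$$|Z^{N-7} G^L| \lesssim \frac{1}{1+s}\,\|r Z^{N-5} G^L\|_{L^2(\m S^1)} \lesssim \frac{\ep^2}{(1+s)^{3/2}} + \frac{\ep^2}{(1+s)(1+|q|)^{1-4\rho}},$$
and similarly (with weaker decay in $s$) for $Z^{N-8}G^L$ and $Z^{N-10}G^L$; in each case the second term $\frac{\ep}{(1+s)(1+|q|)^{1-4\rho}}$ on the right-hand side of the corollary absorbs the contribution of $G^L$. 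The remaining pieces $|\bar\partial Z^I \wht g| + \frac{1}{1+s}|Z^I \wht g|$ I control region-by-region: for $I = N-7$ by \eqref{iks3}, which produces precisely $\frac{\ep(1+|q|)^{1/2}}{(1+s)^{3/2-\rho}}$; for $I = N-8$ by the $L^\infty$ bootstrap \eqref{bootg2}, giving $\frac{\ep}{(1+s)^{3/2-3\rho}}$; and for $I = N-10$ by \eqref{bootg1}, giving $\frac{\ep}{(1+s)^{3/2-\rho}}$.

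No serious obstacle is anticipated — everything reduces to careful bookkeeping of indices (the Sobolev embedding on $\m S^1$ costs two derivatives, which is why the targets stop at $N-7$) and matching the two summands in each right-hand side to the two structurally different contributions (the $\wht g$-part and the $G^L$-part). The only mildly delicate point is that the $\Delta_h$ term in Proposition \ref{prpG} comes with only one power of $\ep$; this is absorbed because it carries a factor $(1+s)^{-1/2}$, which combined with the $\frac{1}{1+s}$ in the Sobolev reduction is stronger than the target decay in both the $(1+s)^{-3/2+\rho}$ and $(1+s)^{-3/2+3\rho}$ cases.
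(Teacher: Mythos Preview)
Your proposal is correct and follows the same approach as the paper's proof: apply Proposition \ref{estLU}, note that $G^L$ vanishes in the exterior, and in the interior combine the pointwise bound on $G^L$ (from Proposition \ref{prpG} via Sobolev on $\m S^1$ and \eqref{estdelta}) with the appropriate bootstrap/Klainerman--Sobolev estimate for $\wht g$ at each derivative level. One minor point: the Sobolev embedding $H^1(\m S^1)\hookrightarrow L^\infty(\m S^1)$ costs only one derivative, not two, so the paper actually records the $G^L$ bound at level $N-6$ (equation \eqref{estGinf}); your use of two derivatives is harmless but slightly wasteful, and the remark ``which is why the targets stop at $N-7$'' is not quite the right explanation.
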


\begin{proof}
	Thanks to Proposition \ref{estLU} we have
	$$|\partial_q Z^I \wht g_{UU}|\lesssim \frac{1}{1+s}|Z^{I+1}\wht g|+ |Z^I G^L|.$$
	Consequently, \eqref{iks6} yields \eqref{wextuu}.
	Thanks to \eqref{estG5}, the Sobolev embedding $H^{1}(\m S^1)\subset L^\infty$ and the estimate \eqref{estdelta} for $\Delta_h$ we obtain
	\begin{equation}
	\label{estGinf}
	|Z^{N-6}G^L|\lesssim \frac{\ep}{(1+s)^\frac{3}{2}}+ \frac{\ep}{(1+s)(1+|q|)^{1-4\rho}}.
	\end{equation}
Consequently \eqref{iks3} yields \eqref{wintuu1}, \eqref{bootg2} yields \eqref{wintuu2} and \eqref{bootg1} yields \eqref{wintuu3}.
\end{proof}
Thanks to Lemma \ref{lmintegration}, since $\delta-\sigma >\frac{1}{2}$ we obtain the following corollary
\begin{cor}\label{iestuu}
We have the estimates for $q>R+1$ 
\begin{equation}
\label{iwextuu}
|Z^{N-7} \wht g_{UU}|\lesssim \frac{\ep}{(1+s)^{\frac{3}{2}-\rho}(1+|q|)^{-\frac{1}{2}+\delta}}
\end{equation}
and for $q\leq R+1$
\begin{align}
\label{iwintuu1}| Z^{N-7}  \wht g_{UU}|&\lesssim \frac{\ep(1+|q|)^{\frac{3}{2}}}{(1+s)^{\frac{3}{2}-\rho}}+\frac{\ep(1+|q|)^{4\rho}}{(1+s)},\\
\label{iwintuu2}| Z^{N-8} \wht g_{UU}|&\lesssim \frac{\ep(1+|q|)}{(1+s)^{\frac{3}{2}-3\rho}}+\frac{\ep(1+|q|)^{4\rho}}{(1+s)},\\
\label{iwintuu3}|  Z^{N-10} \wht g_{UU}|&\lesssim \frac{\ep(1+|q|)}{(1+s)^{\frac{3}{2}-\rho}}+\frac{\ep(1+|q|)^{4\rho}}{(1+s)}.
\end{align}
\end{cor}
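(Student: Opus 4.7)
The statement follows from Corollary \ref{estuu} by integrating along lines of constant $t,\theta$, exactly in the spirit of the proof of Corollary \ref{iestul}. The plan has two steps, according to whether we are in the exterior or the interior region, linked by a matching at $q = R+1$.

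\textbf{Step 1 (exterior, $q>R+1$).} In this region $\wht g_{UU}$ has support extending only on the initial slice with the decay coming from the asymptotic condition on the perturbation, and for fixed $t,\theta$ we have $\wht g_{UU}(t,r,\theta)\to 0$ as $r\to\infty$. Hence we can integrate the bound \eqref{wextuu} from $q=+\infty$. Since $\delta-\sigma>\tfrac{1}{2}$ in particular gives $\tfrac{1}{2}+\delta>1$, the integrand $(1+|q|)^{-\frac{1}{2}-\delta}$ is integrable at infinity, and we obtain
\begin{equation*}
|Z^{N-7}\wht g_{UU}|\lesssim \frac{\ep}{(1+s)^{\frac{3}{2}-\rho}(1+|q|)^{\delta-\frac{1}{2}}},
\end{equation*}
which is \eqref{iwextuu}. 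This is precisely the conclusion of Lemma \ref{lmintegration} applied with $\beta=-\tfrac{1}{2}-\delta<-1$ and $\gamma=-\tfrac{3}{2}+\rho$.

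\textbf{Step 2 (interior, $q\leq R+1$).} At the matching value $q=R+1$, estimate \eqref{iwextuu} gives $|Z^{N-7}\wht g_{UU}|(q=R+1)\lesssim \ep(1+s)^{-(3/2-\rho)}$, which serves as the boundary datum to integrate \eqref{wintuu1}, \eqref{wintuu2}, \eqref{wintuu3} from $q=R+1$ toward smaller $q$. For each estimate the right-hand side is a sum of two terms, each of the form $(1+s)^{\gamma}(1+|q|)^{\alpha}$, which we integrate separately. The first term ($\alpha=\tfrac{1}{2}$ in \eqref{wintuu1}, resp.\ $\alpha=0$ in \eqref{wintuu2}--\eqref{wintuu3}) integrates, via Lemma \ref{lmintegration}, to $(1+s)^{\gamma}(1+|q|)^{\alpha+1}$, producing the first term on the right of \eqref{iwintuu1}--\eqref{iwintuu3}. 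The second term, coming from $G^L$ with $\alpha=-(1-4\rho)\in(-1,0)$ and $\gamma=-1$, integrates to $(1+s)^{-1}\,(1+|q|)^{4\rho}/(4\rho)$, producing the second term on the right of \eqref{iwintuu1}--\eqref{iwintuu3}. The boundary contribution at $q=R+1$ is dominated by the first term since $(1+|q|)^{3/2}\gtrsim 1$ there, and so does not appear separately.

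\textbf{Main difficulty.} There is nothing fundamentally hard here; the only bookkeeping point worth flagging is the $G^L$ contribution in Corollary \ref{estuu}, which does not fit the clean single-power form assumed in Lemma \ref{lmintegration}. One therefore splits the two-term bound and applies the integration lemma to each summand, using that $4\rho<1$ so that $(1+|q|)^{-(1-4\rho)}$ is not integrable at the origin and the primitive picks up the factor $(1+|q|)^{4\rho}$ recorded in the statement. The hypothesis $\delta-\sigma>\tfrac{1}{2}$ (actually just $\delta>\tfrac{1}{2}$) is exactly what is needed to make the exterior integration in Step~1 convergent and to provide the matching bound for Step~2.
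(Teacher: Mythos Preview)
Your proposal is correct and follows the same approach as the paper, which proves the corollary in one line by invoking Lemma~\ref{lmintegration} (the paper writes simply ``Thanks to Lemma~\ref{lmintegration}, since $\delta-\sigma>\tfrac12$''). Your splitting of the two-term interior bound and integrating each summand separately is exactly the right way to unpack that citation.

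One small imprecision worth fixing: you say you integrate ``along lines of constant $t,\theta$'' and ``from $q=+\infty$'', but Lemma~\ref{lmintegration} integrates the $\partial_q$ estimate along lines of constant $s,\theta$ (outgoing null rays), starting from $t=0$ where $q=s$. Along a fixed null ray $q$ is bounded above by $s$, so ``$q=+\infty$'' does not literally make sense there; the boundary data come from the initial data decay (the hypothesis $|u|_{t=0}\lesssim(1+r)^{\gamma+\beta}$ in Lemma~\ref{lmintegration}). This matters in the interior, where Corollary~\ref{estuu} only controls $\partial_q Z^I\wht g_{UU}$, not $\partial_r$, so integrating along constant $t$ would require an extra argument for the $\partial_s$ piece. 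Once you replace ``constant $t,\theta$'' by ``constant $s,\theta$'' and ``from $q=+\infty$'' by ``from $t=0$'' (equivalently, from $q=s$), your write-up is a faithful expansion of the paper's one-line proof.
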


We now give the $L^2$ estimates for $\wht g_{UU}$.
\begin{prp}\label{l2uu}
	We have 
	\begin{align}
	\label{wl2duun}
	&\int_0^t(1+\tau)^{-2\rho}\left\| w_1'(q)^\frac{1}{2} \partial Z^N \wht g_{UU}\right\|^2_{L^2}ds \lesssim \ep^2(1+t)^{2\rho}\\
	\label{hwl2duun}
		&\int_0^t(1+\tau)^{-2\rho}\left\| \frac{w_1'(q)^\frac{1}{2}}{1+|q|}  Z^N \wht g_{UU}\right\|^2_{L^2}ds \lesssim \ep^2(1+t)^{2\rho}\\
	\label{wl2duu}&\left\| \frac{w_1(q)^\frac{1}{2}}{(1+|q|)^{\frac{1}{2}+\sigma}} \partial Z^{N-1} \wht g_{UU}\right\|_{L^2}\lesssim \frac{\ep}{(1+t)^\frac{1}{2}},\\
	\label{hwl2duu}&\left\| \frac{w_1(q)^\frac{1}{2}}{(1+|q|)^{\frac{3}{2}+\sigma}} Z^{N-1} \wht g_{UU}\right\|_{L^2}\lesssim \frac{\ep}{(1+t)^\frac{1}{2}}.
\end{align}
\end{prp}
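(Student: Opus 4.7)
The natural starting point is the wave coordinate identity from Proposition \ref{estLU}:
$$|\partial Z^I \wht g_{UU}| \lesssim |Z^I G^L| + |\overline{\partial} Z^I \wht g| + \frac{1}{1+s}|Z^I \wht g|,$$
which reduces each of the four estimates to controlling three contributions. The proof follows the same pattern as Propositions \ref{l2ll} and \ref{l2lu}, with the crucial new ingredient being the $L^2(\m S^1)$ estimates for $rZ^I G^L$ given in Proposition \ref{prpG} (and the fact that $G^L$ is supported in the thin strip $|q|\leq R+1$, $r\sim t$, thanks to the cut-off $\Upsilon(r/t)$ and the compact support of $\phi$).

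For \eqref{wl2duun}, the $\bar{\partial}$ contribution is absorbed directly by \eqref{ig2Nbis}. For the term $Z^N\wht g/(1+s)$, I use that in both weight regions $w_1'(q)\sim w_1(q)/(1+|q|)$, combined with the elementary inequality $1+s\geq 1+|q|$, to bound
$$\frac{w_1'(q)}{(1+s)^2}\lesssim \frac{w_1(q)}{(1+|q|)^2(1+s)},$$
after which \eqref{hg2n} yields an integrand $\lesssim \ep^2(1+\tau)^{2\rho-1}$, which integrates to $\ep^2(1+t)^{2\rho}$. For the $Z^N G^L$ piece I use the support property of $G^L$ (on which $r\sim 1+s\sim 1+t$ and $w_1'\lesssim 1$) to pass from $\|w_1'^{1/2}Z^N G^L\|_{L^2}$ to $(1+t)^{-1/2}\|rZ^N G^L\|_{L^2_r L^2(\m S^1)}$ and then apply \eqref{estGN} together with \eqref{estdelta}; this yields a term integrable in time. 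The estimate \eqref{hwl2duun} then follows from \eqref{wl2duun} by the weighted Hardy inequality with $v=w_1'$, whose exponents $\beta=1+2\delta-2\sigma>1$ in the exterior and $\alpha=-1-2\sigma<1$ in the interior satisfy the hypotheses of Proposition \ref{hardy}.

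For \eqref{wl2duu} one drops a derivative, so one exploits the improved estimate \eqref{estG1} for $\|rZ^{N-1}G^L\|_{L^2(\m S^1)}$ together with the fact that on the support of $G^L$ the integral against $w_1/(1+|q|)^{1+2\sigma}$ is bounded. For the term $Z^N\wht g/(1+s)$, the key is the interpolation $(1+s)\geq (1+t)^{1/2+\sigma}(1+|q|)^{1/2-\sigma}$, which converts the weight $w_1/((1+|q|)^{1+2\sigma}(1+s)^2)$ into $w_1/((1+|q|)^2 (1+t)^{1+2\sigma})$, so that \eqref{hg2n} together with the hierarchy $\rho\ll\sigma$ gives the loss $(1+t)^{2\rho-\sigma-1/2}\lesssim (1+t)^{-1/2}$. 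The estimate \eqref{hwl2duu} follows from \eqref{wl2duu} by applying Hardy with weight $v=w_1/(1+|q|)^{1+2\sigma}$, whose exponents $\beta=1+2\delta-4\sigma>1$ (using $\delta-2\sigma>1/2$) and $\alpha=-1-4\sigma<1$ are admissible.

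The principal technical point is the $G^L$ contribution: unlike for $\wht g_{LL}$ and $\wht g_{UL}$, the wave coordinate identity for $\wht g_{UU}$ produces the transport source $G^L$ rather than being purely a good-derivative term. The estimates provided by Proposition \ref{prpG} are tailored precisely so that the compactness of the support in $q$ compensates for the absence of a $q$-decaying weight. Apart from this, the derivations are routine combinations of Hardy and the previously established bootstrap bounds, and no new structural obstruction appears.
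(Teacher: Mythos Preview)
Your overall strategy matches the paper's, and your treatment of \eqref{hwl2duun}, \eqref{wl2duu}, and \eqref{hwl2duu} is essentially correct. However, there is a genuine gap in your handling of the $G^L$ contribution to \eqref{wl2duun}, caused by a misconception about the support of $G^L$.

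You assert that $G^L$ is supported in the thin strip $|q|\leq R+1$. This is false: $G^L$ vanishes only for $q>R+1$ (because $\phi$ and $\chi'$ both vanish there), but for $q\leq R+1$ with $r\sim t$ the integral $\int_\infty^r\bigl(2(\partial_q\phi)^2 r - h\,\chi'\bigr)\,dr$ is generically nonzero. In the region where $\Upsilon(r/t)\neq 0$ the variable $q$ ranges over an interval of length comparable to $t$, not a bounded strip. Consequently, when you replace $w_1'$ by the crude bound $w_1'\lesssim 1$ and pass to $(1+t)^{-1/2}\|rZ^N G^L\|_{L^2_rL^2(\m S^1)}$, the $r$-integration over this long interval contributes a factor $\sim t$ (since $\|rZ^N G^L\|_{L^2(\m S^1)}\lesssim \ep^2(1+t)^\rho$ uniformly in $r$ by \eqref{estGN}), and you obtain only $\|w_1'^{1/2}Z^N G^L\|_{L^2}^2\lesssim \ep^4(1+t)^{2\rho}$. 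After multiplying by $(1+\tau)^{-2\rho}$ and integrating in time, this grows like $\ep^4 t$ and does not close.

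The correct argument, which is what the paper does, is to keep the interior decay $w_1'(q)\lesssim (1+|q|)^{-1-2\sigma}$ (valid on the entire support of $G^L$, since $G^L=0$ for $q>R+1$ so the growing exterior branch of $w_1'$ never enters). Then
\[
\left\|w_1'(q)^{1/2}Z^N G^L\right\|_{L^2}^2\lesssim \int\frac{1}{(1+|q|)^{1+2\sigma}\,r}\,\|rZ^N G^L\|_{L^2(\m S^1)}^2\,dr\lesssim \frac{\ep^4(1+t)^{2\rho}}{1+t},
\]
because $\int(1+|q|)^{-1-2\sigma}\,dq<\infty$. This is precisely the mechanism you correctly invoke for \eqref{wl2duu} when you say that the integral against $w_1/(1+|q|)^{1+2\sigma}$ is bounded; you only need to apply the same reasoning, with $w_1'$ playing the role of $w_1/(1+|q|)^{1+2\sigma}$, already in \eqref{wl2duun}.
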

\begin{proof}
	Thanks to Proposition \ref{estLU} we have
	$$|\partial_q Z^{N} \wht g_{UU}|\lesssim |\bar{\partial}Z^N \wht g| +\frac{1}{1+s}|Z^{N}\wht g|+ |Z^{N} G^L|.$$
	We estimate
	\begin{align*}\int_0^t(1+\tau)^{-2\rho}\ld{w_1'(q)^\frac{1}{2}Z^{N} G^L}^2
	&\lesssim \int_0^t (1+\tau)^{-2\rho}\int \frac{1}{(1+|q|)^{1+2\sigma}r^2}\|rZ^N G^L \|^2_{L^2(\m S^1)}drdt\\
	&\lesssim \int_0^t \ep^2(1+\tau)^{-1}\\
	&\lesssim \ep^2(1+t)^{2\rho}.
	\end{align*}
	This estimate together with \eqref{ig2Nbis} yield \eqref{wl2duun}. Then the weighted Hardy inequality yields \eqref{hwl2duun}.
	Thanks to Proposition \ref{estLU} we have
	$$|\partial_q Z^{N-1} \wht g_{UU}|\lesssim \frac{1}{1+s}|Z^{N}\wht g|+ |Z^{N-1} G^L|.$$
	We can estimate
	$$\ld{\frac{w_1(q)^\frac{1}{2}}{(1+|q|)^{\frac{1}{2}+\sigma}} Z^{N-1}G^L}^2
	\lesssim \int \frac{1}{(1+|q|)^{1+\sigma}(1+s)}\|rZ^{N-1}G^L\|^2_{L^2}
	\lesssim \frac{\ep^2}{1+t},$$
	and
	$$\ld{ \frac{w_1(q)^\frac{1}{2}}{(1+|q|)^{\frac{1}{2}+\sigma}(1+s)}Z^N \wht g}\lesssim \frac{1}{(1+t)^{\frac{1}{2}+\sigma}}
	\ld{\frac{w_1(q)^\frac{1}{2}}{1+|q|}Z^N \wht g}\lesssim \frac{\ep}{(1+t)^\frac{1}{2}},$$
	where we have used \eqref{hg2n}, which concludes the proof of \eqref{wl2duu}. We obtain \eqref{hwl2duu} thanks to \eqref{wl2duu} and the weighted Hardy inequality.
\end{proof}
By taking at each time the maximum of the estimates from Corollary \ref{iestll}, \ref{iestul} and \ref{iestuu}, 
and estimating $Z^{N-7}\sigma^0_{UL}=O\left(\frac{\ep^2}{(1+s)^\frac{3}{4}}\right)$ thanks to \eqref{estsigma0}
we obtain the following
\begin{cor}
We have the estimates for $q>R+1$
\begin{equation}
\label{extinftt}| Z^{N-7}  \wht g_{\q T \q T}|\lesssim \frac{\ep}{(1+s)^{\frac{3}{2}-\rho}(1+|q|)^{\delta-\frac{1}{2}}},
\end{equation}
ad for $q\leq R+1$
\begin{align}
\label{inftt1}| Z^{N-7} \wht g_{\q T \q T}|&\lesssim \frac{\ep(1+|q|)^{\frac{3}{4}+\rho}}{(1+s)^{\frac{3}{4}}},\\
\label{inftt2}| Z^{N-8} \wht g_{\q T \q T}|&\lesssim \frac{\ep(1+|q|)^{\frac{1}{4}+3\rho}}{(1+s)^{\frac{3}{4}}},\\
\label{inftt3}| Z^{N-10} \wht g_{\q T \q T}|&\lesssim \frac{\ep(1+|q|)^{\frac{1}{4}+\rho}}{(1+s)^{\frac{3}{4}}}, \\
\label{inftt}|Z^{N-11} \wht g_{\q T\q T}|+(1+|q|)|\partial_q Z^I\wht g_{\q T \q T}| &\lesssim
\frac{(1+|q|)^{\frac{1}{2}+\rho}}{(1+s)}.
\end{align}

\end{cor}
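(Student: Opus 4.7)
The plan is to establish each of \eqref{extinftt}--\eqref{inftt} by taking, at the relevant order of regularity, the maximum of the pointwise estimates already derived for the three null-frame components $\wht g_{LL}$, $\wht g_{LU}$ and $\wht g_{UU}$ in Corollaries \ref{iestll}, \ref{iestul} and \ref{iestuu} respectively, since $\q T=\{U,L\}$ and so $\wht g_{\q T\q T}$ refers precisely to these three components. The only ingredient still to be inserted is the bound $|Z^I\sigma^0_{UL}|\lesssim \frac{\ep^2}{(1+s)^{3/4}}$ for $I\leq N-7$, which follows from \eqref{estsigma0} (the contribution $\frac{\ep^2(1+|q|)}{1+s}$ there is dominated by the $(1+s)^{-3/4}$ piece for the orders of regularity considered, after multiplying out) and which is used to convert the shifted quantity $\wht g_{UL}-(1-\chi(q))\sigma^0_{UL}$ of Corollary \ref{iestul} into a bound on $\wht g_{UL}$ itself.

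For the exterior estimate \eqref{extinftt}, the bounds \eqref{iwextll}, \eqref{iwextlu} and \eqref{iwextuu} all fit under $\frac{\ep}{(1+s)^{3/2-\rho}(1+|q|)^{\delta-1/2}}$: the $\wht g_{UL}$ and $\wht g_{UU}$ contributions coincide with the target, and the $\wht g_{LL}$ contribution is in fact smaller by a factor $(1+|q|)^{-\sigma}\leq 1$. For the interior estimates \eqref{inftt1}--\eqref{inftt3}, I would check that each of \eqref{iwintll1bis}, \eqref{iwintlu1} and \eqref{iwintuu1} at order $N-7$, and their lower-order analogues \eqref{iwintll2}, \eqref{iwintlu2}, \eqref{iwintuu2} at order $N-8$ and \eqref{iwintll3}, \eqref{iwintlu3}, \eqref{iwintuu3} at order $N-10$, is dominated by the claimed right-hand side. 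The principal inequality used in these comparisons is $|q|\leq s$ (which holds since $q=r-t$ and $s=r+t$ with $r\geq 0$); it ensures that the $\frac{\ep(1+|q|)^{3/2}}{(1+s)^{3/2-\rho}}$ terms absorb into $\frac{\ep(1+|q|)^{3/4+\rho}}{(1+s)^{3/4}}$ via $\left(\frac{1+|q|}{1+s}\right)^{3/4-\rho}\leq 1$. For the $\wht g_{UU}$ contribution there is the additional term $\frac{\ep(1+|q|)^{4\rho}}{1+s}$ inherited from $G^L$, which is trivially absorbed since $(1+|q|)^{3/4-3\rho}(1+s)^{1/4}\geq 1$. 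Unshifting $\wht g_{UL}$ adds $O\!\left(\frac{\ep^2}{(1+s)^{3/4}}\right)$, also absorbed for $\ep$ small.

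The combined statement \eqref{inftt} is handled in exactly the same way: the pointwise bound on $Z^{N-11}\wht g_{\q T\q T}$ is obtained by the same worst-case comparison of \eqref{iwintll3}, \eqref{iwintlu3} and \eqref{iwintuu3}, while $(1+|q|)|\partial_q Z^{N-11}\wht g_{\q T\q T}|$ is obtained from the $\partial_q$-estimates \eqref{wintll3}, \eqref{wintlu3} and \eqref{wintuu3} multiplied by $(1+|q|)$, both again absorbing the $\sigma^0_{UL}$ correction via \eqref{estsigma0}. The only place in the argument requiring real care, and hence the main (though still very mild) obstacle, is the bookkeeping of the $(1-\chi(q))\sigma^0_{UL}$ correction in the $\wht g_{UL}$ estimate; apart from that the proof is a direct maximum over bounds already in hand.
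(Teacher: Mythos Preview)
Your proposal is correct and follows essentially the same approach as the paper: the paper's proof is the single sentence ``by taking at each time the maximum of the estimates from Corollaries \ref{iestll}, \ref{iestul} and \ref{iestuu}, and estimating $Z^{N-7}\sigma^0_{UL}=O\!\left(\frac{\ep^2}{(1+s)^{3/4}}\right)$ thanks to \eqref{estsigma0}'', which is exactly what you do. One small remark: your parenthetical about the $\frac{\ep^2(1+|q|)}{1+s}$ contribution in \eqref{estsigma0} is unnecessary, since $\sigma^0_{UL}=s(1+b)\partial_s f$ depends only on $s,\theta$ and the $q$-dependent term in \eqref{estsigma0} arises from other components of $g_{\mathfrak{b}}$; and for the $\wht g_{UL}$ part of \eqref{inftt} you could simply cite \eqref{iwintlu4} directly.
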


\section{Structure of the equations}
In this section we will study each terms of $\Box_g Z^I g_{\mu \nu}$ in order to perform in the next sections the $L^\infty-L^\infty$ estimates and then the $L^2$ estimates. 

\begin{prp}\label{prpstr}
We can write $\Box_g Z^I g_{\mu \nu}=Z^IR^1_{\mu \nu}+{}^IM_{\mu \nu}+{}^IM^E_{\mu \nu}+{}^IQ_{\mu \nu}$, where $R^1$ is given by Proposition \ref{riccigb}, and
\begin{itemize}
	\item ${}^IM$ is present in the right-hand side of the wave equations satisfied by all the components. It consists principally of terms which have the null structure. It satisfies
\begin{align*}
|{}^IM|
&\lesssim
\frac{\ep}{(1+s)^\frac{3}{2}(1+|q|)^{\frac{1}{2}-4\rho}}|\partial Z^I \phi| +\frac{\ep}{(1+s)^\frac{1}{2}(1+|q|)^{\frac{3}{2}-4\rho}}|\bar{\partial} Z^I \phi|\\
&+\frac{\ep}{(1+s)(1+|q|)^{\frac{1}{2}-\rho}}\left(|\bar{\partial} Z^I \wht g|+\frac{1}{1+s}|Z^I \wht g|\right)\\
&+
\frac{\ep}{(1+s)^{\frac{3}{2}-\rho}}|\partial Z^I \wht g|
+\frac{\ep}{(1+s)(1+|q|)^{\frac{3}{2}-\rho}}|Z^I \wht g_{\q T \q T}|+\frac{1}{1+s}|Z^I G|+|\bar{\partial}Z^I G|\\
&+\ep\min\left(\frac{1}{(1+|q|)(1+s)^{\frac{1}{2}-\rho}},\frac{1}{(1+|q|)^{\frac{1}{2}}(1+s)^\frac{1}{2}}\right)\left(|\bar{\partial}Z^I \wht g_1|+\frac{1}{1+|q|}|Z^I g_{LL}|\right).\\
\end{align*}
\item We have better estimates in the exterior region $q>R$ so we isolate the contribution of this region by introducing a term ${}^IM^E_{\mu \nu}$ which is non zero only in the exterior region $q>R$. We also include in this term the crossed terms between $g_b$ and $\wht g$. ${}^IM^E_{\mu \nu}$ satisfies
\begin{align*}
|{}^IM^E|\lesssim &\frac{\ep}{(1+s)}\left(|\bar{\partial} Z^I \wht g|+ \frac{|q|}{1+s}|\partial Z^I \wht g|+ \frac{1}{1+s}|Z^I \wht g |+ \frac{1}{1+|q|}|\wht g_{\q T \q T}|\right)\\
&+\frac{\ep}{(1+s)(1+|q|)^{2+\delta-\rho}} \left(s|\partial_s Z^I b|
+q|\partial_s \partial_\theta Z^I b|+\frac{q}{s}|Z^I \partial^2_\theta b|\right)\\
&+\min\left(\frac{\ep}{(1+|q|)^{\frac{3}{2}+\delta}(1+s)^{\frac{1}{2}-\rho}}, \frac{\ep}{(1+|q|)^{\frac{3}{2}+\delta-\sigma}(1+s)^\frac{1}{2} }\right)\Big(s|\partial^2_s Z^I b|
+q|\partial^2_s \partial_\theta Z^I b|\\
&+\frac{q}{s}|Z^I \partial_s \partial^2_\theta b|
+\frac{q}{s^2}|Z^I \partial^3_\theta b|\Big).
\end{align*}
\item The terms which do not have the null structure are not presents in all the components of $\Box_g Z^I g_{\mu \nu}$. It is why we introduce ${}^IQ$ such that
$${}^IQ_{\q T \q T}=0, \quad {}^IQ_{L \ba L}=\partial_{\ba L}\wht g_{\ba L \ba L}\partial_L Z^I \wht g_{LL}, \quad 
{}^IQ_{UL}=\partial_{\ba L}\wht g_{\ba L \ba L}Z^I(\partial_U \wht g_{LL}+\partial_L (\wht g_{UL}+\sigma^0_{UL})),$$
$${}^I Q_{\ba L \ba L}=Z^I \left(\partial_q g_{UU}\partial_q \wht g_{\ba L  L}
+\wht g_{L \ba L }\partial_q G^L\right) + \sum_{I_1+I_2= I, \; I_2 \leq I-1} Z^{I_1}g_{LL}\partial_q^2 Z^{I_2}g_{\ba L \ba L},$$
so the new contributions involved in ${}^IQ_{\ba L \ba L}$  are
\begin{align*}
|{}^IQ_{\ba L  \ba L}|&\lesssim 
\frac{\ep}{(1+|q|)(1+s)^{\frac{1}{2}-\rho}}\left(|\bar{\partial}Z^I \wht g_1|+\frac{1}{1+|q|}|Z^I g_{LL}|\right)
+\left(\frac{\ep^2}{(1+s)^\frac{3}{2}}+\frac{\ep}{(1+s)(1+|q|)^{1-4\rho}}\right)|\partial Z^I \wht g_1|\\
+&\ep\min\left(\frac{1}{(1+|q|)(1+s)^{\frac{1}{2}-\rho}},\frac{1}{(1+|q|)^{\frac{1}{2}}(1+s)^\frac{1}{2}}\right)(|\bar{\partial}Z^I \wht g|+|Z^I G^L|)\\
+ & \ch_{q>R} \min\left(\frac{\ep}{(1+s)^{\frac{1}{2}-\rho}(1+|q|)^{\frac{5}{2}+\delta}}, \frac{\ep}{(1+s)^{\frac{1}{2}}(1+|q|)^{\frac{5}{2}+\delta+\sigma}}\right)\frac{q}{s}|Z^I \partial^2_\theta b|
+ \ch_{q>R}\frac{\ep}{1+s}|\partial Z^I \wht g_1|\\
+&\ch_{q>R}\frac{\ep}{(1+|q|)^{\frac{3}{2}+\delta}(1+s)^{\frac{1}{2}-\rho}}\left(s|\partial^2_s Z^I b|
+q|\partial^2_s \partial_\theta Z^I b|+
\frac{q}{s}|Z^I \partial_s \partial^2_\theta b|
+\frac{q}{s^2}|Z^I \partial^3_\theta b|\right).
\end{align*}
\end{itemize}
\end{prp}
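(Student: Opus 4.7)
The plan is to start from equation \eqref{s2} for $\wht g_{\mu\nu}$ and the identity \eqref{calcrb}, substitute $g=g_\mathfrak{b}+\wht g$, and use the decomposition $R_b=R^0+R^1$ of Proposition \ref{riccigb} so that the equation reads schematically
\begin{equation*}
\Box_g g_{\mu\nu} = R^1_{\mu\nu} + \bigl[\text{nonlinearities}\bigr]_{\mu\nu},
\end{equation*}
where the bracket collects the source $-4\partial_\mu\phi\partial_\nu\phi$, the gauge pieces $g_{\mu\rho}\partial_\nu (G^\rho+\wht G^\rho)$ and its symmetric counterpart, the semilinear form $P_{\mu\nu}(g)(\partial g,\partial g)$, the crossed correction $\wht P_{\mu\nu}(\wht g,g_\mathfrak{b})$, and the $R^0$-piece. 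I would then commute with $Z^I$, using $[\Box,Z]=C(Z)\Box$ and bounding the commutator between $Z^I$ and $\Box_g-\Box$ via the already-derived estimates on $g-m$. The remaining task is a componentwise classification of every term produced by the Leibniz rule into the four buckets $Z^I R^1$, ${}^IM$, ${}^IM^E$, ${}^IQ$.

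\medskip

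\noindent For the scalar-field source $-4\partial_\mu\phi\partial_\nu\phi$ I would decompose in the null frame. Unless $(\mu,\nu)=(\ba L,\ba L)$, at least one factor is $\partial_L\phi$ or $\partial_U\phi$, i.e.\ a $\bar\partial\phi$-type derivative, and the contribution enters ${}^IM$. In the $(\ba L,\ba L)$ slot the source is $-4(\partial_q\phi)^2$, and the gauge choice \eqref{gl} is engineered so that $2g_{\ba L\rho}\partial_{\ba L}G^\rho$ precisely cancels this bad quadratic term inside $\{\Upsilon(r/t)=1\}$, leaving only the cubic remainder $\wht g_{L\ba L}\partial_q G^L$; this gets placed in ${}^IQ_{\ba L\ba L}$, together with the product $\partial_q g_{UU}\partial_q\wht g_{L\ba L}$ and the quasilinear piece $\sum_{I_1+I_2=I,\,I_2\le I-1}Z^{I_1}g_{LL}\partial_q^2 Z^{I_2}g_{\ba L\ba L}$ extracted from $g^{\alpha\rho}\partial_\alpha\partial_\rho Z^I g_{\ba L\ba L}$. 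Outside $\Upsilon$-support, $\phi$ is supported in $q<R+\tfrac{1}{2}$ by Proposition \ref{cone}, so no compensation is needed.

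\medskip

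\noindent Next, I would analyse the semilinear form $P_{\mu\nu}(g)(\partial\wht g,\partial\wht g)$ in the null frame as in \cite{lind}. Each quadratic null-frame product is either (a) manifestly of null type (one factor is $\bar\partial$, or both factors involve good components $g_{\q T\q T}$), in which case it is routed to ${}^IM$; or (b) of the form ``$\partial_q$ of a wave-gauge-controlled coefficient'' times ``$\partial$ of anything'', in which case Propositions \ref{estLL}--\ref{estLU} let me replace $\partial_q g_{LL},\partial_q g_{LU},\partial_q g_{UU}$ by $\bar\partial g+\tfrac{1}{r}g_{\q T\q V}+G$ plus lower-order errors, recovering null structure and again landing in ${}^IM$. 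The only exceptional products, where the undifferentiated coefficient $\wht g_{\ba L\ba L}$ multiplies a $\partial_q$-derivative of a $g_{LL}$- or $g_{UL}$-type coefficient, produce the non-null contributions $\partial_{\ba L}\wht g_{\ba L\ba L}\partial_L Z^I\wht g_{LL}$ and $\partial_{\ba L}\wht g_{\ba L\ba L}Z^I(\partial_U\wht g_{LL}+\partial_L(\wht g_{UL}+\sigma^0_{UL}))$ that define ${}^IQ_{L\ba L}$ and ${}^IQ_{UL}$. Crucially, these exceptional products never appear in the $\q T\q T$ component, which explains ${}^IQ_{\q T\q T}=0$.

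\medskip

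\noindent Finally, the crossed correction $\wht P_{\mu\nu}(\wht g,g_\mathfrak{b})$, the $R^0$-piece and the leftover gauge contributions involving $\partial\wht G^\rho$ are all concentrated in $\{q\ge R\}$ (directly or through $\chi'(q)$ and $\Upsilon$), so I would package them into ${}^IM^E$. The quantitative bounds on ${}^IM$, ${}^IM^E$, ${}^IQ$ then follow by substituting the wave-gauge $L^\infty$ and $L^2$ estimates from Corollaries \ref{estll}, \ref{estul}, \ref{estuu} and Propositions \ref{l2ll}, \ref{l2lu}, \ref{l2uu}, \ref{prpG}, together with Corollary \ref{corestb} for $g_\mathfrak{b}$ and the $L^\infty$ bootstrap bounds \eqref{bootphi1}--\eqref{bootg2}; the $\min(\cdot,\cdot)$ appearing in the exterior and $\ba L\ba L$ bounds reflects the option to use either a pointwise-sharp estimate or a $\sigma$-weighted estimate coming from the refined decomposition \eqref{dec2}. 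The main obstacle is the bookkeeping needed to verify that no non-null product escapes the classification, and it relies essentially on Proposition \ref{prpcross} to rule out appearances of $\partial_s^3 b$, $\partial_s^3\partial_\theta b$, $\partial_\theta^4 b$ in the crossed terms, which would otherwise violate the ${}^IM^E$ bound.
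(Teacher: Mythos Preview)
Your proposal is essentially the same approach as the paper's proof: start from \eqref{s2}, commute with $Z^I$, expand in the null frame, use the wave-coordinate relations (Propositions \ref{estLL}--\ref{estLU}) to convert $\partial_q g_{\q T\q T}$ into $\bar\partial g+G$, and sort the resulting products into ${}^IM$, ${}^IM^E$, ${}^IQ$ according to whether they have null structure, are exterior/crossed, or are the residual non-null pieces.

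Two small remarks. First, the paper organises the bookkeeping via a simple index-counting principle: in any product $m^{--}m^{--}\partial_- Z^{I_1}g_{--}\partial_- Z^{I_2}g_{--}$ the repeated down-indices can carry at most two extra $\ba L$'s, so for each component $(\mu,\nu)\in\{\q T\q T,\, L\ba L,\, U\ba L,\, \ba L\ba L\}$ one can exhaustively list the possible ``bad'' products (those with two $\partial_{\ba L}$ derivatives). This is the device that guarantees no non-null term escapes the classification; you allude to this but the paper makes it explicit. Second, you list the $L^2$ Propositions \ref{l2ll}, \ref{l2lu}, \ref{l2uu} among the inputs, but the proof of Proposition \ref{prpstr} is purely pointwise---only the $L^\infty$ bootstrap bounds \eqref{bootphi1}--\eqref{bootg2}, the pointwise wave-coordinate corollaries, and Corollary \ref{corestb} are used here; the $L^2$ estimates enter only in the later energy arguments. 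Also, the $R^0$-piece is not routed into ${}^IM^E$: as you correctly note for $\ba L\ba L$, $R^0_{\ba L\ba L}$ and $R^0_{U\ba L}$ are cancelled by the designed gauge terms $\partial_{\ba L}G^L$ and $\partial_{\ba L}G^U$ respectively, leaving only $R^1$.
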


\begin{proof}
	We can study the terms in $\Box_g Z^I g_{\mu \nu}$ with simple counting arguments :
the quadratic terms in $\Box_g Z^I g_{\mu \nu}$ are of the form
$$A_{--}=m^{--}m^{--}\partial_{-}Z^{I_1}g_{--}\partial_{-}Z^{I_2}g_{--}$$
or
$$B_{--}=m^{--}m^{--}Z^{I_1}(g-m)_{--}\partial_{-}\partial_{-}Z^{I_2}g_{-- }$$
with $I_1+I_2 \leq I$. The $_{-}$ and $^{-}$ stand for down and up indices. The indices $_{- -}$ in $A_{--}$ or $B_{--}$ appear as down indices in the right hand-side at any place a priori and the other down indices should appear with a repeated up index in $m^{--}$. Consequently in the additional down indices we can not have more than two occurrences of $\ba L$.
 With this technique it may happen that we study terms which are not in the equations but we are certain not to miss any. If some terms happen to be difficult to handle we will of course check if they are or not present in the equations.
\paragraph{The case $\q T \q T$.}
In this case there can not be more than two occurrences of the vector field $\ba L$.
In $A_{\q T \q T}$ : the terms involving two bad derivative are of the form
$$\partial_q Z^{I_1}g_{\q T \q T}\partial_{q} Z^{I_2}g_{\q T \q T}.$$
We may assume $I_1\leq I_2$.
We estimate first 
$$\partial_q Z^{I_1}\wht g_{\q T \q T}\partial_{q} Z^{I_2}\wht g_{\q T \q T}.$$
Thanks to \eqref{inftt}, since $I_1\leq \frac{N}{2}\leq N-11$ this is bounded by
$$\frac{\ep}{(1+s)(1+|q|)^{\frac{1}{2}-\rho}}|\partial_q  Z^I \wht g_{\q T \q T}|.$$
Thanks to Proposition \eqref{estLU} this term is bounded by
\begin{align}
\label{str1}&\frac{\ep}{(1+s)(1+|q|)^{\frac{1}{2}-\rho}}\left(|\bar{\partial} Z^I \wht g|+\frac{1}{1+s}|Z^I \wht g|\right),\\
\label{str2}&+\frac{\ep}{(1+s)(1+|q|)^{\frac{1}{2}-\rho}}|Z^IG|.
\end{align}
We estimate
$$\partial_q Z^{I_1} (g_{\mathfrak{b}})_{\q T \q T}\partial_{q} Z^{I_2}\wht g_{\q T \q T}.$$
This term is only present in the exterior and thanks to \eqref{estg0} this is bounded by
\begin{align}
\label{str3}&\ch_{q>R}\frac{\ep}{(1+s)}\left(|\bar{\partial} Z^I \wht g|+\frac{1}{1+s}|Z^I \wht g|\right),\\
\label{str4}&+\ch_{q>R}\frac{\ep}{(1+s)}|Z^I G|.
\end{align}
We now estimate
$$\partial_q Z^{I_1} \wht g_{\q T \q T}\partial_{q} Z^{I_2}(g_{\mathfrak{b}})_{\q T \q T}.$$
This term is only present in the exterior and is bounded by
\begin{equation}
\label{str5}
\ch_{q>R}\frac{\ep}{(1+s)(1+|q|)^{1+\delta-\rho}}\frac{1}{1+|q|} \left(s|\partial_s Z^I b|
+q|\partial_s \partial_\theta Z^I b|+\frac{q}{s}|Z^I \partial^2_\theta b|\right),
\end{equation}
where we have estimated $\partial_q Z^{I_1} \wht g_{\q T \q T}$ with \eqref{extinftt} in the region $q>R+1$ and \eqref{inftt} in the region $R\leq q \leq R+1$, and we have estimated $\partial_{q} Z^{I_2}(g_{\mathfrak{b}})_{\q T \q T}$ with \eqref{calcqg} and \eqref{calcg0}.
The terms involving only the metric $g_{\mathfrak{b}}$ are all taken into account in the Ricci tensor of $g_{\mathfrak{b}}$.

We decompose the terms of the form $\bar{\partial} Z^{I_1} g\partial_q Z^{I_2}  g$, with $I_1\leq I_2$ in three part
$$\bar{\partial} Z^{I_1} \wht g\partial_q Z^{I_2}  \wht g, \quad \bar{\partial} Z^{I_1} \wht g\partial_q Z^{I_2}  g_{\mathfrak{b}}, \quad \bar{\partial} Z^{I_1} g_{\mathfrak{b}}\partial_q Z^{I_2} \wht g.$$
They can be estimated respectively by
\begin{align}
\label{str6}& \frac{\ep}{(1+s)^{\frac{3}{2}-\rho}}|\partial_q Z^{I} \wht g|,\\
\label{str7} &\ch_{q>R}\frac{\ep}{(1+s)^{\frac{3}{2}-\rho}(1+|q|)^{\frac{1}{2}+\delta}}\frac{1}{1+|q|} \left(s|\partial_s Z^I b|
+q|\partial_s \partial_\theta Z^I b|+\frac{q}{s}|Z^I \partial^2_\theta b|\right),\\
\label{str8} & \ch_{q>R} \frac{\ep(1+|q|)}{(1+s)^2}|\partial_q Z^{I_2} \wht g|,
\end{align} 
where in \eqref{str6} we have used
$$|\bar{\partial }Z^{I_1}\wht g|\lesssim \frac{1}{1+s}|Z^{I_1+1}\wht g|\lesssim \frac{\ep}{(1+s)^{\frac{3}{2}-\rho}},$$
thanks to \eqref{bootg1}. In \eqref{str7} we have used
$$|\bar{\partial }Z^{I_1}\wht g|\lesssim \frac{1}{1+s}|Z^{I_1+1}\wht g|\lesssim \frac{\ep}{(1+s)^{\frac{3}{2}-\rho}(1+|q|)^{\frac{1}{2}+\delta}},$$
thanks to \eqref{iks2}, and we have estimated $\partial_q Z^{I_2}  g_{\mathfrak{b}}$ thanks to \eqref{calcg0}, and \eqref{calcqg}. In \eqref{str8} we have estimated
$$|\bar{\partial }Z^{I_1}g_{\mathfrak{b}}|\lesssim \frac{1}{1+s}|Z^{I_1+1}g_{\mathfrak{b}}|\lesssim \frac{\ep(1+|q|)}{(1+s)^2},$$
thanks to \eqref{estg0}, with $I_1+1\leq \frac{N}{2}+1 \leq N-11$.

The remaining terms are of one of the following form 
$$\partial_{\ba L} Z^{I_1}g_{\ba L \q T}\bar{\partial} Z^{I_2}g_{\q T \q T}, \quad
\partial_{\ba L} Z^{I_1}g_{\q T \q T}\bar{\partial} Z^{I_2}g_{\ba L \q T}.$$
We estimate the first one, beginning with $\partial_q Z^{I_1}\wht g_{\ba L \q T} \bar{\partial} Z^{I_2}\wht g_{\q T \q T}$ with $I_1\leq I_2$ : it can be estimated in two different way, according that we use \eqref{bootg1} or \eqref{ks2bis}
\begin{equation}
\label{str9} \frac{\ep}{(1+|q|)(1+s)^{\frac{1}{2}-\rho}}|\bar{\partial}Z^I \wht g_{\q T \q T}|,
\end{equation}
\begin{equation}
\label{str9bis} \frac{\ep}{(1+|q|)^{\frac{1}{2}}(1+s)^{\frac{1}{2}}}|\bar{\partial}Z^I \wht g_{\q T \q T}|.
\end{equation}
The term $\partial_{\ba L} Z^{I_1}(g_{\mathfrak{b}})_{\ba L \q T}\bar{\partial} Z^{I_2}\wht g_{\q T \q T} $ gives the contributions \eqref{str3} and \eqref{str4}. The term $\partial_{\ba L}Z^{I_1}\wht g_{\ba L \q T} \bar{\partial} Z^{I_2}g_{\mathfrak{b}} $ gives the contribution
\begin{equation}
\label{str10}
\begin{split}
&\ch_{q>R}\min\left(\frac{\ep}{(1+|q|)^{\frac{3}{2}+\delta}(1+s)^{\frac{1}{2}-\rho}},
\frac{\ep}{(1+|q|)^{\frac{3}{2}+\delta-\sigma}(1+s)^{\frac{1}{2}}}\right)\\
&\left(s|\partial^2_s Z^I b|+
q|\partial^2_s \partial_\theta Z^I b|+
\frac{q}{s}|Z^I \partial_s \partial^2_\theta b|
+ |\partial_s \partial_\theta Z^I b|
+\frac{q}{s^2}|Z^I \partial^3_\theta b|\right),
\end{split}
\end{equation} 
where we have estimated $\partial_q Z^{I_1}\wht g_{L\q T}$ thanks to \eqref{iks5} or \eqref{iks6}, and we have estimated $\bar{\partial} Z^{I_2}g_{\mathfrak{b}}$ thanks to \eqref{calcsg}.
Thanks to the estimates \eqref{inftt} and \eqref{extinftt}
the term $\partial_{\ba L} Z^{I_1}g_{\q T \q T}\bar{\partial} Z^{I_2}g_{\ba L \q T}$ can be estimated by \eqref{str1}, \eqref{str3} and \eqref{str5}.

We now estimate $B_{\q T \q T}$.
First we note that in generalised wave coordinates the terms involving $\partial^2 Z^I \wht g$ are absent.
Moreover, thanks to Proposition \ref{prpcross},
we note that there are no terms involving $\partial^3_s \partial_\theta Z^I b$,
$\partial^3_s Z^I b$ or $\partial^4_\theta Z^I b$. 

The terms in $Z^{I_1}(g-m)\partial^2 Z^{I_2} g$, with $I_1\leq I_2<I$ give similar contributions than the terms in $A_{\q T \q T}$, noticing that
$$|\bar{\partial}\partial Z^{I_2} g|\lesssim \frac{1}{1+s}|\partial Z^{I_2+1} g|,\quad
|\partial^2 Z^{I_2}g|\lesssim \frac{1}{1+|q|}|\partial Z^{I_2+1}g|.$$
The terms of the form
$(\partial \bar{\partial} Z^{I_1}g) Z^{I_2}g$, with $I_1\leq I_2$ give contributions 
\begin{align}
\label{str11}& \frac{\ep}{(1+|q|)(1+s)^{\frac{3}{2}-\rho}}| Z^{I_2} \wht g|,\\
\label{str12} &+ \ch_{q>R} \frac{\ep}{(1+s)^2}| Z^{I_2} \wht g|,
\end{align} 
or \eqref{str7}.
The other terms are of the form
$\partial_q^2 Z^{I_1}g_{\q T \q T} Z^{I_2}g_{\q T \q T} $. They give contributions 
\begin{align}
\label{str13}\frac{\ep}{(1+s)(1+|q|)^{\frac{3}{2}-\rho}}|Z^{I_2}\wht g_{\q T \q T}|,\\
\label{str14}\ch_{q>0}\frac{\ep}{(1+s)(1+|q|)}|Z^{I_2}\wht g_{\q T \q T}|,
\end{align}
or \eqref{str5}.
We now estimate the terms involving $G$ : they are
$$Z^I(G^\alpha \partial_\alpha  g_{\q T \q T}), \quad Z^I (g_{\q T \alpha} \partial_{\q T} G^\alpha).$$
They give contributions
\begin{align}
\label{str15}&\frac{1}{1+s}|Z^I G|+|\bar{\partial}Z^I G|\\
\label{str16}&\frac{\ep}{(1+s)(1+|q|)^\frac{1}{2}}\left( \bar{\partial} Z^I \wht g_{\q T \q T}+\frac{1}{1+s}Z^I \wht g_{\q T \q T}\right),
\end{align}
where we have used the estimates \eqref{estgu} and \eqref{estGinf} to estimate $Z^{I_1}G$ for $I_1\leq \frac{N}{2}$.
\paragraph{The case $L\ba L$}
We now turn to $A_{L \ba L}$. The new terms are those who contain three times a $\ba L$ vector field : they must also contain three times a $L$ vector field. They are of the form
$$Z^I(\partial_{\ba L}g_{\ba L L}\partial_{\ba L} g_{LL}),\quad
Z^I (\partial_{\ba L}g_{\ba L L}\partial_{L}g_{\ba L L} ), 
\quad Z^I(\partial_{\ba L}g_{\ba L \ba L}\partial_L g_{LL})
\quad Z^I(\partial_{ L }g_{\ba L \ba L}\partial_L g_{L \ba L}).$$
We treat the first term. 
Thanks to Proposition \ref{estLL}, $Z^I(\partial_{\ba L}\wht g_{\ba L L}\partial_{\ba L} \wht g_{LL})$ is equivalent to
$$Z^I(\partial \wht g_1 \bar{\partial} \wht g_1),$$
and consequently gives \eqref{str6}, or either \eqref{str9} or \eqref{str9bis}.
The term $\partial_{\ba L}Z^{I_1} (g_{\mathfrak{b}})_{\ba L L}\partial_{\ba L} Z^{I_2}g_{LL}$ with $I_1\leq I_2$ gives \eqref{str3}, 
$\partial_{\ba L}Z^{I_1} (g_{\mathfrak{b}})_{\ba L L}\partial_{\ba L} Z^{I_2}g_{LL}$ with $I_2\leq I_1$ gives
\eqref{str7}. The term $\partial_{\ba L}Z^{I_1} (\wht g)_{\ba L L}\partial_{\ba L} Z^{I_2}(g_{\mathfrak{b}})_{LL}$ with $I_1\leq I_2$ gives,  thanks to the estimate \eqref{calcsigma1} on $\sigma^1_{LL}$
\begin{equation}
\label{str17} \ch_{q>R}
\frac{\ep}{(1+|q|)^{\frac{3}{2}+\delta}s^{\frac{1}{2}-\rho}}\frac{1}{1+|q|}\left(|\partial_s Z^I b|+\frac{q}{s}|\partial_s \partial_\theta Z^I b|+\frac{q^2}{s^2}|\partial_\theta Z^I b|\right) 
\end{equation}
and the term $\partial_{\ba L}Z^{I_1} \wht g_{\ba L L}\partial_{\ba L} Z^{I_2}(g_{\mathfrak{b}})_{LL}$ with $I_2\leq I_1$ gives
\eqref{str8}
The second term give contributions similar to the first term, except for
$\partial_{\ba L} \wht g_{L \ba L}\partial_L (g_{\mathfrak{b}})_{LL}$ which give \eqref{str10}.
We treat the third term. The terms $\bar{\partial} Z^{I_1} g_{LL}\partial_q Z^{I_2}g_{\ba L \ba L}$ with $I_1\leq I_2$ give contributions \eqref{str6}, \eqref{str7} and \eqref{str8}. The term $\partial_L Z^{I_1} g_{LL} \partial_{\ba L } Z^{I_2}g_{\ba L \ba L}$, with $I_1\leq I_2<I$ is estimated by
\begin{equation}
\label{str19} \frac{1}{(1+s)^{\frac{3}{2}-\rho}(1+|q|)}|Z^{I} g_{LL}|.
\end{equation}
 The term $\partial_{\ba L} \wht g_{\ba L \ba L} \partial_L Z^I \wht g_{LL}$ is in ${}^IQ_{L \ba L}$, and it can be estimated by
 \begin{equation}
 \label{str19bis}\frac{\ep}{(1+s)^{\frac{1}{2}-\rho}(1+|q|)}|\bar{\partial}Z^I \wht g_{LL}|.
 \end{equation} 
 The term $\partial_{\ba L} Z^{I_1} \wht g_{\ba L \ba L} \partial_L Z^{I_2} (g_{\mathfrak{b}})_{LL}$ can be (roughly) estimated by \eqref{str10}, thanks to the estimate \eqref{calcsigma1} on $\sigma^1_{LL}$.
The forth term give contributions \eqref{str6}, \eqref{str7} and \eqref{str8}.
 We turn to $B_{L \ba L}$. The new terms are of the form
 $$\partial_{\ba L \ba L} Z^{I_1} g_{\ba L L} Z^{I_2}g_{LL}, \quad \partial_{\ba L \ba L} Z^{I_1} g_{L L} Z^{I_2} g_{L \ba L},$$
 with $I_1\leq I_2$.
 The term $ \partial_{\ba L \ba L} Z^{I_1} \wht g_{\ba L L} Z^{I_2}\wht g_{LL}$ gives contributions either
 \begin{equation}
 \label{str20}\frac{\ep}{(1+|q|)^2(1+s)^{\frac{1}{2}-\rho}}|Z^I \wht g_{LL}|
 \end{equation}
or 
 \begin{equation}
 \label{str20bis}\frac{\ep}{(1+|q|)^{\frac{3}{2}}(1+s)^{\frac{1}{2}}}|Z^I \wht g_{LL}|
 \end{equation}
according that we estimate $\wht g_{\ba L L}$ with \eqref{bootg1} or \eqref{ks2}. The crossed terms between $\wht g$ and $g_{\mathfrak{b}}$ give contributions \eqref{str14} or \eqref{str17}.
The second term give contributions \eqref{str11}, \eqref{str12} or \eqref{str7}.

We now estimate the terms involving $G$ :
$$Z^I(G^{\q T}\partial_{\q T} g_{L \ba L}),\quad Z^I(g_{\ba L \q T}\partial_L G^{\q T}), \quad Z^I(g_{L \q T}\partial_{\ba L}G^{\q T}).$$
They give contributions \eqref{str1}, \eqref{str15} or \eqref{str16}, where we note that
$$|\partial_q Z^IG|\lesssim \frac{1}{1+|q|}|Z^IG|+|\bar{\partial}Z^I G|.$$

\paragraph{The case $U\ba L$}
The new terms contain three times the vector field $\ba L$, so they contain twice the vector field $L$ and once the vector field $U$. The terms containing two derivatives $\partial_{\ba L}$ are of the form
$$Z^I(\partial_{\ba L} g_{LU}\partial_{\ba L}g_{\ba LL}), \quad
Z^I(\partial_{\ba L}g_{LL}\partial_{\ba L}g_{U \ba L}).$$
The second term can be estimated in the same way as 
$Z^I(\partial_{\ba L} g_{L  L}\partial_{\ba L } g_{\ba L L})$ in the case $L\ba L$.
We now treat the first term. We consider $Z^I(\partial_{\ba L} g_{LU}\partial_{\ba L}\wht g_{\ba LL})$. We decompose it in
$$Z^I(\partial_{\ba L} (\wht g_{LU}+\sigma^0_{LU})\partial_{\ba L}\wht g_{\ba LL})+Z^I(
\partial_{\ba L} (\sigma^1_{LU})\partial_{\ba L}\wht g_{\ba LL}).$$
Thanks to the Proposition \ref{estLU} the first term is equivalent to
$Z^I(\bar{\partial}\wht g\partial_{\ba L}\wht g_{\ba LL})$ and gives contributions \eqref{str6} or either \eqref{str9} or \eqref{str9bis}.
The second terms give contributions \eqref{str17} or \eqref{str8}.
The term
$Z^I(\partial_{\ba L} \wht g_{LU}\partial_{\ba L}(g_{\mathfrak{b}})_{\ba LL})$
 gives contribution \eqref{str3}, \eqref{str4} and \eqref{str5}.
The new terms involving a good derivative are the following (with $I_1\leq I_2$)
$$\partial_{\ba L} Z^{I_1}\wht g_{\ba L \ba L}\partial_{\q T} Z^{I_2}g_{\q T \q T}, \quad
\partial_{\ba L} Z^{I_1}\wht g_{\ba L \q T}\partial_{\q T}Z^{I_2}g_{\ba L \q T }, \quad 
\partial_{\ba L} Z^{I_1} \wht g_{\q T \q T}\partial_{\q T}Z^{I_2}g_{\ba L \ba L}.$$
The third term can be bounded by \eqref{str1}. The first gives the contribution \eqref{str19bis}.
The second term consist of
$$\partial_{\ba L} \wht g_{\ba L \ba L}\partial_{U} \wht g_{LL}, \quad \partial_{\ba L}\wht g_{\ba L \ba L}\partial_{L} (\wht g_{UL}+\sigma^0_{UL}), \quad \partial_{\ba L}\wht g_{\ba L \ba L}\bar{\partial}\sigma^1_{\q T \q T}.$$
The first two are in ${}^IQ_{U\ba L}$. They can be estimated by
\begin{equation}
\label{str24bis}
\ep \min \left(\frac{1}{(1+|q|)(1+s)^{\frac{1}{2}-\rho}},\frac{1}{\sqrt{1+s}\sqrt{1+|q|}}\right)|\bar{\partial}Z^I \wht g|,
\end{equation}
\begin{equation}
\label{str25}
\begin{split}
&\ch_{q>R}\frac{\ep}{(1+|q|)^{\frac{3}{2}+\delta}(1+s)^{\frac{1}{2}-\rho}}\\
&\left(s|\partial^2_s Z^I b|+
q|\partial^2_s \partial_\theta Z^I b|+
\frac{q}{s}|Z^I \partial_s \partial^2_\theta b|
+ |\partial_s \partial_\theta Z^I b|
+\frac{q}{s^2}|Z^I \partial^3_\theta b|\right).
\end{split}
\end{equation} 
The third can be estimated (loosely) by \eqref{str10}.
We turn to $B_{\ba L U}$. The new terms are of the form
$$\partial_{\ba L}\partial_{\ba L} Z^{I_1}g_{U\ba L} Z^{I_2}g_{L L}, \quad
\partial_{\ba L} \partial_{\ba L}Z^{I_1}(g_{\mathfrak{b}})_{\ba L L}Z^{I_2}\wht g_{UL}, \quad
\partial_{\ba L} \partial_{\ba L} Z^{I_1}(g_{\mathfrak{b}})_{U L} Z^{I_2}\wht g_{\ba L L}.$$
The first two terms can be estimated by \eqref{str20} or \eqref{str20bis}, \eqref{str17} and \eqref{str14}. 
The last term would not have enough decay, but it is actually not present : such a term could only come from
$g_{UU}\partial_{\ba L} H^U,$
and more precisely from $\partial_{\ba L} \wht G^U$. However, according to the definition of $\wht G^U$, this term do not contain terms in $\partial_{\ba L}g_{\mathfrak{b}}$.

We now look at the terms involving $G$. They are of the form
$$Z^I(G^{\q T}\partial_{\q T} g_{U \ba L}), \quad Z^I( g_{\q T U}\partial_{\ba L}G^{\q T}), \quad Z^I(g_{\q T \ba L}\partial_U G^{\q T}).$$
Let's look at the order one term in $G$. They are of the form
$$Z^I(\partial_{\ba L}G^U), \quad Z^I(\partial_U G^L).$$
The first term has been introduced to compensate the bad component $R^0_{U\ba L}$ of the Ricci tensor of $g_{\mathfrak{b}}$. The second term gives a contribution which is 
\eqref{str15}. 
The quadratic terms give the same contributions as in the $L\ba L$ case.

\paragraph{The case $\ba L\ba L$}
We look at the term involving two bad derivatives. They are of the form
$$Z^I(\partial_{\ba L} g_{\ba L \ba L}\partial_{\ba L}g_{LL}), \quad
Z^I(\partial_{\ba L} g_{\ba L L}\partial_{\ba L }g_{L \ba L}), \quad Z^I(\partial_{\ba L}g_{UU}\partial_{\ba L}g_{L\ba L}),\quad Z^I(\partial_{\ba L}g_{UL}\partial_{\ba L}g_{U\ba L}).$$
Using Proposition \eqref{estLL}, the first term 
can be estimated by
$$|\partial_{\ba L}Z^{I_1}g_{\ba L \ba L}|\left(|\bar{\partial}Z^{I_2} g_{\q T \q T}|+\frac{1}{1+s}|Z^{I_2}\wht g|+|Z^{I_2}\sigma^1_{LL}|\right)$$
so this gives contribution \eqref{str9}, \eqref{str12} and \eqref{str17}. The term $Z^I(\partial_{\ba L} g_{L \ba L}\partial_{\ba L} g_{L \ba L})$ would come from $P_{\ba L \ba L}$. We can check that it is not present. 
The third term is in ${}^IQ_{\ba L \ba L}$.
Thanks to the wave coordinate condition,  it is composed of
$$ Z^I(G^L \partial_{\ba L} g_{L \ba L}) \quad Z^I(\bar{\partial} \wht g \partial_{\ba L}g_{L \ba L}) \quad Z^I(\partial_{\ba L} (g_{b})_{UU}\partial_{\ba L} \wht g_{L \ba L}).$$
They give contributions
\begin{align}
\label{str31} &\min\left(\frac{\ep^2}{(1+s)^\frac{3}{2}},\frac{\ep}{(1+s)(1+|q|)^{1-4\rho}}\right)|\partial \wht g_1|, \\
\label{str33} &\ep \min\left(\frac{1}{(1+s)^{\frac{1}{2}-\rho}(1+|q|)}, \frac{1}{\sqrt{1+s}\sqrt{1+|q|}}\right) (|Z^I G|+|\bar{\partial}Z^I \wht g|),\\
\label{str32} & \ch_{q>R} \min\left(\frac{\ep}{(1+s)^{\frac{1}{2}-\rho}(1+|q|)^{\frac{3}{2}+\delta}},\frac{\ep}{(1+s)^{\frac{1}{2}}(1+|q|)^{\frac{3}{2}+\delta-\sigma}}\right) \frac{1}{1+|q|} \left(|\partial_s Z^I b|
+\frac{q}{s}|\partial_s \partial_\theta Z^I b|+\frac{q}{s}|Z^I \partial^2_\theta b|\right),\\
\label{str34} &\ch_{q>0}\frac{\ep}{1+s}|\partial Z^I \wht g_1|
\end{align}
or  \eqref{str24bis}. 
The fourth term is equivalent to the term $Z^{I}(\partial_{\ba L}g_{UL}\partial_{\ba L}g_{L\ba L})$ which has already been treated in the case $U\ba L$.

We go to $B_{\ba L \ba L}$. The new terms are of the form
$$\partial_{\ba L }\partial_{\ba L}Z^{I_1}g_{\ba L \ba L} Z^{I_2}g_{LL}, \quad \partial_{\ba L}\partial_{\ba L} Z^{I_1}(g_{\mathfrak{b}})_{L \ba L} Z^{I_2}g_{L \ba L}, \quad \partial_{\ba L}\partial_{\ba L} Z^{I_1}(g_{\mathfrak{b}})_{UU} Z^{I_2}g_{L \ba L}.$$
The first one is in ${}^IQ_{\ba L \ba L}$ and gives a contribution \eqref{str20} or \eqref{str17}. 
The second term would come from
$\wht g_{L \ba L}\partial_{\ba L} F^L,$
but (see the analysis of the wave coordinate condition), in $F^L$, there is only one order one term involving a derivative $\ba L$ which is $\partial_{\ba L} g_{UU}$, consequently 
the second term is not present. The third one is in $Q_{\ba L \ba L}$ and give a contribution in
\eqref{str32} and \eqref{str34}.
The terms with $G$ consist in
$$Z^I(G^{\q T}\partial_{\q T} g_{\ba L \ba L}), \quad Z^I(g_{\ba L \q T}\partial_{\ba L} G^{\q T}),$$
We calculate $\partial_q G^L$ thanks to \eqref{dqG}
$$\partial_{q} G^L= \frac{1}{r}G^L + 2(\partial_q \phi)^2 +\frac{\partial^2_q(q\chi(q))}{r}h(\theta,2t)+\partial_s G^L.$$
The term $2(\partial_q \phi)^2 +\frac{\partial^2_q(q\chi(q))}{r}h$ in $\partial_q G^L$ is here to compensate the term $(\partial_q \phi)^2$ which comes from the right-hand side of \eqref{sys} and the bad component $R^0_{\ba L \ba L}$ of the Ricci tensor of $g_{\mathfrak{b}}$. Let us note that this term is actually
$\frac{\partial^2_q(q\chi(q))}{r}h(\theta,s)$. However we have
$$\frac{\partial^2_q(q\chi(q))}{r}(h(\theta,s)-h(\theta,2t))=\frac{\partial^2_q(q\chi(q))}{r}O(\partial_s h),$$
so we can neglect this term, compared to the terms which are already present in $R^1$.
The terms which remain give contributions \eqref{str31}, \eqref{str3}, and \eqref{str33}.
\end{proof}

We now give a similar result for $\phi$.

\begin{prp}\label{strphi}
We have
\begin{align*}
|\Box_g Z^I \phi|&\lesssim \frac{\ep}{\sqrt{1+s}(1+|q|)^{\frac{5}{2}-4\rho}}| Z^I \wht g_{LL}|
+\frac{\ep}{(1+s)^\frac{3}{2}(1+|q|)^{\frac{3}{2}-4\rho}}| Z^I \wht g_1|
+\frac{\ep}{(1+s)^\frac{5}{2}(1+|q|)^{\frac{1}{2}-4\rho}}| Z^I \wht g|
\\
&+\frac{\ep}{(1+s)^{\frac{3}{2}-\rho}}|\partial Z^I \phi|
+\frac{\ep}{(1+s)^\frac{3}{2}(1+|q|)^{\frac{1}{2}-4\rho}}| Z^I G^L|
+\frac{\ep}{(1+s)\sqrt{1+|q|}}|\bar{\partial}Z^I \phi|.
\end{align*}
\end{prp}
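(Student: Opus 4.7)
Since $\phi$ solves $\Box_g\phi=0$, I would start from $Z^I(\Box_g\phi)=0$ and expand using $\Box_g = \Box + (g-m)^{\alpha\beta}\partial_\alpha\partial_\beta - H^\rho\partial_\rho$ together with the Minkowski commutations $[\Box, Z]=c(Z)\Box$ and $[Z,\partial]\sim \partial$ (which also yields $[Z,\bar\partial]\sim \bar\partial$). This expresses $\Box_g Z^I\phi$ as a finite sum of bilinear expressions of two types, $(Z^{I_1}(g-m)^{\alpha\beta})(\partial_\alpha\partial_\beta Z^{I_2}\phi)$ and $(Z^{I_1}H^\rho)(\partial_\rho Z^{I_2}\phi)$ with $I_1+I_2\leq I$, plus commutator remainders of lower order that will be collected into the final summand. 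At each product at least one of the two indices is $\leq N/2 \leq N-9$, so the corresponding factor can be bounded in $L^\infty$ via the bootstrap \eqref{bootphi1}--\eqref{bootg2}, Corollaries \ref{estks}--\ref{estksg}, and the weighted commutator bound \eqref{important}.

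For the quadratic piece I would decompose $g^{\alpha\beta}\partial_\alpha\partial_\beta$ in the null frame $\{L,\ba L,U\}$. Every coefficient except $g^{\ba L\ba L}$ is paired with at least one good derivative $\bar\partial$ acting on $\phi$, which via \eqref{important} costs an extra factor $(1+s)^{-1}$. Placing $\phi$ in $L^\infty$ with \eqref{bootphi1} then produces the second and third summands, in which $|Z^I \wht g_1|$ and $|Z^I\wht g|$ appear as the higher-index factor. The remaining bad coefficient $g^{\ba L\ba L}=-\tfrac{1}{4} g_{LL}+O(\wht g_{\q T\q T})\cdot O(g)$ is treated separately: for $I_1=I$, I pair $|Z^I g_{LL}|$ with $|\partial_q^2\phi|\lesssim \ep/(\sqrt{1+s}(1+|q|)^{5/2-4\rho})$, producing the first summand; for $I_1<I$, $Z^{I_1}g_{LL}$ is bounded in $L^\infty$ via \eqref{wintll1}--\eqref{wintll3} and the remaining $|\partial Z^{I_2}\phi|$ is absorbed in the fourth summand.

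For the transport piece $H^\rho\partial_\rho Z^I\phi$, the generalized wave gauge \eqref{gwl} and the defining relations \eqref{gu}--\eqref{gbal} ensure that the component of $H$ multiplying the bad derivative $\partial_{\ba L}\phi$ carries only improved-decay contributions from $F_b$, $\wht G$, and from $G^L$ itself. The surviving components then contract with good derivatives $\partial_L\phi$ or $\partial_U\phi$; keeping $G^L$ explicit at top order and applying $|\partial\phi|\lesssim \ep/((1+s)^{3/2}(1+|q|)^{1/2-4\rho})$ gives the fifth summand, while the small-index factors of $H$ bounded in $L^\infty$ through \eqref{estGinf}, \eqref{estgu} and \eqref{estg0} yield the fourth summand $\frac{\ep}{(1+s)^{3/2-\rho}}|\partial Z^I\phi|$. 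Residual commutator contributions of the form $\bar\partial Z^I\phi$ with coefficient bounded by $\ep/((1+s)\sqrt{1+|q|})$ land in the last summand, after a final application of Corollary \ref{estks}.

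The only genuinely subtle point is the retention of the bad quadratic contribution $g_{LL}\partial_q^2\phi$: no $q$-decay is available at the $L^\infty$ level, so the only recourse is to keep $|Z^I g_{LL}|$ explicit on the right-hand side of the claimed inequality, leaving its control to the $L^2$ level via the wave-coordinate identity of Proposition \ref{estLL} and the improved estimates \eqref{wl2dll}--\eqref{hwl2dll} of Proposition \ref{l2ll}. Apart from this, the argument is a direct and systematic counting in the null frame, parallel to the proof of Proposition \ref{prpstr} but significantly simpler because the right-hand side of the $\phi$-equation vanishes identically.
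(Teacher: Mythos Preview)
Your overall scheme—commute $\Box_g$ with $Z^I$, decompose in the null frame, and keep the $g_{LL}\partial_q^2\phi$ contribution explicit—is the same as the paper's. However, you miss the one observation that actually makes the argument work cleanly and that explains why only $\wht g$ and $G^L$ appear on the right-hand side: the support of $\phi$ and the support of $g_{\mathfrak b}-m$ are \emph{disjoint}. By Proposition~\ref{cone}, $\phi$ is supported in $q<R+\tfrac12$, while $g_{\mathfrak b}-m$, and hence $F_b$, $\wht G$ and $G^U$, are all supported in $q\ge R+\tfrac12$ (since $\chi(q)=0$ there). Therefore on the support of $\phi$ one has $g-m=\wht g$ and $H^\rho\partial_\rho\phi$ reduces to the $G^L$ contribution alone.

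Without this observation, two parts of your outline do not go through as written. First, your bilinear terms $Z^{I_1}(g-m)^{\alpha\beta}\partial_\alpha\partial_\beta Z^{I_2}\phi$ include $Z^{I_1}(g_{\mathfrak b}-m)$, which you never account for; the statement has no $g_{\mathfrak b}$ on the right-hand side precisely because these products vanish identically, not because of any decay. Second, your claim that $F_b$ and $\wht G$ contribute only ``improved-decay'' pieces to the transport term is not correct as stated—$F_b$ and $\wht G$ involve derivatives of $g_{\mathfrak b}$ and do not carry extra decay in the region where they live; the point is rather that they are identically zero on $\operatorname{supp}\phi$. Once you insert this support argument at the outset, the rest of your counting is exactly the paper's short proof.
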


\begin{proof}
	First we note that since $\phi$ is supported in $q<R+\frac{1}{2}$, the support of $\phi$ and the support of $g_{\mathfrak{b}}-m$ are disjoint, consequently the support of $\phi$ and the support of $F_b$, $\wht G$ and $G^U$ are also disjoint. Therefore we have
	$$\Box_g \phi = g^{\alpha\beta}\partial_\alpha \partial_\beta \phi + 2G^L\partial_{s}\phi.$$
	Consequently $\Box_g Z^I \phi$ is composed of terms
$$Z^{I_1}\wht g_{LL}\partial^2_q Z^{I_2}\phi, \quad Z^{I_1}\wht g_{\q T \q V}\partial_q \bar{\partial} Z^{I_2}\phi,
\quad Z^{I_1}\wht g_{\ba L \ba L}\partial^2_s Z^{I_2}\phi, \quad Z^{I_1}G^L \partial_s Z^{I_2}\phi,$$
with $I_1+I_2= I$ and $I_2<I$.
Therefore \eqref{bootphi1}, \eqref{bootg1} and \eqref{estGinf} yield the estimate of Proposition \ref{strphi}.
\end{proof}

\section{Angle and linear momentum}\label{angle}
The aim of this section is to prove Proposition \ref{prpangle}. Roughly speaking, the estimates of Proposition \ref{prpangle} are obtained by "integrating the constraint equations".
For this, we separate in $R_{\mu \nu}$ the linear terms in $g$ and $G$ from the quadratic terms, which are the same as the quadratic terms in $\Box g_{\mu \nu}$.
We denote by $\wht \Gamma$ the part of the Christoffel symbol of $g$ which involve derivatives of $\wht g$. We note $O((\partial g)^2)$ the quadratic terms : they are determined in Proposition \ref{prpstr}. 
\begin{align*}
R_{00} &= (R_b)_{00} + \partial_0 \wht \Gamma^0_{00}+\partial_i   \wht \Gamma^i_{00}
-\partial_0 \wht \Gamma^0_{00}-\partial_0   \wht \Gamma^i_{0i}+O((\partial g)^2)\\
&=(R_b)_{00} +\partial_i   \wht \Gamma^i_{00}
-\partial_0   \wht \Gamma^i_{0i}+O((\partial g)^2),\\
R_{ii}&= (R_b)_{ii} + \partial_0 \wht \Gamma^0_{ii}+\partial_j   \wht \Gamma^j_{ii}
-\partial_i \wht \Gamma^0_{i0}-\partial_i   \wht \Gamma^j_{ij}+O((\partial g)^2).
\end{align*}
We note that
$$-\partial_0   \wht \Gamma^i_{0i}+ \partial_0 \wht \Gamma^0_{ii}=
-\partial_0\partial_i g_{i0}+O((\partial g)^2).$$
Consequently 
\begin{equation}
\label{trace}
2((\partial_t \phi)^2+|\nabla \phi|^2)=(R_b)_{00}+(R_b)_{11}+(R_b)_{22}
+\partial_i   \wht \Gamma^i_{00}+\partial_j   \wht \Gamma^j_{ii}
-\partial_i \wht \Gamma^0_{i0}-\partial_i   \wht \Gamma^j_{ij}
-\partial_i\partial_0 g_{i0}+O((\partial g)^2).
\end{equation}
Moreover we have
$$(R_b)_{00}+(R_b)_{11}+(R_b)_{22}=\frac{2}{r}\partial^2_q (q\chi(q))h(\theta,s)+O\left(\frac{\ep^2}{r^2}\ch_{R\leq q \leq R+1}\right).$$
We note that when $R\leq q \leq R+1$ we can write $h(\theta,s)=h(\theta,2t)+O\left(\frac{\ep^2}{(1+s)^\frac{3}{2}}\right).$
Consequently, if we integrate \eqref{trace} over $\m R^2$ we obtain
$$\int (\partial_t \phi)^2+|\nabla \phi|^2 =\int h(\theta,2t)d\theta +O\left(\frac{\ep^2}{1+t}\right) + \int O((\partial g)^2).$$
We calculate
$$R_{0i}= (R_b)_{0i} + \partial_0  \wht \Gamma^0_{0i} +\partial_j \wht \Gamma^j_{0i}-\partial_0  \wht \Gamma^0_{0i} -\partial_0 \wht \Gamma^j_{ji}+ O((\partial g)^2),$$
and consequently
\begin{equation}
\label{roi} 2\partial_t \phi \partial_1 \phi = -\frac{\cos(\theta)}{r}\partial^2_q(q \chi(q))h(\theta,s) +\partial^2_q \chi(q) \sigma^0_{UL}\sin(\theta) +O\left(\frac{\ep^2}{r^2}\ch_{R\leq q \leq R+1}\right)+\partial_j \wht \Gamma^j_{01} -\partial_0\partial_1 \wht g_{jj} +O((\partial g)^2).
\end{equation}
By integrating \eqref{roi} over $\m R^2$ we obtain
$$ 2\int \partial_t \phi \partial_1 \phi = -\int \cos(\theta)h(\theta,2t)d\theta  +O\left(\frac{\ep^2}{1+t}\right) + \int O((\partial g)^2).$$
Moreover, thanks to Proposition \ref{prpstr} the quadratic terms in $\Box g_{\mu \nu}$ can be bounded by $$\frac{\ep^2}{(1+s)^\frac{3}{2}(1+|q|)^{\frac{3}{2}-2\rho}},$$
(see also the proof of Proposition \ref{linfN} for more details). Consequently we have
$$\Delta_h = O\left(\frac{\ep^2}{(1+s)^{\frac{1}{2}}}\right).$$

\section{$L^\infty$ estimates}\label{seclinf}

\subsection{Estimate for $I\leq N-9$}

\begin{prp}\label{linfN}
We have the estimates for for $I \leq N-9$ 
\begin{align*}
|Z^I \wht g| &\leq \frac{C_0\ep+C\ep^2 }{(1+s)^{\frac{1}{2}-\rho}},\\
|Z^I \phi| &\leq \frac{C_0\ep+C\ep^2 }{\sqrt{1+s}(1+|q|)^{\frac{1}{2}-4\rho}}.
\end{align*}
\end{prp}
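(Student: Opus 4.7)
The plan is to combine the decay of free waves from Corollary~\ref{cordec} with the $L^\infty$--$L^\infty$ estimate of Proposition~\ref{inhom}. For each $I\leq N-9$, I would split $Z^I\phi=u^I_0+u^I_1$ and $Z^I\wht g_{\mu\nu}=v^{I,0}_{\mu\nu}+v^{I,1}_{\mu\nu}$, where $u^I_0,v^{I,0}_{\mu\nu}$ solve $\Box u=0$ with the initial data at $t=0$ and $u^I_1,v^{I,1}_{\mu\nu}$ solve $\Box u=F$ with vanishing initial data and source $F=\Box Z^I\phi$ or $F=\Box Z^I\wht g_{\mu\nu}$. Since $(\phi_0,\phi_1)$ is compactly supported in $B(0,R)$, Corollary~\ref{cordec} applied with arbitrary $\mu$ gives $|u^I_0|\lesssim C_0\ep\,(1+s)^{-1/2}(1+|q|)^{-1/2}$, which is stronger than what is claimed. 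For $(\wht h_0,\wht h_1)\in H^{N+1}_\delta\times H^N_{\delta+1}$ with $\delta>1/2$, the weighted Sobolev embedding of Proposition~\ref{holder} combined with Corollary~\ref{cordec} delivers $(1+s)^{-1/2}$ decay for $v^{I,0}_{\mu\nu}$; these linear contributions account for the $C_0\ep$ term in the conclusion.

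For the inhomogeneous parts I would apply Proposition~\ref{inhom} with parameters $\mu,\nu$ satisfying $\mu>3/2$, $\nu>1$, and $[2-\mu]_+\leq 4\rho$ for $\phi$ (respectively $\leq\rho$ for $\wht g$); it then suffices to verify
$$M_{\mu,\nu}(F)=\sup_{y,s}(1+s+|y|)^\mu(1+|s-|y||)^\nu|F(y,s)|\lesssim\ep^2$$
termwise. The source $F$ is expanded with Proposition~\ref{strphi} for $\phi$ and Proposition~\ref{prpstr} for $\wht g_{\mu\nu}$. Each factor is then estimated pointwise using the $L^\infty$ bootstrap assumptions \eqref{bootphi1}--\eqref{bootg2} when the number of vector fields is $\leq I\leq N-9$, Proposition~\ref{estks} and Corollary~\ref{estksg} when one or more derivatives are involved, the improved decay for the good components $\wht g_{LL},\wht g_{UL},\wht g_{UU}$ from Corollaries~\ref{iestll},~\ref{iestul} and~\ref{iestuu}, the pointwise bound \eqref{estGinf} on $G^L$ (itself relying on $|\Delta_h|\lesssim\ep^2/\sqrt{1+s}$ from Proposition~\ref{prpangle}), and the support property of $R^1$ from Proposition~\ref{riccigb} together with hypothesis~$\q H$ on $b$. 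Since every term is at least quadratic in $(\phi,\wht g, g_{\mathfrak{b}}-m,G)$, two factors of $\ep$ appear naturally, and the excess decay from the good components, the structural estimates, and the weight $(1+|q|)^\nu$ provides the margin needed.

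The main obstacle is the $\ba L\ba L$ component of $\wht g$: the quadratic source $(\partial_q\phi)^2$ together with the non-null cubic contributions in ${}^IQ_{\ba L\ba L}$ do not fit the scheme above directly and would propagate the $(1+s)^\rho$ logarithmic loss to all other components were $\wht g$ treated uniformly. Following Section~\ref{sketch}, I would instead bound $\wht g$ using the decomposition \eqref{dec2}, applying Kubo--Kubota to $\wht g_1$, whose source benefits from the extra $\bar\partial$-regularity described in Section~\ref{noncomu}, and treating the scalar $k$ separately through its equation \eqref{eqg2}, in which the cubic weak-null structure isolates the growth. In this way the $(1+s)^\rho$ loss is confined to the $\ba L\ba L$ component, the remaining components satisfy the desired improved $L^\infty$ bound, and the full $L^\infty$ estimate of Proposition~\ref{linfN} closes at level $N-9$.
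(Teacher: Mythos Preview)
Your overall scheme---split into homogeneous plus inhomogeneous, control the free part by Proposition~\ref{flat1}/Corollary~\ref{cordec}, and bound the source termwise via Propositions~\ref{strphi} and~\ref{prpstr}---is exactly what the paper does. Two points, however, deserve correction.

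First, your last paragraph is unnecessary and reflects a misreading of what needs to be shown. The target bound for $Z^I\wht g$ already carries the $(1+s)^\rho$ loss; you are not trying to avoid it at this stage. The paper does \emph{not} invoke the decomposition \eqref{dec2} in the proof of Proposition~\ref{linfN}. Instead it bounds $\Box Z^I\wht g$ directly for all components, including $\ba L\ba L$: Proposition~\ref{linfwhtg} shows that ${}^IQ_{\ba L\ba L}$ itself obeys $|{}^IQ_{\ba L\ba L}|\lesssim\ep^2(1+s)^{-3/2}(1+|q|)^{-3/2+4\rho+\sigma}$ in the interior (see \eqref{estqbal}), which is ample. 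The quadratic term $(\partial_q\phi)^2$ you worry about has already been cancelled by the gauge term $g_{\ba L L}\partial_{\ba L}G^L$ (this is the point of the construction in Section~\ref{secgen}, and is recorded at the end of the $\ba L\ba L$ case in the proof of Proposition~\ref{prpstr}); what survives in ${}^IQ_{\ba L\ba L}$ is cubic and decays fast enough. The $\wht g_1,k$ decomposition is used only later, in the $L^2$ energy estimates of Sections~\ref{sechigh}--\ref{seclow}, to keep the $t^\rho$ energy growth from contaminating the weights on the good components.

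Second, a technical gap: you cannot apply Proposition~\ref{inhom} to $\wht g$ with a single choice of $(\mu,\nu)$, because in the interior the best uniform source bound is $(1+s)^{-3/2}(1+|q|)^{-1}$, which just misses $\nu>1$. The paper resolves this with Lemma~\ref{linf2}: fix $t_0$, observe that for $t\le t_0$ one may trade powers between $s$ and $q$ (since $|q|\le t_0$ in the interior cone and $|q|\sim s$ outside it), apply Proposition~\ref{inhom} on $[0,t_0]$, and evaluate at $t=t_0$. This introduces the harmless $(1+t)^\rho$ factor and is precisely the source of the $\rho$ loss in the conclusion for $\wht g$. For $\phi$ the source decays like $(1+s)^{-2+3\rho}(1+|q|)^{-1}$, which the paper rewrites as $(1+s)^{-2+4\rho}(1+|q|)^{-1-\rho}$ so that Proposition~\ref{inhom} applies directly with $\mu=2-4\rho$, $\nu=1+\rho$.
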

This proposition is a consequence of the following propositions.

\begin{prp}\label{linfphi}
We have the estimate for $I \leq N-9$ and $q\leq R+1$
$$
|\Box Z^I \phi| \lesssim \frac{\ep^2 }{(1+s)^{2-3\rho}(1+ |q|)} , \; q<R+1,\\
$$
and $\Box Z^I \phi=0$ for $q>R+1$.
\end{prp}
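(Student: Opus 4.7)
The outside statement is immediate: by Proposition \ref{cone} the causal future of $B(0,R)$ is contained in $\{q<R+\tfrac12\}$, and since $\phi_0,\phi_1$ are supported in $B(0,R)$ the function $\phi$ vanishes identically in $\{q\ge R+\tfrac12\}$. Hence $Z^I\phi\equiv 0$ there, so $\Box Z^I\phi=0$ for $q>R+1$.

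For $q\le R+1$ I would relate $\Box$ to the already-controlled operator $\Box_g$. The identity
\begin{equation*}
\Box Z^I\phi=\Box_gZ^I\phi+(m^{\alpha\beta}-g^{\alpha\beta})\partial_\alpha\partial_\beta Z^I\phi+H^\alpha\partial_\alpha Z^I\phi
\end{equation*}
holds for any smooth function. Since we integrate only over the support of $\phi$, which lies in $\{q\le R+\tfrac12\}$ where $g_{\mathfrak b}=m$, all the ``background'' contributions disappear: $F_b^\alpha\equiv 0$, $\wht G^\alpha\equiv 0$, $G^U\equiv 0$, $G^{\ba L}\equiv 0$, so $m-g=-\wht g$ and $H^\alpha\partial_\alpha$ reduces to a multiple of $G^L\partial_s$. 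The two correction terms are thus of the same algebraic type as the ones already estimated in the proof of Proposition \ref{strphi} (i.e.\ $\wht g\cdot\partial^2 Z^I\phi$ and $G^L\partial_s Z^I\phi$), and give contributions with the same upper bound as the right-hand side of that proposition.

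It then remains to insert the bootstrap $L^\infty$ bounds and the Klainerman--Sobolev consequences of Section 4 into each of the terms appearing on the right-hand side of Proposition \ref{strphi}. Since $I\le N-9$, the auxiliary indices $I+1,I+2$ stay within the range where the bootstraps \eqref{bootphi1}--\eqref{bootg2} and Proposition \ref{estks} apply: $|Z^I\wht g_{LL}|$ is bounded by Corollary \ref{iestll}, $|Z^I\wht g_1|$ and $|Z^I\wht g|$ by Corollary \ref{estksg}, $|Z^IG^L|$ by \eqref{estGinf}, and $|\partial^k Z^I\phi|$ via \eqref{important} together with \eqref{bootphi1}--\eqref{bootphi2}.

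The only delicate point is the dominant contribution, which comes from $\wht g_{LL}\partial_q^2 Z^{I_2}\phi$ where $\partial_q$ is the bad derivative. Here one must combine the improved decay $|Z^{I_1}\wht g_{LL}|\lesssim \ep(1+|q|)/(1+s)^{3/2-\rho}$ supplied by the wave-coordinate condition (Corollary \ref{iestll}, using $I_1\le N-10$) with the pointwise bound $|\partial_q^2 Z^{I_2}\phi|\lesssim \ep/(\sqrt{1+s}(1+|q|)^{5/2-4\rho})$ coming from \eqref{important} and \eqref{bootphi1}; the extra $(1+|q|)$ factor from the wave-coordinate identity is exactly what is needed to produce the $(1+|q|)^{-1}$ on the right-hand side of the claim. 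All remaining terms (those involving $\wht g_{\q T\q V}$, $\wht g_{\ba L\ba L}$ or $G^L$) are strictly better: either the metric coefficient has faster decay (Corollaries \ref{iestul}, \ref{iestuu}), or one of the derivatives acting on $\phi$ is a good derivative $\bar\partial$ or $\partial_s$, gaining an extra factor $(1+s)^{-1}$. Summing the contributions produces the stated bound $\ep^2/((1+s)^{2-3\rho}(1+|q|))$, with the $3\rho$ loss coming entirely from the bootstrap $|Z^{N-8}\wht g_{LL}|$ used when $I=N-9$.
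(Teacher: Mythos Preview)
Your proof is correct and follows essentially the same route as the paper's. The paper invokes Proposition~\ref{strphi} directly for $|\Box Z^I\phi|$ without explicitly writing out the decomposition $\Box Z^I\phi=\Box_g Z^I\phi+(m^{\alpha\beta}-g^{\alpha\beta})\partial_\alpha\partial_\beta Z^I\phi+H^\alpha\partial_\alpha Z^I\phi$; your explicit treatment of the correction terms (and the observation that $F_b^\alpha,\wht G^\alpha,G^U$ all vanish on the support of $\phi$) is a helpful clarification. One minor slip: when $I=N-9$ the index $I_1$ on $Z^{I_1}\wht g_{LL}$ can reach $N-9$, not just $N-10$, which is why one must use \eqref{iwintll2} rather than \eqref{iwintll3}; this is precisely the source of the $3\rho$ loss you identify in your last sentence.
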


\begin{prp}\label{linfwhtg}
We have the estimate for $I \leq N-9$ and $q>R$ 
$$
|\Box Z^I \wht g| \lesssim \frac{\ep^2}{(1+s)^\frac{3}{2}(1+|q|)^{\frac{3}{2}-4\rho-\sigma}},$$
and for $q<R$
$$|\Box Z^I \wht g| \lesssim \frac{\ep^2}{(1+s)^\frac{3}{2}(1+|q|)}.$$
\end{prp}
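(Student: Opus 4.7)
My plan is to derive the estimate from Proposition \ref{prpstr} together with the identity
$$\Box Z^I \wht g = \Box_g Z^I g - (g^{\alpha\beta}-m^{\alpha\beta})\partial_\alpha\partial_\beta Z^I g + H^\rho\partial_\rho Z^I g - \Box Z^I g_{\mathfrak{b}},$$
which follows from $\Box_g = g^{\alpha\beta}\partial_\alpha\partial_\beta - H^\rho\partial_\rho$ in our gauge. The first term on the right is controlled by Proposition \ref{prpstr} as $Z^I R^1 + {}^I M + {}^I M^E + {}^I Q$. The two semilinear corrections $(g-m)\partial^2 Z^I g$ and $H^\rho \partial_\rho Z^I g$ are of the quadratic $B_{--}$ type (respectively of the $G$-type) already accounted for in the proof of Proposition \ref{prpstr}, so they satisfy the same bounds; the last piece $\Box Z^I g_{\mathfrak{b}}$ is estimated directly from the explicit form given by Proposition \ref{prpgb} and the hypotheses $\q H$, and in particular vanishes in $\{q<R\}$ since $\chi\equiv 0$ there.

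For the interior bound $q<R$, both $Z^I R^1$ (supported in $\{R\le q \le R+1\}$ by Proposition \ref{riccigb}) and ${}^I M^E$ (supported in $\{q>R\}$) vanish, so it suffices to estimate ${}^I M$ and ${}^I Q$. Each term in ${}^I M$ is a product in which one factor is placed in $L^\infty$ via Proposition \ref{estks} (applicable since $I+1\le N-8$ leaves ample room for both $\phi$ and $\wht g$), while the other is estimated by the pointwise bootstraps \eqref{bootphi1}--\eqref{bootg2}; a term-by-term verification gives the bound $\frac{\ep^2}{(1+s)^{3/2}(1+|q|)}$. The terms in ${}^I Q$ of components other than $\ba L\ba L$ contain $\partial_{\ba L}\wht g_{\ba L\ba L}$, controlled by \eqref{bootg1}, times a derivative of $\wht g_{LL}$ or of $\wht g_{UL}+\sigma^0_{UL}$ whose enhanced decay comes from the wave-coordinate estimates of Corollaries \ref{estll} and \ref{estul}; the $\ba L\ba L$ component contains the residual $\partial_q g_{UU}\partial_q\wht g_{\ba L L}+\wht g_{L\ba L}\partial_q G^L$ left after the cancellation of $(\partial_q\phi)^2$ against $\partial_q G^L$, which is bounded using \eqref{bootphi1}, the $L^\infty$ estimate \eqref{estGinf} on $G^L$, and the improved decay of $\wht g_{LL}$.

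For the exterior bound $q>R$, one additionally treats $Z^I R^1$, ${}^I M^E$ and $\Box Z^I g_{\mathfrak{b}}$. The first is supported in $\{R\le q\le R+1\}$ and controlled by the estimates of Proposition \ref{riccigb} together with the bounds on derivatives of $b$ in $\q H$; the second is handled using the exterior pointwise decay \eqref{ks6}, \eqref{iks6}, the structural estimates \eqref{calcg0}--\eqref{calcsg} for $g_{\mathfrak{b}}$, and again $\q H$. For $\Box Z^I g_{\mathfrak{b}}$, the explicit form in $(s,q,\theta)$ coordinates from Proposition \ref{prpgb} reduces the computation to products $\partial_s\partial_q$ or $r^{-2}\partial_\theta^2$ acting on the $b$-dependent coefficients, for which the bounds of $\q H$ directly yield the required decay. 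The weight $(1+|q|)^{3/2-4\rho-\sigma}$ is sharp and comes precisely from \eqref{iks6}, combined with the $(1+|q|)^{-1}$ gain of a $q$-derivative and the hypothesis $\delta-\sigma>1/2$.

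The main obstacle is twofold. First, closing the ${}^I Q_{\ba L\ba L}$ estimate requires invoking Proposition \ref{estLU} to rewrite $\partial_q g_{UU}$ as $-2G^L$ plus good-derivative and lower-order terms, and then exploiting the pointwise control of $G^L$ given by Proposition \ref{prpG} together with the estimate \eqref{estdelta} on $\Delta_h$; this cubic weak-null structure is essential since the naive quadratic estimate $(\partial_q\phi)^2$ would not decay fast enough. Second, in the exterior, the cross terms between $\wht g$ and the background $g_{\mathfrak{b}}$ involve high derivatives $\partial_s^2 b$, $\partial_s\partial_\theta^2 b$, and $\partial_\theta^3 b$ of the modulation; these would be out of reach without the tailored estimates in $\q H$ and the structural cancellation of Proposition \ref{prpcross}, which guarantees that no $\partial_s^3 b$, $\partial_s^3\partial_\theta b$ or $\partial_\theta^4 b$ ever appears in $\wht P_{\mu\nu}(\wht g,g_{\mathfrak{b}})$.
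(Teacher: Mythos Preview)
Your approach is essentially the same as the paper's. Both rely on the structure decomposition of Proposition~\ref{prpstr} and then estimate ${}^I M$, ${}^I M^E$, ${}^I Q$, and $Z^I R^1$ term by term using the pointwise bootstraps, the Klainerman--Sobolev bounds of Proposition~\ref{estks}, the wave-coordinate improvements of Corollaries~\ref{estll}--\ref{estuu}, and the $L^\infty$ control on $G^L$ from Proposition~\ref{prpG}.

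The one organizational difference is that you make explicit the identity relating $\Box Z^I \wht g$ to $\Box_g Z^I g$ and isolate $\Box Z^I g_{\mathfrak{b}}$ as a separate piece to be estimated directly from Proposition~\ref{prpgb} and the hypotheses~$\q H$. The paper instead works directly with $\Box_g Z^I \wht g$ (using Proposition~\ref{prpstr} somewhat loosely for $\wht g$ rather than $g$), so that the background contributions are already absorbed into $Z^I R^1$ (supported in the transition region, via \eqref{calcrb}) and the crossed terms in ${}^I M^E$. Your route is slightly more careful about the $\Box$ versus $\Box_g$ distinction, at the cost of having to estimate $\Box Z^I g_{\mathfrak{b}}$ in the full exterior region rather than only in the transition strip; the paper's route exploits the Ricci-flatness of $g_{\mathfrak{a}}$ for $q>R+1$ to avoid that computation. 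Both lead to the same final bound, and you correctly identify the two genuine obstacles: the cubic weak-null cancellation in ${}^I Q_{\ba L\ba L}$ (requiring Proposition~\ref{estLU} and the control of $\Delta_h$) and the high $b$-derivatives in the exterior crossed terms (requiring Proposition~\ref{prpcross}).
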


\begin{rk}The estimate in the region $q>R$ is not sharp for the decay in $q$.
\end{rk}

The following lemma is a direct consequence of the $L^\infty-L^\infty$ estimate and is proved at the end of the section.
\begin{lm}
\label{linf2}Let $\beta,\alpha \geq 0$, such that $\beta-\alpha \geq \rho>0$.
Let $u$ be such that
$$|\Box u|\lesssim \frac{1}{(1+s)^{\frac{3}{2}-\alpha}(1+|q|)}, \; for \;q<0
\quad |\Box u|\lesssim \frac{1}{(1+s)^{\frac{3}{2}-\alpha}(1+|q|)^{1+\beta}},
\; for \; q>0,$$
and $(u,\partial_t u)|_{t=0}=0.$
Then we have the estimate
$$|u|\lesssim \frac{(1+t)^{\alpha+\rho}}{\sqrt{1+s}}.$$
\end{lm}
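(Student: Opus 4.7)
The plan is to apply the $L^\infty$--$L^\infty$ estimate of Proposition \ref{inhom} after restricting the source in time. Fix the point $(t,x)$ at which we wish to control $u$. By Duhamel's formula and finite speed of propagation, $u(t,x)$ depends on $F:=\Box u$ only through its values at times $\tau \leq t$, so we may apply Proposition \ref{inhom} to the Cauchy problem with source $\tilde F = F \cdot \mathbf{1}_{\tau \leq t}$ in place of $F$ and obtain the same value $\tilde u(t,x) = u(t,x)$.

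The heart of the argument is to choose exponents $\mu > \tfrac{3}{2}$ and $\nu > 1$ in Proposition \ref{inhom} so that $M_{\mu,\nu}(\tilde F)\lesssim (1+t)^{\alpha+\rho}$. I would take $\mu = \tfrac{3}{2}+\tfrac{\rho}{2}$ and $\nu = 1 + \tfrac{\rho}{2}$ (assuming $\rho$ is small enough that $\mu < 2$) and split into the two regions. Using the short-hand $\sigma = \tau+|y|$ and $\rho' = |\tau-|y||$, in the interior ($|y|\leq \tau$) one has $\sigma\leq 2\tau \leq 2t$ and $\rho' \leq \tau\leq t$, giving
\begin{equation*}
(1+\sigma)^\mu (1+\rho')^\nu |\tilde F| \lesssim (1+\sigma)^{\mu-\frac{3}{2}+\alpha}(1+\rho')^{\nu-1} = (1+\sigma)^{\alpha+\frac{\rho}{2}}(1+\rho')^{\frac{\rho}{2}} \lesssim (1+t)^{\alpha+\rho}.
\end{equation*}
In the exterior ($|y|>\tau$), the restriction $\tau\leq t$ together with $\sigma = 2\tau + \rho'$ yields $1+\sigma \leq (1+2t)(1+\rho')$, so
\begin{equation*}
(1+\sigma)^\mu (1+\rho')^\nu |\tilde F| \lesssim (1+\sigma)^{\alpha+\frac{\rho}{2}}(1+\rho')^{\frac{\rho}{2}-\beta} \lesssim (1+t)^{\alpha+\frac{\rho}{2}}(1+\rho')^{\alpha+\rho-\beta} \lesssim (1+t)^{\alpha+\rho},
\end{equation*}
where the last step uses the hypothesis $\beta-\alpha \geq \rho$ to bound the $\rho'$-factor by $1$.

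Finally, Proposition \ref{inhom} with $\mu = \tfrac{3}{2}+\tfrac{\rho}{2}<2$ gives $[2-\mu]_+ = \tfrac{1}{2}-\tfrac{\rho}{2}$, so
\begin{equation*}
|u(t,x)|(1+s)^{\frac{1}{2}} \lesssim M_{\mu,\nu}(\tilde F)\,(1+|q|)^{-\frac{\rho}{2}} \lesssim (1+t)^{\alpha+\rho},
\end{equation*}
which is the claimed inequality. The only subtlety is the balance of exponents: the $(1+t)^\alpha$ growth is forced by the weak $(1+s)^{-\frac{3}{2}+\alpha}$ decay, while the extra $(1+t)^\rho$ loss comes from trading $\rho/2$ worth of decay between the two weights in order to meet the strict thresholds $\mu>\tfrac{3}{2}$ and $\nu>1$; the condition $\beta-\alpha\geq\rho$ is precisely what guarantees this trade is consistent in the exterior, since it is there that both weights must be simultaneously summable.
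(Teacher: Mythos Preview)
Your proof is correct and follows essentially the same approach as the paper's. Both arguments restrict to times $\tau\leq t$ (you do this by truncating the source via Duhamel and finite propagation speed; the paper fixes a final time $t_0$ and considers all $t\leq t_0$, then sets $t=t_0$ at the end), choose the same exponents $\mu=\tfrac{3}{2}+\tfrac{\rho}{2}$ and $\nu=1+\tfrac{\rho}{2}$, and split space into two regions to bound $M_{\mu,\nu}(\tilde F)$ by $(1+t)^{\alpha+\rho}$ before applying Proposition~\ref{inhom}. Your split by the sign of $q$ (i.e.\ $|y|\lessgtr\tau$) lines up directly with the two hypotheses on $\Box u$, whereas the paper splits at $r=2t$; both work, and the exterior step where $\beta-\alpha\geq\rho$ is used is the same in substance.
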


We first assume Proposition \ref{linfphi} and \ref{linfwhtg}, and prove Proposition \ref{linfN}.

\begin{proof}[Proof of Proposition \ref{linfN}]
We have 
$$|\Box Z^I \phi| \lesssim \frac{\ep^2 } {(1+s)^{2-3\rho}(1+ |q|)}
\lesssim \frac{\ep^2}{(1+s)^{2-4\rho}(1+|q|)^{1+\rho}},$$
therefore the $L^\infty-L^\infty$ estimate, combined with Proposition \ref{flat1} for the contribution of the initial data yields
$$|Z^I \phi|\leq \frac{C_0\ep}{\sqrt{1+s}\sqrt{1+|q|}}+\frac{C\ep^2}{\sqrt{1+s}(1+|q|)^{\frac{1}{2}-4\rho}},$$
where $C$ is a constant depending on $\rho$.

The estimate for $\wht g$ follows from Lemma \ref{linf2} with $\alpha= 0$, $\beta=\frac{3}{2}+\delta-\sigma$ combined with Proposition \ref{flat1} 
$$| Z^I \wht g|\leq \frac{C_0\ep}{\sqrt{1+s}\sqrt{1+|q|}}+\frac{C\ep^2}{(1+s)^{\frac{1}{2}-\rho}},$$
which concludes the proof of Proposition \ref{linfN}.
\end{proof}

\begin{proof}[Proof of Proposition \ref{linfphi}]
We have, thanks to Proposition \ref{strphi}
\begin{align*}
|\Box Z^I \phi|&\lesssim \frac{\ep}{\sqrt{1+s}(1+|q|)^{\frac{5}{2}-4\rho}}| Z^I \wht g_{LL}|
+\frac{\ep}{(1+s)^\frac{3}{2}(1+|q|)^{\frac{3}{2}-4\rho}}| Z^I \wht g|\\
&+\frac{\ep}{(1+s)^{\frac{3}{2}-\rho}}|\partial Z^{I+1} \phi|
+\frac{\ep}{(1+s)^\frac{3}{2}(1+|q|)^{\frac{1}{2}-4\rho}}| Z^I G^L|
+\frac{\ep}{(1+s)\sqrt{1+|q|}}|\bar{\partial}Z^I \phi|\\
&\lesssim  \frac{\ep}{\sqrt{1+s}(1+|q|)^{\frac{3}{2}-4\rho}}| Z^I \wht g_{LL}|
+\frac{\ep}{(1+s)^\frac{3}{2}(1+|q|)^{\frac{1}{2}-4\rho}}| Z^I \wht g|\\
&+\frac{\ep}{(1+s)^{\frac{3}{2}-\rho}(1+|q|)}|Z^{I+2} \phi|
+\frac{\ep}{(1+s)^\frac{3}{2}(1+|q|)^{\frac{1}{2}-4\rho}}| Z^I G^L|
+\frac{\ep}{(1+s)^2(1+|q|)^\frac{1}{2}}|Z^{I+1}\phi|.
\end{align*}
For $I\leq N-9$ we can estimate $Z^I \wht g_{LL}$ thanks to \eqref{iwintll2},
$$|Z^I \wht g_{LL}|\lesssim \frac{\ep(1+|q|)}{(1+s)^{\frac{3}{2}-3\rho}},$$
we estimate $Z^I \wht g$ thanks to \eqref{bootg2}
$$|Z^I \wht g|\lesssim \frac{\ep}{(1+s)^{\frac{1}{2}-3\rho}},$$
we estimate
$Z^{I+2} \phi$ thanks to \eqref{bootphi2}
$$|Z^{I+2} \phi|\lesssim \frac{\ep}{(1+s)^{\frac{1}{2}-2\rho}},$$
and we estimate $Z^IG^L$ thanks to Proposition \eqref{prpG}, with Proposition \ref{prpangle} to estimate $\Delta_h$
$$|Z^I G^L|\lesssim \frac{\ep^2}{(1+s)(1+|q|)^\frac{1}{2}}.$$
Consequently we obtain
\begin{align*}
|\Box Z^I \phi|\lesssim& \frac{\ep^2}{(1+s)^{2-3\rho}(1+|q|)^{\frac{3}{2}-4\rho}}
+\frac{\ep^2}{(1+s)^{2-3\rho}(1+|q|)}\\
&+\frac{\ep^2}{(1+s)^\frac{5}{2}(1+|q|)^{1-4\rho}}
+\frac{\ep^2}{(1+s)^2(1+|q|)^{1-2\rho}}\\
\lesssim & \frac{\ep^2}{(1+s)^{2-3\rho}(1+|q|)}.
\end{align*}

\end{proof}

\begin{proof}[Proof of Proposition \ref{linfwhtg}]
We start with the region $q<R$. We estimate first
$Q_{\ba L \ba L}$ which contain the limiting contributions. Thanks to Proposition \ref{prpstr} we have
\begin{align*}
|{}^IQ_{\ba L  \ba L}|\lesssim& 
\frac{\ep}{(1+|q|)(1+s)^{\frac{1}{2}-\rho}}\left(|\bar{\partial}Z^I g_{\q T \q T}|+\frac{1}{1+|q|}|Z^I g_{LL}|\right)
+\left(\frac{\ep^2}{(1+s)^\frac{3}{2}}+\frac{\ep}{(1+s)(1+|q|)^{1-4\rho}}\right)|\partial Z^I \wht g_1|\\
&+\ep\min\left(\frac{1}{(1+|q|)(1+s)^{\frac{1}{2}-\rho}},\frac{1}{(1+|q|)^{\frac{1}{2}}(1+s)^\frac{1}{2}}\right)(|\bar{\partial}Z^I \wht g|+|Z^I G^L|)\\
\lesssim &\left(\frac{\ep}{(1+s)^{\frac{3}{2}-\rho}}+\frac{\ep}{(1+s)(1+|q|)^{1-4\rho}}\right)|\partial Z^{I} \wht g_1|
+\frac{\ep}{(1+s)^{\frac{1}{2}-\rho}(1+|q|)^2}|Z^I g_{LL}|+s.t.
\end{align*}
where $s.t.$ denotes similar terms.
We estimate $\partial Z^I \wht g_1$ in two ways : thanks to \eqref{bootg2} we have
$$|\partial Z^I \wht g_1|\lesssim \frac{\ep}{(1+|q|)(1+s)^{\frac{1}{2}-3\rho}},$$
and thanks to \eqref{ks2} we have
$$|\partial Z^I \wht g_1|\lesssim \frac{\ep}{(1+|q|)^{\frac{1}{2}-\sigma}(1+s)^{\frac{1}{2}}}.$$
Consequently
$$\left(\frac{\ep^2}{(1+s)^{\frac{3}{2}-\rho}}+\frac{\ep}{(1+s)(1+|q|)^{1-4\rho}}\right)|\partial Z^I \wht g_1|\lesssim 
 \frac{\ep^2}{(1+s)^{2-4\rho}(1+|q|)}+\frac{\ep^2}{(1+s)^\frac{3}{2}(1+|q|)^{\frac{3}{2}-4\rho-\sigma}}.$$
We estimate $Z^I g_{LL}$ thanks to \eqref{wintll2}
$$\frac{\ep}{(1+s)^{\frac{1}{2}-\rho}(1+|q|)^2}|Z^I g_{LL}|\lesssim\frac{\ep^2}{(1+s)^{2-4\rho}(1+|q|)}.$$
Therefore in the region $q<R$ we have
\begin{equation}
\label{estqbal}
|{}^IQ_{\ba L \ba L}|\lesssim \frac{\ep^2}{(1+s)^\frac{3}{2}(1+|q|)^{\frac{3}{2}-4\rho-\sigma}}.
\end{equation}
We now estimate the other contributions in $\Box Z^I \wht g$ : they are given  thanks to Proposition \ref{prpstr} by
$$\frac{\ep}{(1+s)^{\frac{3}{2}-\rho}(1+|q|)}|Z^{I+2}\wht g|
+\frac{\ep}{(1+s)^\frac{3}{2}(1+|q|)^{\frac{3}{2}-4\rho}}|Z^{I+1}\phi|
+\frac{1}{1+s}|Z^{I+1}G|.$$
We estimate $Z^{I+2} \wht g$ thanks to \eqref{bootg2}, we estimate $Z^{I+1} \phi$ thanks to \eqref{bootphi2}, 
we estimate $Z^{I+1}G$ thanks to Proposition \ref{prpG}, with Proposition \ref{prpangle} which is now proved to estimate $\Delta_h$. We obtain
$$|\Box Z^I \wht g|\lesssim  \frac{\ep^2}{(1+s)^\frac{3}{2}(1+|q|)^{\frac{3}{2}-4\rho-\sigma}}+\frac{\ep^2}{(1+s)^2(1+|q|)^\frac{1}{2}}.$$
We now look at the region $q>R$. We estimate the new contributions in $Q_{\ba L \ba L}$ which are given by
\begin{align*}  & \ch_{q>R} \min\left(\frac{\ep}{(1+s)^{\frac{1}{2}-\rho}(1+|q|)^{\frac{5}{2}+\delta}}, \frac{\ep}{(1+s)^{\frac{1}{2}}(1+|q|)^{\frac{5}{2}+\delta+\sigma}}\right)\frac{q}{s}|Z^I \partial^2_\theta b|
+ \ch_{q>R}\frac{\ep}{1+s}|\partial Z^{I+1} \wht g_1|\\
+&\ch_{q>R}\frac{\ep}{(1+|q|)^{\frac{3}{2}+\delta}(1+s)^{\frac{1}{2}-\rho}}\left(s|\partial^2_s Z^I b|
+q|\partial^2_s \partial_\theta Z^I b|+
\frac{q}{s}|Z^I \partial_s \partial^2_\theta b|
+\frac{q}{s^2}|Z^I \partial^3_\theta b|\right)\\
\lesssim &\ch_{q>R}\frac{\ep}{(1+s)(1+|q|)}|Z^{I+1}\wht g_1|\\
&+ \ch_{q>R} \min\left(\frac{\ep}{(1+s)^{\frac{1}{2}-\rho}(1+|q|)^{\frac{5}{2}+\delta}}, \frac{\ep}{(1+s)^{\frac{1}{2}}(1+|q|)^{\frac{5}{2}+\delta+\sigma}}\right)\left(q|\partial_s\partial_\theta Z^{I+1}b|+ \frac{q}{s}|Z^{I+1} \partial^2_\theta b|\right)
\end{align*}
We estimate $\partial_s \partial_\theta Z^I b$ thanks to \eqref{estb3} (and the Sobolev embedding $H^1(\m S^1)\subset L^\infty(\m S^1)$)
$$|\partial_s \partial_\theta Z^{I+1} b|\lesssim \frac{\ep^2}{(1+s)^{2-\frac{\sigma}{2}}},$$
we estimate $Z^I \partial^2_\theta b$ thanks to \eqref{estb2}.
$$|  Z^I \partial^2_\theta b|\lesssim \ep^2,$$
and we estimate $Z^{I+1}\wht g_1$ thanks to \eqref{iks6}
$$|Z^{I+1} \wht g_1|\lesssim\frac{\ep }{(1+|q|)^{\frac{1}{2}+\delta-\sigma}\sqrt{1+s}}.$$
Consequently, we obtain for $q>R$
\begin{equation}
\label{qbalext}
|Q_{\ba L \ba L}|\lesssim \frac{\ep^2}{(1+s)^\frac{3}{2}(1+|q|)^{\frac{3}{2}-4\rho-\sigma}}
+ \frac{\ep^2 }{(1+|q|)^{\frac{3}{2}+\delta-\sigma}(1+s)^\frac{3}{2}}.
\end{equation}
The estimate for $M^E$ can be done exactly in the same way, which concludes
the proof of Proposition \ref{linfwhtg}.

\end{proof}

\begin{proof}[Proof of Lemma \ref{linf2}]
Let $t_0 >0$. We consider times $t\leq t_0$.
In the region $r\leq 2t$ we have $|q|\leq t \leq t_0$ and $s \leq 3t \leq 3t_0$. Therefore
$$|\Box u|\lesssim \frac{(1+t_0)^{\alpha+\rho}}{(1+|q|)^{1+\frac{\rho}{2}}(1+s)^{\frac{3}{2}+\frac{\rho}{2}}}.$$
In the region $r\geq 2t$, we have 
$\frac{r}{2}\leq |q|\leq r$ and $r\leq s\leq \frac{3r}{2}$, therefore
$$|\Box u|\lesssim \frac{1}{(1+r)^{\frac{3}{2}-\alpha+1+\beta}}
\lesssim \frac{(1+t_0)^{\alpha+\rho}}{(1+r)^{\frac{5}{2}-\alpha+\beta}}
\lesssim\frac{(1+t_0)^{\alpha+\rho}}{(1+|q|)^{1+\frac{\rho}{2}}(1+s)^{\frac{3}{2}+\frac{\rho}{2}}},$$
provided $\frac{5}{2}+\rho \leq \frac{5}{2}+\beta-\alpha$, i.e. $\beta -\alpha\geq \rho$.
Consequently, the $L^\infty-L^\infty$ estimate yields, for $t\leq t_0$
$$|u|\lesssim \frac{(1+t_0)^{\alpha+\rho}}{\sqrt{1+s}}.$$
If we take $t=t_0$ we have proved
$$|u|\lesssim \frac{(1+t)^{\alpha+\rho}}{\sqrt{1+s}},$$
which concludes the proof of Lemma \ref{linf2}.
\end{proof}

We now give the $L^\infty$ estimate for $k$, defined by \eqref{eqg2}.
\begin{cor}
	\label{linfk}
	We have the estimate
	$$|Z^{N-9}k|\lesssim \frac{\ep^2}{(1+t)^{\frac{1}{2}-\rho}}.$$
\end{cor}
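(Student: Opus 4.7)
The plan is to treat $k$ as a scalar solving the wave equation \eqref{eqg2} with vanishing data and to apply Lemma \ref{linf2} after estimating the right-hand side $Q_{\ba L \ba L}$ by means of Proposition \ref{prpstr}.

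First I would commute $Z^I$, for $I\leq N-9$, with the equation $\Box_g k = Q_{\ba L \ba L}$. Writing $\Box_g = \Box + (g^{\alpha\beta}-m^{\alpha\beta})\partial_\alpha \partial_\beta - H^\rho \partial_\rho$, the commutator terms $[Z^I,\Box_g]k$ involve $Z$-derivatives of $g-m$ times at most two derivatives of $Z^{I'}k$ with $I'<I$, and can be absorbed in exactly the same way as the terms making up ${}^IQ_{\ba L \ba L}$ in Proposition \ref{prpstr}: they are covered by bootstrap assumption \eqref{gmieux} (together with the $L^\infty$ bounds on $g-m$) and previously established bounds on $Z^{I'}k$ for $I'<I$, so a simple induction in $I$ reduces the statement to estimating $Z^I Q_{\ba L \ba L}$.

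Next, I would invoke the bounds on ${}^IQ_{\ba L \ba L}$ from Proposition \ref{prpstr}, combined with the analysis already carried out in the proof of Proposition \ref{linfwhtg}. Indeed, the very same estimates \eqref{estqbal} and \eqref{qbalext} yield, for $I\leq N-9$,
$$|Z^I Q_{\ba L \ba L}|\lesssim \frac{\ep^2}{(1+s)^{\frac{3}{2}}(1+|q|)^{\frac{3}{2}-4\rho-\sigma}}\quad\text{in the interior},\qquad |Z^I Q_{\ba L \ba L}|\lesssim \frac{\ep^2}{(1+s)^{\frac{3}{2}}(1+|q|)^{\frac{3}{2}-4\rho-\sigma}}+\frac{\ep^2}{(1+s)^{\frac{3}{2}}(1+|q|)^{\frac{3}{2}+\delta-\sigma}}$$ in the exterior. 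Both estimates fit the framework of Lemma \ref{linf2} with $\alpha=0$ and $\beta=\tfrac{1}{2}-4\rho-\sigma$, which is positive and satisfies $\beta-\alpha\geq \rho$ provided $\rho$ is small enough relative to $\sigma$ (recall $\rho\ll\sigma\leq \tfrac{1}{4}$).

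Finally, since $k$ has zero initial data, Lemma \ref{linf2} gives
$$|Z^I k|\lesssim \frac{\ep^2 (1+t)^{\rho}}{\sqrt{1+s}}\leq \frac{\ep^2}{(1+t)^{\frac{1}{2}-\rho}},$$
using $1+s\geq 1+t$ in the last inequality. The only potential obstacle is verifying that the commutator $[Z^I,\Box_g]k$ genuinely produces no new term worse than those already treated; this is resolved by the fact that these commutators share the structure of the crossed quadratic terms analyzed in Proposition \ref{prpstr}, for which the bootstrap assumptions on $\wht g$ and the $L^\infty$ bounds of Proposition \ref{linfN} furnish the required decay.
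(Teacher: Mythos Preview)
Your proposal is correct and follows the paper's approach exactly: the paper's proof observes in one line that $k$ has zero initial data and that $\Box k$ satisfies the same bounds as $\Box \wht g$ (already established in Proposition~\ref{linfwhtg}), so Lemma~\ref{linf2} gives the result directly. One small clarification on your exposition: the quasilinear term $(g-m)^{\alpha\beta}\partial_\alpha\partial_\beta Z^I k$ coming from $\Box-\Box_g$ (as opposed to the commutator $[Z^I,\Box_g]$) requires $L^\infty$ control of $Z^{I+2}k$, which is obtained not by induction in $I$ or from the $L^2$ bound \eqref{gmieux}, but from the relation $\wht g - \wht g_1 = 4\Upr k\, dq^2$ together with the existing $L^\infty$ bounds on $Z^{N-7}\wht g$ and $Z^{N-7}\wht g_1$ --- exactly parallel to how \eqref{bootg2} controls $Z^{I+2}\wht g$ in the proof of Proposition~\ref{linfwhtg}.
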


\begin{proof}
	This is a direct consequence of Lemma \ref{linf2} since the initial data for $k$ are $0$ and $\Box k$ satisfies the same estimate as $\Box \wht g$.
\end{proof}

\subsection{Estimate for $I\leq N-7$}

\begin{prp}\label{linfN5}
We have the estimates for for $I \leq N-7$
\begin{align*}
|Z^I \wht g| &\leq \frac{C_0\ep+C\ep^2 }{(1+s)^{\frac{1}{2}-3\rho}},\\
|Z^I \phi| &\leq \frac{C_0\ep+C\ep^2 }{(1+s)^{\frac{1}{2}-2\rho}}.
\end{align*}
\end{prp}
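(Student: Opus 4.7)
The plan is to adapt the argument of Proposition \ref{linfN} to the smaller index range $I\leq N-7$. The two extra derivatives cost us the pointwise bootstrap assumptions \eqref{bootphi2} and \eqref{bootg2} on the $Z^{I+1}$ and $Z^{I+2}$ quantities appearing on the right-hand sides of Propositions \ref{prpstr} and \ref{strphi}; we will replace them by the Klainerman–Sobolev–based pointwise estimates of Proposition \ref{estks} and Corollary \ref{estksg}, tolerating a loss of up to $(1+s)^{2\rho}$ in the $\wht g$-equation. The structure of the proof is unchanged: bound $\Box_g Z^I\phi$ and $\Box_g Z^I\wht g$ pointwise, then apply Lemma \ref{linf2} (together with Proposition \ref{flat1} for the initial data) to recover pointwise bounds.

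For $\phi$, we apply Proposition \ref{strphi}. The factor $|Z^I \wht g_{LL}|$ is controlled by \eqref{iwintll1bis}, $|Z^I\wht g_1|$ by \eqref{iks2}, $|Z^I\wht g|$ by \eqref{bootg2}, the term $|\partial Z^I\phi|$ by \eqref{ks1}, and $|Z^IG^L|$ by Proposition \ref{prpG} combined with Proposition \ref{prpangle}, which is already proven. Since $\phi$ is supported in $q<R+\tfrac12$ by Proposition \ref{cone}, all contributions vanish for $q>R+1$, and in the interior they sum to
$$|\Box_g Z^I\phi|\lesssim \frac{\ep^2}{(1+s)^{2-\rho}(1+|q|)}.$$
Lemma \ref{linf2} with $\alpha=\rho$, $\beta=\rho$ then gives $|Z^I\phi|\leq (C_0\ep+C\ep^2)(1+s)^{-1/2+2\rho}$.

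For $\wht g$ we use Proposition \ref{prpstr}, treating $M$, $M^E$ and $Q$ separately. The generic terms in $M$ are handled as in Proposition \ref{linfN} with Klainerman–Sobolev pointwise bounds substituted for the bootstraps (e.g.\ \eqref{iks3} in place of \eqref{bootg2} for $Z^{I+2}\wht g$), and using Corollary \ref{estll} for $\partial Z^I\wht g_{LL}$, Corollary \ref{estul} for $\partial Z^I\wht g_{UL}$, Corollary \ref{estuu} for $\partial Z^I\wht g_{UU}$, Proposition \ref{prpG} for $Z^IG^L$ and \eqref{estgu} for $Z^IG^U$. The cubic $Q_{\ba L\ba L}$ contributions are handled exactly as in \eqref{estqbal}–\eqref{qbalext}: we trade $\partial_q g_{UU}$ for $\bar{\partial}\wht g+G^L$ via Proposition \ref{estLU}, exploit the improved $q$-behaviour of $\wht g_1$ through \eqref{ks2}, and use \eqref{estGinf} for $G^L$. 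In the exterior $q>R$, the cross-terms involving $g_{\mathfrak{b}}$ are controlled by Corollary \ref{corestb} and the hypotheses $\q H$ on $b$. Collecting all contributions one arrives at
$$|\Box_g Z^I\wht g|\lesssim \frac{\ep^2}{(1+s)^{3/2-2\rho}(1+|q|)}$$
in $q\leq R$, with faster $q$-decay for $q>R$; Lemma \ref{linf2} with $\alpha=2\rho$ then produces $|Z^I\wht g|\leq (C_0\ep+C\ep^2)(1+s)^{-1/2+3\rho}$.

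The main obstacle is bookkeeping: at several places we incur an $(1+s)^\rho$ loss from a Klainerman–Sobolev step, another from the logarithmic growth of $\wht g_{\ba L\ba L}$ that enters through $\partial_qg_{UU}$, and another from the $h$-dependent part of $G^L$. The choice of weights and the weak null decomposition $g=g_{\mathfrak{b}}+4\Up(r/t)k\,dq^2+\wht g_1$ are designed precisely so that these losses appear only on $g_{\ba L\ba L}$ and not on the other components; the delicate point is to verify, term by term in $Q_{\ba L\ba L}$, that the bad coefficient $\wht g_{\ba L\ba L}$ only multiplies good derivatives $\bar{\partial}$ or the small quantity $G^L$, so that the total loss stays within $(1+s)^{2\rho}$ before Lemma \ref{linf2} is applied.
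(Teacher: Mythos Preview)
Your approach is the same as the paper's: reduce to pointwise bounds on $\Box Z^I\phi$ and $\Box Z^I\wht g$ (these are Propositions \ref{linfphiN5} and \ref{linfwhtgN5} in the paper), then invoke Lemma \ref{linf2} together with Proposition \ref{flat1} for the initial-data contribution. Two small slips to fix: first, the term $\frac{\ep}{(1+s)^{3/2-\rho}}|\partial Z^I\phi|$ combined with \eqref{ks1} only gives $\frac{\ep^2}{(1+s)^{2-\rho}\sqrt{1+|q|}}$, so after trading $\sqrt{1+|q|}\leq\sqrt{1+s}$ the correct interior bound is $|\Box Z^I\phi|\lesssim\frac{\ep^2}{(1+s)^{3/2-\rho}(1+|q|)}$ rather than your claimed $(1+s)^{-2+\rho}(1+|q|)^{-1}$ (your stated choice $\alpha=\rho$ in Lemma \ref{linf2} is then in fact the right one); second, your choice $\beta=\rho$ violates the hypothesis $\beta-\alpha\geq\rho$ of Lemma \ref{linf2}, but since $\Box Z^I\phi=0$ for $q>R+\tfrac12$ you may take $\beta$ arbitrarily large. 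Neither point affects the final conclusion.
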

This proposition is a straightforward consequence of Lemma \ref{linf2}, Proposition \ref{flat1} and the following propositions.

\begin{prp}\label{linfphiN5}
We have the estimate for $I \leq N-7$ 
$$
|\Box Z^I \phi| \lesssim \frac{\ep^2}{(1+s)^{\frac{3}{2}-\rho}(1+|q|)} , \; q<R+1,\\
$$
and $\Box Z^I \phi=0$ for $q>R+1$.
\end{prp}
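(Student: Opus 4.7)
The plan is to parallel the proof of Proposition \ref{linfphi} line by line, using the larger index range $I \leq N-7$ but targeting the correspondingly weaker decay rate $(1+s)^{-3/2+\rho}$ in place of $(1+s)^{-2+3\rho}$. The second assertion $\Box Z^I \phi = 0$ for $q > R+1$ is immediate: by Proposition \ref{cone}, $\phi$ is supported in $\{q < R + \tfrac{1}{2}\}$, so the same is true of $Z^I\phi$ and hence of $\Box Z^I\phi$.

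In the region $q \leq R+1$, I would first invoke the pointwise decomposition of Proposition \ref{strphi}, which expresses $|\Box_g Z^I\phi|$ as a sum of five terms involving respectively $Z^I\wht g_{LL}$, $Z^I\wht g$, $\partial Z^I \phi$, $Z^I G^L$, and $\bar\partial Z^I \phi$. Each of these factors will be bounded by an estimate already established at the relevant derivative order: $Z^I \wht g_{LL}$ by \eqref{iwintll1bis}; $Z^I \wht g$ by the bootstrap bound \eqref{bootg2} (valid for $I \leq N-7$); $Z^I G^L$ by the pointwise estimate \eqref{estGinf} (valid for $I\leq N-6$, using Proposition \ref{prpG} together with $\Delta_h \lesssim \ep^2/\sqrt{1+t}$ from Proposition \ref{prpangle}); and both $\partial Z^I\phi$ and $\bar\partial Z^I\phi$ via the Klainerman--Sobolev estimates of Proposition \ref{estks} and Corollary \ref{estksg}, which for $I\leq N-4$ give respectively $|\partial Z^I\phi| \lesssim \ep/\sqrt{(1+s)(1+|q|)}$ and, using $|\bar\partial Z^I\phi| \lesssim (1+s)^{-1}|Z^{I+1}\phi|$ together with \eqref{iks1}, $|\bar\partial Z^I\phi|\lesssim \ep\sqrt{1+|q|}/(1+s)^{3/2}$.

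Plugging these bounds back into the five terms of Proposition \ref{strphi}, and using the fact that $1+|q| \lesssim 1+s$ on the support of $\phi$, each contribution is dominated by the target $\ep^2/[(1+s)^{3/2-\rho}(1+|q|)]$. For example, the $Z^I\wht g_{LL}$ term produces $\ep^2/[(1+s)^{2-\rho}(1+|q|)^{1-4\rho}]$, whose ratio to the target is $(1+|q|)^{4\rho}/(1+s)^{1/2}$, which is bounded in the relevant region; the $Z^I\wht g$ term produces $\ep^2/[(1+s)^{2-3\rho}(1+|q|)^{3/2-4\rho}]$, which is dominated by the target since $1/2 - 2\rho > 0$ and $1/2 - 4\rho > 0$; the $G^L$ and $\bar\partial\phi$ contributions decay faster than needed; and the $\partial Z^I\phi$ term gives $\ep^2/[(1+s)^{2-\rho}\sqrt{1+|q|}]$, which again fits.

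I do not expect a serious obstacle: the target decay is strictly looser than in Proposition \ref{linfphi}, and the derivative count we consume (at most $Z^{I+2}\phi$ and $Z^{I+1}\wht g$) stays below $N-3$ when $I \leq N-7$, so Klainerman--Sobolev and the bootstrap assumptions are both available. The only minor bookkeeping point is to verify at each step that we use an $L^\infty$ bound at the correct level of regularity (e.g. \eqref{estGinf} rather than the $L^2(\mathbb{S}^1)$ statements of Proposition \ref{prpG}), which is why the estimate is cleanest to state at the slightly lowered order $I\leq N-7$.
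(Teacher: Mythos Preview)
Your proof is correct and follows essentially the same approach as the paper: both start from Proposition~\ref{strphi}, plug in pointwise bounds for each factor, and verify that every contribution fits under the target $\ep^2/[(1+s)^{3/2-\rho}(1+|q|)]$. The only differences are cosmetic choices of which pre-established estimate to cite---the paper uses \eqref{iks3} for $Z^I\wht g$ and \eqref{iks1} for $Z^{I+2}\phi$, whereas you use \eqref{bootg2} and \eqref{ks1} respectively---and both sets of bounds are valid at the regularity level $I\leq N-7$ and give the same conclusion.
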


\begin{prp}\label{linfwhtgN5}
We have the estimate for $I \leq N-5$ and $q<R$
$$
|\Box Z^I \wht g| \lesssim \frac{\ep^2}{(1+s)^{\frac{3}{2}-2\rho}(1+|q|)},$$
and for $q>R$
$$
|\Box Z^I \wht g| \lesssim \frac{\ep^2}{(1+s)^{\frac{3}{2}}(1+|q|)^{\frac{3}{2}-4\rho-\sigma}}.$$
\end{prp}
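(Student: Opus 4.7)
Proof proposal for Proposition \ref{linfwhtgN5}.

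The plan is to mimic the proof of Proposition \ref{linfwhtg}, using Proposition \ref{prpstr} to split $\Box_g Z^I \wht g_{\mu\nu}$ into $Z^I R^1 + {}^IM + {}^IM^E + {}^IQ$ and bounding each piece pointwise. Crucially, Proposition \ref{prpstr} is valid for any $I$ for which the lower-index half $I_1 \leq I/2$ of each quadratic pair satisfies $I_1+1 \leq N-9$ (the range of the $L^\infty$ bootstraps \eqref{bootphi1}--\eqref{bootg1}); for $N \geq 25$ and $I \leq N-5$ one has $I_1 \leq (N-5)/2 \leq N-15$, so the decomposition applies verbatim and the remaining task is to bound the high-order factor $Z^I(\cdot)$ pointwise.

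For the high-order factor with $I \leq N-5$, each use of a bootstrap $L^\infty$ assumption in the proof of Proposition \ref{linfwhtg} is replaced by the appropriate weighted Klainerman--Sobolev estimate: $|\partial Z^I \phi|$ and $|Z^I \phi|$ are bounded by \eqref{ks1} and \eqref{iks1} (valid since $I \leq N-3$); $|\partial Z^I \wht g|$ and $|Z^I \wht g|$ by \eqref{ks3} and \eqref{iks3} (valid since $I \leq N-5$), each introducing a single factor of $(1+s)^\rho$; the null-frame components $Z^I \wht g_{LL}$, $Z^I \wht g_{UL}$, $Z^I \wht g_{UU}$, $Z^I \wht g_{\q T \q T}$ are handled by Corollaries \ref{iestll}, \ref{iestul}, \ref{iestuu} where they apply (for the borderline indices $I = N-5, N-6$ one combines the weighted $L^2$ bounds from Propositions \ref{l2ll}--\ref{l2uu} with Proposition \ref{prpweight}); and $|Z^I G^L|$ is controlled via Proposition \ref{prpG} together with the Sobolev embedding $H^1(\m S^1)\subset L^\infty$, using Proposition \ref{prpangle} for $\Delta_h$. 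Summing term by term as in the proof of Proposition \ref{linfwhtg}, the interior bound is dominated by ${}^IQ_{\ba L \ba L}$, where the two $(1+s)^\rho$ losses (one from each quadratic factor of $\wht g$) combine into the $(1+s)^{2\rho}$ weakening that appears in the statement. In the exterior $q>R$, the $\q H$-estimates for $b$ and the sharp $q$-decay for $\wht g_1$ from \eqref{iks5}, \eqref{ks5} recover exactly the same bound $\ep^2/((1+s)^{3/2}(1+|q|)^{3/2-4\rho-\sigma})$ as in Proposition \ref{linfwhtg}, without any $s$-loss.

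The main obstacle is the borderline case $I = N-5$ for the factor $Z^I \wht g_1$, since neither \eqref{iks2} nor \eqref{iks5} applies at this level (both only go up to $I \leq N-6$). I propose to handle this through the decomposition $\wht g = \wht g_1 + 4\Upsilon(r/t) k\, dq^2$ coming from \eqref{dec1}--\eqref{dec2}, which reduces the task to bounding $|Z^{N-5} k|$; this in turn follows by applying Lemma \ref{linf2} to the equation $\Box k = Q_{\ba L \ba L}$ from \eqref{eqg2}, whose right-hand side fits within the same scheme via Proposition \ref{prpstr}. A secondary subtlety is that the $\q H$-bound \eqref{estb4} for $Z^N b$ carries a $(1+s)^\rho$ loss; in the exterior this is absorbed by pairing the $b$-factor with a $\wht g$-factor carrying $(1+|q|)^{-(1/2+\delta)}$ decay from \eqref{iks6}, so that the improved exterior bound claimed in Proposition \ref{linfwhtgN5} is preserved.
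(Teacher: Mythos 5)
Your overall strategy is the one the paper follows: keep the decomposition of Proposition \ref{prpstr}, and wherever the proof of Proposition \ref{linfwhtg} used the sup-norm bootstrap assumptions (valid only up to $N-9$, resp. $N-7$), substitute the weighted Klainerman--Sobolev estimates \eqref{ks1}, \eqref{iks1}, \eqref{ks3}, \eqref{iks3} and the good-component bounds (e.g. \eqref{iwintll1bis}), accepting a $(1+s)^{\rho}$ loss per high-order factor; the dominant interior contribution is ${}^I Q_{\ba L \ba L}$ and the two $\rho$-losses produce the $(1+s)^{3/2-2\rho}$ in the statement, while the exterior estimate is unchanged. Up to that point your proposal and the paper's proof coincide.

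The step that does not work is your treatment of the ``borderline'' factor $Z^{I}\wht g_1$. First, what actually enters ${}^I Q_{\ba L\ba L}$ and ${}^I M$ is $|\partial Z^{I}\wht g_1|$ and $|\bar\partial Z^{I}\wht g_1|$ (with $q$-decay), so reducing matters to a bound on $|Z^{N-5}k|$ is not a reduction to the right quantity: Lemma \ref{linf2} only yields an undifferentiated $L^\infty$ bound of the form $(1+t)^{\alpha+\rho}(1+s)^{-1/2}$, with no $(1+|q|)$-gain and no control of derivatives (compare Corollary \ref{linfk}). Second, applying Lemma \ref{linf2} to $Z^{N-5}k$ requires a pointwise bound on $\Box Z^{N-5}k$; since $\Box_g k=Q_{\ba L\ba L}$ by \eqref{eqg2}, this is an estimate of exactly the same nature as the one you are trying to prove, and the flat-vs-curved correction $(\Box-\Box_g)Z^{N-5}k$ brings in $\partial^2 Z^{N-5}k$, i.e. still higher-order information on $k$ that you have not secured -- this is precisely why the paper's Corollary \ref{linfk} stops at order $N-9$. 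Finally, the obstacle is illusory: the paper simply bounds $\partial Z^{I}\wht g_1$ at this order by the Klainerman--Sobolev estimate \eqref{ks3} (equivalently, apply Proposition \ref{prpweight} to the order-$N$ bootstrap assumption \eqref{gN} for $\wht g_1$, at the price of a $(1+s)^{\rho}$ loss and a harmless $(1+|q|)^{\sigma}$), so no detour through $k$ and Lemma \ref{linf2} is needed; with that replacement your argument reduces to the paper's.
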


\begin{proof}[Proof of Proposition \ref{linfphiN5}]
We have, thanks to Proposition \ref{strphi}
\begin{align*}
|\Box Z^I \phi|
&\lesssim  \frac{\ep}{\sqrt{1+s}(1+|q|)^{\frac{5}{2}-4\rho}}| Z^I \wht g_{LL}|
+\frac{\ep}{(1+s)^\frac{3}{2}(1+|q|)^{\frac{3}{2}-4\rho}}| Z^I \wht g|\\
&+\frac{\ep}{(1+s)^{\frac{3}{2}-\rho}(1+|q|)}|Z^{I+2} \phi|
+\frac{\ep}{(1+s)^\frac{3}{2}(1+|q|)^{\frac{1}{2}-4\rho}}| Z^I G^L|
+\frac{\ep}{(1+s)^2(1+|q|)^\frac{1}{2}}|Z^{I+1}\phi|.
\end{align*}
For $I\leq N-7$ we can estimate $Z^I \wht g_{LL}$ thanks to \eqref{iwintll1bis},
$$|Z^I \wht g_{LL}|\lesssim \frac{\ep(1+|q|)^{\frac{3}{2}}}{(1+s)^{\frac{3}{2}-\rho}},$$
we estimate $Z^I \wht g$ thanks to \eqref{iks3}
$$|Z^I \wht g|\lesssim \frac{\ep(1+|q|)^\frac{1}{2}}{(1+s)^{\frac{1}{2}-\rho}},$$
we estimate
$Z^{I+2} \phi$ thanks to \eqref{iks1}
$$|Z^{I+2} \phi|\lesssim \frac{\ep(1+|q|)^{\frac{1}{2}}}{(1+s)^{\frac{1}{2}}},$$
and we estimate $Z^IG^L$ thanks to Proposition \eqref{prpG}
$$|Z^I G^L|\lesssim \frac{\ep^2}{(1+s)(1+|q|)^\frac{1}{2}}.$$
Consequently we obtain
\begin{align*}
|\Box Z^I \phi|\lesssim &\frac{\ep^2}{(1+s)^{2}(1+|q|)^{1-4\rho}}+\frac{\ep}{(1+s)^{2-\rho}(1+|q|)^{1-4\rho}}
+\frac{\ep^2}{(1+s)^{2-\rho}\sqrt{1+|q|}}\\
&+\frac{\ep^2}{(1+s)^\frac{5}{2}(1+|q|)^{1-4\rho}}
+\frac{\ep^2}{(1+s)^\frac{5}{2}}
\end{align*}
which concludes the proof of Proposition \ref{linfphiN5}.
\end{proof}

\begin{proof}[Proof of Proposition \ref{linfwhtgN5}]
We start with the region $q<R$. 
We estimate first $Q_{\ba L \ba L}$ in the same way than in the proof of Proposition \ref{linfwhtg} by
$$
\left(\frac{\ep}{(1+s)^\frac{3}{2}}+\frac{\ep}{(1+s)(1+|q|)^{1-4\rho}}\right)|\partial Z^I \wht g_1|+\frac{\ep}{(1+|q|)^2(1+s)^{\frac{1}{2}-\rho}}|Z^I g_{LL}|
$$
We estimate $\partial Z^I \wht g_1$  thanks to \eqref{ks3} 
$$|\partial Z^I \wht g_1|\lesssim \frac{\ep}{(1+|q|)^{\frac{1}{2}}(1+s)^{\frac{1}{2}-\rho}}.$$
Consequently
$$\left(\frac{\ep}{(1+s)^\frac{3}{2}}+\frac{\ep}{(1+s)(1+|q|)^{1-4\rho}}\right)|\partial Z^I \wht g_1|\lesssim 
\frac{\ep^2}{(1+s)^{2-\rho}(1+|q|)^\frac{1}{2}}+\frac{\ep^2}{(1+s)^{\frac{3}{2}-\rho}(1+|q|)^{\frac{3}{2}-4\rho}}.$$
We estimate $Z^I g_{LL}$ thanks to \eqref{iwintll1bis},
$$\frac{\ep}{(1+|q|)^2(1+s)^{\frac{1}{2}-\rho}}|Z^I g_{LL}|\lesssim \frac{\ep^2}{(1+|q|)^\frac{1}{2}(1+s)^{2-2\rho}}.$$
Consequently
\[|Q_{\ba L \ba L}|\lesssim \frac{\ep^2}{(1+s)^{\frac{3}{2}-2\rho}(1+|q|)}.\]
We now estimate the other contributions in $\Box Z^I \wht g$ : they are given  thanks to Proposition \ref{prpstr} by
$$\frac{\ep}{(1+s)^{\frac{3}{2}-\rho}(1+|q|)}|Z^{I+2}\wht g|
+\frac{\ep}{(1+s)^\frac{3}{2}(1+|q|)^{\frac{3}{2}-4\rho}}|Z^{I+1}\phi|+\frac{1}{1+s}|Z^{I+1}G|.$$
We estimate $|Z^{I+2} \wht g|$ thanks to \eqref{iks3},
 we estimate $Z^{I+1} \phi$ thanks to \eqref{iks1} and we estimate $Z^{I+1}G$ thanks to Proposition \ref{prpG} with $I+1\leq N-6$. We obtain
$$|\Box Z^I \wht g|\lesssim |Q_{\ba L \ba L}|
+\frac{\ep^2}{(1+s)^{2-2\rho}(1+|q|)^\frac{1}{2}}
+\frac{\ep^2}{(1+s)^2(1+|q|)^{1-4\rho}}
+\frac{\ep^2}{(1+s)^2(1+|q|)^\frac{1}{2}}.
$$
In the exterior region, the same estimates as for $I\leq N-9$ are valid.
\end{proof}

\section{Weighted energy estimate}\label{weighte}
\subsection{On Minkowski space-time}
We consider the wave equation on Minkowski space-time 
$\Box u=f$.
We introduce the energy-momentum tensor associated to $\Box$ 
$$Q_{\alpha \beta}=\partial_\alpha u \partial_\beta u-\frac{1}{2}m_{\alpha \beta}m^{\mu \nu}\partial_\mu u \partial_\nu u.$$
We have
$$D^\alpha Q_{\alpha \beta}=f\partial_\beta u.$$
We also note $T=\partial_t$, and introduce the deformation tensor of $T$
$$\pi_{\alpha \beta}=D_\alpha T_\beta +D_\beta T_\alpha=0$$
where $D$ is the covariant derivative.
We have
\begin{equation}
\label{eqm}
D^\alpha (Q_{\alpha \beta} T^\beta) =f\partial_t u +Q_{\alpha \beta}\pi^{\alpha \beta}=f\partial_t u.
\end{equation}
We remark that
$$Q_{TT}=\frac{1}{2}\left((\partial_t u)^2+|\nabla u|^2\right).$$
\begin{prp}\label{prpweightem}Let $w$ be any of our weight functions.
 We have the following weighted energy estimate for $u$
 $$\frac{d}{dt}\left(\int Q_{TT} w(q)dx\right)+\frac{1}{2}\int w'(q)\left((\partial_s u)^2+\left(\frac{\partial_\theta u}{r}\right)^2\right)dx
 \leq  \int w(q)|f\partial_t u|dx.$$
\end{prp}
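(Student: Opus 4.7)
The plan is to perform a standard multiplier argument: test the equation $\Box u = f$ against $w(q)\partial_t u$ and integrate by parts, exploiting the two identities $\partial_t(w(q)) = -w'(q)$ and $\nabla(w(q)) = w'(q)\hat r$. Concretely, I would start from
$$w(q) f\, \partial_t u = w(q)(-\partial_t^2 u + \Delta u)\partial_t u,$$
rewrite the first factor on the right as $-\tfrac12 w(q)\partial_t((\partial_t u)^2)$, and integrate the second term by parts in space,
$$\int w(q)\Delta u\,\partial_t u\,dx = -\int w(q)\nabla u\cdot\nabla \partial_t u\,dx - \int w'(q)\,\partial_r u\,\partial_t u\,dx,$$
using $\nabla u\cdot\nabla\partial_t u = \tfrac12\partial_t|\nabla u|^2$.

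Next I would pull the time derivatives out of the integrals with the identity
$$\int w(q)\partial_t\bigl((\partial_t u)^2+|\nabla u|^2\bigr)\,dx \;=\; \frac{d}{dt}\int w(q)\bigl((\partial_t u)^2+|\nabla u|^2\bigr)\,dx + \int w'(q)\bigl((\partial_t u)^2+|\nabla u|^2\bigr)\,dx,$$
which is where the crucial extra $w'(q)$ term on the left comes from. Collecting everything, one obtains the identity
$$\frac{d}{dt}\int w(q)\,Q_{TT}\,dx + \frac{1}{2}\int w'(q)\Bigl((\partial_t u)^2+|\nabla u|^2+2\partial_r u\,\partial_t u\Bigr)\,dx \;=\; -\int w(q)\,f\,\partial_t u\,dx.$$

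The decisive algebraic step is then the null-frame decomposition
$$(\partial_t u)^2 + |\nabla u|^2 + 2\partial_r u\,\partial_t u \;=\; (\partial_t u+\partial_r u)^2 + \Bigl(\frac{\partial_\theta u}{r}\Bigr)^2,$$
so that the weighted boundary term involves only the good derivatives $\partial_s u \sim (\partial_t+\partial_r)u$ and $\partial_\theta u/r$ tangential to the outgoing light cone, while the bad derivative $\partial_q u$ drops out. Estimating the right-hand side by $\int w(q)|f\partial_t u|\,dx$ yields the stated inequality. There is no genuine obstacle here; the only point requiring care is the sign bookkeeping for $\partial_t(w(q)) = -w'(q)$ versus $\partial_r(w(q))=+w'(q)$, which together conspire so that a positive $w'$ produces coercive control of precisely the tangential derivatives (and not of $\partial_q u$, in agreement with the fact that $w$ cannot be taken as a growing weight on the interior side $q<0$).
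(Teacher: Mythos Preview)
Your proposal is correct and is essentially the same argument as the paper's. The paper packages the computation through the energy–momentum tensor identity $D^\alpha(Q_{\alpha\beta}T^\beta)=f\partial_t u$ and then evaluates $Q_{T\alpha}D^\alpha w = w'(q)Q_{TL}$, whereas you multiply $\Box u=f$ directly by $w(q)\partial_t u$ and integrate by parts; both routes lead to the same algebraic identity and the same null-frame simplification.
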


\begin{proof}
We multiply \eqref{eqm} by $w(q)$ and integrate it on an hypersurface of constant $t$.
We obtain
\begin{equation}
\label{energi1m}-\frac{d}{dt}\left(\int Q_{TT} w(q)\right)=\int w(q)f\partial_t u +\int Q_{T\alpha}D^\alpha w.
\end{equation}
We have
$$Q_{T\alpha}D^\alpha w=
-2w'(q)m^{\alpha \ba L}Q_{T\alpha}
=w'(q)Q_{TL}=w'(q)\left(\partial_t u(\partial_t u+\partial_r u)-\frac{1}{2}(-(\partial_t u)^2
+|\nabla u|^2)\right),$$
so
$$Q_{T \alpha}D^\alpha w
=\frac{1}{2}\left((\partial_s u)^2+\left(\frac{\partial_\theta u}{r}\right)^2\right)w'(q)
$$
which concludes the proof of Proposition \ref{prpweightem}.
\end{proof}

\subsection{On the curved space-time}

We consider the equation 
$$\Box_g u=f,$$
where $g=g_{\mathfrak{b}} +\wht g$ is our space-time metric, satisfying the bootstrap assumptions.
We now introduce the energy-momentum tensor associated to $\Box_g$
$$Q_{\alpha \beta}=\partial_\alpha u \partial_\beta u-\frac{1}{2}g_{\alpha \beta}g^{\mu \nu}\partial_\mu u \partial_\nu u.$$
We have
$$D^\alpha Q_{\alpha \beta}=f\partial_\beta u.$$
We also note $T=\partial_t$, and introduce the deformation tensor of $T$
$$\pi_{\alpha \beta}=D_\alpha T_\beta +D_\beta T_\alpha$$
where $D$ is the covariant derivative.
We have
\begin{equation}
\label{eq}
D^\alpha (Q_{\alpha \beta} T^\beta) =f\partial_t u +Q_{\alpha \beta}\pi^{\alpha \beta}.
\end{equation}
We remark that
$$Q_{TT}=\frac{1}{2}\left((\partial_t u)^2+|\nabla u|^2\right)+O(\ep(\partial u)^2).$$
\begin{prp}\label{prpweighte}Let $w$ be any of our weight functions.
 We have the following weighted energy estimate for $u$
 $$\frac{d}{dt}\left(\int Q_{TT} w(q)dvol_g\right)+C\int w'(q)\left((\partial_s u)^2+\left(\frac{\partial_\theta u}{r}\right)^2dvol_g\right)
 \lesssim \frac{\ep}{1+t}\int w(q)(\partial u)^2dvol_g + \int w(q)|f\partial_t u|dvolg,$$
 where $dvol_g=\sqrt{|det(g)|}dx$,
 and since $-\frac{1}{2}\leq |det(g)|\leq \frac{1}{2}$
 $$\frac{d}{dt}\left(\int Q_{TT} w(q)dvol_g\right)+C\int w'(q)\left((\partial_s u)^2+\left(\frac{\partial_\theta u}{r}\right)^2dx\right)
 \lesssim \frac{\ep}{1+t}\int w(q)(\partial u)^2dx + \int w(q)|f\partial_t u|dx.$$

\end{prp}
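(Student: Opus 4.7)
The plan is to mimic the flat-space argument of Proposition \ref{prpweightem}, treating the difference $g-m$ and the failure of $T=\partial_t$ to be Killing as perturbations of the Minkowski computation. Starting from the curved identity $D^\alpha(Q_{\alpha\beta}T^\beta) = (D^\alpha w)Q_{\alpha\beta}T^\beta/w + w D^\alpha(Q_{\alpha\beta}T^\beta)$ after multiplication by $w(q)$, I would apply Stokes' theorem with respect to $dvol_g$ on the slab $[t_1,t_2]\times \m R^2$. Since $D_\alpha X^\alpha\, dvol_g = \partial_\alpha(\sqrt{|\det g|}\,X^\alpha)\,dx$, the left-hand side reduces to the time derivative of $\int \sqrt{|\det g|}\,w\,Q^{0\beta}T_\beta\,dx$, and since $g$ is a small perturbation of $m$, this quantity is equivalent to $\int Q_{TT}w\,dvol_g$ up to error terms that are harmless in the statement. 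The right-hand side produces three contributions: $-\int w f\,\partial_t u\,dvol_g$ (the source term), $-\int (D^\alpha w)\,Q_{\alpha\beta}T^\beta\,dvol_g$ (the weight term), and $-\int w\,Q_{\alpha\beta}\pi^{\alpha\beta}\,dvol_g$ (the deformation tensor term).

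Next I would analyze the weight term. Writing $D^\alpha w = w'(q)\,g^{\alpha\mu}\partial_\mu q$ and splitting $g^{\alpha\mu}=m^{\alpha\mu}+(g-m)^{\alpha\mu}$, the Minkowski piece reproduces exactly the computation in Proposition \ref{prpweightem}, giving a positive term of the form $cw'(q)\bigl((\partial_s u)^2+(r^{-1}\partial_\theta u)^2\bigr)$ that gets moved to the left-hand side. The remainder is $w'(q)(g-m)^{\alpha\mu}\partial_\mu q\,Q_{T\alpha}$, bounded pointwise by $w'(q)|g-m|(\partial u)^2$. Because $w'(q)\lesssim w(q)$ uniformly and because $|g-m|\lesssim |\wht g|+|g_{\mathfrak{b}}-m|\lesssim \ep(1+s)^{-1/2+\rho}$ by \eqref{bootg1} and \eqref{estg0} (in the relevant regions, with slightly different decay in the exterior but still summable for small $\ep$), this is absorbable into the stated right-hand side $\frac{\ep}{1+t}\int w(q)(\partial u)^2\,dvol_g$, and also partially absorbable into the good $w'$ term with a smaller constant, justifying the positive constant $C$ in the statement.

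For the deformation tensor term, $\pi^{\alpha\beta}=D^\alpha T^\beta+D^\beta T^\alpha=g^{\alpha\mu}\Gamma^\beta_{\mu 0}+g^{\beta\mu}\Gamma^\alpha_{\mu 0}$, so $|\pi|\lesssim |\partial g|$, and hence $|Q_{\alpha\beta}\pi^{\alpha\beta}|\lesssim |\partial g|(\partial u)^2$. Using the pointwise decay $|\partial g|\lesssim |\partial \wht g|+|\partial g_{\mathfrak{b}}|\lesssim \ep(1+t)^{-1}$ (up to $(1+t)^\rho$ losses irrelevant for small $\ep$), from Proposition \ref{estks} combined with the structure of $g_{\mathfrak{b}}$, the deformation term is bounded by $\ep(1+t)^{-1}\int w(q)(\partial u)^2\,dvol_g$, exactly the form required on the right-hand side. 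The source term $\int w f\,\partial_t u\,dvol_g$ is kept as is.

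The main obstacle is the pointwise decay of $\partial g$ needed for the $\pi$ estimate: near the light cone $|\partial g|$ does not quite decay as $(1+t)^{-1}$ uniformly. One must either use the null decomposition to check that the ``bad'' components $\partial g_{\bar L \bar L}$ are always contracted with good derivatives of $u$ (which is why the statement loses only a factor $\ep(1+t)^{-1}$ and not a factor in $|q|$), or use the weighted Hardy/Sobolev estimates to trade some weight in $q$ for the missing $t$-decay. Once this is settled, putting the three pieces together and using $\tfrac12\leq \sqrt{|\det g|}\leq 2$ to replace $dvol_g$ by $dx$ (or vice versa) in all the terms gives both displayed inequalities of the proposition.
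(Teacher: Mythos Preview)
Your overall architecture is right and matches the paper: multiply the curved divergence identity by $w(q)$, integrate, and separate into source, weight, and deformation contributions. The gap is in the treatment of the last two, which you flag in your final paragraph but do not actually resolve.

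Concretely, your proposed bound $w'(q)\,|g-m|\,(\partial u)^2$ for the weight error, together with $w'(q)\lesssim w(q)$ and $|g-m|\lesssim \ep(1+s)^{-1/2+\rho}$, produces only $\ep(1+t)^{-1/2+\rho}\,w(q)(\partial u)^2$, not the $\ep(1+t)^{-1}$ claimed; and your bound $|\pi|\lesssim|\partial g|\lesssim \ep(1+t)^{-1}$ is simply false, since $|\partial_t g_{\ba L\ba L}|$ is at best $O\bigl(\ep(1+|q|)^{-1}(1+t)^{-1/2+\rho}\bigr)$. So the argument as written does not close.

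What the paper does, and what is genuinely needed, is a null decomposition of \emph{both} error terms. For the weight term one expands
\[
Q_{TL}=\tfrac12\Bigl((\partial_s u)^2+\bigl(\tfrac{\partial_\theta u}{r}\bigr)^2\Bigr)
+O\bigl(g_{LL}(\partial u)^2\bigr)+O\bigl((g-m)_{\q T\q V}\,\partial u\,\bar\partial u\bigr)+O\bigl((g-m)_{\ba L\ba L}(\bar\partial u)^2\bigr),
\]
and uses the wave-coordinate improvement $|g_{LL}|\lesssim \ep(1+|q|)(1+s)^{-3/2+\rho}$ for the first error, Cauchy--Schwarz on the middle one, and absorbs the last directly. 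For the deformation term one checks that only $\pi_{L\ba L},\pi_{U\ba L},\pi_{\ba L\ba L}$ fail to decay like $\ep(1+t)^{-1}$; but these are contracted against $Q^{L\ba L},Q^{U\ba L},Q^{\ba L\ba L}$, each of which equals $(\bar\partial u)^2$ up to $O(\ep)$ corrections. The resulting bad terms are of the form $\ep\,w(q)(1+|q|)^{-3/2+\rho}(\bar\partial u)^2$, and the crucial closing observation is that all admissible weights satisfy $w(q)(1+|q|)^{-3/2+\rho}\lesssim w'(q)$, so these are absorbed into the good left-hand side for $\ep$ small. This absorption step---not a trade of $|q|$-weight for $t$-decay via Hardy---is the mechanism that makes the estimate work, and it is missing from your sketch.
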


\begin{proof}
We multiply \eqref{eq} by $w(q)$ and integrate it on an hypersurface of constant $t$.
We obtain
\begin{equation}
\label{energi1}-\frac{d}{dt}\left(\int Q_{TT} w(q)dvol_g\right)=\int w(q)\left(f\partial_t u +Q_{\alpha \beta}\pi^{\alpha \beta}\right)dvol_g+\int Q_{T\alpha}D^\alpha wdvol_g.
\end{equation}
We have
$$Q_{T\alpha}D^\alpha w=
-2w'(q)g^{\alpha \ba L}Q_{T\alpha}
=w'(q)Q_{TL}+w'(q)\left(g^{\alpha \ba L}-m^{\alpha \ba L}\right)Q_{T\alpha}.$$
We calculate
\begin{align*}
Q_{TL}=&\partial_t u(\partial_t u+\partial_r u)-\frac{1}{2}\left(-(\partial_t u)^2
+|\nabla u|^2\right)+O\left( g_{LL}(\partial u)^2
+(g-m)_{\q T \q V}\partial u \bar{\partial }u +(g-m)_{\ba L \ba L}(\bar{\partial}u)^2\right).\\
=&\frac{1}{2}\left((\partial_s u)^2+\left(\frac{\partial_\theta u}{r}\right)^2\right)+O\left( g_{LL}(\partial u)^2
+(g-m)_{\q T \q V}\partial u \bar{\partial }u +(g-m)_{\ba L \ba L}(\bar{\partial}u)^2\right).
\end{align*}
We estimate the metric coefficients in the following way : first we estimate $g_{\mathfrak{b}}$ thanks to \eqref{estg0}
$$|g_{\mathfrak{b}}-m|\lesssim \frac{\ep(1+|q|)}{1+s},$$
thanks to \eqref{wintll3} we estimate
$$| \wht g_{LL}|\leq \frac{(1+|q|)}{(1+s)^{\frac{3}{2}-\rho}},$$
thanks to \eqref{iks2bis} we estimate
$$|\wht g_{\q T \q V}|\lesssim \frac{\ep\sqrt{1+|q|}}{\sqrt{1+s}},$$
and thanks to \eqref{bootg1} we estimate
$$|\wht g_{\ba L \ba L}|\lesssim \frac{\ep}{(1+s)^{\frac{1}{2}-\rho}}.$$
Consequently we have
\[
Q_{T\alpha}D^\alpha w=\left((\partial_s u)^2+\left(\frac{\partial_\theta u}{r}\right)^2\right)(1+O(\ep))w'(q)
+O\left(\frac{\ep (1+|q|)}{1+t}(\partial u)^2\right)w'(q)
+O\left(\frac{\ep \sqrt{1+|q|}}{\sqrt{1+t}}\bar{\partial} u\partial u\right)w'(q)
\]
so
$$Q_{T\alpha}D^\alpha w=\left((\partial_s u)^2+\left(\frac{\partial_\theta u}{r}\right)^2\right)(1+O(\ep))w'(q)
+O\left(\frac{\ep (1+|q|)}{1+t}(\partial u)^2\right)w'(q),$$
and  since $|w'(q)|\lesssim \frac{w(q)}{1+|q|}$
\begin{equation}\label{glt}
Q_{T\alpha}D^\alpha w=\left((\partial_s u)^2+\left(\frac{\partial_\theta u}{r}\right)^2\right)(1+O(\ep))w'(q)
+O\left(\frac{\ep w(q)}{(1+t)}(\partial u)^2\right).
\end{equation}

We now estimate the deformation tensor of $T$. We have
$$\pi_{\alpha \beta}= \q L_T g_{\alpha \beta}=\partial_t g_{\alpha \beta}.$$
 We obtain
\begin{align*}
\pi_{LL}&=\partial_T g_{LL}=O\left(\frac{\ep}{1+t}\right),\\
\pi_{UL}&=\partial_T g_{UL}=O\left(\frac{\ep}{1+t}\right),\\
\pi_{L \ba L}&=\partial_T g_{\ba L L}=O\left(\frac{\ep}{1+t}\right)+O\left(\frac{\ep}{(1+|q|)(1+t)^{\frac{1}{2}-\rho}}\right),\\
\pi_{U \ba L}&=\partial_T g_{U \ba L}= O\left(\frac{\ep}{1+t}\right)+O\left(\frac{\ep}{(1+|q|)(1+t)^{\frac{1}{2}-\rho}}\right),\\
\pi_{\ba L \ba L}&= \partial_{T}g_{\ba L \ba L}= O\left(\frac{\ep}{1+t}\right)+O\left(\frac{\ep}{(1+|q|)(1+t)^{\frac{1}{2}-\rho}}\right),\\
\pi_{UU}&=\partial_T g_{UU}= O\left(\frac{\ep}{1+t}\right),
\end{align*}
Consequently, the terms $Q^{LL}\pi_{LL}$, $Q^{UL}\pi_{UL}$ and $Q^{UU}\pi_{UU}$ give contributions of the form
\begin{equation}\label{cont1}
\frac{\ep}{(1+t)}(\partial u)^2.
\end{equation}
We can calculate
\begin{align*}Q_{L\ba L}&= \partial_{L}u\partial_{\ba L}u-\frac{1}{2}g_{L\ba L}\left(2g^{L\ba L}\partial_L u \partial_{\ba L}u +(\partial_U u)^2\right)+O\left( g_{LL}(\partial u)^2
+(g-m)_{\q T \q V}\partial u \bar{\partial }u +(g-m)_{\ba L \ba L}(\bar{\partial}u)^2\right)\\
&=(\partial_U u)^2+O\left(\frac{\ep(1+|q|)}{(1+s)}(\partial u)^2\right) + O\left( \ep (\bar{\partial} u)^2 \right).
\end{align*}
Consequently $Q^{L \ba L}\pi_{L \ba L}$ gives the contribution
\begin{equation}\label{cont2}
\frac{\ep}{(1+t)}(\partial u)^2 + \frac{\ep}{(1+|q|)^{\frac{3}{2}-\rho}}(\bar{\partial }u)^2.
\end{equation}
The terms $Q^{\ba L \ba L}\pi_{\ba L \ba L}$ and $Q^{\ba L U }\pi_{\ba L U}$ also give the contribution \eqref{cont2}.

Thanks to \eqref{energi1}, \eqref{glt}, \eqref{cont1} and \eqref{cont2} what we obtain is
\begin{equation}\begin{split}\label{enee}
&\frac{d}{dt}\left(\int Q_{TT} w(q)dvol_g\right)+\frac{1}{2}\int w'(q)\left((\partial_s u)^2+\left(\frac{\partial_\theta u}{r}\right)^2dvol_g\right)\\
\lesssim& \frac{\ep}{1+t}\int w(q)(\partial u)^2dvol_g
+\ep\int \frac{w(q)}{(1+|q|)^{\frac{3}{2}-\rho}}(\bar{\partial}u)^2dvol_g
+\ep \int w(q)|f\partial_t u |dvol_g.
\end{split}
\end{equation}
All our weight functions satisfy
$$\frac{w(q)}{(1+|q|)^{\frac{3}{2}-\rho}}\lesssim w'(q),$$
therefore, for $\ep$ small enough, we can subtract from our inequality the term 
$$ \ep\int \frac{w(q)}{(1+|q|)^{\frac{3}{2}-\rho}}(\bar{\partial}u)^2dvol_g,$$
and we obtain
\begin{equation*}
\frac{d}{dt}\left(\int Q_{TT} w(q)dvol_g\right)+C\int w'(q)\left((\partial_s u)^2+\left(\frac{\partial_\theta u}{r}\right)^2\right)dvol_g
 \lesssim \frac{\ep}{1+t}\int w(q)(\partial u)^2dvol_g + \int w(q)|f\partial_t u|dvol_g.
\end{equation*}
 This conclude the proof of Proposition \ref{prpweighte}.
\end{proof}
\section{Higher order $L^2$ estimates}\label{sechigh}

\subsection{Estimate of $\partial Z^N \wht g$} \label{secl2gn}
\begin{prp}\label{prpl2whtg}
We have
$$\ld{w_1^\frac{1}{2}\partial Z^N \wht g} \leq C_0 \ep + C\ep^\frac{3}{2}(1+t)^{2\rho}$$
and
$$\int_0^t \ld{w_1'(q)^\frac{1}{2}\bar{\partial}Z^N \wht g}^2d\tau \lesssim C_0^2 \ep^2 + C\ep^3(1+t)^{4\rho}.$$
\end{prp}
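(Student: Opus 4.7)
The plan is to apply the weighted energy estimate of Proposition \ref{prpweighte} with weight $w_1$ to each component of the commuted equation for $Z^N\wht g_{\mu\nu}$, which reads
\begin{equation*}
\Box_g Z^N \wht g_{\mu\nu} = Z^N R^1_{\mu\nu} + {}^NM_{\mu\nu} + {}^NM^E_{\mu\nu} + {}^NQ_{\mu\nu}
\end{equation*}
by Proposition \ref{prpstr}. The energy identity gives
\begin{equation*}
\frac{d}{dt}\ld{w_1^{1/2}\partial Z^N\wht g}^2 + c\int w_1'(q)(\bar{\partial}Z^N\wht g)^2\,dx \lesssim \frac{\ep}{1+t}\ld{w_1^{1/2}\partial Z^N\wht g}^2 + \ld{w_1^{1/2}\,\Box_g Z^N\wht g}\ld{w_1^{1/2}\partial Z^N\wht g},
\end{equation*}
so it suffices to show that $\ld{w_1^{1/2}\Box_g Z^N \wht g}$ is bounded by $\ep^{3/2}(1+t)^{-1+2\rho}$, up to an $L^2_t$-integrable remainder. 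Gronwall then yields the pointwise-in-$t$ estimate and, reinserting it into the identity and integrating, the spacetime estimate on $\bar\partial Z^N\wht g$ with the same constants.

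First I would dispose of $Z^NR^1$ using Proposition \ref{riccigb}: the support is $R\le q\le R+1$ so the weight $w_1$ is bounded there, and the estimates for $b$, in particular \eqref{estb5}--\eqref{estb5ter} and \eqref{estf1}--\eqref{estf2bis}, give $\int_0^t(1+\tau)^{-1}\|Z^N R^1\|_{L^2}^2\,d\tau\lesssim \ep^4(1+t)^{2\rho}$, which is more than enough. Next, for $\ld{w_1^{1/2}\,{}^NM}$ and $\ld{w_1^{1/2}\,{}^NM^E}$, I would split each schematic summand from Proposition \ref{prpstr} into an $L^\infty$ factor carrying time decay and an $L^2$ factor controlled by the bootstrap assumptions. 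The $L^\infty$ factors are supplied by Propositions \ref{linfN} and \ref{linfN5} together with the wave-coordinates improvements of Corollaries \ref{estll}, \ref{estul} and \ref{estuu}; the $L^2$ factors by the weighted Hardy inequality \eqref{hphin}--\eqref{hg2n} and the high-order energy bootstraps \eqref{phiN}--\eqref{g2N}. Every summand then produces at worst $\ep^{3/2}(1+t)^{-1+2\rho}$, the powers of $(1+t)$ being gained either from the good derivative $\bar\partial$ (which contributes $(1+s)^{-1}$ via \eqref{important}) or from the decay of the $L^\infty$ factors; the contributions from ${}^NM^E$ close because the $(1+|q|)^{\delta-\sigma}$ weight in $w_1$ in the exterior dominates $w_1(q)^{1/2}$-weighted products involving $g_{\mathfrak{b}}-m$ thanks to Corollary \ref{corestb}.

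The main obstacle is the cubic term ${}^NQ$, and specifically ${}^NQ_{\ba L \ba L}$, which contains $\partial_q g_{UU}\partial_q\wht g_{\ba L L}$ and $\wht g_{L\ba L}\partial_q G^L$. Here the linear estimates from the wave-coordinates condition are essential: Proposition \ref{estLU} lets me replace $\partial_q g_{UU}$ by $-2G^L$ plus $\bar\partial\wht g$ and $(1+s)^{-1}\wht g$ errors, while Proposition \ref{estLL} lets me replace $\partial_L\wht g_{LL}$ by $\bar\partial\wht g_1$ and $(1+s)^{-1}\wht g_{\q T\q V}$ errors. After these substitutions every surviving product pairs at most one logarithmically growing factor (the component $\wht g_{\ba L \ba L}$, controlled in $L^\infty$ by the bootstrap \eqref{bootg1}) with a factor containing a good derivative or a power of $(1+|q|)^{-1}$ that can be converted, via weighted Hardy, into an $L^2$ quantity controlled by \eqref{g2N}. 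The potentially dangerous term $\wht g_{L\ba L}\partial_q G^L$ is absorbed using the sharp bound \eqref{estGN1} on $\int (1+\tau)\|r\partial_s Z^N G^L\|^2_{L^2(\m S^1)}d\tau$, which together with the integrated spacetime estimate \eqref{ig2Nbis} just closes the Gronwall argument with total growth $(1+t)^{4\rho}$. Combining everything and integrating the differential inequality produces both statements, the spacetime bound coming from the positive second term on the left-hand side of the weighted energy identity.
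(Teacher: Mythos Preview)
Your overall architecture---apply Proposition \ref{prpweighte} with weight $w_1$, decompose the source via Proposition \ref{prpstr}, and estimate each piece by an $L^\infty\times L^2$ splitting---matches the paper's, but you are missing a whole class of source terms that \emph{cannot} be closed this way because they lose one derivative beyond what the bootstrap controls. In the paper these are singled out as a third category $C^N$ (beyond your ``$L^2_t$-integrable remainder'') and require integration by parts inside the energy identity. Concretely:
\begin{itemize}
\item In $\bar\partial Z^N G$ there is a term $\frac{1}{(1+r)^2}(1-\chi(q))\Upr\,\partial_\theta Z^N h$. Corollary \ref{estih} only controls $\|Z^N h\|_{L^2(\m S^1)}$, not $\|\partial_\theta Z^N h\|_{L^2(\m S^1)}$; no bootstrap quantity bounds the latter. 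The paper moves the $\partial_\theta$ onto $\partial_t Z^N\wht g$ by integrating by parts in $\theta$, then in $t$, producing a boundary term plus terms involving only $Z^N h$, $\partial_s Z^N h$ and $\bar\partial Z^N\wht g$, all of which are controlled.
\item In ${}^NM^E$ and ${}^NQ_{\ba L\ba L}$ there appear terms $\sim(\partial\wht g_1)\frac{q}{r^2}\chi(q)\,\partial_\theta^3 Z^N b$. Assumption \eqref{estb4} only controls $\|Z^N b\|_{H^2(\m S^1)}$, so $\partial_\theta^3 Z^N b$ is one derivative too many. The paper subtracts $\Box_g\bigl((r^2g^{\theta\theta})^{-1}q(\partial\wht g_1)\chi(q)\,\partial_\theta Z^N b\bigr)$ to cancel this term modulo lower-order pieces.
\item For $Z^N R^1$, the contribution $\chi'(q)\,s\,\partial_s^2 Z^N b$ cannot be placed in either of your two categories. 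As a $B^N$-type term one would need $\int_0^t(1+s)^4\|\partial_s^2 Z^N b\|_{L^2(\m S^1)}^2\,ds\lesssim\ep^4(1+t)^{4\rho}$, but \eqref{estf2bis} only gives weight $(1+s)^3$ for $\partial_s Z^N f_2$. (Your stated bound $\int(1+\tau)^{-1}\|Z^N R^1\|^2$ is the wrong weight and does not pair with $\|\partial Z^N\wht g\|$ to close.) The paper handles $\chi'(q)s\,\partial_s Z^N f_2$ by integrating by parts in $t$ and $r$ to trade $\partial_s f_2$ for $f_2$ and $\partial_s Z^N\wht g$, at which point \eqref{estf2} and \eqref{estf2bis} suffice; the $f_1$ part is controlled directly via \eqref{estf1}.
\end{itemize}
None of these three mechanisms appears in your plan, and without them the estimate does not close at order $N$. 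Your treatment of ${}^NQ_{\ba L\ba L}$ via the wave-coordinate relations is in the right spirit, though note the relevant factor there is $\partial_q\wht g_{\ba L L}$, not $\partial_L\wht g_{LL}$.
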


\begin{cor}
	\label{corl2whtg} The proof of Proposition \ref{prpl2whtg} gives us also
	$$\int_0^t (1+\tau)^{-2\rho}\ld{w_1'(q)^\frac{1}{2}\bar{\partial}Z^N \wht g}^2 d\tau\lesssim C_0^2 \ep^2 + C\ep^3(1+t)^{2\rho}.$$
\end{cor}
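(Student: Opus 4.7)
The plan is to apply the weighted energy estimate of Proposition \ref{prpweighte} componentwise to each $Z^N \wht g_{\mu\nu}$ with weight $w_1(q)$, which yields
\begin{align*}
&\frac{d}{dt}\|w_1^{1/2}\partial Z^N \wht g\|_{L^2}^2 + c\|w_1'(q)^{1/2}\bar\partial Z^N \wht g\|_{L^2}^2\\
&\lesssim \frac{\ep}{1+t}\|w_1^{1/2}\partial Z^N \wht g\|_{L^2}^2 + \|w_1^{1/2}\Box_g Z^N \wht g\|_{L^2}\|w_1^{1/2}\partial Z^N \wht g\|_{L^2}.
\end{align*}
Using the bootstrap \eqref{g2N}, the first source term contributes at most $C\ep^3(1+t)^{4\rho-1}$ after time integration, so the whole game reduces to estimating $\|w_1^{1/2}\Box_g Z^N \wht g\|_{L^2}$ via the structure decomposition of Proposition \ref{prpstr}: $\Box_g Z^N \wht g = Z^N R^1 + {}^N M + {}^N M^E + {}^N Q$.

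For $Z^N R^1$ I would use Proposition \ref{riccigb}: the pointwise bound has compact $q$-support in $[R,R+1]$ where $w_1$ is bounded, and is a sum of $s\partial_s^2 Z^N b$, $\partial_s^2\partial_\theta Z^N b$ and $\partial_\theta^2 Z^N b$ terms. The integrated estimates \eqref{estb5ter}, \eqref{estf1}, \eqref{estf11} and \eqref{estb4} from the set $\q H$ furnish the requisite $L^2_t$ control after Cauchy--Schwarz. For the ${}^N M$ terms one expands each product according to Proposition \ref{prpstr}: whenever a good derivative $\bar\partial$ falls on $\phi$ or $\wht g$ I would exploit \eqref{iphiN}--\eqref{iZphiN} and \eqref{igN}--\eqref{ig2N}, and whenever a full derivative $\partial$ appears the coefficient in front (typically carrying one power of $\ep(1+s)^{-3/2+\rho}$ or better) has enough $L^\infty$ decay to render the product time-integrable in $L^2_x$. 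The exterior contribution ${}^N M^E$ is treated identically, using the stronger exterior weights inside $w_1$ combined with Corollary \ref{corestb} to kill the polynomial $q$-growth of $g_{\mathfrak{b}}$.

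The genuine difficulty lies in the cubic remainder ${}^N Q$, which violates the null condition but carries the weak null structure. Its $\q T \q T$ component vanishes; the $L\ba L$ and $UL$ pieces pair $\partial_{\ba L}\wht g_{\ba L \ba L}$ with a $\partial_L$ derivative of $g_{LL}$ (or of $\wht g_{UL}+\sigma^0_{UL}$), so the second factor is integrable in time thanks to the improved $L^2$ estimate of Proposition \ref{l2ll} combined with the $L^\infty$ bound \eqref{bootg1} on $\wht g_{\ba L \ba L}$. The $\ba L \ba L$ component is reduced via Proposition \ref{estLU} applied to $\partial_q \wht g_{UU}$, and via formula \eqref{dqG} together with Proposition \ref{prpG} applied to $\partial_q G^L$: in each case the dangerous $\partial_q$ is traded for either a good derivative $\bar\partial$ (absorbable through the integrated bootstrap \eqref{ig2N}, \eqref{idphiN}) or for an explicitly decaying factor bounded by $\ep(1+t)^{-1}(1+|q|)^{4\rho-1}$ coming from Proposition \ref{prpG} and \eqref{estdelta}. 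The lower-order commutator contributions $Z^{I_1}g_{LL}\partial_q^2 Z^{I_2}g_{\ba L \ba L}$ with $I_2<I$ are closed by pairing the pointwise bounds \eqref{wintll2}--\eqref{wintll3} on $g_{LL}$ against the bootstrap \eqref{g2N2}.

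Assembling these contributions one arrives at a differential inequality of the shape
\[ \frac{d}{dt}\|w_1^{1/2}\partial Z^N \wht g\|_{L^2}^2 \leq \frac{C\ep}{1+t}\|w_1^{1/2}\partial Z^N \wht g\|_{L^2}^2 + C\ep^{5/2}(1+t)^{4\rho-1}, \]
which, together with the initial bound $\|w_1^{1/2}\partial Z^N \wht g\|_{L^2}\big|_{t=0}\leq C_0\ep$, yields the first estimate of the proposition after integration; the second estimate is read off simultaneously from the positive good-derivative term on the left. The main obstacle, inherited from the weak null structure of the Einstein equations in this gauge, is precisely the $\ba L \ba L$ component: every occurrence of the dangerous factor $\partial_q \wht g_{\ba L \ba L}$ must be traded, via the modified wave coordinate condition or via the Einstein structure, for either a structurally improved coefficient or a good derivative before the Cauchy--Schwarz step is performed. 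The same scheme proves Corollary \ref{corl2whtg} upon inserting the time weight $(1+\tau)^{-2\rho}$ throughout; since this weight commutes past all spatial operations, it reduces the final $(1+t)^{4\rho}$ to $(1+t)^{2\rho}$.
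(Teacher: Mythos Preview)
Your last sentence captures exactly the paper's argument for the corollary: multiply the differential energy inequality by $(1+t)^{-2\rho}$, observe that $\frac{d}{dt}\big((1+t)^{-2\rho}E\big)\le (1+t)^{-2\rho}\frac{dE}{dt}$ since the weight is decreasing, and then rerun verbatim the proof of Proposition~\ref{prpl2whtg}. So at the level of the corollary itself your approach and the paper's coincide.

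However, the bulk of what you wrote is a sketch of Proposition~\ref{prpl2whtg}, and that sketch has a genuine gap. You reduce everything to a direct Cauchy--Schwarz estimate $\|w_1^{1/2}\Box_g Z^N\wht g\|_{L^2}\|w_1^{1/2}\partial Z^N\wht g\|_{L^2}$ and arrive at a clean differential inequality. The paper cannot do this: it must keep the bilinear form $\int w_1(\partial_t Z^N\wht g)\Box_g Z^N\wht g\,dvol_g$ and split the source into three pieces $A^N+B^N+C^N$, where $C^N$ is handled only by integration by parts in space and in time inside the energy identity. Concretely, your claim that the hypotheses $\q H$ ``furnish the requisite $L^2_t$ control after Cauchy--Schwarz'' for $Z^NR^1$ fails for the term $\chi'(q)\,s\,\partial_s^2 Z^N b$: writing $\partial_s b=f_1+f_2$, the $f_1$ piece is indeed controlled by \eqref{estf1}, but for $s\partial_s Z^N f_2$ one would need $\int_0^t(1+\tau)^4\|\partial_s Z^Nf_2\|_{L^2(\m S^1)}^2d\tau$, whereas only \eqref{estf2bis} (with $(1+\tau)^3$) is available. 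The paper recovers this missing power of $t$ by integrating by parts in $t$ and in $r$, shifting a $\partial_s$ onto $\partial_t Z^N\wht g$ so as to produce $\bar\partial Z^N\wht g$ and close via the good-derivative flux. Two further terms also escape your scheme entirely: $\frac{(1-\chi(q))}{(1+r)^2}\Upr\,\partial_\theta Z^N h$ in $\bar\partial Z^N G$ (there is no bootstrap assumption on $\partial_\theta Z^N h$, so the paper integrates by parts in $\theta$ and then in $t$), and the $\frac{q}{s^2}\partial_\theta^3 Z^N b$ contribution in $M^E$ (the paper absorbs it by subtracting an explicit $\Box_g$-primitive). None of these manoeuvres is compatible with the single-line differential inequality you wrote down.
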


\begin{cor}\label{col2k}
We have
$$\ld{w_1^\frac{1}{2}\partial Z^N k }\leq  C\ep^\frac{3}{2}(1+t)^{2\rho}$$
and
$$\int_0^t \ld{w_1'(q)^\frac{1}{2}\bar{\partial}Z^N k}^2d\tau \lesssim  C\ep^3(1+t)^{4\rho}.$$
$$\int_0^t (1+\tau)^{-2\rho}\ld{w_1'(q)^\frac{1}{2}\bar{\partial}Z^N k}^2d\tau \lesssim  C\ep^3(1+t)^{2\rho}.$$
\end{cor}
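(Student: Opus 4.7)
The plan is to mimic the proof of Proposition \ref{prpl2whtg}, exploiting the fact that $k$ satisfies an equation of the very same type as the problematic $\ba L \ba L$ component of $\wht g$ --- namely $\Box_g k = Q_{\ba L \ba L}$ with $k|_{t=0}=\partial_t k|_{t=0}=0$. Since the initial data vanish, no $C_0 \ep$ contribution will appear in the final estimate, only the quadratic/cubic source terms.

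The starting point is the weighted energy estimate of Proposition \ref{prpweighte} applied to $Z^N k$ with the weight $w_1$:
\[
\frac{d}{dt}\ld{w_1^{\frac{1}{2}}\partial Z^N k}^2
+ C\int w_1'(q)\left((\partial_s Z^N k)^2+\left(\frac{\partial_\theta Z^N k}{r}\right)^2\right)dx
\lesssim \frac{\ep}{1+t}\ld{w_1^{\frac{1}{2}}\partial Z^N k}^2 + \ld{w_1^{\frac{1}{2}}\Box_g Z^N k}\ld{w_1^{\frac{1}{2}}\partial Z^N k}.
\]
To bound $\Box_g Z^N k$ I would commute $Z^N$ through $\Box_g$, producing $Z^N Q_{\ba L \ba L}$ plus commutator terms of the same structural type as those appearing in $\Box_g Z^N \wht g$ in Proposition \ref{prpstr}. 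These commutator terms involve $Z^{I_1}(g-g_{\mathfrak{b}})$ and $\partial^2 Z^{I_2} k$ with $I_1+I_2\le N$, $I_2\le N-1$, and can be treated exactly as the corresponding pieces of ${}^NM$, ${}^N M^E$ from Proposition \ref{prpstr}, where for low-order $Z^{I_2}k$ one uses Corollary \ref{linfk} instead of the $L^\infty$ estimates on $\wht g$.

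The main terms are the direct source $Z^N Q_{\ba L \ba L}$. The estimates for ${}^N Q_{\ba L \ba L}$ listed in Proposition \ref{prpstr} therefore apply verbatim, and the very same combinations used in the proof of Proposition \ref{prpl2whtg} for the $\ba L \ba L$ component of $\wht g$ give the bounds
\[
\ld{w_1^{\frac{1}{2}}\, Z^N Q_{\ba L \ba L}} \lesssim \frac{\ep^{\frac{3}{2}}}{(1+t)^{1-2\rho}},
\]
the loss $(1+t)^{2\rho}$ coming (as in \eqref{estdeh} and the discussion of non-commutation in Section \ref{noncomu}) from the logarithmic growth allowed for $\wht g_{\ba L \ba L}$; this is exactly what $w_1$ (versus $w_2$) is tailored to absorb. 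Plugging this into the energy inequality, Gronwall in the form $\ep/(1+t)$ gives the $\ep$ factor, and time integration of $(1+t)^{-1+2\rho}$ gives the $(1+t)^{2\rho}$ growth, yielding
\[
\ld{w_1^{\frac{1}{2}}\partial Z^N k}\leq C\ep^{\frac{3}{2}}(1+t)^{2\rho}, \qquad \int_0^t \ld{w_1'(q)^{\frac{1}{2}}\bar\partial Z^N k}^2\, d\tau \leq C\ep^3(1+t)^{4\rho}.
\]

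For the last, improved integrated estimate, I would multiply the energy inequality by $(1+t)^{-2\rho}$ before integrating in time, as in Corollary \ref{corl2whtg}; the factor $(1+t)^{-2\rho}$ converts one of the $(1+t)^{2\rho}$ losses into a $O(1)$ contribution and delivers
\[
\int_0^t (1+\tau)^{-2\rho}\ld{w_1'(q)^{\frac{1}{2}}\bar\partial Z^N k}^2\, d\tau \leq C\ep^3(1+t)^{2\rho}.
\]
The only delicate step --- and the main obstacle --- is verifying that all terms in $Z^N Q_{\ba L \ba L}$ fit within the weight $w_1$ (in particular the cubic term $\wht g_{L\ba L}\partial_q G^L$, whose $\partial_q G^L$ contains $2(\partial_q\phi)^2 + \frac{\partial_q^2(q\chi(q))}{r}h$ and therefore couples $k$ to $\phi$ and $h$); but these are precisely the terms already handled in the $\wht g_{\ba L \ba L}$ piece of Proposition \ref{prpl2whtg}, so their treatment can be copied \emph{mutatis mutandis}.
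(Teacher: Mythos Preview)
Your proposal is correct and follows essentially the same approach as the paper: apply the weighted energy estimate of Proposition \ref{prpweighte} to $Z^N k$ with weight $w_1$, observe that $\Box_g Z^N k$ contains $Z^N Q_{\ba L\ba L}$ plus commutator terms of the same form as those already bounded in the proof of Proposition \ref{prpl2whtg}, and use that $k$ has vanishing initial data to drop the $C_0\ep$ term. The paper's own proof is in fact even terser than yours --- it simply remarks that $\Box_g k$ satisfies the same estimates as $\Box_g \wht g$ and that the initial data vanish --- and for the third inequality it likewise refers back to the $(1+t)^{-2\rho}$ multiplication trick of Corollary \ref{corl2whtg}, exactly as you do.
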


\begin{proof}[Proof of Proposition \ref{prpl2whtg}]
We use the weighted energy estimate in the curved metric. Thanks to Proposition \ref{prpweighte} we have
\begin{align*}&\frac{d}{dt}\left(\int (\partial Z^N \wht g)^2w_1(q)dvol_g\right)+C\int w_1'(q)(\bar{\partial}Z^N \wht g)^2dx\\
 \lesssim &\frac{\ep}{1+t}\int w_1(q)(\partial Z^N \wht g)^2dx + \int w_1(q)(\partial_t Z^N \wht g) \Box_g Z^N \wht gdvol_g.
 \end{align*}
And consequently
\begin{equation}\label{ene}\begin{split}
&\left\|w_1^\frac{1}{2} \partial Z^N \wht g(t)\right\|^2_{L^2}+\int_0^t \left\|w_1'(q)^\frac{1}{2}\bar{\partial}Z^N \wht g(\tau)\right\|^2_{L^2}d\tau\\
&\lesssim \left\|w_1^\frac{1}{2} \partial Z^N \wht g(0)\right\|_{L^2}+\int_0^t \frac{\ep}{1+\tau}\left\|w_1^\frac{1}{2} \partial Z^N \wht g(\tau)\right\|^2_{L^2}d\tau + \left|\int_0^t\int w_1(\partial_\tau Z^N \wht g) \Box_g Z^N \wht g dvol_gd\tau\right|.
\end{split}
\end{equation}
First, thanks to \eqref{g2N} we have
\begin{equation}
\label{gn0}
\int_0^t \frac{\ep}{1+\tau}\left\|w_1^\frac{1}{2} \partial Z^N \wht g(t)\right\|^2_{L^2}\lesssim \int_0^t \frac{\ep^3}{(1+\tau)^{1-4\rho}} \lesssim \ep^3(1+t)^{4\rho}.
\end{equation}
We will decompose $\Box_g Z^N \wht g=A^N +B^N + C^N$ where
$$\ld{w_1^\frac{1}{2} A^N}\lesssim \frac{\ep^2}{(1+t)^{1-2\rho}},$$
$$\int_0^t \ep^{-1}(1+\tau)\ld{w_1^\frac{1}{2}B^N}^2 d\tau\lesssim \ep^3(1+t)^{4\rho},$$
and $C^N$ is dealt with in a specific manner (like integration by part).  We note that since $-\frac{1}{2}\leq \sqrt{|det(g)|}\leq \frac{3}{2}$, this factor do not matter when we study $A^N$ or $B^N$. However we need to keep it when we do integration by parts, so when we study the $C^N$ terms. 
A term $A^N$ will give the contribution
\begin{equation}\label{an}
\int_0^t \left|\int w_1(\partial_t Z^N \wht g) A^N dvol_g\right|\lesssim \int_0^t \|w_1^\frac{1}{2} A^N\|_{L^2}\ld{w_1^\frac{1}{2}\partial Z^N \wht g}\lesssim \int_0^t \frac{\ep^3}{(1+\tau)^{1-4\rho}}\lesssim \ep^3(1+t)^{4\rho},
\end{equation}
and a term $B^N$ will give the contribution
\begin{equation}\label{bn}\begin{split}
\int_0^t \left|\int w_1(\partial_t Z^N \wht g) B^N dvol_g\right|&\lesssim \int_0^t \ep^{-1}(1+\tau)\|w_1^\frac{1}{2} B^N\|_{L^2}^2d\tau
+\int_0^t \frac{\ep}{1+\tau}\ld{w_1^\frac{1}{2}\partial Z^N \wht g}^2\\
&\lesssim \ep^3(1+t)^{4\rho}+\int_0^t \frac{\ep^3}{(1+\tau)^{1-4\rho}}\lesssim \ep^3(1+t)^{4\rho}.
\end{split}
\end{equation}

We estimate $\Box_g Z^N \wht g$ thanks to Proposition \ref{prpstr}.
$$|\Box_g Z^N \wht g|\lesssim |{}^N Q|+|{}^N M|+|{}^N M^E|.$$
We start with ${}^IQ_{\ba L \ba L}$
\begin{align*}
|{}^NQ_{\ba L  \ba L}|&\lesssim 
\frac{\ep}{(1+|q|)(1+s)^{\frac{1}{2}-\rho}}\left(|\bar{\partial}Z^N \wht g_1|+\frac{1}{1+|q|}|Z^N g_{LL}|\right)
+\left(\frac{\ep^2}{(1+s)^\frac{3}{2}}+\frac{\ep}{(1+s)(1+|q|)^{1-4\rho}}\right)|\partial Z^N \wht g_1|\\
+&\ep\min\left(\frac{1}{(1+|q|)(1+s)^{\frac{1}{2}-\rho}},\frac{1}{(1+|q|)^{\frac{1}{2}}(1+s)^\frac{1}{2}}\right)(|\bar{\partial}Z^N \wht g|+|Z^N G^L|)\\
+ & \ch_{q>R} \min\left(\frac{\ep}{(1+s)^{\frac{1}{2}-\rho}(1+|q|)^{\frac{5}{2}+\delta}}, \frac{\ep}{(1+s)^{\frac{1}{2}}(1+|q|)^{\frac{5}{2}+\delta+\sigma}}\right)\frac{q}{s}|Z^N \partial^2_\theta b|
+ \ch_{q>R}\frac{\ep}{1+s}|\partial Z^N \wht g_1|\\
+&\ch_{q>R}\frac{\ep}{(1+|q|)^{\frac{3}{2}+\delta}(1+s)^{\frac{1}{2}-\rho}}\left(s|\partial^2_s Z^N b|
+q|\partial^2_s \partial_\theta Z^N b|+
\frac{q}{s}|Z^N \partial_s \partial^2_\theta b|
+\frac{q}{s^2}|Z^N \partial^3_\theta b|\right).
\end{align*}
We estimate the contributions term by term
\begin{equation}\label{l21}\begin{split}
&\int_0^t \ep^{-1}(1+\tau)\ld{w_1^\frac{1}{2}\frac{\ep}{(1+|q|)(1+s)^{\frac{1}{2}-\rho}}\bar{\partial }Z^N \wht g_1}^2d\tau\\
&\lesssim \ep(1+t)^{2\rho}\int_0^t \ld{w_2'(q)^\frac{1}{2}\bar{\partial}Z^N \wht g_1}^2d\tau\\
&\lesssim \ep^3(1+t)^{4\rho},
\end{split}
\end{equation}
where we have used $\frac{w_1}{(1+|q|)^2} \lesssim w_2'(q)$ and the bootstrap assumption \eqref{igN}.
We estimate
\begin{equation}
\label{l22}\begin{split}&\int_0^t \ep^{-1}(1+\tau)\left\|\frac{\ep w_1^\frac{1}{2}}{(1+|q|)^2(1+s)^{\frac{1}{2}-\rho}} Z^N \wht g_{LL}\right\|_{L^2}^2d\tau\\
&\lesssim \ep(1+t)^{2\rho}\int_0^t \left\|\frac{w_1^\frac{1}{2}}{(1+|q|)^2} Z^N \wht g_{LL}\right\|^2_{L^2}d\tau\\
&\lesssim \ep^3(1+t)^{4\rho}
\end{split}
\end{equation}
thanks to \eqref{hwl2dlln}. We estimate
\begin{equation}
\label{l23}\left\|w_1^\frac{1}{2}\frac{\ep}{(1+s)(1+|q|)^{\frac{1}{2}}}\partial Z^N \wht g_1\right\|_{L^2}
\lesssim \frac{\ep}{1+t}\|w_1^\frac{1}{2}\partial Z^N \wht g\|_{L^2}\lesssim \frac{\ep^2}{(1+t)^{1-2\rho}}.
\end{equation}
We have
\begin{equation}\label{l26}\begin{split}&\int_0^t \ep^{-1}(1+\tau)\ld{w_1^\frac{1}{2}\frac{\ep}{\sqrt{1+s}\sqrt{1+|q|}}\bar{\partial}Z^I \wht g}d\tau\\
&\lesssim \ep \int_0^t \ld{w_1'(q)^\frac{1}{2}\bar{\partial}Z^N \wht g}^2d\tau\\
&\lesssim \ep^3(1+t)^{4\rho},
\end{split}
\end{equation}
\begin{equation}
\label{l25}\left\|\ep \frac{w_1^\frac{1}{2}}{\sqrt{1+s}\sqrt{1+|q|}} Z^N G^L\right\|_{L^2}
\lesssim \frac{\ep}{(1+t)}\left( \int \frac{1}{(1+|q|)^{1+2\sigma}} \|rZ^N G^L\|^2x_{L^2(\m S^1)} dr\right)^{\frac{1}{2}}\lesssim
\frac{\ep^2}{(1+t)^{1-\rho}},
\end{equation}
thanks to \eqref{estGN}.
We estimate
\begin{equation}\label{l25bbis}\begin{split}&\left\| w_1^\frac{1}{2} \ch_{q>R} \frac{\ep}{(1+s)^{\frac{3}{2}-\rho}(1+|q|)^{\frac{3}{2}+\delta}} \partial^2_\theta Z^N b\right\|_{L^2}\\
&\lesssim \ep\left(\int \frac{1}{(1+|q|)^{1+2\sigma}(1+s)^{3-2\rho}}\|\partial^2_\theta Z^N b\|^2_{L^2(\m S^1)}rdr 
\right)^\frac{1}{2}
\lesssim \frac{\ep^2}{(1+t)^{1-\rho}},
\end{split}
\end{equation}
where we have used \eqref{estb4}.
We can bound $M^E$ 
\begin{align*}
|M^E|\lesssim &\frac{1}{1+s}\left(|\partial Z^N \wht g| +\frac{1}{1+|q|}|Z^N \wht g|\right)\\
&+\frac{\ep}{(1+s)(1+|q|)^{2+\delta-\rho}} \left(s|\partial_s Z^I b|
+q|\partial_s \partial_\theta Z^I b|+\frac{q}{s}|Z^I \partial^2_\theta b|\right)\\
&+\frac{\ep}{(1+|q|)^{\frac{3}{2}+\delta}(1+s)^{\frac{1}{2}-\rho}}\left(s|\partial^2_s Z^I b|
+q|\partial^2_s \partial_\theta Z^I b|+
\frac{q}{s}|Z^I \partial_s \partial^2_\theta b|
+\frac{q}{s^2}|Z^I \partial^3_\theta b|\right).
\end{align*}
Consequently the estimate for $M^E$ will also give the remaining of the estimate of $Q_{\ba L \ba L}$.
We estimate
\begin{equation}\label{l25bis}\ld{w_1^\frac{1}{2}\frac{\ep}{1+s}\left(|\partial Z^N \wht g| +\frac{1}{1+|q|}|Z^N \wht g|\right)}
\lesssim \frac{\ep}{1+t}\ld{w_1^\frac{1}{2}\partial Z^N \wht g} \lesssim \ep^2\frac{1}{(1+t)^{1-2\rho}}.
\end{equation}
$$\ld{w_1^\frac{1}{2}\frac{\ep}{(1+|q|)^{\frac{3}{2}+\delta-\rho}} \partial_s \partial_\theta Z^I b}
\lesssim \ep \left(\int \frac{ 1}{(1+|q|)^{1+2\sigma-2\rho}}\|\partial_s  Z^I b\|^2_{H^1(\m S^1)} rdr\right)^{\frac{1}{2}}.$$
and so
\begin{equation}
\label{l27}\begin{split}
&\int_0^t \ep^{-1}(1+\tau)\ld{w_1^\frac{1}{2}\frac{\ep}{(1+|q|)^{\frac{3}{2}+\delta-\rho}} \partial_s \partial_\theta Z^I b}^2d\tau\\
&\lesssim \int_0^t \int_{\tau+R}^\infty  \frac{ (1+\tau)r}{(1+|q|)^{1+2\sigma-2\rho}}\|\partial_s Z^I b\|_{H^1}^2dr d\tau\\
&\lesssim \int_R^\infty \frac{1}{(1+|q|)^{1+2\sigma-2\rho}} \left(  \int_0^{2t+q} (1+s)^2\|\partial_s Z^I b\|_{H^1}^2ds\right)dq\\
&\lesssim (1+t)^{2\rho},
\end{split}
\end{equation}
where we have used \eqref{estf1bis1} and \eqref{estf2}.
The term involving $\partial^2_\theta Z^N b$ has already been estimated. We now estimate
$$\ld{w_1^\frac{1}{2}\frac{\ep}{(1+|q|)^{\frac{3}{2}+\delta}(1+s)^{\frac{1}{2}-\rho}}s\partial^2_s \partial_\theta Z^I b }
\lesssim \ep \left(\int \frac{ 1}{(1+|q|)^{1+2\sigma}}
s^{1+2\rho}\|\partial^2_s  Z^I b\|^2_{H^1(\m S^1)} rdr\right)^{\frac{1}{2}}
,$$
and so
\begin{equation}
\label{l28}\int_0^t \ep^{-1}(1+\tau)\ld{w_1^\frac{1}{2}\frac{\ep}{(1+|q|)^{\frac{3}{2}+\delta}(1+s)^{\frac{1}{2}-\rho}}s\partial^2_s  \partial_\theta Z^I b }^2d\tau
\lesssim \ep\int_0^t (1+s)^{3+2\rho}\|\partial^2_s Z^I b\|^2_{H^1(\m S^1)}ds\lesssim \ep^2(1+t)^{4\rho},
\end{equation}
where we have used \eqref{estf11} and \eqref{estf2bis}.
$$\ld{w_1^\frac{1}{2}\frac{\ep}{(1+|q|)^{\frac{3}{2}+\delta}(1+s)^{\frac{1}{2}-\rho}}\partial_s \partial^2_\theta Z^I b }
\lesssim \ep \left(\int \frac{ 1}{(1+|q|)^{1+2\sigma}}
(1+s)^{2\rho}\|\partial_s  Z^I b\|^2_{H^2(\m S^1)} rdr\right)^{\frac{1}{2}},$$
and so
\begin{equation}
\label{l210}\int_0^t \ep^{-1}(1+\tau)\ld{w_1^\frac{1}{2}\frac{\ep}{(1+|q|)^{\frac{3}{2}+\delta}(1+s)^{\frac{1}{2}-\rho}}|\partial_s \partial^2_\theta Z^I b| }^2d\tau
\lesssim \ep\int_0^t (1+s)^{1+2\rho}\|\partial_s \partial^2_\theta Z^I b\|^2_{H^1(\m S^1)}ds\lesssim \ep^2(1+t)^{4\rho},
\end{equation}
where we have used \eqref{estb5}.
We now turn to the term involving $\partial^3_\theta Z^N b.$ Unfortunately, we don't have a good estimate for $\partial^3_\theta Z^N b$.
 To treat terms of the form
 $$(\partial \wht g_1) \frac{q}{r^2}\chi(q)\partial^3_\theta Z^N b,$$
 we remark that
\begin{align*}&\left|(\partial \wht g_1) \frac{q}{r^2}\chi(q)\partial^3_\theta Z^N b-\Box_g ( (r^2g^{\theta \theta})^{-1}q(\partial \wht g_1) \chi(q)\partial_\theta Z^N b)\right|\\&\lesssim 
\frac{\ep}{(1+s)^{\frac{3}{2}-\rho}(1+|q|)^{\frac{3}{2}+\delta}}|\partial^2_\theta Z^N b|
+\frac{\ep}{(1+s)^{\frac{3}{2}-\rho}(1+|q|)^{\frac{1}{2}+\delta}}|\partial_s\partial^2_\theta Z^N b|
+\frac{\ep}{(1+s)^{\frac{1}{2}-\rho}(1+|q|)^{\frac{1}{2}+\delta}}|\partial^2_s\partial_\theta Z^N b|.
\end{align*}
and all these terms have already been estimated.
We easily check that $q(r^2g^{\theta \theta})^{-1}(\partial \wht g_1) \chi(q)\partial_\theta Z^N b$
satisfy the same estimates as the one we want $\wht g$ to satisfy : for instance
$$\int_{\m R^2} w_1 |(r^2g^{\theta \theta})^{-1}\partial \wht g_1 \chi(q)\partial_\theta Z^N b|^2 dx\lesssim \int \frac{s^{2\rho}}{(1+|q|)^{1+2\rho}}\|\partial_\theta Z^N b\|^2_{L^2(\m S^1)}dr \lesssim \ep^4(1+t)^{4\rho}.$$

The terms $Q_{U\ba L}$ and $Q_{L \ba L}$ can be estimated in the same manner as $Q_{\ba L \ba L}$.

${}^IM$ can be estimated by
$$\frac{\ep}{1+s}|\partial Z^N \wht g|+ \frac{\ep}{\sqrt{1+|q|}\sqrt{1+s}}|\bar{\partial} Z^N \wht g|+\frac{\ep}{(1+s)(1+|q|)}|Z^N \wht g|
+\frac{1}{1+s}|Z^N G|+ |\bar{\partial} Z^N G|.
$$
All these terms have already been estimated, except
the last two. They can be estimated by
\begin{align*} &\chi'(q)( |\partial_s \partial_\theta Z^Nb|+s|\partial^2_s Z^Nb|)
+\frac{1}{1+s}|Z^NG^L|
+\frac{1}{1+s}\Upr\left|Z^N\bar{\partial} \int_{\infty}^r (\partial_q \phi)^2rdr\right|\\
&+\frac{1}{(1+r)^2}(1-\chi(q))\Upr|\partial_\theta Z^N h|+\frac{1}{r}(1-\chi(q))\Upr|\partial_s Z^N h|.
\end{align*}
We estimate $\frac{1}{1+s}\Upr\left|Z^N\bar{\partial} \int_{\infty}^r (\partial_q \phi)^2r'dr'\right|$.
We have
\begin{align*}\left\|\bar{\partial}Z^N \int (\partial_q \phi)^2r'dr'\right\|_{L^2(\m S^1)}
&\lesssim \int \frac{\ep}{\sqrt{1+s}(1+|q|)^{\frac{3}{2}-4\rho}}\|\bar{\partial}Z^N \phi\|_{L^2(\m S^1)}r'dr'\\
&\lesssim \ep \ld{w'(q)^\frac{1}{2}\bar{\partial} \partial Z^N \phi},
\end{align*}
so
\begin{equation}\label{l211}\int_0^t \ep^{-1}(1+\tau)\ld{w_1^\frac{1}{2}\frac{1}{1+s}\Up\bar{\partial}Z^N \int (\partial_q \phi)^2r'dr'}^2 d\tau
\lesssim \int_0^t \ep\ld{w'(q)^\frac{1}{2}\bar{\partial} \partial Z^N \phi}^2d\tau\lesssim \ep^3(1+t)^{2\rho}.
\end{equation}
The contributions \eqref{l23}, \eqref{l25} and \eqref{l25bbis} correspond to $A^N$, and the contributions \eqref{l21}, \eqref{l22}, \eqref{l25bis}, \eqref{l26}, \eqref{l27}, \eqref{l28}, \eqref{l210} and \eqref{l211} correspond to $B^N$.

The terms we will now estimate correspond to $C^N$.
We now estimate the contribution of
$$\frac{1}{(1+r)^2}(1-\chi(q))\Upr\partial_\theta Z^N h.
$$
which appears in $\bar{\partial} Z^N G$. The estimates for $Z^N h$ are given by Corollary \ref{estih}, but we do not have bootstrap assumptions for  $\partial_\theta Z^N h$. Consequently, we will estimate this term with integration by parts in the energy estimate. We calculate
\begin{align*}
&\int_0^t \int w_1 \frac{1}{(1+r)^2}\Uprt(1-\chi(q))(\partial_\theta Z^N h) (\partial_\tau Z^N \wht g)dvol_gd\tau\\
=&\int_0^t \int  w_1 \frac{1}{(1+r)}\Uprt(1-\chi(q))(Z^N h) (\partial_\tau\bar{\partial} Z^N \wht g)dvol_g d\tau
\\
&+\int_0^t \int  w_1 \frac{1}{(1+r)^2}\Uprt(1-\chi(q))(Z^N h) (\partial_\tau Z^N \wht g)(\partial_\theta \sqrt{|det(g)|})dx d\tau
\\
=&\left[\int  w_1 \frac{1}{(1+r)}\Uprt(1-\chi(q))(Z^N h) (\bar{\partial} Z^N \wht g) dvol_g\right]_0^t\\
&-\int_0^t\int w_1\Uprt \frac{1}{(1+r)}(1-\chi(q))(\partial_s Z^N h)(\bar{\partial} Z^N \wht g)dvol_gd\tau\\
&-\int_0^t \int \partial_\tau\left (w_1(1-\chi(q))\Uprt \sqrt{|det(g)|}\right)\frac{1}{(1+r)} (Z^N h)(\bar{\partial} Z^N \wht g)dxd\tau\\
&+\int_0^t \int  w_1 \frac{1}{(1+r)^2}\Uprt(1-\chi(q))(Z^N h) (\partial_\tau Z^N \wht g)(\partial_\theta \sqrt{|det(g)|})dx d\tau.
\\
\end{align*}
We estimate
\begin{equation}\label{l212}\begin{split}&\left|\left[\int  w_1 \frac{1}{(1+r)}\Uprt(1-\chi(q))(Z^N h) (\bar{\partial} Z^N \wht g) dvol_g\right]_0^t\right|\\
&\lesssim
\ld{ w_1^\frac{1}{2} \frac{1}{1+r}\Upr(1-\chi(q))Z^N h}\ld{w_1^\frac{1}{2}\partial Z^N \wht g} \lesssim \ep (1+t)^{3\rho},
\end{split}
\end{equation}
\begin{align*}
&\left|\int_0^t\int w_1\frac{1}{(1+r)}\Uprt(1-\chi(q))(\partial_s Z^N h)(\bar{\partial} Z^N \wht g)dvol_gd\tau\right|\\
&\lesssim \int_0^t \ld{w_1^\frac{1}{2} \frac{1}{(1+r)}(1-\chi(q))\Uprt\partial_s Z^N h}
\ld{w_1^\frac{1}{2}\partial Z^N \wht g}\\
&\lesssim \int_0^t \ep^{-1}(1+\tau)\int w_1 \frac{(1-\chi(q))^2}{r^2}\Uprt\|\partial_s Z^N h\|^2_{L^2(\m S^1)}
rdrd\tau + \int_0^t \frac{\ep}{(1+\tau)}\ld{w_1^\frac{1}{2}\partial Z^N \wht g}^2d\tau\\
&\lesssim \ep^3(1+t)^{4\rho}+\int_0^t (1+s)\|\partial_s Z^N h\|^2_{L^2(\m S^1)}ds
\end{align*}
and consequently
\begin{equation}
\label{l213}\left|\int_0^t\int w_1\frac{1}{(1+r)}\Uprt(1-\chi(q))(\partial_s Z^N h)(\bar{\partial} Z^N \wht g) dvol_g d\tau\right|\lesssim \ep^3(1+t)^{4\rho}
\end{equation}
\begin{align*}
&\left|
\int_0^t \int \partial_\tau \left(w_1(1-\chi(q))\Uprt \sqrt{|det(g)|}\right)\frac{1}{(1+r)} (Z^N h)\bar{\partial} Z^N \wht g dxt\tau\right|\\
&\lesssim \int_0^t \ld{w^\frac{1}{2}_1(1+|q|)^{-\frac{1}{2}} \Uprt\frac{1}{(1+r)}(1-\chi(q)) Z^N h}
\ld{w'_1(q)^\frac{1}{2}\bar{\partial} Z^N \wht g}\\
&\lesssim \int_0^t \ep^{-1}\int w_1 \frac{(1-\chi(q))^2}{r^2(1+|q|)}\Uprt\| Z^N h\|^2_{L^2(\m S^1)}
rdr d\tau+ \int_0^t \ep\ld{w'_1(q)^\frac{1}{2}\bar{\partial} Z^N \wht g}^2d\tau\\
&\lesssim \ep^3(1+t)^{4\rho}+\frac{1}{\ep}\int_0^t \frac{1}{1+\tau}\| Z^N h\|^2_{L^2(\m S^1)}d\tau
\end{align*}
and consequently
\begin{equation}
\label{l214}\left|
\int_0^t \int \partial_\tau \left(w_1(1-\chi(q))\Uprt\sqrt{|det(g)|}\right)\frac{1}{(1+r)} Z^N h\bar{\partial} Z^N \wht gdxt\tau\right|\lesssim \ep^3(1+t)^{4\rho}.
\end{equation}
The last term can be estimated in the same way.
The term $\chi'(q) s\partial^2_s Z^N b$, which is also present in $R^1_{\mu \nu}$ can be estimated in the following way :
we estimate first the term
$$\chi'(q)s\partial_s  Z^N f_2,$$
where we use the decomposition of $\partial_s Z^N b$ \eqref{decb}.
We calculate
\begin{align*}
& \int w_1(q)\chi'(q)s(\partial_s Z^N f_2) (\partial_t Z^N \wht g) \sqrt{|det(g)|}rdrd\theta\\
 &=\frac{1}{2}\partial_t \int w_1(q)\chi'(q)s (Z^N f_2 )(\partial_t Z^N \wht g) \sqrt{|det(g)|}rdrd\theta
 -\frac{1}{2}\int \partial_t( w_1(q)\chi'(q)s r\sqrt{|det(g)|}) (Z^N f_2) (\partial_t Z^N \wht g) drd\theta\\
 &-\frac{1}{2}\int  w_1(q)\chi'(q)s (Z^N f_2) (\partial^2_t Z^N \wht g)\sqrt{|det(g)|}rdrd\theta\\
 &-\frac{1}{2}\int \partial_r( w_1(q)\chi'(q)s r\sqrt{|det(g)|}) (Z^N f_2) (\partial_t Z^N \wht g) drd\theta
 -\frac{1}{2}\int  w_1(q)\chi'(q)s (Z^N f_2) (\partial_t \partial_r Z^N \wht g )\sqrt{|det(g)|}rdrd\theta\\
 &=\frac{1}{2}\partial_t \int w_1(q)\chi'(q)s (Z^N f_2 )(\partial_t Z^N \wht g)\sqrt{|det(g)|} rdrd\theta
  -\int \partial_s( w_1(q)\chi'(q)s\sqrt{|det(g)|})(Z^N f_2) (\partial_t Z^N \wht g) rdrd\theta\\
&  -\partial_t  \int w_1(q)\chi'(q)s (Z^N f_2) (\partial_s Z^N \wht g) \sqrt{|det(g)|}rdrd\theta
  +\int \partial_t(w_1(q)\chi'(q)sZ^N f_2)(\partial_s Z^N \wht g)\sqrt{|det(g)|} rdrd\theta\\
&=\partial_t A   -\int \partial_s( w_1(q)\chi'(q)s\sqrt{|det(g)|})(Z^N f_2) (\partial_t Z^N \wht g) rdrd\theta
  -\int \partial_t(w_1(q)\chi'(q)sZ^N f_2\sqrt{|det(g)|})(\partial_s Z^N \wht g) rdrd\theta
\end{align*}
where we have noted
$$A=\frac{1}{2}\int w_1(q)\chi'(q)s Z^N f_2 \partial_t Z^N \wht g\sqrt{|det(g)|} rdrd\theta- \int w_1(q)\chi'(q)s Z^N f_2 \partial_s Z^N \wht g \sqrt{|det(g)|}rdrd\theta.$$
We estimate, noticing that in the region $\chi'(q)\neq 0$ we have $t\sim s \sim r$, $q$ is bounded from above and from bellow and that $|\partial_s (s\sqrt{|det(g)|})|\lesssim 1$,
\begin{align*}
\int_0^t\left|\int w_1(q)\chi'(q)(Z^N f_2) (\partial_t Z^N \wht g) rdrd\theta\right|
&\lesssim\int_0^t \left(\int |\chi'(q)|( Z^N f_2)^2rdrd\theta\right)^\frac{1}{2}\left(\int |\chi'(q)|(\partial_t Z^N \wht g) rdrd\theta\right)^\frac{1}{2}\\
&\lesssim \int_0^t\frac{\ep}{1+t}\|w_1^\frac{1}{2}\partial Z^N \wht g\|^2_{L^2}
+\int_0^t\int \frac{1}{\ep}(1+\tau)^2|\chi'(q)|\| Z^N f_2\|^2_{L^2(\m S^1)}drd\tau\\
\end{align*}
and so
\begin{equation}
\label{l215}
\int_0^t \left|\int w_1(q)\chi'(q)(Z^N f_2) (\partial_t Z^N \wht g) rdd\theta\right| \lesssim \ep^3(1+t)^{4\rho}
+\int_0^t (1+s)^2\|Z^N f_2\|^2_{L^2(\m S^1)}ds\lesssim \ep^3(1+t)^{4\rho},
\end{equation}
where we have used \eqref{estf2}. Noticing that $|\partial_\tau(w_1(q)\chi'(q)s\sqrt{|det(g)|})| \lesssim s$ we estimate
\begin{align*}
&\int_0^t\left| \int \partial_\tau(w_1(q)\chi'(q)s\sqrt{|det(g)|})(Z^N f_2)(\partial_s Z^N \wht g) rdrd\theta\right|\\
&\lesssim \int_0^t\left(\int |\chi'(q)|( sZ^N f_2)^2\theta\right)^\frac{1}{2}\left(\int |\chi'(q)|(\partial_s Z^N \wht g )^2rdd\theta\right)^\frac{1}{2}+s.t.\\
&\lesssim \ep \int_0^t\|w_1'(q)^\frac{1}{2}\bar{\partial} Z^N \wht g\|^2_{L^2}d\tau
+\int_0^t\int \frac{1}{\ep}(1+\tau)^3\chi'(q)^2\| Z^N f_2\|^2_{L^2(\m S^1)}dr d\tau
\end{align*}
and so
\begin{equation}
\label{l216}\int_0^t\left| \int \partial_t(w_1(q)\chi'(q)s)Z^N f_2\partial_s Z^N \wht g rdrd\theta\right|d\tau\lesssim \ep^3(1+t)^{4\rho},
\end{equation}
where we have used \eqref{estf2}.
\begin{align*}
&\int_0^t\left| \int w_1(q)\chi'(q)s(\partial_s Z^N f_2)(\partial_s Z^N \wht g) rdrd\theta\right|d\tau\\
&\lesssim \int_0^t\left(\int |\chi'(q)|(s \partial_s Z^N f_2)^2rdrd\theta\right)^\frac{1}{2}\left(\int |\chi'(q)|(\partial_s Z^N \wht g)^2 rdrd\theta\right)^\frac{1}{2}d\tau\\
&\lesssim \ep \int_0^t\|w_1'(q)^\frac{1}{2}\bar{\partial} Z^N \wht g\|^2_{L^2}d\tau
+\int_0^t\int \frac{1}{\ep}(1+\tau)^3|\chi'(q)|\| \partial_s Z^N f_2\|^2_{L^2(\m S^1)}dr d\tau
\end{align*}
and so
\begin{equation}
\label{l216bis}\int_0^t\left| \int w_1(q)\chi'(q)s(\partial_s Z^N f_2)(\partial_s Z^N \wht g) rdrd\theta\right|d\tau\lesssim \ep^3(1+t)^{4\rho},
\end{equation}
where we have used \eqref{estf2bis}.
We now turn to the estimate of $A$
\begin{equation}\label{l217}\begin{split}
|A|&\lesssim \left|\int w_1(q)\chi'(q)s(Z^N f_2) (\partial Z^N \wht g) rdrd\theta\right|\\
&\lesssim (1+t)^\frac{3}{2}\|Z^N f_2\|_{L^2(\m S^1)}\|w_1^\frac{1}{2}\partial Z^N \wht g\|_{L^2} \lesssim \ep^3(1+t)^{3\rho},
\end{split}
\end{equation}
where we have used \eqref{estf2b}.
We now estimate the contribution of
$$\chi'(q)s\partial_s Z^Nf_1.$$
We have
\begin{align*}
&\int_0^t\left|\int w_1(q)\chi'(q)s(\partial_s Z^Nf_1)( \partial_t Z^N \wht g) rdrd\theta\right|d\tau\\
&\lesssim \int_0^t\int\frac{1}{\ep}(1+\tau)^{4}|\chi'(q)|\|\partial_s Z^N f_1\|^2_{L^2(\m S^1)}drd\tau+\int_0^t\frac{\ep}{(1+\tau)}\|w_1^\frac{1}{2} \partial_t Z^N \wht g \|^2_{L^2}d\tau
\end{align*}
and consequently
\begin{equation}\label{l218}
\int_0^t\left|\int w_1(q)\chi'(q)s(\partial_s Z^Nf_1) (\partial_t Z^N \wht g) rdrd\theta\right|\lesssim \ep^3(1+t)^{4\rho}+\ep^ {-1}\int_0^t (1+s)^4\|\partial_s Z^N f_1\|^2_{L^2(\m S^1)}\lesssim \ep^3(1+t)^{4\rho},
\end{equation}
where we have used \eqref{estf1}.
Estimates \eqref{an}, \eqref{bn}, \eqref{l212}, \eqref{l213}, \eqref{l214}, \eqref{l215}, \eqref{l216}, \eqref{l216bis}, \eqref{l217} and \eqref{l218} conclude the proof of Proposition \ref{prpl2whtg}.
\end{proof}

\begin{proof}[Proof of Corollary \ref{corl2whtg}]
	We use the energy estimate 
\begin{align*}&\frac{d}{dt}\left(\int (\partial Z^N \wht g)^2w_1(q)dvol_g\right)+C\int w'(q)(\bar{\partial}Z^N \wht g)^2\\
	&\lesssim \frac{\ep}{1+t}\int w_1(q)(\partial Z^N \wht g)^2 + \left|\int w_1(q)\partial_t Z^N \wht g \Box_g Z^N \wht gdvol_g\right|,
	\end{align*}
	we multiply it by $(1+t)^{-2\rho}$, and notice that
	$$\frac{d}{dt}\left((1+t)^{-2\rho}\int (\partial Z^N \wht g)^2w_1(q)\right)\lesssim
	(1+t)^{-2\rho}\frac{d}{dt}\left(\int (\partial Z^N \wht g)^2w_1(q)\right).$$
	Then Corollary \ref{corl2whtg} can be proved with exactly the same steps as Proposition \ref{prpl2whtg}.
\end{proof}

\begin{proof}[Proof of Corollary \ref{col2k}]
	We perform the energy estimate for $k$
	\begin{align*}
	&\frac{d}{dt}\left(\int (\partial Z^N k)^2w_1(q)dvol_g\right)+C\int w'(q)(\bar{\partial}Z^N k)^2\\
&	\lesssim \frac{\ep}{1+t}\int w_1(q)(\partial Z^N k)^2 + \left|\int w_1(q)\partial_t Z^N \wht g \Box_g Z^N kdvol_g\right|,
	\end{align*}
	then the fact that the initial data for $k$ are $0$, and that $\Box_g k$ satisfy the same estimates as $\Box_g \wht g$ yield Corollary \ref{col2k}.
\end{proof}



\subsection{Estimate of $\partial Z^N \wht g_{1}$}\label{l2g1}
We need the following corollary of Proposition \ref{prpstr}.
\begin{cor}\label{corstr}
We have
$$\Box_g Z^N \wht g_{1} =Z^NR^1+ {}^N M +{}^NM^E+{}^NQ_{\q T \q V}+ O\left(\Upr\frac{1}{r^2}\partial_\theta k\right),$$
\end{cor}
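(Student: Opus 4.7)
The plan is to exploit the fact that the two decompositions \eqref{dec1}--\eqref{dec2} of $g$ differ only by a tensor proportional to $dq\otimes dq$. Componentwise,
\[
(\wht g_1)_{\mu\nu}(t,x) \;=\; \wht g_{\mu\nu}(t,x) \;-\; 4\Upr\,k(t,x)\,(dq)_\mu(dq)_\nu.
\]
Applying the reduced wave operator $\Box_g = g^{\alpha\beta}\partial_\alpha\partial_\beta - H^\rho\partial_\rho$ to each coordinate component yields
\[
\Box_g(\wht g_1)_{\mu\nu} \;=\; \Box_g \wht g_{\mu\nu} \;-\; 4\,\Box_g\!\left[\Upr\,k\,(dq)_\mu(dq)_\nu\right].
\]
The first summand is provided by Proposition \ref{prpstr}. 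Everything therefore reduces to a Leibniz expansion of the second summand and to the handling of the commutators $[Z^N,\Box_g]$.

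The principal Leibniz term is $4\Upr(dq)_\mu(dq)_\nu\,\Box_g k$, which by the definition \eqref{eqg2} of $k$ equals $4\Upr(dq)_\mu(dq)_\nu\,Q_{\ba L \ba L}$. Since $dq(L) = dq(U) = 0$, this contribution vanishes in every component other than $\ba L\ba L$; there, $(dq(\ba L))^2 = 4$ produces a factor $16\,\Upr Q_{\ba L \ba L}$, which (with the choice of coefficient $4$ in \eqref{dec2}) exactly cancels the principal quadratic piece of ${}^NQ_{\ba L \ba L}$ appearing in $\Box_g \wht g_{\ba L \ba L}$ from Proposition \ref{prpstr}. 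After this algebraic cancellation only the $\q T\q V$ part ${}^NQ_{\q T \q V}$ survives, which is precisely what the statement claims.

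The cross Leibniz term $2g^{\alpha\beta}\partial_\alpha(\Upr k)\,\partial_\beta[(dq)_\mu(dq)_\nu]$ is where the quoted error arises. The explicit formulae $(dq)_0=-1$ and $(dq)_i=x_i/r$ show that $\partial_t(dq) = \partial_r(dq) = 0$, so that the only nonzero derivative of $(dq)_\mu(dq)_\nu$ is in the $U$-direction and is of size $O(1/r)$. Since $\Upr$ is independent of $\theta$, we have $\partial_U(\Upr k) = \Upr\,\partial_\theta k/r$, and the leading contraction $g^{UU}\approx 1$ produces exactly the stated error $O\!\left(\Upr\,\partial_\theta k/r^2\right)$; the contractions with the small entries $g^{UL}$ and $g^{U\ba L}$ are strictly smaller and absorbable into ${}^NM$. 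The remaining piece $\Upr k\,\Box_g[(dq)_\mu(dq)_\nu]$ is of size $\Upr k/r^2$ with the same angular-derivative structure and therefore also fits into ${}^NM$. Finally, the $Z^N$ derivatives are distributed using $[Z,\Upr]$, $[Z,(dq)_\mu]$ and $[Z,\Box_g]$; since each $Z\in\q Z$ acting on $\Upr$ or on $(dq)_\mu$ either preserves the angular structure identified above or lowers the order of $k$ by one, the argument closes inductively. The main obstacle is the bookkeeping of these commutators, in order to verify that no term falls outside the categories $Z^NR^1$, ${}^NM$, ${}^NM^E$, ${}^NQ_{\q T \q V}$, and $O(\Upr\partial_\theta k/r^2)$; once the algebraic cancellation of $Q_{\ba L \ba L}$ is in place, however, this bookkeeping is routine.
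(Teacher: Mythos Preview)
Your proof is correct and follows essentially the same approach as the paper's. The core observation is identical: the Cartesian components $(dq)_\mu$ are trigonometric functions of $\theta$ only, so $\Box_g[\Upr k\,(dq)_\mu(dq)_\nu]-[\Box_g(\Upr k)](dq)_\mu(dq)_\nu$ produces only angular derivatives of $k$ accompanied by $1/r^2$, which is precisely the displayed error. The paper's proof is much terser, essentially recording only this non-commutation identity, whereas you spell out the Leibniz expansion and the mechanism by which the $\ba L\ba L$ part of ${}^NQ$ disappears via $\Box_g k=Q_{\ba L\ba L}$; your extra detail is accurate, with the caveat that the word ``exactly'' is slightly strong---the commutator piece $\sum Z^{I_1}g_{LL}\,\partial_q^2 Z^{I_2}g_{\ba L\ba L}$ in ${}^NQ_{\ba L\ba L}$ does not literally match the corresponding commutator from $[\Box_g,Z^N](\Upr k)$, but both are controlled by the ${}^NM$ bound $\frac{\ep}{(1+|q|)^2(1+s)^{1/2-\rho}}|Z^I g_{LL}|$, so the discrepancy is absorbable as you indicate.
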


\begin{proof}
	We expressed the 2-forms $dq^2$ in the coordinate $(t,x_1,x_2)$
	$$
	dq^2= (dr-dt)^2=(\cos(\theta)dx^1+\sin(\theta)dx^2-dt)^2\\
	$$
	Therefore, we will have, in the coordinates
	$x_1,x_2$
	\begin{equation}
	\label{noncom}\Box \left( \Up\left(\frac{r}{t}\right)g_{\ba L \ba L}dq^2\right)_{\mu \nu}
	-\Box\left(\Up\left(\frac{r}{t}\right)g_{\ba L \ba L}\right)(dq^2)_{\mu \nu}
	=\Up\left(\frac{r}{t}\right)\frac{1}{r^2}\left(u^1_{\mu \nu}(\theta)g_{\ba L \ba L}
	+u^2_{\mu \nu}(\theta)\partial_\theta g_{\ba L \ba L}\right)
	\end{equation}
	where $u^1_{\mu \nu}$ and $u^2_{\mu \nu}$ are some trigonometric functions.
\end{proof}

\begin{prp}\label{prpwhtg1n}
We have 
$$\ld{w_2^\frac{1}{2}\partial Z^N \wht g_{1}}\leq C_0 \ep + C\ep^{\frac{5}{4}}(1+t)^\rho,$$
$$\int_0^t \ld{w'_2(q)^\frac{1}{2}\bar{\partial} Z^N \wht g_{1}}^2 \lesssim C_0 \ep^2+C\ep^{\frac{5}{2}}(1+t)^{2\rho}.$$
\end{prp}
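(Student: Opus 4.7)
The plan is to apply the weighted energy estimate of Proposition \ref{prpweighte} with weight $w_2$ to $Z^N \wht g_1$ and to expand the right-hand side using Corollary \ref{corstr}:
\[\Box_g Z^N \wht g_1 = Z^N R^1 + {}^N M + {}^N M^E + {}^N Q_{\q T \q V} + O\!\left(\Upr \frac{\partial_\theta Z^N k}{r^2}\right).\]
Each source term will be classified as either type $A^N$ (bounded pointwise in $L^2$ and paired directly with the energy) or type $B^N$ (absorbed via Cauchy--Schwarz with the factor $\ep^{-1}(1+\tau)$), following the scheme of the proof of Proposition \ref{prpl2whtg}. The contributions from $R^1$ supported in $\{\chi'(q)\neq 0\}$ will again be handled by integration by parts in time, using the decomposition $\partial_s b = f_1 + f_2$ and the estimates in $\q H$ exactly as in \eqref{l215}--\eqref{l218}.

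The decisive improvement compared with Proposition \ref{prpl2whtg} is that the problematic cubic term ${}^N Q_{\ba L \ba L}$ has been placed into the $k$-component of the decomposition \eqref{dec2}, and so no longer appears as a source for $Z^N \wht g_1$. By Proposition \ref{prpstr} one has ${}^N Q_{\q T \q T} = 0$, and the remaining components ${}^N Q_{U\ba L}$, ${}^N Q_{L\ba L}$ couple $\partial_{\ba L}\wht g_{\ba L \ba L}$ only with good derivatives $\bar\partial Z^N \wht g$ and with $Z^N G^L$; these behave in a null-structure-like manner and contribute $O(\ep^{5/2}(1+t)^\rho)$ to $\ld{w_2^{1/2}\partial Z^N \wht g_1}^2$ through the $B^N$ mechanism. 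The terms in ${}^N M$, ${}^N M^E$ and the non-$\ba L \ba L$ part of $Z^N R^1$ are estimated by essentially the same inequalities as in Section \ref{secl2gn}, with $w_1$ replaced by the lighter weight $w_2$, which only loses powers of $(1+|q|)^{\sigma}$ and does not affect integrability.

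The main obstacle is the commutator error $\Upr\, r^{-2}\partial_\theta Z^N k$, stemming from the non-commutation of the null decomposition with $\Box_g$ and already anticipated in Section \ref{noncomu}. Since $\frac{\partial_\theta}{r}$ is a tangential derivative, this quantity is a good derivative of $Z^N k$, and on the support of $\Upr$ one has $r\sim s\sim t$ together with the weight comparison $w_2(q) \lesssim (1+s)^{1-2\sigma}\, w_1'(q)$, valid in both $q>0$ and $q<0$. Combining this with the prefactor $r^{-2}\sim (1+t)^{-2}$ yields
\[\left\|w_2^{1/2}\,\Upr\, \frac{\bar\partial Z^N k}{r}\right\|_{L^2} \lesssim \frac{1}{(1+t)^{\frac{1}{2}+\sigma}}\, \|w_1'(q)^{1/2}\bar\partial Z^N k\|_{L^2}.\]
Pairing this against the energy via Cauchy--Schwarz in $\tau$, using the bootstrap bound \eqref{gN} on $\ld{w_2^{1/2}\partial Z^N \wht g_1}$ for one factor and the integrated estimate from Corollary \ref{col2k} for the other, and exploiting $\sigma > 2\rho$ to ensure integrability, one obtains a contribution of order $\ep^{5/2}(1+t)^\rho$ to $\ld{w_2^{1/2}\partial Z^N \wht g_1}^2$, well within the desired bound. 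The integrated estimate for $w_2'(q)^{1/2}\bar\partial Z^N \wht g_1$ then falls out of the same energy identity, since this quantity appears on the left-hand side with a favourable sign.
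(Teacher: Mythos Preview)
Your overall architecture is correct and matches the paper: weighted energy estimate with $w_2$, decomposition of $\Box_g Z^N\wht g_1$ via Corollary~\ref{corstr}, and the weight-shift argument for the commutator term $\Upr\,r^{-2}\partial_\theta Z^N k$ using Corollary~\ref{col2k}. The treatment of ${}^NM$, ${}^NM^E$ and of the commutator term is essentially as in the paper.

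There is, however, a genuine gap in your handling of the term $s\chi'(q)\partial_s^2 Z^N b$ contained in $Z^NR^1$. You assert that it can be treated ``exactly as in \eqref{l215}--\eqref{l218}'', but the paper explicitly says otherwise: with the target growth $(1+t)^{2\rho}$ (rather than $(1+t)^{4\rho}$ as in Proposition~\ref{prpl2whtg}), the integration-by-parts scheme of Section~\ref{secl2gn} loses an extra factor $(1+t)^{\rho}$ coming from the $-2\rho$ shift in \eqref{estf2}. Concretely, after the time-IBP one is left with a term of the type $\int_0^t (1+\tau)^{3/2}\|Z^Nf_2\|_{L^2(\m S^1)}\,\|w_2'^{1/2}\bar\partial Z^N\wht g_1\|_{L^2}\,d\tau$; no Cauchy--Schwarz split balances both factors, and the best one obtains is $\ep^3(1+t)^{3\rho}$, not $\ep^{5/2}(1+t)^{2\rho}$. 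The paper circumvents this by observing that
\[
s\chi'(q)\partial_s^2 Z^N b=\Box_g\bigl((g^{L\ba L})^{-1}\Upr(\chi(q)-1)s\partial_s Z^N b\bigr)+\text{(better terms)},
\]
checking that the auxiliary function $(g^{L\ba L})^{-1}\Upr(\chi(q)-1)s\partial_s Z^N b$ obeys the same $w_2$-weighted bounds as $\wht g_1$, and absorbing it into the unknown; the remaining error terms then close using \eqref{estb5bis}, \eqref{estb5ter}, \eqref{estf1bis1}, \eqref{estf2}.

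A related issue affects your treatment of ${}^NQ_{U\ba L}$: this term contains $\partial_{\ba L}\wht g_{\ba L\ba L}\,\partial_L Z^N\sigma^0_{UL}$, and since $\sigma^0_{UL}\sim s(1+b)\partial_s f$, the factor $\partial_L Z^N\sigma^0_{UL}$ again produces $s\partial_s^2 Z^N b$. A direct $B^N$ estimate via \eqref{str25} runs into the same $(1+t)^{\rho}$-loss from \eqref{estf2bis} versus the needed $(1+t)^{2\rho}$ target. The paper therefore handles both ${}^NQ_{L\ba L}$ and ${}^NQ_{U\ba L}$ by an integration by parts in $t$ (moving the $\partial_s$ off $Z^N\wht g_{LL}$ and $Z^N\sigma^0_{UL}$ onto $\partial_t Z^N\wht g_1$), rather than by a $B^N$ bound. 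Your $B^N$ argument is adequate for the $\partial_L Z^N\wht g_{LL}$ and $\partial_U Z^N\wht g_{LL}$ pieces, but not for the $\sigma^0_{UL}$ piece.
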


\begin{proof}
We use the weighted energy estimate in the background metric $g$
\begin{align*}&\|w_{2}^\frac{1}{2} \partial Z^N \wht g_{1}(t)\|^2_{L^2}+\int_0^t \|w_2'(q)^\frac{1}{2}\bar{\partial}Z^N \wht g_{1}\|^2_{L^2}\\
&\lesssim \|w_{2}^\frac{1}{2} \partial Z^N \wht g_{1}(0)\|^2_{L^2}+\int_0^t \frac{\ep}{1+t}\|w_{2}^\frac{1}{2} \partial Z^N \wht g_{1}(t)\|^2_{L^2} + \left|\int_0^t \int w_2\partial_t Z^N \wht g_{1} \Box_g Z^N \wht g_{1}dvol_gdt\right|.
\end{align*}
We decompose $\Box_g Z^N \wht g_{\q T\q T}= A^N +B^N +C^N$ with
$$\ld{w_2^\frac{1}{2}A^N}\lesssim \frac{\ep^2}{(1+t)^{1-\rho}}$$
$$\int_0^t\ep^{-1}(1+s)\ld{w_2^\frac{1}{2}B^N}^2ds \lesssim \ep^\frac{5}{2}(1+t)^{2\rho}$$
and $C^N$ is dealt with in a specific manner.
We start with ${}^N M$
\begin{align*}
|{}^NM|
&\lesssim
\frac{\ep}{(1+s)^\frac{3}{2}(1+|q|)^{\frac{1}{2}-4\rho}}|\partial Z^N \phi| +
\frac{\ep}{(1+s)(1+|q|)^{\frac{1}{2}-\rho}}\left(|\bar{\partial} Z^N \wht g|+\frac{1}{1+s}|Z^N \wht g|\right)\\
&+
\frac{\ep}{(1+s)^{\frac{3}{2}-\rho}}|\partial Z^N \wht g|
+\frac{\ep}{(1+s)(1+|q|)^{\frac{3}{2}-\rho}}|Z^N \wht g_{\q T \q T}|+\frac{1}{1+s}|Z^N G|+|\bar{\partial}Z^N G|\\
&+\ep\min\left(\frac{1}{(1+|q|)(1+s)^{\frac{1}{2}-\rho}},\frac{1}{(1+|q|)^{\frac{1}{2}}(1+s)^\frac{1}{2}}\right)\left(|\bar{\partial}Z^N \wht g_1|+\frac{1}{1+|q|}|Z^N g_{LL}|\right).
\end{align*}
We estimate
$$\ld{w_2^\frac{1}{2}\frac{\ep}{(1+s)(1+|q|)^{\frac{1}{2}-\rho}}\bar{\partial} Z^N \wht g}
\lesssim \frac{\ep}{(1+t)}\ld{ w_1'(q)^\frac{1}{2} \bar{\partial}Z^N \wht g},$$
where we used $\frac{w_2(q)^\frac{1}{2}}{(1+|q|)^{\frac{1}{2}-\rho}}\leq w'_1(q)^\frac{1}{2}$ and so
$$\int_0^t \ep^{-1}(1+\tau)\ld{w_2^\frac{1}{2}\frac{\ep}{(1+s)(1+|q|)^{\frac{1}{2}-\rho}}\bar{\partial} Z^N \wht g}^2d\tau
\lesssim \int_0^t \ep (1+t)^{-2\rho}\ld{ w_1'(q)^\frac{1}{2} \bar{\partial}Z^N \wht g}^2
\lesssim \ep^3(1+t)^{2\rho}.$$
We estimate
$$\ld{w_2^\frac{1}{2}\frac{\ep}{(1+s)^2(1+|q|)^{\frac{1}{2}-\rho}} Z^N \wht g}
\lesssim \frac{\ep}{(1+t)^{1+2\rho}}\ld{ \frac{w_1(q)^\frac{1}{2}}{1+|q|} Z^N \wht g}\lesssim \frac{\ep^3}{1+t},$$
where we have used \eqref{ig2Nbis},
$$\ld{w_2^\frac{1}{2}\frac{\ep}{(1+s)^{\frac{3}{2}-\rho}}\partial Z^N \wht g}
\lesssim \frac{\ep}{(1+t)^{1+2\rho}}\ld{ w_1(q)^\frac{1}{2} \partial Z^N \wht g}\lesssim \frac{\ep^3}{1+t},$$
$$\ld{w_2^\frac{1}{2}\frac{\ep}{(1+s)(1+|q|)^{\frac{3}{2}-\rho}}Z^N \wht g_{\q T \q T}}
\lesssim \frac{1}{1+t}\ld{\frac{w_2^\frac{1}{2}}{1+|q|} Z^N \wht g_1}\lesssim \frac{\ep^3}{(1+t)^{1-\rho}},$$
where we have used \eqref{hg1n}.
$$\ld{w_2^\frac{1}{2}\frac{\ep}{(1+|q|)^{\frac{1}{2}}(1+s)^\frac{1}{2}}\bar{\partial}Z^N \wht g_{\q T \q T}}
\lesssim \frac{\ep}{(1+t)^\frac{1}{2}}\ld{w_2'(q)^\frac{1}{2}\bar{\partial}Z^N \wht g_{\q T \q T}},$$
so
$$\int_0^t \ep^{-1}(1+\tau)\ld{w_2^\frac{1}{2}\frac{\ep}{(1+|q|)^{\frac{1}{2}}(1+s)^\frac{1}{2}}\bar{\partial}Z^N \wht g_{1}}^2d\tau
\lesssim \int_0^t \ep \ld{w_2'(q)^\frac{1}{2}\bar{\partial}Z^N \wht g_{1}}^2d\tau
\lesssim \ep^3(1+t)^{2\rho},$$
where we have used \eqref{igN}.
$$\int_0^t \ep^{-1}(1+\tau)\ld{w_2^\frac{1}{2}\frac{\ep}{(1+|q|)^{\frac{3}{2}}(1+s)^\frac{1}{2}}Z^N \wht g_{LL}}^2d\tau
\lesssim \int_0^t \ld{\frac{w_2^\frac{1}{2}}{(1+|q|)^\frac{3}{2}}Z^N \wht g_{LL}}^2d\tau\lesssim \ep(1+t)^{2\rho},$$
where we have used \eqref{hwl2dll}.
We estimate the term involving $G$ in the same way than in the previous section : see \eqref{l211} to \eqref{l218}.
We now estimate the contribution of terms coming from the non commutation of the wave operator with the null frame.
It is sufficient to estimate
$$\ld{w_2^\frac{1}{2}\Upr\frac{1}{r}\bar{\partial} k}
\lesssim \frac{1}{(1+t)^{\frac{1}{2}+\sigma}}\ld{\frac{w_1^\frac{1}{2}}{(1+|q|)^\frac{1}{2}} \bar{\partial }k},$$
which yields
$$\int_0^t \ep^{-\frac{1}{2}}(1+\tau)\ld{w_2^\frac{1}{2}\Upr \frac{1}{r}\bar{\partial} k}^2d\tau
\lesssim \ep^{-\frac{1}{2}} \int_0^t (1+t)^{-2\rho}\ld{\frac{w_1^\frac{1}{2}}{(1+|q|)^\frac{1}{2}} \bar{\partial }k}^2d\tau
\lesssim \ep^\frac{5}{2}(1+t)^{2\rho},$$
thanks to Corollary \ref{col2k}.

We now estimate the contribution of $M^E$. :
\begin{align*}|M^E|\lesssim &\frac{\ep}{(1+s)}\left(|\bar{\partial} Z^N \wht g|+ \frac{|q|}{1+s}|\partial Z^N \wht g|+ \frac{1}{1+s}|Z^N \wht g |+ \frac{1}{1+|q|}|\wht g_{\q T \q T}|\right)\\
&+\frac{\ep}{(1+s)(1+|q|)^{2+\delta-\rho}} \left(s|\partial_s Z^N b|
+q|\partial_s \partial_\theta Z^N b|+\frac{q}{s}|Z^N \partial^2_\theta b|\right)\\
&\min\left(\frac{\ep}{(1+|q|)^{\frac{3}{2}+\delta}(1+s)^{\frac{1}{2}-\rho}}, \frac{\ep}{(1+|q|)^{\frac{3}{2}+\delta-\sigma}(1+s)^\frac{1}{2}}\right)\Big(s|\partial^2_s Z^N b|
+q|\partial^2_s \partial_\theta Z^N b|\\
&+
\frac{q}{s}|Z^N \partial_s \partial^2_\theta b|
+\frac{q}{s^2}|Z^N \partial^3_\theta b|\Big).
\end{align*}
We have
$$\ld{w_2^\frac{1}{2}\frac{\ep}{(1+s)}\bar{\partial} Z^N \wht g}
\lesssim \ld{w_1^\frac{1}{2}\frac{\ep}{(1+s)(1+|q|)^{\sigma}}\bar{\partial} Z^N \wht g}
\lesssim \frac{\ep}{(1+t)^{\frac{1}{2}+\rho}}\ld{w_1'(q)^\frac{1}{2}\bar{\partial} Z^N \wht g},$$ and so
$$\int_0^t \ep^{-1}(1+\tau)\ld{w_2^\frac{1}{2}\frac{\ep}{(1+s)}\bar{\partial} Z^N \wht g}^2d\tau
\lesssim \ep \int_0^t (1+\tau)^{-2\rho}\ld{w_1'(q)^\frac{1}{2}\bar{\partial} Z^N \wht g}^2d\tau\lesssim \ep^3(1+t)^{2\rho}.$$
We proceed in a similar way for the other terms involving $\wht g$.
The term 
$\frac{\ep}{(1+s)(1+|q|)^{2+\delta-\rho}}s|\partial_s\partial_\theta Z^N b|$ 
can be estimated like \eqref{l27}. We estimate
\begin{align*}
\ld{w_2^\frac{1}{2}\frac{\ep}{(1+s)^2(1+|q|)^{1+\delta-\rho}}Z^N \partial^2_\theta b}
&\lesssim \left( \int \frac{\ep^2}{(1+s)^4(1+|q|)^{2\sigma-\rho}}\|Z^N b\|_{H^2(\m S^1)}^2rdr\right)^\frac{1}{2}\\
&\lesssim \frac{\ep^3}{1+t}.
\end{align*}
We now estimate
$$\ld{w_2^\frac{1}{2}\frac{\ep}{(1+|q|)^{\frac{3}{2}+\delta-\sigma}(1+s)^{\frac{1}{2}}}s\partial^2_s \partial_\theta Z^N b }
\lesssim \ep \left(\int \frac{ 1}{(1+|q|)^{1+2\sigma}}
s\|\partial^2_s  Z^N b\|^2_{H^1(\m S^1)} rdr\right)^{\frac{1}{2}}
,$$
and so
$$\int_0^t \ep^{-1}(1+\tau)\ld{w_2^\frac{1}{2}\frac{\ep}{(1+|q|)^{\frac{3}{2}+\delta-\sigma}(1+s)^{\frac{1}{2}}}s\partial^2_s \partial_\theta ^N b }^2d\tau
\lesssim \ep\int_0^t (1+s)^{3}\|\partial^2_s Z^N b\|^2_{H^1(\m S^1)}ds\lesssim \ep^2(1+t)^{2\rho}.$$
The other terms can be estimated in the same way.

We now treat the terms $Q_{L\ba L}$ and $Q_{U\ba L}$.
We start with $\partial_q\wht g_{\ba L \ba L}\partial_s Z^N \wht g_{LL}$, that we estimate by integration by parts
\begin{align*}
&\int w_2 (\partial_q\wht g_{\ba L \ba L})(\partial_s Z^N \wht g_{LL})(\partial_t Z^N \wht g_1) \sqrt{|det(g)|}dx\\
=& \frac{1}{2}\partial_t\int w_2 (\partial_q \wht g_{\ba L \ba L})(Z^N \wht g_{LL})(\partial_t Z^N \wht g_1) \sqrt{|det(g)|}dx
-\int w_2\partial_s (r\sqrt{|det(g)|}\partial_q \wht g_{\ba L \ba L})(Z^N \wht g_{LL})(\partial_t Z^N \wht g_1) drd\theta\\
&-\partial_t \int w_2(\partial_q \wht g_{\ba L \ba L})(Z^N \wht g_{LL})(\partial_s Z^N \wht g_1)\sqrt{|det(g)|}dx
+\int \partial_t( w_2\sqrt{|det(g)|}(\partial_q \wht g_{\ba L \ba L})Z^N \wht g_{LL})\partial_s Z^N \wht g_1dx.
\end{align*}
We estimate
\begin{align*}\left|\int w_2 (\partial_q \wht g_{\ba L \ba L})(Z^N \wht g_{LL})(\partial_t Z^N \wht g_1) dvol_g\right|&\lesssim 
\int \frac{\ep w_2}{(1+|q|)(1+s)^{\frac{1}{2}-\rho}}|Z^N \wht g_{LL}\partial_t Z^N \wht g_1|\\
&\lesssim \frac{\ep}{(1+t)^{\frac{1}{2}-\rho}}\ld{\frac{w_2^\frac{1}{2}}{1+|q|}Z^N \wht g_{LL}}\ld{w_2^\frac{1}{2}\partial_t Z^N \wht g_1}\lesssim \frac{\ep^3}{(1+t)^{\frac{1}{2}-3\rho}},
\end{align*}
\begin{align*}
\left|\int w_2\partial_s(\sqrt{|det(g)|} \partial_q \wht g_{\ba L \ba L})(Z^N \wht g_{LL})(\partial_t Z^N \wht g_1) dx\right|
&\lesssim \int \frac{\ep w_2}{(1+s)^{\frac{3}{2}-\rho}(1+|q|)}|Z^N \wht g_{LL}\partial_t Z^N \wht g_1|\\
&\lesssim \frac{\ep}{(1+t)^{\frac{3}{2}-\rho}}\ld{\frac{w_2^\frac{1}{2}}{1+|q|}Z^N \wht g_{LL}}\ld{w_2^\frac{1}{2}\partial_t Z^N \wht g_1}\\
&\lesssim \frac{\ep^3}{(1+t)^{\frac{3}{2}-3\rho}},
\end{align*}
$$
\left|\int  w_2(\partial_q \wht g_{\ba L \ba L})(\partial_t Z^N \wht g_{LL})(\partial_s Z^N \wht g_1)dvol_g\right|\\
\lesssim \frac{\ep}{(1+t)^{\frac{1}{2}-\rho}}\ld{w'_2(q)^\frac{1}{2}\partial Z^N \wht g_{LL}}\ld{w_2'(q)^\frac{1}{2} \partial_s Z^N \wht g_1},
$$
and consequently
$$\int_0^t \left|\int  w_2(\partial_q \wht g_{\ba L \ba L})(\partial_t Z^N \wht g_{LL})(\partial_s Z^N \wht g_1)\sqrt{|det(g)|}dx\right| \lesssim \ep^3(1+t)^{2\rho}.$$
The term $\partial_q \wht g_{\ba L \ba L}\partial_U Z^N \wht g_{LL}$ is similar to estimate.
We now turn to $\partial_q \wht g_{\ba L \ba L}\partial_s Z^N \sigma^0_{UL}$. We follow the same calculation, noticing that we have the estimate for $\partial_q \wht g_{\ba L \ba L}$ 
$$|w_2^\frac{1}{2}\partial_q g_{\ba L \ba L}|\lesssim  \frac{\ep}{(1+s)^{\frac{1}{2}-\rho}(1+|q|)^{\frac{1}{2}+\sigma}},$$
and consequently
\begin{align*}\left|\int w_2 (\partial_q \wht g_{\ba L \ba L})(Z^N \sigma^0_{UL})(\partial_t Z^N \wht g_1) dx\right|&\lesssim 
\int \frac{\ep w_2}{(1+|q|)^{\frac{1}{2}+\sigma}(1+s)^{\frac{1}{2}-\rho}}|Z^N \sigma^0_{UL}\partial_t Z^N \wht g_1|\\
&\lesssim \ep\ld{w_2^\frac{1}{2}\partial_t Z^N \wht g_1}
\left(\int \frac{1}{(1+|q|)^{1+\sigma}}(1+s)^{2+2\rho}\|\partial_s Z^N b\|_{L^2}^2\right)^\frac{1}{2}\\
&\lesssim \ep^3(1+t)^{2\rho},
\end{align*}
where we have used \eqref{estf2} and \eqref{estf1b}.
\begin{align*}
\left|\int w_2\partial_s(\sqrt{|det(g)|} \partial_q \wht g_{\ba L \ba L})(Z^N \sigma^0_{UL})(\partial_t Z^N \wht g_1) dx\right|
&\lesssim \int \frac{\ep w_2}{(1+s)^{\frac{3}{2}-\rho}(1+|q|)^{\frac{3}{2}+\delta}}|Z^N \sigma^0_{UL}\partial_t Z^N \wht g_1|\\
&\lesssim \ep\ld{w_2^\frac{1}{2}\partial_t Z^N \wht g_1}
\left(\int \frac{1}{(1+|q|)^{1+\sigma}}(1+s)^{2\rho}\|\partial_s Z^N b\|_{L^2}^2dr\right)^\frac{1}{2}
\end{align*}
and consequently
\begin{align*}&\int_0^t\left|\int w_2\partial_s(\sqrt{|det(g)|} \partial_q \wht g_{\ba L \ba L})(Z^N \sigma^0_{UL})(\partial_t Z^N \wht g_1) dx\right|d\tau\\
&\lesssim \int_0^t \frac{\ep}{1+\tau}\ld{w_2^\frac{1}{2}\partial_t Z^N \wht g_1}^2+\ep\int_0^t (1+\tau)^{1+2\rho}\|\partial_s Z^N b\|_{L^2}^2d\tau\\
&\lesssim\ep^3(1+t)^{2\rho}.
\end{align*}
\begin{align*}
&\left|\int  \partial_t(w_2\sqrt{|det(g)|}(\partial_q \wht g_{\ba L \ba L} )(Z^N \sigma^0_{UL}))(\partial_s Z^N \wht g_1)dx\right|\\
&\lesssim \int \frac{\ep w_2}{(1+s)^{\frac{1}{2}-\rho}(1+|q|)^{\frac{1}{2}+\sigma}}\left(\frac{1}{1+|q|}|Z^N \sigma^0_{UL}|
+|\partial_s Z^N \sigma^0_{UL}|\right)
|\partial_s Z^N \wht g_1|\\
&\lesssim \ep\ld{w'_2(q)^\frac{1}{2}\partial_s Z^N \wht g_1}\left(\int \frac{1}{(1+|q|)^\sigma}\left(\frac{1}{1+|q|}(1+s)^{2+2\rho}\|\partial_s Z^N b\|^2_{L^2(\m S^2)}+(1+s)^{2+2\rho}\|\partial^2_s Z^N b\|^2_{L^2(\m S^1)}\right)dr\right)^\frac{1}{2}
\end{align*}
and consequently
\begin{align*}&\int_0^t \left|\int  \partial_t(w_2\sqrt{|det(g)|}(\partial_q \wht g_{\ba L \ba L} )(Z^N \sigma^0_{UL}))(\partial_s Z^N \wht g_1)dx\right|d\tau\\ &\lesssim 
\ep\int_0^t\ld{w'_2(q)^\frac{1}{2}\partial_s Z^N \wht g_1}^2d\tau+\int_0^t \left((1+s)^{2+2\rho}\|\partial_s Z^N b\|_{L^2(\m S^1)}^2+
(1+s)^{3}\|\partial^2_s Z^N b\|_{L^2(\m S^1)}^2\right)ds\\
&\lesssim \ep^3(1+t)^{2\rho}.
\end{align*}

We now turn to the term $s\chi'(q)\partial^2_s Z^N b$. We cannot do the same reasoning as before because of the estimates
\eqref{l216bis} and \eqref{l217}, which are a consequence of the additional loss in $t^{\rho}$ in \eqref{estf2}.
 Instead, we remark the calculation
\begin{align*}
&\Box_g (g^{L\ba L})^{-1}\left(\Upr(\chi(q)-1)s\partial_s Z^N b\right)\\
=&s\chi'(q)\partial^2_s Z^N b
+O\left( \Upr \partial_s \partial_\theta Z^N b \right)+O\left(\frac{1}{r}\Upr \partial_s \partial^2_\theta Z^N b\right)\\
& + O\left(\wht g_{LL}\Upr s\partial_s^3Z^N b\right).
\end{align*}
We estimate
\begin{align*}
\int_0^t (1+\tau)\ep^{-1}\ld{w_2^\frac{1}{2} \Upr \partial_s \partial_\theta Z^N b}^2d\tau
&\lesssim \int_0^t \int (1+\tau)\ep^{-1}\Upr^2\frac{1}{(1+|q|)^{1+2\sigma}}\|\partial_s \partial_\theta Z^N b\|_{L^2(\m S^1)}^2 r drd\tau\\
&\lesssim \ep^{-1}\int_0^t (1+s)^2\|\partial_s \partial_\theta Z^N b\|_{L^2(\m S^1)}^2ds\\
&\lesssim \ep^3(1+t)^{2\rho},
\end{align*}
where we have used \eqref{estf2} and \eqref{estf1bis1},
\begin{align*}
\int_0^t (1+\tau )\ld{w_2^\frac{1}{2}\frac{1}{r}\Upr \partial_s \partial^2_\theta Z^N b}^2d\tau
&\lesssim \int_0^t \int (1+\tau)\ep^{-1}\Upr^2\frac{1}{r^2(1+|q|)^{1+2\sigma}}\|\partial_s \partial^2_\theta Z^N b\|_{L^2(\m S^1)}^2 r drd\tau\\
&\lesssim \int_0^t \ep^{-1}\|\partial_s \partial^2_\theta Z^N b\|_{L^2(\m S^1)}^2,\\
&\lesssim \ep^3(1+t)^{2\rho},
\end{align*}
where we have used \eqref{estb5bis}
\begin{align*}
&\int_0^t (1+\tau)\ep^{-1}\ld{w_2^\frac{1}{2}\wht g_{LL}\Upr s\partial_s^3Z^N b}^2d\tau \\
&\lesssim \int_0^t (1+\tau)\ep^{-1}\int\frac{1}{(1+|q|)^{1+2\sigma}}\left(\Upr\frac{\ep (1+|q|)}{(1+s)^{\frac{3}{2}-\rho}}s\|\partial_s^3 Z^N b\|_{L^2(\m S^1)}\right)^2rdrd\tau\\
&\lesssim \int_0^t \ep^{-1}(1+s)^{3-2\sigma+2\rho}\|\partial_s^3 Z^N b\|_{L^2(\m S^1)}^2\\
&\lesssim \ep^3(1+t)^{2\rho},
\end{align*}
where we have used \eqref{estb5ter} and the fact that $2\rho \leq \sigma$.
We now check that $ g_{L\ba L}\left(\Upr(\chi(q)-1)s\partial_s Z^N b\right)$ satisfies the same estimates as $\wht g_1$. We have
\begin{align*}
&\ld{w_2^\frac{1}{2}g_{L\ba L}\partial_q\left(\Upr(\chi(q)-1)s\partial_s Z^N b\right)}^2\\
\lesssim &\int \frac{1}{(1+|q|)^{1+\sigma}} \|\partial_s Z^N b\|_{L^2(\m S^1)}^2 rdr
+\int \chi'(q)s^2\|\partial_s Z^N b\|^2_{L^2(\m S^1)} rdr+s.t.\\
\lesssim &\ep^2(1+t)^{2\rho},
\end{align*}
where we have used \eqref{estf1b} and \eqref{estf2b}.
We estimate
\begin{align*}
&\int_0^t \ld{w_2'(q)^\frac{1}{2}\bar{\partial}  g_{L\ba L}\left(\Upr(\chi(q)-1)s\partial_s Z^N b\right)}^2d\tau
\lesssim &\int_0^t \int \frac{1}{(1+|q|)^{2+2\sigma}}s^2\|\partial_s^2 Z^N b\|_{L^2(\m S^1)}^2rdrd\tau  \\
\lesssim &\int_0^t (1+s)^3\|\partial_s^2Z^N b\|_{L^2(\m S^1)}^2ds\\
\lesssim &\ep^2(1+t)^{2\rho},
\end{align*}
which concludes the proof of Proposition \ref{prpwhtg1n}.
\end{proof}
\subsection{Estimates of $\partial Z^N \phi$ and $\partial^2 Z^N \phi$}
\begin{prp}\label{l2phin}
We have
$$\ld{w^\frac{1}{2}\partial Z^N \phi} 
+\ld{w^\frac{1}{2}\partial^2 Z^N \phi}\leq C_0 \ep + C\ep^\frac{3}{2}(1+t)^\rho,$$
$$\int_0^t \ld{w'(q)^\frac{1}{2}\bar{\partial }Z^N \phi}^2
+\ld{w'(q)^\frac{1}{2}\bar{\partial }\partial Z^N \phi}^2 \leq C_0 \ep + C \ep^\frac{3}{2}(1+t)^\rho.$$
\end{prp}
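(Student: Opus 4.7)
The plan is to apply the weighted energy estimate of Proposition \ref{prpweighte} twice: first to $u=Z^N\phi$ to obtain the estimates on $\|w^{1/2}\partial Z^N\phi\|_{L^2}$ and $\int w'(q)(\bar\partial Z^N\phi)^2$, and then to $u=\partial Z^N\phi$ to obtain those on $\|w^{1/2}\partial^2 Z^N\phi\|_{L^2}$ and $\int w'(q)(\bar\partial \partial Z^N\phi)^2$. In both cases the energy inequality reads
\[
\|w^{1/2}\partial u(t)\|_{L^2}^2+\int_0^t\int w'(q)(\bar\partial u)^2\,dx\,d\tau\lesssim \|w^{1/2}\partial u(0)\|_{L^2}^2+\int_0^t\!\frac{\ep}{1+\tau}\|w^{1/2}\partial u\|_{L^2}^2 d\tau+\Big|\!\int_0^t\!\!\int w\,\partial_\tau u\,\Box_g u\,dvol_g\,d\tau\Big|,
\]
so the whole matter reduces to estimating the forcing $\Box_g u$ in the same $A^N$/$B^N$ scheme used throughout Section \ref{sechigh}: pieces with $\|w^{1/2}A^N\|_{L^2}\lesssim \ep^2/(1+\tau)^{1-\rho/2}$ give an $\ep^3(1+t)^{2\rho}$ contribution after Cauchy--Schwarz against $\|w^{1/2}\partial u\|_{L^2}\lesssim \ep(1+\tau)^\rho$, while pieces with $\int_0^t\ep^{-1}(1+\tau)\|w^{1/2}B^N\|_{L^2}^2\,d\tau\lesssim \ep^3(1+t)^{2\rho}$ can be absorbed via the Gronwall term.

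For $Z^N\phi$, Proposition \ref{strphi} furnishes six contributions. The $|Z^N\wht g_{LL}|$ piece is the most delicate: its $w^{1/2}/\sqrt{1+s}(1+|q|)^{5/2-4\rho}$ weight combines with the wave-coordinate Hardy estimate \eqref{hwl2dlln} (with $w_2$ replaced by $w$, which is legitimate since the proof of Proposition \ref{l2ll} works for this weight), yielding a $B^N$ contribution. The $|Z^N\wht g_1|$ and $|Z^N\wht g|$ pieces are handled by \eqref{hg1n} and \eqref{hg2n} after pulling a factor $(1+|q|)^{-1}$. The $|\partial Z^N\phi|$ piece feeds directly into the Gronwall argument via the bootstrap \eqref{phiN}. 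The $|Z^N G^L|$ piece is controlled by Proposition \ref{prpG}, whose estimates \eqref{estGN}--\eqref{estGN1} together with $\Delta_h\lesssim \ep^2/\sqrt{1+t}$ (now established as Proposition \ref{prpangle}) give exactly the right decay. Finally $|\bar\partial Z^N\phi|$ is absorbed on the left-hand side using $w/(1+s)^2\lesssim w'(q)$ combined with the integrated bootstrap \eqref{iphiN}.

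For $\partial^2 Z^N\phi$, I apply the same scheme to $u=\partial Z^N\phi$, for which
\[
\Box_g(\partial Z^N\phi)=\partial(\Box_g Z^N\phi)+[\Box_g,\partial]Z^N\phi.
\]
The first term is bounded by $\partial$ applied to the list in Proposition \ref{strphi}; the only real change is that some factors now carry one extra $Z$ derivative on $\wht g$, $\phi$ or $G^L$, and these are accommodated by \eqref{ZphiN}, \eqref{iZphiN}, Proposition \ref{l2ll} and Proposition \ref{prpG} at order $N+1$, where the extra $(1+t)^{1/2}$ factor in the top-order energy is exactly compensated by the extra $(1+|q|)^{-1}$ or $(1+s)^{-1/2}$ decay in the structural estimates. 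The commutator $[\Box_g,\partial]Z^N\phi$ has the schematic form $\partial g\cdot \partial^2 Z^N\phi$; its worst constituent is $\partial g_{LL}\,\partial_q^2 Z^N\phi$, and here Corollary \ref{estll} provides the decisive gain $|\partial g_{LL}|\lesssim \ep/(1+s)^{3/2-\rho}$ which converts it into a $B^N$ term, while $\partial g_{\q T\q V}$ is paired with $\bar\partial\partial Z^N\phi$ using \eqref{idphiN}.

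The main obstacle I anticipate is precisely this commutator term at top order: because $\partial^2 Z^N\phi$ is being estimated at the same growth rate $(1+t)^\rho$ as $\partial Z^N\phi$, I cannot afford any logarithmic loss, so the wave-coordinate decay on $g_{LL}$ (and the $G^L$ structure replacing the missing $L$-component of the gauge) must be exploited at the sharp threshold, exactly as in Sections \ref{secl2gn}--\ref{l2g1}. Once this is done, collecting the $A^N$ and $B^N$ contributions yields $\|w^{1/2}\partial Z^N\phi\|_{L^2}+\|w^{1/2}\partial^2 Z^N\phi\|_{L^2}\le C_0\ep+C\ep^{3/2}(1+t)^\rho$ and, since the bulk term appears on the left-hand side of the energy inequality with a definite sign, the corresponding integrated estimates for $\bar\partial Z^N\phi$ and $\bar\partial\partial Z^N\phi$ follow with the same constants.
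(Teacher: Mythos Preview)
Your approach matches the paper's: weighted energy estimate in the curved metric, Proposition~\ref{strphi} for the forcing, and the $A^N/B^N$ splitting. The treatment of the five pieces for $Z^N\phi$ is essentially what the paper does (the paper bounds the $g_{LL}$ term via $w^{1/2}/(1+|q|)^{5/2-4\rho}\lesssim w_2^{1/2}/(1+|q|)^{3/2}$ and then \eqref{hwl2dlln}, which is the same mechanism you invoke).

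There is, however, a small but real gap in your handling of $\partial^2 Z^N\phi$. You write that the terms in $\partial(\Box_g Z^N\phi)$ ``carry one extra $Z$ derivative on $\wht g$, $\phi$ or $G^L$'' and propose to close using Proposition~\ref{l2ll} and Proposition~\ref{prpG} ``at order $N+1$''. No such estimates exist: the paper only proves them at order $N$. The point you are missing is that the extra derivative is a translation $\partial$, not a general $Z$, so $\partial Z^N\wht g_{LL}$ is controlled directly by the order-$N$ integrated estimate \eqref{wl2dlln} (no loss, no $(1+t)^{1/2}$ to compensate). This is precisely why the paper says the second-order case is ``similar or easier'' and singles out only one new difficulty: the term $\bar\partial Z^N G^L$. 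That term is \emph{not} covered by Proposition~\ref{prpG} (which gives $\partial_s Z^N G^L$ but not $\partial_\theta Z^N G^L$), and the paper handles it by the integration-by-parts argument of \eqref{l211}--\eqref{l214}, which you do not mention. Your commutator analysis using Corollary~\ref{estll} for $\partial g_{LL}$ is fine; it is the appeal to nonexistent $N{+}1$ metric estimates and the omission of the $\bar\partial Z^N G^L$ integration by parts that need fixing.
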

\begin{proof}
As in the previous section, we use the weighted energy estimate in the metric $g$.
Thanks to Proposition \ref{strphi}, we have
\begin{align*}
|\Box_g Z^N \phi|&\lesssim \frac{\ep}{\sqrt{1+s}(1+|q|)^{\frac{5}{2}-4\rho}}| Z^N \wht g_{LL}|
+\frac{\ep}{(1+s)^\frac{3}{2}(1+|q|)^{\frac{3}{2}-4\rho}}| Z^N \wht g|\\
&+\frac{\ep}{(1+s)^{\frac{3}{2}-\rho}}|\partial Z^N \phi|
+\frac{\ep}{(1+s)^\frac{3}{2}(1+|q|)^{\frac{1}{2}-4\rho}}| Z^N G^L|
+\frac{\ep}{(1+s)\sqrt{1+|q|}}|\bar{\partial}Z^N \phi|.
\end{align*}
We estimate
$$\ld{w^\frac{1}{2}\frac{\ep}{\sqrt{1+s}(1+|q|)^{\frac{5}{2}-4\rho}} Z^N \wht g_{LL}}
\lesssim \frac{\ep}{\sqrt{1+t}}\ld{\frac{w_2^\frac{1}{2}}{(1+|q|)^\frac{3}{2}} Z^N \wht g_{LL}}
\lesssim \frac{\ep}{\sqrt{1+t}}\ld{w'_2(q)^\frac{1}{2} \bar{\partial}Z^N \wht g_{LL}},$$
so
\begin{equation}
\label{l2phi1}\int_0^t \ep^{-1}(1+t)\ld{w^\frac{1}{2}\frac{\ep}{\sqrt{1+s}(1+|q|)^{\frac{5}{2}-4\rho}} Z^N \wht g_{LL}}^2 \lesssim \ep^3(1+t)^{2\rho},
\end{equation}
\begin{equation}\label{l2phi2}\ld{w^\frac{1}{2}\frac{\ep}{(1+s)^\frac{3}{2}(1+|q|)^{\frac{3}{2}-4\rho}} Z^N \wht g}
\lesssim \frac{1}{(1+s)^\frac{3}{2}}\ld{\frac{w_1^\frac{1}{2}}{1+|q|}Z^N \wht g}\lesssim \frac{\ep^2}{(1+t)^{\frac{3}{2}-2\rho}},
\end{equation}
\begin{equation}\label{l2phi3}
\ld{w^\frac{1}{2}\frac{\ep}{(1+s)^{\frac{3}{2}-\rho}}\partial Z^N \phi}
\lesssim \frac{\ep^2}{(1+t)^{\frac{3}{2}-2\rho}},
\end{equation}
\begin{equation}\label{l2phi4}\begin{split}\ld{\frac{\ep}{(1+s)^\frac{3}{2}(1+|q|)^{\frac{1}{2}-4\rho}}| Z^N G^L|}
&\lesssim \ep \left( \int \frac{1}{(1+s)^3(1+|q|)^{1-8\rho}}\|Z^N G^L \|^2_{L^2(\m S^2)}rdr\right)^\frac{1}{2}\\
&\lesssim \ep \left( \int \frac{1}{(1+s)^4(1+|q|)^{1-8\rho}}\|rZ^N G^L \|^2_{L^2(\m S^2)}dr\right)^\frac{1}{2}\\
&\lesssim \frac{\ep^2}{(1+t)^\frac{3}{2}},
\end{split}
\end{equation}
$$\ld{w^\frac{1}{2}\frac{\ep}{(1+s)\sqrt{1+|q|}}\bar{\partial}Z^N \phi}
\lesssim \frac{\ep}{(1+s)^{1-\mu}}\ld{w'(q)^\frac{1}{2}\bar{\partial}Z^N \phi},$$
so
\begin{equation}
\label{l2phi5}\int_0^t \ep^{-1}(1+t) \ld{w^\frac{1}{2}\frac{\ep}{(1+s)\sqrt{1+|q|}}\bar{\partial}Z^N \phi}^2
\lesssim \ep \int_0^t\ld{w'(q)^\frac{1}{2}\bar{\partial}Z^N \phi}^2
\lesssim \ep^3(1+t)^{2\rho}.
\end{equation}
Estimates \eqref{l2phi1}, \eqref{l2phi2}, \eqref{l2phi1}, \eqref{l2phi4} and \eqref{l2phi5} conclude the first part of Proposition \ref{l2phin}.
We now estimate $\partial Z^N \phi$. 
The terms are all similar or easier to estimate, since no terms with two derivatives of $g$ are involved.
The terms involving a derivative of $G^L$ can be estimated by
$$\frac{\ep}{(1+s)^\frac{3}{2}(1+|q|)^{\frac{3}{2}-4\rho}}| Z^N G^L|, \quad
\frac{\ep}{(1+s)^\frac{3}{2}(1+|q|)^{\frac{1}{2}-4\rho}}|\bar{\partial} Z^N G^L|.$$
The contribution of the second term can be estimated (very loosely) with an integration by part as in the last section (see estimates \eqref{l211}, \eqref{l212}, \eqref{l213} and \eqref{l214}).
\end{proof}
\subsection{Estimate of $\partial Z^{N+1}\phi$}
\begin{prp}
We have
$$\ld{w^\frac{1}{2}\partial Z^{N+1}\phi}\leq C_0 \ep + C \ep^\frac{3}{2}(1+t)^{\frac{1}{2}+\rho},$$
$$\int \frac{1}{1+t}\ld{w'(q)^\frac{1}{2}\bar{\partial}Z^{N+1}\phi}^2\leq C_0^2\ep^2 +C\ep^3(1+t)^\rho.$$
\end{prp}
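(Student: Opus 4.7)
The plan is to run the weighted energy estimate of Proposition \ref{prpweighte} for $u = Z^{N+1}\phi$ with the weight $w$, exactly as in Proposition \ref{l2phin} but tracking the one-derivative loss. This yields
$$\ld{w^{1/2}\partial Z^{N+1}\phi(t)}^2 + \int_0^t \ld{w'(q)^{1/2}\bar\partial Z^{N+1}\phi}^2 d\tau \lesssim \ld{w^{1/2}\partial Z^{N+1}\phi(0)}^2 + \int_0^t \frac{\ep}{1+\tau}\ld{w^{1/2}\partial Z^{N+1}\phi}^2 d\tau + \left|\int_0^t\!\!\int w\,(\partial_\tau Z^{N+1}\phi)\,\Box_g Z^{N+1}\phi\,dvol_g\right|.$$
Inserting the bootstrap \eqref{ZphiN} into the Gronwall-type term gives a contribution of size $\ep^3(1+t)^{1+2\rho}$, which sits inside the square of the target bound.

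For the source, I would apply Proposition \ref{strphi} at order $I=N+1$, producing six contributions of the same shape as in \eqref{l2phi1}--\eqref{l2phi5}. The terms $\frac{\ep|\partial Z^{N+1}\phi|}{(1+s)^{3/2-\rho}}$ and $\frac{\ep|\bar\partial Z^{N+1}\phi|}{(1+s)\sqrt{1+|q|}}$ are closed by \eqref{ZphiN} and \eqref{iZphiN} exactly as \eqref{l2phi3} and \eqref{l2phi5}. The $|Z^{N+1}G^L|$ term uses \eqref{estGN} and \eqref{estGN1} as in \eqref{l2phi4}. The term $\frac{\ep|Z^{N+1}\wht g|}{(1+s)^{5/2}(1+|q|)^{1/2-4\rho}}$ is handled by weighted Hardy (Proposition \ref{hardy}) plus the commutator identity $[Z,\partial]\sim \partial$, reducing it to the $N$-th order bootstrap \eqref{g2N} on $\partial Z^N \wht g$; the resulting one-derivative loss manifests as an extra factor $(1+t)^{1/2}$ which is precisely what the target growth accommodates. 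A similar analysis, also using Hardy, applies to $Z^{N+1}\wht g_1$.

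The main obstacle is the term $\frac{\ep|Z^{N+1}\wht g_{LL}|}{\sqrt{1+s}(1+|q|)^{5/2-4\rho}}$, because there is no direct $L^2$ bootstrap at order $N+1$ on the metric. Following the strategy of Proposition \ref{l2ll}, I would apply the wave-coordinate identity of Proposition \ref{estLL} at order $N+1$,
$$|\partial_q Z^{N+1}\wht g_{LL}| \lesssim |\bar\partial Z^{N+1}\wht g_1| + \frac{1}{1+s}|Z^{N+1}\wht g|,$$
and combine it with Hardy to express $\ld{w^{1/2}(1+|q|)^{-3/2}Z^{N+1}\wht g_{LL}}$ in terms of $\ld{w_2'^{1/2}\bar\partial Z^{N+1}\wht g_1}$ and $\ld{w_1'^{1/2}\bar\partial Z^{N+1}\wht g}$, which after one commutation reduce to the $N$-th order integrated bootstraps \eqref{igN} and \eqref{ig2N}. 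The loss of one $Z$ via commutation and the $(1+s)^{-1/2}$ prefactor jointly explain the $(1+t)^{1/2+\rho}$ growth; the factor $s\geq t$ is used freely to pull $(1+s)^{-1/2}$ out of the spatial integral.

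Finally, the integrated estimate for $\bar\partial Z^{N+1}\phi$ is obtained by multiplying the basic energy inequality by $(1+\tau)^{-1}$ before integrating in $\tau$, exactly as in the passage from Proposition \ref{prpl2whtg} to Corollary \ref{corl2whtg}: the extra factor $\frac{d}{d\tau}(1+\tau)^{-1}$ contributes an integrable piece which is absorbed in the $(1+t)^\rho$ target, and every source term treated above is correspondingly divided by $1+\tau$, trading the $(1+t)^{1+2\rho}$ growth in the stationary estimate for $(1+t)^{2\rho}$ in the integrated one.
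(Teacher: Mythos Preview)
Your overall scaffold (run the weighted energy inequality with weight $w$, multiply by $(1+t)^{-1}$ for the integrated estimate, and use Proposition~\ref{strphi} to control $\Box_g Z^{N+1}\phi$) matches the paper. The treatment of the terms involving $\partial Z^{N+1}\phi$, $\bar\partial Z^{N+1}\phi$, $Z^{N+1}G^L$ and $Z^{N+1}\wht g$ also works essentially as you describe.

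The gap is in your handling of the critical term $\frac{\ep}{\sqrt{1+s}(1+|q|)^{5/2-4\rho}}|Z^{N+1}\wht g_{LL}|$. Applying the wave-coordinate identity at order $N+1$ produces $|\bar\partial Z^{N+1}\wht g_1|$ on the right-hand side, and your claim that this ``after one commutation reduces to the $N$-th order integrated bootstraps'' does not close. Commuting $\bar\partial$ past one $Z$ gives $\bar\partial Z^{N+1}\wht g_1 = Z\bar\partial Z^N\wht g_1 + O(\bar\partial Z^N\wht g_1)$; the second piece is fine, but the first still carries $N+1$ derivatives, and expanding the outer $Z$ via $|Zu|\lesssim (1+s)|\bar\partial u|+(1+|q|)|\partial u|$ leads to $\bar\partial^2 Z^N\wht g_1$ and $\partial\bar\partial Z^N\wht g_1$, neither of which is covered by the bootstrap assumptions \eqref{igN}, \eqref{ig2N} (those control only one $\bar\partial$ of $Z^N\wht g_1$, not second derivatives). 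The route via Proposition~\ref{l2ll} at level $N$ works precisely because the bootstrap supplies $\int_0^t\|w_2'^{1/2}\bar\partial Z^N\wht g_1\|^2$; there is no analogous input at level $N+1$.

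The paper proceeds differently: it first uses the algebraic identity $|Z^{N+1}g_{LL}|\lesssim (1+s)|\bar\partial Z^N g_{LL}|+(1+|q|)|\partial Z^N g_{LL}|$ directly on $\wht g_{LL}$ (not on $\wht g_1$), identifying the dangerous contribution as the explicit product $s\,\bar\partial Z^N\wht g_{LL}\,\partial_q^2\phi$. It then treats this term inside the energy integral by a \emph{double integration by parts}: one in $t$ to move $\partial_t$ off $\partial_t Z^{N+1}\phi$, one in the tangential direction to move $\bar\partial$ off $\bar\partial Z^N\wht g_{LL}$. After these integrations by parts, the remaining metric factor is $Z^N\wht g_{LL}$ (no derivative), which is then estimated via Hardy together with the wave-coordinate $L^2$ bounds at order $N$ (Proposition~\ref{l2ll}, estimates \eqref{wl2dlln}, \eqref{hwl2dlln}) and the integrated bootstrap \eqref{iZphiN}. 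This integration-by-parts step is the missing idea in your proposal; without it, the argument does not close at the top order.
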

\begin{proof}
We use the weighted energy estimate in the metric $g$ and multiply it by $\frac{1}{1+t}$. We obtain
\begin{align*}
&\frac{d}{dt}\left(\frac{1}{1+t}\int (\partial Z^{N+1}\phi)^2w_1(q)dvol_g\right)+C\frac{1}{1+t}\int w'(q)(\bar{\partial}Z^{N+1} \phi)^2\\
&\lesssim \frac{\ep}{1+t}\frac{1}{1+t}\int w_1(q)(\partial Z^{N+1} \phi)^2 + \frac{1}{1+t}\left|\int w_1(q)\partial_t Z^{N+1}\phi \Box_g Z^{N+1} \phi dvol_g\right|.
\end{align*}

To estimate $\Box_g Z^{N+1} \phi$ we use Proposition \ref{strphi} and remark that
$$|Z^{N+1}g| \lesssim (1+s)|\bar{\partial} Z^N g|+(1+|q|)|\partial Z^N g|.$$
Consequently
\begin{align*}
|\Box_g Z^{N+1} \phi|&\lesssim \frac{\ep\sqrt{1+s}}{(1+|q|)^{\frac{5}{2}-4\rho}}|\bar{\partial} Z^N \wht g_{LL}|
+\frac{\ep}{\sqrt{1+s}(1+|q|)^{\frac{3}{2}-4\rho}}|\partial Z^N \wht g_{LL}|\\
&+\frac{\ep}{(1+s)^\frac{1}{2}(1+|q|)^{\frac{3}{2}-4\rho}}|\bar{\partial} Z^N \wht g_1|
+\frac{\ep}{(1+s)^\frac{3}{2}(1+|q|)^{\frac{1}{2}-4\rho}}|\partial Z^N \wht g|\\
&+\frac{\ep}{(1+s)^{\frac{3}{2}-\rho}}|\partial Z^{N+1} \phi|
+\frac{\ep}{(1+s)^\frac{1}{2}(1+|q|)^{\frac{1}{2}-4\rho}}| \bar{\partial}Z^N G^L|\\
&+\frac{\ep}{(1+s)^\frac{3}{2}(1+|q|)^{\frac{1}{2}-4\rho}}| Z^N G^L|
+\frac{\ep}{(1+s)\sqrt{1+|q|}}|\bar{\partial}Z^{N+1}\phi|.
\end{align*}
We estimate the first term. It comes from
a term of the form
$s\bar{\partial} Z^N \wht g_{LL}\partial_q^2 \phi.$
We estimate its contribution with an integration by parts
\begin{align*}
&\int_0^t \frac{1}{1+\tau}\int w(q) s(\bar{\partial} Z^N \wht g_{LL})(\partial_q^2 \phi) (\partial_t Z^{N+1}\phi) dvol_g d\tau\\
=& \left[ \frac{1}{1+t}\int w(q)  s(\bar{\partial} Z^N \wht g_{LL})(\partial_q^2 \phi) (Z^{N+1}\phi) dvol_g\right]_0^t
-\int \int \partial_t\left(w(q)\sqrt{|det(g)|}  \frac{s}{1+\tau}(\bar{\partial}Z^N \wht g_{LL})\partial_q^2 \phi \right) Z^{N+1}\phi dx d\tau\\
=&\left[ \frac{1}{1+\tau}\int w(q)  s(\bar{\partial} Z^N \wht g_{LL})(\partial_q^2 \phi)( Z^{N+1}\phi)\right]_0^t
+\int \int \partial_t\left(w(q) \sqrt{|det(g)|} \frac{s}{1+\tau} (Z^N \wht g_{LL})\partial_q^2 \phi\right)\bar{\partial}Z^{N+1} \phi dxd\tau\\
&+O \left(\int_0^t \int |\partial_t(w(q) \bar{\partial}( \sqrt{|det(g)|} \frac{s}{1+\tau} \partial_q^2 \phi )Z^N \wht g_{LL}) Z^{N+1} \phi |dxd\tau \right)\\
&+O\left(\left[\int \partial_t\left(w(q)\sqrt{|det(g)|} \frac{s}{1+\tau}Z^N \wht g_{LL}\partial_q^2 \phi\right) Z^{N+1} \phi\right]_0^t \right).
\end{align*}
We estimate
\begin{align*}&\frac{1}{1+t}\left| \int w(q)  s(\bar{\partial} Z^N \wht g_{LL})(\partial_q^2 \phi)( Z^{N+1}\phi)dvol_g\right|\\
&\lesssim \frac{1}{1+t}\int w(q) \frac{\ep\sqrt{1+s}}{(1+|q|)^{\frac{5}{2}-4\rho}}|\bar{\partial} Z^N \wht g_{LL}|| Z^{N+1}\phi|dx\\
&\lesssim \ep\frac{1}{\sqrt{1+t}}\ld{w_2(q)^\frac{1}{2}\partial Z^N \wht g_1}\ld{w(q)^\frac{1}{2}\partial Z^{N+1}\phi}\\
&\lesssim \ep^3(1+t)^{2\rho},
\end{align*}
\begin{align*}&\left|\int_0^t \int \partial_t\left(w(q) \sqrt{|det(g)|} \frac{s}{1+\tau} Z^N \wht g_{LL}\partial_q^2 \phi\right)\bar{\partial}Z^{N+1} \phi dxd\tau\right|\\
&\lesssim \int_0^t \int \frac{1}{(1+s)^\frac{1}{2}(1+|q|)^{\frac{5}{2}-4\rho}}\left(|\partial Z^N g_{LL}|+\frac{1}{1+|q|}|Z^N g_{LL}|\right)
|\bar{\partial}Z^{N+1} \phi|dxd\tau \\
&\lesssim \ep\int_0^t \ld{w'_2(q)\left(|\partial Z^N g_{LL}|+\frac{1}{1+|q|}|Z^N g_{LL}|\right)}^2d\tau
+\ep\int_0^t \frac{1}{1+\tau}\ld{w'(q)\bar{\partial}Z^{N+1} \phi}^2d\tau\\
&\lesssim \ep^3(1+t)^{2\rho},
\end{align*}
where we have used \eqref{iZphiN}, \eqref{wl2dlln} and \eqref{hwl2dlln},
\begin{align*}
&\left|\int_0^t \int \frac{1}{(1+s)^2}\partial_t(w(q)  \sqrt{|det(g)|}s Z^N \wht g_{LL}\partial_q^2 \phi) Z^{N+1} \phi dxd\tau \right|\\
&\lesssim \int_0^t \int  \frac{\ep}{(1+s)^\frac{3}{2}(1+|q|)^{\frac{5}{2}-4\rho}}\left(|\partial Z^N g_{LL}|+\frac{1}{1+|q|}|Z^N g_{LL}|\right)|Z^{N+1} \phi|dxd\tau\\
&\lesssim \ep\int_0^t \ld{w'_2(q)\left(|\partial Z^N g_{LL}|+\frac{1}{1+|q|}|Z^N g_{LL}|\right)}^2d\tau
+\ep\int_0^t \frac{1}{(1+\tau)^3}\ld{w^\frac{1}{2}\frac{Z^{N+1}\phi}{1+|q|}}^2d\tau
\lesssim \ep^3(1+t)^{2\rho}.
\end{align*}
The last term can be estimated as the first. For the estimate of the other terms, we refer to the following section.
\end{proof}
\subsection{Estimation of $\partial S Z^N \phi$}
In this section we prove better estimates for $\partial S Z^N \phi$. These better estimates allow to exploit the better decay of $\partial_s Z^N \phi$ noticing
$$\partial_s Z^N \phi = \frac{1}{s}SZ^N \phi + \frac{q}{s}\partial_q Z^N \phi^.$$
This fact is used in Section \ref{secprph} to estimate $\partial_s Z^N h$.
\begin{prp}\label{l2Sphin}
	We have
	$$\ld{w^\frac{1}{2}\partial (SZ^N \phi-s\partial_q \phi Z^Ng_{LL})} 
	\leq C_0 \ep + C\ep^\frac{3}{2}(1+t)^\rho,$$
	$$\int_0^t \ld{w'(q)^\frac{1}{2}\bar{\partial }(SZ^N \phi-s\partial_q \phi Z^Ng_{LL})}
	\leq C_0 \ep + C \ep^\frac{3}{2}(1+t)^\rho.$$
\end{prp}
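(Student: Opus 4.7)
The plan is to apply the weighted energy estimate of Proposition \ref{prpweighte} with weight $w$ to $u := SZ^N\phi - s\partial_q\phi\cdot Z^N g_{LL}$, exactly as in the proof of Proposition \ref{l2phin}, and to show that $\Box_g u$ satisfies essentially the same bounds that were obtained there for $\Box_g Z^N\phi$, up to corrections that the special combination is designed to cancel.

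First I would compute $\Box_g SZ^N\phi = S\Box_g Z^N\phi + [\Box_g, S]Z^N\phi$. Writing $\Box_g = \Box - \wht g^{\alpha\beta}\partial_\alpha\partial_\beta - (g_{\mathfrak{b}}-m)^{\alpha\beta}\partial_\alpha\partial_\beta - H^\rho\partial_\rho$ and using $[\Box,S]=2\Box$, the commutator produces two genuinely dangerous contributions: $(S\wht g^{\alpha\beta})\partial_\alpha\partial_\beta Z^N\phi$ (worst piece: $S\wht g_{LL}\cdot\partial_q^2 Z^N\phi$), and, coming from $S$ distributed inside the principal term of Proposition \ref{strphi}, an expression $SZ^N\wht g_{LL}\cdot\partial_q^2\phi$ that cannot be controlled from the existing bootstrap assumptions on $\wht g$. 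All other contributions from $S\Box_g Z^N\phi$ are of the type already estimated in Proposition \ref{l2phin} (after using $[S,\partial_q]=-\partial_q$ and the existing $L^\infty$ bounds on $\wht g_{LL}$ from Corollary \ref{estll}).

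The subtracted term $\Box_g(s\partial_q\phi\cdot Z^N\wht g_{LL})$ is precisely chosen to cancel these contributions. Its leading part is $s\partial_q\phi\cdot\Box_g Z^N\wht g_{LL}$; by Proposition \ref{prpstr} (noting that ${}^IQ_{\q T\q T}=0$, so the $LL$-equation carries only the ${}^NM + {}^NM^E$ structure) combined with $g^{\alpha\beta}\partial_\alpha\partial_\beta\phi = H^\alpha\partial_\alpha\phi$ at top order and the wave coordinate identity of Proposition \ref{estLL} (which converts $\partial_q Z^N g_{LL}$ into good derivatives of $\wht g_{\q T\q V}$), the algebraic identity
\[
s\partial_q\phi\cdot\Box_g Z^N\wht g_{LL} \;-\; SZ^N\wht g_{LL}\cdot\partial_q^2\phi \;-\; S\wht g_{LL}\cdot\partial_q^2 Z^N\phi \;=\; \text{(good)}
\]
holds, where the right-hand side involves only good derivatives $\bar\partial$ of $\wht g$, null-form products $\partial\phi\cdot\partial Z^N\wht g$, cubic contributions through $G^L$, and lower-order commutators. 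The remaining cross terms $2g^{\alpha\beta}\partial_\alpha(s\partial_q\phi)\partial_\beta Z^N\wht g_{LL}$ and $\Box_g(s\partial_q\phi)\cdot Z^N\wht g_{LL}$ are controlled by the $L^\infty$ bounds for $\partial\phi$ in Proposition \ref{estks} together with Propositions \ref{l2ll} and \ref{prpwhtg1n}, and by the $L^\infty$ estimate $|\Box_g(s\partial_q\phi)| \lesssim \ep(1+s)^{-1}\ldots$ that follows from $\Box_g\phi=0$ combined with $[\Box_g,s\partial_q]$.

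The hard part will be making the cancellation precise: one has to match, at the level of principal symbols, the commutator $[\Box_g,S]$ term with the $\partial_q^2\phi\cdot\Box_g Z^N g_{LL}$ term, paying attention to factors from $\sqrt{|\det g|}$, to the difference between $S\partial_q^2$ and $\partial_q^2 S$, and to the role of $H^\alpha$ (in particular $G^L$) at top order. Once this is done, every remaining contribution to $\Box_g u$ is estimated by the same menu of techniques used in Propositions \ref{prpl2whtg}, \ref{prpwhtg1n}, and \ref{l2phin}: $L^\infty$--$L^2$ Hölder estimates with weighted Hardy, integration by parts in $t$ against $\partial_t u$ for borderline terms (exactly as in the estimates \eqref{l212}--\eqref{l218} for $s\chi'(q)\partial_s^2 Z^N b$), the integrated bootstrap assumptions \eqref{iSphiN}, \eqref{iphiN}, \eqref{iZphiN}, \eqref{igN}, \eqref{ig2Nbis}, and the bounds for $G^L$ from Proposition \ref{prpG} together with Proposition \ref{prpangle}. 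Summing the contributions yields
\[
\|w^{1/2}\partial u(t)\|_{L^2}^2 + \int_0^t \|w'(q)^{1/2}\bar\partial u(\tau)\|_{L^2}^2\,d\tau \;\le\; C_0^2\ep^2 + C\ep^3(1+t)^{2\rho},
\]
which, upon taking square roots, is the statement of Proposition \ref{l2Sphin}.
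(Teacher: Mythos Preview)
Your overall strategy is right and matches the paper: apply the weighted energy inequality (Proposition~\ref{prpweighte}) with weight $w$ to $u=SZ^N\phi-s\partial_q\phi\,Z^N g_{LL}$, and control $\Box_g u$ termwise as in Proposition~\ref{l2phin}. However, you have the cancellation mechanism backwards. The displayed ``algebraic identity''
\[
s\partial_q\phi\cdot\Box_g Z^N\wht g_{LL}\;-\;SZ^N\wht g_{LL}\cdot\partial_q^2\phi\;-\;S\wht g_{LL}\cdot\partial_q^2 Z^N\phi\;=\;\text{(good)}
\]
is not correct: the factor $s\partial_q\phi\cdot\Box_g Z^N\wht g_{LL}$ has no structural relationship to $SZ^N\wht g_{LL}\cdot\partial_q^2\phi$. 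In the paper the cancellation occurs through the \emph{cross term} in the Leibniz formula
\[
\Box_g(s\partial_q\phi\cdot Z^N g_{LL})=Z^N g_{LL}\,\Box_g(s\partial_q\phi)+s\partial_q\phi\,\Box_g Z^N g_{LL}+2g^{\alpha\beta}\partial_\alpha(s\partial_q\phi)\,\partial_\beta Z^N g_{LL}.
\]
The Minkowski part of this cross term contains $2\partial_q(s\partial_q\phi)\,\partial_s Z^N g_{LL}=2s\partial_q^2\phi\,\partial_s Z^N g_{LL}$, and this is what kills the single dangerous contribution $s\partial_s Z^N g_{LL}\,\partial_q^2\phi$ arising in $\Box_g SZ^N\phi$ (your term (a), $S\wht g_{LL}\cdot\partial_q^2 Z^N\phi$, is not actually dangerous: it is absorbed in the $\frac{\ep}{(1+s)^{3/2-\rho}}|\partial SZ^N\phi|$ line of Proposition~\ref{strphi}). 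The piece $s\partial_q\phi\,\Box_g Z^N g_{LL}$ is then estimated on its own, using that $LL\in\q T\q T$ so ${}^NQ_{\q T\q T}=0$ in Proposition~\ref{prpstr}.

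Because the cancellation lives in the cross term, you cannot dismiss $2g^{\alpha\beta}\partial_\alpha(s\partial_q\phi)\,\partial_\beta Z^N g_{LL}$ as ``controlled by $L^\infty$ bounds''. After the Minkowski part cancels the bad term, what remains is the metric correction $\wht g_{L\ba L}\,\partial_q(s\partial_q\phi)\,\partial_s Z^N g_{LL}$, and this residual is \emph{not} bounded by a direct H\"older estimate: it is precisely the term the paper treats by an integration by parts in $t$ (moving $\partial_s$ off $Z^N g_{LL}$, as in the computations leading to \eqref{l2sphi}). You do invoke integration by parts, but for the wrong class of terms. Once you relocate the cancellation to the cross term and apply the time-integration-by-parts to its $\wht g_1$-residual, the rest of your outline goes through as written.
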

We do the weighted energy estimate in the metric $g$. We have, thanks to Proposition \ref{strphi}
\begin{align*}
|\Box_g SZ \phi|&\lesssim \frac{\ep\sqrt{1+s}}{(1+|q|)^{\frac{5}{2}-4\rho}}|\partial_s Z^N \wht g_{LL}|
+\frac{\ep}{\sqrt{1+s}(1+|q|)^{\frac{3}{2}-4\rho}}|\partial Z^N \wht g_{LL}|\\
&+\frac{\ep}{(1+s)^\frac{1}{2}(1+|q|)^{\frac{3}{2}-4\rho}}|\bar{\partial} Z^N \wht g_1|
+\frac{\ep}{(1+s)^\frac{3}{2}(1+|q|)^{\frac{1}{2}-4\rho}}|\partial Z^N \wht g|\\
&+\frac{\ep}{(1+s)^{\frac{3}{2}-\rho}}|\partial SZ^{N} \phi|
+\frac{\ep}{(1+s)^\frac{1}{2}(1+|q|)^{\frac{1}{2}-4\rho}}| \partial_s Z^N G^L|\\
&+\frac{\ep}{(1+s)^\frac{3}{2}(1+|q|)^{\frac{1}{2}-4\rho}}| Z^N G^L|
+\frac{\ep}{(1+s)\sqrt{1+|q|}}|\bar{\partial}SZ^{N}\phi|
\end{align*}
We estimate the first term.
It can be written $s\partial_s Z^N g_{LL} \partial_q^2 \phi$. We remark the following calculation
\begin{align*}
\Box_g s\partial_q \phi Z^Ng_{LL}&=Z^N g_{LL}\Box_g (s\partial_q\phi)+s\partial_q \phi \Box_g Z^N g_{LL}+2g^{\alpha \beta}\partial_\alpha (s\partial_q \phi)\partial_{\beta} Z^N g_{LL}\\ 
&=O\left(\frac{\ep}{(1+s)^\frac{1}{2}(1+|q|)^{\frac{5}{2}-4\rho}}\right)Z^N g_{LL}+s\partial_q \phi \Box_g Z^N g_{LL}\\
&+2\partial_s Z^N g_{LL}\partial_q (s\partial_q \phi)
+2\partial_q Z^N g_{LL}\partial_s(s\partial_q \phi)
+\frac{2}{r^2}\partial_\theta Z^N g_{LL}\partial_\theta(s\partial_q \phi)\\
&+2\wht g  \bar{\partial}(s\partial_q \phi)\partial_q Z^N g_{LL}+2\wht g_{L \ba L} \partial_q(s\partial_q \phi)\partial_s Z^N g_{LL} +2\wht g_{UL}\partial_q(s\partial_q \phi)\frac{\partial_\theta}{r} Z^N g_{LL}.
\end{align*}
In $\Box_g Z^N g_{LL}$, considering the support condition on $\phi$, it is sufficient to study the contribution of
$M_{LL}$ : the only dangerous terms are the one of the form
$\partial_{\ba L} \wht g_{\ba L \q T}\partial_{\q T}Z^N g_{\q T \q T}$  which can only be
$\partial_{\ba L}\wht g_{\ba L L}\partial_L Z^N g_{LL}$, 
and the contribution of the commutator of the wave operator with the null frame, which are more precisely in this case
$$\frac{1}{r^2}\Upr Z^N \wht g +\frac{1}{r}\Up\left(\frac{r}{t}\right)\bar{\partial} \wht g_{L\q V}.$$
Consequently, the terms in $\Box_g (SZ^N \phi -  s\partial_q \phi Z^Ng_{LL})$ are similar to one of the three following terms
$$O\left(\frac{\ep}{(1+s)^\frac{1}{2}(1+|q|)^{\frac{5}{2}-4\rho}}\right)Z^N g_{LL},
\quad O\left(\frac{\ep}{(1+s)^\frac{1}{2}(1+|q|)^{\frac{3}{2}-4\rho}}\right)\bar{\partial}Z^N \wht g_1, \quad \wht g_1 \partial_q(s\partial_q \phi)\partial_s Z^N g_{LL} .$$
We estimate the first term
\begin{equation}\label{l2sphi}\begin{split}
\int_0^t \ep^{-1}(1+\tau)\left\|w^\frac{1}{2}O\left(\frac{\ep}{(1+s)^\frac{1}{2}(1+|q|)^{\frac{5}{2}-4\rho}}\right)Z^N g_{LL}\right\|_{L^2}^2d\tau&
\lesssim \ep \int \left\|\frac{w_2^\frac{1}{2}}{(1+|q|)^{\frac{5}{2}-4\rho-\sigma }} Z^N g_{LL}\right\|^2_{L^2}d\tau\\
&\lesssim \ep^3(1+t)^{2\rho},
\end{split}
\end{equation}
where we have used \eqref{hwl2dlln}. The second term is similar.
We now estimate the contribution of the third term.
\begin{align*}
&\int w(q)\wht g_1 (\partial_q(s\partial_q \phi))(\partial_s Z^N g_{LL})(\partial_t S Z^N \phi) dvol_g\\
&=\partial_t \int w(q)\wht g_1 (\partial_q(s\partial_q \phi) )(Z^N g_{LL})(\partial_t S Z^N \phi) dvol_g
-\int \partial_s\left( w(q)\sqrt{|det(g)|}\wht g_1 \partial_q(s\partial_q \phi)r\right)(Z^N g_{LL})(\partial_t S Z^N \phi) d\theta dr\\
&- \int w(q)\wht g_1 (\partial_q(s\partial_q \phi))( Z^N g_{LL})(\partial_t\partial_s S Z^N \phi) dvol_g\\
&= \partial_t \int w(q)\wht g_1 (\partial_q(s\partial_q \phi))( Z^N g_{LL})(\partial_t S Z^N \phi)dvol_g 
-\int \partial_s\left( w(q)\sqrt{|det(g)|}\wht g_1 \partial_q(s\partial_q \phi)r\right)(Z^N g_{LL})(\partial_t S Z^N \phi) d\theta dr\\
&-\partial_t \int w(q)\wht g_1 (\partial_q(s\partial_q \phi))( Z^N g_{LL})(\partial_s S Z^N \phi)dvol_g +\int \partial_t\left(w(q)\sqrt{|det(g)|}\wht g_1 \partial_q(s\partial_q \phi) Z^N g_{LL}\right)(\partial_s S Z^N \phi)dx\\
&=\partial_t A -\int \partial_s\left( w(q)\wht g_1 \partial_q(s\partial_q \phi)r\right)Z^N g_{LL}\partial_t S Z^N \phi d\theta dr+\int \partial_t\left(w(q)\wht g_1 \partial_q(s\partial_q \phi) Z^N g_{LL}\right)\partial_s S Z^N \phi
\end{align*}
with 
$$A=\int w(q)\wht g_1( \partial_q(s\partial_q \phi) )(Z^N g_{LL})(\partial_t S Z^N \phi)dvol_g+\int w(q)\wht g_1( \partial_q(s\partial_q \phi))( Z^N g_{LL})(\partial_s S Z^N \phi)dvol_g.$$
We estimate
\begin{align*}
&\left|\int w(q)\wht g_1 \partial_q(s\partial_q \phi) Z^N g_{LL}\partial_t S Z^N \phi\right|\\
&\lesssim \int \frac{\ep^2}{(1+|q|)^{2-4\rho}}|Z^N g_{LL}||\partial_t SZ^N \phi|\\
&\lesssim \ep^2\ld{\frac{w_2^\frac{1}{2}}{1+|q|}Z^N \wht g_1}\ld{w^\frac{1}{2}\partial_t SZ^N \phi}\lesssim \ep^4(1+t)^{2\rho}.
\end{align*}
The second term in $A$ obey a similar estimate so
$$|A|\lesssim \ep^4(1+t)^{2\rho}.$$
We now estimate
\begin{align*}|\partial_s\left( w(q) \sqrt{|det(g)|}\wht g_1 \partial_q(s\partial_q \phi)r\right)|\lesssim
\frac{1}{(1+|q|)^2}|Z \wht g_1||Z^3 \phi|r &\lesssim\frac{1}{(1+|q|)^2} \frac{\ep(1+|q|)^{\frac{1}{2}}}{(1+s)^\frac{1}{2}}\frac{\ep}{(1+s)^\frac{1}{2}(1+|q|)^{\frac{1}{2}-4\rho}}r \\
&\lesssim \frac{\ep^2}{(1+s)(1+|q|)^{2-4\rho}}r.
\end{align*}
Consequently
\begin{align*}
&\int_0^t\left|\int \partial_s\left( w(q)\sqrt{|det(g)|}\wht g \partial_q(s\partial_q \phi)r\right)Z^N g_{LL}\partial_t S Z^N \phi d\theta dr\right|d\tau\\
&\lesssim \int_0^t\int w(q)\frac{\ep^2}{(1+s)(1+|q|)^{2-4\rho-\sigma}}|Z^Ng_{LL}||\partial_t S Z^N \phi|dxd\tau\\
&\lesssim \int_0^t\frac{\ep^2}{(1+\tau)}\left\|\frac{w_2(q)^\frac{1}{2}}{(1+|q|)^{\frac{3}{2} }}Z^N g_{LL}\right\|_{L^2}\|w^\frac{1}{2} \partial_t S Z^N \phi\|_{L^2}d\tau\\
&\lesssim \ep\int_0^t \left\|\frac{w_2(q)^\frac{1}{2}}{(1+|q|)^{\frac{3}{2} }}Z^N g_{LL}\right\|_{L^2}^2d\tau
+\int_0^t \frac{\ep}{(1+\tau)^2}\|w^\frac{1}{2} \partial_t S Z^N \phi\|_{L^2}^2d\tau\\
&\lesssim \ep^3(1+t)^{2\rho}.
\end{align*}
We now estimate
\begin{align*}
&\int_0^t\left|\int \partial_t\left(w(q)\sqrt{|det(g)|}\wht g_1 \partial_q(s\partial_q \phi) Z^N g_{LL}\right)\partial_s S Z^N \phi\right|\\
&\lesssim \int w(q)\frac{\ep^2}{(1+|q|)^{3-4\rho-\sigma}}|Z^N g_{LL}||\partial_s SZ^N\phi|dx+s.t.\\
&\lesssim\ep^2\int_O^t\left\|\frac{w(q)^\frac{1}{2}}{(1+|q|)^{\frac{5}{2}-4\rho-\sigma-\mu}}Z^N g_{LL}\right\|_{L^2}\left\|\frac{w^\frac{1}{2}}{(1+|q|)^{\frac{1}{2}+\mu}} \partial_s S Z^N \phi\right\|_{L^2}\\
&\lesssim \ep \int_0^t\left\|\frac{w_2'(q)^\frac{1}{2}}{1+|q|} Z^N  g_{LL} \right\|^2_{L^2}
+\ep\int_0^t\left\|w'(q)^\frac{1}{2} \partial_s S Z^N \phi\right\|^2_{L^2}\\
&\lesssim \ep^3(1+t)^{2\rho}.
\end{align*}

We now estimate the other contributions in $\Box_g SZ^N \phi$.
The term $\frac{\ep}{\sqrt{1+s}(1+|q|)^{\frac{3}{2}-4\rho}}|\partial Z^N \wht g_{LL}|$ can be estimated like \eqref{l2sphi}.
We estimate
$$
\int_0^t \ep(1+\tau)^{-1}\ld{w\frac{\ep}{\sqrt{1+s}(1+|q|)^{\frac{3}{2}-4\rho}}\bar{\partial} Z^N \wht g_1}^2d\tau
\lesssim \int_0^t \ep\ld{w_2^\frac{1}{2}(q)'\bar{\partial}Z^N \wht g_1}^2\lesssim \ep^2(1+t)^{2\rho},$$
$$\ld{w\frac{\ep}{(1+s)^\frac{3}{2}(1+|q|)^{\frac{1}{2}-4\rho}}\partial Z^N \wht g}
\lesssim \frac{\ep}{(1+t)^\frac{3}{2}}\ld{w_1^\frac{1}{2}\partial Z^N \wht g}\lesssim \frac{\ep^2}{(1+t)^{\frac{3}{2}-2\rho}}.$$
We decompose
$$\frac{\ep}{(1+s)\sqrt{1+|q|}}|\bar{\partial}SZ^{N}\phi|\leq\frac{\ep}{(1+s)\sqrt{1+|q|}}|\bar{\partial}(SZ^N \phi -s\partial_q \phi Z^N g_{LL})|+ \frac{\ep}{(1+s)\sqrt{1+|q|}}|\bar{\partial} (s\partial_q \phi Z^N g_{LL})|.$$
We estimate
$$\int_0^t \ep(1+\tau)^{-1}\ld{\frac{\ep}{(1+s)\sqrt{1+|q|}}\bar{\partial}(SZ^N \phi -s\partial_q \phi Z^N g_{LL})}^2d\tau
\lesssim \ep^3(1+t)^{2\rho},$$
thanks to \eqref{iSphiN}, and
\begin{align*}
&\int_0^t \ep(1+\tau)^{-1}\ld{\frac{\ep}{(1+s)\sqrt{1+|q|}}\bar{\partial} (s\partial_q \phi Z^N g_{LL})}^2d\tau\\
&\lesssim \int_0^t \ep(1+\tau)^{-1}\ld{\frac{\ep}{\sqrt{1+s}(1+|q|)^{\frac{5}{2}-4\rho}}\bar{\partial}Z^N g_{LL}}^2d\tau +s.t.\\
&\lesssim \int_0^t \ep\ld{w_2'(q)^\frac{1}{2}\bar{\partial}Z^N g_{LL}}^2\lesssim \ep^3(1+t)^{2\rho}.
\end{align*}
The other term can be estimated in a similar way. For the term involving $\bar{\partial}G^L $ we refer to \eqref{l211}, \eqref{l212}, \eqref{l213} and \eqref{l214}.
\section{Lower order $L^2$ estimates}\label{seclow}
\subsection{Estimate of $\partial Z^{N-1} \phi$}
\begin{prp}\label{prpn1phi}
	We have
	$$\ld{w^\frac{1}{2}\partial Z^{N-1}\phi}\lesssim C_0 \ep + C \ep^\frac{3}{2}.$$
\end{prp}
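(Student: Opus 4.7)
The plan is to apply the curved-space weighted energy estimate (Proposition \ref{prpweighte}) to $Z^{N-1}\phi$ with the weight $w$, and to bound each of the six source terms appearing in $\Box_g Z^{N-1}\phi$ from Proposition \ref{strphi} using the lower-order bootstrap assumptions \eqref{phiN1}, \eqref{g2N2}, \eqref{gN2}, \eqref{gN2}, together with their Hardy-type consequences \eqref{hphin1}, \eqref{hgn1}, \eqref{hg1n1}, the bounds on $Z^{N-1}\wht g_{LL}$ from Proposition \ref{l2ll} (namely \eqref{wl2dll2}--\eqref{hwl2dll2}), and the bound \eqref{estG1} on $Z^{N-1}G^L$ (in which $\Delta_h$ is controlled by Proposition \ref{prpangle}). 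The key structural difference with the top-order estimate of Proposition \ref{l2phin} is that at order $N-1$ every one of these ingredients is uniform in time or has extra decay, so each source contribution will be time-integrable.

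Concretely, starting from
\begin{align*}
&\|w^\frac{1}{2}\partial Z^{N-1}\phi(t)\|^2_{L^2}+\int_0^t \|w'(q)^\frac{1}{2}\bar{\partial}Z^{N-1}\phi\|^2_{L^2}d\tau \\
&\lesssim \|w^\frac{1}{2}\partial Z^{N-1}\phi(0)\|^2_{L^2}+\int_0^t \frac{\ep}{1+\tau}\|w^\frac{1}{2}\partial Z^{N-1}\phi\|^2_{L^2}d\tau + \int_0^t \|w^\frac{1}{2}\Box_g Z^{N-1}\phi\|_{L^2}\|w^\frac{1}{2}\partial_\tau Z^{N-1}\phi\|_{L^2}d\tau,
\end{align*}
I would estimate each source term in turn, in the spirit of the proof of Proposition \ref{l2phin} but without any $(1+t)^\rho$ loss: the contribution of $Z^{N-1}\wht g_{LL}$ is bounded by $\ep^2(1+t)^{-1+\rho}$ using the Hardy-type estimate \eqref{hwl2dll} after trading the weight $w$ against $w_2$ via $w \lesssim w_2(1+|q|)^{4\sigma}$ in the exterior and the corresponding weight relation in the interior; the $Z^{N-1}\wht g_1$ and $Z^{N-1}\wht g$ contributions are controlled by $\ep^2(1+t)^{-3/2}$ and $\ep^2(1+t)^{-5/2+\rho}$ via \eqref{hg1n1} and \eqref{hgn1}; the $\partial Z^{N-1}\phi$ term is $\leq \ep^2(1+t)^{-3/2+\rho}$ directly from the bootstrap; the $Z^{N-1}G^L$ contribution is $\ep^2(1+t)^{-3/2+\rho}$ by \eqref{estG1} combined with Proposition \ref{prpangle}; finally the $\bar{\partial}Z^{N-1}\phi$ factor is absorbed into the positive $w'(q)$ term on the left via the same weighted Cauchy--Schwarz argument already used in Section \ref{secl2gn}. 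Each such contribution yields, after the Cauchy--Schwarz in $\tau$ and the bootstrap $\|w^{1/2}\partial Z^{N-1}\phi\|_{L^2}\lesssim \ep$, an integral of the form $\int_0^t \ep^3(1+\tau)^{-1-\alpha}d\tau\lesssim \ep^3$ with some $\alpha>0$.

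It only remains to handle the $\frac{\ep}{1+\tau}\|w^\frac{1}{2}\partial Z^{N-1}\phi\|_{L^2}^2$ term, which by Grönwall produces a factor $(1+t)^{C\ep}$. Since we are free to take $\ep$ small, this factor is harmless: for $\ep$ small enough (depending on $\rho$) it is dominated by any arbitrarily small power, and one may absorb it in the improvement constant by the standard Lindblad--Rodnianski trick of modifying the weight by a factor $(1+\tau)^{-C\ep}$, which changes the energy only by a multiplicative constant. Combining the initial data bound $\|w^\frac{1}{2}\partial Z^{N-1}\phi(0)\|_{L^2}\leq C_0\ep$ with the source estimate then gives $\|w^\frac{1}{2}\partial Z^{N-1}\phi\|_{L^2}\leq C_0\ep + C\ep^{3/2}$.

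The main obstacle is to sharpen each individual source bound enough that no $(1+t)^\rho$ factor survives: this forces, for the $Z^{N-1}\wht g_{LL}$ term, the use of the inequality $\partial_q Z^{N-1}\wht g_{LL}\lesssim (1+s)^{-1}|Z^N\wht g_1|$ from Proposition \ref{estLL} combined with \eqref{hg1n}, rather than the direct $Z^{N-1}\wht g_{LL}$ bootstrap, and a careful comparison of the weights $w$, $w_1$, $w_2$ across the interior and exterior regions. The $G^L$ term is delicate as well because its estimate \eqref{estG1} still contains the $\Delta_h$ piece; this is why the present proposition must follow Proposition \ref{prpangle} in the logical order of the bootstrap.
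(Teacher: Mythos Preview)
Your approach has a genuine gap in handling the term $\frac{\ep}{1+\tau}\|w^{1/2}\partial Z^{N-1}\phi\|_{L^2}^2$ coming from the curved energy estimate. Inserting the bootstrap bound \eqref{phiN1} gives
\[
\int_0^t \frac{\ep}{1+\tau}\|w^{1/2}\partial Z^{N-1}\phi\|_{L^2}^2\,d\tau \lesssim \ep^3\ln(1+t),
\]
which is not bounded uniformly in $t$. Your suggested remedy of multiplying the weight by $(1+\tau)^{-C\ep}$ does not help: that trick trades the logarithmic loss for a factor $(1+t)^{C\ep}$ on the energy itself, so you would end up with $\|w^{1/2}\partial Z^{N-1}\phi\|_{L^2}\leq (C_0\ep+C\ep^{3/2})(1+t)^{C\ep}$, which still grows and therefore cannot close the uniform bootstrap \eqref{phiN1}. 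At order $N$ this growth is harmless because the bootstrap target already carries a $(1+t)^\rho$ factor; at order $N-1$ the target is exactly uniform, so no such loss is permitted.

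The paper avoids this problem by performing the energy estimate in the \emph{Minkowski} metric (Proposition \ref{prpweightem}), which contains no $\frac{\ep}{1+\tau}$ deformation-tensor term at all. The price is that the source is now $\Box Z^{N-1}\phi$ rather than $\Box_g Z^{N-1}\phi$, so one must also estimate $(\Box-\Box_g)Z^{N-1}\phi$; but these extra terms are of exactly the same form as the commutator terms in Proposition \ref{strphi} (with the undifferentiated metric factor now in $L^\infty$ and the top-order $\partial^2 Z^{N-1}\phi$ controlled via $|\partial^2 Z^{N-1}\phi|\lesssim (1+|q|)^{-1}|\partial Z^N\phi|$), and each yields an integrable-in-$\tau$ contribution. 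In fact, as the paper notes, the estimates \eqref{l2phi2}--\eqref{l2phi5} already have extra decay, and the only term requiring a sharper bound than at order $N$ is the $Z^{N-1}\wht g_{LL}$ contribution, for which \eqref{hwl2dll} gives $\ep^2(1+t)^{-3/2+\rho}$.
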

\begin{proof}
	We perform the energy estimate in the Minkowski metric. Estimates \eqref{l2phi2} to \eqref{l2phi5} are quite loose, so the
	only term we need to estimate is
	$$\ld{\frac{\ep}{\sqrt{1+s}(1+|q|)^{\frac{5}{2}-4\rho}} Z^{N-1} \wht g_{LL}}\lesssim \frac{\ep}{\sqrt{1+t}}\ld{\frac{w_2^\frac{1}{2}}{(1+|q|)^2}Z^{N-1} \wht g_{LL}}\lesssim \frac{\ep^2}{(1+t)^{\frac{3}{2}-\rho}},$$
	where we have used \eqref{hwl2dll}.
\end{proof}
\subsection{Estimate of $\partial Z^{N-3}\wht g$}
\begin{prp}\label{prpn3g}
	We have 
	$$\ld{w^\frac{1}{2}\partial Z^{N-3} \wht g}\leq C_0 \ep + C\ep^{\frac{3}{2}}(1+t)^\rho,$$
	$$\ld{w^\frac{1}{2}\partial Z^{N-3} k}\leq  C\ep^{\frac{3}{2}}(1+t)^\rho.$$
\end{prp}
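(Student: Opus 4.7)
The plan is to follow the same template as the higher-order estimate for $\partial Z^N\wht g$ in Section \ref{secl2gn} (Proposition \ref{prpl2whtg}), but with two crucial advantages: we are working at $N-3$ derivatives instead of $N$, and we aim only for $(1+t)^\rho$ growth (instead of $(1+t)^{2\rho}$). The lower derivative count allows us, in every quadratic product arising from Proposition \ref{prpstr}, to put the less-differentiated factor in $L^\infty$ using Proposition \ref{prplinf} and the wave-coordinate Corollaries \ref{estll}, \ref{estul}, \ref{estuu}, since for $I_1+I_2\leq N-3$ we always have $\min(I_1,I_2)\leq (N-3)/2\leq N-9$ when $N\geq 25$. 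This extra pointwise decay more than compensates for replacing the weight $w_1$ (which has a favorable factor $(1+|q|)^{-2\sigma}$ in the interior) by the stronger weight $w$, so there is no need here to invoke the auxiliary decomposition $g=g_\mathfrak{b}+\Upr k\,dq^2+\wht g_1$.

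Concretely, I would first apply the curved-space weighted energy estimate (Proposition \ref{prpweighte}) with weight $w$ to $Z^{N-3}\wht g$, yielding
\[
\|w^{1/2}\partial Z^{N-3}\wht g(t)\|_{L^2}^2 + C\int_0^t\|w'(q)^{1/2}\bar\partial Z^{N-3}\wht g\|_{L^2}^2d\tau \lesssim C_0^2\ep^2 + \int_0^t\frac{\ep}{1+\tau}\|w^{1/2}\partial Z^{N-3}\wht g\|_{L^2}^2 d\tau + \left|\int_0^t\int w(\partial_\tau Z^{N-3}\wht g)\Box_g Z^{N-3}\wht g\,dvol_g\, d\tau\right|.
\]
The middle term is handled as in Section \ref{secl2gn} via the bootstrap \eqref{g2N2}, producing a $\ep^3(1+t)^{2\rho}$ contribution. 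The main work is bounding the last term by $\ep^3(1+t)^{2\rho}$.

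Next, I would expand $\Box_g Z^{N-3}\wht g=Z^{N-3}R^1+{}^{N-3}M+{}^{N-3}M^E+{}^{N-3}Q$ using Proposition \ref{prpstr} and split each piece into an $A^{N-3}$-type part (bounded pointwise in $L^\infty$, giving $\|w^{1/2}A^{N-3}\|_{L^2}\lesssim \ep^2/(1+t)^{1-\rho}$) and a $B^{N-3}$-type part (handled by $\int_0^t\ep^{-1}(1+\tau)\|w^{1/2}B^{N-3}\|_{L^2}^2 d\tau\lesssim \ep^3(1+t)^{2\rho}$), with the usual integration-by-parts treatment of the contributions $\chi'(q)s\partial_s^2 Z^{N-3}b$ and the $\partial_\theta Z^{N-3}h$ term in $\bar\partial Z^{N-3}G$, following the steps leading to \eqref{l212}--\eqref{l218}. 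Each quadratic product is handled by Leibniz and the dichotomy above; for example in the worst term $\partial_q g_{UU}\partial_q \wht g_{\ba L L}$ of ${}^{N-3}Q_{\ba L\ba L}$, we place $\partial_q Z^{I_1} g_{UU}$ in $L^\infty$ using Corollary \ref{estuu} whenever $I_1\leq N-10$, and in $L^2$ via Proposition \ref{l2uu} otherwise — combined with the good component estimates from Corollary \ref{estll}, this gives both terms pointwise decay $\ep/(1+s)^{3/2-\rho}$ on one factor, which is amply sufficient.

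The estimate for $k$ is obtained in exactly the same way, since $k$ solves $\Box_g k=Q_{\ba L\ba L}$ with zero initial data, and $Q_{\ba L\ba L}$ satisfies the same bounds as one of the worst contributions already controlled in the $\wht g$ estimate. The main obstacle in the argument is to verify, term by term, that the replacement of $w_1$ by the stronger weight $w$ (which in particular loses the $(1+|q|)^{-2\sigma}$ in the interior) is compatible with the bootstrap $L^2$ controls — but this works precisely because the extra pointwise decay gained by putting one factor in $L^\infty$ via Proposition \ref{prplinf} brings in an additional factor of $(1+t)^{-1/2}$ that buys back more than what is lost by strengthening the weight.
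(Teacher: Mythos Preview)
Your outline is correct in structure and would work, but the paper's proof is considerably more direct. The key observation you miss is that at $N-3$ derivatives, \emph{every} term in $\Box_g Z^{N-3}\wht g$ can be put into the $A$-type bucket, i.e.\ one shows directly
\[
\ld{w^{1/2}\Box_g Z^{N-3}\wht g}\lesssim \frac{\ep^2}{(1+t)^{1-\rho}},
\]
with no $B$-type pieces and no integration by parts. The reason is that all the troublesome $b$-terms in ${}^{N-3}M^E$, ${}^{N-3}Q_{\ba L\ba L}$ and $Z^{N-3}R^1$ now involve at most $\partial_\theta^3 Z^{N-3}b$, $\partial_s^2\partial_\theta Z^{N-3}b$, etc., which are controlled \emph{pointwise} by the hypotheses \eqref{estb2}, \eqref{estb3} (since these reach $Z^{N-1}b$ in $H^2(\m S^1)$ and $\partial_s Z^{N-1}b$ in $H^1(\m S^1)$); the paper then observes that all such $b$-contributions are dominated by $\ch_{q>R}\frac{\ep}{1+s}|\partial Z^{N-3}\wht g|$, which is immediately $A$-type. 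Similarly the $G$-terms involve only $Z^{N-2}G^L$, handled directly by \eqref{estG1}. So the machinery of \eqref{l212}--\eqref{l218} is simply unnecessary here. Your route would work (the integration-by-parts terms are all supported where $\chi'(q)\neq 0$, hence $w\sim w_1\sim 1$ there), but it does more work than needed; the paper's shortcut is the main point of this proposition compared to the $N$-level one.
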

\begin{proof}
We use the weighted energy estimate in the metric $g$
$$\frac{d}{dt}\ld{w^\frac{1}{2}(q)\partial Z^{N-3} \wht g}^2+C\ld{w'(q)^\frac{1}{2}\bar{\partial}Z^{N-3}\wht g}^2
 \lesssim \left| \int w(q)\Box_g Z^{N-3}\wht g\partial_t Z^{N-3}\wht g \right|+ \frac{\ep}{1+t}\ld{w^\frac{1}{2}\partial Z^{N-3} \wht g}^2.$$
We use Proposition \eqref{prpstr} to estimate $\Box_g Z^{N-3}\wht g$. We start with $Q_{\ba L \ba L}$.
\begin{align*}
|Q_{\ba L \ba L}|&\lesssim
\frac{\ep}{(1+|q|)(1+s)^{\frac{1}{2}-\rho}}\left(|\bar{\partial}Z^{N-3} \wht g_1|+\frac{1}{1+|q|}|Z^{N-3} g_{LL}|\right)
+\left(\frac{\ep^2}{(1+s)^\frac{3}{2}}+\frac{\ep}{(1+s)(1+|q|)^{1-4\rho}}\right)|\partial Z^{N-3} \wht g_1|\\
+&\ep\min\left(\frac{1}{(1+|q|)(1+s)^{\frac{1}{2}-\rho}},\frac{1}{(1+|q|)^{\frac{1}{2}}(1+s)^\frac{1}{2}}\right)(|\bar{\partial}Z^{N-3} \wht g|+|Z^{N-3} G^L|)\\
+ & \ch_{q>R} \min\left(\frac{\ep}{(1+s)^{\frac{1}{2}-\rho}(1+|q|)^{\frac{5}{2}+\delta}}, \frac{\ep}{(1+s)^{\frac{1}{2}}(1+|q|)^{\frac{5}{2}+\delta+\sigma}}\right)\frac{q}{s}|Z^{N-3} \partial^2_\theta b|
+ \ch_{q>R}\frac{\ep}{1+s}|\partial Z^{N-3} \wht g_1|\\
+&\ch_{q>R}\frac{\ep}{(1+|q|)^{\frac{3}{2}+\delta}(1+s)^{\frac{1}{2}-\rho}}\left(s|\partial^2_s Z^{N-3} b|
+q|\partial^2_s \partial_\theta Z^{N-3} b|+
\frac{q}{s}|Z^{N-3} \partial_s \partial^2_\theta b|
+\frac{q}{s^2}|Z^{N-3} \partial^3_\theta b|\right)\\
&\lesssim
\frac{\ep}{(1+s)^{\frac{3}{2}-\rho}(1+|q|)}|Z^{N-2}\wht g|+\frac{\ep}{(1+|q|)^2(1+s)^{\frac{1}{2}-\rho}}|Z^{N-3} g_{LL}|\\
&+\left(\frac{\ep^2}{(1+s)^\frac{3}{2}}+\frac{\ep}{(1+s)(1+|q|)^{1-4\rho}}\right)|\partial Z^{N-3} \wht g_1|\\
+&\frac{\ep}{(1+s)^{\frac{1}{2}-\rho}(1+|q|)}|Z^{N-3} G^L|+\ch_{q> R}\frac{\ep}{1+s}|\partial Z^{N-3} \wht g|,
\end{align*}
where we have used the fact that
$$|\partial^3_\theta Z^{N-3}b|\lesssim \|\partial^3_\theta Z^{N-3}b\|_{H^1(\m S^1)}\lesssim \|Z^{N-1}b\|_{H^1(\m S^1)} \lesssim \ep,$$
thanks to \eqref{estb2}, and
$$|\partial^2_s \partial_\theta Z^{N-3}b|\lesssim \frac{1}{1+s}\|\partial_s Z^{N-1} b\|_{H^1(\m S^1)}\lesssim \frac{\ep}{(1+s)^{3-\frac{1}{4}}},$$
thanks to \eqref{estb3}, to say that to estimate the terms involving $b$, it is sufficient to estimate
$\ch_{q> R}\frac{\ep}{1+s}|\partial Z^I \wht g|$.

We estimate
\begin{equation}
\label{l2n31}
\ld{w^\frac{1}{2}\frac{\ep}{(1+s)^{\frac{3}{2}-\rho}(1+|q|)}Z^{N-2}\wht g}
\lesssim \frac{\ep}{(1+s)^{\frac{3}{2}-\rho-\sigma}}\ld{\frac{w_1^\frac{1}{2}}{1+|q|}Z^{N-2}\wht g}\lesssim \frac{\ep^2}{(1+t)^{\frac{3}{2}-3\rho-\sigma}},
\end{equation}
thanks to \eqref{hg2n}.
\begin{equation}\label{l2n31bis}\ld{w^\frac{1}{2}\frac{\ep}{(1+|q|)^2(1+s)^{\frac{1}{2}-\rho}}Z^{N-3} g_{LL}}
\lesssim \frac{\ep}{(1+s)^{\frac{1}{2}-\rho-\sigma}}\ld{ \frac{w_1^\frac{1}{2}}{(1+|q|)^2}Z^{N-3} g_{LL}}
\lesssim \frac{\ep^2}{(1+t)^{\frac{3}{2}-4\rho -\sigma}},
\end{equation}
thanks to Proposition \ref{l2ll}.
We estimate 
\begin{equation}
\label{l2n32}\ld{w^\frac{1}{2}\frac{\ep}{(1+s)\sqrt{1+|q|}}\partial Z^{N-3} \wht g_1}\lesssim \frac{1}{1+t}\ld{w^\frac{1}{2}\partial Z^{N-3} \wht g} \lesssim \frac{\ep^2}{(1+t)^{1-\rho}},
\end{equation}
\begin{align*}&\ld{\frac{\ep}{(1+s)^{\frac{1}{2}-\rho}(1+|q|)}Z^{N-3} G^L}\\
&\lesssim \left(\int\frac{\ep}{(1+s)^{2-2\rho}(1+|q|)^2}\|rZ^{N-3}G^L\|_{L^2(\m S^1)}^2 dr\right)^\frac{1}{2}\\
&\lesssim \left(\int \frac{\ep}{(1+s)^{2-2\rho}(1+|q|)^2}\left(
\frac{\ep^2}{(1+t)^{\frac{1}{2}-\rho}}+\frac{\ep^2}{(1+|q|)^{1-4\rho}}+\frac{\ep\sqrt{1+s}}{(1+|q|)^{\frac{1}{2}-4\rho}}\|\partial_q Z^I \phi\|_{L^2(\m S^1)}\right)^2dr\right)^\frac{1}{2},
\end{align*}
so
\begin{equation}
\label{l2n33}\ld{\frac{\ep}{(1+s)^{\frac{1}{2}-\rho}(1+|q|)}Z^{N-3} G^L}\lesssim \frac{\ep^2}{(1+t)^{1-\rho}}.
\end{equation}
We estimate
\begin{equation}
\label{l2n34}
\ld{w^\frac{1}{2}\ch_{q> R}\frac{\ep}{1+s}|\partial Z^{N-3} \wht g|}\lesssim \frac{\ep}{1+t}\ld{w^\frac{1}{2}\partial Z^{N-3} \wht g}\lesssim \frac{\ep}{(1+t)^{1-\rho}}.
\end{equation}

We now go to the other terms in $\Box_g Z^{N-3}\wht g$.
The contribution of ${}^IM^E$ can be estimated by \eqref{l2n34}.
To estimate the contribution of $R^1$, it is sufficient to consider
$$\ch_{R\leq q \leq R+1}\partial_s Z^{I+2}b =O\left(\ch_{R\leq q \leq R+1}\frac{\ep^2}{(1+s)^{2-\frac{1}{4}}}\right),$$
and we have
\begin{equation}
\label{l2n35}\ld{w^\frac{1}{2}\ch_{R\leq q \leq R+1}\partial_s Z^{I+2}b}\lesssim \frac{\ep^2}{(1+t)^{\frac{3}{2}-\frac{1}{4}}}.
\end{equation}
The terms in ${}^IM$ can be estimated by \eqref{l2n31} and \eqref{l2n31bis}, except
$$\frac{\ep}{(1+s)(1+|q|)^{\frac{3}{2}-\rho}}|Z^I \wht g_{\q T \q T}|, \quad \frac{1}{1+s}|Z^{I+1} G^L|.$$
The first term can be estimated by \eqref{l2n32}, and the second one by
\begin{equation}
\label{l2n36}
\begin{split}
&\ld{w^\frac{1}{2}\frac{1}{1+s}Z^{N-2}G^L}\\
&\lesssim \left(\int \frac{1}{(1+s)^3}\|rZ^{N-2}G^L\|_{L^2(\m S^1)}^2 dr\right)^\frac{1}{2}\\
&\lesssim\left(\int \frac{1}{(1+s)^3}\left(
\frac{\ep^2}{(1+t)^{\frac{1}{2}-\rho}}+\frac{\ep^2}{(1+|q|)^{1-4\rho}}+\frac{\ep\sqrt{1+s}}{(1+|q|)^{\frac{1}{2}-4\rho}}\|\partial_q Z^{N-2} \phi\|_{L^2(\m S^1)}\right)^2 dr\right)^\frac{1}{2}\\
&\lesssim \frac{\ep^2}{(1+t)^{\frac{3}{2}-\rho}}.
\end{split}
\end{equation}
Estimates \eqref{l2n31}, \eqref{l2n31bis}, \eqref{l2n32}, \eqref{l2n33}, \eqref{l2n34}, \eqref{l2n35} and \eqref{l2n36} yield 
$$\ld{w^\frac{1}{2}\Box_g Z^{N-3}\wht g} \lesssim \frac{\ep^2}{(1+t)^{1-\rho}}.$$
We also have
$$\ld{w^\frac{1}{2}\Box_g Z^{N-3}k} \lesssim \frac{\ep^2}{(1+t)^{1-\rho}},$$
which concludes the proof of Proposition \ref{prpn3g}.
 \end{proof}
\subsection{Estimate of $\partial Z^{N-4} \wht g_1$}\label{secl2gn4}
\begin{prp}\label{prpn4g}
	We have 
	$$\ld{w_1^\frac{1}{2}\partial Z^{N-4}\wht g_1}\lesssim C_0 \ep + C\ep^\frac{5}{4}.$$
\end{prp}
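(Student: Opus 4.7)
The plan is to perform the weighted energy estimate of Proposition \ref{prpweighte} in the curved metric $g$, applied to $Z^{N-4}\wht g_1$ with the weight $w_1$. This yields
$$\|w_1^{\frac12}\partial Z^{N-4}\wht g_1(t)\|_{L^2}^2 + \int_0^t \|w_1'(q)^{\frac12}\bar\partial Z^{N-4}\wht g_1\|_{L^2}^2 d\tau \lesssim \|w_1^{\frac12}\partial Z^{N-4}\wht g_1(0)\|_{L^2}^2 + \int_0^t \frac{\ep}{1+\tau}\|w_1^{\frac12}\partial Z^{N-4}\wht g_1\|_{L^2}^2 d\tau + \mathcal I,$$
where $\mathcal I = \left|\int_0^t\int w_1\,(\partial_\tau Z^{N-4}\wht g_1)\,\Box_g Z^{N-4}\wht g_1\,dvol_g\,d\tau\right|$. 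The initial datum contributes $C_0^2\ep^2$, and the Gronwall term is absorbed by the bootstrap \eqref{gN2} at the cost of a harmless $(1+t)^{C\ep}$ factor (or, equivalently, by choosing $\ep$ small enough in a standard Gronwall argument).

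For the source, we invoke Corollary \ref{corstr} to write $\Box_g Z^{N-4}\wht g_1 = Z^{N-4}R^1 + {}^{N-4}M + {}^{N-4}M^E + {}^{N-4}Q_{\q T \q V} + O\left(\Upsilon(r/t)\tfrac{1}{r^2}\partial_\theta Z^{N-4}k\right)$, and estimate each piece exactly along the lines of Section \ref{l2g1}, splitting $\Box_g Z^{N-4}\wht g_1 = A^{N-4}+B^{N-4}+C^{N-4}$, where $A^{N-4}$ satisfies $\|w_1^{\frac12}A^{N-4}\|_{L^2}\lesssim \ep^2(1+t)^{-1-\mu}$ for some $\mu>0$, $B^{N-4}$ satisfies $\int_0^t \ep^{-\frac12}(1+\tau)\|w_1^{\frac12}B^{N-4}\|_{L^2}^2 d\tau \lesssim \ep^{5/2}$, and $C^{N-4}$ is handled by integration by parts in the energy identity. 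The crucial gain compared to the top-order estimate of Proposition \ref{prpwhtg1n} is that, since $N-4\leq N-4$, the extra two derivatives available in the bootstrap assumptions \eqref{phiN1}, \eqref{g2N2}, \eqref{gN2}, \eqref{gmieux} and the pointwise estimates of Section \ref{seclinf} allow us to trade the $(1+t)^\rho$ growth for clean $(1+t)^{-\alpha}$ factors, so that $\mathcal I \lesssim \ep^{5/2}$ uniformly in $t$.

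The genuinely delicate terms are the ones containing the bad coefficient $\wht g_{\ba L\ba L}$: in $Q_{L\ba L}$, $Q_{U\ba L}$ we encounter $\partial_{\ba L}\wht g_{\ba L\ba L}\,\partial_L Z^{N-4}\wht g_{LL}$ and its $U$-counterpart. For these we reproduce the integration-by-parts argument used for Proposition \ref{prpwhtg1n}: we move one derivative off $Z^{N-4}g_{LL}$ by writing $\partial_L = \partial_s$, commute with $w_1\partial_\tau Z^{N-4}\wht g_1$ to trade $\partial_\tau\partial_s Z^{N-4}\wht g_1$ for a time derivative of the energy, and bound the resulting boundary terms using \eqref{hwl2dll2} and \eqref{gN2}; the remaining bulk terms yield an $\ep^{5/2}$ contribution since the $(1+t)^\rho$ growth of $\wht g_{\ba L\ba L}$ is multiplied by the integrable factor $(1+|q|)^{-1-\sigma}$ coming from $w_1/(1+|q|)^2\lesssim w_1'$. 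Similarly, the modulation term $s\chi'(q)\partial_s^2 Z^{N-4}b$ in $R^1$ is handled by the trick from Section \ref{l2g1} of writing it as $\Box_g\bigl((g^{L\ba L})^{-1}\Upsilon(\chi-1)s\partial_s Z^{N-4}b\bigr)$ up to lower order, then absorbing the corresponding correction into $\wht g_1$ and using \eqref{estf1}--\eqref{estb5ter}.

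The main obstacle is therefore not any single new term, but a careful verification that every estimate performed at the top order in Section \ref{l2g1} still closes with the stronger weight $w_1$ in place of $w_2$ and without the $(1+t)^\rho$ loss. This is possible precisely because (i) in the exterior, the extra factor $(1+|q|)^{2\sigma}$ in $w_1/w_2$ is compensated by the two additional derivatives we can afford to use, upgrading Klainerman--Sobolev to genuine $(1+|q|)^{-\frac12-\delta}$ decay on $Z^{N-5}\wht g_1$ via \eqref{iks5}; and (ii) in the interior, the loss of weight on $\partial\wht g_{\ba L\ba L}$ is offset by using the bootstrap \eqref{g2N2} on $\partial Z^{N-3}\wht g$ with weight $w$ (strictly stronger than $w_1$), so the bilinear interactions in $Q_{\q T\q V}$ remain summable. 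Putting these pieces together yields $\mathcal I \lesssim \ep^{5/2}$ and hence $\|w_1^{\frac12}\partial Z^{N-4}\wht g_1\|_{L^2}\leq C_0\ep + C\ep^{5/4}$, as claimed.
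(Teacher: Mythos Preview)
There is a genuine gap in your choice of energy estimate. You perform the weighted energy estimate in the curved metric $g$, which produces the linear term $\int_0^t \frac{\ep}{1+\tau}\|w_1^{1/2}\partial Z^{N-4}\wht g_1\|_{L^2}^2\,d\tau$ on the right. This term is \emph{not} harmless: inserting the bootstrap bound \eqref{gN2} gives $\int_0^t \frac{\ep}{1+\tau}(2C_0\ep)^2\,d\tau \sim \ep^3\ln(1+t)$, and by Gronwall the energy acquires a factor $(1+t)^{C\ep}$. Since Proposition \ref{prpn4g} requires a bound \emph{uniform in $t$} (no growth whatsoever), this factor destroys the improvement and the bootstrap cannot close. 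Taking $\ep$ small does not help: for any $\ep>0$ the factor $(1+t)^{C\ep}$ diverges as $t\to\infty$, and the bootstrap needs the improved constant uniformly on the whole interval $[0,T]$ independently of $T$.

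The paper's remedy is to use the energy estimate for the \emph{flat} operator $\Box$ (Proposition \ref{prpweightem}), which has no deformation-tensor term. The price is that the commutator $(\Box-\Box_g)Z^{N-4}\wht g_1$ becomes part of the source; but these quasilinear terms $\wht g\,\partial^2 Z^{N-4}\wht g_1$ and $g_{\mathfrak b}\,\partial^2 Z^{N-4}\wht g_1$ inherit the null-type structure catalogued in Proposition \ref{prpstr} and are bounded directly by \eqref{l2n31}, \eqref{l2n31bis}, \eqref{l2n36}. The decisive remaining term is the non-commutation contribution $\Upsilon(r/t)\,r^{-2}\partial_\theta Z^{N-4}k$: here the paper exploits the weight hierarchy of Section \ref{noncomu}, using the $w$-weighted estimate on $Z^{N-3}k$ from Proposition \ref{prpn3g} to gain $(1+|q|)^\sigma$ and convert it into $(1+t)^{-\sigma}$, yielding $\|w_1^{1/2}\Box Z^{N-4}\wht g_1\|_{L^2}\lesssim \ep^{3/2}(1+t)^{-1-\sigma+\rho}$, which is integrable since $\sigma>\rho$. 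Your integration-by-parts machinery for $Q_{\q T\q V}$ and the $b$-modulation is unnecessary at this order: with the extra derivatives available, those terms admit direct $L^2$ bounds.
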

\begin{proof}
We use the weighted energy estimate in the flat metric
$$\frac{d}{dt}\ld{w_1(q)^\frac{1}{2}\partial Z^{N-3} \wht g_1}^2+C\ld{w_1'(q)^\frac{1}{2}\bar{\partial}Z^{N-3}\wht g}^2
\lesssim  \left|\int w_1(q)\Box_g Z^{N-4}\wht g_1\partial_t Z^{N-4}\wht g\right|.$$
We estimate 
$$\Box Z^{N-4} \wht g_1= (\Box - \Box_g)Z^{N-4} \wht g_1+{}^{N-4}M + {}^{N-4} Q_{\q T \q V} + {}^{N-4} M^E + O\left(\frac{1}{r^2}\Upr \partial_\theta Z^{N-4} k\right).$$
The terms in ${}^{N-4} M$, ${}^{N-4} Q_{\q T \q V}$ and the terms in $(\Box_g-\Box) Z^{N-4} \wht g_1$ of the form $\wht g\partial^2 Z^{N-4} \wht g_1$ can be estimated by \eqref{l2n31}, \eqref{l2n31bis} and \eqref{l2n36}, except the term
$$\frac{\ep}{(1+s)(1+|q|)^{\frac{3}{2}-\rho}}|Z^{N-4} \wht g_{\q T \q T}|.$$
We estimate it
$$\ld{w_1^\frac{1}{2}\frac{\ep}{(1+s)(1+|q|)^{\frac{3}{2}-\rho}}Z^{N-4} g_{\q T \q T}}
\lesssim \frac{\ep}{(1+t)}\ld{ \frac{w_1^\frac{1}{2}}{(1+|q|)^{\frac{5}{4}+\sigma}}Z^{N-4} g_{\q T \q T}}\lesssim \frac{\ep^2}{(1+t)^\frac{5}{4}},$$
thanks to \eqref{hwl2dll}, \eqref{hwl2dul} and \eqref{hwl2duu}.
Terms coming from the non commutation with the null frame can be estimated by
$$\ld{\frac{w_1^\frac{1}{2}}{(1+s)^2}Z^{N-3}k}\lesssim \frac{1}{(1+s)^{1+\sigma}}\ld{\frac{w^\frac{1}{2}}{1+|q|}Z^{N-3}k}
\lesssim \frac{\ep^\frac{3}{2}}{(1+t)^{1+\sigma-\rho}}.$$
The terms in $M_E$, and the terms in $(\Box - \Box_g)Z^{N-4}\wht g_1$ of the form $g_{\mathfrak{b}}\partial^2 Z^{N-4} \wht g_1$ can be estimated by
$$\ch_{q>R}\frac{\ep}{(1+s)^2} Z^{N-3}\wht g, \quad \ch_{q>R}\frac{\ep(1+|q|)}{(1+s)^2} \partial_q Z^{N-3}\wht g.$$
We estimate
$$\ld{w_1^\frac{1}{2}\ch_{q>R}\frac{\ep}{(1+s)^2} Z^{N-3}\wht g} \lesssim \frac{\ep}{(1+s)^{1+\sigma}}\ld{\frac{w^\frac{1}{2}}{1+|q|}Z^{N-3} \wht g}\lesssim \frac{\ep^2}{(1+t)^{1+\sigma-\rho}}.$$
We now estimate the terms coming from the non commutation with the null frame
$$\ld{\frac{w_1^\frac{1}{2}}{(1+s)^2}Z^{N-3}k}\lesssim \frac{1}{(1+s)^{1+\sigma}}\ld{\frac{w^\frac{1}{2}}{1+|q|}Z^{N-3}k}
\lesssim \frac{\ep^\frac{3}{2}}{(1+t)^{1+\sigma-\rho}}.$$
Consequently we have proved
$$\ld{w_1^\frac{1}{2}\Box Z^{N-4}\wht g_1}\lesssim \frac{\ep^\frac{3}{2}}{(1+t)^{1+\sigma-\rho}},$$
which concludes the proof of Proposition\ref{prpn4g}.
\end{proof}
\subsection{Estimate of $\partial Z^{N-10}\wht g_1$}
\begin{prp}\label{prpn10g}
	We have 
	$$\ld{\partial Z^{N-10}\wht g_1}\lesssim C_0 \ep + C\ep^\frac{5}{4}.$$
\end{prp}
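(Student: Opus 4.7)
\textbf{Plan for Proposition \ref{prpn10g}.} The strategy is the same as in the proof of Proposition \ref{prpn4g}: we perform the weighted energy estimate of Proposition \ref{prpweighte} on $Z^{N-10}\wht g_1$ with the trivial weight $w\equiv 1$, obtaining
\begin{equation*}
\frac{d}{dt}\ld{\partial Z^{N-10}\wht g_1}^2 \lesssim \frac{\ep}{1+t}\ld{\partial Z^{N-10}\wht g_1}^2 + \left|\int \Box_g Z^{N-10}\wht g_1 \,\partial_t Z^{N-10}\wht g_1 \, dvol_g\right|.
\end{equation*}
The decomposition of $\Box_g Z^{N-10}\wht g_1$ given by Corollary \ref{corstr} reduces the problem to showing $\ld{\Box_g Z^{N-10}\wht g_1}\lesssim \ep^{5/4}/(1+t)^{1+\eta}$ for some $\eta>0$; combined with the initial bound $\ld{\partial Z^{N-10}\wht g_1(0)}\lesssim C_0 \ep$ and a Grönwall argument, this yields the claimed estimate.

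\textbf{Estimating the source.} Because we are working at regularity $N-10$, all of the low-order pointwise bounds of Proposition \ref{estks} and Corollary \ref{estksg} are available for the factors carrying derivatives, and the sharper estimate $|Z^{N-11}g_{\mathfrak{b}}|\lesssim \ep^2(1+|q|)/(1+s)$ of Corollary \ref{corestb} together with \eqref{estb1} applies to all coefficients coming from $g_{\mathfrak{b}}$ or $b$. The terms in ${}^{N-10}M$ and the ``good'' part of ${}^{N-10}Q_{\q T\q V}$ may then be treated exactly as in the proof of Proposition \ref{prpn4g}, but replacing the $w_1$-weighted $L^2$ bounds \eqref{hg1n1}, \eqref{g2N2} by the low order un-weighted bounds \eqref{hphin1}, \eqref{hgn1} (and by Proposition \ref{l2ll} at the same low regularity for $g_{LL}$, $g_{UL}$, $g_{UU}$). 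Each such contribution is then seen to be bounded in $L^2$ by $\ep^{5/4}/(1+t)^{1+\sigma-\rho}$, which is integrable in $t$.

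\textbf{Main obstacle.} The only delicate point is the exterior source ${}^{N-10}M^E$ and the non-commutation term $\frac{1}{r^2}\Upr\partial_\theta Z^{N-10}k$, since without the weight $w_1$ we lose one power of $\langle q\rangle$ in the exterior region $q>R$. For ${}^{N-10}M^E$ we exploit the improved bound \eqref{estb1} $\|Z^{N-11}b\|_{H^2(\m S^1)}\lesssim \ep^2/(1+s)^2$ (together with \eqref{estb3} for $\partial_s b$); this additional $(1+s)^{-1}$ decay compensates the missing $\langle q\rangle$-weight, because in the exterior either the coefficient is supported in the bounded strip $R\leq q\leq R+1$ (in which case $\ch_{R\leq q\leq R+1}\langle q\rangle^0$ is trivially $L^2$) or it carries a factor $\ch_{q>R}\langle q\rangle^{-3/2-\delta+\sigma}$ which is square-integrable in $r$ on its own. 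For the non-commutation term we use Corollary \ref{linfk} together with the analogue of Proposition \ref{prpn3g} for $k$ (valid because $\Box_g k$ satisfies the same pointwise estimate as $\Box_g\wht g$, as noted in Corollary \ref{col2k}), giving $\ld{r^{-2}\Upr\partial_\theta Z^{N-10}k}\lesssim \ep^{5/4}/(1+t)^{1+\sigma-\rho}$ via weighted Hardy. Summing the contributions produces the desired source bound and closes the estimate.
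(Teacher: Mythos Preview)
Your overall plan — energy estimate with trivial weight, then bound the source — is the right shape, and you correctly single out the non-commutation term $\Upr\,r^{-2}\partial_\theta Z^{N-10}k$ as the one place where the weight $w_1$ was genuinely needed in the proof of Proposition~\ref{prpn4g}. Using Corollary~\ref{linfk} to handle it is exactly the paper's idea. A few points, however, differ from the paper's argument and one of them is a real (if easily repaired) gap.

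\emph{The energy estimate.} You invoke Proposition~\ref{prpweighte} with $w\equiv 1$, but that proposition is stated for weights satisfying $w(q)(1+|q|)^{-(3/2-\rho)}\lesssim w'(q)$ (this is how the deformation-tensor term $\ep\!\int\! w(q)(1+|q|)^{-(3/2-\rho)}(\bar\partial u)^2$ is absorbed in its proof). For $w\equiv 1$ that condition fails and the term cannot be absorbed; bounding $(\bar\partial u)^2\le (\partial u)^2$ leaves you with $dE/dt\lesssim \ep E$, which blows up. The paper avoids this by performing the energy estimate \emph{in the flat metric} (Proposition~\ref{prpweightem}), exactly as in Proposition~\ref{prpn4g}; then the $(\Box-\Box_g)Z^{N-10}\wht g_1$ terms go into the source and are estimated with the rest.

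\emph{The exterior and ${}^{N-10}M^E$.} Your treatment here is more laborious than necessary and the dichotomy you state (``either supported in the strip $R\le q\le R+1$ or carries a factor $\langle q\rangle^{-3/2-\delta+\sigma}$'') is not literally correct for the $\wht g$-terms in $M^E$: the factor in front of $\bar\partial Z^I\wht g$, for instance, is just $\ep/(1+s)$. The paper sidesteps the whole issue by observing that in the exterior $q>R$ one has $w_1(q)=(1+|q|)^{2+2\delta-2\sigma}\ge 1$, so the bound $\|w_1^{1/2}\partial Z^{N-4}\wht g_1\|_{L^2}\lesssim\ep$ of Proposition~\ref{prpn4g} already controls the unweighted exterior norm. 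Nothing further is needed there.

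\emph{The non-commutation term.} Once the exterior is disposed of as above, in the interior the only estimate from the proof of Proposition~\ref{prpn4g} that actually used the weight $w_1$ is the one for $\Upr\,r^{-2}\partial_\theta Z^{I}k$. At regularity $I\le N-10$ the paper simply plugs in the $L^\infty$ bound $|Z^{N-9}k|\lesssim \ep^2(1+t)^{-1/2+\rho}$ of Corollary~\ref{linfk} and computes directly on the support of $\Upsilon$:
\[
\ld{\Upr\,(1+s)^{-2}Z^{N-9}k}\;\lesssim\;\Bigl(\int_{t/2}^{2t}\frac{\ep^4}{(1+s)^{5-2\rho}}\,r\,dr\Bigr)^{1/2}\;\lesssim\;\frac{\ep^2}{(1+t)^{3/2-\rho}}.
\]
No Hardy inequality is needed (indeed the weighted Hardy of Proposition~\ref{hardy} would not apply with the trivial weight, since it requires the exterior exponent $\beta>1$), and the analogue of Proposition~\ref{prpn3g} for $k$ is not invoked either.
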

\begin{proof}
	We perform the energy estimate in the flat metric.
	We note that in the exterior region the result is already given by Proposition \ref{prpn4g}. In the interior, the only place where the weight $w_1$ was needed, was in the estimate of the term coming from the non commutation with the null frame,
	$$\Upr \frac{\partial_\theta Z^{N-10}k}{(1+r)^2}.$$
	We estimate it thanks to Corollary \ref{linfk}.
	For $I\leq N-10$ we have
	$$\ld{\Upr\frac{1}{(1+s)^2}Z^{I+1}k}\lesssim \left(\int_\frac{t}{2}^{2t} \frac{\ep^2}{(1+s)^{5-2\rho}}rdr\right)^\frac{1}{2}\lesssim
	\frac{\ep^2}{(1+t)^{\frac{3}{2}-\rho}}.$$
\end{proof}

\section{Choice of $b$}
\subsection{Proof of Proposition \ref{prph}}\label{secprph}
In this section we will choose $h$ for the next iteration. The heuristic choice would be $2\int_0^\infty \Upr\partial_t \phi \partial_r \phi rdr$. However we have to modify this choice in order for $h$ to satisfy two important conditions
\begin{itemize}
	\item $\partial_t h$ must be $O\left(\frac{1}{(1+t)^2}\right)$,
	\item $\partial_t h$ must be at the same level of regularity than $\partial_\theta \partial \phi$ and $\partial_\theta g$.
\end{itemize}
We can achieve the second point by choosing $h$ to be equal to $2\int_0^\infty \Upr g^{t\alpha}\sqrt{|\det{g}|}\partial_\alpha \phi \partial_r \phi dr$, where \textbf{in this section $det(g)$ denotes the determinant of $g$ in the coordinates $t,r,\theta$.}
However, with this choice, $\partial_t h$ contains a term of the form $\int \partial_r g_{LL}(\partial_q \phi)^2rdr$ which does not have the decay $O\left(\frac{1}{(1+t)^2}\right)$ (we can note that the regularity condition is satisfied by such a term because the $\partial_r$ which falls on $g_{LL}$ can be put on the other factors if necessary with an integration by part). To deal with such a term we will set
$$\check{h}(\theta,2t)
=2\int_0^\infty \Upr(1+\beta)g^{t\alpha}\sqrt{|\det{g}|}\partial_\alpha \phi \partial_r \phi dr,$$
with the metric $g$ expressed in coordinates $t,r,\theta$, and $\beta$ is a factor whose role is to compensate the term $\int \partial_r g_{LL}(\partial_q \phi)^2rdr$. We have to be careful with the choice of $\beta$, because it should not induce terms that do not have  the required regularity. Fortunately, the wave coordinate condition implies that the only term with a decay of $\frac{1}{(1+s)^\frac{3}{2}}$ in $\partial_q g_{LL}$ is $\frac{\wht g_{L\ba L}}{r}$, which is more regular than a derivative of $g$. To define precisely $\beta$ we need the following Corollary of Proposition \ref{estLL}
\begin{cor}\label{corbeta}
	In the region $q\leq R+1$ we can write
$$\partial_q \wht g_{LL} = \frac{1}{r}\wht g_{L\ba L}+ F_1+F_2,$$
where
$$F_1=\wht g_{\q T \q T}\partial_r \wht g_1+\wht g_1 \partial_U \wht g_1
+\partial_U \wht g_{\q T \q T} +\frac{1}{r} \wht g_{\q T \q T},$$
and
$$F_2= (\partial_s + \frac{1}{4}g_{LL}\partial_r) \wht g_{LL}+\wht g_1 (\partial_{s} +\frac{1}{4}g_{LL}\partial_r) \wht g_1.$$
\end{cor}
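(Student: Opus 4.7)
The plan is to derive Corollary \ref{corbeta} as a direct algebraic rearrangement of the second identity in Proposition \ref{estLL}, regrouping its right-hand side into three pieces: the single worst linear term $\tfrac{1}{r}\wht g_{L\ba L}$, products of components of $\wht g$ with tangential derivatives together with the harmless $\tfrac{1}{r}\wht g_{\q T\q T}$ piece (collected in $F_1$), and the geometric derivative $(\partial_s + \tfrac14 g_{LL}\partial_r)$ applied to $\wht g_{LL}$ and $\wht g_1$ (which defines $F_2$). The particular choice of adapted vector field in $F_2$ is motivated by the fact that, substituting $\partial_s + \lambda\partial_r$ into the eikonal equation $g^{\alpha\beta}\xi_\alpha\xi_\beta=0$ and linearizing in $\wht g$, one obtains $\lambda = \tfrac14 g_{LL} + O(\wht g^2)$, so $\partial_s + \tfrac14 g_{LL}\partial_r$ is the outgoing null direction of $g$ modulo quadratic corrections. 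Thus $F_2$ is "genuinely tangential" for the curved geometry, which is exactly what is needed in Section \ref{secprph} for the modulation $\beta$ to absorb the problematic $\int \partial_r g_{LL}\,(\partial_q \phi)^2\,r\,dr$ contribution in $\partial_s \check h$.

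Concretely, starting from the identity furnished by Proposition \ref{estLL},
$$\partial_q \wht g_{LL} = \tfrac{1}{2r}\wht g_{L\ba L} + \wht g_{\q T\q T}\,\partial_q \wht g_1 + \wht g_1\,\partial_U \wht g_1 + \partial_U \wht g_{\q T\q T} + \tfrac{1}{r}\wht g_{\q T\q T} + (\text{similar terms}),$$
I would (a) use the coordinate identity $\partial_q = \partial_r - \partial_s$ to split the left-hand side and the $\wht g_{\q T\q T}\partial_q \wht g_1$ term into $\partial_r$- and $\partial_s$-pieces; (b) rewrite each resulting $\partial_s$ via $\partial_s = (\partial_s + \tfrac14 g_{LL}\partial_r) - \tfrac14 g_{LL}\partial_r$. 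The two $(\partial_s + \tfrac14 g_{LL}\partial_r)$-brackets, applied respectively to $\wht g_{LL}$ and to $\wht g_1$, produce exactly the two summands in $F_2$. The $\wht g_{\q T\q T}\partial_r \wht g_1$ piece obtained in (a), the cubic correction $-\tfrac14 g_{LL}\partial_r \wht g_1$ obtained in (b) (which is of the same type once one observes that $g_{LL}$ is itself a $\q T\q T$-component), and the already tangential terms $\wht g_1 \partial_U \wht g_1$, $\partial_U \wht g_{\q T\q T}$, $\tfrac{1}{r}\wht g_{\q T\q T}$ together make up $F_1$. The "similar terms" bracket of Proposition \ref{estLL}, which collects remainders of at least the same decay type, also falls inside $F_1$.

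The only delicate point is matching the coefficient of $\tfrac{1}{r}\wht g_{L\ba L}$, which the corollary claims to be $\tfrac{1}{r}$ while Proposition \ref{estLL} yields $\tfrac{1}{2r}$; the missing $\tfrac{1}{2r}\wht g_{L\ba L}$ has to be recovered by reopening the wave coordinate identity \eqref{condonde1} and keeping the pieces of $\partial_L g^{\ba L L}$ and of $g^{\ba L\ba L}\,\partial_{\ba L}\sqrt{|\det g|}/\sqrt{|\det g|}$ that depend linearly on $\wht g_{L\ba L}$ and were absorbed into "s.t." in the proof of Proposition \ref{estLL}. This is the only nontrivial bookkeeping step; once the $\wht g_{L\ba L}$ coefficient has been tracked explicitly, the decomposition $\partial_q \wht g_{LL} = \tfrac{1}{r}\wht g_{L\ba L} + F_1 + F_2$ follows directly from the rearrangement above.
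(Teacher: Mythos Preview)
Your approach is essentially the same as the paper's: the proof there simply recalls the second identity from Proposition \ref{estLL}, invokes $\partial_q=\partial_r-\partial_s$ to reorder, and absorbs cubic corrections of comparable decay into the schematic $F_1,F_2$. Your steps (a) and (b) spell this out in more detail than the paper does, and the motivation you give for the operator $\partial_s+\tfrac14 g_{LL}\partial_r$ is correct and useful.

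On the coefficient discrepancy you flag: you do not need to reopen \eqref{condonde1}. The mismatch between the $\tfrac{1}{2r}\wht g_{L\ba L}$ in Proposition \ref{estLL} and the $\tfrac{1}{r}\wht g_{L\ba L}$ in Corollary \ref{corbeta} is a typographical inconsistency in the paper rather than a missing piece of analysis. Indeed, the paper's own proof of the corollary ``recalls'' the identity of Proposition \ref{estLL} with $\partial_r$ on the left and coefficient $\tfrac{1}{r}$, which already differs from how the proposition is stated. The precise numerical constant is immaterial for everything that follows: $\beta$ is defined directly from the corollary's decomposition (whatever constant it carries), and only the qualitative split into ``worst linear term / harmless $F_1$ / curved-tangential $F_2$'' is used in Section \ref{secprph}. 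So your proposed ``nontrivial bookkeeping step'' is unnecessary, though it would of course also work.
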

\begin{proof}
	We recall from Proposition \ref{estLL} that
	$$\partial_r \wht g_{LL} = \frac{1}{r}\wht g_{L\ba L}+ \wht g_{\q T \q T}\partial_q \wht g_1 + \wht g_1 \partial_{\q T} \wht g_1
	+\partial_U \wht g_{\q T \q T} +\frac{1}{r} \wht g_{\q T \q T}.$$
	To obtain Corollary \ref{corbeta}, we just reorder the terms, noticing that $\partial_q = \partial_r-\partial_s$, and neglecting cubic terms which have a similar decay.
\end{proof}

We can now define $\beta$ by $\beta=0$ at $t=T$ and
$$\partial_s \beta +\frac{1}{4}g_{LL}\partial_r \beta= -\frac{1}{2}\frac{\wht g_{L \ba L}}{r}-\frac{1}{2}F_2.$$

\begin{prp}\label{prph1}
	We have the estimates
		$$\|Z^N h(\theta,t)\|_{L^2(\m S^1)} \lesssim \ep^2(1+t)^\rho,$$
		$$\|Z^{N-1}h(\theta,t)\|_{L^2(\m S^1)}\lesssim \ep^2.$$
\end{prp}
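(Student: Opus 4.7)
Both estimates are essentially restatements of Corollary \ref{estih} with the parameter $s$ replaced by $t$ (with $s=2t$ on the light cone), and I would derive them directly from the explicit formula \eqref{eqh} for $h$ in terms of $b$, using the bootstrap hypotheses $\q H$. The factor of $2$ between $s$ and $t$ is harmless since $(1+2t)^\rho \lesssim (1+t)^\rho$, so the entire burden of the proof is to unfold $h$ as a nonlinear function of $b$ and $a(\theta')$ and apply the already available estimates on $b$.

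The plan has two conceptual steps. First I would reduce the spacetime action of $Z^I$ on $h(\theta,2t)$, a function independent of $r$, to purely angular and $\partial_s$ derivatives: on such functions $\partial_t$ acts as $2\partial_s$, $\Omega_{12}$ as $\partial_\theta$, $S$ as $s\partial_s$ on the light cone, and $\Omega_{0i}$ reduces to a combination of $r\partial_s$ and $\frac{t}{r}\partial_\theta$ with bounded angular coefficients. Hence at fixed $t$, $\|Z^I h\|_{L^2(\m S^1)}$ is controlled by a finite sum of $\|\partial_s^a \partial_\theta^b h(\cdot,2t)\|_{L^2(\m S^1)}$ with $a+b\leq I$. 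Second, I expand $h$ via \eqref{eqh} as a rational–algebraic function of $b$, $\partial_\theta b$, $\partial_\theta^2 b$, and $a(\theta')$, where $|a_i|\lesssim \ep^2$ by Theorem \ref{thinitial} and $\theta'=\theta+f$ is controlled by $b$ through $1+\partial_\theta f=(1+b)^{-1}$. Leibniz expansion produces a finite sum of products whose total derivative count is at most $I$, which I would estimate in $L^2(\m S^1)$ by placing the top-order factor in $L^2$ and all remaining factors in $L^\infty$ via the Sobolev embedding $H^1(\m S^1)\hookrightarrow L^\infty(\m S^1)$. The denominator $(1+b)^{-1}$ and its derivatives are uniformly bounded because $\|b\|_{L^\infty}\ll 1$ by \eqref{estb2} and Sobolev.

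For $I=N-1$, every top-order factor is $O(\ep^2)$ by \eqref{estb2} or by $|a_i|\lesssim \ep^2$, yielding $\|Z^{N-1} h\|_{L^2(\m S^1)}\lesssim \ep^2$. For $I=N$, the only new case is when $N$ derivatives fall on a single $b$-factor, estimated by \eqref{estb4} as $\ep^2(1+s)^\rho\lesssim \ep^2(1+t)^\rho$. The sole technical nuisance is bookkeeping in the Leibniz expansion and verifying that the Sobolev losses fit inside the available regularity $N$; no new analytic idea beyond the hypotheses $\q H$ and Sobolev embedding on $\m S^1$ is required.
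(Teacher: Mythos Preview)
Your proposal rests on a misidentification of which function $h$ is at stake. Proposition~\ref{prph1} lives inside Section~\ref{secprph}, whose purpose is to construct the \emph{new} $h$ for the next bootstrap iteration; the object being estimated is
\[
\check{h}(\theta,2t)=2\int_0^\infty \Up\left(\frac{r}{t}\right)(1+\beta)\,g^{t\alpha}\sqrt{|\det g|}\,\partial_\alpha\phi\,\partial_r\phi\,dr,
\]
an integral in $r$ of $\phi$-bilinear terms with weights depending on $\wht g$ and on the auxiliary function $\beta$ introduced just before the proposition. It is \emph{not} the $h$ of \eqref{eqh}, which is an algebraic expression in $b$. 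What you have outlined is a proof of Corollary~\ref{estih} (the estimates on the old $h(\theta,s)$ defined by \eqref{eqh}), and indeed that corollary is already stated and proved earlier in the paper by exactly the mechanism you describe. But that is a different statement.

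The paper's actual proof of Proposition~\ref{prph1} has genuinely new content beyond the hypotheses $\q H$: one must (i) bound the integrand pointwise by $\frac{\ep\sqrt{1+s}}{(1+|q|)^{3/2-4\rho}}|\partial Z^I\phi|+\frac{\ep^2}{(1+|q|)^{3-8\rho}}(|Z^I\wht g|+|Z^I\beta|)$, (ii) integrate in $r$ and pass to $L^2(\m S^1)$ using Cauchy--Schwarz together with the energy bounds \eqref{phiN}, \eqref{phiN1} for $\phi$ and \eqref{hg2n} for $\wht g$, and (iii) control $Z^I\beta$ by integrating its transport equation along the characteristics of $\partial_s+\tfrac14 g_{LL}\partial_r$ and invoking the $L^2$ bounds on $\wht g_1$ and on $\wht g_{LL}$ (Propositions~\ref{hardy}-type weighted estimates plus \eqref{hg1n}, \eqref{hwl2dlln}). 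None of this machinery---the $\phi$-energy, the $\wht g$-Hardy bounds, the transport estimate for $\beta$---appears in your proposal, because your proposal targets the wrong $h$.
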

\begin{proof}
	We have
	\begin{align*}&\left|Z^I\left(\Upr(1+\beta)g^{t\alpha}\sqrt{|\det{g}|}\partial_\alpha \phi \partial_r \phi\right)\right)|\\
	&\lesssim \frac{\ep\sqrt{1+s}}{(1+|q|)^{\frac{3}{2}-4\rho}}|\partial Z^I \phi|
	+ \frac{\ep^2}{(1+|q|)^{3-8\rho}}(|Z^I \wht g| +|Z^I \beta|).
	\end{align*}
	We can estimate
	$$\left\|\int \frac{\ep\sqrt{1+s}}{(1+|q|)^{\frac{3}{2}-4\rho}}|\partial Z^I \phi|dr \right\|_{L^2(\m S^1)}\lesssim \ep\ld{\partial Z^I \phi}.$$
For $I=N$ we estimate it with \eqref{phiN} and for $I=N-1$ with \eqref{phiN1}.
	$$\left\|\int  \frac{\ep^2}{(1+|q|)^{3-8\rho}}|Z^I \wht g|\right\|_{L^2(\m S^1)} \lesssim \frac{\ep^2}{\sqrt{1+t}}\ld{\frac{w_1}{1+|q|}Z^I \wht g}
	\lesssim \frac{\ep^3}{(1+s)^{\frac{1}{2}-2\rho}},$$
	thanks to \eqref{hg2n}.
	We now estimate $\beta$.
	We have
	$$\partial_s Z^I \beta +\frac{1}{4}g_{LL}\partial_r Z^I\beta= -\frac{1}{2}\frac{Z^I\wht g_{L \ba L}}{r}-\frac{1}{2}Z^I F_2 +Z^{I-J}g_{LL}\partial_q Z^J \beta $$
	and consequently
	$$(\partial_s +\frac{1}{4}g_{LL}\partial_r)( Z^I\beta+Z^I \wht g_1)= O\left(\frac{1}{r}Z^I \wht g_1\right)+O\left(Z^{I-J}g_{LL}\partial_q Z^J \beta\right).$$
	Thanks to \eqref{iks2bis} we easily obtain that $Z^{N-13}\beta = O\left(\ep\frac{\sqrt{1+|q|}}{\sqrt{1+s}}\right).$
	It is equivalent to integrate with respect to $s$ than with an affine parameter $s'$ along the integral curve of $\partial_s +g_{LL}\partial_q \beta$. We obtain
	\begin{equation}
	\label{beta}Z^I \beta(q,s,\theta) = O\left(Z^I \wht g_1\right)
	+\int_s^{2T-q} \left(\frac{Z^I\wht g_1}{s'+q} + \frac{Z^Ig_{LL}}{\sqrt{s'}\sqrt{1+|q|}}\right) ds'.
	\end{equation}
	We estimate
	\begin{align*}&\left(\int \left(\frac{1}{(1+|r-t|)^{1+\mu}} \int_{t+r}^{2T-t+r} \frac{Z^I\wht g_1(\rho,r-t,\theta)}{\rho+r-t} d\rho\right)^2 drd\theta\right)^\frac{1}{2}\\
	&\lesssim \int_t^{2T} \left(\int \ch_{r+t\leq \rho\leq 2T-t+r} \left(\frac{1}{(1+|r-t|)^{1+\mu}}\frac{Z^I\wht g_1(\rho,r-t,\theta)}{\rho+r-t}\right)^2 drd\theta \right)^\frac{1}{2}d\rho \\
	&\lesssim \left(\int_t^{2T} \frac{1}{\rho^\frac{5}{4}}d\rho \right)^\frac{1}{2}
	\left(\int_t^{2T} \int  \rho^\frac{5}{4}\ch_{r+t\leq \rho\leq 2T-t+r} \left(\frac{1}{(1+|r-t|)^{1+\mu}}\frac{Z^I\wht g_1(\rho,r-t,\theta)}{\rho+r-t}\right)^2 dr d\rho \right)^\frac{1}{2}.
	\end{align*}
	We make the change of variable $r'-t'=r-t$, $r'+t'=\rho$. We obtain
	\begin{align*}&\left(\int_\frac{t}{2}^{t+R} \left(\frac{1}{(1+|r-t|)^{1+\mu}} \int_{t+r}^{2T-t+r} \frac{Z^I\wht g_1(\rho,r-t,\theta)}{\rho+r-t} d\rho\right)^2 drd\theta\right)^\frac{1}{2}\\
	&\lesssim \left(\int_t^{2T} \frac{1}{\rho^\frac{5}{4}}d\rho \right)^\frac{1}{2}
	\left(\int_t^T  \int_{t'+\frac{t}{2}}^{R+t'}  (r'+t')^\frac{5}{4} \left(\frac{1}{(1+|r'-t'|)^{1+\mu}}\frac{Z^I\wht g_1(r'+t',r'-t',\theta)}{r'}\right)^2 dr d\rho \right)^\frac{1}{2}\\
	&\lesssim \frac{1}{t^\frac{1}{8}}
	\left(\int_t^T \frac{(2t'+R)^\frac{5}{4}}{(t'+\frac{t}{2})^3} 
	\left\|\frac{\ch_{q\leq R}}{(1+|q|)^{1+\mu}}Z^I\wht g_{1}\right\|^2_{L^2}dt' \right)^\frac{1}{2}.
	\end{align*}
	In the same way we estimate
		\begin{align*}&\left(\int_\frac{t}{2}^{t+R} \left(\frac{1}{(1+|r-t|)^{1+\mu}} \int_{t+r}^{2T-t+r} \frac{Z^I\wht g_{LL}(\rho,r-t,\theta)}{\sqrt{\rho}\sqrt{1+|r-t|}} d\rho\right)^2 drd\theta\right)^\frac{1}{2}\\
		&\lesssim \left(\int_t^{2T} \frac{1}{\tau^{1+\frac{1}{2}-4\rho}}d\tau \right)^\frac{1}{2}
		\left(\int_t^T  \int_{t'+\frac{t}{2}}^{R+t'}  (r'+t')^{1+\frac{1}{2}-4\rho} \left(\frac{1}{(1+|r'-t'|)^{\frac{3}{2}+\mu}}\frac{Z^I\wht g_{LL}(r'+t',r'-t',\theta)}{\sqrt{r'}}\right)^2 dr d\rho \right)^\frac{1}{2}\\
		&\lesssim \frac{1}{t^{\frac{1}{2}-2\rho}}
		\left(\int_t^T \frac{(2t'+R)^{1+\frac{1}{2}-4\rho}}{(t'+\frac{t}{2})^\frac{3}{2}} 
		\left\|\frac{\ch_{q\leq R}}{(1+|q|)^{1+\mu}}Z^I\wht g_{LL}\right\|^2_{L^2}dt' \right)^\frac{1}{2},
		\end{align*}
	and consequently
	\begin{equation}\label{estbeta}\begin{split}
\left(\int \frac{1}{(1+|q|)^{2+2\mu}}(Z^I \beta)^2drd\theta \right)^\frac{1}{2}
\lesssim &\frac{1}{t^\frac{1}{8}}
\left(\int_t^T \frac{(2t'+R)^\frac{5}{4}}{(t'+\frac{t}{2})^3} 
\left\|\frac{\ch_{q\leq R}}{(1+|q|)^{1+\mu}}Z^I\wht g_{1}\right\|^2_{L^2}dt' \right)^\frac{1}{2}\\
&+\frac{1}{t^{\frac{1}{2}-2\rho}}
\left(\int_t^T \frac{(2t'+R)^{1+\frac{1}{2}-4\rho}}{(t'+\frac{t}{2})^\frac{3}{2}} 
	\left\|\frac{\ch_{q\leq R}}{(1+|q|)^{1+\mu}}Z^I\wht g_{L L}\right\|^2_{L^2}dt' \right)^\frac{1}{2}\\
\lesssim& \frac{1}{t^\frac{1}{8}}\left(\int_t^T \frac{1}{t'^{3-\frac{5}{4}-2\rho}}dt'\right)^\frac{1}{2} 
+\frac{1}{t^{\frac{1}{2}-2\rho}}\left(\int_t^T (1+t')^{-4\rho}\ld{\frac{w_2'(q)}{1+|q|}Z^I g_{LL}}^2dt'\right)^\frac{1}{2}\\
\lesssim& \frac{\ep}{t^{\frac{1}{2}-2\rho}},
\end{split}
	\end{equation}
where we have used \eqref{hg1n} (we assume $\mu \geq \frac{1}{2}+\sigma$). It is easy to convince oneself, with Section \ref{l2g1} of
$$\int_t^T (1+t')^{-4\rho}\ld{\frac{w_2'(q)}{1+|q|}Z^I g_{LL}}^2dt'\lesssim \ep.$$
Consequently
$$
\left\|\int \frac{\ep^2}{(1+|q|)^{3-8\rho}}|Z^I \beta|\right\|_{L^2(\m S^1)} \lesssim \frac{\ep^3}{(1+t)^{\frac{1}{2}-2\rho}}.$$
This concludes the proof of Proposition \ref{prph1}.
\end{proof}

\begin{prp}\label{prph2}
	We have the estimates
	$$\|\partial_t Z^{N-1}h(\theta,t)\|_{H^{-1}(\m S^1)}\lesssim \frac{\ep^2}{(1+t)^{2-\frac{1}{2}\sigma}},$$
		$$\|\partial_t Z^{N}h(\theta,t)\|_{H^{-1}(\m S^1)}\lesssim \frac{\ep^2}{(1+t)^{1-\rho}},$$
	$$\int_0^t (1+s)\|\partial_t Z^N h\|^2_{L^2(\m S^1)} \lesssim \ep^4(1+t)^{2\rho},$$ 
		$$\int_0^t (1+s)^{3-2\rho}\|\partial_t^3 Z^N h\|^2_{H^{-2}(\m S^1)} \lesssim \ep^4(1+t)^{2\rho},$$ 
	and we can decompose $$\partial_s h=h_1+h_2$$ with
	$$\|Z^{N}h_1\|_{H^{-2}(\m S^1)}\lesssim \frac{\ep^2}{(1+t)^{2-\frac{1}{2}\sigma}},$$
	$$\int_0^t (1+s)^2\|Z^{N}h_1\|^2_{H^{-1}(\m S^1)}\lesssim \ep^2(1+t)^{2\rho},$$
	$$\int_0^t (1+s)^4\|\partial_t Z^{N}h_1\|^2_{H^{-2}(\m S^1)} \lesssim \ep^2(1+t)^{2\rho},$$
	$$\int_0^t (1+s)^3\|\partial_t Z^{N}h_1\|^2_{H^{-1}(\m S^1)} \lesssim \ep^2(1+t)^{2\rho},$$
	$$\|\partial_t Z^N h_2\|_{H^{-2}(\m S^1)}\lesssim \frac{\ep^2}{(1+s)^{\frac{3}{2}-\rho}},$$
	$$\int_0^t (1+s)^{3-2\rho}\|Z^{N}h_2\|^2_{H^{-1}(\m S^1)}\lesssim \ep^2(1+t)^{2\rho},$$
$$\int_0^t (1+s)^3\|\partial_t Z^{N}h_1\|_{H^{-1}(\m S^1)}^2 \lesssim \ep^2(1+t)^{2\rho}.$$
\end{prp}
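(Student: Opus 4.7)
The plan is to differentiate $\check h(\theta, s = 2t)$ in $t$ and exploit the wave equation $\Box_g\phi = 0$, which in the support of $\phi$ (contained in $q < R+\frac{1}{2}$ by Proposition \ref{cone}) is equivalent to the divergence form $\partial_\alpha(g^{\alpha\beta}\sqrt{|\det g|}\,\partial_\beta\phi) = 0$. This identity converts $\partial_t((1+\beta)g^{0\alpha}\sqrt{|\det g|}\,\partial_\alpha\phi)$ into the sum of $\partial_t\beta\cdot g^{0\alpha}\sqrt{|\det g|}\,\partial_\alpha\phi$ and $-\partial_i((1+\beta)g^{i\alpha}\sqrt{|\det g|}\,\partial_\alpha\phi)$ plus a commutator in $\beta$; integrating by parts in $r$ then moves the remaining $\partial_t\partial_r\phi$ onto the other factor, producing a $Q_{tr}$ null form between $(1+\beta)g^{0\alpha}\sqrt{|\det g|}\,\partial_\alpha\phi$ and $\phi$, together with boundary terms coming from $\Upr$ localized in $\frac{t}{2}\leq r \leq 2t$.

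The heart of the proof lies in a cancellation. Expanding the null form using Corollary \ref{corbeta} (with $\partial_r \wht g_{LL} = \frac{1}{r}\wht g_{L\ba L} + F_1 + F_2$) produces a dangerous contribution of the form $\int \Up (1+\beta)\cdot \frac{1}{r}\wht g_{L \ba L}(\partial_q\phi)^2\cdot r\, dr$ which on its own only decays like $(1+t)^{-\frac{3}{2}}$. The transport equation defining $\beta$, namely
\[
\partial_s \beta + \tfrac{1}{4}g_{LL}\partial_r\beta = -\tfrac{1}{2}\tfrac{\wht g_{L\ba L}}{r} - \tfrac{1}{2}F_2,
\]
is designed precisely so that the $\partial_t\beta$-contribution cancels both the $\frac{1}{r}\wht g_{L\ba L}$ piece and the $F_2$ piece (the non-$L^\infty$-controlled part of $\partial_r\wht g_{LL}$). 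What remains after this cancellation is the genuinely null $F_1$ contribution together with lower-order cubic terms in $\wht g$, $\phi$, $G^L$, and $\partial_t\beta$.

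To produce the splitting $\partial_s h = h_1 + h_2$, I would group into $h_1$ all the terms that inherit a time-integrated $L^2$ bound from the $w'(q)$-weighted integrated estimates for $\bar\partial\phi$ and $\bar\partial\wht g$ (Propositions \ref{l2phin}, \ref{prpl2whtg}, Corollary \ref{corl2whtg}), notably the null form $Q_{tr}$ after using the identity $\partial_s\phi = \frac{1}{s}SZ\phi + \frac{q}{s}\partial_q\phi$ combined with Proposition \ref{l2Sphin} on $SZ^N\phi - s\partial_q\phi\,Z^Ng_{LL}$ (the subtraction here being exactly the $\beta$-corrected combination). Into $h_2$ go the remaining cubic/higher terms that enjoy only the pointwise $(1+t)^{-\frac{3}{2}+\rho}$ decay from the $L^\infty$ estimates. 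The negative Sobolev index $H^{-2}(\m S^1)$ comes from integrating the $\partial_\theta$-derivatives appearing in the null form by parts on $\m S^1$, transferring them onto smoother metric weights, which also yields the regularity compatibility between $\partial_s h$ and $\partial_\theta\partial\phi$, $\partial_\theta g$. The transport estimates for $Z^I\beta$ proved along the lines of \eqref{beta}-\eqref{estbeta} and the corresponding estimates for $\partial_s Z^I\beta$ (obtained by differentiating the transport equation once more and reusing $\Box_g\phi = 0$) feed into these bounds. Finally, \eqref{esth'}-\eqref{esth2'} follow by comparing $h'$ with the heuristic $-2\int \partial_t\phi \partial_r\phi\cdot r\, dr$: the $\beta$-correction and the deviation of $g^{0\alpha}\sqrt{|\det g|}$ from its Minkowski value $-r\cdot\delta^{0\alpha}$ are $O(\ep)$ in the relevant norms, producing the claimed $\ep^3$ gain.

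The main obstacle is the top-order piece of \eqref{estf1hb}-\eqref{estf1hbis} and particularly the bound \eqref{estf1h1} with the $(1+\tau)^4$ weight: there the time integral of $\partial_t h_1$ must be controlled, which requires differentiating the defining relation twice and reapplying $\Box_g\phi = 0$, producing products of $\phi$ with two derivatives of the metric that barely close with the available $L^2$ bounds on $\partial^2 Z^N \phi$ and $\partial Z^N \wht g_1$. The argument has to absorb the loss via the $H^{-2}$ norm on $\m S^1$ and the good commutator $SZ^N\phi - s\partial_q\phi\,Z^N g_{LL}$; any attempt to use $\partial_s\phi$ directly would lose the sharp $(1+t)^{-2}$ decay needed for the $b$-estimates in $\q H$ to propagate at the next iteration.
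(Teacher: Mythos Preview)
Your proposal captures the essential architecture of the paper's proof: differentiate $\check h$, use $\partial_\mu(g^{\mu\nu}\sqrt{|\det g|}\,\partial_\nu\phi)=0$ in $(t,r,\theta)$ coordinates to trade $\partial_t$ for spatial derivatives, integrate by parts in $r$, and exploit the transport equation for $\beta$ to cancel the worst piece of $\partial_r \wht g_{LL}$. You also correctly identify the role of the identity $\partial_s Z^N\phi=\tfrac{1}{s}(SZ^N\phi-s\partial_q\phi\,Z^Ng_{LL})+\partial_q\phi\,Z^Ng_{LL}+\tfrac{q}{s}\partial_q Z^N\phi$ together with Proposition~\ref{l2Sphin}, and the use of $H^{-1},H^{-2}$ on $\m S^1$ to shift $\partial_\theta$ off the top-order factor.

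Your description of the $h_1/h_2$ split is, however, inverted relative to the paper. After the cancellation $A_1+A_4+A_7$, the paper organises the remainder into seven templates $B_1,\dots,B_7$; the only term that fails to have the sharp $(1+t)^{-2+}$ pointwise decay in $H^{-2}$ is $B_4$, coming from $Z^N\wht g_{\q T\q T}$. This term is further split: the part $B_4^{(1)}$ controlled via the \emph{time-integrated} wave-coordinate estimates of Propositions~\ref{l2ll}--\ref{l2uu}, together with the $f_2$-piece $\wht B_4^{(3)}$ of the $\sigma^0_{UL}$ contribution (controlled by \eqref{estf2}), constitute $h_2$. Everything else --- including $\wht B_1,B_2,B_3,B_5,B_6,B_7$ and the $f_1$-piece $\wht B_4^{(2)}$ --- goes into $h_1$ and enjoys the pointwise $H^{-2}$ bound $\lesssim\ep^2(1+t)^{-2+\sigma/2}$. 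So $h_2$ is precisely the part that \emph{only} has integrated control, not the part with merely pointwise $(1+t)^{-3/2+\rho}$ decay as you wrote. This matters because the hypotheses $\q H$ on $b^{(2)}$ (specifically \eqref{estf2hb}--\eqref{estf2bis}) are tailored to this split.

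A minor point: the comparison of $h'$ with $-2\int\partial_t\phi\,\partial_r\phi\,r\,dr$ that you sketch at the end is Proposition~\ref{comph}, not part of Proposition~\ref{prph2}.
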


\begin{proof}

Since $\phi$ satisfies $\Box_g \phi= 0$ we have in coordinates $t,r,\theta$
$$\partial_\mu (g^{\mu \nu}\sqrt{|\det (g)|}\partial_\nu \phi )=0.$$
We can neglect the contributions of $\Upr$, because when it is different from $1$, we are far from the light cone and $q\sim s$.
We calculate 
\begin{align*}
2\partial_t h(\theta,2t)
=& \int_0^\infty \partial_t \beta g^{t\alpha}\sqrt{|\det{g}|}\partial_\alpha \phi \partial_r \phi dr\\
&+\int_0^\infty (1+\beta)\partial_t\left(g^{t\alpha}\sqrt{|\det{g}|}\partial_\alpha \phi\right)\partial_r \phi dr
 + \int_0^\infty (1+\beta)g^{t\alpha}\sqrt{|\det{g}|}\partial_\alpha \phi \partial_r\partial_t \phi dr\\
=&\int_0^\infty \partial_t \beta g^{t\alpha}\sqrt{|\det{g}|}\partial_\alpha \phi \partial_r \phi dr\\
&-\int_0^\infty(1+\beta)\partial_r\left(g^{r\alpha}\sqrt{|\det{g}|}\partial_\alpha \phi\right)\partial_r \phi dr
- \int_0^\infty(1+\beta)\partial_\theta\left(g^{\theta\alpha}\sqrt{|\det{g}|}\partial_\alpha \phi\right)\partial_r \phi dr\\
&+\frac{1}{2}\int_0^\infty (1+\beta)g^{tt}\sqrt{|\det{g}|}\partial_r(\partial_t \phi)^2 \phi dr\\
&+\int_0^\infty (1+\beta)g^{tr}\sqrt{|\det{g}|}\partial_r \phi \partial_ r\partial_t \phi dr
+ \int_0^\infty (1+\beta)g^{t\theta}\sqrt{|\det{g}|}\partial_\theta \phi \partial_ r\partial_t \phi dr\\
=&\int_0^\infty \partial_t \beta g^{t\alpha}\sqrt{|\det{g}|}\partial_\alpha \phi \partial_r \phi dr
+\int_0^\infty \partial_r \beta g^{rr}\sqrt{|\det{g}|}(\partial_r \phi)^2 dr\\
&+\frac{1}{2}\int_0^\infty (1+\beta)g^{rr}\sqrt{|\det{g}|}\partial_r (\partial_r \phi)^2 dr \\
&-\int_0^\infty (1+\beta)g^{rt}\sqrt{|\det{g}|}\partial_r \partial_t \phi\partial_r \phi dr-\int_0^\infty(1+\beta)\partial_\theta\left(g^{\theta\alpha}\sqrt{|\det{g}|}\partial_\alpha \phi\right)\partial_r \phi dr\\
&-\int_0^\infty (1+\beta)\partial_r(g^{rt}\sqrt{|\det{g}|}) \partial_t \phi\partial_r \phi dr
-\int_0^\infty(1+\beta)\partial_r\left(g^{r\theta}\sqrt{|\det{g}|}\partial_\theta \phi\right)\partial_r \phi dr\\
&-\frac{1}{2}\int_0^\infty \partial_r \left((1+\beta)g^{tt}\sqrt{|\det{g}|}\right)(\partial_t \phi)^2  dr\\
&+\int_0^\infty (1+\beta)g^{tr}\sqrt{|\det{g}|}\partial_r \phi \partial_ r\partial_t \phi dr + \int_0^\infty (1+\beta)g^{t\theta}\sqrt{|\det{g}|}\partial_\theta \phi \partial_ r\partial_t \phi dr\\
=&\int_0^\infty \partial_t \beta g^{t\alpha}\sqrt{|\det{g}|}\partial_\alpha \phi \partial_r \phi dr
+\frac{1}{2}\int_0^\infty \partial_r \beta \sqrt{|\det{g}|}\left( g^{rr}(\partial_r \phi)^2-g^{tt}(\partial_t \phi)^2\right)dr\\
&-\frac{1}{2}\int_0^\infty (1+\beta)\left(\partial_r(g^{rr} \sqrt{|\det{g}|})(\partial_r \phi)^2+2\partial_r(g^{rt}\sqrt{|\det{g}|}) \partial_t \phi\partial_r \phi+\partial_r(g^{tt}\sqrt{|\det{g}|})(\partial_t \phi)^2 \right)dr \\
&\int_0^\infty(1+\beta)\partial_\theta\left(g^{\theta\alpha}\sqrt{|\det{g}|}\partial_\alpha \phi\right)\partial_r \phi dr
+\int_0^\infty \partial_r \beta g^{r\theta}\sqrt{|\det{g}|}\partial_\theta \phi\partial_r \phi dr\\
&+\int_0^\infty(1+\beta)g^{r\theta}\sqrt{|\det{g}|}\partial_\theta \phi\partial_r^2 \phi dr
 +\int_0^\infty (1+\beta)g^{t\theta}\sqrt{|\det{g}|}\partial_\theta \phi \partial_ r\partial_t \phi dr
\end{align*}

We analyse the different contribution to $\partial_t h$ :
$$\partial_t h= \int A_1+A_2+...$$
where
\[A_1=  \partial_s \beta \sqrt{|\det(g)|}
(\partial_q \phi)^2\left(-g^{tt}+g^{tr}+\frac{1}{2}(g^{rr}-g^{tt})\right)=\partial_s \beta \sqrt{|\det(g)|}
(\partial_q \phi)^2(-4g^{\ba L\ba L}-4g^{L\ba L}),
\]
\[A_2= \partial_s \beta \sqrt{|\det(g)|}\partial_s \phi \partial_q \phi\left(2g^{tr}+\frac{1}{2}(2g^{rr}+2g^{tt})\right)
=\partial_s \beta \sqrt{|\det(g)|}\partial_s \phi \partial_q \phi 4g^{ L  L},
\]
\[A_3=\partial_s \beta \sqrt{|\det(g)|}(\partial_s \phi )^2\left(g^{tt}+g^{tr}+\frac{1}{2}(g^{rr}-g^{tt})\right)
=\partial_s \beta \sqrt{|\det(g)|}(\partial_s \phi)^2 2g^{ L L},
\]
\[A_4=  \partial_q \beta \sqrt{|\det(g)|}
(\partial_q \phi)^2\left(g^{tt}-g^{tr}+\frac{1}{2}(g^{rr}-g^{tt})\right)=\partial_q \beta \sqrt{|\det(g)|}
(\partial_q \phi)^22  g^{\ba L \ba L},
\]
\[A_5= \partial_q \beta \sqrt{|\det(g)|}\partial_s \phi \partial_q \phi\left(-2g^{tr}+\frac{1}{2}(2g^{rr}+2g^{tt})\right)
=\partial_q \beta \sqrt{|\det(g)|}\partial_s \phi \partial_q \phi 4g^{ \ba L \ba L},
\]
\[A_6=\partial_q \beta \sqrt{|\det(g)|}(\partial_s \phi )^2\left(-g^{tt}-g^{tr}+\frac{1}{2}(g^{rr}-g^{tt})\right)
=\partial_s \beta \sqrt{|\det(g)|}(\partial_s \phi)^2 (-2g^{LL}-4g^{L\ba L}),\]
\[A_7=(1+\beta)(\partial_q \phi)^2\partial_r \left(\sqrt{|\det g|}(g^{rr}-2g^{rt}+g^{tt})\right)
=(1+\beta)(\partial_q \phi)^2\partial_r \left(\sqrt{|\det g|}4g^{\ba L \ba L}\right),
\]
\[A_8=(1+\beta)\partial_q \phi\partial_s \phi \partial_r \left(\sqrt{|\det g|}(g^{rr}-g^{tt})\right)
=-(1+\beta)\partial_s \phi \partial_q \phi\partial_r \left(\sqrt{|\det g|}4g^{\ba L L}\right),
\]
\[A_9=(1+\beta)(\partial_s \phi)^2 \partial_r \left(\sqrt{|\det g|}(g^{rr}+2g^{rt}+g^{tt})\right)
=(1+\beta)(\partial_s \phi)^2\partial_r \left(\sqrt{|\det g|}4g^{ L L}\right),
\]
\[A_{10}=(1+\beta)\partial_\theta \left(g_{UU}\frac{\sqrt{|\det g|}}{r}\partial_{U}\phi\right)\partial_r \phi,
\]
\[A_{11}=(1+\beta)\partial_\theta \left(g_{U\ba L}\frac{\sqrt{|\det g|}}{r}\partial_s\phi\right)\partial_r \phi,
\]
\[A_{12}=(1+\beta)\partial_\theta \left(g_{UL}\frac{\sqrt{|\det g|}}{r}\partial_q\phi\right)\partial_r \phi,
\]
\[A_{13}=\partial_s \beta (g^{Ut}+g^{Ur})\sqrt{|\det g|}\partial_U\phi \partial_r \phi=\partial_s \beta g^{UL}\sqrt{|\det g|}\partial_U\phi \partial_r \phi,\]
\[A_{14}=\partial_q \beta (-g^{Ut}+g^{Ur})\sqrt{|\det g|}\partial_U\phi \partial_r \phi
=\partial_q \beta g^{U\ba L}\sqrt{|\det g|}\partial_U\phi \partial_r \phi
,\]
\[A_{15}=(1+\beta)\sqrt{|\det{g}|}(g^{Ur}+g^{Ut})\partial_U \phi \partial_r \partial_s \phi =   (1+\beta)\sqrt{|\det{g}|}g^{UL}\partial_U \phi \partial_r \partial_s \phi
,\]
\[A_{16}=(1+\beta)\sqrt{|\det{g}|}(g^{Ur}-g^{Ut})\partial_U \phi \partial_r \partial_q \phi =   (1+\beta)\sqrt{|\det{g}|}g^{U\ba L}\partial_U \phi \partial_r \partial_q \phi
.\]
We remark (see \eqref{determinant}) that 
$$\sqrt{|\det g|}g^{\ba L L }\sim -\frac{1}{2}g_{UU}r,$$
so the term $A_8$ has the required decay.
We recall that we choose $\beta$ such that $\beta=0$ at time $T$ and
$$2\partial_s \beta+\frac{1}{2}g_{LL}\partial_q \beta =- \frac{\wht g_{\ba L \ba L}}{r}-F_2.$$ 
We remark that thanks to Corollary \ref{corbeta},
$$2\partial_s \beta+\frac{1}{2}g_{LL}\partial_q \beta=-\partial_r  g_{LL}+F_1
=-\partial_r  g_{LL}+
\wht g_{\q T \q T}\partial_r  g_1+\wht g_1 \partial_U  g_1
+\partial_U  g_{\q T \q T} +\frac{1}{r} \wht g_{\q T \q T}$$
Consequently we have
\begin{align*}
A_1+A_7 +A_4=&\partial_s \beta \sqrt{|\det(g)|}
(\partial_q \phi)^2(-4g^{\ba L\ba L}-4g^{L\ba L})+(1+\beta)(\partial_q \phi)^2\partial_r \left(\sqrt{|\det g|}4g^{\ba L \ba L}\right)\\
&+\partial_q \beta \sqrt{|\det(g)|}
(\partial_q \phi)^2 2 g^{\ba L \ba L}\\
=&(\partial_q \phi)^2\sqrt{|\det(g)|}(2\partial_s \beta +\partial_r g_{LL}+\frac{1}{2}g_{LL}\partial_q \beta
)+O\left(r(\partial_q \phi)^2(\partial_r g_{LL})\wht g_1\right)+s.t.\\
=&O\left(r(\partial_q \phi)^2\partial_U g_{\q T \q T}\right)+O\left(r(\partial_q \phi)^2(\partial_r g_{LL})\wht g_1\right)+s.t.
\end{align*}
We note that applying a vector field to $h$ corresponds to applying a vector field to the integrand. We note also that we can get rid of a $\partial_r$ derivative on a term $Z^N g$ or $\partial Z^N \phi$ by integration by part.
Then we can distribute the vector fields and use the $L^\infty$ estimate for the terms $Z^J \wht g$ and $Z^J \phi$ with $J \leq N/2$.
We obtain that 
$$\partial_s Z^I h =\int (B_1+B_2+...)dr$$ where
$$
B_1=O\left(\frac{\ep}{(1+s)^\frac{1}{2}(1+|q|)^{\frac{3}{2}-4\rho}}\right)\partial_s \partial^\alpha_\theta Z^I \phi,$$
$$B_2=O\left(\frac{\ep}{(1+s)^\frac{3}{2}(1+|q|)^{\frac{1}{2}-4\rho}}\right)\partial_q \partial^\alpha_\theta Z^I \phi,$$
$$B_3=O\left(\frac{\ep}{(1+s)^\frac{1}{2}(1+|q|)^{\frac{3}{2}-4\rho}}\right)\frac{\partial_\theta}{r} \partial^\alpha_\theta Z^I \phi,$$
$$B_4= O\left(\frac{\ep^2}{(1+s)(1+|q|)^{3-8\rho}}\right) \partial^\alpha_\theta Z^I g_{\q T \q T},$$
$$B_5= O\left(\frac{\ep^2}{(1+s)^2(1+|q|)^{2-8\rho}}\right) \partial^\alpha_\theta Z^I \wht g,$$
$$B_6=O\left(\frac{\ep^2}{(1+s)^\frac{3}{2}(1+|q|)^{\frac{7}{2}-8\rho-\sigma}}\right) Z^I \beta,$$
$$B_7=O\left(\frac{\ep^2}{(1+s)^{\frac{3}{2}}(1+|q|)^{\frac{3}{2}-8\rho}}\right) \partial^\alpha_\theta Z^I \wht g_1,$$
where $\alpha=0,1$.
We estimate the term for $I=N$.
We start with $B_1$. We write 
$$\partial_s Z^N \phi= \frac{1}{s}S Z^N \phi + \frac{q}{s}\partial_q Z^N \phi
=\frac{1}{s}(SZ^N  \phi - s\partial_q\phi Z^N g_{LL}) +\partial_q \phi Z^N g_{LL} 
 + \frac{q}{s}\partial_q Z^N \phi.$$
 The last two terms are similar to $B_4$ and $B_2$ respectively. We note
 $$\wht B_1= O\left(\frac{\ep}{(1+s)^\frac{3}{2}(1+|q|)^{\frac{3}{2}-4\rho}}\right) (S\partial^\alpha_\theta Z^N  \phi - s\partial_q\phi \partial^\alpha_\theta Z^N g_{LL}).$$
We estimate
\begin{align*}
\left\|\int \wht B_1dr\right\|_{H^{-1}(\m S^1)}&\lesssim
\int \frac{\ep}{(1+s)^{\frac{3}{2}}(1+|q|)^{\frac{3}{2}-4\rho}}\|SZ^N  \phi - s\partial_q\phi Z^N g_{LL}\|_{L^2(\m S^1)}dr\\
&\lesssim \frac{\ep}{(1+t)^2} \left( \int \frac{1}{(1+|q|)^{2+2\rho}}|SZ^N  \phi - s\partial_q\phi Z^N g_{LL}|^2 rdrd\theta\right)^{\frac{1}{2}}\left( \int \Up\left(\frac{r}{t}\right)\frac{1}{(1+|q|)^{1-10\rho}}dr\right)^{\frac{1}{2}}\\
&\lesssim \frac{\ep}{(1+t)^{2-10\rho}}\| w^\frac{1}{2} \partial (SZ^N  \phi - s\partial_q\phi Z^N g_{LL})\|_{L^2}\\
&\lesssim  \frac{\ep^2}{(1+t)^{2-\frac{1}{2}\sigma}},
\end{align*}
\begin{align*}
\left\|\int B_2dr\right\|_{H^{-1}(\m S^1)}&\lesssim \int
\frac{\ep}{(1+s)^\frac{3}{2}(1+|q|)^{\frac{1}{2}-4\rho}}\|\partial_q Z^I \phi\|_{L^2(\m S^1)}dr\\
&\lesssim \frac{\ep}{(1+t)^2}\left(\int (\partial_q Z^I \phi)^2 rdrd\theta\right)^\frac{1}{2}\left(\int\Up\left(\frac{r}{t}\right) \frac{1}{(1+|q|)^{1-8\rho}}dr\right)^\frac{1}{2}\\
&\lesssim \frac{\ep^2}{(1+t)^{2-3\rho}},
\end{align*}
We estimate $B_3$ in $H^{-2}$ first
$$
\left\|\int  B_3dr\right\|_{H^{-2}(\m S^1)}\lesssim
\int \frac{\ep}{(1+s)^{\frac{3}{2}}(1+|q|)^{\frac{3}{2}-4\rho}}\|Z^N \phi\|_{L^2(\m S^1)}dr
\lesssim  \frac{\ep^2}{(1+t)^{2-5\rho}}.$$
and now in $H^{-1}$
\begin{align*}
\left\|\int B_3dr\right\|_{H^{-1}(\m S^1)}&\lesssim
\int \frac{\ep}{(1+s)^{\frac{1}{2}}(1+|q|)^{\frac{3}{2}-4\rho}}\|\bar{\partial}Z^N \phi\|_{L^2(\m S^1)}dr\\
&\lesssim \frac{\ep}{(1+t)} \left(\int \frac{1}{(1+|q|)^{1+2\mu}}|\bar{\partial}Z^N \phi|^2 rdrd\theta\right)^\frac{1}{2}
\left(\int \frac{1}{(1+|q|)^{2-8\rho-2\mu}}dr\right)^\frac{1}{2}\\
&\lesssim \frac{\ep}{1+t}\|w'(q)^\frac{1}{2}\bar{\partial}Z^N \phi\|_{L^2}
\end{align*}
and so
$$\int_0^t (1+s)^2 \left\|\int B_3dr\right\|_{H^{-1}(\m S^1)}^2ds\lesssim \ep^4(1+t)^{2\rho}.$$
We also have
$$\left\|\int B_3dr\right\|_{H^{-1}(\m S^1)}\lesssim \frac{\ep^2}{(1+t)^{1-\rho}}.$$
We now turn to $B_4$.
\begin{align*}
 \left\|\int B_4 dr\right\|_{H^{-1}(\m S^1)}
 &\lesssim \int
 \frac{\ep^2}{(1+s)(1+|q|)^{3-8\rho}} \| Z^N g_{L \q T}\|_{L^2(\m S^1)}dr\\
 &\lesssim \frac{\ep^2}{(1+s)^\frac{3}{2}}\left(\int \frac{1}{(1+|q|)^{2-2\sigma -16\rho}}dr\right)^\frac{1}{2}\left\| \frac{w_2(q)}{(1+|q|)^{\frac{3}{2}}}  Z^N \wht g_{L \q T}\right\|_{L^2}.
\end{align*}
First of all, thanks to \eqref{hg1n} we have
$$\left\|\int B_4dr\right\|_{H^{-1}(\m S^1)}\lesssim \frac{\ep^2}{(1+t)^{\frac{3}{2}-\rho}}.$$
To have a more precise estimate use
Propositions \ref{l2ll}, \ref{l2lu} and \ref{l2uu}. We have to decompose $B_4= B_4^{(1)}+B_4^{(2)}$ with
$$\int_0^t (1+\tau)^{3-2\rho} \left\|\int B_4^{(1)} dr\right\|_{H^{-1}(\m S^1)} d\tau\lesssim \ep^2(1+t)^{2\rho},$$
and
$$B_4^{(2)}=O\left(\frac{\ep^2}{(1+s)(1+|q|)^{3-8\rho}} \partial^\alpha_\theta (\sigma^0_{UL})(1-\chi(q))\right)
=O\left(\frac{\ep^2}{(1+s)(1+|q|)^{3-8\rho}}(1-\chi(q))s\partial_s Z^N b\right).$$
We again decompose $B^{(2)}_4= \wht B^{(2)}_4+ \wht B^{(3)}_4$ with
$$\wht B_4^{(2)}=O\left(\frac{\ep^2}{(1+s)(1+|q|)^{3-8\rho}} (1-\chi(q))sZ^N f_1\right)$$
and
$$\wht B_4^{(3)}=O\left(\frac{\ep^2}{(1+s)(1+|q|)^{3-8\rho}} (1-\chi(q))sZ^N f_2\right).$$
We have
$$\left\|\int \wht B_4^{(2)}dr\right\|_{L^2(\m S^1)} \lesssim \int \frac{\ep^2}{(1+|q|)^{3-8\rho}}\|Z^N f_1\|_{L^2(\m S^1)}dr\lesssim \frac{\ep^4}{(1+s)^{2-\frac{\sigma}{2}}}$$
and
$$\int_0^t (1+s)^{3-2\rho}\left\|\int \wht B_4^{(3)}dr\right\|_{L^2(\m S^1)}^2ds\lesssim \ep^4\int_0^t (1+s)^{3-2\rho}\|Z^N f_2\|^2_{L^2}ds\lesssim \ep^6(1+t)^{2\rho}.$$
\begin{align*}
\left\|\int B_5dr\right\|_{H^{-1}(\m S^1)}&\lesssim\int \frac{\ep^2}{(1+s)^2(1+|q|)^{2-8\rho}}\| Z^I \wht g\|_{L^2(\m S^1)}\\
&\lesssim \frac{\ep^2}{(1+s)^\frac{5}{2}}\left\|\frac{1}{(1+|q|)^{1+\mu+\sigma}}\partial Z^I \wht g\right\|_{L^2} \left(\int \frac{1}{(1+|q|)^{2-16\rho-\mu}}\right)^\frac{1}{2}\\
&\lesssim \frac{\ep^2}{(1+s)^\frac{5}{2}}\|w_1^\frac{1}{2}\partial Z^N \wht g\|_{L^2}
\lesssim \frac{\ep^2}{(1+s)^{\frac{5}{2}-2\rho}}.
\end{align*}
To estimate $B_6$ we use \eqref{estbeta}.

\begin{align*}
\left\|\int B_6dr\right\|_{L^2}&\lesssim \int \left(\frac{\ep^2}{(1+s)^\frac{3}{2}(1+|q|)^{\frac{7}{2}-8\rho-\sigma}}\right) Z^I \beta dr\\
&\lesssim \frac{\ep^2}{(1+t)^\frac{3}{2}}\left(\int \frac{1}{(1+|q|)^{2+2\mu}}|Z^I \beta|^2dr\right)^\frac{1}{2}\\
&\lesssim \frac{\ep^3}{(1+t)^{2-2\rho}}.
\end{align*}
We estimate
\begin{align*}
\left\|\int B_7 dr\right\|_{H^{-1}(\m S^1)}
&\lesssim \int
\frac{\ep^2}{(1+s)^\frac{3}{2}(1+|q|)^{\frac{3}{2}-8\rho}} \| Z^N \wht g_1\|_{L^2(\m S^1)}dr\\
&\lesssim \frac{\ep^2}{(1+s)^2}\left(\int \frac{1}{(1+|q|)^{1-2\sigma -8\rho}}dr\right)^{\frac{1}{2}}\left\| \frac{w_2(q)^\frac{1}{2}}{1+|q|}  Z^N \wht g_1\right\|_{L^2} \\
&\lesssim \frac{\ep^2}{(1+s)^{2-\frac{\sigma}{2}}}.
\end{align*}
We set
$$Z^N h_1= \int \wht B_1+B_2+B_3 + \wht B^{(2)}_4+B_5+B_6+B_7, \quad Z^N h_2= \int B_4^{(1)}+ \wht B^{(3)}_4.$$

We now turn to the estimate of $\partial_s^2 Z^N h$. We start with $\partial_s Z^N h_1$. We have
\begin{align*}\partial_s \wht B_1 =&O\left(\frac{\ep}{(1+s)^\frac{5}{2}(1+|q|)^{\frac{3}{2}-4\rho}}\right) (SZ^N  \phi - s\partial_q\phi Z^N g_{LL})\\
&+O\left(\frac{\ep}{(1+s)^\frac{3}{2}(1+|q|)^{\frac{3}{2}-4\rho}}\right) \partial_s (SZ^N  \phi - s\partial_q\phi Z^N g_{LL}).
\end{align*}
We estimate
\begin{align*}
&\left\|\int \partial_s \wht B_1dr\right\|_{H^{-1}(\m S^1)}\\
\lesssim&
\int \frac{\ep}{(1+s)^{\frac{3}{2}}(1+|q|)^{\frac{3}{2}-4\rho}}\|\partial_s (SZ^N  \phi - s\partial_q\phi Z^N g_{LL}) \phi\|_{L^2(\m S^1)}dr +\frac{\ep^2}{(1+t)^{3-5\rho-\mu}} \\
\lesssim &\frac{\ep}{(1+t)^2} \left( \int \frac{1}{(1+|q|)^{1+2\mu}}|\partial_s (SZ^N  \phi - s\partial_q\phi Z^N g_{LL})|^2 rdrd\theta\right)^{\frac{1}{2}}\left( \int \Up\left(\frac{r}{t}\right)\frac{1}{(1+|q|)^{2-8\rho -2\mu}}dr\right)^{\frac{1}{2}}\\
&+\frac{\ep^2}{(1+t)^{3-5\rho-\mu}}\\
\lesssim& \frac{\ep}{(1+t)^{2}}\| w'(q) \partial_s (SZ^N  \phi - s\partial_q\phi Z^N g_{LL})\|_{L^2}+\frac{\ep^2}{(1+t)^{3-5\rho-\mu}}.
\end{align*}
To estimate $\partial_s \int B_2$ and $\partial_s \int B_3$ we note that
$$\partial_s \int B_2dr = \int \left(\frac{1}{t}S B_2 +\frac{r}{t}\partial_r B_2\right)
=\frac{1}{t} \int (SB_2 -B_2).$$ 
Consequently we obtain

$$\left\|\partial_s \int  B_2dr\right\|_{H^{-1}(\m S^1)}\lesssim \frac{\ep^2}{(1+t)^{3-3\rho}},$$
$$
\left\|\partial_s \int  B_3dr\right\|_{H^{-2}(\m S^1)}
\lesssim  \frac{\ep^2}{(1+t)^{3-5\rho}},$$
To estimate $\partial_s B_3$ in $H^{-1}$ we write
$$\partial_s B_3 =O\left(\frac{\ep}{(1+s)^\frac{3}{2}(1+|q|)^{\frac{3}{2}-4\rho}}\right)\frac{\partial_\theta}{r} \partial^\alpha_\theta (S Z^N \phi - s\partial_q\phi Z^N g_{LL})+O(\partial_s B_2)+O(\partial_s B_4).$$
Consequently
$$
\left\|\partial_s \int B_3dr\right\|_{H^{-1}(\m S^1)}\lesssim \frac{\ep}{(1+t)^\frac{3}{2}}\|w'(q)^\frac{1}{2}\bar{\partial}(SZ^N \phi -s\partial_q\phi Z^N g_{LL})\|_{L^2},
$$
$$
 \left\|\int \partial_s \wht B^{(2)}_4 dr\right\|_{H^{-1}(\m S^1)}
\lesssim \int \frac{\ep^2}{(1+s)(1+|q|)^{3-8\rho}} \|sZ^N \partial_s f_1\|_{L^2(\m S^1)}dr \lesssim \frac{\ep^4}{(1+t)^{2-\rho}},$$
\begin{align*}
\left\|\int \partial_s B_5dr\right\|_{H^{-1}(\m S^1)}&\lesssim\int \frac{\ep^2}{(1+s)^2(1+|q|)^{2-8\rho}}\| \partial_s Z^I \wht g\|_{L^2(\m S^1)}dr\\
&\lesssim \frac{\ep^2}{(1+s)^\frac{5}{2}}\|w_1'(q)^\frac{1}{2}\bar{\partial} Z^N \wht g\|_{L^2}.
\end{align*}
Since $\partial_s \beta \sim \frac{\wht g_{L\ba L}}{r}+ \bar{\partial} \wht g_{L\q T}$, $\partial_s B_6$ gives contributions similar to $\partial_s B_7$.
\begin{align*}
\left\|\int \partial_s B_7 dr\right\|_{H^{-1}(\m S^1)}
&\lesssim \int
\frac{\ep^2}{(1+s)^\frac{3}{2}(1+|q|)^{\frac{3}{2}-8\rho}} \| \partial_s Z^N \wht g_1\|_{L^2(\m S^1)}dr\\
&\lesssim \frac{\ep^2}{(1+s)^2}\left(\int \frac{1}{(1+|q|)^{1-2\sigma -8\rho}}dr\right)^\frac{1}{2}\left\| w_2'(q)^\frac{1}{2} \partial_s  Z^N \wht g_1\right\|^2_{L^2}.
\end{align*}
Consequently we have proven
\begin{align*}
&\int_0^t (1+s)^4\left\| \partial_s Z^N h_1\right\|_{H^{-2}(\m S^1)}^2ds\\
\lesssim& \int_0^t \ep^2\ld{w'(q)^\frac{1}{2}\bar{\partial}(SZ^N \phi -s\partial_q\phi Z^N g_{LL})}^2
+\frac{\ep^2}{(1+s)}\ld{w_1'(q)^\frac{1}{2}\bar{\partial} Z^N \wht g}\\
&+\ep^2\ld{ w_2'(q)^\frac{1}{2} \partial_s  Z^N \wht g_1}^2 +\frac{\ep^4}{(1+s)^{2-\frac{\sigma}{2}}}ds\\
	\lesssim& \ep^4(1+t)^{2\rho},
\end{align*}
and also
$$\int_0^t (1+s)^3\left\| \partial_s Z^N h_1\right\|_{H^{-1}(\m S^1)}^2ds
\lesssim  \ep^4(1+t)^{2\rho}.$$
We now turn to $h_2$.
We have
\begin{align*}
&\left\|\int \partial_s (B_4^{(2)}+\wht B_4^{(3)}) \right\|_{H^{-1}(\m S^1)}\\
&\lesssim \int
\frac{\ep^2}{(1+s)(1+|q|)^{3-8\rho}} \| \partial_s Z^N g_{\q T \q T}\|_{L^2(\m S^1)}
+\int
\frac{\ep^2}{(1+s)(1+|q|)^{3-8\rho}} \|s \partial_s Z^N b\|_{L^2(\m S^1)}
\\
&\lesssim \frac{\ep^2}{(1+s)^\frac{3}{2}}\left\| w_2'(q)^\frac{1}{2} \bar{\partial }Z^N \wht g_{\q T \q T}\right\|_{L^2} 
+\int
\frac{\ep^2}{(1+|q|)^{3-8\rho}} \| \partial_s Z^N b\|_{L^2(\m S^1)}
\end{align*}
and consequently
$$\int_0^t (1+s)^3\|\partial_s Z^N h_2\|_{H^{-1}(\m S^1)}^2
\lesssim\ep^2 \int_0^t \ld{ w_2'(q)^\frac{1}{2} \bar{\partial }Z^N \wht g_{\q T \q T}}^2
+\ep^2\int_0^t (1+s)^3\|\partial_s Z^N b\|_{L^2(\m S^1)}^2\lesssim \ep^4(1+t)^{2\rho}.$$

We now do the estimate of $\partial_s Z^N h$ in $L^2$. We can write the terms of our decomposition in the form
$$
\wht B_1=O\left(\frac{\ep}{(1+s)^\frac{1}{2}(1+|q|)^{\frac{3}{2}-4\rho}}\right)\bar{\partial} S Z^N \phi,$$
$$B_2=O\left(\frac{\ep}{(1+s)^\frac{3}{2}(1+|q|)^{\frac{1}{2}-4\rho}}\right) \partial Z^{N+1}\phi,$$
$$ B_3=O\left(\frac{\ep}{(1+s)^\frac{1}{2}(1+|q|)^{\frac{3}{2}-4\rho}}\right)\bar{\partial} Z^{N+1}\phi,$$
$$ (B_4+B_7)= O\left(\frac{\ep^2}{(1+|q|)^{3-8\rho}}\right) \bar{\partial} Z^N \wht g_1,$$
$$ B_5=O\left(\frac{\ep^2}{(1+|q|)^{2-8\rho}(1+s)}\right) \bar{\partial} Z^N \wht g,$$
$$ B_6=O\left(\frac{\ep^2}{(1+s)^\frac{3}{2}(1+|q|)^{\frac{7}{2}-8\rho-\sigma}}\right) Z^N \beta.$$
We estimate
$$\left\|\int  \wht B_1dr\right\|_{L^2(\m S^1)}
\lesssim \frac{\ep}{1+t}\| w'(q) \bar{\partial} S Z^N \phi\|_{L^2},$$
$$\left\|\int  B_2dr\right\|_{L^2(\m S^1)}
\lesssim \frac{\ep}{(1+t)^2}\left(\int \Upr \frac{1}{(1+|q|)^{1-8\rho}}dr\right)^\frac{1}{2}\| w(q) \partial Z^{N+1} \phi\|_{L^2} \lesssim \frac{\ep^2}{(1+t)^{\frac{3}{2}-4\rho}},$$
$$
\left\|\int  B_3dr\right\|_{L^2(\m S^1)}
\lesssim \frac{\ep}{1+t}\ld{w'(q)^\frac{1}{2}\bar{\partial}Z^{N+1}\phi}$$
$$\left\|\int  (B_4+B_7)dr\right\|_{L^2(\m S^1)}
\lesssim \frac{\ep^2}{(1+t)^\frac{1}{2}}\left\| w_2'(q) \bar{\partial }Z^N \wht g_1\right\|^2_{L^2},$$
$$\left\|\int B_5dr\right\|_{L^2(\m S^1)}
\lesssim \frac{\ep^2}{(1+t)^\frac{3}{2}}\left\| w_1'(q) \bar{\partial }Z^N \wht g\right\|^2_{L^2},$$
$$\left\|\int   B_6dr\right\|_{L^2(\m S^1)}
\lesssim \frac{\ep^2}{(1+t)^{2-\rho}}.$$
Consequently
\begin{align*}\int_0^t (1+s)\|\partial_s h\|^2_{L^2}ds \lesssim &
\ep\int_0^t \left(\frac{\ep^2}{1+s}\ld{w'(q)^\frac{1}{2}\bar{\partial}Z^{N+1}\phi}^2+\frac{\ep^4}{(1+s)^{2-8\rho}}\right)ds\\
&+\int_0^t \left(\ep^2\ld{ w_2'(q) \bar{\partial }Z^N \wht g_1}^2
+\frac{\ep^2}{(1+s)^2}\ld{w_1'(q) \bar{\partial }Z^N \wht g}^2\right)ds\\
\lesssim &\ep^4(1+t)^{2\rho}.
\end{align*}

We finish with the estimate of $\partial_s^3 h(\theta,s)$ in $H^{-2}(\m S^1)$. 
We claim that it satisfies the same estimate as $\partial_s h(\theta,s)$ in $H^{-1}(\m S^1)$. Indeed, to estimate
$$\partial_t^2 B_1= \int O\left(\frac{\ep}{(1+s)^\frac{3}{2}(1+|q|)^{\frac{3}{2}-4\rho}}\right) \partial_t^2 S  \partial^\alpha_\theta Z^I \phi,$$
we write
$$g_{00}\partial_t^2 \partial^\alpha_\theta SZ^I \phi = g_{00}\partial_t^2 \partial^\alpha_\theta SZ^I\phi -\Box_g  S\partial^\alpha_\theta Z^I \phi + \Box_g S\partial^\alpha_\theta Z^I \phi
=O\left(\partial_r \partial S\partial^\alpha_\theta Z^I \phi\right) + O\left(\partial_U \partial \partial^\alpha_\theta SZ^I \phi\right).$$
The term $\Box_g S\partial^\alpha_\theta Z^I \phi$ contains quadratic term and we can neglect it. We can get rid of a $\partial_r$ with an integration by parts, and we are reduced to estimate
$$\int  O\left(\frac{\ep}{(1+s)^\frac{3}{2}(1+|q|)^{\frac{3}{2}-4\rho}}\right)\partial S\partial^\alpha_\theta Z^I \phi,$$
which can be estimated in the same way as $\wht B_1$.
Since $g$ also satisfy a wave equation, we can treat the other terms in a similar way.
This concludes the proof of Proposition \ref{prph2}.
\end{proof}

\begin{prp}\label{prph3}
	We have 
	$$\|\partial_t Z^{N-11} h\|_{L^2(\m S^1)} \lesssim \frac{\ep^2}{(1+s)^2}.$$
\end{prp}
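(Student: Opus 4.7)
The plan is to mimic the proof of Proposition \ref{prph2} verbatim, but at the lower order $I \leq N-11$ we replace the weighted $L^2$ estimates used there by loss-free pointwise estimates. Concretely, starting from
$$\check h(\theta,s)=2\int_0^\infty \Upr(1+\beta)g^{t\alpha}\sqrt{|\det g|}\partial_\alpha\phi\,\partial_r\phi\,dr$$
at $t=s/2$, we apply $\partial_s$ and use $\partial_\mu(g^{\mu\nu}\sqrt{|\det g|}\partial_\nu\phi)=0$ to rewrite $\partial_s h$ as $\int\sum_{i=1}^{16}A_i\,dr$, and then $\partial_s Z^{N-11}h=\int\sum_{j=1}^{7}B_j\,dr$ exactly as in the proof of Proposition \ref{prph2}. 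The gain at this order is twofold. First, Proposition \ref{cone} gives $\mathrm{supp}\,\phi\subset\{q\leq R+\tfrac12\}$, so on $\mathrm{supp}\,\phi$ we have $|q|/s\lesssim (1+s)^{-1}$; combined with $\partial_s Z^{N-11}\phi=\frac{1}{s}SZ^{N-11}\phi+\frac{q}{s}\partial_qZ^{N-11}\phi$ and Proposition \ref{linfN}, this yields the extra factor of $(1+s)^{-1}$ we need in $B_1$. Second, in the range $I\leq N-11$ the bootstrap losses $(1+t)^\rho$ and $(1+|q|)^{4\rho}$ can be removed: we may use the sharp Klainerman--Sobolev bound \eqref{ks1} $|\partial Z^{N-10}\phi|\lesssim\ep/(\sqrt{1+|q|}\sqrt{1+s})$, the good-component estimates of Corollaries \ref{estll}--\ref{iestuu} at order $N-10$, and Proposition \ref{linfN} for $Z^{N-9}\wht g$.

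Term by term one then checks that each $|B_j|$ admits a pointwise bound of the form $C\ep^2/((1+s)^2(1+|q|)^{1+\eta_j})$ for some $\eta_j>0$, so that integrating over the $r$-range (restricted to $\mathrm{supp}\,\phi$ for $B_1,B_2,B_3$, and using fast decay in $q$ coming from the good metric components for $B_4,B_5,B_7$) yields
$$\Big\|\int B_j\,dr\Big\|_{L^\infty(\mathbb{S}^1)}\lesssim\frac{\ep^2}{(1+s)^2},$$
which is stronger than the claimed $L^2(\mathbb{S}^1)$ bound. For the $\beta$-term $B_6$ we do not use the crude bound \eqref{estbeta} (which has the wrong rate), but instead integrate the transport equation $\partial_s\beta+\tfrac14 g_{LL}\partial_r\beta=-\tfrac12\wht g_{L\bar L}/r-\tfrac12 F_2$ along its characteristics using the loss-free pointwise bounds on $\wht g_{L\bar L}$, $\wht g_{\mathcal T\mathcal T}$ and $\partial\wht g_1$ at order $N-10$ (Corollaries \ref{iestll}, \ref{iestul}, \ref{estks}), getting $|Z^{N-11}\beta|\lesssim\ep(1+|q|)^{1/2}/\sqrt{1+s}$ on $\{q\leq R+\tfrac12\}$ --- which is enough to absorb the prefactor $\ep^2/((1+s)^{3/2}(1+|q|)^{7/2-8\rho-\sigma})$ and give $\lesssim\ep^2/(1+s)^2$ after integration in $r$.

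The main obstacle is the term $B_2\sim\frac{\ep}{(1+s)^{3/2}(1+|q|)^{1/2}}\partial_qZ^{N-11}\phi$: the naive pointwise bound using \eqref{ks1} only yields $|B_2|\lesssim\ep^2/((1+s)^2(1+|q|))$, whose $r$-integral diverges logarithmically. To close this we need the sharper pointwise decay $|\partial_qZ^{N-11}\phi|\lesssim\ep/((1+|q|)^{3/2}\sqrt{1+s})$ in the wave zone, obtained by applying the $L^\infty$--$L^\infty$ estimate of Proposition \ref{inhom} to $\Box Z^{N-11}\phi$ (whose source is controlled by Proposition \ref{linfphi} with $\alpha=3\rho$, $\beta$ arbitrarily large on supp$\,\phi$) and combining with Corollary \ref{cordec} for the homogeneous contribution. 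Once this sharp bound is available, $|B_2|\lesssim\ep^2/((1+s)^2(1+|q|)^2)$ integrates to the desired $\ep^2/(1+s)^2$, completing the proof.
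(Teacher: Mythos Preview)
Your overall strategy---decompose $\partial_s Z^I h$ into the integrands $B_1,\dots,B_7$ and bound each pointwise by $\ep^2/((1+s)^2(1+|q|)^{1+\eta})$---is exactly the paper's approach, and your treatment of $B_1,B_3,B_5,B_6,B_7$ matches.

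The one substantive difference is your handling of $B_2$, where you identify an ``obstacle'' that does not exist. You estimate $\partial_q Z^{N-11}\phi$ via the Klainerman--Sobolev bound \eqref{ks1}, obtain only $|B_2|\lesssim\ep^2/((1+s)^2(1+|q|))$, and then propose to recover the missing half power of $(1+|q|)$ by re-running the $L^\infty$--$L^\infty$ machinery of Proposition~\ref{inhom}. But the sharper decay you need is already the content of the $L^\infty$ bootstrap assumption \eqref{bootphi1}: for $I\le N-11$ and $\alpha\le 1$ one has $I+\alpha+1\le N-9$, so \eqref{bootphi1} combined with \eqref{important} gives directly
\[
|\partial_q\partial_\theta^\alpha Z^I\phi|\lesssim \frac{1}{1+|q|}|Z^{I+\alpha+1}\phi|\lesssim \frac{\ep}{(1+s)^{1/2}(1+|q|)^{3/2-4\rho}},
\]
whence $|B_2|\lesssim \ep^2/((1+s)^2(1+|q|)^{2-8\rho})$, which is integrable since $2-8\rho>1$. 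This is precisely what the paper does (``in the same way'' as $B_1$). Your proposed fix via Proposition~\ref{inhom} would just re-derive Proposition~\ref{linfN}, which is already in force.

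One minor point you gloss over: in $B_4$ the quantity $\partial_\theta^\alpha Z^I g_{\q T\q T}$ contains, through $g_{UL}$, the piece $(1-\chi(q))\sigma^0_{UL}\sim s\partial_s b$, which is not covered by the good-component corollaries for $\wht g_{\q T\q T}$. The paper splits this off and controls it by \eqref{estb1}; you should do the same.
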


\begin{proof}
	We use the decomposition $\partial_s h = \int B_1 + B_2 +...$
	We estimate, 
	$$B_1=O\left(\frac{\ep}{(1+s)^\frac{1}{2}(1+|q|)^{\frac{3}{2}-4\rho}}\right)\partial_s \partial^\alpha_\theta Z^I \phi
	=O\left(\frac{\ep}{(1+s)^\frac{3}{2}(1+|q|)^{\frac{3}{2}-4\rho}}\right)Z^{I+2} \phi,$$
	so thanks to \eqref{bootphi1}
	$$B_1=O\left(\frac{\ep^2}{(1+s)^2(1+|q|)^{2-8\rho}}\right).$$
	In the same way
$$B_2=O\left(\frac{\ep}{(1+s)^\frac{3}{2}(1+|q|)^{\frac{1}{2}-4\rho}}\right)\partial_q \partial^\alpha_\theta Z^I \phi
=O\left(\frac{\ep^2}{(1+s)^2(1+|q|)^{2-8\rho}}\right),$$
$$B_3=O\left(\frac{\ep}{(1+s)^\frac{1}{2}(1+|q|)^{\frac{1}{2}-4\rho}}\right)\frac{\partial_\theta}{r} \partial^\alpha_\theta Z^I \phi=O\left(\frac{\ep^2}{(1+s)^2(1+|q|)^{2-8\rho}}\right).$$
We estimate
$$B_4= O\left(\frac{\ep^2}{(1+s)(1+|q|)^{3-8\rho}}\right) \partial^\alpha_\theta Z^I g_{\q T \q T}
.$$
Thanks to estimates \eqref{iwintll3}, \eqref{iwintuu3} and \eqref{iwintlu3} we can estimate
$$\partial_\theta Z^I g_{\q T \q T} = O\left(\frac{\ep(1+|q|)^{\frac{1}{2}-\rho}}{(1+s)}\right)+O\left(s\partial_s Z^I b\right).
$$
Consequently
$$B_4= O\left(\frac{\ep^3}{(1+s)^2(1+|q|)^{\frac{5}{2}-9\rho}}\right) +  O\left(\frac{\ep^2}{(1+|q|)^{3-8\rho}}\partial_s Z^I b\right)= O\left(\frac{\ep^3}{(1+s)^2(1+|q|)^{\frac{5}{2}-9\rho}}\right).$$
where we have used the estimate \eqref{estb1} for $\partial_s Z^I b$.
We estimate
$$B_5= O\left(\frac{\ep^2}{(1+s)^2(1+|q|)^{2-8\rho}}\right) \partial^\alpha_\theta Z^I \wht g
=O\left(\frac{\ep^2}{(1+s)^{\frac{5}{2}-\rho}(1+|q|)^{2-8\rho}}\right),
$$
$$B_6=O\left(\frac{\ep^2}{(1+s)^\frac{3}{2}(1+|q|)^{\frac{7}{2}-8\rho-\sigma}}\right) Z^I \beta.$$
The equation \eqref{beta} gives, for $I\leq N-6$
$$	Z^I \beta(q,s,\theta) = O\left(\frac{(1+|q|)^{\frac{1}{2}+\sigma}}{(1+s)^{\frac{3}{2}}}\right)
	+\int_s^{2T-q} \frac{\ep(1+|q|)^{\frac{1}{2}+\sigma}}{(\tau+q)(1+\tau)^{\frac{1}{2}}} d\tau=O\left(\frac{\ep(1+|q|)^{\frac{1}{2}+\sigma}}{(1+s)^{\frac{1}{2}}}\right).$$
where we have used \eqref{iks2}.
Consequently we obtain
$$B_6=O\left(\frac{\ep^2}{(1+s)^2(1+|q|)^{3-8\rho-\sigma}}\right).$$
By integrating we obtain
$$Z^I h= O\left(\frac{\ep^2}{(1+s)^2}\right).$$
\end{proof}

\begin{prp}\label{comph}
	We have
	$$\left\|Z^{N-1}\left(h(\theta,2t)+2\int \partial_r \phi(t,r,\theta)\partial_t \phi(t,r,\theta) rdr\right)\right\|_{L^2} \lesssim \frac{\ep^3}{(1+t)^{\frac{1}{2}-2\rho}},$$
	$$\left\|Z^{N-5}\left(h(\theta,2t)+2\int \partial_r \phi(t,r,\theta)\partial_t \phi(t,r,\theta) rdr\right)\right\|_{L^2} \lesssim \frac{\ep^3}{\sqrt{1+t}}.$$
\end{prp}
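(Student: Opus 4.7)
The plan is to exploit the explicit formula for $h$ (which denotes $h'$ throughout Section~\ref{secprph}):
\begin{equation*}
h(\theta,2t) = 2\int_0^\infty \Upr(1+\beta)g^{t\alpha}\sqrt{|\det g|}\,\partial_\alpha\phi\,\partial_r\phi\, dr \qquad (t \leq T - 1/2),
\end{equation*}
where the determinant is taken in coordinates $(t,r,\theta)$, so that $m^{t\alpha}\sqrt{|\det m|}\partial_\alpha\phi = -r\,\partial_t\phi$. Combining with $2\int\partial_r\phi\,\partial_t\phi\, r\, dr$ and inserting $\Upr$ freely on the latter integral (since $\partial\phi$ is supported in $q \leq R+1/2$ by Proposition~\ref{cone}, where $\Upr = 1$ once $t$ is sufficiently large), I obtain
\begin{equation*}
h(\theta,2t) + 2\int \partial_r\phi\,\partial_t\phi\, r\, dr = 2\int \Upr\bigl[(1+\beta)g^{t\alpha}\sqrt{|\det g|} - m^{t\alpha}\sqrt{|\det m|}\bigr]\partial_\alpha\phi\,\partial_r\phi\, dr.
\end{equation*}
The bracket decomposes into $\beta\, m^{t\alpha}\sqrt{|\det m|}$, $(g^{t\alpha}\sqrt{|\det g|}-m^{t\alpha}\sqrt{|\det m|})$, plus a quadratic remainder in $(\beta,\wht g)$; on $\mathrm{supp}(\phi)$ we may identify $g - m$ with $\wht g$ since $g_{\mathfrak b} - m$ vanishes in $q \leq R$. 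The integrand is thus cubic: one factor of $\beta$ or $\wht g$ times two factors of $\partial\phi$, which is what produces the $\ep^3$ in the target bound.

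I then apply $Z^I$ with $I \in \{N-5, N-1\}$ and distribute by Leibniz, obtaining a finite sum of terms of the schematic form
\begin{equation*}
\int r\, Z^{I_1}(\wht g \text{ or }\beta)\cdot Z^{I_2}\partial\phi\cdot Z^{I_3}\partial\phi\, dr, \qquad I_1 + I_2 + I_3 \leq I,
\end{equation*}
plus commutator terms from $[Z,\partial_r]$ and $Z$ acting on $r, \Upr$, which are similar or easier. Each summand is bounded in $L^2(\m S^1)$ by placing the factor with the largest index in $L^2(\m R^2)$ and the other two in $L^\infty$, closing the estimate via Cauchy--Schwarz in $r$. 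When $I_1$ is maximal, the $L^2$ bound comes from~\eqref{g2N2} or~\eqref{g2N} for $\wht g$, or from~\eqref{estbeta} for $\beta$; the two $\partial\phi$ factors are controlled pointwise via $|Z^{I_j}\partial\phi| \lesssim \ep/((1+s)^{1/2}(1+|q|)^{3/2-4\rho})$, which follows from Proposition~\ref{linfN5} and~\eqref{important}. When $I_2$ or $I_3$ is maximal, the $L^2$ bound comes from~\eqref{phiN1} or~\eqref{phiN}, and the other factors are bounded pointwise using Propositions~\ref{linfN},~\ref{linfN5} together with the pointwise consequence of~\eqref{beta}. The compact support of $\phi$ in $q \leq R+1/2$ (Proposition~\ref{cone}) reduces the $r$-integration to a bounded region near the light cone, and the weighted Hardy inequality (Proposition~\ref{hardy}) converts the weighted $L^2(\m R^2)$ bounds into the $L^2(\m S^1)$ norms needed.

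The main obstacle is the top-regularity case $I = N-1$: the bootstrap~\eqref{g2N} on $\|w_1^{1/2}\partial Z^N\wht g\|_{L^2}$ carries a $(1+t)^{2\rho}$ growth which propagates into the final estimate and accounts for the weaker rate $(1+t)^{-1/2+2\rho}$. The saving mechanism is that exactly two of the three cubic factors are $\partial\phi$, each contributing the decisive $1/\sqrt{1+s}$ from pointwise Klainerman--Sobolev; the remaining $1/\sqrt{1+t}$ decay comes either from the metric or $\beta$ factor, or from the integrable weight $(1+|q|)^{-3+8\rho}$ encountered once the $L^\infty$ bounds on $\partial\phi$ are inserted. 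For $I = N-5$, the analogous bootstrap~\eqref{g2N2} has no $\rho$ loss, so the sharper rate $(1+t)^{-1/2}$ follows directly.
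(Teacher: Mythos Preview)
Your overall strategy matches the paper's: expand the bracket $(1+\beta)g^{t\alpha}\sqrt{|\det g|}-m^{t\alpha}\sqrt{|\det m|}$ into cubic pieces in $(\beta,\wht g,\partial\phi)$, distribute $Z^I$, and close with Cauchy--Schwarz in $r$ using the bootstrap bounds. For $I=N-1$ your argument is fine and gives exactly the paper's rate.

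There is, however, a real gap in your $I=N-5$ case. You assert that \eqref{g2N2} ``has no $\rho$ loss''; it does --- it reads $\|w^{1/2}\partial Z^{N-3}\wht g\|_{L^2}\le 2C_0\ep(1+t)^\rho$. With that bound (and likewise with \eqref{estbeta} for $\beta$, which carries a $2\rho$ loss at level $N$), your argument only yields $\ep^3/(1+t)^{1/2-\rho}$, not the claimed $\ep^3/\sqrt{1+t}$. The missing structural observation is that the bad component $\wht g_{\ba L\ba L}$ enters $g^{t\alpha}\partial_\alpha\phi$ only through $g^{LL}\partial_L\phi$, i.e.\ it multiplies the good derivative $\partial_s\phi$; hence the metric factor is effectively $\wht g_1$, and that contribution is absorbed into the $\bar\partial\phi\,\partial\phi$ term. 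Once this is noted, you may invoke \eqref{gN2} (equivalently \eqref{hg1n1}), which genuinely has no $\rho$ loss at level $N-4$, and similarly refine the $\beta$ estimate via \eqref{estbeta} using \eqref{hg1n1} in place of \eqref{hg1n}. This is exactly how the paper closes the sharp $I=N-5$ bound.
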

\begin{proof}
	We have
	$$h(\theta,2t)-\int( \partial_q \phi)^2 rdr= \int O(\bar{\partial} \phi \partial \phi)r+O\left(\beta(\partial \phi)^2\right)r + O\left(\wht g_1 (\partial \phi)^2\right)r = \int C$$
	with 
	$$C\lesssim 
	\frac{\ep^2}{(1+|q|)^{\frac{3}{2}-4\rho-\sigma}}|\partial Z^I \phi|
	+ \frac{\ep^2}{(1+|q|)^{3-8\rho}}(|Z^I \wht g_1| +|Z^I \beta|).$$
	We have already estimated, for $I\leq N-1$ (see the proof of Proposition \ref{prph})
	$$\left\|\int \frac{\ep^2}{(1+|q|)^{3-8\rho}}(|Z^I \wht g_1| +|Z^I \beta|)\right\|_{L^2(\m S^1)}\lesssim \frac{\ep^3}{(1+t)^{\frac{1}{2}-2\rho}}.$$
	We have
	$$\left\|\int \frac{\ep^2}{(1+|q|)^{\frac{3}{2}-4\rho-\sigma}}|\partial Z^I \phi|\right\|_{L^2(\m S^1)}\lesssim \frac{\ep^2}{\sqrt{1+t}}\ld{\partial Z^I \phi}\left(\int \frac{1}{(1+|q|)^{2-8\rho-2\sigma}}\right)^\frac{1}{2}	\lesssim \frac{\ep^3}{(1+t)^{\frac{1}{2}-\rho}}.$$
For $I\leq N-5$ we easily see from the previous calculation
$$\left\|\int \frac{\ep^2}{(1+|q|)^{\frac{3}{2}-4\rho-\sigma}}|\partial Z^I \phi|\right\|_{L^2(\m S^1)}\lesssim \frac{\ep^3}{(1+t)^{\frac{1}{2}}},$$
$$\left\|\int \frac{\ep^2}{(1+|q|)^{3-8\rho}}|Z^I \wht g_1| \right\|_{L^2(\m S^1)}\lesssim\frac{\ep^3}{(1+t)^{\frac{1}{2}}},$$
where we use \eqref{hg1n1}.
Thanks to \eqref{estbeta} we have
	\begin{equation*}\begin{split}
	\left(\int \frac{1}{(1+|q|)^{2+2\mu}}(Z^{N-5}\beta)^2drd\theta \right)^\frac{1}{2}
	&\lesssim \frac{1}{t^\frac{1}{8}}
	\left(\int_t^T \frac{(2t'+R)^\frac{5}{4}}{(t'+\frac{t}{2})^3} 
	\left\|\frac{\ch_{q\leq R}}{(1+|q|)^{1+\mu}}Z^{N-5}\wht g_{L \ba L}\right\|^2_{L^2}dt' \right)^\frac{1}{2}\\
	&\lesssim \frac{1}{t^\frac{1}{8}}\left(\int_t^T \frac{1}{t'^{3-\frac{5}{4}}}dt'\right)^\frac{1}{2} \leq \frac{\ep}{t^{\frac{1}{2}}},
	\end{split}
	\end{equation*}
	and consequently
		$$\left\|\int \frac{\ep^2}{(1+|q|)^{3-8\rho}}|Z^{N-5} \beta|\right\|_{L^2(\m S^1)}\lesssim \frac{\ep^3}{(1+t)^{\frac{1}{2}-\rho}}.$$
\end{proof}

We now prove Proposition \ref{prph}.
\begin{proof}
We extend $h$ to $\infty$ by setting
$$h'(\theta,s)=\psi(s)h(\theta,s)+(1-\psi(s))h(\theta,2T),$$
where $\psi$ is a cut-off function such that $\psi=1$ for $s\leq 2T-1$ and $\psi=0$ for $s>2T$.
The fact that $h'$ satisfies the estimates of Proposition \ref{prph1} is straightforward.
For the estimates of Propositions \ref{prph2} and \ref{prph3} we just have to notice that  
$$\partial_s  h'(\theta,s)= \psi(s)\partial_s h(\theta,s) +\psi'(s)(h(\theta,s)-h(\theta,2T)).$$
Since in the region $s\sim 2T$, we have $h(\theta,s)-h(\theta,2T)=O(\partial_s h),$
we easily see that $\partial_s h'$ satisfy the same estimates as $\partial_s h$.
For the estimates \eqref{esth'} and \eqref{esth2'} we write
$$h'(\theta,s)=h(\theta,s)+(1-\psi(s))(h(\theta,2T)-h(\theta,s)),$$
and notice that
$$\|Z^{N-1}(h(\theta,2T)-h(\theta,s))\|_{L^2(\m S^1)}\lesssim \int_s^{2T}\|Z^{N}(\partial_s h)\|_{H^{-1}(\m S^1)}\lesssim \int_s^{2T}\frac{\ep^2}{(1+s)^{\frac{3}{2}-\rho}}\lesssim \frac{\ep^2}{T(1+s)^{\frac{1}{2}-\rho}}.$$
Without loss of generality, we can assume $T\geq \frac{D}{\ep},$ and consequently
$$\|Z^{N-1}(h(\theta,2T)-h(\theta,s))\|_{L^2(\m S^1)}\lesssim \frac{\ep^3}{(1+s)^{\frac{1}{2}-\rho}}.$$
In a similar way
$$\|Z^{N-5}(h(\theta,2T)-h(\theta,s))\|_{L^2(\m S^1)}\lesssim \frac{\ep^3}{(1+s)^{\frac{1}{2}}},$$
which concludes the proof of Proposition \ref{prph}.
\end{proof}

\subsection{Proof of Proposition \ref{prpb}}\label{secprpb}

We want to find three coefficients $b_0(s),b_1(s),b_2(s)$ and a solution $b(\theta,s)$ of
\begin{align*}&\frac{2a(\theta+f)}{(1+b)^2}+\frac{1}{(1+b)^2}-1-2\frac{\partial^2_\theta b}{(1+b)}-\frac{(\partial_\theta b)^2}{(1+b)^2}\\
&=\Pi h'(\theta,s)+b_0+b_1\cos(\theta)+b_2\sin(\theta)
\end{align*}
satisfying $\int \frac{b}{1+b}d\theta =0$
and $1+\partial_\theta f=(1+b)^{-1}$.
We make a change of unknown $\beta= \frac{b}{1+b}$. We calculate
$$\partial^2_\theta \beta = \frac{\partial^2_\theta b}{(1+b)^2}-2\frac{(\partial_\theta b)^2}{(1+b)^3}.$$
The problem is therefore equivalent to finding $b_0(s),b_1(s),b_2(s)$ and $\beta$ a solution of
\begin{equation}\label{eqbeta}-2\partial^2 \beta -2\beta = (1-\beta)(\Pi h'(\theta,s)+b_0+b_1\cos(\theta)+b_2\sin(\theta))-2(1-\beta)^3a_0(1+f)+R(\partial_\theta \beta,\beta)
\end{equation}
with $\int \beta = 0$ and $f$ defined by $\partial_\theta f =-\beta$, and we have denoted by $R$ a quadratic form.
 We will do this with a fixed point argument. We consider $F: H^2(\m S^1)\mapsto  H^2(\m S^1)$ which maps $\beta$ such that $\|\beta\|_{H^2}\leq \ep$ and $\int \beta =0$ to $\beta'$ solution of 
\begin{equation}
\label{linb}-2\partial^2 \beta' -2\beta' = (1-\beta)(\Pi h'(\theta,s)+b_0+b_1\cos(\theta)+b_2\sin(\theta))-2(1-\beta)^3a_0(1+f)+R(\partial_\theta \beta,\beta)
\end{equation}
with $b_1,b_2,b_0$ chosen such that
\begin{align}
\label{b1}&\int \cos(\theta)\left((1-\beta)(\Pi h'(\theta,s)+b_0+b_1\cos(\theta)+b_2\sin(\theta))-2(1-\beta)^3a_0(1+f))+R(\partial_\theta \beta,\beta)\right)d\theta =0,\\
\label{b2}&\int \sin(\theta)\left((1-\beta)(\Pi h'(\theta,s)+b_0+b_1\cos(\theta)+b_2\sin(\theta))-2(1-\beta)^3a_0(1+f))+R(\partial_\theta \beta,\beta)\right)d\theta =0,\\
\label{b3}&\int \left( (1-\beta)(\Pi h'(\theta,s)+b_0+b_1\cos(\theta)+b_2\sin(\theta))-2(1-\beta)^3a_0(1+f))+R(\partial_\theta \beta,\beta) \right)d\theta=0.
\end{align}
We first show that we can find three such coefficients. We have, thanks to Sobolev embedding
$$|b|+|\partial_\theta b|\leq \ep.$$
Consequently the three integral conditions \eqref{b1}, \eqref{b2} and \eqref{b3} can be written
$$ \frac{1}{2}b_1 = O(\ep)b_0 +  O(\ep)b_1+ O(\ep)b_2 + \int \cos(\theta)\left((1-\beta)\Pi h'(\theta,s)-2(1-\beta)^3a(1+f)+R(\partial_\theta \beta,\beta)\right)d\theta.$$
$$\frac{1}{2}b_2 = O(\ep)b_0 +  O(\ep)b_1+ O(\ep)b_2 + \int \sin(\theta)\left((1+\beta)\Pi h'(\theta,s)-2(1-\beta)^3a(1+f)+R(\partial_\theta \beta,\beta)\right)d\theta.$$
$$b_0 = \int\left((1-\beta)\Pi h'(\theta,s)-2(1-\beta)^3a(1+f)+R(\partial_\theta \beta,\beta^2)\right)d\theta. $$
This system is invertible : we have a unique solution which satisfies the estimate
$$|b_0|+|b_1|+|b_2|\lesssim \| h'\|_{L^2(\m S^1)} + \ep \|\beta\|_{H^1(\m S^1)} \lesssim \ep^2.$$
Thanks to \eqref{b1} and \eqref{b2}, we are allowed to solve \eqref{linb}. There exists a unique solution $\beta'\in H^2(\m S^1)$, and it satisfies
$$\|\beta'\|_{H^2} \lesssim \|h'\|_{L^2(\m S^1)} + \ep \|\beta\|_{H^1(\m S^1)} +|a_0| \lesssim \ep^2$$
Moreover, thanks to \eqref{b3} we have $\int \beta'=0$.
We see easily that the map $F$ is contracting. Consequently it admits a unique fixed point $\beta(\theta,s)$, satisfying
$$\|\beta\|_{H^{2}} \lesssim \|h'\|_{L^{(2)}}+|a|.$$
Moreover there exists $b_0,b_1,b_2$ such that $\beta$ satisfy \eqref{eqbeta}.
In addition we have
$$\|\beta\|_{H^{k+2}} \lesssim \|h'\|_{H^{k}}+|a|,$$
and deriving \eqref{eqbeta} , \eqref{b1}, \eqref{b2} and \eqref{b3} with respect to $s$ we obtain
$$\|\partial_s^l \beta \|_{H^{k+2}} \lesssim \|\partial_s^l h'\|_{H^{k}},$$
$$|\partial^l_s b_0|+|\partial^l_s b_1|+ |\partial^l_s b_2|\lesssim   \|\partial_s^l h'\|_{L^2}.$$

\subsection{Proof of Proposition \ref{prpfin}}\label{secprpfin}

From the initial data of Theorem \ref{thinitial}, we can construct a solution of \eqref{sys} up to time $T=1$. Moreover, this solution exists in the entire region $q>R+1$, $t>0$ (see Appendix \ref{ext}).
Let $b^{(2)}$ be defined by Proposition \ref{prpb} and set
$$h^{(2)}=h'(\theta,s)+b_0+b_1\cos(\theta)+b_2\sin(\theta).$$
By performing the change of variable of Section \ref{change} with $ b^{(2)}$, and looking at the data on $t=0$, we obtain a solution of the constraint equation with the desired asymptotic behaviour (see Appendix \ref{reguini})
$$g^{(2)}=g_{b^{(2)}}+\wht g^{(2)}.$$
We consider the solution $(g^{(2)},\phi^{(2)})$ in generalized wave coordinates
$$(H^{(2)})^\alpha = (g^{(2)})^{\lambda \beta}(\Gamma^{(2)})^\alpha_{\lambda \beta}=(F^{(2)})^\alpha+(G^{(2)})^\alpha +(\wht G^{(2)})^\alpha,$$
with
$$(F^{(2)})^\alpha = \Box_{g_{b^{(2)}}}x^\alpha.$$
\begin{align*}
U_\alpha (G^{(2)})^\alpha&=-(\sigma^0_{U\ba L})^{(2)}\chi'(q),\\
L_\alpha (g^{(2)})^\alpha& =  \frac{1}{r}\Up\left(\frac{r}{r}\right)\int_{-\infty}^r \left(2(\partial_q \phi^{(2)})^2r -h^{(2)}(\theta,2t)\chi'(q)\right)dr,\\
\ba L_\alpha (G^{(2)})^\alpha &= 0,
\end{align*}
where $(\sigma^0_{U\ba L})^{(2)}=s(1+b^{(2)})\partial_s f^{(2)}$ with
$$1+\partial_\theta f^{(2)}=(1+b^{(2)})^{-1},$$
and $(\wht G^{(2)})^\alpha$ contains the terms of the form $\wht g^{(2)} \partial^l_s\partial^k_\theta b^{(2)}$, where
$l+k-2\geq 1$ or $l\geq 2$.
$b^{(2)}$ satisfy the hypothesis \eqref{estb1} to \eqref{estf2bis}.
We assume that on $[0,T^{(2)}]$, $g^{(2)}$ satisfy the bootstrap estimates.
Thanks to all we have done so far, 
we know that $(g^{(2)},\phi^{(2)})$
satisfies the improved bootstrap estimates, except \eqref{esth} which remained to be proved.
For this, thanks to Proposition \ref{comph}, all we have to do is to compare $\phi$ and $\phi^{(2)}$.
We can pass from $(g,\phi)$ to $(g^{(2)},\phi^{(2)})$ by a change of variable, that we note $\Psi$.

We note $x^{(2)}=\Psi(x)$ the new generalized wave coordinates.
We have 
$$|\nabla s^{(2)}|_g = \left|( g^{(2)})^{ LL}\right|,$$
and so
\begin{align*}
|s^{(2)}-s|&\lesssim \int_{s}^q \left|g_{b^{(2)}}^{LL}-g_{\mathfrak{b}}^{LL}\right|+\left|\wht g^{(2)}\right|dq'\\
&\lesssim  \int_s^q O\left(\frac{q'}{s}\partial_\theta (b-b^{(2)})\right)dq'
+\frac{\ep}{(1+|q|)^{\frac{1}{2}+\delta}(1+s)^{\frac{1}{2}-\rho}}\\
&\lesssim \ep\sqrt{1+s},
\end{align*}
where we have used that $\delta-\rho \geq \frac{1}{2}$.
and in the interior
\begin{align*}
|s^{(2)}-s|&\lesssim \int_{s}^q \left|\wht g^{(2)}\right|dq'+|s^{(2)}-s|_{q=0}\\
&\lesssim  \ep\sqrt{1+s}+\int_s^q \frac{\ep}{(1+s)^{\frac{1}{2}-\rho}}dq'\\
&\lesssim \ep\sqrt{1+s} +\ep \frac{1+|q|}{(1+s)^{\frac{1}{2}-\rho}}.
\end{align*}
We have in the exterior region
$$|\nabla (q^{(2)}-q(1+b^{(2)})^{-1})|_g = \left|(\wht g^{(2)})^{ \ba L \ba L}\right|,$$
so in the exterior region we obtain
\begin{align*}
|q^{(2)}-q|&\lesssim |q||b^{(2)}-b|+\int_{s}^q \left|(\wht g^{(2)})^{\ba L \ba L}\right|dq\\
&\lesssim  |q||b^{(2)}-b|+\int_s^q 
\frac{\ep}{(1+|q'|)^{-\frac{1}{2}+\delta-\sigma}(1+s)^{\frac{3}{2}-\rho}}dq'\\
&\lesssim \frac{\ep|q|}{\sqrt{1+s}}+\frac{\ep}{(1+s)^{\delta-\sigma-\rho}}
\end{align*}
where we used
$$|b^{(2)}-b|\lesssim \|h^{(2)}-h\|_{L^2(\m S^1)}\lesssim \frac{\ep^2}{\sqrt{1+s}}.$$
We recall that $\delta-\sigma-\rho \geq \frac{1}{2}$. In the interior we obtain
\begin{align*}
|q^{(2)}-q|&\lesssim \int_{s}^q \left|(\wht g^{(2)})^{\ba L \ba L}\right|dq'+|q^{(2)}-q|_{q=0}\\
&\lesssim  \frac{\ep}{\sqrt{1+s}}+\int_s^q \frac{\ep(1+|q'|)^{\frac{3}{2}}}{(1+s)^{\frac{3}{2}}}dq'\\
&\lesssim \frac{\ep (1+|q|)^{\frac{3}{2}}}{(1+s)^\frac{1}{2}}.
\end{align*}
We have
$$|\nabla \theta^{(2)}|_g = \frac{1}{r}\left|( g^{(2)})^{ UU}\right|,$$
so in the exterior
\begin{align*}
|\theta^{(2)}-\theta|&\lesssim \int_{s}^q \left(\frac{1}{r}\left|g_{b^{(2)}}^{UU}-g_{\mathfrak{b}}^{UU}\right|+\frac{1}{r}\left|\wht g^{(2)}\right|\right)dq'\\
&\lesssim  \int_s^q O\left(\frac{q'}{s^2}\partial_\theta^2 (b-b^{(2)})\right)dq'
+\frac{\ep}{(1+|q|)^{\frac{1}{2}+\delta}(1+s)^{\frac{3}{2}-\rho}}\\
&\lesssim \frac{\ep}{\sqrt{1+s}},
\end{align*}
and in the interior
\begin{align*}
|\theta^{(2)}-\theta|&\lesssim \int_{s}^q \frac{1}{r}\left|\wht g^{(2)}\right|dq'+|\theta^{(2)}-\theta|_{q=0}\\
&\lesssim  \frac{\ep}{\sqrt{1+s}}+\int_s^q \frac{\ep}{(1+s)^{\frac{3}{2}-\rho}}dq'\\
&\lesssim \frac{\ep}{\sqrt{1+s}} +\ep \frac{1+|q|}{(1+s)^{\frac{3}{2}-\rho}}.
\end{align*}
With these estimates it is easy to see that $\phi^{(2)}$ satisfy the same improved estimates as $\phi$. 
We have
$$\phi^{(2)}(x)=\phi(\Psi(x)).$$
We calculate 
$$\partial \phi^{(2)}=(\partial s^{(2)})\partial_s \phi(\Psi(x))
 +(\partial q^{(2)})\partial_q \phi(\Psi(x))+ (\partial \theta^{(2)})\partial_\theta \phi(\Psi(x)).$$
We compare $\phi$ and $\phi^{(2)}$ to improve \eqref{esth}.
$\phi$ is non zero only in the interior region. We obtain
\begin{align*}|\partial \phi^{(2)}-(\partial \phi)(\Psi(x))|\lesssim&
|\wht g||\bar{\partial} \phi|+|\wht g_{LL}||\partial_q \phi|\\
\lesssim& \frac{\ep}{(1+s)^{\frac{1}{2}-\rho}}\frac{\ep}{(1+s)^\frac{3}{2}(1+|q|)^{\frac{1}{2}-4\rho}}+ \frac{1+|q|}{(1+s)^{\frac{3}{2}-\rho}}\frac{\ep}{(1+s)^\frac{1}{2}(1+|q|)^{\frac{3}{2}-4\rho}}\\
 \lesssim& \frac{\ep^2}{(1+|q|)^{\frac{1}{2}-5\rho}(1+s)^{2-\rho}}.
\end{align*}
We estimate
\begin{align*}
|(\partial \phi)(\Psi(x))-\partial \phi(x)|&\lesssim\left|\int_0^1 (x^{(2)}-x).\nabla \partial \phi(x+\tau(x^{(2)}-x) )d\tau\right|\\
&\lesssim \int_0^1 \left(\ep(1+s)^{\frac{1}{2}}|\bar{\partial}\partial \phi(x+\tau(x^{(2)}-x))|+\frac{\ep (1+|q|)^{\frac{3}{2}+\sigma}}{(1+s)^\frac{1}{2}}
|\partial^2\phi(x+\tau(x^{(2)}-x) )|\right)d\tau\\
&\lesssim \frac{\ep^2}{(1+s)(1+|q|)^{1-4\rho-\sigma}}
\end{align*}
In the same way we have
$$|\partial Z^I \phi^{(2)}-(\partial Z^I \phi)(\Psi(x))|\lesssim
\frac{\ep}{(1+s)^{\frac{3}{2}-\rho}}| Z^{I+1} \phi|
+\frac{\ep}{\sqrt{1+s}(1+|q|)^{\frac{3}{2}-4\rho}}|Z^I g_{LL}|+
 \frac{\ep}{(1+s)^\frac{3}{2}(1+|q|)^{\frac{1}{2}-4\rho}}|Z^I \wht g|
$$
and
\begin{align*}
&|(\partial Z^I \phi)(\Psi(x))-\partial Z^I\phi(x)|\\
&\lesssim\left|\int_0^1 (x^{(2)}-x).\nabla \partial Z^I \phi(x+\tau(x^{(2)}-x) )d\tau\right|\\
&\lesssim \int_0^1 \left(\ep(1+s)^{\frac{1}{2}}|\bar{\partial}\partial Z^I \phi(x+\tau(x^{(2)}-x))|+\frac{\ep (1+|q|)^{\frac{3}{2}+\sigma}}{(1+s)^\frac{1}{2}}
|\partial^2 Z^I \phi(x+\tau(x^{(2)}-x) )|\right)d\tau\\
&\lesssim \int_0^1\frac{\ep (1+|q|)^{\frac{1}{2}+\sigma}}{(1+s)^\frac{1}{2}}
\left|\partial Z^{I+1} \phi(x+\tau(x^{(2)}-x) )\right|d\tau.
\end{align*}
Consequently, we have
\begin{align*}
&\left\|Z^I \left(\int (\partial_q \phi^{(2)})^2 rdr-\int (\partial_q \phi)^2 rdr \right) \right\|_{L^2(\m S^1)}\\
&\lesssim \int \frac{\ep}{(1+|q|)^{\frac{3}{2}-4\rho}(1+s)^\frac{1}{2}}\frac{\ep (1+|q|)^{\frac{1}{2}+\sigma}}{(1+s)^\frac{1}{2}}
\|\partial Z^{I+1} \phi\|_{L^2(\m S^1)} rdr\\
&+\int\frac{\ep^2}{(1+s)(1+|q|)^{3-8\rho}}\|Z^I g_{LL}\|_{L^2(\m S^1)} rdr+s.t.\\
&\lesssim \frac{\ep}{(1+t)^\frac{1}{2}}\left(\ld{\partial Z^{I+1} \phi}+\ld{w_2\frac{Z^Ig_{LL}}{1+|q|}}\right)+ s.t.
\end{align*}
Consequently we have,
$$\left\|Z^{N-5} \left(\int (\partial_q \phi^{(2)})^2 rdr-\int (\partial_q \phi)^2 rdr \right) \right\|_{L^2(\m S^1)}\lesssim \frac{\ep^3}{(1+t)^{\frac{1}{2}}},$$
	and 
$$\left\|Z^{N-1} \left(\int (\partial_q \phi^{(2)})^2 rdr-\int (\partial_q \phi)^2 rdr \right) \right\|_{L^2(\m S^1)}\lesssim \frac{\ep^3}{(1+t)^{\frac{1}{2}-\rho}}.$$
\appendix
\section{Global existence of solutions in the exterior}\label{ext}

We denote by $\bar{C}$ the complementary of the domain of dependence of $B(0,R+1)$. 
Let $g_{\mathfrak{a}}$ be defined by Theorem \ref{thinitial}.
In generalized wave coordinates $\Box_g x^\alpha = \Box_{g_{\mathfrak{a}}} x^{\alpha}$ the system $R_{\mu \nu}=0$ can be written, with the
decomposition $g=g_{\mathfrak{a}} + \wht g$
$$
\Box_g\wht g_{\mu \nu} = P_{\mu \nu}(g)(\partial \wht g, \partial \wht g) + \wht P_{\mu \nu} (\wht g, g_{\mathfrak{a}}),$$
where we used the fact that $g_{\mathfrak{a}}$ is Ricci flat.
We perform a bootsrap argument : let $T$ be such that we have a solution $g$ of this equation on $\bar{C}_T$ where $\bar{C}_T$ is the restriction of $\bar{C}$ to times less than $T$, and assume that
\begin{align}
\label{extboot1}&\|v^\frac{1}{2}\partial Z^N \wht g\|_{L^2(\bar{C}\cap \Sigma_t)} \lesssim \ep (1+t)^\rho,\\
\label{extboot2}&\|v_1^\frac{1}{2} \partial Z^{N-2} \wht g\|_{L^2(\bar{C}\cap \Sigma_t)} \lesssim \ep.
\end{align}
where
$$v(q)=(1+|q|)^{2+2\delta'},$$
$$v_1(q)=(1+|q|)^{2+2\delta'-2\sigma}.$$
Thanks to Klainerman-Sobolev estimates (which are still valid in a region $\bar{C}\cap \Sigma_t$) we have
\begin{align*}
&|\partial Z^{N-2}\wht g|\lesssim \frac{\ep}{(1+s)^{\frac{1}{2}-\rho}(1+|q|)^{\frac{3}{2}+\delta'}},\\
&|\partial Z^{N-3}\wht g|\lesssim \frac{\ep}{(1+s)^{\frac{1}{2}}(1+|q|)^{\frac{3}{2}+\delta'-\sigma}},
\end{align*}
\begin{align}
\label{extiks1}&|Z^{N-2}\wht g|\lesssim \frac{\ep}{(1+s)^{\frac{1}{2}-\rho}(1+|q|)^{\frac{1}{2}+\delta'}},\\
\label{extiks2}&|\partial Z^{N-3}\wht g|\lesssim \frac{\ep}{(1+s)^{\frac{1}{2}}(1+|q|)^{\frac{1}{2}+\delta'-\sigma}}.
\end{align}
Thanks to the wave coordinate condition
\begin{align}
\label{exttt1}&|Z^{N-3}\wht g_{\q T \q T}|\lesssim \frac{\ep}{(1+s)^{\frac{3}{2}-\rho}(1+|q|)^{-\frac{1}{2}+\delta'}},\\
\label{exttt2}&| Z^{N-4}\wht g_{\q T \q T}|\lesssim \frac{\ep}{(1+s)^{\frac{3}{2}}(1+|q|)^{-\frac{1}{2}+\delta'-\sigma}}.
\end{align}
To improve \eqref{extboot1} we perform the energy estimate in the background metric, in the region $\bar{C}_t$. We obtain
\begin{align*}
&\int_{\Sigma_t} wQ_{TT}+ \int_{\partial \bar{C}} wQ_{T\q L} +C\int_0^t \int w'(q) (\bar{\partial} \wht g)^2\\
&\lesssim \int_0^t \frac{1}{1+\tau}\int Q_{TT} + \int_0^t \int |\partial_t Z^I \wht g||\Box_g Z^I \wht g|
\end{align*}
where $\q L$ is a null vector tangent to $\partial \bar{C}$ and $Q$ is the energy momentum tensor for $\Box_g$
$$Q_{\alpha \beta}=\partial_\alpha Z^I \wht g \partial_\beta Z^I \wht g-\frac{1}{2}g_{\alpha \beta}g^{\mu \nu}\partial_\mu Z^I \wht g \partial_\nu Z^I \wht g.$$
Consequently
$$Q_{T\q L}=T(Z^I \wht g)\q L(Z^I \wht g)-\frac{1}{2}g_{T\q L}(\q L(Z^I \wht g) \ba {\q L} (Z^I \wht g)+e_\theta(Z^I \phi)^2)$$
where $\ba {\q L}$ is null such that $g(\q \ba L,\q L)=-2$ and $e_\theta$ is tangent to $\bar{C}$ and orthogonal to $\q L$ and $\ba {\q L}$.
We have
\begin{align*}Q_{T\q L}=&\frac{1}{2}g_{T\q L}\ba {\q L} (Z^I \wht g)\q L(Z^I \wht g)+\frac{1}{2}g_{T \ba {\q L}}\q L(Z^I \wht g)\q L(Z^I \wht g)
+g_{Te_\theta}\q L(Z^I \wht g)e_\theta(Z^I \wht g)\\
&-\frac{1}{2}g_{T\q L}(\q L(Z^I \wht g)\ba {\q L} (Z^I \wht g)+e_\theta(Z^I \wht g)^2)\\
=&\frac{1}{2}g_{T \ba {\q L}}\q L(Z^I \wht g)\q L(Z^I \wht g)-\frac{1}{2}g_{T\q L}e_\theta(Z^I \phi)^2+g_{Te_\theta}\q L(Z^I \wht g)e_\theta(Z^I \wht g)\\
\geq &(1-C\ep)(\q L(Z^I \wht g)^2+e_\theta (Z^I \wht g)^2)\geq 0.
\end{align*}
Since in all our proof, the bootstrap condition \eqref{esth} was not needed in the exterior region, we easily see from section \ref{secl2gn} that we will be able to improve \eqref{extboot1}.

To improve \eqref{extboot2} we perform the energy estimate in the flat metric. $\wht Q$ is now the flat energy-momentum tensor. We now have to be careful with
\begin{align*}\wht Q_{T\q L}= &\partial_t Z^I \wht g \q L(Z^I \wht g)-\frac{1}{2}m_{T\q L}\left(\partial_s(Z^I \wht g)\partial_q (Z^I \wht g)+\frac{1}{r^2}(\partial_\theta Z^I \wht g)^2\right),\\
\end{align*}
which may not be positive.
Since $\q L=  (1+O(g-m))\partial_s + O(g_{L L})\partial_q + O(g_{UL})\partial_U,$
we have
\begin{align*}\wht Q_{T\q L}&=(1+O(\ep))\left((\partial_s Z^I \wht g)^2+\frac{1}{r^2}(\partial_\theta Z^I \wht g)^2\right)
+O(g_{ L L})(\partial_q Z^I \wht g)^2\\
&\geq (1-\ep)\left((\partial_s Z^I \wht g)^2+\frac{1}{r^2}(\partial_\theta Z^I \wht g)^2\right)
-\frac{\ep^3}{(1+s)^{\frac{5}{2}-3\rho}}
\end{align*}
where we have used \eqref{extiks1}. Consequently the energy estimate yields
\begin{align*}
&\int_{\Sigma_t} w(\partial Z^I  \wht g)^2+ \int_{\partial \bar{C}} w(\bar \partial Z^I \wht g)^2 +C\int_0^t \int w'(q) (\bar{\partial} \wht g)^2\\
&\lesssim \int_0^t \int |\partial_t Z^I \wht g||\Box_g Z^I \wht g|+ \int_{0}^t \frac{\ep^3}{(1+\tau)^{\frac{3}{2}-3\rho}} d\tau.
\end{align*}
We then easily see from Section \ref{secl2gn4} that we can improve the bootstrap assumption (we can check that the cubic non linearities without null structure in $Q_{\ba L \ba L}$ are not present).

\section{Regularity of the initial data}\label{reguini}
To obtain solutions of the constraint equation with an asymptotic behaviour $g=g_{\mathfrak{b}}+\wht g$, we take the exterior solution constructed in the previous section (we denote by $s',q',\theta'$ the coordinates used for this construction), make the change of variable
$$s'=(1-\chi(r))s+\chi(r)\left((1+b(\theta,s))s-(\partial_\theta b(\theta,s))^2(1+b(\theta,s))^{-1}q\right),$$
$$q'=(1-\chi(r))q+\chi(r)(1+b(\theta,s))^{-1}q,$$
$$\theta'= (1-\chi(r))\theta + \chi(r)\left(\theta - \frac{q}{r}\frac{\partial_\theta b(\theta,s)}{(1+b(\theta,s))^2}+ f(\theta,s)\right),$$
and consider the space-like hypersurface, given by $t=0$. 
We denote by $\Sigma_b$ this hypersurface, and consider $\bar{g}=g|_{\Sigma_b}$, and $K$ the second fundamental form of the embedding $\Sigma_b \subset M$. $(\bar{g},K)$ is a solution to the constraint equations.

\begin{prp}\label{prpinitial}
There exists
	$$ (g_{\alpha \beta})_0,(g_{\alpha \beta})_1 \in H^{N+1}_{\delta}\times H^{N}_{\delta+1}$$ such that the initial data for $g$ given by
	$$g=g_{\mathfrak{b}}+g_0, \;\partial_t g = \partial_t g_{\mathfrak{b}} +g_1,$$
	are such that
	\begin{itemize}
		\item $\bar{g}_{ij}=g_{ij}, K_{ij}=\q L_{\beta}g_{ij}$ satisfy the constraint equations \eqref{contrmom} and \eqref{contrham}.
		\item the following generalized wave coordinates condition is satisfied at $t=0$
		$$g^{\lambda\beta}\Gamma^\alpha_{\lambda \beta}=g_{\mathfrak{b}}^{\lambda\beta}(\Gamma_b)^\alpha_{\lambda \beta}+G^\alpha + \wht G^\alpha,$$
	\end{itemize}
	where $G^\alpha$ is defined by \eqref{gu}, \eqref{gl} and \eqref{gbal} and $\wht G$ is the sum of all the crossed term of the form
	$g_0\frac{\partial_\theta}{r} g_{\mathfrak{b}}$ and $g_0\partial_s g_{\mathfrak{b}}$ in
	$g^{\lambda\beta}\Gamma^\alpha_{\lambda \beta}-g_{\mathfrak{b}}^{\lambda\beta}(\Gamma_b)^\alpha_{\lambda \beta}$.
	Moreover we have the estimate
	$$\|g_0\|_{H^{N+1}_{\delta}} +\|g_1\|_{H^N_{\delta+1}}\lesssim \ep.$$
\end{prp}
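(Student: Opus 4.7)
The plan is to reduce Proposition \ref{prpinitial} to Theorem \ref{thinitial} via the localized change of coordinates of Section \ref{change}, and then to fix the gauge at $t=0$ by adjusting only the non-spatial time derivatives of the metric. More precisely I would proceed in four steps.

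First, apply Theorem \ref{thinitial} to the given $(\phi_0,\phi_1)$ to obtain data $g|_{t=0}=g_{\mathfrak{a}}+\tilde g_0^{(a)}$, $\partial_t g|_{t=0}=\partial_t g_{\mathfrak{a}}+\tilde g_1^{(a)}$ with $(\tilde g_0^{(a)},\tilde g_1^{(a)})\in H^{N+1}_\delta\times H^N_{\delta+1}$, satisfying the constraint equations and the wave coordinate condition adapted to $g_{\mathfrak{a}}$. Second, perform the change of variables of Section \ref{change} associated to the function $b$, but with the cutoff $\chi(q)$ in the definition of $g_{\mathfrak{b}}$ acting only in the far exterior: by the very construction $g_{\mathfrak{b}}=\chi(q)g_{\mathfrak{a}}+(1-\chi(q))m$, so the pullback of the metric $g_{\mathfrak{a}}+\tilde g_0^{(a)}$ to the coordinates $(s,q,\theta)$ naturally decomposes at $t=0$ as $g_{\mathfrak{b}}+g_0$. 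In the interior region $\{q\le R+1/2\}$ the change of variables is the identity, and in the exterior region $\{q\ge R+1\}$ where $\chi\equiv 1$ the decomposition is exact by construction of $g_{\mathfrak{b}}$; only in the transition strip is there a non-trivial remainder, which is controlled by the regularity of $\chi$ and of $b$.

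Third, I would verify the weighted Sobolev estimate for $g_0$ and $g_1$. The part coming from $\tilde g_0^{(a)}$ and $\tilde g_1^{(a)}$ composed with the change of variables remains in $H^{N+1}_\delta\times H^N_{\delta+1}$ with the same order $\ep$, since the Jacobian is $1+O(\ep^2)$ by \eqref{estb1}--\eqref{estb2} of $\mathcal H$. The contribution from the transition region is supported in a compact annulus and is bounded by a finite combination of weighted $L^2$ norms of products of $b$ and its derivatives by Proposition \ref{prpgb}, hence is $O(\ep^2)$ in $H^{N+1}_\delta$. Summing gives the estimate claimed.

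Fourth, enforce the prescribed gauge. The constraint equations depend only on the induced spatial metric $\bar g_{ij}$ and the extrinsic curvature $K_{ij}$, hence only on $g_{ij}|_{t=0}$ and $\partial_t g_{ij}|_{t=0}$ modulo the shift. The condition $g^{\lambda\beta}\Gamma^\alpha_{\lambda\beta}=g_{\mathfrak b}^{\lambda\beta}(\Gamma_b)^\alpha_{\lambda\beta}+G^\alpha+\wht G^\alpha$ at $t=0$ is a linear system in $\partial_t g_{00}|_{t=0}$ and $\partial_t g_{0i}|_{t=0}$, invertible since its principal part involves $g^{00}\ne 0$; adjusting these components leaves the constraints untouched. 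A crucial simplification is that at $t=0$ one has $\Upsilon(r/t)=\Upsilon(\infty)=0$, so $L_\alpha G^\alpha=0$ initially, and $U_\alpha G^\alpha=\sigma^0_{UL}\chi'(q)$ and $\wht G^\alpha$ are supported either in the transition strip or involve products $\wht g\,\partial_s^l\partial_\theta^k b$, all of which lie in $H^N_{\delta+1}$ by the estimates of $\mathcal H$ and the previous step. The main obstacle will be keeping track of the precise weighted Sobolev class of these correction terms, in particular the contribution of $\sigma^0_{UL}\chi'(q)$ and of the crossed terms in $\wht G$; this is however purely a careful bookkeeping since $\chi'(q)$ restricts us to a compact annulus in $q$ and $b(\theta,\cdot)$ is controlled on $\mathbb S^1$ by $\mathcal H$.
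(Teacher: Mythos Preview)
Your reduction misses the central geometric point: the change of variables of Section~\ref{change} tilts the time slices, so the hypersurface $\{t=0\}$ in the new coordinates does \emph{not} coincide with the hypersurface $\{t'=0\}$ on which Theorem~\ref{thinitial} produces data. Concretely, $t'\sim t-b(\theta,s)r$, so along $\Sigma_b=\{t=0\}$ one has $|t'|\to\infty$ as $r\to\infty$. Consequently you cannot simply pull back $(\tilde g_0^{(a)},\tilde g_1^{(a)})$, which live only on $\{t'=0\}$: to evaluate the metric and its time derivative on $\Sigma_b$ you need the \emph{spacetime solution} in the exterior. This is exactly why the paper first invokes the global exterior solution of Appendix~\ref{ext}, performs the change of variables on that solution, and only then restricts to $\Sigma_b$. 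Your ``Jacobian is $1+O(\ep^2)$'' argument controls nothing here, because the issue is not the size of the coordinate change on a fixed slice but the fact that you are sampling the solution at unbounded $t'$.

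This creates two real regularity obstacles that your proposal does not address. First, the higher-order energies of $\wht g$ in the exterior grow like $(1+t')^\rho$, and since $|t'|\sim \ep^2 r$ on $\Sigma_b$ this threatens the $H^{N+1}_\delta$ bound; the paper closes this by an energy estimate over the wedge $\Omega_b$ between $\Sigma_0$ and $\Sigma_b$, exploiting that in $\Omega_b$ one has $|q|\gtrsim t$ and trading the weight gap $\delta'-\delta$ against the growth. Second, after the change of variables, $\partial_\theta^N\wht g$ contains terms like $(\partial_\theta^{N+2}b)\,\partial_\theta\wht g$, i.e.\ two more $\theta$-derivatives on $b$ than $\mathcal H$ directly controls pointwise; the paper absorbs this using the decay of $\partial\wht g$ in the exterior together with the weight. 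Your step four, on adjusting $\partial_t g_{0\alpha}$ via the gauge condition and the role of $\wht G^\alpha$ in removing the problematic crossed terms, is on the right track and matches the paper's treatment, but it only becomes relevant once the data on $\Sigma_b$ have actually been constructed and estimated.
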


\begin{proof}
There are two issues to consider for the regularity of $(\bar{g},K)$.
\begin{itemize}
	\item We have $t'\sim t-b(\theta,s)r$, so $|t'|\rightarrow \infty$ as $r\rightarrow \infty$ in $\Sigma_b$. Consequently we have to be careful with the logarithmic growth in $t'$ of the higher energy of $\wht g$.
	\item In $\partial^N_\theta \wht g$, we have terms of the form $\partial_\theta^{N+2}b(\theta,s) \partial_\theta \wht g$ : we have also to be careful with the logarithmic growth in $s$ of $\|\partial_\theta^{N+2}b(\theta,s) \|_{L^2(\m S^1)}$.
\end{itemize}
We treat the first issue.
We can estimate $\int_{\Sigma_b} w(q)(\partial Z^N \wht g)^2 rdrd\theta$ by performing the energy estimate on the domain
delimited by $\Sigma_0$ and $\Sigma_b$. We denote by $\Omega_b$ this domain.
We have
$$\int_{\Sigma_b} w(q)(\partial Z^N \wht g)^2\lesssim \int_{\Sigma_0}w(q)(\partial Z^N \wht g)^2
+ \int_{\Omega_b} \frac{\ep}{1+s}w(q)(\partial Z^N \wht g)^2.$$
We note that in the region $\Omega_b\cap \{q>R\}$ we have $|q|>Ct$. Since $w(q)\leq v(q)(1+|q|)^{\delta-\delta'}$ we have
\begin{align*}
\int_{\Sigma_b} w(q)(\partial Z^N \wht g)^2&\lesssim \int_{\Sigma_0}w(q)(\partial Z^N \wht g)^2
+ \int_{\Omega_b} \frac{\ep}{(1+t)^{1+\delta-\delta'}}v(q)(\partial Z^N \wht g)^2\\
&\lesssim \int_{\Sigma_0}w(q)(\partial Z^N \wht g)^2 +\ep^3.
\end{align*}
We treat the second issue with the help of the weight $w$: 
\begin{align*}
\int w(r)(\partial^{N+2}b(\theta,s) \partial\partial_\theta \wht g)^2
&\lesssim \int \frac{\ep}{(1+r)^{1+\delta-\delta'}}\|\partial_\theta^{N+2}b(\theta,r) \|^2_{L^2(\m S^1)}\\
&\lesssim  \int \frac{\ep^3}{(1+r)^{1+\delta-\delta'-2\rho}}\lesssim \ep^3
\end{align*}
$$
\int w(r)(\partial^{N+2}\partial_s b(\theta,s)\partial_\theta \wht g)^2
\lesssim \int \frac{\ep}{(1+r)^{\delta-\delta'}}\|\partial_s \partial_\theta^{N+2}b(\theta,r) \|^2_{L^2(\m S^1)}dr\\
\lesssim  \ep^3.$$

We now discuss the regularity of $\partial_t g_{0i}$.
 The generalized wave coordinate condition can be written
 $$g^{\lambda \beta}\Gamma^\alpha_{\lambda \beta}=(g_{\mathfrak{b}})^{\lambda \beta}(\Gamma_b)^\alpha_{\lambda \beta}+G^\alpha + \wht G^\alpha,$$
 Therefore, if we write it for $\alpha=i$ we obtain a relation for $\partial_tg_{0i}$ and if we write it for $\alpha=0$, we obtain a relation for
 $\partial_t g_{00}$. However, if we write $g=g_{\mathfrak{b}} +\wht g$, the term
 $$g^{\lambda \beta}\Gamma^\alpha_{\lambda \beta}-(g_{\mathfrak{b}})^{\lambda \beta}(\Gamma_b)^\alpha_{\lambda \beta}
 $$
 contains crossed terms of the form
 $$\wht g \partial_U g_{\mathfrak{b}} \sim \wht g \frac{\partial^3_\theta b(\theta)}{r}+s.t., \quad \wht g \partial_s g_{\mathfrak{b}} \sim \wht g\partial_s^2 \partial_\theta b+s.t.$$
 which do not belong in $H^N_{\delta+1}$ because we are missing a derivative on $b$. However these terms are removed thanks to the addition of the term $\wht G$ in the generalized wave coordinate condition. Consequently $\partial_t \wht g_{00}$ and $\partial_t \wht g_{0i}$ are given by a sum of terms the form
 $$K, \;\nabla g_0,\;g_{\mathfrak{b}}K,\; g_{\mathfrak{b}} \nabla g_0, \; \frac{\chi(r)g_{\mathfrak{b}}}{r}g_0.$$ 
 With this choice, $\partial_t \wht g_{0i}$ and $\partial_t \wht g_{00}$
 belong to $H^N_{\delta+1}$.
\end{proof}

\paragraph{Acknowledgement} This paper has benefit from the insight of many people. The author would like to thank in particular Jérémie Szeftel, Qian Wang, Spyros Alexakis, Mihalis Dafermos and Igor Rodnianski for the interesting conversations.

\bibliographystyle{plain}
\bibliography{stab}
\end{document}